\newcommand{\innerpoduct}[2]{\left\langle #1, #2\right\rangle }
\newcommand{\biginnerpoduct}[2]{\big\langle #1, #2\big\rangle }
\newcommand{\norm}[1]{\left\lVert#1\right\rVert}
\newcommand{\tr}[0]{ \operatorname{tr} }
\theoremstyle{plain}
\newtheorem{theorem}{Theorem}[section]
\newtheorem{proposition}[theorem]{Proposition}
\newtheorem{lemma}[theorem]{Lemma}
\newtheorem{corollary}[theorem]{Corollary}
\theoremstyle{definition}
\newtheorem{definition}[theorem]{Definition}
\newtheorem{assumption}{Assumption}
\let\origtheassumption\theassumption
\theoremstyle{remark}
\newtheorem{remark}[theorem]{Remark}
\newtheorem{example}{Example}
\theoremstyle{challenge}
\newtheorem{challenge}[theorem]{Challenge}
\theoremstyle{definition} %
\newcommand{\inD}{ \stackrel{\mathrm{d}}{\rightarrow}}
\newcommand{\inP}{ \stackrel{\mathbb{P}_{*}}{\rightarrow}}
\newcommand{\ShatA}{\mathcal{S}^{\mathcal{A}}}
\def\va{{\bm{a}}}
\newcommand{\blA}{\bm{A}}
\newcommand{\blB}{\bm{B}}
\newcommand{\blC}{\bm{C}}
\newcommand{\blE}{\bm{E}}
\newcommand{\blH}{\bm{H}}
\newcommand{\blJ}{\bm{J}}
\newcommand{\blM}{\bm{M}}
\newcommand{\blP}{\bm{P}}
\newcommand{\blS}{\bm{S}}
\newcommand{\blT}{\bm{T}}
\newcommand{\blU}{\bm{U}}
\newcommand{\blV}{\bm{V}}
\newcommand{\blW}{\bm{W}}
\newcommand{\blX}{\bm{X}}
\newcommand{\blY}{\bm{Y}}
\newcommand{\blZ}{\bm{Z}}
\newcommand{\bla}{\bm{a}}
\newcommand{\blb}{\bm{b}}
\newcommand{\bld}{\bm{d}}
\newcommand{\ble}{\bm{e}}
\newcommand{\blh}{\bm{h}}
\newcommand{\bll}{\bm{l}}
\newcommand{\blr}{\bm{r}}
\newcommand{\blu}{\bm{u}}
\newcommand{\blv}{\bm{v}}
\newcommand{\blx}{\bm{x}}
\newcommand{\bly}{\bm{y}}
\newcommand{\blz}{\bm{z}}
\newcommand{\bgbeta}{\bm{\beta}}
\newcommand{\bgdelta}{\bm{\delta}}
\newcommand{\bgtheta}{\bm{\theta}}
\newcommand{\bgmu}{\bm{\mu}}
\newcommand{\bgxi}{\bm{\xi}}
\newcommand{\bgpi}{\bm{\pi}}
\newcommand{\bgTheta}{\bm{\Theta}}
\newcommand{\bgSigma}{\bm{\Sigma}}
\newcommand{\bgOmega}{\bm{\Omega}}
\newcommand{\cA}{\mathcal{A}}
\newcommand{\cR}{\mathcal{R }}
\newcolumntype{L}[1]{>{\raggedright\let\newline\\\arraybackslash\hspace{0pt}}p{#1}}
\newcolumntype{C}[1]{>{\centering\let\newline\\\arraybackslash\hspace{0pt}}p{#1}}
\newcolumntype{R}[1]{>{\raggedleft\let\newline\\\arraybackslash\hspace{0pt}}p{#1}}
\title{Globally-Optimal Greedy Experiment Selection for Active Sequential Estimation}
\author{
  Xiaoou Li\\
  University of Minnesota\vspace{0.5cm}\\  
  Hongru Zhao\\
  University of Minnesota}
\date{}
\begin{document}
\maketitle

\onehalfspacing
\begin{abstract}
Motivated by modern applications such as computerized adaptive testing, sequential rank aggregation, and heterogeneous data source selection, we study the problem of active sequential estimation, which involves adaptively selecting experiments for sequentially collected data. %
The goal is to design experiment selection rules for more accurate model estimation. Greedy information-based experiment selection methods, optimizing the information gain for one-step ahead, have been employed in practice thanks to their computational convenience, flexibility to context or task changes, and broad applicability. However, statistical analysis is restricted to one-dimensional cases due to the problem's combinatorial nature and the seemingly limited capacity of greedy algorithms, leaving the multidimensional problem open.

In this study, we close the gap for multidimensional problems. In particular, we propose adopting a class of greedy experiment selection methods and provide statistical analysis for the maximum likelihood estimator following these selection rules. This class encompasses both existing methods and introduces new methods with improved numerical efficiency. We prove that these methods produce consistent and asymptotically normal estimators. Additionally, within a decision theory framework, we establish that the proposed methods achieve asymptotic optimality when the risk measure aligns with the selection rule.  We also conduct extensive numerical studies on both simulated and real data to illustrate the efficacy of the proposed methods.

From a technical perspective, we devise new analytical tools to address theoretical challenges. For instance, we demonstrate that functions of inverted Fisher information have a regularization effect when used in selection rules, thereby automatically exploring necessary experiments. Additionally, we show that a class of greedy and stochastic optimization methods converges to the minimum of a convex function over a simplex almost surely. These analytical tools are of independent theoretical interest and may be reused in related problems involving stochastic approximation and sequential designs.
\end{abstract}

\noindent
Keywords: 
{Active sequential estimation},
{optimality theory},
{sequential analysis},
{computerized adaptive testing}
\section{Introduction}\label{sec:intro}
In many modern applications, data are collected sequentially and adaptively through varied experiments, with the distribution being influenced by both unknown model parameters and the experiments. Active sequential estimation, which involves the adaptive selection of the experiments, enables more efficient model estimation. It has received considerable attention across various disciplines recently. A few examples are provided below.

\paragraph*{Computerized Adaptive Testing (CAT)} CAT refers to a form of educational assessment where test items are administered adaptively and sequentially based on the test taker's responses to previous items. For instance, if a test taker answers questions correctly, they may receive a more challenging item subsequently. Over the past decades, CAT has gained popularity due to its ability to achieve a more accurate assessment with fewer test items compared to traditional non-adaptive tests.
To implement CAT, Item Response Theory (IRT) models are typically employed \citep{chen2021item,reckase200618}. IRT models assume that a test-taker's responses, whether correct or incorrect, are influenced by both their latent trait parameter and the selected item. A crucial aspect of CAT design involves developing effective item selection rules to estimate the latent trait parameter as accurately as possible. For a comprehensive review on this topic, see \cite{wang2017computerized, bartroff2008modern, chang2009nonlinear}, and the references therein.

\paragraph*{Sequential rank aggregation} 
The rank aggregation problem involves inferring a global rank for a set of items by aggregating noisy pairwise comparison results. This problem finds applications across various domains such as social choice (\cite{saaty2012possibility}), sports (\cite{elo1978rating}), and search rankings (\cite{page1999pagerank}). Statistical models such as the Bradley-Terry model \citep{bradley1952rank}, which assigns a latent score parameter to each object, are often utilized to model the noisy pairwise comparison results. Subsequently, the global rank can be inferred from the estimated latent score parameters. 
Recently, the sequential rank aggregation problem has attracted increased interest. This approach involves sequentially and adaptively selecting the next pair to compare based on the comparison results of previously selected pairs (see, e.g.,\cite{chen2022asymptotically,chen2013pairwise,chen2016bayesian}). A key question of interest is the design of pair selection rules to enhance the efficiency of the rank aggregation process.

Besides the aforementioned applications, additional areas of application include active sampling in signal processing \citep{mukherjee2022active}, active contextual search \citep{chen2023active}, and dynamic pricing \citep{chen2023robust}, among others.

In all of the above applications, the problem can be formulated as a sequential design-and-estimation problem where data $X_1, \cdots , X_n, \cdots $ are collected sequentially. Each $X_n$ has a density function $f_{\boldsymbol{\theta}, a_n} (\cdot)$ relative to a baseline measure, with ${\bgtheta}\in\mathbb{R}^p$ representing the underlying model parameter,  $a_n\in\mathcal{A}$ denoting the experiment selected at time $n$, and $\mathcal{A}$ being a finite set encompassing all possible 
 experiment choices. For example, in the context of CAT, $\bgtheta$ corresponds to the latent proficiency level of a test-taker on $p$ subjects or skills, $a_n$ indicates the $n$-th test item, $\mathcal{A}$ indicates the item bank which collects all the potential test items, and $X_n\in\{0,1\}$ indicates that whether the test-taker answers the $n$-th question correctly or not. 
 At each time step $n$, a decision maker needs to select an experiment $a_n$ based on the past observations $X_1, a_1, X_2, a_2, \cdots, X_{n-1}, a_{n-1}$, sample $X_n$ accordingly, and construct an estimator $\widehat{\boldsymbol{\theta}}_n$ for estimating ${\bgtheta}$. The goal is to find a good adaptive experiment selection rule and an estimator $\widehat{\boldsymbol{\theta}}_N$ so that $\widehat{\boldsymbol{\theta}}_N$ is as accurate as possible, where $N$ could be a fixed sample size or a random stopping time depending on the application.

Greedy information-based experiment selection rules that maximize one-step-ahead information gain have been commonly adopted for item selection in CAT (see, e.g.,  \cite{chang1996global,wang2011item,van1999multidimensional,cheng2009cognitive}).
For example, \cite{wang2011item}  and \cite{tu2018item} describe the following experiment selection rule:
\begin{equation}\label{eq:trace-selection}
    a_{n+1} = \arg\min_{a\in\mathcal{A}} \operatorname{tr} \Big[\big\{\mathcal{I} (\widehat{\bgtheta}_{n}^{\text{ML}};\va_n,a)  \big\}^{-1}\Big],
\end{equation}
where $\va_n= (a_1,\cdots,a_n)$ denotes the experiments selected up to time $n$,  $$\widehat{\bgtheta}^{\text{ML}}_n=\arg\max_{{\bgtheta}}\sum_{i=1}^n \log f_{{\bgtheta}, a_i }(X_i)$$
denotes the maximum likelihood estimator (MLE) with $n$ observations,  $$\mathcal{I} (\widehat{\bgtheta}_{n}^{\text{ML}};\va_n,a) = \frac{1}{n+1}\Big\{\sum_{i=1}^{n}\mathcal{I}_{a_i}(\widehat{\bgtheta}^{\text{ML}}_{n})+\mathcal{I}_{a }(\widehat{\bgtheta}^{\text{ML}}_{n})\Big\}$$ represents the rescaled Fisher information matrix associated with the first $n$ experiments and one extra experiment $a$, while $\mathcal{I}_a ({\bgtheta}) = \mathbb{E}_{X\sim f_{{\boldsymbol\theta},a}} [\nabla \log f_{\bgtheta,a}(X) \{\nabla \log f_{\bgtheta,a}(X)\}^T ]$ denotes the Fisher information matrix associated with the experiment $a$ at the parameter ${\bgtheta}$. Other experiment selection rules in a similar form (e.g., substituting the trace function with other functions like $\log\det(\cdot)$) are also explored in \cite{wang2011kullback}.

These information-based experiment selection rules offer several benefits. First, the selection processes only require the calculation of the Fisher information and are easy to implement. %
Moreover, they are inherently parallelizable, offering scalability when \( |\mathcal{A}| \) is large. %
Second, they quantify the information gain associated with each experiment, thereby providing priority scores for them. This feature enables extension of these rules to various contexts and tasks (e.g.,  $\mathcal{A}$ varies over time). Additionally, given a parametric model, these rules can readily address problems in other applications.

Despite the computational advantages and wide applicability,  the statistical analysis of greedy information-based experiment selection methods is limited to the one-dimensional case ($p=1$) in existing research. In this context,  \cite{chang2009nonlinear} established the consistency, asymptotic normality and optimality results for the MLE, and discussed the application in CAT. However, the multidimensional ($p>1$) case remains an open problem, partly due to the challenges regarding the combinatorial nature of the multidimensional problem and the seemingly limited capacity of greedy methods. 
The following example, which mimics the settings of an educational test measuring two latent traits, illustrates that one has to combine experiments carefully in order to obtain a consistent and/or risk-optimal estimator.
\begin{example}\label{eg:toy-eg}
Let $\boldsymbol{\theta} = (\theta_1,\theta_2)^T$ and $\mathcal{A}=\{1,2,3\}$. Let $f_{\boldsymbol{\theta}, a}$ be the probability mass function for Bernoulli variables with the probability parameter $(1+\exp(-\theta_1+0.1))^{-1}$, $(1+\exp(-\theta_2))^{-1}$, and $(1+\exp(-\theta_1/2 -\theta_2))^{-1}$, for $a=1,2,3$, respectively. Let $n_k$ be the number of times that experiment $k$ is selected and $\pi_k = n_k/n$ be its frequency ($k=1,2,3$) with $n=\sum_{k}n_k$. Then, a necessary condition for the existence of a consistent estimator $\widehat{\boldsymbol{\theta}}_n$ is $\max\{\min(n_1,n_2),\min(n_1,n_3), \min(n_2,n_3)\}\to\infty$.
Moreover, in order to minimize the mean squared error $\mathbb{E}\|\widehat{{\bgtheta}}_n - {\bgtheta}\|^2$ asymptotically, a necessary condition is $(\pi_1,\pi_2,\pi_3)\to {\boldsymbol \pi}^*({\boldsymbol{\theta}})$ as the total sample size grows, where ${\boldsymbol \pi}^*({\boldsymbol{\theta}})$ is a vector-valued optimal proportion function depending on $\bgtheta$.
See Figure~\ref{fig:opt_prop} for an illustration of the function ${ {\bgpi}}^*({\boldsymbol{\theta}})$ and additional details in Section~\ref{sec:app-cat}.

\end{example}
In this example, achieving consistent or asymptotically optimal estimators requires experiments to be combined carefully with a parameter-dependent frequency.
However, information-based selection methods, being one-step-ahead greedy, do not consider the benefits of combining experiments or multi-step planning. Thus, it remains an open question whether these selection methods lead to consistent, asymptotically normal, or risk-optimal estimators.

In this study, we provide a definitive answer to the above question for a class of greedy-information-based experiment selection rules. In particular, we introduce two experiment selection rules based on a pre-specified criterion function $\mathbb{G}_{\bgtheta}:\mathbb{R}^{p\times p}\to \mathbb{R}$, 
\begin{equation}\label{eq:GI0}
 \textrm{GI0}: \quad   a_{n+1} = \arg\min_{a\in\mathcal{A}}\mathbb{G}_{\widehat{\bgtheta}_{n}^{\text{ML}}}\Big[\big\{   \mathcal{I}(\widehat{\bgtheta}_{n}^{\text{ML}};\va_n,a)\big\}^{-1} \Big], \text{ and}
\end{equation}
\begin{equation}\label{eq:GI1}
      \textrm{GI1}:  a_{n+1} = \arg\max_{a\in\mathcal{A}}  \operatorname{tr} \Big[  \nabla\mathbb{G}_{\widehat{\bgtheta}_{n}^{\text{ML}}}\big(   \widehat\bgSigma_n   \big) 
  \widehat\bgSigma_n \mathcal{I}_a(\widehat{\bgtheta}_{n}^{\text{ML}})\widehat\bgSigma_n \Big], 
\end{equation}

where $\widehat\bgSigma_n=\{\mathcal{I}(\widehat{\bgtheta}_{n}^{\text{ML}}; \va_n ) \}^{-1}$, $\nabla \mathbb{G}_{\bgtheta}(\bgSigma)=\Big(\frac{\partial \mathbb{G}_{\bgtheta}(\bgSigma)}{\partial \bgSigma_{ij}}\Big)_{1\leq i,j\leq p}$ denotes the gradient of $\mathbb{G}_{\bgtheta}$ with respect to its matrix input and recall $
\mathcal{I} (\widehat{\bgtheta}_{n}^{\text{ML}};\va_n) = \frac{1}{n} \sum_{i=1}^{n}\mathcal{I}_{a_i}(\widehat{\bgtheta}^{\text{ML}}_{n}) 
$.
We refer to the selection rule in \eqref{eq:GI0} as the zero-order greedy information-based selection rule (\textrm{GI0}), and that in \eqref{eq:GI1} as the first-order greedy information-based selection rule (\textrm{GI1}), because GI0 is minimizing a certain function of the Fisher information at the next time point, while GI1 is derived based on a first-order Taylor expansion of \textrm{GI0}; see Section~\ref{sec:method} for more details. 
\text{GI0} generalizes the selection rule in \eqref{eq:trace-selection}, accommodating more diverse settings. New methods can be obtained by specifying an appropriate function $\mathbb{G}_{\bgtheta}$. \textrm{GI1} offers a class of new experiment selection rules which share similar asymptotic properties as \textrm{GI0} but are computationally more efficient when both $p$ and $|\mathcal{A}|$ are large.

Our main theoretical contributions are as follows. First, we show that MLE is strongly consistent and asymptotically normal when using GI0 or GI1 as the experiment selection rule, under mild conditions. Second, we derive the asymptotic covariance matrix of the MLE as a function involving $\mathbb{G}_{\bgtheta}(\cdot)$ and the Fisher information. Third, we prove that the empirical frequency of selected experiments converges to a limiting frequency. Fourth, we show that the experiment selection rule GI0 (or GI1) combined with the MLE is asymptotically optimal in minimizing certain risk measures related to $\mathbb{G}_{\bgtheta}(\cdot)$. In particular, if $\mathbb{G}_{\bgtheta}(\cdot) = \operatorname{tr}(\cdot)$, then the MLE has the smallest asymptotic mean squared error (MSE), when compared with other experiment selection rules and estimators. Moreover, these results are valid not only for fixed sample sizes, but also for random stopping times, which is beneficial for applications that use early stopping criteria.

Beyond the methodological and theoretical contributions, we have developed new analytical tools for addressing technical challenges. For example, we show that the inverted Fisher information, through its directional derivatives in experiment selection rules, acts as a regularizer. This facilitates automatic exploration of necessary experiments, removing the need for additional exploration steps traditionally employed in stochastic control methods for related problems (e.g., two-stage design in sequential design for hypothesis tests \citep{chernoff1959,naghshvar2013active}). 
Furthermore, we show that a class of greedy and stochastic optimization methods converges to the minimum of a convex function over a simplex almost surely.
In addition, we refine and extend several classic results in stochastic analysis, such as Anscombe's theorem \citep{anscombe1952large} and the Robbins-Siegmund theorem \citep{robbins1971convergence}.
These theoretical results and technical tools are important in their own right and may be reused in other related problems. See Section~\ref{sec:proof-sketch} for more details of the technical challenges and our new analytical tools.

The rest of the paper is organized as follows. Section~\ref{sec:problem} formalizes the active sequential estimation problem. Section~\ref{sec:method} introduces the greedy information-based experiment selection rules GI0 and GI1, elaborating on their implementation. Section~\ref{sec:theory} offers the main theoretical results regarding the MLE and the experiment selection rules. Section~\ref{sec:application} details the methods and theory in applications including the item selection in CAT and sequential rank aggregation. Section~\ref{sec:proof-sketch} gives new analytical tools and a proof sketch. Section~\ref{sec:numerical} presents two simulation studies, which illustrate the finite sample performance and the computational efficiency of the proposed methods. Section~\ref{sec:real_data_example} showcases the performance of the proposed method on a real-data example. Section~\ref{sec:discussion} summarizes the main results and provides discussions on future directions.
All the technical proofs for the theoretical results and additional simulation results are given in the supplementary material.
\subsection{Notations}
In this paper, we use the following notations and mathematical conventions. Let \(\overline{C}\) and \(\underline{C}\) represent generic constants that are bounded from above and below, respectively. 
These generic constants are independent of \(\boldsymbol{\theta}\) and \(a \in \mathcal{A}\), and their values may vary from place to place. Let $|\mathcal{A}|$ denote the cardinality of a set $\mathcal{A}$. Let $I(\cdot)$ denote the indicator function. Let $I_p$ denote the $p\times p$ identity matrix. The inner product between real matrices (or vectors) $\blA$ and $\blB$ of the same size is defined by $\innerpoduct{ \blA }{\blB}=\mathrm{tr} (\blA^T\blB)$. For a real matrix $\blA$, define the operator norm $\norm{\blA}_{op}$ as the maximum singular value of $\blA$. For a vector $\blx$, denote its Euclidean norm by $\norm{\blx}=\sqrt{\innerpoduct{ \blx }{\blx}}$. For a symmetric matrix $\blA$, $\lambda_{max}(\blA)$, $\lambda_{min}(\blA)$, and $\kappa(\blA)$ denote its maximum eigenvalue, minimum eigenvalue, and condition number, respectively. If $\blA$ is a positive definite matrix, then $\kappa(\blA) = \frac{\lambda_{max}(\blA)}{\lambda_{min}(\blA)}$. For a differentiable matrix function $\mathbb{G}(\bgSigma)$, its gradient is denoted by $\nabla \mathbb{G}(\bgSigma)$, and is defined as the matrix such that
\(\mathbb{G}(\bgSigma+\Delta 
\bgSigma)- \mathbb{G}(\bgSigma )= \innerpoduct{\nabla \mathbb{G}(\bgSigma)}{\Delta 
\bgSigma}+o(\norm{\Delta 
\bgSigma})\). For symmetric matrices $\blA$ and $\blB$, define the partial order $\blA\preceq \blB$ %
if and only if $\blB-\blA$ is a positive semidefinite matrix. Throughout the paper, all the vectors are column vectors, unless otherwise specified.

\section{Problem Statement}\label{sec:problem}
Let $X_1,\cdots,X_n,\cdots$ be data collected sequentially, $\mathcal{A}$ be a finite set with cardinality $k$, and $a_1,\cdots,a_n,\cdots\in\mathcal{A}$ be the experiments selected at different time points. Denote by $\mathcal{F}_n = \sigma(a_1,X_1,\cdots, a_n,X_n)$, the sigma field that contains information of the observations and the selected experiments up to time $n$. At each time $n$, a decision maker needs to select the experiment $a_{n+1}$ adaptively based on past information. That is, $a_{n+1}$ is measurable with respect to $\mathcal{F}_n$. Throughout the study, we assume that the distribution of $X_{n+1}$ satisfies
$$
X_{n+1}|\mathcal{F}_n \sim f_{\bgtheta,a_{n+1}}(\cdot) \text{ for } \bgtheta\in\bgTheta \subset \mathbb{R}^p
$$
where $\bgtheta$ is a $p$-dimensional model parameter, $\bgTheta$ is a compact parameter space and $f_{\bgtheta,a_{n+1}}(\cdot)$ denotes the probability density of $X_{n+1}$ with respect to a baseline measure. That is,  $X_{n+1}$ is assumed to follow a parametric model, and its distribution is determined by both the underlying model parameter $\bgtheta$ and the selected experiment $a_{n+1}$.

In an active sequential estimation problem, the goal is to design an experiment selection rule for $\{a_n\}_{n\geq 1}$ and find an estimator $\widehat{\bgtheta}_n$ that is measurable with respect to $\mathcal{F}_n$, so that  $\widehat{\bgtheta}_n$ is close to the true underlying parameter $\bgtheta^*$ with high probability. In some applications, the data collection process may be stopped early to save for the sampling cost. In these cases, we are also interested in $\widehat{\bgtheta}_N$, where $N$ is a random stopping time.

\section{Methods}\label{sec:method}
For the estimation method, we focus on the MLE, although some of the methods and theoretical results may be extended to other estimators. The definition of MLE is given as follows. Let the selected experiments up to time $n$ be $\bla_n=(a_1,\cdots,a_n)$. Then, the rescaled log-likelihood and the corresponding MLE are
\begin{equation}\label{equ:log-like}
    l_n(\bgtheta)=l_n(\bgtheta;\va_n)=\frac{1}{n}\sum_{i=1}^n \log f_{{\bgtheta}, a_i }(X_i),\text{ and}
\end{equation}
\begin{equation}\label{def:MLE}
\widehat{\bgtheta}_{n}^{\text{ML}}\in\arg\max_{\bgtheta \in \bgTheta} l_n(\bgtheta;\va_n).
\end{equation}
We propose adopting two experimental selection rules, including the zero-order greedy information-based selection rule \textrm{GI0} and the first-order greedy information-based selection rule \textrm{GI1}. The precise description of these methods are given in Algorithms~\ref{alg:gi0} and \ref{alg:gi1}.

\begin{algorithm}[ht]
\caption{\textrm{GI0} Algorithm}
\label{alg:gi0}
\begin{algorithmic}[1]
\STATE {\textbf{Input:} $\widehat{\bgtheta}_{0}$, $a_1^0, \cdots, a_{n_0}^0$.}
\STATE {\textbf{Require:} $\widehat{\bgtheta}_{0}\in \bgTheta$, $a_1^0, \cdots, a_{n_0}^0\in \mathcal{A}$ such that $ \sum_{i=1}^{n_0}\mathcal{I}_{a^0_{i}}(\widehat{\bgtheta}_{0})$ is nonsingular. }
\STATE{\textbf{Initialization}: $a_1=a_1^0, \cdots, a_{n_0}=a_{n_0}^0$, collecting responses $X_1,X_2,\cdots,X_{n_0}$ correspondingly.}

\FOR{$n = n_0$ to $N$} %
    \STATE{calculating the MLE $\widehat{\bgtheta}_{n }^{\text{ML}}$ according to equation \eqref{def:MLE}}
    \STATE{selecting experiment $a_{n+1}$ according to equation \eqref{eq:GI0}}
    \STATE{collecting response $X_{n+1}$ corresponding to the selected experiment $a_{n+1}$} 
\ENDFOR %
\STATE{\textbf{Output:} $\widehat{\bgtheta}_{N }^{\text{ML}}$}
\end{algorithmic}
\end{algorithm}

\begin{algorithm}[ht]
\caption{\textrm{GI1} Algorithm}\label{alg:gi1}
\begin{algorithmic}[1]
\STATE {\textbf{Input:} $\widehat{\bgtheta}_{0}$, $a_1^0, \cdots, a_{n_0}^0$.}
\STATE {\textbf{Require:} $\widehat{\bgtheta}_{0}\in \bgTheta$, $a_1^0, \cdots, a_{n_0}^0\in \mathcal{A}$ such that $ \sum_{i=1}^{n_0}\mathcal{I}_{a^0_{i}}(\widehat{\bgtheta}_{0})$ is nonsingular. }
\STATE{\textbf{Initialization}: $a_1=a_1^0, \cdots, a_{n_0}=a_{n_0}^0$, collecting responses $X_1,X_2,\cdots,X_{n_0}$ correspondingly.}

\FOR{$n = n_0$ to $N$} %
    \STATE{calculating the MLE $\widehat{\bgtheta}_{n }^{\text{ML}}$ according to equation \eqref{def:MLE}}
    \STATE{selecting experiment $a_{n+1}$ according to equation \eqref{eq:GI1}}
    \STATE{collecting response $X_{n+1}$ corresponding to the selected experiment $a_{n+1}$} 
\ENDFOR %
\STATE \textbf{Output}:$\widehat{\bgtheta}_{N }^{\text{ML}}$  
\end{algorithmic}
\end{algorithm}

We explain steps in Algorithms~\ref{alg:gi0} and \ref{alg:gi1}. First, we note that both algorithms require a pre-specified criterion function $\mathbb{G}_{\bgtheta}:\mathbb{R}^{p\times p}\to\mathbb{R}$. Motivated by \cite{kiefer1974general} on the design of experiments, a reasonable choice is 
\begin{equation}\label{def:Phi_q}
\mathbb{G}_{\bgtheta}(\bgSigma)=\Phi_q( \bgSigma )= \left\{\begin{array}{ll}
\log\det(\bgSigma), & \text { if } q=0; \\
\tr ( \bgSigma^{q}), & \text { if } 0<q<1;\\
(\tr ( \bgSigma^{q}) )^{1/q}, & \text { if } q\geq 1,
\end{array}\right.
\end{equation}
for a prespecified $q\geq 0$. %
In the context of adaptive item selection in CAT, \textrm{GI0} with $\mathbb{G}_{\bgtheta}(\bgSigma)=\Phi_q( \bgSigma )$ has been adopted in
\cite{van1999multidimensional} and \cite{wang2011item}. In particular, the selection rule in \eqref{eq:trace-selection} corresponds to \textrm{GI0} with $\mathbb{G}_{\bgtheta}(\cdot)=\Phi_1(\cdot)$. Both \textrm{GI0} and \textrm{GI1} are relatively new in other applications described in Section~\ref{sec:intro}. Note that for $\mathbb{G}_{\bgtheta}(\bgSigma)=\Phi_q( \bgSigma )$, it is a function independent with the input ${\bgtheta}$. Another option is \(\mathbb{G}_{\bgtheta}(\bgSigma) = \operatorname{tr}(\blH_{\bgtheta} \bgSigma )\), where $\blH_{\bgtheta}$ is a positive definite matrix depending on $\bgtheta$. This criterion function is useful in the cases where we would like to assign different weights to different values of ${\bgtheta}$. For more details, please refer to Theorem~\ref{thm:ultimate}. 

Second, both algorithms require an initialization step where $n_0$ experiments are selected so that the Fisher information matrix $\mathcal{I}(\widehat{\bgtheta}_{0};\va_{n_0})$ is nonsingular. This initialization step  ensures that  $\mathcal{I}(\widehat{\bgtheta}_{n};\va_{n})$ is nonsingular and the experiment selection rules in \eqref{eq:GI0} and \eqref{eq:GI1} are well-defined for all $n\geq n_0$. In practice, it is usually straightforward to find such $a_1^0, \cdots, a_{n_0}^0$. For instance, in Example~\ref{eg:toy-eg}, we could choose $\widehat{\bgtheta}_0= (0,0)$,  $(a_1^0,  a_{2}^0)=(1,2)$, and $n_0=2$. Then, at each time point, the algorithm first calculates the MLE based on the available information, selects a new experiment according to \eqref{eq:GI0} for \textrm{GI0} (or \eqref{eq:GI1} for \textrm{GI1}), and then samples a new observation according to the selected experiment. 

We refer to the selection rule in Algorithm~\ref{alg:gi0} as \textrm{GI0} and  that in Algorithm~\ref{alg:gi1} as \textrm{GI1}, because \textrm{GI0} tries to minimize the criterion function $\mathbb{G}_{\widehat{\bgtheta}_{n}}\Big[\big\{   \mathcal{I}(\widehat{\bgtheta}_{n}^{\text{ML}};\va_n,a)\}^{-1} \Big]$ for one-step ahead, while \textrm{GI1} tries to minimize its first-order approximation, i.e.,
{ 
\begin{equation}\label{eq:first-approximation}
\begin{split}  &\mathbb{G}_{\widehat{\bgtheta}_{n}^{\text{ML}}}\Big[\big\{   \mathcal{I}( \widehat{\bgtheta}_{n}^{\text{ML}};\va_n,a) \}^{-1} \Big] - \mathbb{G}_{\widehat{\bgtheta}_{n}^{\text{ML}}}\Big[\big\{   \mathcal{I}(\widehat{\bgtheta}_{n}^{\text{ML}};\va_n)\}^{-1}\Big]\\
    \approx & { \biginnerpoduct{\overline{{\bgpi}}^a_{n+1}-\overline{{\bgpi}}_{n}}{\frac{\partial}{\partial \bgpi}   \mathbb{G}_{\widehat{\bgtheta}_{n}^{\text{ML}}}\big[\big\{\sum_{a \in \mathcal{A} } \pi(a) \mathcal{I}_a( {\widehat{\bgtheta}_{n}^{\text{ML}}}  ) \big\}^{-1}\big]\Big|_{\bgpi = \overline{{\bgpi}}_{n}}} } 
\\
    = &-\frac{1}{n+1}\operatorname{tr} \left[  \nabla\mathbb{G}_{\widehat{\bgtheta}_{n}^{\text{ML}}} \Big(\{   \mathcal{I}(\widehat{\bgtheta}_{n}^{\text{ML}};  \va_n ) \}^{-1}   \Big) 
  \{\mathcal{I}(\widehat{\bgtheta}_{n}^{\text{ML}}; \va_n)\}^{-1} \mathcal{I}_a(\widehat{\bgtheta}_{n}^{\text{ML}})\{\mathcal{I} (\widehat{\bgtheta}_{n}^{\text{ML}}; \va_n)\}^{-1}  \right]\\
  &+\frac{1}{n+1}\operatorname{tr} \left[  \nabla\mathbb{G}_{\widehat{\bgtheta}_{n}^{\text{ML}}} \Big(\{   \mathcal{I}(\widehat{\bgtheta}_{n}^{\text{ML}};  \va_n ) \}^{-1}   \Big) 
  \{\mathcal{I}(\widehat{\bgtheta}_{n}^{\text{ML}}; \va_n)\}^{-1}   \right],
\end{split}
\end{equation}
where %
the empirical frequency %
vector $\overline{{\bgpi}}_n $ is defined as
\begin{equation}\label{def:pi_n}
    \overline{{\bgpi}}_n = \overline{{\bgpi}}_n[\va_n] = \Big(\frac{1}{n}|\{ i; a_i=a,1\leq i\leq n\} |\Big)_{a\in\mathcal{A} },
\end{equation}
with $\va_n=(a_1,\cdots,a_n)$ collects experiments selected up to time $n$, 
and $\overline{{\bgpi}}^a_{n+1}(a')=\frac{n}{n+1} \overline{{\bgpi}}_{n}(a')+\frac{1}{n+1} I(a=a')$ is the empirical frequency at the time $n+1$ if $a'$ is selected at that time.
}
Note that \textrm{GI0} minimizes the first line of \eqref{eq:first-approximation}, \textrm{GI1} minimizes the first term on the last equation of \eqref{eq:first-approximation}, and the second term  on the last equation of \eqref{eq:first-approximation} does not depend on the choice of experiment $a$. This suggests that \textrm{GI0} and \textrm{GI1} are asymptotically equivalent, although the rigorous theoretical justification is much more involved.
\subsection{Improving Computational Efficiency}

If $k,p$ are large, and $\mathcal{I}_a(\bgtheta)$ has some low-dimensional representation, \textrm{GI1} can be implemented with improved numerical efficiency. In particular, we consider two specific cases which are commonly seen in applications, including (1) low-rank information: $    \mathcal{I}_a(\bgtheta) = L_a(\bgtheta)L^T_a(\bgtheta)$ where $L_a(\bgtheta)\in \mathbb{R}^{p\times s}$ for all $a$ and $\bgtheta$ and $s<p$;  (2) sparse and low-rank information: $L_a(\bgtheta)$ has no more than $s$ non-zero rows. For these cases,   Algorithm~\ref{alg:gi1} can be implemented using the following accelerated version.
\begin{algorithm}[H]
\caption{Accelerated \textrm{GI1} Algorithm}
\label{alg:gi1-Accelerated}
\begin{algorithmic}
\STATE \quad We modify line 6 in Algorithms~\ref{alg:gi1}, while keeping the other lines of the algorithms unchanged.
    \STATE{6:\quad selecting experiment $a_{n+1}$ according to }
\begin{equation*}
\begin{split}
    &\blM= \{\mathcal{I} (\widehat{\bgtheta}_{n}^{\text{ML}}; \va_n)\}^{-1} \nabla\mathbb{G}_{\widehat{\bgtheta}_{n}^{\text{ML}}}\big(\{   \mathcal{I}(\widehat{\bgtheta}_{n}^{\text{ML}}; \va_n ) \}^{-1}   \big) 
    \{\mathcal{I} (\widehat{\bgtheta}_{n}^{\text{ML}}; \va_n)\}^{-1}, \\
    &a_{n+1}     = \arg\max_{a\in \mathcal{A}}\tr\left[ L^T_a(\widehat{\bgtheta}_{n}^{\text{ML}}) \blM
   L_a(\widehat{\bgtheta}_{n}^{\text{ML}})\right].
\end{split}
\end{equation*}   
\end{algorithmic}
\end{algorithm}
\begin{lemma}\label{lem:complexity}
Assume the computational complexity of evaluating $\mathbb{G}_{ \bgtheta }(\bgSigma)$ and $\nabla \mathbb{G}_{ \bgtheta }(\bgSigma)$ is no more than $O(p^3)$. Given the MLE \(\widehat{\bgtheta}_n^{\text{ML}}\) and \(\mathcal{I}(\widehat{\bgtheta}_n^{\text{ML}};\va_n)\), we have
\begin{enumerate}
    \item the computational complexity for each iteration in GI0 is of the order \(O(kp^3)\);
\item the computational complexity for each iteration in  the accelerated \textrm{GI1} Algorithm~\ref{alg:gi1-Accelerated} is $O(ksp^2 + p^3)$, assuming that the information matrices are low-rank matrices with given $L_a(\bgtheta)\in\mathbb{R}^{p\times s}$. Moreover, the computational cost for the accelerated \textrm{GI1} Algorithm~\ref{alg:gi1-Accelerated} becomes \(O(ks^2p + p^3)\) if $L_a(\bgtheta)$ has no more than $s$ non-zero rows.
\end{enumerate}

\end{lemma}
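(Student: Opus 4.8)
The plan is to prove each bound by directly counting the cost of the linear-algebra operations performed in a single iteration, using the standard facts that multiplying a $p\times q$ matrix by a $q\times r$ matrix costs $O(pqr)$, that inverting a $p\times p$ matrix costs $O(p^3)$, and that by hypothesis one evaluation of $\mathbb{G}_{\bgtheta}$ or $\nabla\mathbb{G}_{\bgtheta}$ costs $O(p^3)$. Throughout I treat $\widehat{\bgtheta}_n^{\text{ML}}$, the matrix $\mathcal{I}(\widehat{\bgtheta}_n^{\text{ML}};\va_n)$, and the factors $L_a(\widehat{\bgtheta}_n^{\text{ML}})$ (together with the per-experiment $\mathcal{I}_a(\widehat{\bgtheta}_n^{\text{ML}})$) as given, so that only the selection step need be analyzed. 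For part 1 (GI0), I would write $\mathcal{I}(\widehat{\bgtheta}_n^{\text{ML}};\va_n,a)=\frac{1}{n+1}\{n\,\mathcal{I}(\widehat{\bgtheta}_n^{\text{ML}};\va_n)+\mathcal{I}_a(\widehat{\bgtheta}_n^{\text{ML}})\}$, so that for each candidate $a\in\mathcal{A}$ one forms this perturbed matrix ($O(p^2)$), inverts it ($O(p^3)$), and evaluates $\mathbb{G}_{\widehat{\bgtheta}_n^{\text{ML}}}$ at the inverse ($O(p^3)$). Summing the per-candidate cost $O(p^3)$ over the $k=|\mathcal{A}|$ candidates yields $O(kp^3)$.

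For part 2 (accelerated GI1), the key structural observation is that the matrix $\blM$ in Algorithm~\ref{alg:gi1-Accelerated} does not depend on $a$, and so is assembled only once per iteration. Forming $\blM$ requires one inversion of $\mathcal{I}(\widehat{\bgtheta}_n^{\text{ML}};\va_n)$, one evaluation of $\nabla\mathbb{G}_{\widehat{\bgtheta}_n^{\text{ML}}}$, and two $p\times p$ matrix products, for a one-time cost of $O(p^3)$. It then remains to bound the cost of computing $\tr[L_a^T\blM L_a]$ for each of the $k$ candidates. In the dense low-rank case $L_a\in\mathbb{R}^{p\times s}$, I would first form the $p\times s$ product $\blM L_a$ in $O(p^2 s)$ operations and then evaluate $\tr[L_a^T\blM L_a]=\langle L_a,\blM L_a\rangle$ as an inner product of two $p\times s$ matrices in $O(ps)$ operations; hence each candidate costs $O(p^2 s)$, the $k$ candidates cost $O(ksp^2)$, and adding the one-time cost gives $O(ksp^2+p^3)$.

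In the sparse low-rank case I would exploit that $L_a$ has at most $s$ nonzero rows, say indexed by a set $S_a$ with $|S_a|\le s$. Then each entry of $\blM L_a$ is a sum over only the indices $j\in S_a$, so the full $p\times s$ product $\blM L_a$ can be assembled in $O(ps^2)$ operations rather than $O(p^2 s)$, and the inner product $\langle L_a,\blM L_a\rangle$ involves only the $\le s$ nonzero rows of $L_a$ and costs $O(s^2)$. Thus each candidate costs $O(ps^2)$, the $k$ candidates cost $O(ks^2 p)$, and the total is $O(ks^2 p+p^3)$, which is the claimed improvement since $s<p$.

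I expect the only genuine subtlety to lie in the sparse case: one must verify that the reduction from $O(p^2 s)$ to $O(ps^2)$ comes precisely from restricting the inner summation in $\blM L_a$ to the nonzero rows of $L_a$, and that the $O(p^3)$ cost of assembling the (dense) matrix $\blM$ is additive—incurred once per iteration rather than once per candidate—so it contributes only the $+p^3$ term and does not spoil either bound. The remaining steps are routine bookkeeping of matrix-product dimensions.
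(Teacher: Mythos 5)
Your proposal is correct and follows essentially the same argument as the paper: per-candidate cost $O(p^3)$ (form, invert, evaluate $\mathbb{G}$) summed over $k$ candidates for GI0, and for the accelerated GI1 the one-time $O(p^3)$ assembly of $\blM$ plus per-candidate products costing $O(p^2s)$ in the dense low-rank case and $O(ps^2)$ in the sparse case by restricting the inner summation to the nonzero rows of $L_a$. The only element of the paper's proof you omit is the cyclic-trace reformulation showing that Algorithm~\ref{alg:gi1-Accelerated} selects the same experiment as \eqref{eq:GI1}, but since the lemma as stated only bounds the cost of the accelerated algorithm itself, this omission is not a gap.
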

According to the above lemma, the accelerated \textrm{GI1} algorithm is computationally much more efficient than \textrm{GI0}, when  $k$ and $p$ are large and $s$ is small. Numerical results supporting these findings can be found in Section~\ref{sec:simulation_ra}.

\subsection{Early Stopping}\label{sec:method-stopping}
In many applications, the data collection process is stopped early when sufficient observations have been gathered to make accurate statistical inference. For instance, in the context of CAT, educational tests often have variable lengths determined by specific early stopping rules. These rules generally lead to less fatigue and a better experience for examinees. In this section, we introduce two early stopping rules suitable for active sequential estimation. 

The first stopping rule $\tau^{(1)}_{c}$ is concerned with the estimation of a differentiable function of the parameter $h(\bgtheta) \in\mathbb{R}$, and it is defined as
\begin{equation}\label{equ:early stopping g}
\begin{split}
    &\tau^{(1)}_{c}=\min \big\{m\geq n_0;  \widehat{\text{SE}}(h( \widehat{\bgtheta}_{ m}^{\text{ML}} )) \leq c \big\},
\end{split}
\end{equation}
where $\widehat{\text{SE}}^2(h( \widehat{\bgtheta}_{ m}^{\text{ML}} ))=  \frac{1}{m} \{\nabla h( \widehat{\bgtheta}_{ m}^{\text{ML}} )\}^T \{\mathcal{I} (\widehat{\bgtheta}_{m}^{\text{ML}}; \va_m)\}^{-1} \nabla h( \widehat{\bgtheta}_{ m}^{\text{ML}} )$.
The second stopping rule $ \tau^{(2)}_{c}$ is concerned with the estimation of the vector $\bgtheta$, and is defined as
\begin{equation}\label{equ:early stopping TV}
    \tau^{(2)}_{c}=\min \Big\{m\geq n_0; \widehat{\text{MSE}}(\widehat{\bgtheta}_{m}^{\text{ML}}) \leq c  \Big\},\ \text{ where }\widehat{\text{MSE}}(\widehat{\bgtheta}_{m}^{\text{ML}})=\frac{1}{m} \tr \Big(  \{\mathcal{I} (\widehat{\bgtheta}_{m}^{\text{ML}}; \va_m)\}^{-1}   \Big).
\end{equation}
Here, $\widehat{\text{SE}}$ serves as an approximation of $\text{sd}(h( \widehat{\bgtheta}_{m}^{\text{ML}}))$ and $\widehat{\text{MSE}}(\widehat{\bgtheta}_{m}^{\text{ML}})$ serves as an approximation of $\text{MSE}( \widehat{\bgtheta}_{m}^{\text{ML}})=\mathbb{E}_{\bgtheta^*}\|\widehat{\bgtheta}_{m}^{\text{ML}}-\bgtheta^*\|^2$. Both rules terminate the data collection process once a certain error estimator falls below a predetermined threshold $c$.

\section{Theoretical Results}\label{sec:theory}
In this section, we first introduce the regularity conditions, and then present the main theoretical results regarding the consistency, asymptotic normality, and the optimality of the proposed method.

\subsection{Regularity Conditions}\label{sec:assumptions}
Throughout Section~\ref{sec:theory}, we
make the following Assumptions \ref{ass:1}--\ref{ass:5}, along with Assumptions \ref{ass:6A} and \ref{ass:7A}, and we will refer to this set of assumptions as the `regularity conditions'. All the theoretical results still hold when \ref{ass:6A} and \ref{ass:7A} are replaced with the more relaxed Assumptions \ref{ass:6B} and \ref{ass:7B}.

\begin{assumption}\label{ass:1}
The parameter space $\bgTheta$ is a non-empty compact and convex subset of $\mathbb{R}^p$. The true parameter $\bgtheta^*$ is an interior point of $\bgTheta$.
\end{assumption}
\begin{assumption}\label{ass:2}
    The support of the probability density $f_{\bgtheta,a}$, denoted as $\operatorname{supp}(f_{\bgtheta,a})$, depends only on $a$ and does not depend on $\bgtheta$,  where the support of a function is defined as
\begin{equation*}
\operatorname{supp}(f_{\bgtheta,a})=\operatorname{cl}\{ x^a; f_{\bgtheta,a}(x^a)>0 \}, %
\end{equation*}
and $\operatorname{cl}(S)$ denotes the closure of a set $S$. Moreover, for all $a\in \mathcal{A}$ and $X^a\in \operatorname{supp}(f_{\bgtheta,a})$, the gradient $\nabla_{\bgtheta}\log f_{\bgtheta,a}(X^a)=(\frac{\partial \log f_{\bgtheta,a}(X^a)}{\partial \theta_i})_{1\leq i \leq p}$ and the Hessian matrix $\nabla^2_{\bgtheta}\log f_{\bgtheta,a}(X^a)=(\frac{\partial^2 \log f_{\bgtheta,a}(X^a)}{\partial \theta_i\partial \theta_j})_{1\leq i,j \leq p}$ exist, where $\bgtheta=(\theta_1,\cdots,\theta_p)^T.$ Assume that there exist functions $\Psi_1^a$ and $\Psi_2^a$ satisfying $\sup_{\bgtheta\in \bgTheta }\mathbb{E}_{X^a\sim f_{\bgtheta,a}}\{\Psi_1^a(X^a)\}^2<\infty$, $\sup_{\bgtheta\in \bgTheta }\mathbb{E}_{X^a\sim f_{\bgtheta,a}}\Psi_2^a(X^a)<\infty, $
\begin{equation}
\norm{\nabla_{\bgtheta}\log f_{\bgtheta_1,a}(X^a)-\nabla_{\bgtheta}\log f_{\bgtheta_2,a}(X^a)  }\leq \Psi_1^a(X^a)  \norm{\bgtheta_1-\bgtheta_2},\text{ and}
\end{equation}
\begin{equation}\label{equ:hessian_lips}
\norm{\nabla^2_{\bgtheta}\log f_{\bgtheta_1,a}(X^a)-\nabla^2_{\bgtheta}\log f_{\bgtheta_2,a}(X^a)  }_{op}\leq \Psi_2^a(X^a)  \norm{\bgtheta_1-\bgtheta_2},
\end{equation}
for all $\bgtheta_1,\bgtheta_2\in \bgTheta$ and $a\in\mathcal{A}$. Furthermore, for all $a\in \mathcal{A}$,
\begin{equation*}
\sup_{\bgtheta\in \bgTheta} \mathbb{E}_{X\sim f_{\bgtheta^*, a}} \{ \norm{\nabla_{\bgtheta} \log f_{\bgtheta, a} (X)}^2 \}<\infty \text{ and } \sup_{\bgtheta\in \bgTheta} \mathbb{E}_{X\sim f_{\bgtheta^*, a}} \{ \norm{\nabla^2_{\bgtheta}\log f_{\bgtheta, a} (X)}_{op} \}<\infty.
\end{equation*}
\end{assumption}
\begin{assumption}\label{ass:3}
The Fisher information matrices satisfy the following conditions:
\begin{equation*}
\mathcal{I}_a( {\bgtheta} )= \mathbb{E}_{X\sim f_{\bgtheta, a}} \left[ \nabla_{\bgtheta} \log f_{\bgtheta, a}(X) \{\nabla_{\bgtheta} \log f_{\bgtheta, a}(X)\}^T \right] = -\mathbb{E}_{X\sim f_{\bgtheta, a}} \left\{ \nabla^2_{\bgtheta} \log f_{\bgtheta, a}(X) \right\},
\end{equation*}
and those Fisher information matrices are continuously differentiable with respect to $\bgtheta$ for all $a\in \mathcal{A}$. Furthermore, $\sum_{a\in \mathcal{A}}\mathcal{I}_a(\bgtheta)$ is positive definite for every $\bgtheta\in \bgTheta$.
\end{assumption}
\begin{assumption}\label{ass:4}
Let $M(\bgtheta;{\bgpi})=\sum_{a\in \mathcal{A}} \pi(a)\mathbb{E}_{X\sim f_{\bgtheta^*, a}} \{\log f_{\bgtheta,a}(X)\}$ for $\bgpi=(\pi(a))_{a\in\mathcal{A}}$. Assume the following uniform law of large numbers holds {for all sequence $\va_n=(a_1,\cdots,a_n)$} such that $a_i$ is measurable with respect to $\mathcal{F}_{i-1}$, for all $1\leq i\leq n$:
\begin{equation}\label{eq:uslln}
\mathbb{P} \left\{
\lim_{n\to \infty} \sup_{\bgtheta \in \bgTheta} 
|l_n(\bgtheta;\va_n)-{ M(\bgtheta;\overline{\bgpi}_n[\va_n])}|  = 0
\right\} = 1,  
\end{equation}
where {$\overline{{\bgpi}}_n[\va_n]=(\overline{\pi}_n(a; \va_n))_{a\in \mathcal{A}}$, and $\overline{\pi}_n(a;\va_n)=\frac{1}{n}|\{ i; a_i=a,1\leq i\leq n\} |$ denotes the empirical frequency that the experiment $a$ is selected up to time $n$.}
\end{assumption}

\begin{assumption}\label{ass:5}
    The criterion function $\mathbb{G}_{\bgtheta}$ takes one of the following forms:
\begin{enumerate}
\item $\mathbb{G}_{\bgtheta}(\cdot)=\Phi_q(\cdot)$ for some $q\geq 0$ where $\Phi_q(\cdot)$ is defined in \eqref{def:Phi_q}, or
\item the function $\mathbb{G}_{\bgtheta}(\cdot): \mathbb{R}^{p\times p}\mapsto \mathbb{R}$ is convex, and it satisfies: for all positive definite matrix $\bgSigma$, $\nabla_{\bgtheta }\nabla\mathbb{G}_{\bgtheta}(\bgSigma)$ and $\nabla^2 \mathbb{G}_{\bgtheta}(\bgSigma)$ are continuous in $(\bgtheta,\bgSigma)$; and for all positive definite matrices satisfying $\blA \succeq \blB$, we have $\mathbb{G}_{\bgtheta}(\blA)\geq \mathbb{G}_{\bgtheta}(\blB)$. Additionally, $\sup_{\blA}\kappa(\nabla \mathbb{G}_{\bgtheta}(\blA))<\infty$ and 
$\lim_{ \lambda_{\max}(\blA) \to \infty }\inf_{\bgtheta \in \bgTheta} \mathbb{G}_{\bgtheta}(\blA)=\infty$.

\end{enumerate}
\end{assumption}
\edef\oldassumptionOne{\the\numexpr\value{assumption}+1}
\setcounter{assumption}{0}
\renewcommand{\theassumption}{\oldassumptionOne\Alph{assumption}}
\begin{assumption}[Reparametrization]\label{ass:6A}
There exist matrices $\{\bm{Z}_a \}_{a\in \mathcal{A}}$ and probability density functions $\{h_{\bm{Z}_a\bgtheta,a}(\cdot) \}_{a\in \mathcal{A}}$ satisfying the following requirements 
\begin{enumerate}
    \item $\bm{Z}_a$ is a matrix of dimension $p_a\times p$ with rank $p_a$ and $f_{\bgtheta,a}(\cdot)=h_{\bm{Z}_a \bgtheta,a}(\cdot)$ for all $a\in\mathcal{A}$. 
    \item Let $\bgxi_a=\bm{Z}_a \bgtheta$ be a reparametrization of $\bgtheta$. Assume that the Fisher information matrix of each experiment $a$ is nonsingular with respect to $\bgxi_a$.
    That is, the compressed Fisher information matrix
\begin{equation*}
\begin{split}
    \mathcal{I}_{\bgxi_a,a}( {\bgxi_a} )&= \mathbb{E}_{X\sim h_{\bgxi_a, a}} \left[ \nabla_{\bgxi_a} \log h_{\bgxi_a, a}(X) \{\nabla_{\bgxi_a} \log h_{\bgxi_a, a}(X)\}^T \right]\\
    &= -\mathbb{E}_{X\sim h_{\bgxi_a, a}} \left\{ \nabla^2_{\bgxi_a } \log h_{\bgxi_a, a}(X) \right\}
\end{split}
\end{equation*}
is nonsingular for all $\bgtheta \in \bgTheta$. 
\end{enumerate}

\end{assumption}
\let\theassumption\origtheassumption
\setcounter{assumption}{6}
\edef\oldassumptionTwo{\the\numexpr\value{assumption}+1}
\setcounter{assumption}{0}
\renewcommand{\theassumption}{\oldassumptionTwo\Alph{assumption}}
\begin{assumption}[Identifiability]\label{ass:7A}
There exists a constant $C>0$ such that for all $\bgtheta\in \bgTheta$, 
\begin{equation}
    D_{\mathrm{KL}}(h_{\bgxi^*_{a},a}\| h_{\bgxi_{a},a} ) \geq  C \norm{ \bgxi^*_{a} - \bgxi_{a} }^2,
\end{equation}
where $\bgxi_a^*=\bm{Z}_a\bgtheta^*$ is the compressed parameter after reparametrization, and $D_{\mathrm{KL}}(h_{\bgxi^*_{a},a}\|h_{\bgxi_{a},a})$ denotes the Kullback–Leibler divergence between the density functions $h_{\bgxi^*_{a},a}$ and $h_{\bgxi_{a},a}$, and is defined as \(D_{\mathrm{KL}}(h_{\bgxi^*_{a},a} \| h_{\bgxi_{a},a}) = \mathbb{E}_{X\sim h_{\bgxi^*_{a},a}} \log\left(\frac{h_{\bgxi^*_{a},a}(X)}{h_{\bgxi_{a},a}(X)}\right).\)

\end{assumption}
\let\theassumption\origtheassumption

We comment on the above regularity conditions. Assumptions \ref{ass:1}, \ref{ass:2}, \ref{ass:3} and \ref{ass:4} are extensions of standard regularity conditions for the consistency of the MLE based on independent and identically distributed (i.i.d.) observations (see, e.g., Chapter 5 of  \cite{van2000asymptotic}). In particular, Assumption \ref{ass:1} ensures the existence of MLE. Assumption \ref{ass:2} requires that the gradient of log-density function associated with each experiment is stochastic Lipschitz and has a bounded second moment. Condition \eqref{equ:hessian_lips} can be replaced by a more relaxed condition:
\begin{equation}\label{cond:relaxed}
\norm{\nabla^2_{\bgtheta}\log f_{\bgtheta_1,a}(X^a)-\nabla^2_{\bgtheta}\log f_{\bgtheta_2,a}(X^a)  }_{op}\leq \Psi_2^a(X^a)  \psi(\norm{\bgtheta_1-\bgtheta_2}),
\end{equation}
where $\psi:[0,\infty)\to [0,\infty)$ is a strictly increasing continuous function such that $\psi(0)=0$.  Assumption \ref{ass:3} requires that the Fisher information matrices are well-behaved. Under this assumption,  each Fisher information matrix $\mathcal{I}_a(\bgtheta)$ may be singular, but their sum is nonsingular. In other words, if we combine all the experiments together, the Fisher information matrix is nonsingular. Assumption \ref{ass:4} requires that the log-likelihood follows the uniform law of large numbers. This assumption can be verified by uniform martingale laws of large numbers (see \cite{rakhlin2015sequential}) in most applications.
Assumption \ref{ass:5} describes the requirement on the criterion function $\mathbb{G}_{\bgtheta}(\cdot)$. Assumptions \ref{ass:6A} and \ref{ass:7A} require that for each experiment $a$, we can reparameterize the model with a new parameter $\bgxi_a$ with possibly lower dimension $p_a\leq p$ such that $\bgxi_a$ is locally identifiable around the true model parameter, and the Fisher information matrix with respect to $\bgxi_a$ is nonsingular. Note that Fisher information with respect to $\bgtheta$ may be singular in this case. 

All the regularity assumptions are easily satisfied in practical problems, including the item selection in CAT and the sequential rank aggregation problem described in Section~\ref{sec:intro}. See Section~\ref{sec:application} for detailed justifications of the assumptions in these applications. Note that \ref{ass:6A} and \ref{ass:7A} can be relaxed to a more general condition, allowing for non-linear model reparameterization. These relaxed conditions are provided below.

\edef\oldassumptionThree{\the\numexpr\value{assumption}+5}
\setcounter{assumption}{1}
\renewcommand{\theassumption}{\oldassumptionThree\Alph{assumption}}
\begin{assumption}\label{ass:6B}
For $Q\subset \mathcal{A}$, define a vector space $V_Q=V_Q(\bgtheta)=\sum_{a\in Q} \mathcal{R}(\mathcal{I}_a(\bgtheta) )$, where $\mathcal{R}(\blA)$ represents the column space of a matrix $\blA$. Assume that the dimension $\operatorname{dim}(V_Q(\bgtheta))$ does not depend on $\bgtheta$, and there exist constants $0<\underline{c}\leq \overline{c}<\infty$, which do not depend on $Q$ and $\bgtheta$, such that for all $Q \subset \mathcal{A}$ and $\bgtheta \in \bgTheta$
\begin{equation}\label{ineq:double c bound}
    \underline{c} \cdot \blP_{V_Q(\bgtheta)} \preceq \sum_{a\in Q} \mathcal{I}_a(\bgtheta) \preceq \overline{c} \cdot \blP_{V_Q(\bgtheta)},
\end{equation}
    where $\blP_{V_Q(\bgtheta)}$ denotes the orthogonal projection matrix onto vector space $V_Q(\bgtheta)$.
\end{assumption}

\let\theassumption\origtheassumption

\edef\oldassumption{\the\numexpr\value{assumption}+5}

\setcounter{assumption}{1}

\renewcommand{\theassumption}{\oldassumption\Alph{assumption}}
\begin{assumption}\label{ass:7B}
Let $\ShatA=\{ {\bgpi}=(\pi(a))_{a\in\mathcal{A}}:  \sum_{a\in \mathcal{A}}\pi(a)=1 \text{ and } \pi(a)\geq 0 \text{ for all } a\in \mathcal{A}\}$ denote the simplex in $\mathbb{R}^{\mathcal{A}}$. Assume that there exists a positive constant $C$ such that for all ${\bgpi} \in \ShatA $ and $\bgtheta\in \bgTheta$, 
\begin{equation}\label{ineq:piD_KL}
     \sum_{a \in \mathcal{A}}\pi(a)D_{\mathrm{KL}}(f_{\bgtheta^*,a}\|f_{\bgtheta,a} )\geq  {C}   \sum_{a\in \mathcal{A}}\pi(a)(\bgtheta-\bgtheta^*)^T \mathcal{I}_a(\bgtheta^*)(\bgtheta-\bgtheta^*),
\end{equation}
where $D_{\mathrm{KL}}(f_{\bgtheta^*,a}\|f_{\bgtheta,a} )$ is the Kullback–Leibler divergence between the density functions $f_{\bgtheta^*,a}$ and $f_{\bgtheta,a}$.
\end{assumption}

\subsection{Main Theoretical Results}\label{sec:main-theory}
In this section, we present the main theoretical results, including the consistency, asymptotic normality and the optimality of the proposed method. Recall that the regularity conditions (Assumptions \ref{ass:1} -- \ref{ass:5}, along with Assumptions \ref{ass:6A} -- \ref{ass:7A} or \ref{ass:6B} -- \ref{ass:7B}) are assumed  throughout the section. 
\subsubsection{Strong Consistency}
We start with the strong consistency of the MLE following \textrm{GI0} or \textrm{GI1}.
\begin{theorem}[Strong consistency]\label{thm:consistency_final}
 Let $\widehat{\bgtheta}_{n}^{\text{ML}}$ be the MLE following the experiment selection rule \textrm{GI0} or \textrm{GI1}, as described in Algorithm~\ref{alg:gi0} and Algorithm~\ref{alg:gi1}. Then,
$$
\lim_{n\to\infty} \widehat{\bgtheta}_{n}^{\text{ML}} = \bgtheta^* \text{ a.s. } \mathbb{P}_*,
$$
where $\mathbb{P}_*$ denotes the  data-generating probability distribution under the true model parameter  $\bgtheta^*$.

\end{theorem}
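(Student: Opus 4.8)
The plan is to reduce almost-sure consistency to three ingredients: a uniform law of large numbers replacing the random log-likelihood by its deterministic profile $M(\cdot\,;\overline{\bgpi}_n)$; an \emph{automatic-exploration} estimate guaranteeing that the greedy rule keeps the accumulated Fisher information non-degenerate; and the identifiability condition converting a vanishing likelihood gap into a vanishing parameter error. Writing $g_n=M(\bgtheta^*;\overline{\bgpi}_n)-M(\widehat{\bgtheta}_n^{\text{ML}};\overline{\bgpi}_n)$ and recalling that $\bgtheta^*$ maximizes $M(\cdot\,;\bgpi)$ for every $\bgpi$ (their difference being the nonnegative mixture $\sum_a\pi(a)D_{\mathrm{KL}}(f_{\bgtheta^*,a}\|f_{\bgtheta,a})$), the entire argument comes down to showing $g_n\to 0$ while $g_n$ controls $\|\widehat{\bgtheta}_n^{\text{ML}}-\bgtheta^*\|^2$ with a constant bounded away from zero.

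First I would record the two easy halves. By Assumption~\ref{ass:4}, $\sup_{\bgtheta\in\bgTheta}|l_n(\bgtheta;\va_n)-M(\bgtheta;\overline{\bgpi}_n)|\to 0$ a.s.; since $l_n(\widehat{\bgtheta}_n^{\text{ML}})\ge l_n(\bgtheta^*)$ this forces $0\le g_n\le 2\sup_{\bgtheta}|l_n-M|\to 0$ a.s. For the reverse control, Assumption~\ref{ass:7A}/\ref{ass:7B} supplies the quadratic lower bound
\begin{equation*}
g_n=\sum_{a\in\mathcal{A}}\overline{\pi}_n(a)\,D_{\mathrm{KL}}(f_{\bgtheta^*,a}\|f_{\widehat{\bgtheta}_n^{\text{ML}},a})\ \ge\ C\,(\widehat{\bgtheta}_n^{\text{ML}}-\bgtheta^*)^T\Big(\sum_{a\in\mathcal{A}}\overline{\pi}_n(a)\mathcal{I}_a(\bgtheta^*)\Big)(\widehat{\bgtheta}_n^{\text{ML}}-\bgtheta^*),
\end{equation*}
so consistency follows once $\liminf_n\lambda_{\min}\big(\sum_a\overline{\pi}_n(a)\mathcal{I}_a(\bgtheta^*)\big)>0$ a.s. A key simplification decouples the not-yet-consistent estimate from the target: by Assumption~\ref{ass:6A} the compressed information is uniformly nonsingular on the compact $\bgTheta$, giving $\underline{c}\,\blP_{V_a}\preceq\mathcal{I}_a(\bgtheta)\preceq\overline{c}\,\blP_{V_a}$ with the fixed subspaces $V_a=\mathcal{R}(\bm{Z}_a^T)$ (more generally, Assumption~\ref{ass:6B} yields the same two-sided bound with subspaces $V_a(\bgtheta)=\mathcal{R}(\mathcal{I}_a(\bgtheta))$ of $\bgtheta$-independent dimension). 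Hence $\sum_a\overline{\pi}_n(a)\mathcal{I}_a(\bgtheta)$ is, uniformly in $\bgtheta$, comparable to $\sum_a\overline{\pi}_n(a)\blP_{V_a}$, and continuity in $\bgtheta$ lets me reduce the target bound to one evaluated at $\widehat{\bgtheta}_n^{\text{ML}}$, namely $\sup_n\lambda_{\max}(\widehat{\bgSigma}_n)<\infty$.

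The crux is this last bound, and here I would exploit the coercive, monotone, convex structure of $\mathbb{G}_{\bgtheta}$ from Assumption~\ref{ass:5}. Viewing $\Psi_{\bgtheta}(\bgpi)=\mathbb{G}_{\bgtheta}[\{\sum_a\pi(a)\mathcal{I}_a(\bgtheta)\}^{-1}]$ as a convex function on the simplex $\ShatA$, the recursion $\overline{\bgpi}_{n+1}=\tfrac{n}{n+1}\overline{\bgpi}_n+\tfrac{1}{n+1}\ble_{a_{n+1}}$ is exactly a Frank--Wolfe (conditional-gradient) scheme with predetermined step $1/(n+1)$: \textrm{GI1} selects the simplex vertex minimizing the linearized objective \eqref{eq:first-approximation}, while \textrm{GI0} is its exact greedy counterpart. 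Because $\Psi_{\bgtheta}(\bgpi)\to\infty$ as $\bgpi$ approaches any face on which $\sum_a\pi(a)\mathcal{I}_a(\bgtheta)$ degenerates (the coercivity $\lim_{\lambda_{\max}(\blA)\to\infty}\inf_{\bgtheta}\mathbb{G}_{\bgtheta}(\blA)=\infty$ together with monotonicity), whereas the minimum of $\Psi_{\bgtheta}$ is attained in the interior, the Frank--Wolfe descent produces a strictly negative drift whenever $\overline{\bgpi}_n$ moves toward a degenerate face. The spanning property $\sum_aV_a=\mathbb{R}^p$ from Assumption~\ref{ass:3}, made quantitative by $\inf_{\bgtheta}\lambda_{\min}(\sum_a\mathcal{I}_a(\bgtheta))>0$, guarantees that some vertex always charges the weak direction with information bounded below, so the linearized step genuinely decreases $\Psi$. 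I would package this as a Robbins--Siegmund almost-supermartingale for $\Psi_{\widehat{\bgtheta}_n^{\text{ML}}}(\overline{\bgpi}_n)=\mathbb{G}_{\widehat{\bgtheta}_n^{\text{ML}}}(\widehat{\bgSigma}_n)$, whose convergence yields both the boundedness of $\widehat{\bgSigma}_n$ needed here and, as a by-product, the convergence of $\overline{\bgpi}_n$ used later for optimality.

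The main obstacle is precisely this exploration step, for three reasons. The individual $\mathcal{I}_a(\bgtheta)$ may be singular, so the drift must be run on the family of subspaces controlled by Assumptions~\ref{ass:6A}--\ref{ass:6B} rather than on full-rank matrices. The scheme is driven by the not-yet-consistent $\widehat{\bgtheta}_n^{\text{ML}}$, so both $\mathbb{G}_{\widehat{\bgtheta}_n^{\text{ML}}}$ and $\mathcal{I}_a(\widehat{\bgtheta}_n^{\text{ML}})$ carry estimation error that enters the drift and the fluctuation term of the almost-supermartingale; the uniform comparability above, the continuity of $\nabla\mathbb{G}_{\bgtheta}$, and the bounded condition number $\sup_{\blA}\kappa(\nabla\mathbb{G}_{\bgtheta}(\blA))<\infty$ from Assumption~\ref{ass:5} are what keep this error from destroying the descent. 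Finally, the averaging normalization $1/(n+1)$ makes the information an average rather than a monotone sum, so I must combine eigenvalue-perturbation bounds with the $O(1/n)$ step size to control the quadratic Frank--Wolfe remainder and verify the summability hypotheses of the Robbins--Siegmund inequality. Once the almost-supermartingale is established, its convergence delivers $\sup_n\lambda_{\max}(\widehat{\bgSigma}_n)<\infty$, and the chain of the first two paragraphs then gives $\widehat{\bgtheta}_n^{\text{ML}}\to\bgtheta^*$ a.s.\ $\mathbb{P}_*$.
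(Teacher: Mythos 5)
Your outer skeleton is the same as the paper's: reduce consistency to (i) the uniform LLN of Assumption~\ref{ass:4}, (ii) the identifiability bound of Assumption~\ref{ass:7B}, and (iii) a lower bound $\liminf_n\lambda_{\min}\big(\mathcal{I}^{\overline{\bgpi}_n}(\bgtheta^*)\big)>0$ a.s. Steps (i) and (ii) in your write-up are correct and essentially identical to the paper's general consistency result (Theorem~\ref{thm:GI0-GI1consistent}). The gap is in step (iii), which you correctly identify as the crux but do not actually establish. Your plan is to run a Frank--Wolfe descent on $\Psi_{\widehat{\bgtheta}_n}(\bgpi)=\mathbb{G}_{\widehat{\bgtheta}_n}\big[\{\mathcal{I}^{\bgpi}(\widehat{\bgtheta}_n)\}^{-1}\big]$ and package it as a Robbins--Siegmund almost-supermartingale. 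This is circular. The objective of your scheme changes at every step (it is indexed by $\widehat{\bgtheta}_n^{\text{ML}}$), so the supermartingale inequality acquires perturbation terms of the form $\Psi_{\widehat{\bgtheta}_{n+1}}(\overline{\bgpi}_{n+1})-\Psi_{\widehat{\bgtheta}_{n}}(\overline{\bgpi}_{n+1})$, controlled only by $\|\widehat{\bgtheta}_{n+1}-\widehat{\bgtheta}_n\|$ (or $\|\widehat{\bgtheta}_n-\bgtheta^*\|$). Before exploration is established, the log-likelihood can be flat in some direction, so the MLE may jump by $O(1)$ between steps; these perturbations are not summable, and the hypotheses of any Robbins--Siegmund-type lemma (cf.\ Lemma~\ref{lem:RS lem}) cannot be verified. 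Indeed, in the paper this machinery is deployed only \emph{after} consistency is proved (Lemma~\ref{lem:replacement principle after} and Theorem~\ref{thm:empirical pi as converge}), where the error $c_{n-1}=C/n^2+C\|\widehat{\bgtheta}_{n-1}-\bgtheta^*\|/n$ is made summable via Corollary~\ref{cor:bound}, which itself presupposes consistency.

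There is a second, independent failure: the Frank--Wolfe descent inequality with an $O(1/n^2)$ remainder requires a curvature bound on the segment traversed, but near a degenerate face both $\Psi_{\bgtheta}$ and its derivatives blow up (this blow-up is exactly the coercivity you invoke). Ruling out approach to those faces is the statement to be proven, so you may not assume the bounded-curvature region; a naive barrier computation shows a single step of size $1/n$ can inflate $\Psi$ by a multiplicative factor $1+O(1/n)$, which is not summable, so coercivity alone gives no contradiction. The paper resolves both problems at once by making the exploration claim \emph{deterministic and uniform over arbitrary sequences} $\bgtheta_n\in\bgTheta$ (Theorem~\ref{thm:selection bound}, yielding Proposition~\ref{prop:exploration}): using order statistics of the selection counts, a decomposition $\widehat{\bgSigma}_n^{-1}=\blA+\blE$ into dominant and small information, and Davis--Kahan perturbation bounds (Lemmas~\ref{lem:Li-Bound}, \ref{lem:Li-Bound2}, \ref{lem:m^n_a norm bound}), it shows that whenever the count ratios become too extreme, the greedy criterion's directional derivative forces selection of an experiment outside the span of the over-sampled ones, giving $\inf_{n\ge n_0} n_I/n\ge C>0$ with $C$ independent of the $\bgtheta_n$ sequence. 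That uniformity is precisely what breaks the circularity your proposal runs into; without an argument of this type (or some other consistency-free control of exploration), your proof of step (iii), and hence of the theorem, is incomplete.
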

Theorem \ref{thm:consistency_final} suggests that the MLE will be close to the true model parameter with a large sample size following \textrm{GI0} or \textrm{GI1}.

\subsubsection{Limiting Selection Frequency and Asymptotic Normality of MLE}
Let
$$
\mathcal{I}^{{\bgpi}}(\bgtheta)=\sum_{a\in \mathcal{A}} \pi(a) \mathcal{I}_a( {\bgtheta}),
$$
be the weighted Fisher information associated with a proportion vector ${\bgpi}\in\ShatA$. {The  distribution of MLE depends on the empirical frequency vector $\overline{{\bgpi}}_n$, which is defined by \eqref{def:pi_n}.
}

We first present an auxiliary asymptotic normality result for the MLE following a general active experiment selection rule that is not necessarily \textrm{GI0} or \textrm{GI1}.
\begin{theorem}[Asymptotic normality following general experiment selection rules]\label{thm:GI0-GI1AN}
Let $\widehat\bgtheta^{ML}_n$ be the MLE calculated according to \eqref{def:MLE} following an active experiment selection rule that is not necessarily \textrm{GI0} or \textrm{GI1}. {Let $\overline{{\bgpi}}_n$ be the corresponding empirical frequency vector.}

Assume that there exists ${\bgpi}\in \ShatA$ such that $\overline{{\bgpi}}_n$ converges to ${\bgpi}$ in probability $\mathbb{P}_*$ as $n\to\infty$, and $\mathcal{I}^{{\bgpi}}(\bgtheta^*)$ is nonsingular. Then, 
     \begin{equation}
         \sqrt{n} (\widehat{\bgtheta}_n^{\text{ML}} - \bgtheta^*) \inD N_p\Big(\bm{0}_p,\big\{\mathcal{I}^{{\bgpi}}(\bgtheta^*)\big\}^{-1} \Big) \text{ as } n\to\infty,
     \end{equation}
     where `$\inD$' denotes the convergence in distribution. 
\end{theorem}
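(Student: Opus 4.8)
The plan is to run the classical three-step argument for asymptotic normality of an $M$-estimator—first-order expansion of the score, convergence of the Hessian, and a central limit theorem for the score—while replacing each i.i.d.\ limit theorem by a martingale analogue to accommodate the adaptive, $\mathcal{F}_{i-1}$-measurable choice of $a_i$. I would begin by upgrading the hypothesis $\overline{\bgpi}_n\to\bgpi$ to consistency of $\widehat{\bgtheta}_n^{\text{ML}}$. Because $\mathcal{A}$ is finite and $\bgtheta\mapsto\mathbb{E}_{f_{\bgtheta^*,a}}[\log f_{\bgtheta,a}]$ is bounded on the compact $\bgTheta$ (Assumption \ref{ass:2}), $\sup_{\bgtheta}|M(\bgtheta;\overline{\bgpi}_n)-M(\bgtheta;\bgpi)|\leq\sum_{a\in\mathcal{A}}|\overline{\pi}_n(a)-\pi(a)|\,\max_{a,\bgtheta}|\mathbb{E}_{f_{\bgtheta^*,a}}\log f_{\bgtheta,a}|\to 0$ in $\mathbb{P}_*$, which together with the uniform law of large numbers (Assumption \ref{ass:4}) gives $\sup_{\bgtheta}|l_n(\bgtheta)-M(\bgtheta;\bgpi)|\to 0$. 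The limit $M(\cdot;\bgpi)$ is uniquely maximized at $\bgtheta^*$ with quadratic separation, since $M(\bgtheta^*;\bgpi)-M(\bgtheta;\bgpi)=\sum_{a}\pi(a)D_{\mathrm{KL}}(f_{\bgtheta^*,a}\|f_{\bgtheta,a})\geq C(\bgtheta-\bgtheta^*)^T\mathcal{I}^{\bgpi}(\bgtheta^*)(\bgtheta-\bgtheta^*)\geq C\lambda_{\min}(\mathcal{I}^{\bgpi}(\bgtheta^*))\norm{\bgtheta-\bgtheta^*}^2$ by the identifiability condition (Assumption \ref{ass:7B}, or its local counterpart \ref{ass:7A}) and the assumed nonsingularity of $\mathcal{I}^{\bgpi}(\bgtheta^*)$; the argmax theorem then yields $\widehat{\bgtheta}_n^{\text{ML}}\to\bgtheta^*$ in $\mathbb{P}_*$.

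With consistency and the interiority of $\bgtheta^*$ (Assumption \ref{ass:1}) in hand, the stationarity condition $\nabla l_n(\widehat{\bgtheta}_n^{\text{ML}})=\bm{0}_p$ holds eventually, and a mean-value expansion gives $\sqrt{n}(\widehat{\bgtheta}_n^{\text{ML}}-\bgtheta^*)=-\bar{\blH}_n^{-1}\sqrt{n}\,\nabla l_n(\bgtheta^*)$ with $\bar{\blH}_n=\int_0^1\nabla^2 l_n(\bgtheta^*+t(\widehat{\bgtheta}_n^{\text{ML}}-\bgtheta^*))\,dt$. I would show $\bar{\blH}_n\to-\mathcal{I}^{\bgpi}(\bgtheta^*)$ in probability in two pieces. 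At the fixed point, $\nabla^2 l_n(\bgtheta^*)=-\mathcal{I}^{\overline{\bgpi}_n}(\bgtheta^*)+\frac1n\sum_{i}\{\nabla^2\log f_{\bgtheta^*,a_i}(X_i)+\mathcal{I}_{a_i}(\bgtheta^*)\}$, where the first term converges to $-\mathcal{I}^{\bgpi}(\bgtheta^*)$ (continuity and boundedness of each $\mathcal{I}_a$, plus $\overline{\bgpi}_n\to\bgpi$) and the second is a normalized sum of mean-zero martingale differences that vanishes by a martingale law of large numbers under the moment bounds of Assumption \ref{ass:2}. The remaining gap is controlled by the Hessian-Lipschitz bound \eqref{equ:hessian_lips}: $\norm{\bar{\blH}_n-\nabla^2 l_n(\bgtheta^*)}_{op}\leq\big(\frac1n\sum_i\Psi_2^{a_i}(X_i)\big)\norm{\widehat{\bgtheta}_n^{\text{ML}}-\bgtheta^*}\to 0$. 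Nonsingularity of $\mathcal{I}^{\bgpi}(\bgtheta^*)$ then gives $\bar{\blH}_n^{-1}\to-\{\mathcal{I}^{\bgpi}(\bgtheta^*)\}^{-1}$.

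The crux, and the step I expect to be hardest, is the central limit theorem for $\sqrt{n}\,\nabla l_n(\bgtheta^*)=n^{-1/2}\sum_{i=1}^n\bgxi_i$ with $\bgxi_i=\nabla\log f_{\bgtheta^*,a_i}(X_i)$. Since $a_i$ is $\mathcal{F}_{i-1}$-measurable and the score has conditional mean zero under the true parameter, $\{\bgxi_i\}$ is a martingale difference sequence with conditional covariance $\mathbb{E}[\bgxi_i\bgxi_i^T\mid\mathcal{F}_{i-1}]=\mathcal{I}_{a_i}(\bgtheta^*)$, so its predictable quadratic variation is $n^{-1}\sum_i\mathcal{I}_{a_i}(\bgtheta^*)=\mathcal{I}^{\overline{\bgpi}_n}(\bgtheta^*)\to\mathcal{I}^{\bgpi}(\bgtheta^*)$ in $\mathbb{P}_*$. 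It is precisely the assumed convergence of $\overline{\bgpi}_n$ to a \emph{deterministic} limit $\bgpi$ that forces the quadratic variation to a constant matrix, hence producing an ordinary (not mixed) Gaussian limit. To apply the multivariate martingale CLT (e.g., Hall--Heyde) through the Cram\'er--Wold device, I must additionally verify a conditional Lindeberg condition; this holds because, conditionally, $\bgxi_i$ is the score of one of the finitely many fixed laws $\{f_{\bgtheta^*,a}\}_{a\in\mathcal{A}}$, so $\frac1n\sum_i\mathbb{E}[\norm{\bgxi_i}^2 I(\norm{\bgxi_i}>\epsilon\sqrt{n})\mid\mathcal{F}_{i-1}]\leq\max_{a\in\mathcal{A}}\mathbb{E}_{f_{\bgtheta^*,a}}[\norm{\nabla\log f_{\bgtheta^*,a}}^2 I(\norm{\nabla\log f_{\bgtheta^*,a}}>\epsilon\sqrt{n})]\to 0$ by dominated convergence and the finite second moments of Assumption \ref{ass:2}. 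Thus $\sqrt{n}\,\nabla l_n(\bgtheta^*)\inD N_p(\bm{0}_p,\mathcal{I}^{\bgpi}(\bgtheta^*))$.

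Finally I would combine the two limits by Slutsky's theorem: since $\bar{\blH}_n^{-1}\to-\{\mathcal{I}^{\bgpi}(\bgtheta^*)\}^{-1}$ in probability and $\sqrt{n}\,\nabla l_n(\bgtheta^*)\inD N_p(\bm{0}_p,\mathcal{I}^{\bgpi}(\bgtheta^*))$, the product converges in distribution to $\{\mathcal{I}^{\bgpi}(\bgtheta^*)\}^{-1}N_p(\bm{0}_p,\mathcal{I}^{\bgpi}(\bgtheta^*))$, whose law is $N_p(\bm{0}_p,\{\mathcal{I}^{\bgpi}(\bgtheta^*)\}^{-1}\mathcal{I}^{\bgpi}(\bgtheta^*)\{\mathcal{I}^{\bgpi}(\bgtheta^*)\}^{-1})=N_p(\bm{0}_p,\{\mathcal{I}^{\bgpi}(\bgtheta^*)\}^{-1})$, as claimed. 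The principal technical subtlety throughout is that adaptivity breaks independence, so every limit must be routed through the martingale structure; the delicate point is matching the conditional-variance limit to the deterministic $\mathcal{I}^{\bgpi}(\bgtheta^*)$, which is exactly what the hypothesis on $\overline{\bgpi}_n$ buys us.
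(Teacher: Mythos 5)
Your proposal is correct and follows essentially the same route as the paper's proof: consistency obtained from the ULLN and KL-separation, the score equation with a Taylor expansion at $\bgtheta^*$, a martingale CLT (Cram\'er--Wold plus a conditional Lindeberg condition) for the score whose predictable quadratic variation converges to $\mathcal{I}^{\bgpi}(\bgtheta^*)$ precisely because $\overline{\bgpi}_n$ has a deterministic limit, a martingale LLN together with the Lipschitz--Hessian bound for the curvature term, and finally Slutsky's theorem. The only cosmetic differences are that you inline the consistency argument (the paper invokes its consistency theorem, whose proof is the same ULLN/KL argument you give) and that you control the remainder via an integral-form mean-value Hessian rather than the paper's explicit Lagrange remainder with a self-bounding inequality; both are standard equivalent devices.
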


The above Theorem~\ref{thm:GI0-GI1AN} extends the classic asymptotic normality results for MLE to the sequential setting with active experiment selection. It roughly states that if the frequency of the selected experiment approximates a limiting proportion as the sample size grows, and the Fisher information weighted by the limiting proportion is nonsingular, then the MLE is asymptotically normal and the asymptotic covariance matrix is the inverted weighted Fisher information.
Next, we will show that if we follow the experiment selection rule \textrm{GI0} or \textrm{GI1}, then the frequency for the selected experiments is approaching a limiting proportion that is determined by the criterion function $\mathbb{G}_{\bgtheta}$. For this purpose, we first define a function $\mathbb{F}_{\bgtheta}: \ShatA\to\mathbb{R}$,
\begin{equation}\label{def:F_theta}
    \mathbb{F}_{\bgtheta} ({\bgpi})=\mathbb{G}_{\bgtheta}\Big[\Big\{\sum_{a \in \mathcal{A} } \pi(a) \mathcal{I}_a( {\bgtheta}  ) \Big\}^{-1}\Big].
\end{equation}

\begin{theorem}[Limiting experiment selection frequency following \textrm{GI0} or \textrm{GI1}]\label{thm:empirical pi as converge}
Assume that $\mathbb{F}_{\bgtheta^*} ({\bgpi})$ has a unique minimizer, denoted by ${\bgpi}^*$. That is, ${\bgpi}^*=\arg\min_{{\bgpi}\in\ShatA}  \mathbb{F}_{\bgtheta^*} ({\bgpi})$. 
Then,  \textrm{GI0} and \textrm{GI1} both satisfy
\begin{equation}
    \lim_{n\to \infty}\overline{{\bgpi}}_n = {\bgpi}^*\ a.s.\ \mathbb{P}_{*},
\end{equation}
{where $\overline{{\bgpi}}_n$ is the corresponding empirical frequency vector.} Moreover, for a general function $\mathbb{F}_{\bgtheta^*}(\cdot)$ whose minimizer is not necessarily unique, we have
\begin{equation}
\lim_{n\to\infty}n^{\beta}\{\mathbb{F}_{\bgtheta^*}(\overline{{\bgpi}}_n )- \min_{{\bgpi}\in \ShatA }\mathbb{F}_{\bgtheta^*}( {\bgpi} )\}=0\ a.s.\ \mathbb{P}_{*}.
\end{equation}
for all $0\leq \beta<1/2$, given that \textrm{GI0} or \textrm{GI1} is used as the experiment selection rule.

\end{theorem}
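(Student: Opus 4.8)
The plan is to recognize the empirical frequency update as a Frank–Wolfe (conditional gradient) iteration on the simplex $\ShatA$ with step size $\gamma_n = 1/(n+1)$, applied to the slowly varying convex objective $\mathbb{F}_{\widehat{\bgtheta}_n^{\text{ML}}}$, and to exploit the almost sure convergence $\widehat{\bgtheta}_n^{\text{ML}}\to\bgtheta^*$ from Theorem~\ref{thm:consistency_final} to reduce the analysis to a perturbed descent recursion for the fixed target $\mathbb{F}_{\bgtheta^*}$.

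First I would establish the structural properties of $\mathbb{F}_{\bgtheta}$. Since $\bgpi\mapsto \mathcal{I}^{\bgpi}(\bgtheta)$ is affine, the matrix inverse is operator convex, and $\mathbb{G}_{\bgtheta}$ is convex and order preserving (Assumption~\ref{ass:5}), the composition $\bgpi\mapsto\mathbb{F}_{\bgtheta}(\bgpi)$ is convex on the region where $\mathcal{I}^{\bgpi}(\bgtheta)$ is nonsingular; joint continuity of $\nabla^2\mathbb{G}_{\bgtheta}$ and $\nabla_{\bgtheta}\nabla\mathbb{G}_{\bgtheta}$ further yields that $\nabla_{\bgpi}\mathbb{F}_{\bgtheta}$ is Lipschitz in $\bgpi$ and in $\bgtheta$, uniformly on compact subsets of the nonsingular region and for $\bgtheta$ near $\bgtheta^*$. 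I would then verify that both rules perform a Frank–Wolfe step: direct differentiation gives $\partial \mathbb{F}_{\bgtheta}/\partial\pi(a) = -\tr[\nabla\mathbb{G}_{\bgtheta}(\{\mathcal{I}^{\bgpi}\}^{-1})\{\mathcal{I}^{\bgpi}\}^{-1}\mathcal{I}_a\{\mathcal{I}^{\bgpi}\}^{-1}]$, so GI1 selects exactly the vertex $e_{a_{n+1}} = \arg\min_{e_a}\innerpoduct{\nabla_{\bgpi}\mathbb{F}_{\widehat{\bgtheta}_n^{\text{ML}}}(\overline{\bgpi}_n)}{e_a}$, while GI0 selects the vertex with smallest exact one-step value $\mathbb{F}_{\widehat{\bgtheta}_n^{\text{ML}}}(\overline{\bgpi}^a_{n+1})=\mathbb{G}_{\widehat{\bgtheta}_n^{\text{ML}}}[\{\mathcal{I}(\widehat{\bgtheta}_n^{\text{ML}};\va_n,a)\}^{-1}]$. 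By the Taylor expansion in \eqref{eq:first-approximation} the two choices differ only through an $O(\gamma_n^2)$ curvature term, so they can be handled uniformly. In either case $\overline{\bgpi}_{n+1} = \overline{\bgpi}_n + \gamma_n(e_{a_{n+1}} - \overline{\bgpi}_n)$, and, crucially, $\overline{\bgpi}_{n+1}$ is $\mathcal{F}_n$-measurable, so the frequency recursion is pathwise deterministic given the data.

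The core is a descent inequality. Using smoothness and the Frank–Wolfe optimality of $e_{a_{n+1}}$ for $\mathbb{F}_{\widehat{\bgtheta}_n^{\text{ML}}}$, then transferring from $\widehat{\bgtheta}_n^{\text{ML}}$ to $\bgtheta^*$ (the gradient discrepancy is $O(\norm{\widehat{\bgtheta}_n^{\text{ML}}-\bgtheta^*})$ and enters only against the increment of size $\gamma_n$), I would obtain, with $h_n := \mathbb{F}_{\bgtheta^*}(\overline{\bgpi}_n)-\min_{\bgpi}\mathbb{F}_{\bgtheta^*}(\bgpi)$,
\begin{equation*}
h_{n+1}\le (1-\gamma_n)h_n + \overline{C}\,\gamma_n\norm{\widehat{\bgtheta}_n^{\text{ML}}-\bgtheta^*} + \overline{C}\,\gamma_n^2,
\end{equation*}
where I have used the convexity fact that the Frank–Wolfe gap dominates the suboptimality $h_n$. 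Writing $u_n := n h_n$ turns this into $u_{n+1}\le u_n + \overline{C}\norm{\widehat{\bgtheta}_n^{\text{ML}}-\bgtheta^*} + \overline{C}/(n+1)$; since $\norm{\widehat{\bgtheta}_n^{\text{ML}}-\bgtheta^*}\to 0$ a.s., the partial sums are $o(n)$, giving $h_n\to 0$ and hence the general (non-unique minimizer) assertion with $\beta=0$. To reach every $\beta<1/2$, I would invoke an almost sure rate $\norm{\widehat{\bgtheta}_n^{\text{ML}}-\bgtheta^*}=O(n^{-1/2+\delta})$ (a law-of-the-iterated-logarithm refinement of the martingale central limit machinery underlying Theorem~\ref{thm:GI0-GI1AN}), whence $u_n=O(n^{1/2+\delta})$ and $n^\beta h_n\to 0$. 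Finally, for a unique minimizer $\bgpi^*$, I would combine $h_n\to 0$ with coercivity and continuity of $\mathbb{F}_{\bgtheta^*}$: every subsequential limit of the precompact sequence $\overline{\bgpi}_n$ is a minimizer, hence equals $\bgpi^*$, so $\overline{\bgpi}_n\to\bgpi^*$ almost surely.

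I expect the main obstacle to be confining the iterates to a compact subset of $\{\bgpi:\mathcal{I}^{\bgpi}(\bgtheta^*)\succ 0\}$, bounded away from the singular boundary, on which the Lipschitz constants above are finite. This is where the coercivity condition $\lim_{\lambda_{\max}(\blA)\to\infty}\inf_{\bgtheta}\mathbb{G}_{\bgtheta}(\blA)=\infty$ in Assumption~\ref{ass:5} plays its role: it makes $\mathbb{F}_{\bgtheta}$ act as a barrier, so that the value-decreasing greedy steps automatically steer $\overline{\bgpi}_n$ away from experiment combinations with near-singular information — the automatic-exploration/regularization phenomenon advertised in the introduction. Making this precise, so that the descent recursion holds globally in $n$ rather than only once the iterates are known to be well inside the interior, together with establishing the almost sure estimation rate needed for the $\beta<1/2$ conclusion, is the delicate part of the argument.
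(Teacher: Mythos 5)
Your exploitation analysis is essentially the paper's: the Jensen/Taylor descent inequality you write as $h_{n+1}\le(1-\gamma_n)h_n+\overline{C}\gamma_n\norm{\widehat{\bgtheta}_n^{\text{ML}}-\bgtheta^*}+\overline{C}\gamma_n^2$ is exactly Lemmas~\ref{lem:replacement principle before} and \ref{lem:replacement principle after}, and your telescoping of $u_n=nh_n$ plays the role of the modified Robbins--Siegmund theorem (Lemma~\ref{lem:RS lem}). However, there are two genuine gaps. The first and most serious is the confinement step, which you acknowledge but propose to handle by a ``barrier'' argument: coercivity of $\mathbb{G}_{\bgtheta}$ steering value-decreasing steps away from the singular boundary. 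This argument is circular as stated and cannot be completed in that form. The descent inequality itself is only valid once you know $\overline{\bgpi}_n\in K_U$ for a fixed $U>0$, because the Lipschitz constants of $\nabla_{\bgpi}\mathbb{F}_{\bgtheta}$ and $\nabla_{\bgtheta}\nabla_{\bgpi}\mathbb{F}_{\bgtheta}$ blow up as $\mathcal{I}^{\bgpi}$ approaches singularity; and you cannot bound the value sequence by convexity either, since $\mathbb{F}_{\widehat{\bgtheta}_n}(\overline{\bgpi}_{n+1})\le(1-\gamma_n)\mathbb{F}_{\widehat{\bgtheta}_n}(\overline{\bgpi}_n)+\gamma_n\mathbb{F}_{\widehat{\bgtheta}_n}(\delta_{a_{n+1}})$ is useless when $\mathcal{I}_{a_{n+1}}$ is singular (then $\mathbb{F}_{\widehat{\bgtheta}_n}(\delta_{a_{n+1}})=\infty$), and the crude monotonicity bound $\mathbb{F}_{\widehat{\bgtheta}_n}(\overline{\bgpi}_{n+1})\le\mathbb{G}_{\widehat{\bgtheta}_n}\big(\tfrac{n+1}{n}\{\mathcal{I}^{\overline{\bgpi}_n}(\widehat{\bgtheta}_n)\}^{-1}\big)$ only gives a multiplicative $(1+1/n)$ loss per step, whose cumulative product is unbounded. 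The paper breaks this circularity with an entirely separate, non-variational argument (Proposition~\ref{prop:exploration} / Theorem~\ref{thm:selection bound}, proved via the decomposition $\widehat{\bgSigma}_n^{-1}=\blA+\blE$, Davis--Kahan perturbation bounds, and the iterative Lemmas~\ref{lem:Li-Bound}--\ref{lem:m^n_a norm bound}), showing that GI0/GI1 keep $n_I/n$ bounded below for \emph{any} sequence $\bgtheta_n\in\bgTheta$, estimated or not. That result is the technical heart of the theorem, and your proposal does not contain a substitute for it.

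The second gap is the ingredient you invoke for $\beta<1/2$: an almost sure rate $\norm{\widehat{\bgtheta}_n^{\text{ML}}-\bgtheta^*}=O(n^{-1/2+\delta})$ described as a ``law-of-the-iterated-logarithm refinement'' of the CLT machinery. The martingale CLT underlying Theorem~\ref{thm:GI0-GI1AN} gives distributional, not pathwise, rates; an LIL for the MLE under adaptive design would require a martingale LIL for the score plus almost sure control of the Hessian inverse and Taylor remainders, none of which is established (or assumed available) in the paper, and it is not immediate under the second-moment conditions of Assumption~\ref{ass:2}. The paper instead proves the strictly weaker but sufficient summability statement $\sum_n n^{-s}\norm{\sqrt{n}(\widehat{\bgtheta}_n^{\text{ML}}-\bgtheta^*)}^t<\infty$ a.s.\ for $s>1$ (Corollary~\ref{cor:bound}), obtained from a modified Kolmogorov three-series theorem (Lemma~\ref{lem:K two-series theorem}) applied on the good events $D_n$ where the score controls the estimation error; taking $t=1$, $s=3/2-\beta$ gives precisely $\sum_n n^{\beta-1}\norm{\widehat{\bgtheta}_n^{\text{ML}}-\bgtheta^*}<\infty$, which is what your telescoped recursion needs. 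If you replace your LIL claim by this summability argument, and supply the exploration result above, your proof goes through and coincides in substance with the paper's.
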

The asymptotic normality of the MLE following \textrm{GI0} or \textrm{GI1} is proved by combining the above two theorems. We summarize this result in the next theorem.
\begin{theorem}[Asymptotic normality following \textrm{GI0} or \textrm{GI1}]\label{thm:Asy_Normal_final}
Let $\widehat{\bgtheta}_{n}^{\text{ML}}$ be the MLE following the experiment selection rule \textrm{GI0} or \textrm{GI1}, as described in Algorithm~\ref{alg:gi0} and Algorithm~\ref{alg:gi1}. Assume $\mathbb{F}_{\bgtheta^*} ({\bgpi})$ has a unique minimizer ${\bgpi}^*$. Then, 
     \begin{equation}
         \sqrt{n} (\widehat{\bgtheta}_{n}^{\text{ML}} -\bgtheta^*) \inD N_p\Big(\bm{0}_p,\big\{  \mathcal{I}^{{\bgpi}^*}(\bgtheta^*)\big\}^{-1} \Big).
     \end{equation}
\end{theorem}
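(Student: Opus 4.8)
The plan is to obtain Theorem~\ref{thm:Asy_Normal_final} as a direct composition of Theorem~\ref{thm:empirical pi as converge} and Theorem~\ref{thm:GI0-GI1AN}. The asymptotic normality for a \emph{general} selection rule is already available from Theorem~\ref{thm:GI0-GI1AN} provided two ingredients hold: the empirical frequency $\overline{\bgpi}_n$ converges in probability to some $\bgpi$, and the weighted Fisher information $\mathcal{I}^{\bgpi}(\bgtheta^*)$ at that limit is nonsingular. So the only genuine work here is to produce a specific $\bgpi$ satisfying both requirements when the rule is \textrm{GI0} or \textrm{GI1}, and the natural candidate is the minimizer $\bgpi^*$ of $\mathbb{F}_{\bgtheta^*}$ defined in \eqref{def:F_theta}.

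First I would invoke Theorem~\ref{thm:empirical pi as converge}: since $\mathbb{F}_{\bgtheta^*}(\bgpi)$ is assumed to have a unique minimizer $\bgpi^*$, that theorem gives $\overline{\bgpi}_n \to \bgpi^*$ almost surely under $\mathbb{P}_*$ when \textrm{GI0} or \textrm{GI1} is used. Almost-sure convergence implies convergence in probability, so $\overline{\bgpi}_n \inP \bgpi^*$, which is precisely the first hypothesis of Theorem~\ref{thm:GI0-GI1AN} with the choice $\bgpi = \bgpi^*$.

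The second hypothesis to verify is that $\mathcal{I}^{\bgpi^*}(\bgtheta^*)$ is nonsingular, and here I would exploit the coercivity built into Assumption~\ref{ass:5}. By Assumption~\ref{ass:3}, $\sum_{a\in\mathcal{A}}\mathcal{I}_a(\bgtheta^*)$ is positive definite, so the uniform proportion $\bgpi_0=(1/|\mathcal{A}|)_{a\in\mathcal{A}}$ makes $\mathcal{I}^{\bgpi_0}(\bgtheta^*)$ positive definite, whence $\mathbb{F}_{\bgtheta^*}(\bgpi_0)=\mathbb{G}_{\bgtheta^*}[\{\mathcal{I}^{\bgpi_0}(\bgtheta^*)\}^{-1}]$ is finite and $\min_{\bgpi\in\ShatA}\mathbb{F}_{\bgtheta^*}(\bgpi)<\infty$. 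On the other hand, $\mathcal{I}^{\bgpi}(\bgtheta^*)=\sum_{a}\pi(a)\mathcal{I}_a(\bgtheta^*)$ is linear, hence continuous, in $\bgpi$, so as $\bgpi$ approaches any point at which $\mathcal{I}^{\bgpi}(\bgtheta^*)$ degenerates, the largest eigenvalue of $\{\mathcal{I}^{\bgpi}(\bgtheta^*)\}^{-1}$ diverges, and the condition $\lim_{\lambda_{\max}(\blA)\to\infty}\inf_{\bgtheta\in\bgTheta}\mathbb{G}_{\bgtheta}(\blA)=\infty$ forces $\mathbb{F}_{\bgtheta^*}\to\infty$ there. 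A minimizer attaining the finite value $\min_{\bgpi\in\ShatA}\mathbb{F}_{\bgtheta^*}(\bgpi)$ therefore cannot make $\mathcal{I}^{\bgpi^*}(\bgtheta^*)$ singular; I conclude $\mathcal{I}^{\bgpi^*}(\bgtheta^*)$ is nonsingular. (For the $\Phi_q$ family this is even more immediate, since each $\Phi_q$ diverges on arguments whose spectrum blows up.) With both hypotheses in hand, applying Theorem~\ref{thm:GI0-GI1AN} at $\bgpi=\bgpi^*$ delivers $\sqrt{n}(\widehat{\bgtheta}_{n}^{\text{ML}}-\bgtheta^*)\inD N_p(\bm{0}_p,\{\mathcal{I}^{\bgpi^*}(\bgtheta^*)\}^{-1})$, as claimed.

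The main obstacle is not this composition, which is short, but rather the two ingredient theorems it rests on. In particular, Theorem~\ref{thm:empirical pi as converge} carries the real difficulty: its proof must control the coupled greedy-and-stochastic dynamics of the selection frequencies driven by plug-in estimates $\widehat{\bgtheta}_n^{\text{ML}}$, establishing almost-sure convergence of $\overline{\bgpi}_n$ to the simplex minimizer $\bgpi^*$ of a convex objective. Once that convergence and the general normality result of Theorem~\ref{thm:GI0-GI1AN} are granted, the present theorem requires only the elementary upgrade from almost-sure to in-probability convergence together with the nonsingularity check above.
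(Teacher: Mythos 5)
Your proposal is correct and follows essentially the same route as the paper, whose proof of this theorem is exactly the composition of Theorem~\ref{thm:empirical pi as converge} (almost-sure convergence of $\overline{\bgpi}_n$ to $\bgpi^*$, hence convergence in probability) with Theorem~\ref{thm:GI0-GI1AN}. Your explicit coercivity argument for the nonsingularity of $\mathcal{I}^{\bgpi^*}(\bgtheta^*)$ is a point the paper leaves implicit in this proof but establishes by the same reasoning in its Lemma~\ref{lem:K_U}, so nothing is missing.
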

The covariance of the MLE can be approximated by the plug-in estimator $ n^{-1}\{\mathcal{I}^{ \overline{{\bgpi}}_{ n} }(\widehat{\bgtheta}_{ n}^{\text{ML}})\}^{-1}$. This is justified by the next theorem.
 
\begin{theorem}[Asymptotic covariance matrix of the MLE]\label{thm:Asy_cov_mle}
    Under the settings of Theorem~\ref{thm:Asy_Normal_final},
    \begin{equation}\label{lim:AN}
          \sqrt{n}\big\{ \mathcal{I}^{ \overline{{\bgpi}}_{ n} }(\widehat{\bgtheta}_{ n}^{\text{ML}}) \big\}^{1/2}(\widehat{\bgtheta}_{n}^{\text{ML}} -\bgtheta^*)\inD N_p(\mathbf{0}_p, I_p).
    \end{equation}
    In addition, 
    for any continuously differentiable function $g: \bgTheta\to \mathbb{R}$ such that $\nabla g(\bgtheta^*)\neq \mathbf{0}_p$,
\begin{equation}\label{lim:lim_g}
     \frac{\sqrt{ n}(g(\widehat{\bgtheta}_{ n}^{\text{ML}}) -g(\bgtheta^*))}{\norm{\big\{  \mathcal{I}^{ \overline{{\bgpi}}_{ n} }(\widehat{\bgtheta}_{ n}^{\text{ML}})\big\}^{ -1/2}\nabla g( \widehat{\bgtheta}_{ n}^{\text{ML}} )}}\inD N\big(0,1\big).
\end{equation}
\end{theorem}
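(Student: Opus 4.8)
The plan is to read both displays \eqref{lim:AN} and \eqref{lim:lim_g} as studentization statements: they replace the deterministic asymptotic covariance $\{\mathcal{I}^{\bgpi^*}(\bgtheta^*)\}^{-1}$ from Theorem~\ref{thm:Asy_Normal_final} by the observable plug-in matrix $\mathcal{I}^{\overline{\bgpi}_n}(\widehat{\bgtheta}_n^{\text{ML}})$, and the conclusions then follow from a consistency argument for this plug-in together with Slutsky's theorem. The whole argument is downstream of the four theorems already proved; the only genuinely new ingredient is the continuity of the (inverse) matrix square root on the positive-definite cone.

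The first step is to establish the auxiliary convergence
\[
\mathcal{I}^{\overline{\bgpi}_n}(\widehat{\bgtheta}_n^{\text{ML}}) \to \mathcal{I}^{\bgpi^*}(\bgtheta^*) \quad \text{a.s. } \mathbb{P}_*.
\]
This rests on three facts available earlier: strong consistency $\widehat{\bgtheta}_n^{\text{ML}}\to\bgtheta^*$ a.s. (Theorem~\ref{thm:consistency_final}); convergence of the empirical frequency $\overline{\bgpi}_n\to\bgpi^*$ a.s. (Theorem~\ref{thm:empirical pi as converge}, applicable because $\bgpi^*$ is the assumed unique minimizer of $\mathbb{F}_{\bgtheta^*}$); and the joint continuity of $(\bgpi,\bgtheta)\mapsto \mathcal{I}^{\bgpi}(\bgtheta)=\sum_{a\in\mathcal{A}}\pi(a)\mathcal{I}_a(\bgtheta)$, which holds since each $\mathcal{I}_a(\bgtheta)$ is continuous in $\bgtheta$ by Assumption~\ref{ass:3} and the sum is finite. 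Intersecting the two probability-one events and applying the continuous mapping theorem gives the displayed limit. I would then promote it to the square-root level: $\mathcal{I}^{\overline{\bgpi}_n}(\widehat{\bgtheta}_n^{\text{ML}})$ equals $\mathcal{I}(\widehat{\bgtheta}_n^{\text{ML}};\va_n)$, which the initialization step of Algorithms~\ref{alg:gi0} and \ref{alg:gi1} guarantees to be nonsingular for all $n\ge n_0$, while its limit $\mathcal{I}^{\bgpi^*}(\bgtheta^*)$ is nonsingular by the setting of Theorem~\ref{thm:Asy_Normal_final}. Since $A\mapsto A^{1/2}$ and $A\mapsto A^{-1/2}$ are continuous on the open cone of positive-definite matrices, the continuous mapping theorem yields $\{\mathcal{I}^{\overline{\bgpi}_n}(\widehat{\bgtheta}_n^{\text{ML}})\}^{1/2}\to\{\mathcal{I}^{\bgpi^*}(\bgtheta^*)\}^{1/2}$ and $\{\mathcal{I}^{\overline{\bgpi}_n}(\widehat{\bgtheta}_n^{\text{ML}})\}^{-1/2}\to\{\mathcal{I}^{\bgpi^*}(\bgtheta^*)\}^{-1/2}$, both a.s.

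For \eqref{lim:AN}, I would apply Slutsky's theorem to the product of the random matrix $\{\mathcal{I}^{\overline{\bgpi}_n}(\widehat{\bgtheta}_n^{\text{ML}})\}^{1/2}$, converging in probability to the constant $\{\mathcal{I}^{\bgpi^*}(\bgtheta^*)\}^{1/2}$, with the vector $\sqrt{n}(\widehat{\bgtheta}_n^{\text{ML}}-\bgtheta^*)\inD N_p(\bm{0}_p,\{\mathcal{I}^{\bgpi^*}(\bgtheta^*)\}^{-1})$ from Theorem~\ref{thm:Asy_Normal_final}; the limiting covariance is $\{\mathcal{I}^{\bgpi^*}(\bgtheta^*)\}^{1/2}\{\mathcal{I}^{\bgpi^*}(\bgtheta^*)\}^{-1}\{\mathcal{I}^{\bgpi^*}(\bgtheta^*)\}^{1/2}=I_p$ by symmetry of the square root. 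For \eqref{lim:lim_g}, I would first apply the delta method to Theorem~\ref{thm:Asy_Normal_final}, obtaining $\sqrt{n}(g(\widehat{\bgtheta}_n^{\text{ML}})-g(\bgtheta^*))\inD N(0,\sigma^2)$ with $\sigma^2=\norm{\{\mathcal{I}^{\bgpi^*}(\bgtheta^*)\}^{-1/2}\nabla g(\bgtheta^*)}^2$, where $\nabla g(\bgtheta^*)\neq\bm{0}_p$ and nonsingularity of the information matrix force $\sigma>0$. The denominator $\norm{\{\mathcal{I}^{\overline{\bgpi}_n}(\widehat{\bgtheta}_n^{\text{ML}})\}^{-1/2}\nabla g(\widehat{\bgtheta}_n^{\text{ML}})}$ converges a.s. to $\sigma$, by continuity of $\nabla g$, consistency of $\widehat{\bgtheta}_n^{\text{ML}}$, and the inverse-square-root convergence above; a final Slutsky step divides by a quantity converging to the positive constant $\sigma$ and produces the $N(0,1)$ limit.

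I do not anticipate a substantive obstacle here, as the analytic heavy lifting is carried by Theorems~\ref{thm:consistency_final}, \ref{thm:GI0-GI1AN}, \ref{thm:empirical pi as converge}, and \ref{thm:Asy_Normal_final}. The only points requiring care are confirming that the plug-in matrix remains in the positive-definite cone along the whole sequence so that its square root and inverse square root are well-defined and continuous (secured by the nonsingularity guarantee of the initialization step), and ensuring the almost-sure limits of $\widehat{\bgtheta}_n^{\text{ML}}$ and $\overline{\bgpi}_n$ are used on a common probability-one event (handled by intersection).
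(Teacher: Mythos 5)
Your proposal is correct and follows essentially the same route as the paper's proof: almost-sure convergence of the plug-in matrix $\mathcal{I}^{\overline{\bgpi}_n}(\widehat{\bgtheta}_n^{\text{ML}})$ to $\mathcal{I}^{\bgpi^*}(\bgtheta^*)$ via Theorems~\ref{thm:consistency_final} and \ref{thm:empirical pi as converge}, then Slutsky's theorem combined with Theorem~\ref{thm:Asy_Normal_final} for \eqref{lim:AN}, and the delta method plus a second Slutsky step for \eqref{lim:lim_g}. The additional care you take (continuity of the matrix square root on the positive-definite cone, nonsingularity along the sequence from the initialization step, intersecting probability-one events) simply makes explicit what the paper's terser proof leaves implicit.
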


The first part of the above theorem justifies  the use of the plug-in estimator for the covariance matrix of the MLE.  The second part of the theorem suggests that the approximate $1-\alpha$ confidence interval for $g(\bgtheta)$ can be constructed as $g(\widehat{\bgtheta}_{n}^{\text{ML}})\pm z_{\alpha/2} \norm{\big\{  \mathcal{I}^{ \overline{{\bgpi}}_{ n} }(\widehat{\bgtheta}_{ n}^{\text{ML}})\big\}^{ -1/2}\nabla g( \widehat{\bgtheta}_{ n}^{\text{ML}} )}$ where $z_{\alpha/2}$ is the $1-\alpha/2$ quantile of the standard normal distribution.
\subsubsection{Asymptotic Optimality}
In this section, we present results regarding the optimality of the proposed methods. We consider two notions of optimality, including the optimal design and asymptotic efficiency of the estimators under a decision theory framework. The former extends a similar concept in the literature on the design of experiments, and the latter builds upon the classic asymptotic efficiency results for MLE with i.i.d. observations.
We start with the notion of optimality in terms of the optimal design.  
\begin{definition}[$\mathbb{G}_{\bgtheta^*}$- optimality]
    A selection rule is said to be $\mathbb{G}_{\bgtheta^*}$ a.s. optimal design if its corresponding selection frequency $\{\overline{{\bgpi}}_n\}_{n\in\mathbb{Z}_+}$  satisfies
\begin{equation}
    \lim_{n\to \infty}\mathbb{G}_{\bgtheta^*}( \{\mathcal{I}^{\overline{{\bgpi}}_n}(\bgtheta^*)\}^{-1} ) =  \min_{{\bgpi}\in \ShatA} \mathbb{G}_{\bgtheta^*}( \{\mathcal{I}^{{\bgpi}}(\bgtheta^*)\}^{-1} )\ a.s.\ \mathbb{P}_*.
\end{equation}
\end{definition}
 The above notion of $\mathbb{G}_{\bgtheta^*}$- optimal selection rules extends the classic concept of optimal designs adopted in the literature on the design of experiments (see, e.g., \cite{yang2013optimal,kiefer1974general}). It allows for general criteria functions and adaptive experiment selection rules. 
If an adaptive experiment selection rule is $\mathbb{G}_{\bgtheta^*}$- optimal, it approximately minimizes the criterion function when the sample size is large. Theorem~\ref{thm:empirical pi as converge} implies the following result.
\begin{theorem}[$\mathbb{G}_{\bgtheta^*}$- optimal selection]\label{thm:opt_selection}
    Both \textrm{GI0} and \textrm{GI1} are $\mathbb{G}_{\bgtheta^*}$ a.s. optimal.
\end{theorem}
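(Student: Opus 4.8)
The plan is to recognize that, after unwinding the definition of $\mathbb{G}_{\bgtheta^*}$ a.s. optimal design, the claimed statement is exactly the $\beta = 0$ instance of the second (general) conclusion of Theorem~\ref{thm:empirical pi as converge}, so that the result follows as a corollary with no genuinely new argument. The only work is a definitional substitution linking the criterion $\mathbb{G}_{\bgtheta^*}$ in the optimality definition to the objective $\mathbb{F}_{\bgtheta^*}$ appearing in Theorem~\ref{thm:empirical pi as converge}.

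First I would record the identity connecting the two objectives. By the definition \eqref{def:F_theta} of $\mathbb{F}_{\bgtheta}$ and the definition $\mathcal{I}^{\bgpi}(\bgtheta^*) = \sum_{a\in\mathcal{A}}\pi(a)\mathcal{I}_a(\bgtheta^*)$, we have $\mathbb{F}_{\bgtheta^*}(\bgpi) = \mathbb{G}_{\bgtheta^*}(\{\mathcal{I}^{\bgpi}(\bgtheta^*)\}^{-1})$ for every $\bgpi \in \ShatA$. Evaluating at the empirical frequency vector gives $\mathbb{F}_{\bgtheta^*}(\overline{\bgpi}_n) = \mathbb{G}_{\bgtheta^*}(\{\mathcal{I}^{\overline{\bgpi}_n}(\bgtheta^*)\}^{-1})$, and minimizing over the simplex gives $\min_{\bgpi\in\ShatA}\mathbb{F}_{\bgtheta^*}(\bgpi) = \min_{\bgpi\in\ShatA}\mathbb{G}_{\bgtheta^*}(\{\mathcal{I}^{\bgpi}(\bgtheta^*)\}^{-1})$. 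Consequently, the defining requirement of $\mathbb{G}_{\bgtheta^*}$ a.s. optimality is verbatim the statement $\lim_{n\to\infty}\{\mathbb{F}_{\bgtheta^*}(\overline{\bgpi}_n) - \min_{\bgpi\in\ShatA}\mathbb{F}_{\bgtheta^*}(\bgpi)\} = 0$ a.s.\ $\mathbb{P}_*$.

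Next I would invoke Theorem~\ref{thm:empirical pi as converge} under the selection rule GI0, and separately under GI1, both of which are covered by that theorem. Its second conclusion asserts that $\lim_{n\to\infty} n^{\beta}\{\mathbb{F}_{\bgtheta^*}(\overline{\bgpi}_n) - \min_{\bgpi\in\ShatA}\mathbb{F}_{\bgtheta^*}(\bgpi)\} = 0$ a.s.\ $\mathbb{P}_*$ for all $0 \le \beta < 1/2$, and crucially it imposes no uniqueness assumption on the minimizer of $\mathbb{F}_{\bgtheta^*}$. Specializing to $\beta = 0$ produces exactly the limit identified in the previous step, which is the definition of $\mathbb{G}_{\bgtheta^*}$ a.s. optimality for both rules.

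I expect essentially no obstacle: all of the substantive content — the almost-sure convergence of the objective value along the empirical frequencies to its minimum over $\ShatA$ — is already established in Theorem~\ref{thm:empirical pi as converge}. The single point requiring care is to cite the \emph{general} (second) conclusion rather than the first, since the optimality definition does not presume that $\mathbb{F}_{\bgtheta^*}$ has a unique minimizer; taking $\beta = 0$ in the value-convergence statement bypasses any such requirement and delivers the result directly.
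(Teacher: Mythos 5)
Your proposal is correct and matches the paper's own argument: the paper states Theorem~\ref{thm:opt_selection} as a direct consequence of Theorem~\ref{thm:empirical pi as converge}, exactly as you do, with the definitional identity $\mathbb{F}_{\bgtheta^*}(\bgpi)=\mathbb{G}_{\bgtheta^*}(\{\mathcal{I}^{\bgpi}(\bgtheta^*)\}^{-1})$ translating between the two statements. Your observation that one must cite the second (general, $\beta=0$) conclusion to avoid any uniqueness assumption on the minimizer is the right point of care and is consistent with how the paper's result is stated.
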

The above theorem indicates that the proposed experiment selection rules have the best performance in some sense when compared with other experiment selection rules. Next, we consider the optimality property of the MLE when combined with \textrm{GI0} or \textrm{GI1}  under the lens of a sequential decision theory framework for the design-and-estimation problem.

{
Consider a loss function \(L(\bgtheta^*,\widehat\bgtheta) \) for an estimator $\widehat{\bgtheta}$ following an active experiment selection rule, and the corresponding risk $\mathbb{E}_{\bgtheta^*}L(\bgtheta^*,\widehat\bgtheta)$. The next theorem first establishes a lower bound for the asymptotic risk for unbiased estimators and then shows that the MLE combined with the selection rule \textrm{GI0} (or  \textrm{GI1}) achieves this lower bound when the criterion function matches the loss function.
}
\begin{theorem}[Minimum risk for unbiased estimators]\label{thm:ultimate}
Let \(L(\bgtheta, \widehat\bgtheta)\) be a loss function {twice continuously differentiable in $\widehat\bgtheta$} satisfying 
that $L(\bgtheta, \widehat\bgtheta)\geq 0$, \(L(\bgtheta, \widehat\bgtheta) = 0\) if and only if \(\widehat\bgtheta = \bgtheta\), and $\eta I_p\preceq \frac{1}{2}\nabla^2_{\widehat\bgtheta} L(\bgtheta^*,\widehat\bgtheta) \preceq \eta' I_p$ for some positive constants $\eta$ and $\eta'$, and all $\widehat\bgtheta\in\bgTheta$. Let $H_{\bgtheta}=\frac{1}{2}\nabla^2_{\widehat \bgtheta} L(\bgtheta,\widehat\bgtheta)\Big\vert_{\widehat\bgtheta=\bgtheta}$. 
Then, the following results hold.

\begin{enumerate}
    \item Assume regularity conditions (but without Assumption \ref{ass:5}) hold.
  {Consider an unbiased estimator  \(\boldsymbol{T}_n\) of $\bgtheta$ following an arbitrary adaptive experiment selection rule. If the loss function does not satisfy \(L(\boldsymbol{\theta^*},\widehat{\boldsymbol{\theta}})  \equiv \langle H_{\boldsymbol{\theta^*}} (\boldsymbol{\theta^*}-\widehat{\boldsymbol{\theta}}), \boldsymbol{\theta^*}-\widehat{\boldsymbol{\theta}} \rangle\), we further assume for any \(\varepsilon>0\), \(\limsup_{n\to \infty} \mathbb{E}_{\boldsymbol{\theta^*}} n \|\boldsymbol{T}_n-\boldsymbol{\theta^*}\|^2 {I}(\|\boldsymbol{T}_n-\boldsymbol{\theta^*}\| >\varepsilon)= 0.\) 
}
Then, 
\begin{equation}\label{lim:efficiency}
    \liminf_{n\to \infty} { \mathbb{E}_{\bgtheta^*}\Big[ n \cdot L(\bgtheta^*, \blT_n )\Big]  } \geq {\inf_{\pi\in \ShatA}  \tr( H_{\bgtheta^*}   \{\mathcal{I}^{ \pi } (\bgtheta^*)\}^{-1})}.
\end{equation}
In particular, if the squared error loss $L(\bgtheta,\widehat\bgtheta)=\|\bgtheta-\widehat\bgtheta\|^2$ is used, then for any unbiased estimator $\blT_n$,
$\liminf_{n\to \infty} \Big[ n \cdot \text{MSE}(\blT_n)\Big]   \geq {\inf_{\pi\in \ShatA}  \tr(    \{\mathcal{I}^{ \pi } (\bgtheta^*)\}^{-1})}.$
    \item Under Assumptions~\ref{ass:1}-\ref{ass:4},  \ref{ass:6A} and \ref{ass:7A}, 
and further assume that there exists $\alpha>0$, such that for any $\bgxi_a=\blZ_a\bgtheta, \bgtheta\in \bgTheta$ and $x^a\in \operatorname{supp}(f_{\bgtheta,a})$,
    \begin{equation}\label{ass:ult_strong convex}
        \lambda_{min}(-\nabla^2_{\bgxi_a} \log h_{\bgxi_a,a}(x^a))\geq \alpha>0.
    \end{equation}
Assume there exists $\delta>0$ such that $\mathbb{E}_{X^a\sim f_{\bgtheta^*,a}} \norm{\nabla_{\bgtheta} \log f_{\bgtheta,a}(X^a) }^{2+\delta}<\infty$. Assume that $\tr(H_{\bgtheta^*}\{\mathcal{I}^{{\bgpi}}(\bgtheta^*)\}^{-1} )$ has a unique minimizer, denoted by ${\bgpi}^*$. If we choose $\mathbb{G}_{\bgtheta}(\bgSigma)=\tr (H_{\bgtheta} \bgSigma)$ and use the experiment selection rule \textrm{GI0} (or \textrm{GI1}) described in Algorithm~\ref{alg:gi0} (or Algorithm~\ref{alg:gi1}), then the MLE achieves the lower bound in \eqref{lim:efficiency}. That is,
    \begin{equation}\label{equ:Loss_H}
        \lim_{n\to \infty}\mathbb{E}_{\bgtheta^*}\{ n \cdot L(\bgtheta^*,\widehat{\bgtheta}_{n}^{\text{ML}} )\} =\min_{{\bgpi}\in \ShatA}\tr (H_{\bgtheta^*} \{ \mathcal{I}^{{\bgpi}}(\bgtheta^*) \}^{-1} ).
    \end{equation}
{In particular, if $L(\bgtheta,\widehat\bgtheta)=\|\bgtheta-\widehat\bgtheta\|^2$, the corresponding criterion function is $\mathbb{G}_{\bgtheta }(\cdot)=\Phi_1(\cdot)=\tr (\cdot)$. MLE combined with \textrm{GI0} (or \textrm{GI1}) achieves the asymptotic lower bound for $n\cdot \text{MSE}(\blT_n)$ for unbiased estimator $\blT_n$.}
\end{enumerate}
\end{theorem}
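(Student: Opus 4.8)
The plan is to handle the two parts by different mechanisms: Part 1 through a sequential (martingale) information inequality, and Part 2 by upgrading the asymptotic normality of the MLE in Theorem~\ref{thm:Asy_Normal_final} to convergence of the scaled risk via uniform integrability.

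For Part 1, I would first observe that, because each $a_i$ is $\mathcal{F}_{i-1}$-measurable and the selection mechanism does not depend on $\bgtheta$, the score $S_n=\sum_{i=1}^n \nabla_{\bgtheta}\log f_{\bgtheta,a_i}(X_i)$ evaluated at $\bgtheta^*$ is a mean-zero martingale whose cross terms vanish under the tower property, so $\mathrm{Var}_{\bgtheta^*}(S_n)=\mathbb{E}_{\bgtheta^*}[\sum_i \mathcal{I}_{a_i}(\bgtheta^*)]=n\,\mathcal{I}^{\tilde{\bgpi}_n}(\bgtheta^*)$ with $\tilde{\bgpi}_n:=\mathbb{E}_{\bgtheta^*}[\overline{\bgpi}_n]\in\ShatA$, using linearity of $\mathcal{I}^{\bgpi}$ in $\bgpi$. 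Differentiating the unbiasedness identity $\mathbb{E}_{\bgtheta}[\blT_n]=\bgtheta$ under the integral sign gives $\mathbb{E}_{\bgtheta^*}[(\blT_n-\bgtheta^*)S_n^T]=I_p$, and the matrix Cauchy--Schwarz inequality then yields $\mathrm{Cov}(\blT_n)\succeq \tfrac1n\{\mathcal{I}^{\tilde{\bgpi}_n}(\bgtheta^*)\}^{-1}$ (degenerate $\tilde{\bgpi}_n$ being treated by a pseudo-inverse limit, under which the scaled variance diverges and the bound is immediate). Writing the loss through its second-order expansion $L(\bgtheta^*,\widehat{\bgtheta})=\langle H_{\bgtheta^*}(\widehat{\bgtheta}-\bgtheta^*),\widehat{\bgtheta}-\bgtheta^*\rangle+r(\widehat{\bgtheta})$ with $r(\widehat{\bgtheta})=o(\|\widehat{\bgtheta}-\bgtheta^*\|^2)$, the quadratic part contributes $n\,\tr(H_{\bgtheta^*}\mathrm{Cov}(\blT_n))\geq \tr(H_{\bgtheta^*}\{\mathcal{I}^{\tilde{\bgpi}_n}\}^{-1})\geq \inf_{\bgpi\in\ShatA}\tr(H_{\bgtheta^*}\{\mathcal{I}^{\bgpi}\}^{-1})$ after taking the trace against $H_{\bgtheta^*}$ and minimizing over the simplex. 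For the remainder I would split $\{\|\blT_n-\bgtheta^*\|\le\delta\}$ from its complement: on the former $|r|\le\epsilon\|\cdot\|^2$, and since $n\,\mathbb{E}\|\blT_n-\bgtheta^*\|^2$ is either bounded or diverges (the latter making the bound trivial because $L\succeq\eta\|\cdot\|^2$), this term is negligible as $\epsilon\downarrow0$; on the latter the stated condition $\limsup_n \mathbb{E}_{\bgtheta^*} n\|\blT_n-\bgtheta^*\|^2 I(\|\blT_n-\bgtheta^*\|>\varepsilon)=0$ forces $n\mathbb{E}[r\,I(\cdot)]\to0$ via the global envelope $|r|\le \overline{C}\|\cdot\|^2$. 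This gives the $\liminf$ bound, with the exactly quadratic case needing no remainder control.

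For Part 2, with $\mathbb{G}_{\bgtheta}(\bgSigma)=\tr(H_{\bgtheta}\bgSigma)$ the minimizer of $\mathbb{F}_{\bgtheta^*}$ is exactly $\bgpi^*=\arg\min_{\bgpi}\tr(H_{\bgtheta^*}\{\mathcal{I}^{\bgpi}(\bgtheta^*)\}^{-1})$, so Theorem~\ref{thm:Asy_Normal_final} gives $\sqrt{n}(\widehat{\bgtheta}_n^{\text{ML}}-\bgtheta^*)\inD N_p(\mathbf{0}_p,\{\mathcal{I}^{\bgpi^*}(\bgtheta^*)\}^{-1})$. Writing $W_n=\sqrt{n}(\widehat{\bgtheta}_n^{\text{ML}}-\bgtheta^*)$, the same quadratic expansion combined with the continuous mapping theorem and Slutsky (the remainder being $o_P(\|W_n\|^2)=o_P(1)$) yields $n\,L(\bgtheta^*,\widehat{\bgtheta}_n^{\text{ML}})\inD \langle H_{\bgtheta^*}Z,Z\rangle$ for $Z\sim N_p(\mathbf{0}_p,\{\mathcal{I}^{\bgpi^*}\}^{-1})$, whose mean is $\tr(H_{\bgtheta^*}\{\mathcal{I}^{\bgpi^*}\}^{-1})=\min_{\bgpi}\tr(H_{\bgtheta^*}\{\mathcal{I}^{\bgpi}\}^{-1})$, matching the lower bound of Part 1. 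It then remains to promote this distributional convergence to convergence of expectations, for which I would establish uniform integrability of $\{n\|\widehat{\bgtheta}_n^{\text{ML}}-\bgtheta^*\|^2\}_n$, equivalent to that of $\{n\,L\}$ by the two-sided bound $\eta\|\cdot\|^2\le L\le\eta'\|\cdot\|^2$.

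The uniform integrability is the crux, and the hard part. Here I would exploit the strong-concavity hypothesis \eqref{ass:ult_strong convex}: since $-\nabla^2_{\bgxi_a}\log h_{\bgxi_a,a}\succeq\alpha I$, the negative Hessian of the rescaled log-likelihood satisfies $-\nabla^2_{\bgtheta}l_n(\bgtheta)=\tfrac1n\sum_i \blZ_{a_i}^T(-\nabla^2_{\bgxi}\log h)\blZ_{a_i}\succeq \tfrac{\alpha}{n}\sum_i \blZ_{a_i}^T\blZ_{a_i}$, which is bounded below by a positive multiple of $I_p$ once the selected designs span $\mathbb{R}^p$ with adequate weight. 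Because $\overline{\bgpi}_n\to\bgpi^*$ with $\mathcal{I}^{\bgpi^*}(\bgtheta^*)$ nonsingular (Theorem~\ref{thm:empirical pi as converge}), this curvature lower bound holds with overwhelming probability, and strong concavity converts the score into estimation error, $n\|\widehat{\bgtheta}_n^{\text{ML}}-\bgtheta^*\|^2\le \overline{C}\,\alpha^{-2}\|n^{-1/2}S_n\|^2$. The moment assumption $\mathbb{E}\|\nabla_{\bgtheta}\log f_{\bgtheta^*,a}\|^{2+\delta}<\infty$, fed through a martingale Burkholder--Rosenthal-type moment inequality, then bounds $\sup_n\mathbb{E}[\|n^{-1/2}S_n\|^{2+\delta}]$, delivering $\sup_n\mathbb{E}[(n\|\widehat{\bgtheta}_n^{\text{ML}}-\bgtheta^*\|^2)^{1+\delta/2}]<\infty$ and hence uniform integrability by the de la Vall\'ee-Poussin criterion. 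The delicate point is controlling the random, adaptively chosen design matrix $\tfrac1n\sum_i\blZ_{a_i}^T\blZ_{a_i}$ uniformly in $n$ — in particular bounding the small-probability event where early selections fail to span $\mathbb{R}^p$, so that the inverse curvature remains integrable — which is where the interplay between the consistency, the frequency-convergence result, and the martingale moment bounds must be combined carefully.
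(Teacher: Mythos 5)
Your Part~1 is essentially the paper's own argument: the paper proves a sequential Cram\'er--Rao bound (its Lemma on the C--R inequality for sequential data) by exactly your route --- differentiate the unbiasedness identity under the integral sign (justified via a decoupling lemma and dominated convergence), exploit the martingale structure of the score so that $\operatorname{cov}(S_n)=n\,\mathcal{I}^{\mathbb{E}\overline{\bgpi}_n}(\bgtheta^*)$, apply the multivariate Cauchy--Schwarz inequality, and then handle the loss by a second-order expansion, treating the exactly quadratic case directly and using the assumed tail condition $\limsup_n \mathbb{E}\, n\norm{\blT_n-\bgtheta^*}^2 I(\norm{\blT_n-\bgtheta^*}>\varepsilon)=0$ to kill the remainder otherwise. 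Your Part~2 skeleton (asymptotic normality of $n L$ via Slutsky, then upgrading to convergence of expectations through a $(2+\delta)$-moment bound, strong convexity converting score into estimation error, and martingale Rosenthal-type inequalities) also matches the paper's proof.

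However, there is a genuine gap at what you yourself call the crux of Part~2. You derive the curvature lower bound on $-\nabla^2_{\bgtheta}l_n$ only ``with overwhelming probability,'' from the almost-sure convergence $\overline{\bgpi}_n\to\bgpi^*$, and then plan to separately control ``the small-probability event where early selections fail to span $\mathbb{R}^p$.'' This route does not close: on that bad event the only available bound is $n\norm{\widehat{\bgtheta}_n^{\text{ML}}-\bgtheta^*}^2\leq n\cdot\operatorname{diam}(\bgTheta)^2$, so uniform integrability of $n\norm{\widehat{\bgtheta}_n^{\text{ML}}-\bgtheta^*}^2$ would require the bad-event probability to decay at a quantitative polynomial rate (faster than $n^{-1-\delta/2}$), and no such large-deviation estimate for the empirical selection frequency is available from the almost-sure results you invoke. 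The missing idea is that no bad event exists: the paper's exploration result (Theorem~\ref{thm:selection bound}, the generalized form of Proposition~\ref{prop:exploration}) is a \emph{deterministic, pathwise} statement --- for \emph{every} sample path and every $n\geq n_0$, given only the nonsingular initialization required by Algorithms~\ref{alg:gi0}--\ref{alg:gi1}, one has $\mathcal{I}^{\overline{\bgpi}_n}(\bgtheta)\succeq \underline{c}\,C\, I_p$ uniformly in $\bgtheta$, independently of the (arbitrary, possibly random) parameter sequence driving the selections. Combined with \eqref{ass:ult_strong convex} this gives $-\nabla^2_{\bgtheta}l_n(\bgtheta;\va_n)\succeq 2\underline{C}I_p$ \emph{surely} for all $n\geq n_0$, hence the inequality $\norm{\widehat{\bgtheta}_n^{\text{ML}}-\bgtheta^*}\leq \underline{C}^{-1}\norm{\nabla_{\bgtheta}l_n(\bgtheta^*;\va_n)}$ holds surely, and the $(2+\delta)$-moment bound follows directly from the martingale moment inequality with no exceptional event to control. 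Without replacing your high-probability step by this deterministic bound, your uniform-integrability argument cannot be completed as proposed.
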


{
The first part of the above theorem provides a lower bound for the risk of any unbiased estimator combined with an arbitrary experiment selection rule. In particular, when $p=|A|=1$, it aligns with the classic Cram\'er - Rao lower bound for the variance of unbiased estimators with independent observations. The second part of the theorem suggests that the asymptotic risk of the MLE combined with the proposed \textrm{GI0} (or \textrm{GI1}) matches the lower bound, if the criterion function aligns with the loss function. When  $p = |\mathcal{A}|=1$, this matching risk gives an extension of the classic asymptotic efficiency result for MLE with i.i.d. data.

We note that Theorem~\ref{thm:ultimate} does not directly imply that the proposed method minimizes  risk within a class of decision rules, since the MLE is not necessarily unbiased. This scenario is analogous to the classic asymptotic efficiency result for MLE with i.i.d. observations, where the MLE is shown to have the asymptotic variance matching the Cram\'er - Rao bound for unbiased estimators but the MLE itself is not unbiased. On the other hand, the asymptotic optimality of the MLE within a decision theory framework can be formalized using concepts such as local asymptotically normal (LAN) estimators and asymptotic concentration (see Chapter 8 of~\cite{van2000asymptotic}) in classic asymptotic statistics. The next theorem suggests that MLE combined with the proposed experiment selection method is also asymptotically optimal in a similar sense. Here, we omit the definitions of notations and terminology such as ``$\rightsquigarrow$", ``$\star$", %
and ``bowl-shaped functions", and refer readers to Theorem~8.8 and 8.11 in  Chapter 8 of~\cite{van2000asymptotic}, as the formal definitions of these notations are lengthy.   
}

\begin{theorem}[{Local asymptotic minimax risk}]\label{thm:convolution_theorem}
\sloppy Assume $a_1,\cdots,a_n,\cdots$ are experiments selected following an active experiment selection rule such that $a_{n+1}$ is measurable with respect to $\mathcal{F}_n$ for all $n$. Assume that the sequence $(\blT_n(a_1,X_1,\cdots,a_n,X_n), \overline{\bgpi}_n)$ is regular at $(\bgtheta,\bgpi)\in  {\bgTheta}\times {\ShatA}$ for estimating parameter $\bgtheta$, which means that for every $\blh\in \mathbb{R}^p$, 
\begin{equation}\label{equ:lim_T_n}
    \sqrt{n}\Big( \blT_n-(\bgtheta+\frac{\blh}{\sqrt{n}}) \Big) \stackrel{\bgtheta+\frac{\blh}{ \sqrt{n}} }{\rightsquigarrow} L^{\bgpi}_{\bgtheta} \text{ and } \overline{\bgpi}_n \stackrel{\bgtheta+\frac{\blh}{ \sqrt{n}} }{\rightsquigarrow}   \bgpi,
\end{equation}
for some distribution $L^{\bgpi}_{\bgtheta}$, and $\mathcal{I}^{\bgpi}(\bgtheta)=\sum_{a\in \mathcal{A} }\pi(a)\mathcal{I}_a(\bgtheta)$ is nonsingular. %
Then, the following statements hold.
\begin{enumerate}
    \item (Convolution theorem) There exists a probability measure $M^{\bgpi}_{\bgtheta}$ such that
    \begin{equation}\label{equ:Convolution_Theorem}
        L^{\bgpi}_{\bgtheta}=N_p( \bm{0}_p, \{\mathcal{I}^{ \bgpi }(\bgtheta)\}^{-1} ) * M^{\bgpi}_{\bgtheta}.
    \end{equation}
In particular, if $L^{\bgpi}_{\bgtheta}$ has the covariance matrix $\bgSigma^{\bgpi}_{\bgtheta}$, then $\bgSigma^{\bgpi}_{\bgtheta} \succeq \{\mathcal{I}^{ \bgpi }(\bgtheta)\}^{-1}$.
    \item (Local asymptotic minimax theorem) For any bowl-shaped loss function $\ell$,
\begin{equation}\label{conj:local_minimax}
    \sup_{|F|<\infty, F\subset \mathbb{R}^p} \liminf_{n\to \infty} \sup_{h\in F} \mathbb{E}_{\bgtheta+\frac{\blh}{\sqrt{n}}} \ell \Big( \sqrt{n}\big(  \blT_n-(\bgtheta+\frac{\blh}{\sqrt{n}})  \big) \Big) \geq \mathbb{E} \ell(V^{\bgpi}) \geq \min_{\bgpi} \mathbb{E} \ell(V^{\bgpi}),
\end{equation}
where the first supremum is taken over all finite subsets $F$ of $\mathbb{R}^p$, and $V^{\bgpi}\sim N_p(\bm{0}_p, \{\mathcal{I}^{\bgpi}(\bgtheta) \}^{-1} )$.
\end{enumerate}
\end{theorem}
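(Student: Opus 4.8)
The plan is to reduce both parts of the statement to applications of the abstract convolution theorem (Theorem~8.8 of \cite{van2000asymptotic}) and the abstract local asymptotic minimax theorem (Theorem~8.11 of \cite{van2000asymptotic}), after first establishing that the sequence of localized experiments is \emph{locally asymptotically normal} (LAN) at $\bgtheta$ with limiting Fisher information $\mathcal{I}^{\bgpi}(\bgtheta)$. The crucial conceptual point is that, although the experiments $a_i$ are chosen adaptively so that the per-step information $\mathcal{I}_{a_i}(\bgtheta)$ is random, the regularity hypothesis $\overline{\bgpi}_n \stackrel{\bgtheta+\blh/\sqrt{n}}{\rightsquigarrow}\bgpi$ forces the aggregate information $n^{-1}\sum_{i=1}^n\mathcal{I}_{a_i}(\bgtheta)$ to converge to the \emph{deterministic} matrix $\mathcal{I}^{\bgpi}(\bgtheta)$. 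This keeps us in the genuinely LAN (rather than mixed-normal) regime and is precisely what makes the limit experiment a single Gaussian shift.

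First I would write the localized log-likelihood ratio $\Lambda_n(\blh)=\sum_{i=1}^n\log\{f_{\bgtheta+\blh/\sqrt{n},a_i}(X_i)/f_{\bgtheta,a_i}(X_i)\}$ and Taylor-expand each summand to second order in $\blh/\sqrt{n}$, using the existence and Lipschitz continuity of $\nabla_{\bgtheta}\log f$ and $\nabla^2_{\bgtheta}\log f$ from Assumption~\ref{ass:2}. The first-order term equals $\blh^T\Delta_{n,\bgtheta}$ with $\Delta_{n,\bgtheta}=n^{-1/2}\sum_{i=1}^n\nabla_{\bgtheta}\log f_{\bgtheta,a_i}(X_i)$. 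Since each $a_i$ is $\mathcal{F}_{i-1}$-measurable and the score has conditional mean zero, $\{\nabla_{\bgtheta}\log f_{\bgtheta,a_i}(X_i)\}_i$ is a martingale-difference sequence whose conditional covariances average to $n^{-1}\sum_i\mathcal{I}_{a_i}(\bgtheta)\to\mathcal{I}^{\bgpi}(\bgtheta)$; a martingale central limit theorem then yields $\Delta_{n,\bgtheta}\inD N_p(\bm{0}_p,\mathcal{I}^{\bgpi}(\bgtheta))$ under $\mathbb{P}_{\bgtheta}$. The second-order term $\tfrac{1}{2n}\sum_i\blh^T\nabla^2_{\bgtheta}\log f_{\bgtheta,a_i}(X_i)\blh$ converges to $-\tfrac12\blh^T\mathcal{I}^{\bgpi}(\bgtheta)\blh$ by the information identity in Assumption~\ref{ass:3} and a martingale law of large numbers, and the remainder is controlled uniformly by the Lipschitz-Hessian bound \eqref{equ:hessian_lips}. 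Together these give the expansion $\Lambda_n(\blh)=\blh^T\Delta_{n,\bgtheta}-\tfrac12\blh^T\mathcal{I}^{\bgpi}(\bgtheta)\blh+o_{\mathbb{P}_{\bgtheta}}(1)$.

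With LAN in hand, the sequence of experiments converges to the Gaussian shift experiment with covariance $\{\mathcal{I}^{\bgpi}(\bgtheta)\}^{-1}$: the regularity of $\blT_n$ supplied by the hypothesis is exactly what the convolution theorem requires, while $\overline{\bgpi}_n\rightsquigarrow\bgpi$ pins down the limiting information. Part~1 then follows from Theorem~8.8 of \cite{van2000asymptotic}, giving $L^{\bgpi}_{\bgtheta}=N_p(\bm{0}_p,\{\mathcal{I}^{\bgpi}(\bgtheta)\}^{-1})*M^{\bgpi}_{\bgtheta}$, whence the covariance inequality $\bgSigma^{\bgpi}_{\bgtheta}\succeq\{\mathcal{I}^{\bgpi}(\bgtheta)\}^{-1}$ is immediate from the independence of the two convolution factors. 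For Part~2, the first inequality in \eqref{conj:local_minimax} is the local asymptotic minimax bound of Theorem~8.11 of \cite{van2000asymptotic} applied to the bowl-shaped loss $\ell$ and the information $\mathcal{I}^{\bgpi}(\bgtheta)$, and the final inequality $\mathbb{E}\ell(V^{\bgpi})\geq\min_{\bgpi}\mathbb{E}\ell(V^{\bgpi})$ is trivial.

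The main obstacle is the LAN step in the adaptive design setting, where one cannot invoke i.i.d. asymptotics and must instead verify the martingale CLT and LLN for a triangular array whose summands depend on the data-dependent choices $a_i$. The delicate points are: (i) convergence of the conditional-variance process $n^{-1}\sum_i\mathcal{I}_{a_i}(\bgtheta)$ to its deterministic limit, which hinges on $\overline{\bgpi}_n\to\bgpi$ together with the continuity of $\bgtheta\mapsto\mathcal{I}_a(\bgtheta)$; (ii) the conditional Lindeberg condition, which follows from the uniform second-moment control in Assumption~\ref{ass:2} together with the finiteness of $\mathcal{A}$ (so that only finitely many score laws, each square-integrable, appear); and (iii) the uniform control of the quadratic Taylor remainder via \eqref{equ:hessian_lips}. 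Once these are secured, everything else is an application of the classical limit-experiment machinery.
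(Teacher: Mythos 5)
Your proposal is correct in substance and shares the paper's overall architecture (reduce both parts to the H\'ajek--Le Cam limit-experiment machinery of \cite{van2000asymptotic} after establishing that the sequential, adaptively-designed experiment is LAN with information $\mathcal{I}^{\bgpi}(\bgtheta)$), but your route to the LAN expansion is genuinely different from the paper's. You derive LAN by a pointwise second-order Taylor expansion of $\Lambda_n(\blh)=\sum_i\log\{f_{\bgtheta+\blh/\sqrt{n},a_i}(X_i)/f_{\bgtheta,a_i}(X_i)\}$, handling the first-order term with a martingale CLT for the scores, the second-order term with a martingale LLN for the Hessians plus $\overline{\bgpi}_n\inP\bgpi$, and the remainder via the Lipschitz-Hessian bound \eqref{equ:hessian_lips}; this is valid under Assumption~\ref{ass:2} and in fact recycles exactly the arguments already used in the paper's proof of Theorem~\ref{thm:GI0-GI1AN} (Parts I--III). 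The paper instead verifies differentiability in quadratic mean (via Lemma~7.6 of \cite{van2000asymptotic}) and adapts the proof of van der Vaart's Theorem~7.2, working with the square-root likelihood ratios $W_{nj}=2\bigl(\sqrt{f_{\bgtheta+\blh_n/\sqrt{n},a_j}/f_{\bgtheta,a_j}}-1\bigr)$ and exploiting martingale orthogonality (conditional covariances of cross terms vanish) together with the decoupling device of Lemma~\ref{lem:same dist}; that route needs only quadratic-mean differentiability rather than pointwise twice-differentiability, so it is the more economical argument relative to the hypotheses, whereas yours is shorter given that the paper's stronger smoothness assumptions are available anyway. One presentational gap in your write-up: van der Vaart's Theorems~8.8 and~8.11 are stated for i.i.d.\ product experiments (DQM of a fixed model), so they cannot be cited verbatim for the sequential design; what is actually needed, and what the paper re-verifies, are the LAN-based intermediate results behind them (the asymptotic representation Theorem~7.10, Theorem~8.3, and Propositions~8.4 and~8.6, suitably modified). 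Since those proofs depend only on LAN and not on the product structure, your argument survives, but a complete write-up would have to make this chain explicit rather than invoking Theorems~8.8/8.11 off the shelf.
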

In the case where $\ell(\bgtheta)= \norm{\bgtheta}^2$ and $\mathbb{G}_{\bgtheta}(\cdot) = \Phi_1(\cdot) = \operatorname{tr}(\cdot)$, the second part of Theorem~\ref{thm:ultimate} together with Theorem~\ref{thm:convolution_theorem} imply that the MLE combined with both \textrm{GI0} and \textrm{GI1} selection achieves the local asymptotic minimax lower bound on the MSE of estimators.

\subsection{Theoretical Results Regarding Early Stopping Rules}\label{sec:theory stopping}
As discussed in Section~\ref{sec:method-stopping},
early stopping rules are adopted in many applications to reduce the expected sample size. In this section, we provide consistency and asymptotic normality results  for the MLE obtained at a large random stopping time.

\begin{theorem}[Strong consistency at a random stopping time]\label{thm:a.s. stopping time}
{Let $\widehat{\bgtheta}_{n}^{\text{ML}}$ be the MLE following the experiment selection rule \textrm{GI0} or \textrm{GI1}, as described in Algorithm~\ref{alg:gi0} and Algorithm~\ref{alg:gi1}},
and let $\tau_n\in \mathbb{N}$ be a sequence of stopping time with respect to the filtration $\{\mathcal{F}_n\}_{n\in\mathbb{Z}_+}$ such that $\lim_{n\to\infty}\tau_n= \infty$ a.s. and $\tau_n<\infty$ a.s. for each $n$. Then,
$$
\lim_{n\to\infty} \widehat{\bgtheta}_{\tau_n}^{\text{ML}} = \bgtheta^* \text{ a.s. } \mathbb{P}_*.
$$
\end{theorem}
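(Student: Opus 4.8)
The plan is to derive this statement as an immediate consequence of the pointwise (almost sure) consistency already established in Theorem~\ref{thm:consistency_final}, exploiting the fact that almost sure convergence of a sequence is preserved under any index change whose values diverge almost surely. The stopping-time and measurability structure of $\tau_n$ plays no essential role here, precisely because the conclusion is a statement about almost sure (i.e., pointwise-in-$\omega$) behavior; that structure would matter only for a distributional statement such as asymptotic normality at a random time, where one would instead invoke an Anscombe-type argument.

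First I would fix the two relevant full-measure events. Let $\Omega_0 = \{\omega : \lim_{m\to\infty}\widehat{\bgtheta}_m^{\text{ML}}(\omega) = \bgtheta^*\}$; by Theorem~\ref{thm:consistency_final}, $\mathbb{P}_*(\Omega_0) = 1$. Let $\Omega_1 = \{\omega : \lim_{n\to\infty}\tau_n(\omega) = \infty\}$; by hypothesis $\mathbb{P}_*(\Omega_1) = 1$. The intersection $\Omega_0 \cap \Omega_1$ then has probability one under $\mathbb{P}_*$. The assumption $\tau_n < \infty$ a.s. guarantees that $\widehat{\bgtheta}_{\tau_n}^{\text{ML}}$ is a genuine finite-index MLE; moreover, since $\tau_n \to \infty$ on $\Omega_1$, for each such $\omega$ we eventually have $\tau_n(\omega) \geq n_0$, so the MLE is well-defined for all large $n$.

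The core step is an elementary deterministic lemma: if an $\mathbb{R}^p$-valued sequence $(b_m)_m$ converges to a limit $L$ and $(k_n)_n$ is any sequence of positive integers with $k_n \to \infty$, then $b_{k_n} \to L$. I would prove this by the $\varepsilon$--$M$ definition: given $\varepsilon>0$, choose $M$ with $\norm{b_m - L} < \varepsilon$ for all $m \geq M$, then choose $N$ with $k_n \geq M$ for all $n \geq N$, so that $\norm{b_{k_n} - L} < \varepsilon$ for all $n \geq N$. Applying this lemma for each fixed $\omega \in \Omega_0 \cap \Omega_1$ with $b_m = \widehat{\bgtheta}_m^{\text{ML}}(\omega)$, $L = \bgtheta^*$, and $k_n = \tau_n(\omega)$ yields $\widehat{\bgtheta}_{\tau_n}^{\text{ML}}(\omega) \to \bgtheta^*$.

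Since this holds for every $\omega$ in the full-measure set $\Omega_0 \cap \Omega_1$, we conclude $\lim_{n\to\infty}\widehat{\bgtheta}_{\tau_n}^{\text{ML}} = \bgtheta^*$ almost surely $\mathbb{P}_*$, as claimed. I do not anticipate a genuine obstacle: the only points requiring care are the bookkeeping with the two full-measure events and the well-definedness of the MLE at small random indices, both of which are resolved by $\tau_n \to \infty$ a.s. together with $\tau_n < \infty$ a.s.
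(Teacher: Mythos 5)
Your proposal is correct and follows exactly the paper's own argument: the paper likewise deduces the result by combining the strong consistency of $\widehat{\bgtheta}_{n}^{\text{ML}}$ from Theorem~\ref{thm:consistency_final} with the hypothesis $\tau_n \to \infty$ a.s., merely stating the conclusion where you spell out the intersection of full-measure events and the deterministic $\varepsilon$--$M$ subsequence lemma. Your additional detail (including the remark about well-definedness for $\tau_n \geq n_0$) is a faithful elaboration, not a different route.
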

The above theorem extends Theorem~\ref{thm:consistency_final} to allow for random stopping times.
  It suggests that the MLE is close to the true model parameter at a large random sample size. 
  Next, we present the result on asymptotic normality, which enables statistical inference at large stopping times.
\begin{theorem}[Asymptotic normality following \textrm{GI0} or \textrm{GI1} with an early stopping rule]\label{thm:Asy_Normal_final_stopping_time_sp}
Let $\widehat{\bgtheta}_{n}^{\text{ML}}$ be the MLE following the experiment selection rule \textrm{GI0} or \textrm{GI1}, as described in Algorithm~\ref{alg:gi0} and Algorithm~\ref{alg:gi1}. Assume $\mathbb{F}_{\bgtheta^*} (\bgpi)$ has a unique minimizer $\bgpi^*$. Let $\{c_n\}_{n\geq 0}$ be a positive and decreasing sequence such that $c_n\to 0$ as $n\to \infty$. 
Let $h: \bgTheta\to \mathbb{R}$ be a continuously differentiable function such that $\nabla h(\bgtheta)\neq \bm{0}_p$ for all $\bgtheta\in \bgTheta$.
Consider stopping times $\tau^{(1)}_{c_n}$ and $\tau^{(2)}_{c_n}$ defined in \eqref{equ:early stopping g} and \eqref{equ:early stopping TV}, respectively.
Then, for both stopping time \(\tau_n = \tau^{(1)}_{c_n}\) and \(\tau_n = \tau^{(2)}_{c_n}\), we have
\begin{equation}\label{lim:stopping_time_lim_sp}
    \sqrt{\tau_n} \big\{  \mathcal{I}^{ \overline{\pi}_{\tau_n} }(\widehat{\bgtheta}_{\tau_n}^{\text{ML}})\big\}^{ 1/2}(\widehat{\bgtheta}_{\tau_n}^{\text{ML}} -\bgtheta^*) \inD N_p\big(\bm{0}_p,I_p\big).
\end{equation}
Furthermore, for any continuously differentiable function $g: \bgTheta\to \mathbb{R}$ such that $\nabla g(\bgtheta^*)\neq \bm{0}_p$,
\begin{equation}\label{lim:stopping_time_lim_g_sp}
     \frac{\sqrt{\tau_n}(g(\widehat{\bgtheta}_{\tau_n}^{\text{ML}}) -g(\bgtheta^*))}{\norm{\big\{  \mathcal{I}^{ \overline{\pi}_{\tau_n} }(\widehat{\bgtheta}_{\tau_n}^{\text{ML}})\big\}^{ -1/2}\nabla g( \widehat{\bgtheta}_{\tau_n}^{\text{ML}} )}}\inD N(0,1).
\end{equation}
\end{theorem}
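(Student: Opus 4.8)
The plan is to reduce both stopping times to asymptotically deterministic sample sizes and then transfer the fixed-sample asymptotic normality of Theorem~\ref{thm:Asy_Normal_final} through a random time-change (Anscombe-type) argument, finishing with Slutsky's theorem in exactly the way Theorem~\ref{thm:Asy_cov_mle} is obtained from Theorem~\ref{thm:Asy_Normal_final}. Throughout, write $V^*=\tr(\{\mathcal{I}^{\bgpi^*}(\bgtheta^*)\}^{-1})$ and $W^*=\{\nabla h(\bgtheta^*)\}^T\{\mathcal{I}^{\bgpi^*}(\bgtheta^*)\}^{-1}\nabla h(\bgtheta^*)$, both strictly positive by nonsingularity of $\mathcal{I}^{\bgpi^*}(\bgtheta^*)$ and $\nabla h(\bgtheta^*)\neq\bm{0}_p$.

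First I would control the stopping times. By Theorem~\ref{thm:consistency_final} we have $\widehat{\bgtheta}_m^{\text{ML}}\to\bgtheta^*$ a.s., and by Theorem~\ref{thm:empirical pi as converge} we have $\overline{\bgpi}_m\to\bgpi^*$ a.s.; since $\mathcal{I}(\widehat{\bgtheta}_m^{\text{ML}};\va_m)=\mathcal{I}^{\overline{\bgpi}_m}(\widehat{\bgtheta}_m^{\text{ML}})$, continuity gives $\mathcal{I}(\widehat{\bgtheta}_m^{\text{ML}};\va_m)\to\mathcal{I}^{\bgpi^*}(\bgtheta^*)$ a.s., hence $m\,\widehat{\text{MSE}}(\widehat{\bgtheta}_m^{\text{ML}})\to V^*$ and $m\,\widehat{\text{SE}}^2(h(\widehat{\bgtheta}_m^{\text{ML}}))\to W^*$ a.s. Consequently $\widehat{\text{MSE}}$ and $\widehat{\text{SE}}$ tend to $0$, so each stopping time is a.s. finite; moreover the minimum of these statistics over any finite window $n_0\le m\le M$ is a.s. bounded below by a positive constant while $c_n\to0$, so no $m\le M$ can trigger stopping for large $n$, giving $\tau_n\to\infty$ a.s. for both rules. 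Evaluating the defining inequality at $\tau_n$ and $\tau_n-1$ and using $\tau_n\to\infty$ then yields the sharp rates $\tau_n c_n\to V^*$ for $\tau_n=\tau^{(2)}_{c_n}$ and $\tau_n c_n^2\to W^*$ for $\tau_n=\tau^{(1)}_{c_n}$; that is, $\tau_n/a_n\to1$ a.s. with the deterministic sequence $a_n=V^*/c_n$ (resp. $a_n=W^*/c_n^2$), and $a_n\to\infty$. Since $\tau_n\to\infty$ and $\tau_n<\infty$ a.s., Theorem~\ref{thm:a.s. stopping time} gives $\widehat{\bgtheta}_{\tau_n}^{\text{ML}}\to\bgtheta^*$ a.s., and $\overline{\bgpi}_{\tau_n}\to\bgpi^*$ a.s., so that $\mathcal{I}^{\overline{\bgpi}_{\tau_n}}(\widehat{\bgtheta}_{\tau_n}^{\text{ML}})\to\mathcal{I}^{\bgpi^*}(\bgtheta^*)$ a.s.

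The core step is the random time change, which I expect to be the main obstacle. I would linearize the MLE as $\sqrt{m}(\widehat{\bgtheta}_m^{\text{ML}}-\bgtheta^*)=\{\mathcal{I}^{\bgpi^*}(\bgtheta^*)\}^{-1}\,m^{-1/2}S_m+o_{\mathbb{P}_*}(1)$, where $S_m=\sum_{i=1}^m\nabla\log f_{\bgtheta^*,a_i}(X_i)$ is a martingale with respect to $\{\mathcal{F}_m\}$ because $a_i$ is $\mathcal{F}_{i-1}$-measurable and the score has conditional mean zero. The martingale CLT underlying Theorem~\ref{thm:Asy_Normal_final} supplies $m^{-1}\langle S\rangle_m\to\mathcal{I}^{\bgpi^*}(\bgtheta^*)$ a.s. together with a Lindeberg condition. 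I would then invoke the refined martingale version of Anscombe's theorem developed in this paper: given $\tau_n/a_n\to1$ with $a_n\to\infty$ deterministic, the Anscombe uniform-continuity-in-probability condition—negligibility of the normalized score increments over windows of size $o(a_n)$ about $a_n$—follows from the smooth growth of $\langle S\rangle_m$, yielding $a_n^{-1/2}S_{\tau_n}\inD N_p(\bm{0}_p,\mathcal{I}^{\bgpi^*}(\bgtheta^*))$ and hence, after replacing $a_n$ by $\tau_n$ through $\tau_n/a_n\to1$, $\sqrt{\tau_n}(\widehat{\bgtheta}_{\tau_n}^{\text{ML}}-\bgtheta^*)\inD N_p(\bm{0}_p,\{\mathcal{I}^{\bgpi^*}(\bgtheta^*)\}^{-1})$. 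The delicate point is that the stopping boundary is defined through the estimator and its plug-in information rather than through $S_m$ directly, so one must show both that this boundary is asymptotically deterministic and that the estimator's fluctuation between indices $\tau_n$ and $a_n$ is negligible; accommodating the martingale (rather than i.i.d.) dependence is precisely what the refined Anscombe/random-time-change tool is designed to handle.

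Finally I would assemble the conclusions. Combining the displayed CLT with $\mathcal{I}^{\overline{\bgpi}_{\tau_n}}(\widehat{\bgtheta}_{\tau_n}^{\text{ML}})\to\mathcal{I}^{\bgpi^*}(\bgtheta^*)$ a.s. via Slutsky's theorem gives $\sqrt{\tau_n}\{\mathcal{I}^{\overline{\bgpi}_{\tau_n}}(\widehat{\bgtheta}_{\tau_n}^{\text{ML}})\}^{1/2}(\widehat{\bgtheta}_{\tau_n}^{\text{ML}}-\bgtheta^*)\inD N_p(\bm{0}_p,I_p)$, which is \eqref{lim:stopping_time_lim_sp}. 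For \eqref{lim:stopping_time_lim_g_sp}, the delta method applied to the same CLT yields $\sqrt{\tau_n}(g(\widehat{\bgtheta}_{\tau_n}^{\text{ML}})-g(\bgtheta^*))\inD N(0,\{\nabla g(\bgtheta^*)\}^T\{\mathcal{I}^{\bgpi^*}(\bgtheta^*)\}^{-1}\nabla g(\bgtheta^*))$, while consistency and continuity give $\norm{\{\mathcal{I}^{\overline{\bgpi}_{\tau_n}}(\widehat{\bgtheta}_{\tau_n}^{\text{ML}})\}^{-1/2}\nabla g(\widehat{\bgtheta}_{\tau_n}^{\text{ML}})}\to(\{\nabla g(\bgtheta^*)\}^T\{\mathcal{I}^{\bgpi^*}(\bgtheta^*)\}^{-1}\nabla g(\bgtheta^*))^{1/2}$ a.s.; a last Slutsky step normalizes the limit to $N(0,1)$.
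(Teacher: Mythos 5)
Your high-level strategy coincides with the paper's: show the random stopping time is asymptotically equivalent to a deterministic sequence, transfer the fixed-sample normality of Theorem~\ref{thm:Asy_Normal_final} by an Anscombe-type random time change, and finish with Slutsky and the delta method. Your stopping-time analysis is correct and is essentially the specialization of the paper's Lemma~\ref{lem:tau_n_prop}: the paper proves a general result (Theorem~\ref{thm:Asy_Normal_final_stopping_time}) for rules of the form $\frac{1}{m}\Gamma_{\widehat{\bgtheta}_m}\big(\{\mathcal{I}^{\overline{\bgpi}_m}(\widehat{\bgtheta}_m)\}^{-1}\big)\le c_n$ under a continuity condition on $\Gamma$, then checks that $\tau^{(1)}_{c_n}$ and $\tau^{(2)}_{c_n}$ are instances with $\Gamma^{(1)}_{\bgtheta}(\bgSigma)=\tr\big(\{\nabla h(\bgtheta)\}^T\bgSigma\,\nabla h(\bgtheta)\big)$ (threshold $c_n^2$) and $\Gamma^{(2)}_{\bgtheta}(\bgSigma)=\tr(\bgSigma)$, whereas you derive the deterministic equivalents $W^*/c_n^2$ and $V^*/c_n$ directly; that difference is cosmetic.

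The genuine gap is in your core step. The paper applies its multivariate Anscombe theorem (Theorem~\ref{thm:mult_random_clt}) to the MLE sequence itself, $T_m=\widehat{\bgtheta}_m^{\text{ML}}$, and the entire difficulty of its proof is verifying the Anscombe uniform-continuity hypothesis for the MLE: it restricts to a high-probability event on which the score equations hold and the Hessians are well conditioned, then combines the Taylor expansion of Lemma~\ref{lem:taylor ln} with Doob's maximal inequality for the score increments $\sum_{j=n+1}^{n'}\nabla_{\bgtheta}\log f_{\bgtheta^*,a_j}(X_j)$ and Markov bounds for the $\Psi_1$ averages over windows $[n,(1+\delta)n]$. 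You instead apply Anscombe to the score martingale $S_m$ (where the uniform-continuity condition is indeed easy, via Doob) and then assert that evaluating the linearization $\sqrt{m}(\widehat{\bgtheta}_m^{\text{ML}}-\bgtheta^*)=\{\mathcal{I}^{\bgpi^*}(\bgtheta^*)\}^{-1}m^{-1/2}S_m+o_{\mathbb{P}_*}(1)$ at $m=\tau_n$ is ``precisely what the refined Anscombe tool is designed to handle.'' It is not: any Anscombe theorem, including Theorem~\ref{thm:mult_random_clt}, only transfers the CLT of the sequence it is applied to, under a hypothesis you must verify for that sequence; it says nothing about whether a remainder that is $o_{\mathbb{P}_*}(1)$ along deterministic indices remains negligible at the random index $\tau_n$, and in general an $o_{\mathbb{P}_*}(1)$ sequence evaluated at a random time need not be $o_{\mathbb{P}_*}(1)$. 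Closing this step requires its own argument---for instance, showing that $-\nabla^2_{\bgtheta}l_m(\bgtheta^*;\va_m)\to\mathcal{I}^{\bgpi^*}(\bgtheta^*)$ and the $\Psi_2$ averages converge almost surely (so they may be evaluated at $\tau_n$), and bootstrapping $\sqrt{\tau_n}\,\|\widehat{\bgtheta}_{\tau_n}^{\text{ML}}-\bgtheta^*\|=O_{\mathbb{P}_*}(1)$ from the score bound at $\tau_n$---which is exactly the kind of work the paper performs (via Lemma~\ref{lem:a.s.bound} and the event $D_n$ construction) and which your proposal does not contain. The surrounding parts of your argument (stopping-time control, Slutsky, delta method) are correct as written.
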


\section{Applications}\label{sec:application}
In this section, we provide details on the methods and theoretical results to applications discussed in Section~\ref{sec:intro}, including  item selection in CAT and adaptive pairs selection in sequential rank aggregation problems. We also provide results regarding  active estimation for generalized linear models (GLM), which encompass many useful models as its special cases.
\subsection{Active Estimation for GLM}\label{sec:app-glm}
Consider the case where the distribution of the observations falls into an exponential family (see, e.g.,  \cite{mccullagh2019generalized}). Following the setting in Section 5 of \cite{chaudhuri2015convergence}, we consider the density functions 
\begin{equation}\label{eq:glm}
f_{\bgtheta,a}(x_a) = \zeta^a(x_a)\exp\left\{x_a \blz_a^T\bgtheta - B_a(\blz_a^T\bgtheta) \right\},
\end{equation}
where $x_a\in \mathbb{R}$, $\blz_a\in \mathbb{R}^p$, and $B_a(\cdot),a\in \mathcal{A}$. {Assume that the support of $B_a$ is $\mathbb{R}$.} 
Under this model, $\bgtheta$ serves as the unknown linear coefficient in a GLM and we are interested in estimating it using the proposed Algorithm~\ref{alg:gi0} and Algorithm~\ref{alg:gi1}. The Fisher information is given by
\begin{equation}\label{equ:easy_B}
 \mathcal{I}_a(\bgtheta)= B''_a(\blz_a^T\bgtheta) \blz_a \blz_a^T \text{ and } \mathcal{I}(\bgtheta;\va_n ) = \sum_{i=1}^n B''_{a_i}(\blz_{a_i}^T\bgtheta) \blz_{a_i} \blz_{a_i}^T.%
\end{equation}
Based on the above equations, Algorithms~\ref{alg:gi0} and \ref{alg:gi1} are simplified as follows.
\begin{algorithm}[H]
\caption{Simplified \textrm{GI0/GI1} Algorithm for GLM}
\label{alg:gi-glm}
\begin{algorithmic}
\STATE \quad We modify the following lines in Algorithms~\ref{alg:gi0} and \ref{alg:gi1}, while keeping the other lines of the algorithms unchanged.
\STATE 2:   {\textbf{Require:} 
    choose $a_1^0,\cdots, a_{n_0}^0$ such that $\dim (\operatorname{span}\{ \blz_{a_n^0}; n=1,2,\cdots, n_0 \})=p$. }
\STATE 6: The Fisher information matrices used in line 6 of Algorithms~\ref{alg:gi0} and \ref{alg:gi1}  are calculated using the formula \eqref{equ:easy_B}.
\end{algorithmic}
\end{algorithm}
\begin{corollary}\label{cor:app-glm}
Assume {the function $B_a$ has the support $\mathbb{R}$, $\dim (\operatorname{span}\{ \blz_{a}; a\in \mathcal{A} \})=p$, and} Assumptions \ref{ass:1} and \ref{ass:5} hold. 
If the above Algorithm~\ref{alg:gi-glm} for \textrm{GI0} or \textrm{GI1} is used,
then {all the theorems presented in Section~\ref{sec:main-theory} hold.}
\end{corollary}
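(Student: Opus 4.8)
The plan is to reduce the corollary to the verification of the full set of regularity conditions: since Assumptions~\ref{ass:1} and \ref{ass:5} are assumed outright, it suffices to check that the GLM specification \eqref{eq:glm}, together with the full-domain hypothesis on $B_a$ and the rank condition $\dim(\operatorname{span}\{\blz_a; a\in\mathcal{A}\})=p$, forces Assumptions~\ref{ass:2}, \ref{ass:3}, \ref{ass:4}, \ref{ass:6A}, and \ref{ass:7A}; every theorem in Section~\ref{sec:main-theory} is then inherited directly. The computations rest on the explicit score and Hessian, $\nabla_{\bgtheta}\log f_{\bgtheta,a}(x_a)=(x_a-B'_a(\blz_a^T\bgtheta))\blz_a$ and $\nabla^2_{\bgtheta}\log f_{\bgtheta,a}(x_a)=-B''_a(\blz_a^T\bgtheta)\blz_a\blz_a^T$, and hence on the Fisher information $\mathcal{I}_a(\bgtheta)=B''_a(\blz_a^T\bgtheta)\blz_a\blz_a^T$ recorded in \eqref{equ:easy_B}.

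First I would dispatch Assumptions~\ref{ass:2} and \ref{ass:3}. The support of $f_{\bgtheta,a}$ is $\{x_a:\zeta^a(x_a)>0\}$ because the exponential factor is strictly positive, so it depends on $a$ only. As $\bgTheta$ is compact, each linear predictor $\blz_a^T\bgtheta$ ranges over a compact interval on which $B_a$ and its derivatives are finite, continuous, and (being analytic in the interior of their domain) smooth; this yields the stochastic-Lipschitz bounds of \ref{ass:2} with \emph{deterministic} $\Psi_1^a,\Psi_2^a$ proportional to powers of $\|\blz_a\|$, and, because the cumulant generating function $B_a$ is finite on all of $\mathbb{R}$, every moment of $x_a$ under $f_{\bgtheta^*,a}$ is finite. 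The two expressions for $\mathcal{I}_a$ agree by the standard exponential-family identity. The rank condition enters only in the positive-definiteness of $\sum_a\mathcal{I}_a(\bgtheta)=\sum_a B''_a(\blz_a^T\bgtheta)\blz_a\blz_a^T$: strict convexity of $B_a$ (its second derivative is the response variance) gives $B''_a>0$, and $\dim(\operatorname{span}\{\blz_a\})=p$ then makes the sum positive definite. For \ref{ass:4}, the uniform law of large numbers follows from a uniform martingale strong law applied to the summands $x_i\blz_{a_i}^T\bgtheta-B_{a_i}(\blz_{a_i}^T\bgtheta)$, which are affine in $\bgtheta$ with uniformly bounded, integrable stochastic Lipschitz constants; compactness of $\bgTheta$ and a standard stochastic-equicontinuity argument then upgrade pointwise to uniform convergence.

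The crux is the reparametrization underlying \ref{ass:6A} and \ref{ass:7A}. I would take $\blZ_a=\blz_a^T$, so $p_a=1$, and set $h_{\xi,a}(x_a)=\zeta^a(x_a)\exp\{x_a\xi-B_a(\xi)\}$, giving $f_{\bgtheta,a}(\cdot)=h_{\blz_a^T\bgtheta,a}(\cdot)$ with $\xi_a=\blz_a^T\bgtheta$ and compressed (scalar) Fisher information $B''_a(\xi_a)>0$, which settles \ref{ass:6A}. For \ref{ass:7A}, the Kullback--Leibler divergence within this one-parameter natural family is the Bregman divergence $D_{\mathrm{KL}}(h_{\xi^*_a,a}\|h_{\xi_a,a})=B_a(\xi_a)-B_a(\xi^*_a)-B'_a(\xi^*_a)(\xi_a-\xi^*_a)=\tfrac12 B''_a(\tilde\xi)(\xi_a-\xi^*_a)^2$ for some intermediate $\tilde\xi$; since all $\xi_a$ lie in the compact image $\{\blz_a^T\bgtheta:\bgtheta\in\bgTheta\}$, on which $B''_a$ is continuous and bounded below by a positive constant, the quadratic lower bound holds with a uniform $C$. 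The same scalar structure also verifies the extra hypotheses in the second part of Theorem~\ref{thm:ultimate} for free: $-\nabla^2_{\xi_a}\log h_{\xi_a,a}(x_a)=B''_a(\xi_a)$ is deterministic and bounded below, giving \eqref{ass:ult_strong convex}, and the finiteness of $B_a$ everywhere supplies the $(2+\delta)$ moment.

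I expect the main obstacle to be Assumption~\ref{ass:4} together with the uniform moment control demanded by \ref{ass:2}: these are precisely the places where the full-domain hypothesis on $B_a$ must be exploited to guarantee that the cumulant generating function, and hence every moment of the response, is finite uniformly over the compact parameter set, and where a martingale (rather than i.i.d.) uniform law of large numbers is needed because the experiments $a_i$ are chosen adaptively. By contrast, once the reparametrization $\xi_a=\blz_a^T\bgtheta$ is in hand, Assumptions~\ref{ass:6A}--\ref{ass:7A} collapse to elementary one-dimensional exponential-family facts. With all regularity conditions established, each theorem of Section~\ref{sec:main-theory} applies to Algorithm~\ref{alg:gi-glm} under its own stated further hypotheses, completing the proof.
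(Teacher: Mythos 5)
Your proposal is correct and follows essentially the same route as the paper's own proof: reduce the corollary to verifying Assumptions~\ref{ass:2}, \ref{ass:3}, \ref{ass:4}, \ref{ass:6A}, and \ref{ass:7A}, use the scalar reparametrization $\bgxi_a=\blz_a^T\bgtheta$ with compressed information $B''_a(\bgxi_a)>0$ for \ref{ass:6A}, the Bregman/Taylor lower bound on the Kullback--Leibler divergence (with $B''_a$ bounded below on the compact image of $\bgTheta$) for \ref{ass:7A}, finiteness of the cumulant generating function for the moment conditions, a martingale uniform law of large numbers for \ref{ass:4}, and strict convexity of $B_a$ plus the rank condition $\dim(\operatorname{span}\{\blz_a;a\in\mathcal{A}\})=p$ for positive definiteness of $\sum_a\mathcal{I}_a(\bgtheta)$ in \ref{ass:3}. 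The paper additionally spells out the linear-algebra step $\ker\bigl(\sum_a\blz_a\blz_a^T\bigr)=\{\blz_a;a\in\mathcal{A}\}^{\perp}$ and invokes its bracketing lemma for the uniform law, but these are the same arguments you sketch.
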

Note that in the above corollary, the assumptions are greatly simplified compared to the regularity conditions described in Section~\ref{sec:assumptions}, thanks to the nice form of GLMs. It only requires that the parameter space is compact, the true parameter is an interior point of the parameter space, and the parameter is identifiable when using all the experiments together. In practice, the parameter space $\bgTheta$ may not be given in advance. In these cases, we may specify $\bgTheta$ as a box (i.e., $\bgTheta = [-r,r]^p$) or ball (i.e., $\bgTheta = \{\bgtheta:\|\bgtheta\|\leq r\}$) for some large $r$. The theoretical results still apply, if the true parameter is an interior point of the parameter space.

\subsection{Computerized Adaptive Testing (CAT)}\label{sec:app-cat}
CAT has gained prominence in recent decades as an innovative approach to educational assessment \citep{wainer2000computerized,bartroff2008modern,chang2009nonlinear}. 
In CAT, test items are sequentially and adaptively chosen from an item pool based on the test-taker's previous responses. This approach enhances test precision and shortens test length by selecting items tailored to the test-taker's individual latent traits. Item Response Theory (IRT) and Multidimensional Item Response Theory (MIRT) models are commonly used to model a test-taker's responses (See, e.g., \cite{chen2021item}, \cite{embretson2013item}, and \cite{reckase200618} for reviews on IRT and MIRT models). In a binary MIRT model, a response to an item is coded as $0$ or $1$, where $1$ indicates that the item was answered correctly and $0$ indicates the it was answered incorrectly.

Let $k$ be the total number of items in the item pool for an educational test, and let $\mathcal{A}=\{1,\cdots, k\}$ represent the indices of these items. Under a MIRT model, each item $j\in\mathcal{A}$ is associated with a multidimensional item parameter $(\blz_j,b_j)$%
, which quantifies item properties such as the item's difficulty and the skills it measures. The test taker is associated with a latent trait parameter $\bgtheta\in\mathbb{R}^p$, typically interpreted as proficiency in $p$ different skills. 
Given the selected items and the test-taker's latent trait parameter, responses are assumed to be conditionally independent. The correct response probability
$P(\bgtheta; \blz_j,b_j)$, also known as the item response function (IRF) of item $j$, is a function of $\bgtheta$ and depends on $(\blz_j,b_j)$. 
For example, the commonly adopted multidimensional two-parameter logistic model (M2PL) 
assumes that the IRF takes the form
\begin{equation}\label{eq:m2pl}
P({\bgtheta};\boldsymbol{z}_j,b_j) = \{1+ \exp(-\boldsymbol{z}_j^T{\bgtheta}-b_j )\}^{-1},
\end{equation}
where $\blz_j$ is the discrimination parameter, indicating the strength of each latent trait's influence on the response, and $-b_j$ is the difficulty parameter of item $j$.

Item selection is critical for efficient CAT design. The objective is to accurately estimate the latent trait parameter $\bgtheta\in\mathbb{R}^p$ by selecting the next item $a_{n+1}\in\mathcal{A}$ based on previously selected items and responses $a_1, X_1, \cdots, a_n, X_n$. Note that item parameters are typically pre-calibrated based on historical data and are assumed to be known in CAT.
In the rest of the section, we provide details on applying the item selection rules \textrm{GI0} and \textrm{GI1} under the M2PL model. First, the density function is $f_{\bgtheta, a}(x) = P({\bgtheta};\boldsymbol{z}_a,b_a)^x(1- P({\bgtheta};\boldsymbol{z}_a,b_a))^{1-x}$, and the corresponding Fisher information is 
\begin{equation}\label{equ:cat_m2pl}
\mathcal{I}_a(\bgtheta) = P({\bgtheta};\boldsymbol{z}_a,b_a)(1-P({\bgtheta};\boldsymbol{z_a},b_a)) \blz_a\blz^T_a \text{ and }
\mathcal{I} (\bgtheta;\va_n)=\sum_{a\in \mathcal{A}} \overline{{\bgpi}}_n(a) \mathcal{I}_a(\bgtheta).
\end{equation}
If the criterion function $\mathbb{G}_{\bgtheta}(\cdot)=\Phi_q(\cdot)$, then  \textrm{GI1} can be simplified as:
\begin{equation}\label{equ:GI1_m2pl}
    a_{n+1}=\arg\max_{a\in \mathcal{A}} P(\widehat{\bgtheta}^{\text{ML}}_n;\boldsymbol{z}_{a},b_{a})(1-P(\widehat{\bgtheta}^{\text{ML}}_n;\boldsymbol{z_{a}},b_{a})) \cdot  \blz^T_{a}\Big( \mathcal{I} (\widehat{\bgtheta}^{\text{ML}}_n;\va_n)\Big)^{-q-1}\blz_{a}  .
\end{equation}
\begin{algorithm}[H]
\caption{Simplified \textrm{GI0}/\textrm{GI1} Algorithm for M2PL model}
\label{alg:gi-cat}
\begin{algorithmic}
\STATE \quad We modify the following lines in Algorithms~\ref{alg:gi0} and \ref{alg:gi1}, while keeping the other lines of the algorithms unchanged.
\STATE 2:   {\textbf{Require:} $\dim (\operatorname{span}\{ \blz_{a}; a\in \mathcal{A} \})=p$ and
    choose $a_1^0,\cdots, a_{n_0}^0$ such that $\dim (\operatorname{span}\{ \blz_{a_n^0}; n=1,2,\cdots, n_0 \})=p$. }
\STATE 6:\ The Fisher information matrices used in line 6 of Algorithms~\ref{alg:gi0} for \textrm{GI0} are calculated using the formula \eqref{equ:cat_m2pl}. Selection in line 6 of Algorithms~\ref{alg:gi1} for \textrm{GI1} is replaced by \eqref{equ:GI1_m2pl}. 
\end{algorithmic}
\end{algorithm}

\begin{corollary}\label{cor:app-cat}
Assume Assumption \ref{ass:1} holds, $\dim (\operatorname{span}\{ \blz_{a}; a\in \mathcal{A} \})=p$, and criterion function $\mathbb{G}_{\bgtheta}(\cdot)=\Phi_q(\cdot)$.
If we consider simplified Algorithm \ref{alg:gi-cat} for \textrm{GI0} and \textrm{GI1} with M2PL model, then %
the conclusions for \textrm{GI0} and \textrm{GI1} from {all the theorems presented in Section~\ref{sec:main-theory} hold.}
\end{corollary}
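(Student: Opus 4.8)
The plan is to prove this corollary as a pure verification result: I would show that the M2PL model, under the three stated hypotheses, satisfies the entire package of regularity conditions (Assumptions~\ref{ass:1}--\ref{ass:5} together with \ref{ass:6A}--\ref{ass:7A}), after which every theorem in Section~\ref{sec:main-theory} applies with no further work. Assumptions~\ref{ass:1} and \ref{ass:5} are assumed directly. The single structural fact that drives everything else is that the response $X\in\{0,1\}$ is bounded and that, writing the linear predictor $\xi_a=\blz_a^T\bgtheta+b_a$, the log-density is $\log f_{\bgtheta,a}(x)=x\xi_a-\log(1+e^{\xi_a})$, so that the score equals $(x-P_a)\blz_a$ and the Hessian equals $-P_a(1-P_a)\blz_a\blz_a^T$, where $P_a=P(\bgtheta;\blz_a,b_a)$. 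On the compact set $\bgTheta$ the quantity $\xi_a$ ranges over a compact interval, hence $P_a(1-P_a)$ is bounded above and below by positive constants, uniformly over the finite set $\mathcal{A}$.

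Given this, the verification of Assumptions~\ref{ass:2}, \ref{ass:3}, and \ref{ass:4} is routine. For \ref{ass:2}, boundedness of $X$ and smoothness of the score and Hessian on compact $\bgTheta$ supply deterministic Lipschitz constants $\Psi_1^a,\Psi_2^a$ with finite moments; in fact the Hessian does not depend on $x$, so \eqref{equ:hessian_lips} holds trivially. For \ref{ass:3}, the information identity is the standard exponential-family identity, and $\sum_{a}\mathcal{I}_a(\bgtheta)=\sum_a P_a(1-P_a)\blz_a\blz_a^T$ is positive definite precisely because $P_a(1-P_a)$ is bounded below by a positive constant and $\dim(\operatorname{span}\{\blz_a;a\in\mathcal{A}\})=p$. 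For \ref{ass:4}, the uniform law of large numbers follows from a uniform martingale strong law using the uniform boundedness and Lipschitz estimates above to obtain equicontinuity of the empirical log-likelihood.

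The heart of the argument is Assumptions~\ref{ass:6A} and \ref{ass:7A}, which are exactly the device that handles the rank-one, hence per-item singular, information $\mathcal{I}_a$. Since $f_{\bgtheta,a}$ depends on $\bgtheta$ only through the scalar $\xi_a=\blz_a^T\bgtheta$, the natural reparametrization is one-dimensional: take $\blZ_a=\blz_a^T$ (assuming without loss of generality that $\blz_a\neq 0$, as zero-discrimination items carry no information and may be discarded), so $p_a=1$ and $f_{\bgtheta,a}=h_{\xi_a,a}$. The compressed Fisher information is then the positive scalar $\mathcal{I}_{\bgxi_a,a}(\bgxi_a)=P_a(1-P_a)>0$, which establishes \ref{ass:6A}. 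For \ref{ass:7A}, I would view the Bernoulli--Bernoulli Kullback--Leibler divergence as a smooth function of the scalar $\xi_a$ over the compact range $\{\blz_a^T\bgtheta+b_a:\bgtheta\in\bgTheta\}$; it vanishes to exactly second order at $\xi_a=\xi_a^*$ with strictly positive curvature $P_a^*(1-P_a^*)$, and a compactness argument on this interval upgrades the local quadratic behavior to a global lower bound with a single constant $C>0$, uniformity over $a$ being immediate from finiteness of $\mathcal{A}$. Finally, for the optimality statement of Theorem~\ref{thm:ultimate} I would check the two extra conditions: \eqref{ass:ult_strong convex} holds because $-\nabla^2_{\bgxi_a}\log h_{\bgxi_a,a}=P_a(1-P_a)\geq\alpha>0$ is deterministic and bounded below on compact $\bgTheta$, and the $(2+\delta)$-moment condition is trivial since $X$ is bounded; when $q=1$ one has $\Phi_1(\cdot)=\operatorname{tr}(\cdot)=\operatorname{tr}(I_p\,\cdot)$, matching $H_{\bgtheta}=I_p$ and the squared-error loss.

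The main obstacle is the global, uniform quadratic lower bound on the Kullback--Leibler divergence required by \ref{ass:7A}: although conceptually standard, it must be made to hold over the full compact parameter range rather than only infinitesimally near $\xi_a^*$, and with a constant independent of $a$. The correct identification of the rank-one scalar reparametrization that renders \ref{ass:6A} applicable is the other conceptual step; once both are in place, the remainder is bookkeeping and direct citation of the theorems in Section~\ref{sec:main-theory}.
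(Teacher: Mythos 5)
Your proposal is correct, but it takes a different route from the paper. The paper proves this corollary in three lines: it observes that the M2PL density can be written as
\begin{equation*}
    f_{\bgtheta,a}(x)=\exp(b_a x)\exp\big\{ x\,\blz_a^T \bgtheta-\log(1+\exp(\blz_a^T \bgtheta +b_a)) \big\},
\end{equation*}
i.e.\ M2PL is a special case of the GLM \eqref{eq:glm} with $B_a(\xi)=\log\{1+\exp(\xi+b_a)\}$ and $\zeta^a(x)=\exp(b_a x)$, and then simply invokes Corollary~\ref{cor:app-glm}. All of the verification work you propose (score $(x-P_a)\blz_a$, Hessian $-P_a(1-P_a)\blz_a\blz_a^T$, the scalar reparametrization $\blZ_a=\blz_a^T$ for Assumption~\ref{ass:6A}, the quadratic KL lower bound for Assumption~\ref{ass:7A}, positive definiteness of $\sum_a\mathcal{I}_a(\bgtheta)$ from $\dim\operatorname{span}\{\blz_a\}=p$, and the ULLN) is carried out in the paper once, at the level of the general GLM, inside the proof of Corollary~\ref{cor:app-glm}; your proposal essentially inlines that proof specialized to the Bernoulli case. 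The paper's reduction buys brevity and reuse; your direct verification buys self-containedness and lets you exploit Bernoulli-specific simplifications (bounded responses make all moment conditions trivial, and the Hessian is deterministic). One stylistic difference worth noting: for Assumption~\ref{ass:7A} the paper uses the exact Bregman identity $D_{\mathrm{KL}}=B_a(\xi)-B_a(\xi^*)-(\xi-\xi^*)B_a'(\xi^*)\geq \tfrac12\min_{\tilde\xi}B_a''(\tilde\xi)\,|\xi-\xi^*|^2$, which gives the global uniform constant in one step, whereas you upgrade a local second-order expansion via a compactness/ratio argument---both work, but the Bregman route removes the obstacle you flag as the ``heart of the argument.'' Your explicit handling of the $\blz_a=\bm{0}$ edge case is actually slightly more careful than the paper, whose GLM proof implicitly assumes each $\blz_a\neq\bm{0}$ when taking $\blZ_a=\blz_a^T$ of rank one.
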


\subsection{Sequential Rank Aggregation from Noisy Pairwise Comparison}\label{sec:app-RA}
Consider the problem of determining the global rank over $p+1$ objects. Let \(\mathcal{A} \subset \{(j, l);   j, l \in \{0, 1, 2, \ldots, p\}\} \) be a subset of all possible pairs for comparison. At each time $n$, a pair $a_n = (a_{n,1}, a_{n,2})\in  \mathcal{A}$ is chosen for comparison, yielding a random pairwise comparison outcome $X_n\in\{0,1\}$. Here,  \(X_n = 1\) indicates that the object \(a_{n,1}\) is preferred over \(a_{n,2}\) in the comparison, and \(X_n = 0\) indicates the opposite. 
To infer the global rank of objects, ranking models (e.g.,  Bradley-Terry-Luce (BTL) model \citep{bradley1952rank,duncan1959individual} and the Thurstone model \citep{thurstone1927law}) are usually assumed for the noisy pairwise comparison results. These models assume that each object $i$ is associated with a latent score parameter $\theta_i$,  the pairwise comparison result between object $i$ and object $j$ is depending on $\theta_i$ and $\theta_j$, and the true global rank is the rank of the latent score parameters. For example, the BTL model assumes 
\begin{equation}\label{eq:btl}
    f_{\bgtheta,a}(x)=\left(\frac{e^{\theta_i}}{e^{\theta_i}+e^{\theta_j}} \right)^{x}\left(\frac{e^{\theta_j}}{e^{\theta_i}+e^{\theta_j}} \right)^{1-x}
\end{equation}
for the pair $a=(i,j)$. For sequential rank aggregation, the goal is to design an active pair selection rule that determines the next pair $a_{n+1}$ for comparison based on the prior pair comparison results $(a_1,X_1,\cdots, a_n, X_n)$, so that the global rank can be inferred accurately. This problem boils down to the active sequential estimation of the latent score parameters.

In the rest of the section, we elaborate on the implementation and theoretical results for \textrm{GI0} and \textrm{GI1} for the sequential rank aggregation problem under a BTL model. Note that the distribution of the comparison results only depends on the differences $\theta_i-\theta_j$ for $0\leq i,j\leq p$. Thus, we fix ${\theta }_0=0$ to ensure the identifiability of ${\bgtheta} = ( \theta_1, \ldots, \theta_p)^T$. 

When $a=(i,j)$, we set $\blz_a=\ble_j-\ble_i$, where $\{\ble_1,\cdots,\ble_p\}$ is the standard basis of $\mathbb{R}^p$ and $\ble_0=\bm{0}_p$.

For $a=(i,j)$, the Fisher information and the weighted Fisher information are given by 
\begin{equation}\label{equ:BTL_info}
    \mathcal{I}_a(\bgtheta)=\frac{e^{\theta_i-\theta_j}}{(1+e^{\theta_i-\theta_j})^2} \blz_a  \blz_a^T, \text{and }\mathcal{I}(\bgtheta;\va_n )=\sum_{a=(i,j)\in \mathcal{A}} \overline{\pi}_n(a) \frac{e^{\theta_i-\theta_j}}{(1+e^{\theta_i-\theta_j})^2} \blz_a  \blz_a^T.
\end{equation}
If we take the criterion function $\mathbb{G}_{\bgtheta}(\cdot)=\Phi_q(\cdot)$,  \textrm{GI1} can be simplified as 
\begin{equation}\label{equ:gi1_BTL}
a_{n+1}=\operatornamewithlimits{argmax}_{a \in \mathcal{A}} \frac{e^{\blz_a^T \widehat{\bgtheta}^{\text{ML}}_n}}{(1+e^{\blz_a^T \widehat{\bgtheta}^{\text{ML}}_n })^2}    \blz^T_{a}\Big( \mathcal{I}(\widehat{\bgtheta}^{\text{ML}}_n;\va_n )  \Big)^{-q-1}\blz_{a}  .
\end{equation}
We  treat $V=\{0,1,\cdots,p\}$ as vertices and $\mathcal{A}$ as the set of edges. Then, $G=(V, \mathcal{A})$ is an undirected graph. Assume that $G$ is a connected graph. This condition ensures that ${\bgtheta}$ is identifiable when all the pairs in $\mathcal{A}$ are compared. Under this condition, it is possible to select $\mathcal{A}_0=\{a_1^{0},a_2^{0},\cdots, a_{n_0}^{0}\} \subset \mathcal{A}$ so that $(V, \mathcal{A}_0)$ is a connected subgraph of $G$. 

\begin{algorithm}[H]
\caption{Simplified \textrm{GI0}/\textrm{GI1} Algorithm for BTL model}
\label{alg:gi-BTL}
\begin{algorithmic}
\STATE \quad We modify the following lines in Algorithms~\ref{alg:gi0} and \ref{alg:gi1}, while keeping the other lines of the algorithms unchanged.
\STATE 2:\quad   \textbf{Require:} %
The subgraph $(V, \{a_1^0,\cdots,a^0_{n_0}\})$ is a  connected graph.
\STATE 6:\quad The Fisher information matrices used in line 6 of Algorithms~\ref{alg:gi0} for \textrm{GI0} are calculated using the formula \eqref{equ:BTL_info}. Selection in line 6 of Algorithms~\ref{alg:gi1} for \textrm{GI1} is replaced by \eqref{equ:gi1_BTL}.
\end{algorithmic}
\end{algorithm}
\begin{corollary}\label{cor:app-RA}
Assume that Assumption \ref{ass:1} holds, $G$ is a connected graph,
and the criterion function $\mathbb{G}_{\bgtheta}(\cdot)=\Phi_q(\cdot)$. If the above Algorithm~\ref{alg:gi-BTL} for \textrm{GI0} or \textrm{GI1} is used, then %
the conclusions for \textrm{GI0} and \textrm{GI1} from {all the theorems presented in Section~\ref{sec:main-theory} hold.}
\end{corollary}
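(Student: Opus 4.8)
The plan is to prove Corollary~\ref{cor:app-RA} by exhibiting the BTL model as a special case of the GLM framework of Section~\ref{sec:app-glm}, so that the claim reduces to an application of Corollary~\ref{cor:app-glm}. First I would rewrite the BTL density \eqref{eq:btl} in the canonical exponential-family form \eqref{eq:glm}. Fixing $\theta_0 = 0$ for identifiability and writing $a = (i,j)$, a direct computation gives $\log f_{\bgtheta,a}(x) = x\,\blz_a^T\bgtheta - B_a(\blz_a^T\bgtheta)$ with base measure $\zeta^a\equiv 1$ on $\{0,1\}$ and log-partition function $B_a(u) = \log(1+e^{u})$, after orienting $\blz_a = \ble_i - \ble_j$. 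The orientation is immaterial: the Fisher information \eqref{equ:BTL_info}, the span condition below, and both selection rules \eqref{equ:BTL_info}--\eqref{equ:gi1_BTL} depend on $\blz_a$ only through $\blz_a\blz_a^T$ and on $\blz_a^T\bgtheta$ through the even function $B_a''(u) = e^{u}/(1+e^{u})^2$, so one may equally keep the paper's convention $\blz_a = \ble_j - \ble_i$. Since $B_a$ is smooth with support $\mathbb{R}$ and $B_a'' > 0$, the GLM information formula \eqref{equ:easy_B} reduces exactly to \eqref{equ:BTL_info}, and the support hypothesis of Corollary~\ref{cor:app-glm} holds.

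The substantive step is to match the graph hypothesis to the GLM span requirement $\dim(\operatorname{span}\{\blz_a : a\in\mathcal{A}\}) = p$. The family $\{\blz_a\}_{a\in\mathcal{A}}$ consists precisely of the columns of the reduced incidence matrix of $G = (V,\mathcal{A})$ obtained by deleting the row indexed by the reference vertex $0$. It is a classical fact that this reduced incidence matrix has rank $p$ if and only if $G$ is connected; equivalently, the graph Laplacian of a connected graph on $p+1$ vertices has a one-dimensional kernel spanned by the all-ones vector, leaving rank $p$ after fixing $\theta_0 = 0$. Hence the assumed connectivity of $G$ is equivalent to the GLM span condition and also guarantees that $\sum_{a\in\mathcal{A}}\mathcal{I}_a(\bgtheta)$ is positive definite on $\bgTheta$. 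For the initialization, any connected spanning subgraph $(V,\mathcal{A}_0)$ yields $\{\blz_{a^0}\}$ spanning $\mathbb{R}^p$ and hence a nonsingular initial information matrix, which is exactly why Algorithm~\ref{alg:gi-BTL} requires $(V,\mathcal{A}_0)$ to be connected and matches the initialization requirement of Algorithm~\ref{alg:gi-glm}.

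It remains to verify the residual hypotheses and the correctness of the simplified selection rule. Assumption~\ref{ass:1} is assumed outright, and $\mathbb{G}_{\bgtheta}(\cdot) = \Phi_q(\cdot)$ satisfies Assumption~\ref{ass:5} by its first case. To confirm that \eqref{equ:gi1_BTL} is genuinely GI1 applied to $\Phi_q$, I would insert the rank-one structure $\mathcal{I}_a(\bgtheta) = B_a''(\blz_a^T\bgtheta)\,\blz_a\blz_a^T$ and $\nabla\Phi_q(\bgSigma)\propto\bgSigma^{q-1}$ into the GI1 objective $\operatorname{tr}[\nabla\mathbb{G}_{\bgtheta}(\widehat\bgSigma_n)\,\widehat\bgSigma_n\,\mathcal{I}_a(\bgtheta)\,\widehat\bgSigma_n]$ with $\widehat\bgSigma_n = \{\mathcal{I}(\bgtheta;\va_n)\}^{-1}$; by cyclicity of the trace this collapses to a positive multiple of $B_a''(\blz_a^T\bgtheta)\,\blz_a^T\{\mathcal{I}(\bgtheta;\va_n)\}^{-q-1}\blz_a$, matching \eqref{equ:gi1_BTL}. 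With every hypothesis of Corollary~\ref{cor:app-glm} in place, its conclusion transfers all theorems of Section~\ref{sec:main-theory} to the BTL setting. The main obstacle is the span/connectivity equivalence of the second paragraph; everything else is either assumed directly or a mechanical rewriting, so the proof hinges on correctly identifying $\{\blz_a\}$ with the reduced incidence matrix and invoking the rank characterization of connected graphs.
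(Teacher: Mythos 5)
Your proposal is correct and follows essentially the same route as the paper: reduce the BTL model to the exponential-family framework and then show that connectivity of $G$ yields $\dim(\operatorname{span}\{\blz_a; a\in\mathcal{A}\})=p$, after which Corollary~\ref{cor:app-glm} delivers all the theorems of Section~\ref{sec:main-theory}. The only differences are cosmetic: the paper passes through the M2PL case (Corollary~\ref{cor:app-cat}), which is itself a one-line specialization of Corollary~\ref{cor:app-glm}, whereas you go to the GLM corollary directly; and where you invoke the classical fact that the reduced incidence matrix of a connected graph on $p+1$ vertices has rank $p$ (equivalently, the Laplacian kernel is spanned by the all-ones vector), the paper proves exactly this fact self-containedly, by lifting each $\blz_a$ to the incidence vector $\blz_a^+=\ble^+_{a_1}-\ble^+_{a_2}$ in $\mathbb{R}^{p+1}$ (rank is preserved since the deleted coordinate is $-\blz_a^T\mathbf{1}_p$) and then growing a spanning tree edge by edge, showing the rank increases by one each time a new vertex is attached. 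Your citation is legitimate since the result is standard, but the paper's inductive argument is precisely a proof of it; your verification of the simplified GI1 rule \eqref{equ:gi1_BTL} via cyclicity of the trace and $\nabla\Phi_q(\bgSigma)\propto\bgSigma^{q-1}$ is also correct and matches the paper's derivation in Section~\ref{sec:app-RA}.
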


\section{Technical Challenges, New Analytical Tools and a Proof Sketch for Theorem~\ref{thm:empirical pi as converge}}\label{sec:proof-sketch}
In this section, we highlight the key technical challenges in proving Theorem~\ref{thm:empirical pi as converge} and introduce new analytical tools to address these challenges.  
The primary challenge lies in demonstrating that GI0/GI1 effectively balances the trade-off between exploration and exploitation, a well-known concept in the literature on sequential decision making involving unknown parameters. Exploration means sufficient sampling of all relevant experiments to ensure consistent parameter estimation. Exploitation means optimally sampling experiments once the parameter has been accurately estimated. Below, we discuss these two facets—exploration and exploitation—in the context of active sequential estimation.
\subsection{Exploration}
In order to have a consistent estimator, the selection rule needs to sample relevant experiments sufficiently often. This is formalized by the following condition, 
\begin{equation}\label{eq:exploration-condition}
n_I := \max_{S\subset\mathcal{A}: S \text{ is relevant} }\min_{a\in S} n_{a} \to\infty \text{ as $n\to\infty$,}
\end{equation}
where $n_a= |\{  i; a_i=a , 1\leq i\leq n\}|$ for $a\in\mathcal{A}$. Here, we say that a set of experiments $S$ is relevant if $\sum_{a\in S} \mathcal{I}_a(\bgtheta)$ is nonsingular for any $\bgtheta\in \bgTheta$. If $S$ is relevant, then the model parameter is identifiable when all the experiments in $S$ are sampled. Equation~\eqref{eq:exploration-condition} says that at least one of the relevant sets of experiments needs to be sampled infinitely often, in order to have a consistent estimator. 

\begin{challenge}\label{challenge:exporation}
    Show that $n_I\to\infty$ as $n\to\infty$ following GI0/GI1. 
\end{challenge}
We note that in related sequential design problems, exploration is usually achieved by incorporating an extra exploration step in the experiment selection rule. For example, in active sequential hypothesis testing problems (see, e.g., \cite{chernoff1959,naghshvar2013active}), a two-stage algorithm is often utilized, where the first stage is designed for exploration and the second stage is designed for exploitation. 
Another prevalent method for ensuring sufficient exploration is the use of the epsilon-greedy algorithm in reinforcement learning and multi-armed bandit (MAB) problems, where  
all available experiments are sampled with a minimum probability of $\varepsilon$. For methods that incorporate an explicit exploration component, verifying \eqref{eq:exploration-condition} is usually straightforward. However, for algorithms like GI0/GI1, which are greedy and lack an additional exploration component,  proving (or disproving) Equation~\eqref{eq:exploration-condition} is much more challenging. 

Nevertheless, we tackle Challenge~\ref{challenge:exporation} and establish the following proposition concerning sufficient exploration for \textrm{GI0} and \textrm{GI1}.
\begin{proposition}\label{prop:exploration}Under regularity conditions described in Section~\ref{sec:assumptions}, both \textrm{GI0} and  \textrm{GI1}  satisfy that \(\liminf_{n \to \infty}\frac{n_I}{n} >0.\)
\end{proposition}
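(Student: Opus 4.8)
The plan is to reduce the frequency statement to a spectral statement about the weighted Fisher information and then to exploit the self-regularizing nature of the greedy rule. Write $\widehat\bgSigma_n=\{\mathcal I(\widehat{\bgtheta}_n^{\text{ML}};\va_n)\}^{-1}=\{\sum_{a\in\mathcal A}\overline{\bgpi}_n(a)\mathcal I_a(\widehat{\bgtheta}_n^{\text{ML}})\}^{-1}$, which is well defined for $n\ge n_0$ by the initialization. First I would show that it suffices to prove
\begin{equation*}
\limsup_{n\to\infty}\lambda_{\max}(\widehat\bgSigma_n)<\infty\quad\text{a.s. }\mathbb P_*.
\end{equation*}
Indeed, suppose $\lambda_{\min}(\mathcal I(\widehat{\bgtheta}_n^{\text{ML}};\va_n))\ge c_0>0$. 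Since $\mathcal A$ is finite and each $\mathcal I_a(\bgtheta)$ is continuous on the compact $\bgTheta$ (Assumption~\ref{ass:3}), $\sup_{a,\bgtheta}\lambda_{\max}(\mathcal I_a(\bgtheta))\le\overline c<\infty$. Setting $S_\delta=\{a:\overline{\bgpi}_n(a)\ge\delta\}$, the complementary experiments contribute at most $\overline c|\mathcal A|\delta\,I_p$ in operator norm, so $\sum_{a\in S_\delta}\overline{\bgpi}_n(a)\mathcal I_a(\widehat{\bgtheta}_n^{\text{ML}})\succeq(c_0-\overline c|\mathcal A|\delta)I_p$. Choosing $\delta=c_0/(2\overline c|\mathcal A|)$ makes $S_\delta$ relevant with $\min_{a\in S_\delta}\overline{\bgpi}_n(a)\ge\delta$ (using that, under Assumption~\ref{ass:6B}, relevance is parameter-free because $\dim V_S(\bgtheta)$ is constant). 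Hence $n_I/n\ge\delta$. Moreover, by the coercivity in Assumption~\ref{ass:5} ($\inf_\bgtheta\mathbb G_\bgtheta(\blA)\to\infty$ as $\lambda_{\max}(\blA)\to\infty$), it is enough to bound the scalar potential $V_n:=\mathbb G_{\widehat{\bgtheta}_n^{\text{ML}}}(\widehat\bgSigma_n)=\mathbb F_{\widehat{\bgtheta}_n^{\text{ML}}}(\overline{\bgpi}_n)$.

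Next I would make the descent structure explicit. The update is $\overline{\bgpi}_{n+1}=\tfrac{n}{n+1}\overline{\bgpi}_n+\tfrac1{n+1}\ble_{a_{n+1}}$, where $\ble_a\in\ShatA$ is the vertex putting all mass on $a$, so GI0 and GI1 are conditional-gradient (Frank--Wolfe) steps on the convex map $\bgpi\mapsto\mathbb F_{\widehat{\bgtheta}_n^{\text{ML}}}(\bgpi)$ over $\ShatA$ with step $\gamma_n=1/(n+1)$: GI1 selects the vertex minimizing $\langle\nabla_\bgpi\mathbb F_{\widehat{\bgtheta}_n^{\text{ML}}}(\overline{\bgpi}_n),\ble_a\rangle$, and GI0 its exact-value analogue. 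Two facts drive the argument. First, the optimum is uniformly bounded: the uniform design gives $\sum_a\frac1{|\mathcal A|}\mathcal I_a(\bgtheta)\succeq\frac{\underline c}{|\mathcal A|}I_p$ (full relevance, Assumption~\ref{ass:6B}), hence $M^*:=\sup_{\bgtheta\in\bgTheta}\min_{\bgpi\in\ShatA}\mathbb F_\bgtheta(\bgpi)<\infty$ by monotonicity of $\mathbb G_\bgtheta$. Second, a Taylor expansion along the segment yields the one-step bound
\begin{equation*}
V_{n+1}\le V_n+\gamma_n\,\mathrm{gap}_n+\tfrac12\gamma_n^2 L_n+\rho_n,
\end{equation*}
where by convexity $\mathrm{gap}_n=\min_a\langle\nabla\mathbb F_{\widehat{\bgtheta}_n^{\text{ML}}},\ble_a-\overline{\bgpi}_n\rangle\le \min_\bgpi\mathbb F_{\widehat{\bgtheta}_n^{\text{ML}}}(\bgpi)-V_n\le M^*-V_n$, $L_n$ bounds the curvature of $\mathbb F$ along the step, and $\rho_n$ collects the effect of replacing $\widehat{\bgtheta}_n^{\text{ML}}$ by $\widehat{\bgtheta}_{n+1}^{\text{ML}}$. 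The term $\gamma_n(M^*-V_n)$ is mean-reverting, pushing $V_n$ toward the finite level $M^*$; this is exactly the self-regularization produced by the inverted Fisher information.

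The main obstacle is the curvature term. Differentiating $\{\sum\pi\mathcal I_a\}^{-1}$ twice produces factors $\widehat\bgSigma_n\mathcal I_a\widehat\bgSigma_n\mathcal I_a\widehat\bgSigma_n$, so $L_n$ grows like $\lambda_{\max}(\widehat\bgSigma_n)^3$, which is largest precisely when $V_n$ is large; a direct supermartingale estimate is therefore circular. To break the circularity I would sharpen the gap. When $\Lambda_n:=\lambda_{\max}(\widehat\bgSigma_n)$ is large with top eigenvector $v$, relevance of $\mathcal A$ forces some $a^*$ with $v^T\mathcal I_{a^*}(\widehat{\bgtheta}_n^{\text{ML}})v\ge\underline c/|\mathcal A|$, and using the explicit form of $\Phi_q$ (Assumption~\ref{ass:5}(1)) or the positive-definiteness and bounded condition number of $\nabla\mathbb G$ (Assumption~\ref{ass:5}(2)) one gets, in the representative case $\mathbb G=\Phi_1=\tr$, the Euler identity $\langle\nabla\mathbb F,\overline{\bgpi}_n\rangle=-V_n$ together with $\langle\nabla\mathbb F,\ble_{a^*}\rangle\le-c_1\Lambda_n^2$, whence $\mathrm{gap}_n\le-\tfrac{c_1}2\Lambda_n^2$ for $\Lambda_n$ large. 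Thus the negative drift is of order $\Lambda_n^2/n$ while the curvature is of order $\Lambda_n^3/n^2$, so the drift dominates once $\Lambda_n=o(n)$. I would secure $\Lambda_n=o(n)$ by bootstrapping: the nondegenerate initialization gives the crude bound $\Lambda_n\le n/\underline c$ (the relevant initial block persists, controlled uniformly over the compact $\bgTheta$), which is fed back into the sharpened inequality to improve the exponent, while $\rho_n$ is shown to be summable after multiplication by $n$ via continuity of $(\bgtheta,\bgSigma)\mapsto(\mathbb G_\bgtheta(\bgSigma),\nabla\mathbb G_\bgtheta(\bgSigma))$ and $\bgtheta\mapsto\mathcal I_a(\bgtheta)$ (Assumptions~\ref{ass:3} and~\ref{ass:5}) and the $O(1/n)$ stability of successive MLEs. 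With these estimates $\{V_n\}$ obeys a Robbins--Siegmund-type almost-supermartingale inequality (in the refined form developed in this paper), so $V_n$ converges a.s.\ and is in particular bounded; the first-paragraph reduction then gives $\liminf_n n_I/n>0$. I expect the curvature-versus-gain balance near the degenerate boundary, and the ruling out of persistent excursions of $\Lambda_n$ into the inconclusive range $\Lambda_n\asymp n$, to be the crux; an alternative route, should the bootstrapping prove delicate, is a subsequence-contradiction argument showing that on any event where $\overline{\bgpi}_n$ accumulates at a non-relevant support, the starved direction $v$ becomes strictly the most attractive to the greedy rule infinitely often, forcing it to be sampled at a positive rate and contradicting its starvation.
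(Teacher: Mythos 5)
Your reduction (boundedness of $\lambda_{\max}(\widehat\bgSigma_n)$ implies $n_I/n$ bounded below) and your crude bound $\Lambda_n\le n/\underline{c}$ are both correct, but the descent argument that is supposed to close the loop has a genuine gap, and it sits exactly where you yourself locate the crux. With $\mathbb{G}=\Phi_1$ the drift is of order $\Lambda_n^2/n$ and the curvature of order $\Lambda_n^3/n^2$, so the comparison is favorable only when $\Lambda_n=o(n)$; but the only a priori bound available is $\Lambda_n\le n/\underline{c}$, which lands precisely in the inconclusive regime $\Lambda_n\asymp n$, so the bootstrap has nothing to start from and never improves the exponent. Worse, the remainder $\rho_n$ cannot be controlled as you propose: $\rho_n$ is bounded by $\|\nabla_{\bgtheta}\mathbb{F}_{\bgtheta}(\overline{\bgpi}_{n+1})\|\cdot\|\widehat{\bgtheta}_{n+1}^{\text{ML}}-\widehat{\bgtheta}_n^{\text{ML}}\|$, where the first factor itself scales like $\Lambda_n^2$ (it involves $\widehat\bgSigma_n(\nabla_\bgtheta\mathcal{I}^{\bgpi})\widehat\bgSigma_n$), and the $O(1/n)$ stability of successive MLEs is available only when the accumulated information is nondegenerate --- which is exactly the exploration property being proved. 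In the degenerate regime the likelihood is flat in the starved direction, successive MLEs can jump by $O(1)$, and $\rho_n$ can be of order $\Lambda_n^2\asymp n^2$. This circularity is why the paper proves a strictly stronger statement: its Theorem on selection bounds holds for an \emph{arbitrary} sequence $\bgtheta_n\in\bgTheta$, deterministically and uniformly in $n$, so no property of the MLE sequence is ever needed.

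The paper's route is entirely different from your potential-function scheme and avoids both obstacles. It orders the experiments by their counts $m^n_a$, defines $t_n$ as the smallest number of top-count experiments whose information matrices span $\mathbb{R}^p$ (so that $n_I=m^n_{\tau_n(t_n)}$), and shows by a matrix-perturbation argument --- decomposing $\mathcal{I}(\bgtheta_n;\va_n)$ into a dominant block $\blA$ from frequently sampled experiments plus a small perturbation $\blE$, and applying the Davis--Kahan $\sin\Theta$ theorem --- that whenever a ratio $m^n_{a^{(s)}}/m^n_{a^{(s+1)}}$ with $s\le t_n-1$ exceeds an explicit threshold, both GI0 and GI1 are \emph{forced} to select an experiment that strictly increases the spanned dimension. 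An induction on these ratios (with an iteratively defined constant sequence $M_j$) then yields $\sup_{n\ge n_0} n_{\max}/n_I<\infty$, hence $\inf_{n\ge n_0}n_I/n\ge C>0$, which is stronger than the $\liminf$ statement. Your fallback ``subsequence-contradiction'' sketch is the germ of this idea, but as stated it only delivers that the starved direction is selected infinitely often, not at a positive rate; converting ``forced selection above a ratio threshold'' into a uniform frequency bound is exactly the quantitative induction the paper carries out, and it is the missing piece of your proposal.
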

Below, we discuss the heuristic ideas for justifying the above proposition, while clarifying the rigorous proof is much more involved. 
{Let $\cA_{\max} = \arg\max_{a\in \mathcal{A}}n_a$ be the set of experiments that are most frequently selected. A key observation is that the {\em inverted Fisher information, through its directional derivatives in experiment selection rules, acts as a regularizer,} which means that if $n_{\max}/n_I$ is large enough, then we can show that
\begin{equation}\label{eq:partial-regularizer}
\partial_{\bll_{a^m}}\mathbb{F}_{\widehat\bgtheta_n}(\overline{\pi}_n) >  \partial_{\bll_{a'}} \mathbb{F}_{\widehat\bgtheta_n}(\overline{\pi}_n )
\end{equation}
\sloppy for all $a^m\in \cA_{\max}$ and some $a'\notin \cA_{\max}$, { where 
$$\partial_{\bll_{a}}\mathbb{F}_{\widehat\bgtheta_n} = \biginnerpoduct{\overline{{\bgpi}}^a_{n+1}-\overline{{\bgpi}}_{n}}{\frac{\partial}{\partial \bgpi}  \mathbb{G}_{\widehat{\bgtheta}_{n}}\big[\big\{\sum_{a \in \mathcal{A} } \pi(a) \mathcal{I}_a( {\widehat{\bgtheta}_{n}}  ) \big\}^{-1}\big]\big|_{\bgpi = \overline{{\bgpi}}_{n}}}  $$
denotes the directional derivative along the direction $\bll_a=\overline{{\bgpi}}^a_{n+1}-\overline{{\bgpi}}_{n}$.}
This implies that no action from $ \cA_{\max}$ will be selected and the ratio \(n_I/n\) is bounded from below, if we follow the experiment selection rule $a_{n+1} \in\arg\min_{a\in\cA}\partial_{\bll_{a}} \mathbb{F}_{\widehat\bgtheta_n}(\overline{\pi}_n)$. According to Equation~\eqref{eq:first-approximation} and additional asymptotic analysis, this experiment selection rule based on directional derivatives is asymptotically equivalent to \textrm{GI0} and \textrm{GI1}, and,  consequently, Proposition~\ref{prop:exploration} holds.

Note that \eqref{eq:partial-regularizer} itself is challenging to prove, for which we first prove that the following decomposition of the information holds:
$\widehat\bgSigma_n=\mathcal{I}(\widehat\bgtheta_n;\va_n)=\blA+\blE$, and $\blA,\blE$ satisfy: 
\begin{enumerate}
    \item 
    $
    \blA=\sum_{a\in \mathcal{A},\frac{n_a}{n}\geq U} \frac{n_a}{n}\mathcal{I}_{a}(\widehat\bgtheta_n ) \text{ and }\blE=\sum_{a\in \mathcal{A},\frac{n_a}{n} < U} \frac{n_a}{n}\mathcal{I}_{a}(\widehat\bgtheta_n),
    $
    for some $U>0$. %
    \item $\blA$ is a {singular} and positive semidefinite matrix.
    \item $\blE$ is a positive semidefinite matrix and the maximum eigenvalue of $\blE$ is much smaller than the  smallest non-zero eigenvalue of $\blA$.
    \item There exists $a'\in  \mathcal{A}$ such that $n_{a'}\leq n_I$ and {$\mathcal{I}_{a'}(\widehat\bgtheta_n)\notin \cR(\blA)$, where $\cR(\blA)$ denotes the column space of $\blA$. This implies 
\[
    \liminf_{\norm{\blE}\to 0}\ \operatorname{tr} \left[ \nabla \mathbb{G}_{\widehat{\boldsymbol{\theta}}_{n}} \big(\widehat\bgSigma_n \big) (\blA+\blE)^{-1}\mathcal{I}_{a'}(\widehat{\boldsymbol{\theta}}_{n}) (\blA+\blE)^{-1} \right] = \infty.
\]    
}
    \item { For all $a^{m}\in \cA_{\max}$, $\mathcal{I}_{a^{m}}(\widehat\bgtheta_n)\in \cR(\blA)$. This implies 
\[
\limsup_{ \norm{\blE}\to 0}\  \operatorname{tr} \left[ \nabla \mathbb{G}_{\widehat{\boldsymbol{\theta}}_{n}} \big( \widehat\bgSigma_n \big) (\blA+\blE)^{-1} \mathcal{I}_{a^{m}}(\widehat{\boldsymbol{\theta}}_{n}) (\blA+\blE)^{-1} \right]<\infty.
\]    
}
\end{enumerate} 
}
{We treat $\blA$ as the dominating term and $\blE$ as a small perturbation when using the above matrix decomposition. With additional matrix perturbation analysis of $\widehat\bgSigma_n^{-1} = (\blA+\blE)^{-1}$ around its non-continuous point $\blA$, a careful use of the Davis-Kahan \(\sin \Theta\) theorem \citep{yu2015useful}, and additional iterative analysis, we can show that $\operatorname{tr} \big[ \nabla \mathbb{G}_{\widehat{\boldsymbol{\theta}} _{n}} (\widehat\bgSigma_n ) \widehat\bgSigma_n^{-1} \mathcal{I}_{a'}(\widehat{\boldsymbol{\theta}}_{n}) \widehat\bgSigma_n^{-1} \big] > \operatorname{tr} \big[ \nabla \mathbb{G}_{\widehat{\boldsymbol{\theta}}_{n}} ( \widehat\bgSigma_n ) \widehat\bgSigma_n^{-1} \mathcal{I}_{a^{m}}(\widehat{\boldsymbol{\theta}}_{n}) \widehat\bgSigma_n^{-1} \big]$ for all $a^m\in \cA_{\max}$.  This, along with Equation~\eqref{eq:first-approximation} implies \eqref{eq:partial-regularizer}.}

\subsection{Exploitation}
\sloppy In active sequential estimation, optimal exploitation requires frequency of the selected experiments to approximate the optimal proportion ${\bgpi}^* = \arg\min_{{\bgpi}\in \ShatA} \mathbb{G}_{\bgtheta^*}( \{ \mathcal{I}^{{\bgpi}}(\bgtheta^*) \}^{-1} )$, when the estimator is  accurate enough. In related sequential decision problems, such as active sequential hypothesis testing, optimal exploitation is commonly attained through a `plug-in' method. This method assumes the estimator is accurate and replaces ${\bgtheta^*}$ with the estimator for calculating the proportion for the subsequent sampling. The `plug-in' method's theoretical analysis usually combines the consistency result with the optimization problem's continuity.
 However, this approach does not work for \textrm{GI0/GI1 algorithms}, which  optimize one-step-ahead information gain over the {\em discrete set} $\mathcal{A}$, rather than  the {\em probability simplex}  $\ShatA$. Consequently, it is challenging to determine whether \textrm{GI0/GI1} approximately solve the long-term optimization problem $\arg\min \mathbb{F}_{\bgtheta^*}(\bgpi)$ over the probability simplex. This issue is divided into two specific challenges:
\begin{challenge}[Noiseless case]\label{challenge:exploitation-noiseless}
For \textrm{GI0} selection \eqref{eq:GI0} and \textrm{GI1} selection \eqref{eq:GI1} with $\widehat{\bgtheta}_1=\widehat{\bgtheta}_2=\cdots=\bgtheta^*$, {do we have the convergence $\lim_{n\to\infty}\mathbb{F}_{\bgtheta^*} (\overline{\bgpi}_n)=\mathbb{F}_{\bgtheta^*} ({\bgpi}^*$)?}
\end{challenge}

\begin{challenge}[Noisy case]\label{challenge:exploitation-noise}
    How does the difference between $\widehat{\bgtheta}_n$ and $\bgtheta^*$ affect the convergence of the algorithms?
\end{challenge} 
{Challenge~\ref{challenge:exploitation-noiseless} is roughly addressed using the following arguments. First, we can show that $\mathbb{F}_{\boldsymbol{\theta}^*}(\cdot)$ is convex. By Jensen's inequality, we obtain that
\begin{equation*}
    \mathbb{F}_{ {\bgtheta}^* }\big(\frac{n-1}{n}\overline{{\bgpi}}_{n-1}+\frac{1}{n}{\bgpi}^*\big)-\mathbb{F}_{ {\bgtheta}^* }({\bgpi}^*)\leq (1-\frac{1}{n})\big\{\mathbb{F}_{ {\bgtheta}^*}(\overline{{\bgpi}}_{n-1})-\mathbb{F}_{ {\bgtheta}^*} ({\bgpi}^*)\big\}. %
\end{equation*} 
Notice that by Taylor expansion, for any ${\bgpi}\in \ShatA$,
\begin{equation*}
   \mathbb{F}_{ {\bgtheta}^* }\big(\frac{n-1}{n}\overline{{\bgpi}}_{n-1}+\frac{1}{n}{\bgpi}\big)-\mathbb{F}_{ {\bgtheta}^*}(\overline{{\bgpi}}_{n-1})=\biginnerpoduct{\nabla\mathbb{F}_{ {\bgtheta}^*}(\overline{{\bgpi}}_{n-1}) }{\frac{1}{n}{\bgpi}-\frac{1}{n}\overline{{\bgpi}}_{n-1}}+O({1}/{n^2}).
\end{equation*}
The first term on the right-hand side of the above equation is linear in ${\bgpi}$ over the simplex $\ShatA$. Thus, its minimum is achieved at a point mass at $a'$ for some $a'\in\cA$, i.e.,  $\bgpi = \bgdelta_{a'}:=(I(a=a'))_{a\in\cA}$. It can be shown that the solution to the optimization $\arg\min_{a'\in \cA}\biginnerpoduct{\nabla\mathbb{F}_{ {\bgtheta}^*}(\overline{{\bgpi}}_{n-1}) }{\frac{1}{n}\bgdelta_{a'}-\frac{1}{n}\overline{{\bgpi}}_{n-1}}$
coincides with the selection rule \textrm{GI1} if we replace the MLE with \( \bgtheta^* \) (see Equation~\eqref{eq:GI1}). Let $a^{0}_{n}$ and \( a_n^{1} \)  be the experiments selected by  \textrm{GI0} and \(\text{GI1}\) (with the MLE replaced by the true parameter), respectively. Combining the above analysis with the definition of \textrm{GI0}, we obtain
\begin{equation*}
\begin{split}
    &\mathbb{F}_{\bgtheta^*} ( \overline{{\bgpi}}^{a^{0}_n}_n)-\mathbb{F}_{\bgtheta^*} (  {{\bgpi}}^*)\leq      \mathbb{F}_{\bgtheta^*} ( \overline{{\bgpi}}^{a^{1}_n}_n)-\mathbb{F}_{\bgtheta^*} ( {{\bgpi}}^* ) = \min_{a'\in \cA}\biginnerpoduct{\nabla\mathbb{F}_{ {\bgtheta}^*}(\overline{{\bgpi}}_{n-1}) }{\frac{1}{n}\bgdelta_{a'}-\frac{1}{n}\overline{{\bgpi}}_{n-1}}+O(1/n^2)\\
    \leq & \mathbb{F}_{ {\bgtheta}^* }(\frac{n-1}{n}\overline{{\bgpi}}_{n-1}+\frac{1}{n}{\bgpi}^*)-\mathbb{F}_{ {\bgtheta}^*}({{\bgpi}}^*)+O({1}/{n^2})\leq (1-\frac{1}{n})(\mathbb{F}_{ {\bgtheta}^*}(\overline{{\bgpi}}_{n-1})-\mathbb{F}_{ {\bgtheta}^*} ({\bgpi}^*))+O({1}/{n^2}).
\end{split}
\end{equation*}
The above display suggests that the distance $\mathbb{F}_{ {\bgtheta}^*}(\overline{{\bgpi}}_{n-1})-\mathbb{F}_{ {\bgtheta}^*} ({\bgpi}^*)$ is reduced at the factor $1-1/n$ for \textrm{GI0} and \textrm{GI1} under the noiseless case. With  additional iterative analysis, we can further show that $\mathbb{F}_{\boldsymbol{\theta}^*}(\overline{\boldsymbol{\pi}}_{n})-\mathbb{F}_{\boldsymbol{\theta}^*}( \bgpi^*)\leq O({ \log n }/{n})$. Consequently, the frequency of the selected experiment converges to the optimal proportion.
}

On the other hand, the above heuristic analysis does not justify the convergence of the algorithm in the noisy case (Challenge~\ref{challenge:exploitation-noise}), nor does it provide the convergence rate. 
We address these challenges by establishing and using a modified Robbins-Siegmund theorem, which extends the classic result by \cite{robbins1971convergence}, %
to the stochastic process $Z_n = \mathbb{F}_{\bgtheta^*}( \overline{{\bgpi}}_n ) - \mathbb{F}_{\bgtheta^*}(  {{\bgpi}^*}  )$. %

\section{Simulation}\label{sec:numerical}
In this section, we present two simulation studies. The first  assesses the finite sample performance of the proposed methods under the setting of Example~\ref{eg:toy-eg}. The second is concerned with situations where $p$ or $|\cA|$ is large. {Throughout the section, we choose the criterion function $\mathbb{G}_{\bgtheta}(\bgSigma)=\Phi_1(\bgSigma) =\operatorname{tr}(\bgSigma)$ for \textrm{GI0} and \textrm{GI1}.}
Due to the page limit, we leave some detailed specifications and additional simulation results in the supplementary material.

\subsection{Simulation Study 1}\label{sec:simulation_study1}
We first evaluate the performance of the proposed methods under the settings of Example~\ref{eg:toy-eg}. Specifically, let \( p = 2 \), \( \mathcal{A} = \{1,2,3\} \), and $f_{\boldsymbol{\theta},1}(1)  =1/(1+ e^{-(-0.1 + \theta_1)})$, $f_{\boldsymbol{\theta},2}(1) = 1/(1+e^{-\theta_2})$ and $f_{\boldsymbol{\theta},3}(1)=1/(1+e^{-({\theta_1}/{2} + \theta_2)})$. Also, let $\bgTheta=[-3,3]^2$ in this section. 

We start with illustrating the optimal proportion $\bgpi^*$. According to Theorem~\ref{thm:empirical pi as converge}, the optimal proportion for experiment selection is 
\begin{equation}\label{equ:lim_cat_eg}
    \bgpi^*=(\pi^*(1),\pi^*(2),\pi^*(3)) = \arg\min_{\pi \in \ShatA}\mathbb{F}_{\bgtheta^*}(\bgpi)
=    \arg\min_{\pi \in \ShatA} \operatorname{tr} \big\{\mathcal{I}^{{\bgpi}}(\boldsymbol{\theta^*})^{-1}\big\}.
\end{equation}
Note that $\bgpi^*$ is dependent on the true model parameter $\bgtheta^*$. %
Figure~\ref{fig:opt_prop} illustrates the dependency between \( \boldsymbol{\pi}^* \) and \( \theta_2 \) while fixing $\theta_1^*=1 $. %
From the figure, we see that  the optimal proportion varies as  \(\theta_2\) changes. Additionally, for relatively small \(\theta_2\), all three experiments have non-zero optimal proportions. However, for large \(\theta_2\), $\pi^*(3)$ stays at zero, meaning that experiment $a=3$ is unnecessary in this case. %

Next, we investigate the empirical proportion of selected experiments following the proposed methods. Recall that the empirical proportion is defined as $
\overline{\pi}_n(a) = \frac{1}{n}  \big|\{i; a_i=a,1\leq i\leq n\}\big|$, for $a\in\{1,2,3\}.$ We generate data from the model with the true parameter \(\bgtheta^* =( 1 , 0)^T\) and plot the sample path of $\overline{\pi}_n(a)$ against different sample size $n$ following \textrm{GI0} and \textrm{GI1} in Figure~\ref{fig:emp_prop_GI}.
We clarify that the values of the empirical proportion in the figure are obtained without averaging. That is, they are based on a Monte Carlo simulation with only one replication. From Figure~\ref{fig:emp_prop_GI}, we can see that the empirical proportions are approximating their respective optimal values as $n$ increases, for both \textrm{GI0} and \textrm{GI1}. This is consistent with Theorem~\ref{thm:empirical pi as converge}, which states that the empirical proportion almost surely converges to the optimal proportion. We also observe that the selections made by \textrm{GI0} and \textrm{GI1} are almost identical. This may be due to the fact that they are asymptotically equivalent (see Equation \eqref{eq:first-approximation}), and they are initialized with the same random seed.

Now, we evaluate the estimation accuracy of MLE following the proposed \textrm{GI0} and \textrm{GI1}, and compare it with other experiment selection methods. The estimation accuracy is quantified using the estimated MSE, defined as
\( \widehat{\text{MSE}}_n = \frac{1}{N} \sum_{j=1}^N \| \widehat{\bgtheta}_{n,j}^{\text{ML}} - \bgtheta^* \|^2,\)
where $N=20000$ is the number of Monte Carlo replications and  $\widehat{\bgtheta}_{n,j}^{\text{ML}}$ is the MLE from the $j$-th Monte Carlo experiment with the sample size $n$. We compare \textrm{GI0} and  \textrm{GI1} with the following experiment selection rules:
\begin{enumerate}
    \item Uniform selection ({Unif}): $a_{n+1}$ is uniformly sampled from $\mathcal{A}$.
    \item Random optimal proportion selection (Opt\_random): $a_{n+1}$ is sampled randomly from $\cA$ according to the optimal proportion $\bgpi^*$. {That is, $\mathbb{P}(a_{n+1} = a|\mathcal{F}_n) = \pi^*(a)$ for $a \in \mathcal{A}.$} %
    \item Deterministic optimal proportion selection (Opt\_deterministic):\\
    $a_{n+1}=\arg\min_{a\in \mathcal{A}} \{\overline{\pi}_n(a) - \pi^*(a) \}.
    $
\end{enumerate}
We clarify that both Opt\_random and Opt\_deterministic  methods require knowledge of the unknown parameter $\bgtheta^*$, so they are not implementable in practice. These methods serve as `oracle' benchmarks allowing comparison with the proposed methods. Figure \ref{fig:estimation_accuracy} depicts the estimated MSE as a function of the sample size \( n \) for different experiment selection rules. According to the figure, \(\textrm{GI0}\), \(\textrm{GI1}\), and Opt\_deterministic perform very similarly and outperform both Unif and Opt\_random. {These findings are consistent with Theorem~\ref{thm:ultimate}}.

Finally, we check the finite sample validity of the normal approximation of the MLE. According to Theorem~\ref{thm:Asy_cov_mle} and Theorem~\ref{thm:Asy_Normal_final_stopping_time_sp}, for large $n$ and small $c$,
\begin{equation}\label{equ:CIs}
\bld^T\widehat{\bgtheta}_{n}^{\text{ML}} \pm \frac{1}{\sqrt{n}}Z_{\alpha/2}  {\norm{\Big\{  \mathcal{I}^{ \overline{\pi}_{ n} }(\widehat{\bgtheta}_{ n}^{\text{ML}})\Big\}^{ -1/2}\bld } }\text{ and }\bld^T\widehat{\bgtheta}_{\tau_c}^{\text{ML}} \pm Z_{\alpha/2}\cdot c
\end{equation}
give approximate $1-\alpha$ confidence intervals (CIs) for \(\bld^T\bgtheta\) where $\bld\in \mathbb{R}^2$ is nonzero and the stopping time  $\tau_c$ is defined in \eqref{equ:early stopping g} with $h(\bgtheta)=\bld^T\bgtheta$. 
Table~\ref{table:CI} shows the coverage probability of the above CIs at different sample sizes, following \textrm{GI0} or \textrm{GI1}, where we set $\alpha = 0.05$, $\bld=(-0.5454216, -0.8381619)^T$ {and $\bgtheta^* = (1,0)^T$}, based on a Monte Carlo simulation. From the table, we see that the coverage probability is  close to the confidence level $1-\alpha$ for reasonably large $n$ and the random stopping time $\tau_c$.

\begin{table}[ht]
\centering
\begin{tabular}{|c|c|c|c|c|}
\hline
$n$ & 25 & 50 & 100 & $\tau_{0.1}$ \\ \hline
\text{GI0} & 0.981  & 0.954  & 0.951 & 0.955 \\ 
\hline
\text{GI1} & 0.977 & 0.958 & 0.959 & 0.938 \\ 
\hline
\end{tabular}
\caption{Coverage probability for CIs based on~\eqref{equ:CIs}, where the number of Monte Carlo replications is $1000$. The Monte Carlo standard error for the values presented in the table is upper bounded by $0.007626$.  }
\label{table:CI}
\end{table}
We have also performed additional simulation studies and produced histograms of the estimators. These additional simulation results are given in the supplementary material.

\begin{figure}[!htb]
    \centering
    \begin{minipage}{0.45\textwidth}
        \centering
\includegraphics[width=1\textwidth,height=5cm]{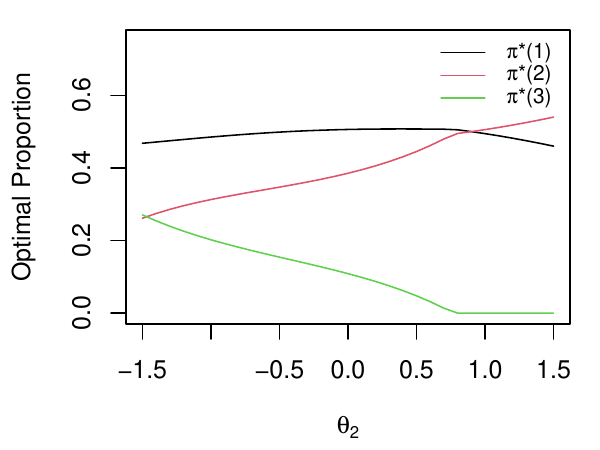}
    \caption{Optimal proportion $\bgpi^*$ as a function of \( \theta_2 \), where the true parameter satisfies \( \boldsymbol{\theta^*} =(1,\theta_2)^T \).}
    \label{fig:opt_prop}
    \end{minipage}
    \begin{minipage}{0.45\textwidth}
        \centering
\includegraphics[width=1\textwidth,height=5cm]{./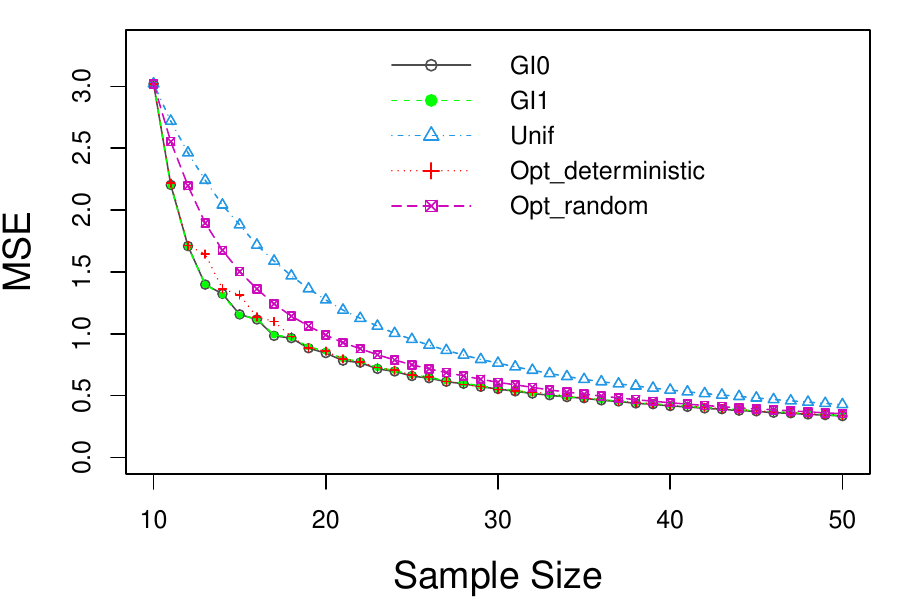}
    \caption{MSE of the MLE as sample size \(n\) varies.}
    \label{fig:estimation_accuracy}
    \end{minipage}
\end{figure}

\begin{figure}[!htb]
    \centering
    \begin{minipage}{0.45\textwidth}
        \centering
        \includegraphics[width=1\textwidth]{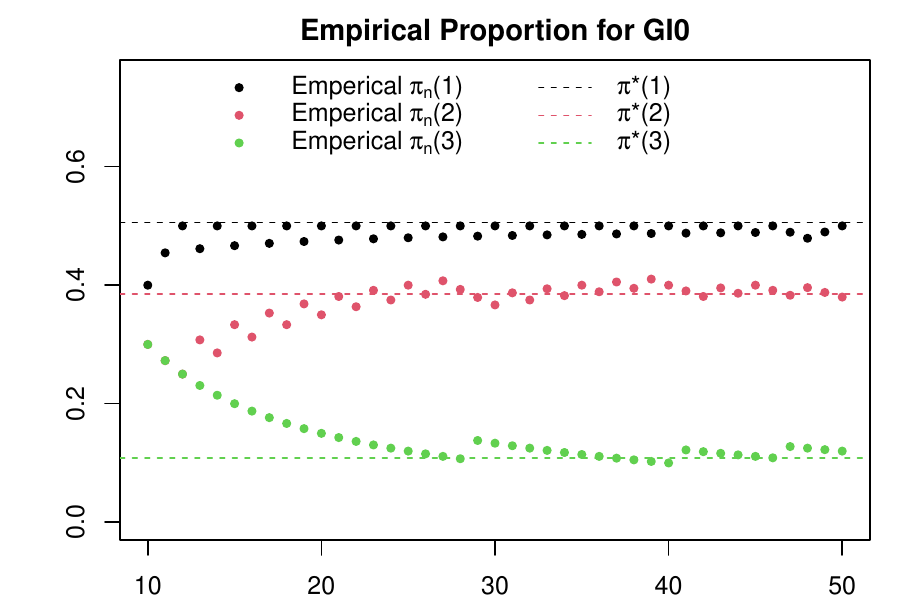}
    \end{minipage}
    \begin{minipage}{0.45\textwidth}
        \centering
    \includegraphics[width=\textwidth]{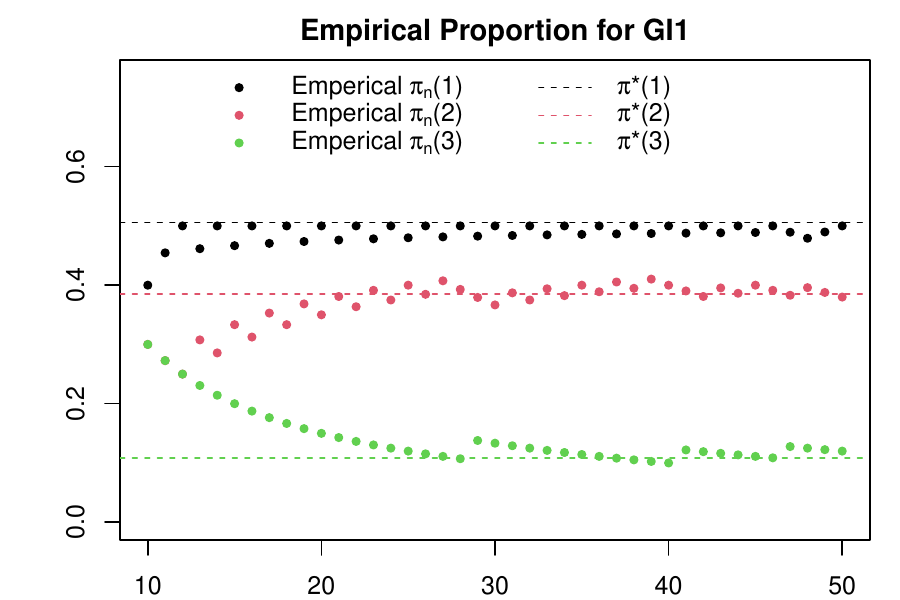}
    \end{minipage}
    \caption{Empirical proportion $\overline{\pi}_n(a)$ and the optimal proportion $\pi(a)$ for $a=1,2,3$. }
    \label{fig:emp_prop_GI}
\end{figure}

\subsection{Simulation Study 2}\label{sec:simulation_ra}
In our theoretical results, we assumed that $|\cA|$ and $p$ are fixed and $n$ grows to infinity. In this simulation study, we investigate the impact of large $|\cA|$ and $p$ on the computational time and the performance of the proposed methods. Consider the sequential rank aggregation problem described in Section~\ref{sec:app-RA}. We simulate the pairwise comparison results from a BTL model (see Equation \eqref{eq:btl}). Each coordinate of the true value of $\boldsymbol{\theta}\in \mathbb{R}^p$ are sampled independently from a uniform distribution $\mathcal{U}(-2,2)$. We vary the value of $p$ and $|\cA|$, with $p$ and $|\cA|$ ranging from $25$ to $500$ and from $p$ to $\frac{p(p+1)}{2}$, respectively. 
The computation time is given by Table~\ref{table:cal time}. Based on Table~\ref{table:cal time}, the non-paralleled \textrm{GI1} is much faster than both the non-paralleled and paralleled \textrm{GI0} when both $p$ and $|\cA|$ are large. This is consistent with Lemma~\ref{lem:complexity}.%
\begin{table*} 
\begin{tabular} 
{ 
 |c|c|c|c|c|c|c|
 }
  \hline
  & \multicolumn{3}{c|}{$p=25$} & \multicolumn{3}{c|}{$p=50$}\\
  \hline
 & $k=25$ & $k=52$ & $k=325$ & $k=50$ & $k=102$ & $k=1275$ \\
    \hline
 $\textrm{GI1}$ non-parallel& 0.676  sec & 0.430  sec & 0.416  sec & 0.963 sec &0.489    sec & 1.046 secs \\
 $\textrm{GI0}$ parallel& 1.328 secs&1.070 secs&1.784 secs&1.639 secs &1.963 secs&10.296 secs\\
 $\textrm{GI0}$ non-parallel& 1.079 secs & 1.325secs&4.790 secs& 3.202 secs & 5.030 secs   &33.430 secs\\
     \hline
  & \multicolumn{3}{c|}{$p=100$} & \multicolumn{3}{c|}{$p=500$}\\
  \hline
 & $k=100$ & $k=202$ & $k=5050$ & $k=500$ & $k=1002$ & $k=125250$ \\
     \hline
 $\textrm{GI1}$  non-parallel& 2.028   secs & 1.358  secs& 10.758  secs & 1.387 mins &57.900 secs & 2.03 hours \\
 $\textrm{GI0}$ parallel& 6.736 secs &9.594 secs& 3.240 mins & 34.322 mins &1.168 hours& 6 days \\
 $\textrm{GI0}$ non-parallel& 19.45 secs &35.065 secs &12.992 mins&  1.925 hours & 3.843 hours   &  about 20 days   \\
  \hline
 \end{tabular}
 \caption{
{The computation time for solving the MLE and selecting a new experiment at a single time point}, 
based on $100$ Monte Carlo replications, is recorded for the non-paralleled \textrm{GI1} algorithm as well as for the non-paralleled and paralleled versions of the \textrm{GI0} algorithm. For each value of \( p \), \( k=|\mathcal{A}| \) takes values in \( p \), \( 2(p+1) \), and \( \frac{p(p+1)}{2} \).
All computations are carried out on a MacBook Pro (13-inch, 2019) equipped with a 1.4 GHz Quad-Core Intel Core i5 processor.
} 
    \label{table:cal time}
\end{table*}

We also perform additional Monte Carlo simulations to assess the estimation accuracy of the proposed methods, and to study how the 
choice of $r$ in \(\bgTheta = [-r, r]^p\) affects the accuracy. Due to the page limit, details of these additional simulation studies are postponed to the supplementary material.

\section{Real Data Example}\label{sec:real_data_example}
We apply the proposed methods to a sushi preference dataset \citep{maystre2017just}. 
This dataset contains feedback from 5,000 participants who ranked 10 different types of sushi, %
selected from a total of $100$ types of sushi. Similar to the data pre-processing steps in \cite{maystre2017just}%
, we first transform each 10-item ranking into pairwise comparison results, {yielding $\binom{10}{2}\times 5000 = 225000$ pairwise comparison results}. {We fit the BTL model described in Equation~\eqref{eq:btl} with $\bgTheta = [-3,3]^{p}$ %
using all pairwise comparison data and treat the MLE of $\bgtheta$ as the ground truth. Under this setting, $p=99$, and $|\mathcal{A}|=4809$. %
We note that \(|\mathcal{A}| < \binom{100}{2}\) due to the absence of comparisons for some pairs in the dataset.}

We vary the sample size $n$ and compare the performance of the proposed \textrm{GI0} and \textrm{GI1} with two other experiment selection methods: uniform sampling and uncertainty sampling. Uncertainty sampling is a popular approach for active learning. In the context of sequential rank aggregation (see \cite{maystre2017just}), uncertainty sampling refers to sampling the pair that is most difficult to distinguish. That is, 
{
\begin{equation}\label{equ:def_uncertainty_sampling}
a_{n+1}=\arg\max_{a\in \mathcal{A}} \big[ \min\{1-f_{\widehat\bgtheta_n,a}(1), f_{\widehat\bgtheta_n,a}(0)\}\big]
= \arg\min_{a=(i,j)\in \mathcal{A}} \{ |\widehat\bgtheta_{n,i}-\widehat\bgtheta_{n,j}| \}.
\end{equation}
}

The performance of the experiment selection rules is measured through the Kendall's \(\tau\) correlation, which is often used to measure the accuracy of ranking algorithms. Specifically, define Kendall's \(\tau\) correlation as
\begin{equation*}
\tau(\widehat\bgtheta_{n},\bgtheta^*)= {\binom{100}{2}}^{-1} \sum_{1\leq i<j\leq 100} \operatorname{sign}( \widehat\bgtheta_{n,i}-\widehat\bgtheta_{n,j} )\cdot \operatorname{sign}( \bgtheta^*_{ i}- \bgtheta^*_{ j}),
\end{equation*}
where $\operatorname{sign}$ denotes the sign function, $\widehat\bgtheta_{n}$ denotes the MLE based on $n$ observations, and $\bgtheta^*$ is the ground truth obtained using the MLE based on all $225000$ comparisons.

Figure \ref{fig:kendall_tau} illustrates the Kendall's \(\tau\) coefficient for \textrm{GI0}, \textrm{GI1}, uniform selection and uncertainty sampling for different number of comparisons $n$, based on a Monte Carlo simulation with $100$ replications. From Figure~\ref{fig:kendall_tau},  \textrm{GI0} and \textrm{GI1} behave similarly, and both outperform uniform selection and uncertainty sampling. Additional details of the Monte Carlo simulation are provided in the supplementary material.

\begin{figure}[ht]
    \centering
\includegraphics[width=0.6\textwidth]{./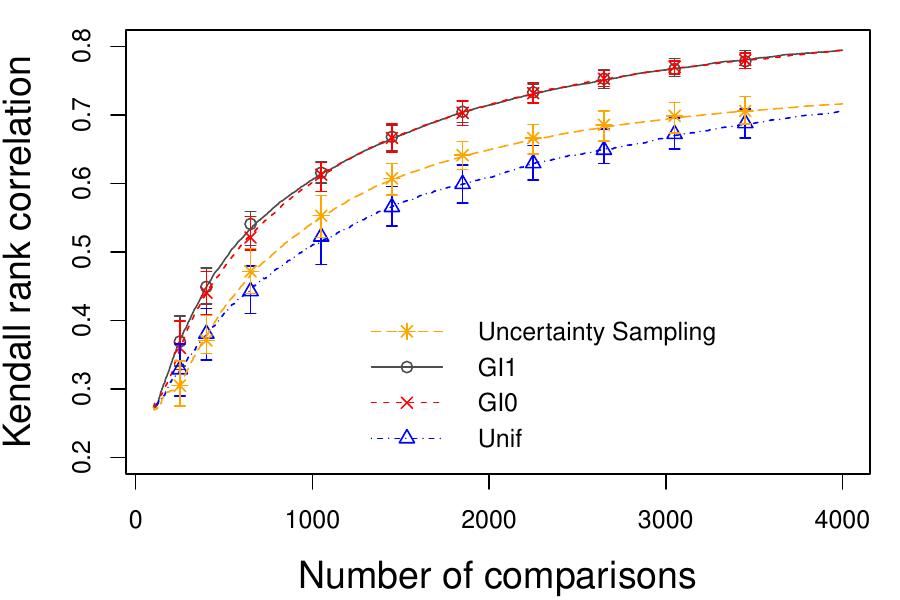}
    \caption{Comparison of different selection methods through Kendall's $\tau$ coefficient.
    The averaged Kendall's \(\tau\) correlation between \(\bgtheta^*\) and \(\widehat{\bgtheta}_{n}^{\text{ML}}\) versus the number of comparisons is plotted, along with the first and third quartiles, following different active experiment selection methods. %
    }
    \label{fig:kendall_tau}
\end{figure}

\section{Conclusion and Further Discussion}\label{sec:discussion}
In this study, we consider the problem of efficient sequential design for active sequential estimation. This problem has widespread applications across different fields; however, a systematic statistical analysis is lacking for the multidimensional case. We introduce a class of experiment selection rules that not only covers existing methods but also presents new approaches with improved numerical efficiency. Furthermore, we provide theoretical analysis including the consistency, asymptotic normality, and asymptotic optimality of the MLE following the proposed selection rule. These findings are also extended to scenarios involving early stopping rules, which are commonly used in practice. The theoretical results are highly non-trivial, and standard techniques in the literature of sequential decision making and stochastic control are not applicable. We have developed new analytical tools to tackle the theoretical challenges, which are important on their own and may be reused for other  related problems.

The current study can be extended in several directions. First, in some applications, different experiments are associated with varying sampling cost. The current method may be extended to incorporate the sampling cost in the experiment selection rules. We expect similar analytical tools can be used in the theoretical analysis. Second, theoretical results can be extended to the case where $p$ and $k$ slowly grow to infinity as $n$ grows. On the other hand, the consistency results do not hold under the high-dimensional setting where $p\geq n$. Some modifications to the estimation and experiment selection methods are necessary to ensure valid statistical inference in this case. Third, nuisance parameters may be present in some applications, where we are only interested in estimating part of the parameter efficiently. In this case, the proposed \textrm{GI0} and \textrm{GI1} still lead to a consistent and asymptotically normal MLE. However, the estimator may be asymptotically inefficient when there are redundant experiments measuring nuisance parameters. Of interest is how to design an experiment selection rule and an estimator to achieve asymptotic optimality. This is worth further investigation.

\newpage
\section*{Supplement to “Globally-Optimal Greedy Experiment Selection for Active Sequential Estimation”}\ \\
This supplement contains additional simulation results, specifications for simulation and real data analysis, and technical proof for all the theoretical results.

\section{Detailed Specifications for Simulation Studies}
In this section, we provide detailed specifications for the simulation studies in  Section \ref{sec:numerical}. %
Recall that $\mathbb{G}_{\bgtheta}(\bgSigma)=\operatorname{tr}(\bgSigma)$ throughout the simulation study.
\subsection{Detailed Specifications for Section \ref{sec:simulation_study1}} 
\subsubsection{Algorithm for Solving the Optimal Optimal Selection Proportion}\label{sec:detail-projected-gradient}
To solve the optimal selection proportion $\bgpi^*$ numerically, we apply the projected gradient descent algorithm over the simplex $\ShatA$ (see, e.g., \cite{chen2011projection}). Let $\blP_{\ShatA}$ denote the projection operator onto the simplex $\ShatA$. Initializing $\bgpi_0=(\frac{1}{3},\frac{1}{3},\frac{1}{3})$, the iterative algorithm is given by
\begin{equation*}
    \bgpi_{n+1}=\blP_{\ShatA}(\bgpi_{n}-\eta \nabla \mathbb{F}_{\bgtheta^*}(\bgpi_n)),
\end{equation*}
where the learning rate \( \eta \) is set to \( 0.001 \) and the maximum number of iterations is set to \(10000\).
{
\subsubsection{
\textrm{GI0} and \textrm{GI1} Implementation}\label{sec:S1.1.2}
When implementing \textrm{GI0} and \textrm{GI1}, the first two lines of the algorithm requires the input $\widehat\bgtheta_0$ and $a_1^0,\cdots, a_{n_0}^0$. Here we specify {$\widehat\bgtheta_0 = (0,0)^T$}, $n_0=9$, and \( a_1 = a_4 = a_7 = 1 \), \( a_2 = a_5 = a_8 = 2 \), and \( a_3 = a_6 = a_9 = 3 \).

Additionally, the MLE is solved using the \texttt{R} function \texttt{glmnet} function from the \texttt{R} package \texttt{glmnet} with the constraint  $\bgTheta = [-3,3]^2$. 
}

\subsubsection{Coverage Probability for CIs}

The coverage probability of confidence intervals described \eqref{equ:CIs} is estimated as follows:
\begin{equation*}
\frac{1}{N}\sum_{j=1}^N I\Big( |\bld^T \widehat{\bgtheta}_n^j - \bld^T {\bgtheta}^*| \leq \frac{Z_{\alpha/2}}{\sqrt{n}} {\norm{\Big\{  \mathcal{I}^{ \overline{\pi}_{ n} }(\widehat{\bgtheta}_{ n}^{\text{ML}})\Big\}^{ -1/2}\bld } }   \Big), \text{ for } n\in\{25,50,100\},
\end{equation*}
and 
\begin{equation*}
    \frac{1}{N}\sum_{j=1}^N I\Big( |\bld^T \widehat{\bgtheta}_{\tau_c}^j - \bld^T {\bgtheta}^*| \leq   Z_{\alpha/2}\cdot c   \Big),
\end{equation*}
where {$N=1000$} is the number of Monte Carlo simulation, $\widehat{\bgtheta}_{}^j$ and $\widehat{\bgtheta}_{\tau_c}^j$ are the MLE obtained in the $j$-th Monte Carlo replication with  the sample size $n$ and $\tau_c$, respectively.
\subsection{Detailed Specifications for Section \ref{sec:simulation_ra}} 
Let $\bgTheta=[-3,3]^p$.
To solve for the MLE (constrained in $\bgTheta$), we use the \texttt{glmnet} function from the \texttt{R} package \texttt{glmnet}. The computation time shown in Table \ref{table:cal time} is determined using RStudio. 

{
\subsubsection{Sampling of $\mathcal{A}$}\label{sec:graph-generation}
For a sequential rank aggregation problem under a BTL model, the graph $G= ( \{0,\cdots, p\}, \mathcal{A} )$ needs to be a connected graph for the identifiability of the model parameter (see Corollary~\ref{cor:app-RA}%
). This implies that $\mathcal{A}$ needs to satisfy some condition rather than being an arbitrary set of pairs to ensure the identifiability of the problem. Below we describe the random sampling scheme of $\mathcal{A}$ used in the Monte Carlo simulation which ensures the connectivity of $G$. Note that $|\mathcal{A}|\in\{p, 2(p+1), \frac{p(p+1)}{2}\}$ in the simulation study.
\begin{itemize}
    \item If \( |\mathcal{A}| = \frac{p(p+1)}{2} \), $G$ is a fully connected graph, 
 meaning that \( \mathcal{A} \) collects all the pairs among the $p+1$ objects. In this case, $\mathcal{A}$ is fixed throughout the Monte Carlo simulation.
    \item If \( |\mathcal{A}| = p \), a connected $G$ is equivalent to that $G$ is a minimal spanning tree for a fully connected graph. In this case, we sample $G$ uniformly from all minimal spanning trees in the Monte Carlo simulation. This is implemented using the function \texttt{sample\_spanning\_tree} from the \texttt{R} package \texttt{igraph}.
    \item If $|\mathcal{A}| = 2(p+1)$, we restrict $G$ to be $4$-regular, which means that each node from $\{0,1,\cdots,p\}$ has exactly $4$ neighbors. In this case, we sample $G$ uniformly from all $4$-regular graphs. This is implemented using the function \texttt{sample\_k\_regular} from the \texttt{R} package \texttt{igraph}.
\end{itemize}
\subsubsection{Initial estimator $\widehat\bgtheta_0$ and experiments $a_1^0,\cdots, a_{n_0}^0$}\label{sec:details-initial}
For implementing \textrm{GI0} and \textrm{GI1}, we set $\widehat\bgtheta_0=\bm{0}$ in Algorithms~\ref{alg:gi-BTL}. %

According to Corollary~\ref{cor:app-RA} in Section~\ref{sec:app-RA}, %
the initial experiments needs to be selected so that $G_0 =(\{0,\cdots,p\}, \{a_1^0,\cdots,a_{n_0}^0\} )$ is a connected subgrpah of $G$. Here, elements of $\{a_1^0,\cdots,a_{n_0}^0\}$ are not necessary to be distinct. Throughout the Monte Carlo simulation, we set $n_0=4p$, and sample $G_0$ randomly using the following steps and collect the edges in $G_0$ to form the set of initial experiments $\{a_1^0,\cdots,a_{n_0}^0\}$.
\begin{enumerate}
    \item[Step 1:] Sample uniformly from all the minimal spanning trees from $G$, which is implemented using the \texttt{R} function \texttt{sample\_spanning\_tree}. Let $(\{0,\cdots,p\},\{a_1^{\text{tree}},\cdots,a_p^{\text{tree}} \})$ denote the sampled tree.
    \item[Step 2:] Randomly sample $3p$ pairs from {$\mathcal{A} $} without replacement. Let $\{a'_{p+1},\cdots, a'_{4p}\}$ denote all sets of pairs (possibly repeated) sampled from this step.
    \item[Step 3:] $\{a_1^0,\cdots,a_{n_0}^0\}=\{a_1^{\text{tree}},\cdots,a_p^{\text{tree}} , a'_{p+1},\cdots, a'_{4p} \}$ collects all the edges generated in the first and second steps.
\end{enumerate}
Among the steps mentioned above, the first step yields a connected subgraph of $G$ with $p$ edges, and the second step expands this subgraph into another connected subgraph to have at most $p+3p=4p$ edges. 

}

\subsubsection{Algorithm Acceleration}
The accelerated \textrm{GI1} algorithm (as described in Algorithm~\ref{alg:gi1-Accelerated}) is employed for \textrm{GI1} selection, because in sequential rank aggregation problem the information matrix can be decomposed into a structure that is both sparse and of low rank with $s=2$ (see Lemma~\ref{lem:complexity}). To accelerate \textrm{GI0} Algorithm~\ref{alg:gi-BTL}, we parallel the calculation of \eqref{eq:GI0} when $|\mathcal{A}|$ is large.

\section{Detailed Specifications for the Real Data Analysis in Section~\ref{sec:real_data_example}}
\subsection{Data Structure} 
{The transformed dataset contains 225000 pairwise comparison results. 
We list these comparison results as the dataset $\mathcal{D}=\{( a^{(i)}, X^{(i)} )\}_{i=1}^{T}$, where $T=225000$, $a^{(i)}$ indicates the pairs to compare and $X^{(i)}$ is binary, indicating the corresponding pairwise comparison result.  We note that $\mathcal{D}$ is a multiset, meaning that it may have repeated elements.
\subsection{Sequential Sampling of the Pairwise Comparison Data}\label{sec:real-data-sampling-spec}
We note that, for the real data analysis, each element in the data set $\mathcal{D}$ is  sampled at most once, to prevent the redundancy of using the same data points multiple times. As a result, when we implementing an active sampling scheme for the real data analysis, we will always sample elements from $\mathcal{D}$ without replacement. 

Specifically, for all the experiment selection methods compared in this section, we set $n_0=p=99$, %
and generate the initial experiments $\{a_1^0,\cdots, a_{n_0}^0\}$ randomly following the Step 1 procedure in Section~\ref{sec:details-initial}. %
For each $j\in \{1,2,\cdots,n_0\} $, we sample the initial pairwise comparison results as follows: we sample $s_j$ uniformly from 
\(\{i: a^{(i)} =  a_j^0 , 1\leq i\leq T\}
\). Then, the initial pairs and comparison results are given by  $ ( a^{(s_1)}, X^{(s_1)} ),\cdots ( a^{(s_{n_0})}, X^{(s_{n_0})} ) $. This gives the initial data $(a_1^0, X_1),\cdots, (a_{n_0}^0, X_{n_0})$.

Let $S_{n_0}=\{s_1,\cdots,s_{n_0}\}$, $[T]=\{1,2,\cdots,T\}$.
For each $S\subset [T]$, define
\[
\mathcal{A}_{S}=\{a^{(i)} \in \mathcal{A}:   i\in [T] \backslash S\}.
\]
Next, we provide details of the implementation of different adaptive pair selection rules for $n>n_0$.
\begin{itemize}
    \item [Uniform sampling:] For $n= n_{0}, \cdots, T-1$,   sample $s_{n+1}$ uniformly from $[T] \setminus S_{n}$. Let $ S_{n+1}= S_{n}\cup \{s_{n+1}\}$. The $(n+1)-$th pair and comparison result $(a_{n+1}, X_{n+1})$ is given by $(a^{(s_{n+1} )} , X^{(s_{n+1} )})$.
    \item[\textrm{GI0} and \textrm{GI1}:]For $n=n_0, \cdots, T-1$,  calculate $a_{n+1}$ according to \eqref{eq:GI0} and \eqref{eq:GI1} with $\mathcal{A}$ replaced by $\mathcal{A}_{S_n}$ for \textrm{GI0} and \textrm{GI1}, respectively. Next, we uniformly sample the index $s_{n+1}$ from $\{ i\in [T] \backslash S_{n} : a^{(i)}=a_{n+1} \}$. Let $ S_{n+1}= S_{n}\cup \{s_{n+1}\}$. The $(n+1)-$th pair and comparison result $(a_{n+1}, X_{n+1})$ is given by $(a^{(s_{n+1} )} , X^{(s_{n+1} )})$.
    \item[Uncertainty sampling:] For $n=n_0, \cdots, T-1$,  calculate $a_{n+1}$ according to \eqref{equ:def_uncertainty_sampling} with $\mathcal{A}$ replaced by $\mathcal{A}_{S_n}$. Next, we uniformly sample the index $s_{n+1}$ from $\{ i\in [T] \backslash S_{n} : a^{(i)}=a_{n+1} \}$. Let $ S_{n+1}= S_{n}\cup \{s_{n+1}\}$. The $(n+1)-$th pair and comparison result $(a_{n+1}, X_{n+1})$ is given by $(a^{(s_{n+1} )} , X^{(s_{n+1} )})$.
\end{itemize}
}
{
\subsection{Specifications for the Monte Carlo Experiments} For Figure~\ref{fig:kendall_tau},
we perform a Monte Carlo simulation with $100$ replications. For each replication, we sample \( 99 \) pairs of comparisons at random for initialization, and then perform sequential sampling for following different methods using the initialization and sampling method described in  Section~\ref{sec:real-data-sampling-spec}. We specify \( \mathbb{G}_{\bgtheta}(\bgSigma) = \operatorname{tr}(\bgSigma) \) for implementing \textrm{GI0} and \textrm{GI1} and $\bgTheta=[-3,3]^{p}$ to solve the MLE. 
}

\section{Additional Simulation Results}
In this section, we present additional simulation results.

\subsection{Additional Simulation Results for Simulation Study 1}
Let the true value $\bgtheta^*=(1,0)^T$. 
{The initial estimator $\widehat\bgtheta_0$, $n_0$, and experiments $\{a_1^0,\cdots a_{n_0}^0\}$ are selected according to Section \ref{sec:S1.1.2}. Let the sample size $n=50$.}
Define
\[
{Z}^j_1=\frac{ \sqrt{N}  (\widehat\theta_1^j -\theta^*_1) }{ \big(\ble_1^T \{\mathcal{I}^{\overline{\bgpi}_n }(\widehat\bgtheta^j_{n} ) \}^{-1} \ble_1 \big)^{1/2} } \text{ and } {Z}^j_2=\frac{ \sqrt{N}  (\widehat\theta_2^j -\theta^*_2) }{ \big(\ble_2^T \{\mathcal{I}^{\overline{\bgpi}_n }(\widehat\bgtheta^j_{n} )\}^{-1}  \ble_2 \big)^{1/2} },
\]
where $\widehat\bgtheta^j_{n}=(\widehat\theta_1^j,\widehat\theta_2^j)^T$ represents the MLE of $\bgtheta$ based on $j-$th Monte Carlo replication, and \(N = 1000\) is the number of Monte Carlo replications.
That is, $Z_1^j$ and $Z_2^j$ are i.i.d. copies of $Z_1=\frac{ \sqrt{N}  (\widehat\theta_1 -\theta^*_1) }{ \big(\ble_1^T \{\mathcal{I}^{\overline{\bgpi}_n }(\widehat\bgtheta_{n} ) \}^{-1} \ble_1 \big)^{1/2} }$ and $Z_2=\frac{ \sqrt{N}  (\widehat\theta_2 -\theta^*_2) }{ \big(\ble_2^T \{\mathcal{I}^{\overline{\bgpi}_n }(\widehat\bgtheta_{n} )\}^{-1}  \ble_2 \big)^{1/2} }$. 
In Figure~\ref{fig:CLT_verify}, we plot the histogram for $\{Z^j_1\}^N_{j=1}$ and $\{Z^j_2\}^N_{j=1}$ following \textrm{GI0} and \textrm{GI1}. 

\begin{figure}[ht]
    \centering
    \includegraphics[width=1\textwidth,height=6cm]{./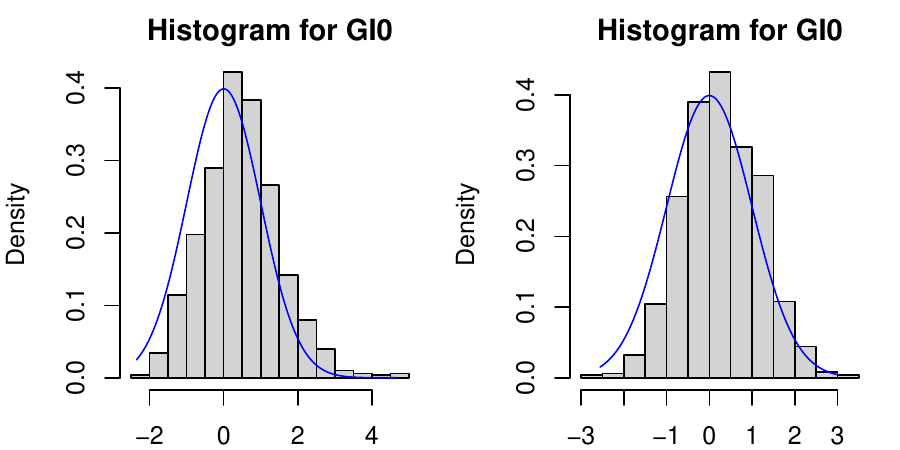}
    \includegraphics[width=1\textwidth,height=6cm]{./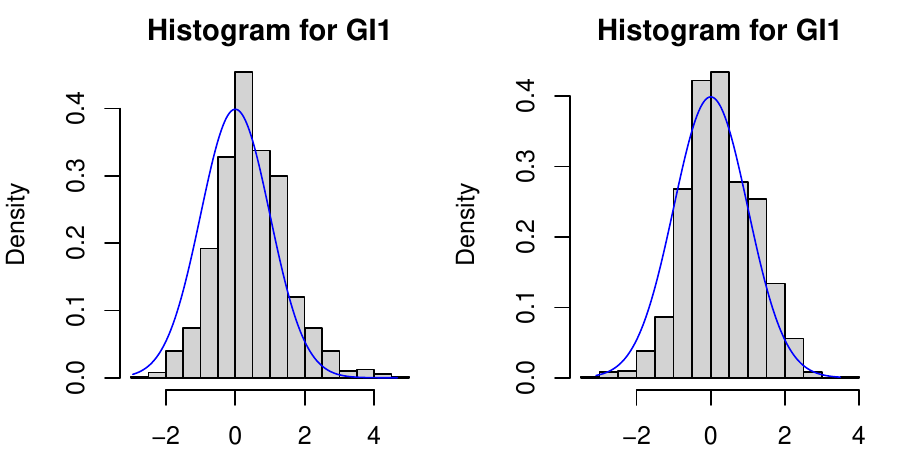}
    \caption{Histograms for $\{Z^j_1\}_{j=1}^N$ and $\{Z^j_2\}_{j=1}^N$ following \textrm{GI0} and \textrm{GI1},  and the density curve for the standard normal distribution. The upper left and bottom left panels show the histogram of $\{Z^j_1\}_{j=1}^N$ following \textrm{GI0} and \textrm{GI1}, respectively. The upper right and bottom right panels show the histogram of $\{Z^j_2\}_{j=1}^N$ following \textrm{GI0} and \textrm{GI1}, respectively. 
    }
    \label{fig:CLT_verify}
\end{figure}
In Figure~\ref{fig:CLT_verify}, the histogram closely approximates the standard normal density curve. This is consistent with Theorem~~\ref{thm:Asy_Normal_final}.
\subsection{Additional Simulation Results for Simulation Study 2}

The theoretical results in the manuscript assume that $p$ and $|\mathcal{A}|$ are fixed while the sample size $n$ grows large. In this section, we investigate the performance of the proposed method when $p$ and $|\mathcal{A}|$ are comparable with $n$, and this condition is violated. We investigate the performance of the proposed methods under a sequential rank aggregation problem assuming a BTL model.

 We consider the following simulation settings. Set $p=10$ or $50$. Entries of $\bgtheta^*$ are i.i.d. and generated from \(\mathcal{U}(-2, 2)\). $|\mathcal{A}|= 2(p+1)$, and $\mathcal{A}$ is sampled uniformly from all $4$-regular graphs (see Section~\ref{sec:graph-generation}). {The initial estimator and experiments are selected in the same way as those in Section~\ref{sec:details-initial} except that $n_0$ is set as $2p$ instead of $4p$.} 
\subsubsection{Empirical and Optimal Frequency}\label{sec:RA_SEA}
We plot the expected value of \(F_n = \mathbb{F}_{\bgtheta^*}(\overline{\bgpi}_n )-  \mathbb{F}_{\bgtheta^*}( {\bgpi}^*  )\) for different methods in Figure \ref{fig:Fn_fig} based on $N=1000$ Monte Carlo replications. {Here, the optimal proportion $\bgpi^*$ is computed according to Section~\ref{sec:detail-projected-gradient}. }
{From the Figure~\ref{fig:Fn_fig}, we see that $F_n$ is approaching zero when $n$ is large. This is consistent with Theorem~\ref{thm:empirical pi as converge}. However, it is far from zero when $p$ is comparable with $n$ (e.g., $p=50$ and $n=150$). This is expected as it becomes a high-dimensional problem under this setting.}

\begin{figure}[ht]
    \centering
    \begin{minipage}[c]{0.48\textwidth}
        \includegraphics[width=\textwidth]{./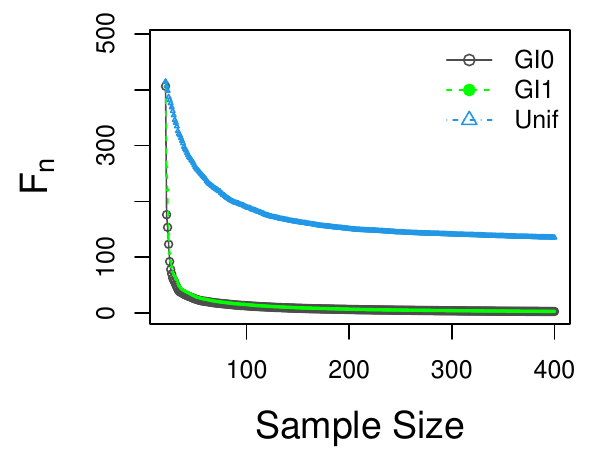}
    \end{minipage}
    \hfill %
    \begin{minipage}[c]{0.48\textwidth}
        \includegraphics[width=\textwidth]{./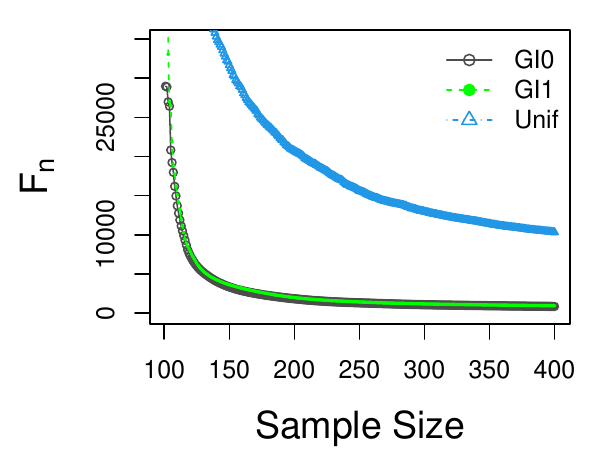}
    \end{minipage}
    \caption{ Comparison of $F_n$ for different selection methods (\textrm{GI0}, \textrm{GI1}, \text{Unif} selections) and different sample size $n$. The left panel and the right panel show $F_n$ with $p=10$ and $p=50$, respectively.
}
    \label{fig:Fn_fig}
\end{figure}

\subsubsection{Estimation Accuracy}\label{sec:RA_EA}

\begin{figure}[ht]
    \centering
    \begin{minipage}[c]{0.48\textwidth}
        \includegraphics[width=\textwidth]{./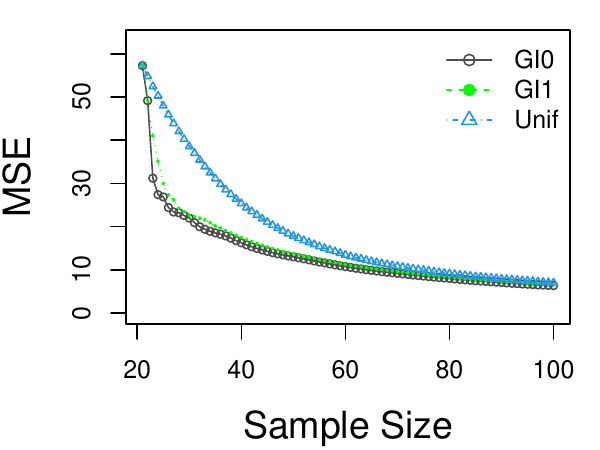}
    \end{minipage}
    \hfill %
    \begin{minipage}[c]{0.48\textwidth}
        \includegraphics[width=\textwidth]{./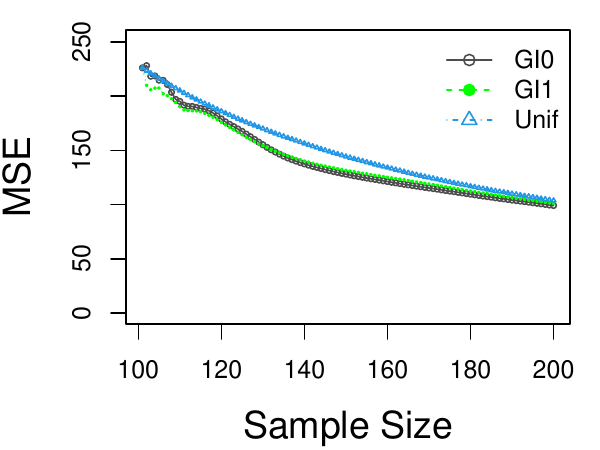}
    \end{minipage}
    \caption{ Comparison of performance of MSE for different selection methods (\textrm{GI0}, \textrm{GI1}, uniform selections) for the rank aggregation problem.\quad The left panel and the right panel show MSE with $p=10$ and $p=50$, respectively.
}
    \label{fig:MSE_fig}
\end{figure}

In Figure \ref{fig:MSE_fig}, we plot the MSE for the MLE at different sample size $n$ following different experiment selection methods based on $N=10000$ Monte Carlo replications.
The MSE is not close to zero when $p$ is relatively large compared to $n$, which is expected. However, \textrm{GI0} and \textrm{GI1} still perform much better when compared with \text{Unif}. %

\subsubsection{Impact of the Choice of $\bgTheta$}
{In our theoretical results, we assume the true parameter $\bgtheta^*$ is an inner point of $\bgTheta$. In this section, we study the impact of the choice of $\bgTheta$ on the estimation accuracy. We consider the following simulation setting: $p=50$, each element of $\bgtheta^*$ is sampled i.i.d. from $\mathcal{U}(-2,2)$,  $\mathcal{A}$ is randomly sampled from all $4-$regular graphs with $N=100$ Monte Carlo simulations. As a result, $|\mathcal{A}|= 102$.
We consider 4 choices of $\bgTheta$ when solving for the MLE: $\bgTheta=[-1,1]^p$, $\bgTheta=[-2,2]^p$, $\bgTheta=[-3,3]^p$ and $\bgTheta=[-5,5]^p$. The initial sample size is set to $n_0=p$. 

In Figure~\ref{fig:cube_RA}, we compare the Kendall's correlation of the MLE following \textrm{GI1} for different choices of $\bgTheta$, and obtain the following findings.}
\begin{enumerate}
    \item For cube 2 ($\bgTheta = [-2,2]^p$), it coincides with the data generation distribution $\mathcal{U}(-2,2)$. The Kendall's $\tau$ correlation is the largest among all cubes and sample sizes.
    \item For cube 1 ($\bgTheta = [-1,1]^p$), it does not satisfy the condition $\bgtheta^*\in \bgTheta$ for the theoretical results. For small sample size ($n<500$), it performs similarly as cube 2. However, for larger $n$, it's performance becomes worse.
    \item For cube 3 and cube 5 ($\bgTheta = [-3,3]^p$ and $\bgTheta = [-5,5]^p$), they cover the true model parameter, but are larger than the support of sampling distribution of $\bgtheta^*$. For small sample size, the larger the cube, the poorer the performance is. However, as the sample size increases, the performance becomes better than cube 1.
    \item Overall, the choice of $r$ in \(\bgTheta=[-r,r]^p\) does not seem affect the overall trend between Kendall's correlation and sample size.
\end{enumerate}

\begin{figure}[H]
\centering
\includegraphics[width=0.9\textwidth]{./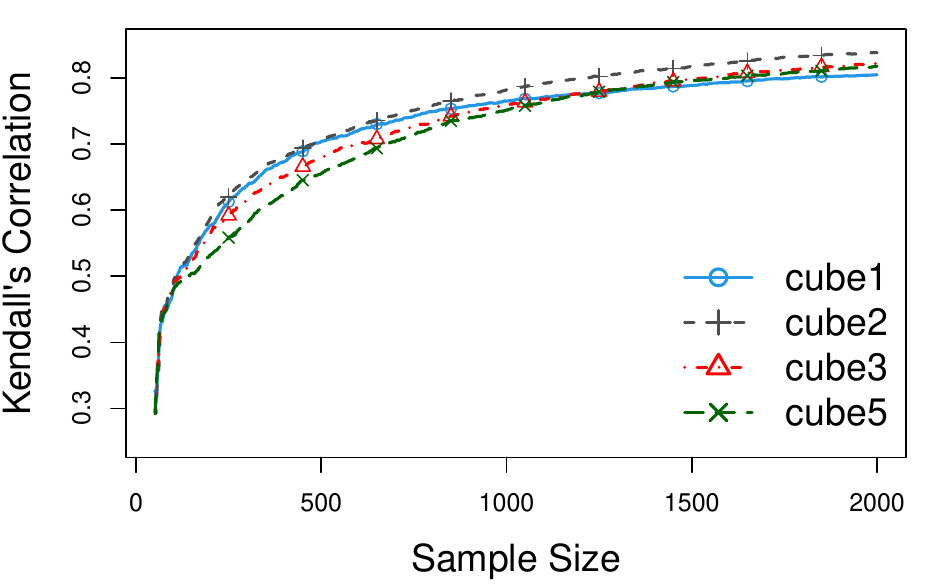}
\centering
\caption{Comparison of Performance of Different Compact Cubes with \textrm{GI1} Selection for the Rank Aggregation Problem.\quad  The curves for Cube1, Cube2, Cube3, and Cube5 represent the plot of Kendall's $\tau$ correlation versus sample size over compact cubes $\bgTheta = [-1,1]^p$, $\bgTheta = [-2,2]^p$, $\bgTheta = [-3,3]^p$, and $\bgTheta = [-5,5]^p$, respectively. %
}
\label{fig:cube_RA}
\end{figure}

\section{Preliminary Theoretical Results and Supporting Lemmas}
In this section, we present  preliminary theoretical results and supporting lemmas which are useful for the rest of the theoretical analysis. Moreover, they may be useful for other problems involving the analysis of stochastic processes, functions of matrices, and linear algebra for spaces indexed by a parameter.

\subsection{Useful Results for the Convergence of Stochastic Processes}
The next lemma extends the classic Kolmogorov's three-series theorem with relaxed moments and independence conditions. It is useful for proving almost sure convergence results for dependent stochastic processes.
\begin{lemma}[Modified Kolmogorov's three-series theorem]\label{lem:K two-series theorem}
Consider nested $\sigma-$fields $\mathcal{F}_{n}\subset \mathcal{F}_{n+1}, n\geq 0$. Let $\{X_n\}_{n=1}^\infty$ and $\{\varepsilon_n\}_{n=1}^\infty$ be two sequences of random variables, adaptive to $\{\mathcal{F}_{n}\}_{n=1}^\infty$, respectively. Consider a sequence of events $E_n$ such that
\begin{equation*}
    \mathbb{P}(\liminf_{n}E_n)=\mathbb{P}\left(\bigcup_{n=1}^\infty\bigcap^\infty_{m=n}E_m\right)=1.
\end{equation*}
If there exists $0<\gamma\leq 1$ such that, 
$$
\mathbb{E}[|X_n|^\gamma I_{E_n} \mid \mathcal{F}_{n-1}]\leq \varepsilon_{n-1}\ a.s.,\text{ and, }\sum_{n=0}^\infty\mathbb{E} \varepsilon_n<\infty,
$$
then $\sum_{n=1}^\infty X_n$ converges almost surely.
\end{lemma}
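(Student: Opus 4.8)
The plan is to reduce the claim to an almost sure convergence statement for a martingale-difference-type series and then to run a truncation argument tailored to the fractional conditional moment $\gamma\le 1$. First I would exploit $\mathbb{P}(\liminf_n E_n)=1$: on this full-measure event each $\omega$ lies in $E_n$ for all $n\ge N(\omega)$, so the tails of $\sum_n X_n$ and of $\sum_n Y_n$ with $Y_n:=X_n I_{E_n}$ coincide. Hence it suffices to prove that $\sum_n Y_n$ converges a.s. Next I would take expectations in $\mathbb{E}[|X_n|^\gamma I_{E_n}\mid\mathcal{F}_{n-1}]\le\varepsilon_{n-1}$ and sum: since $\sum_{n}\mathbb{E}\varepsilon_{n-1}<\infty$ by hypothesis, we get $\sum_n\varepsilon_{n-1}<\infty$ a.s., and therefore the predictable quantity $T:=\sum_n\mathbb{E}[|Y_n|^\gamma\mid\mathcal{F}_{n-1}]<\infty$ a.s. This finite $T$ is the object all later estimates are compared against.

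The core step is a truncation at level one: write $Y_n=Y_n'+Y_n''$ with $Y_n'=Y_n I(|Y_n|\le 1)$ and $Y_n''=Y_n I(|Y_n|>1)$, and handle three series separately. For the large part, conditional Markov gives $\mathbb{P}(|Y_n|>1\mid\mathcal{F}_{n-1})\le\mathbb{E}[|Y_n|^\gamma\mid\mathcal{F}_{n-1}]$, whose sum is at most $T<\infty$ a.s.; since $\{|Y_n|>1\}\in\mathcal{F}_n$, the conditional (L\'evy) Borel--Cantelli lemma forces $|Y_n|>1$ only finitely often a.s., so $\sum_n Y_n''$ is eventually a finite sum and converges. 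The fractional exponent is absorbed through the elementary inequalities, valid for $|y|\le 1$ and $0<\gamma\le 1$, $y^2\le|y|^\gamma$ and $|y|\le|y|^\gamma$. The first gives $\mathbb{E}[(Y_n')^2\mid\mathcal{F}_{n-1}]\le\mathbb{E}[|Y_n|^\gamma\mid\mathcal{F}_{n-1}]$, so the conditional variances of $Y_n'$ are summable; the second gives $|\mathbb{E}[Y_n'\mid\mathcal{F}_{n-1}]|\le\mathbb{E}[|Y_n|^\gamma\mid\mathcal{F}_{n-1}]$, so the conditional means are absolutely summable.

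Then I would split the small part as $Y_n'=D_n+c_n$ with $c_n=\mathbb{E}[Y_n'\mid\mathcal{F}_{n-1}]$ and $D_n=Y_n'-c_n$. The sequence $\{D_n\}$ is a martingale difference sequence satisfying $\sum_n\mathbb{E}[D_n^2\mid\mathcal{F}_{n-1}]\le\sum_n\mathbb{E}[(Y_n')^2\mid\mathcal{F}_{n-1}]\le T<\infty$ a.s., so the martingale convergence theorem for summable conditional variances yields a.s. convergence of $\sum_n D_n$; and $\sum_n|c_n|\le T<\infty$ a.s. gives absolute convergence of $\sum_n c_n$. Combining the three series shows $\sum_n Y_n$ converges a.s., and by the first-step reduction, $\sum_n X_n$ converges a.s.

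The main obstacle is that only a fractional conditional moment ($\gamma\le 1$) is assumed, so the usual second-moment machinery of the three-series/martingale argument is unavailable directly; the resolution is exactly the truncation at level one together with the two one-sided inequalities above, which simultaneously convert the $\gamma$-moment bound into a conditional-variance bound and a conditional-mean bound for the truncated variables. A secondary point requiring care is the legitimacy of the conditional Borel--Cantelli step and of forming the differences $D_n$, both of which rely on $Y_n'$ (equivalently $E_n$) being $\mathcal{F}_n$-measurable; I would verify this adaptation explicitly and confirm that the reduction via $\liminf_n E_n$ requires no integrability beyond what is already assumed.
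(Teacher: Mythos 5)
Your argument takes a genuinely different route from the paper's (a martingale-style three-series proof rather than a direct tail estimate), and most of its steps are sound, but as written it proves a strictly weaker statement than the lemma: the gap is the measurability of $E_n$. Every step after your first reduction requires $Y_n = X_n I_{E_n}$ to be $\mathcal{F}_n$-measurable --- the conditional Markov bound $\mathbb{P}(|Y_n|>1\mid\mathcal{F}_{n-1})\le\mathbb{E}[|Y_n|^\gamma\mid\mathcal{F}_{n-1}]$ followed by L\'evy's conditional Borel--Cantelli lemma needs $\{|Y_n|>1\}\in\mathcal{F}_n$, and the decomposition $Y_n'=D_n+c_n$ needs $Y_n'$ adapted for $\{D_n\}$ to be a martingale difference sequence. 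The lemma, however, only assumes that $X_n$ and $\varepsilon_n$ are adapted; the events $E_n$ are arbitrary, with no measurability hypothesis at all (they need not even lie in $\sigma\bigl(\bigcup_n\mathcal{F}_n\bigr)$). You flag this point and say you would ``verify'' the adaptedness, but it cannot be verified: it is simply not implied by the hypotheses. So your proposal establishes the lemma only under the extra assumption $E_n\in\mathcal{F}_n$. That assumption does hold in the paper's application (Corollary~\ref{cor:bound}, where $E_n=D_n$ is built from $\mathcal{F}_n$-measurable quantities), so your version would suffice there, but it does not prove the lemma as stated.

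The paper's proof sidesteps measurability entirely and is more elementary: it verifies the a.s.\ Cauchy property of the partial sums $S_n$ directly. For fixed $m$ it bounds $\mathbb{P}\bigl(\sup_{n,l\ge m}|S_n-S_l|\ge 2\varepsilon\bigr)$ by intersecting with $\bigcap_{n>m}E_n$, using the $C_r$ inequality $\bigl|\sum_{i=1}^k X_{m+i}\bigr|^\gamma\le\sum_{i=1}^k|X_{m+i}|^\gamma$, replacing $I\bigl(\bigcap_{n>m}E_n\bigr)$ by $I(E_{m+i})$ term by term, and then applying unconditional Markov together with the tower property; this yields the bound $\varepsilon^{-\gamma}\sum_{i\ge1}\mathbb{E}\varepsilon_{m+i-1}+\mathbb{P}\bigl(\bigl(\bigcap_{n>m}E_n\bigr)^c\bigr)$, and both terms vanish as $m\to\infty$. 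Because the indicators $I_{E_n}$ only ever appear inside total expectations $\mathbb{E}\bigl[\mathbb{E}[\,\cdot\mid\mathcal{F}_{n-1}]\bigr]$, no adaptedness of $E_n$ is needed and no martingale theory is invoked. To repair your approach you would either have to add $E_n\in\mathcal{F}_n$ as a hypothesis, or construct adapted surrogates (e.g.\ $A_n=\{\mathbb{E}[I_{E_n}\mid\mathcal{F}_n]>1/2\}$, which inherits the moment bound up to a factor $2$ and, via L\'evy's zero--one law, satisfies $\mathbb{P}(\liminf_n A_n)=1$ --- but only when the $E_n$ are measurable with respect to the terminal $\sigma$-field); at that point the paper's direct estimate is the cleaner fix.
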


\begin{proof}[Proof of Lemma \ref{lem:K two-series theorem}]    
Let $S_N=\sum_{n=1}^NX_n$. It is sufficient to show that with probability 1,
$$
\lim_{m\to \infty} \sup_{n,l\geq m}|S_n - S_l|=0.
$$
Applying $C_r$ inequality (see 9.1.a in \cite{lin2010probability}), for any $0<\gamma\leq 1$, $k\geq 1$, we have
$$
\left|\sum_{i=1}^k X_{m+i}\right|^\gamma\leq \sum_{i=1}^k\left| X_{m+i}\right|^\gamma.
$$
For any $m\in \mathbb{N}$, and $\varepsilon>0$, applying $C_r$ inequality (see 9.1.a in \cite{lin2010probability}) and Markov inequality, we have
\begin{equation*}
\begin{aligned}
&\mathbb{P}\left(\sup_{n,l\geq m}|S_n - S_l| \geq 2{\varepsilon}  \right)\\
\leq &\mathbb{P}\left(  2\sup _{k \in \mathbb{N}}\left|\sum_{i=1}^k X_{m+i}\right| \geq 2{\varepsilon}   \right) \\
=&\mathbb{P}\left(\sup_{k \in \mathbb{N}}\left|\sum_{i=1}^k X_{m+i}\right|^\gamma \geq \varepsilon^\gamma \right) \\
\leq&\mathbb{P}\left( \sup_{k \in \mathbb{N}}\sum_{i=1}^k\left| X_{m+i}\right|^\gamma \geq \varepsilon^\gamma \right) \\
\leq&\mathbb{P}\left(\left\{\sup_{k \in \mathbb{N}}\sum_{i=1}^k\left| X_{m+i}\right|^\gamma \geq \varepsilon^\gamma\right\}\bigcap\left( \bigcap^\infty_{n=m+1}E_n \right)  \right)+\mathbb{P}\left(\overline{\bigcap^\infty_{n=m+1}{E_n}} \right) \\
\leq &\limsup_{k\to \infty}\mathbb{P}\left( \sum_{i=1}^k\left| X_{m+i}\right|^\gamma I{\big( \bigcap^\infty_{n=m+1}E_n \big) } \geq \varepsilon^\gamma \right)+\mathbb{P}\left(\overline{\bigcap^\infty_{n=m+1}{E_n}} \right) \\
\leq &\limsup_{k\to \infty}\mathbb{P}\left( \sum_{i=1}^k\left| X_{m+i}\right|^\gamma I{\big(  E_{m+i} \big) } \geq \varepsilon^\gamma \right)+\mathbb{P}\left(\overline{\bigcap^\infty_{n=m+1}{E_n}} \right) \\
\leq& \limsup _{k \rightarrow \infty} \frac{1}{\varepsilon^{\gamma} }  \sum_{i=1}^k \mathbb{E}[\mathbb{E}\left\{|X_{m+i}|^\gamma I_{E_{m+i}} \mid \mathcal{F}_{m+i-1}\right\}]+\mathbb{P}\left(\overline{\bigcap^\infty_{n=m+1}{E_n}} \right)\\
\leq& \frac{1}{\varepsilon^{\gamma} }  \sum_{i=1}^\infty \mathbb{E}\varepsilon_{m+i-1}+\mathbb{P}\left(\overline{\bigcap^\infty_{n=m+1}{E_n}} \right),
\end{aligned}
\end{equation*}
where we used the assumption $\mathbb{E}[|X_n|^\gamma I_{E_n} \mid \mathcal{F}_{n-1}]\leq \varepsilon_{n-1}$ for all $n$ for obtaining the last inequality.
Notice that
\begin{equation*}
    \lim_{m\to \infty }\mathbb{P}\left(\overline{\bigcap^\infty_{n=m+1}{E_n}} \right)=1-\lim_{m\to \infty}\mathbb{P}\left({\bigcap^\infty_{n=m+1}{E_n}} \right)=1-  \mathbb{P}\left(\bigcup_{m=1}^\infty \bigcap^\infty_{n=m+1}{E_n}  \right)=0.
\end{equation*}
Let $m\to \infty$, we obtain that for all $\varepsilon>0$,
\begin{equation*}
\begin{split}
& \mathbb{P}\left( \bigcap_{m\geq 1} \left\{\sup_{n,l\geq m}|S_n - S_l| \geq 2{\varepsilon} \right\} \right)\\
= & 
    \lim_{m\to \infty}\mathbb{P}\left(   \left\{\sup_{n,l\geq m}|S_n - S_l| \geq 2{\varepsilon} \right\} \right)\\
    \leq & \frac{1}{\varepsilon^{\gamma} }  \lim_{m\to \infty} \sum_{i=1}^\infty \mathbb{E}\varepsilon_{m+i-1}+\lim_{m\to \infty}\mathbb{P}\left(\overline{\bigcap^\infty_{n=m+1}{E_n}} \right)\\
    = & 0
\end{split}
\end{equation*}
This implies $ \mathbb{P}\left( \bigcap_{m\geq 1} \left\{\sup_{n,l\geq m}|S_n - S_l| \geq 2{\varepsilon} \right\} \right)=0$ and completes the proof.
\end{proof}
Next, we extends Theorem 2.19 in \cite{hall2014martingale} obtain a law of large number result for martingale differences which allows for adaptive experiment selection.
\begin{lemma}[Modified Theorem 2.19 in \cite{hall2014martingale}]\label{lem:SLLN}
    Let $\{X_n\}_{n=1}^\infty$ be a sequence of random variables and $\{\mathcal{F}_{n}\}_{n=1}^\infty$ be an increasing sequence of $\sigma-$fields with $X_n$ measurable with respect to $\mathcal{F}_n$ for all $n$. Let $\{a_n\}_{n=1}^{\infty}$ denote a sequence of discrete random variables, where each variable takes values from the set $\{1, 2, \ldots, k\}$. Let $X^1,\cdots,X^k$ be a sequence of random variables such that $\max_{1\leq a\leq k }\mathbb{E}|X^a|<\infty$. If the conditional distribution function of $X_n|\mathcal{F}_{n-1},a_{n}=a$ is the same as the distribution function $X^a$ with probability 1, then

\begin{equation}\label{equ:45}
n^{-1} \sum_{i=1}^n\left\{X_i-\mathbb{E}\left(X_i \mid \mathcal{F}_{i-1}\right)\right\}  \stackrel{\text { a.s. }}{\longrightarrow} 0.
\end{equation}
\end{lemma}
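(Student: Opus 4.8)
The plan is to run the classical Kolmogorov truncation scheme for the strong law, but in the martingale-difference setting, using the finiteness of the family $\{X^1,\dots,X^k\}$ to turn random conditional moments into deterministic, summable envelopes. Write $D_i=X_i-\mathbb{E}(X_i\mid\mathcal{F}_{i-1})$, so that $\{D_i\}$ is a martingale difference sequence adapted to $\{\mathcal{F}_i\}$ and the claim is $n^{-1}\sum_{i=1}^n D_i\to 0$ a.s. The one structural fact I would exploit throughout is that, by conditioning further on $a_i$ and using the hypothesis on the conditional law, for every nonnegative $g$ we have $\mathbb{E}(g(X_i)\mid\mathcal{F}_{i-1})\le\max_{1\le a\le k}\mathbb{E}(g(X^a))$ a.s.; this replaces the (random, dependent) conditional moments of $X_i$ by deterministic bounds depending only on the finite family.

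First I would truncate at the natural level $i$: set $Y_i=X_i\,I(|X_i|\le i)$ and $D_i'=Y_i-\mathbb{E}(Y_i\mid\mathcal{F}_{i-1})$, giving the decomposition $D_i=D_i'+(X_i-Y_i)-\mathbb{E}(X_i-Y_i\mid\mathcal{F}_{i-1})$. For the tail part, the envelope bound gives $\sum_i\mathbb{P}(|X_i|>i\mid\mathcal{F}_{i-1})\le\sum_{a=1}^k\sum_i\mathbb{P}(|X^a|>i)\le\sum_{a=1}^k\mathbb{E}|X^a|<\infty$ a.s., so the conditional (L\'evy) Borel--Cantelli lemma forces $X_i=Y_i$ for all large $i$ a.s., whence $n^{-1}\sum_{i\le n}(X_i-Y_i)\to0$ a.s. For the centering part, $|\mathbb{E}(X_i-Y_i\mid\mathcal{F}_{i-1})|\le b_i:=\max_a\mathbb{E}(|X^a|\,I(|X^a|>i))$, and $b_i\to0$ by dominated convergence; Ces\`aro averaging then sends $n^{-1}\sum_{i\le n}\mathbb{E}(X_i-Y_i\mid\mathcal{F}_{i-1})\to0$.

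The core is the truncated martingale $M_n=\sum_{i=1}^n D_i'/i$. Since $|Y_i|\le i$, each $D_i'$ is bounded and $M_n$ is an $L^2$ martingale with predictable variation $\langle M\rangle_n=\sum_{i\le n} i^{-2}\mathbb{E}((D_i')^2\mid\mathcal{F}_{i-1})$. I would bound $\mathbb{E}((D_i')^2\mid\mathcal{F}_{i-1})\le\mathbb{E}(Y_i^2\mid\mathcal{F}_{i-1})\le c_i:=\max_a\mathbb{E}((X^a)^2 I(|X^a|\le i))$ and then use the elementary estimate $\sum_{i\ge m}i^{-2}\le C/m$ together with $|x|^2/\max(1,|x|)\le|x|$ to obtain $\sum_i c_i/i^2\le C\sum_{a=1}^k\mathbb{E}|X^a|<\infty$. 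Hence $\langle M\rangle_\infty<\infty$ a.s., so $M_n$ converges a.s. by the $L^2$ martingale convergence theorem; Kronecker's lemma (with weights $i$) then yields $n^{-1}\sum_{i\le n}D_i'\to0$ a.s. Adding the three contributions gives $n^{-1}\sum_{i\le n}D_i\to0$ a.s.

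The hard part is precisely the martingale term under only a first-moment hypothesis: the Hall--Heyde strong law cannot be invoked directly, since without truncation neither $\sum_i i^{-1}\mathbb{E}(|D_i|\mid\mathcal{F}_{i-1})$ (harmonic divergence) nor $\sum_i i^{-2}\mathbb{E}(D_i^2\mid\mathcal{F}_{i-1})$ (no second moment) is summable. The substance is to verify that truncation at level $i$ simultaneously makes the discarded probability mass summable and the $i^{-2}$-weighted truncated conditional variances summable, and---most delicately---that the randomness and adaptedness of the selection $\{a_i\}$ never enter these bounds, which is exactly what the finite family $\{X^a\}$ and the uniform envelope $\max_a$ buy us by collapsing every random conditional moment into a deterministic summable sequence.
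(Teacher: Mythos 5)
Your proposal is correct and follows essentially the same route as the paper's proof: truncation at level $i$, a martingale strong law for the truncated centered part (via $i^{-2}$-weighted variance summability plus Kronecker's lemma), Borel--Cantelli for the tail $X_i\neq Y_i$, and Ces\`aro averaging for the conditional centering term, with the finite family $\{X^a\}$ collapsing all random conditional moments into deterministic summable envelopes. The only cosmetic differences are that you work with conditional variances, the predictable variation, and L\'evy's conditional Borel--Cantelli where the paper uses unconditional expectations, probabilities, and the ordinary Borel--Cantelli lemma, and you use $\max_a$ envelopes where the paper uses $\sum_a$; these are interchangeable here.
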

\begin{proof}[Proof of Lemma~\ref{lem:SLLN}]
Let $Y_n=X_nI_{\{|X_n|\leq n\}}$, $n\geq 1$. 

Note that $\mathbb{E}|X^a|<\infty$ for any $1\leq a\leq k$, and for any $x>0$, 
\begin{equation*}
\begin{split}
    &\mathbb{P}(|X_n| > x)=\mathbb{E}\ \mathbb{P}(|X_n| > x|\mathcal{F}_{n-1})=\mathbb{E}\sum_{a=1}^k \mathbb{P}(|X_n| > x|\mathcal{F}_{n-1},a_n=a) \mathbb{P}(a_n=a|\mathcal{F}_{n-1} )\\
    =& \mathbb{E}\sum_{a=1}^k \mathbb{P}(|X^a| > x ) \mathbb{P}(a_n=a|\mathcal{F}_{n-1} )\leq \sum_{a=1}^k\mathbb{P}(|X^a| > x )<\infty.
\end{split}
\end{equation*}
Similar to the proof of Theorem 2.19 in \cite{hall2014martingale}, %
we obtain that
\begin{equation*}
\begin{split}
    &\sum_{n=1}^\infty \frac{1}{n^2} \mathbb{E}[ \{Y_n-\mathbb{E}(Y_n|\mathcal{F}_{n-1}) \}^2 ]\leq 2 \sum_{n=1}^\infty \frac{1}{n^2} \int_{0<x\leq n} x \mathbb{P}(|X_n|>x)dx\\
    \leq&2 \sum_{a=1}^k \sum_{n=1}^\infty \frac{1}{n^2} \int_{0<x\leq n} x \mathbb{P}(|X^a|>x)dx\leq 4\sum_{a=1}^k \sum_{i=1}^\infty  \mathbb{P}(|X^a|>i-1) <\infty,
\end{split}
\end{equation*}

\begin{equation*}
n^{-1} \sum_{i=1}^n\left\{Y_i-\mathbb{E}\left(Y_i \mid \mathcal{F}_{i-1}\right)\right\} \stackrel{\text { a.s. }}{\longrightarrow} 0,
\end{equation*}
\begin{equation*}
    \sum_{n=1}^\infty \mathbb{P}(X_n\neq Y_n)=\sum_{n=1}^\infty \mathbb{P}(|X_n|>n)\leq \sum_{a=1}^k\sum_{n=1}^\infty\mathbb{P}(|X^a| > n )<\infty
\end{equation*}
and
\begin{equation}\label{equ:as1}
n^{-1} \sum_{i=1}^n\left\{X_i-\mathbb{E}\left(Y_i \mid \mathcal{F}_{i-1}\right)\right\} \stackrel{\text { a.s. }}{\longrightarrow} 0 .
\end{equation}
Notice that with probability 1, as $n\to \infty$,
\begin{equation*}
\begin{split}
    &\mathbb{E}(\left.|X_n| I(|X_n|>n)\right| \mathcal{F}_{n-1})\\
    = &\int_{n}^\infty \mathbb{P}(|X_n|>x\mid \mathcal{F}_{n-1})dx\\
    =&\int_{n}^\infty \sum_{a=1}^k\mathbb{P}(|X_n|>x\mid \mathcal{F}_{n-1}, a_n=a )\mathbb{P}(a_n=a|\mathcal{F}_{n-1})dx\\
    \leq& 
    \int_n^\infty\sum_{a=1}^k \mathbb{P}(|X^a|>x)dx\\
    = &\sum_{a=1}^k \mathbb{E}( |X^a| I(|X^a|>n) )\\
    \to & 0.
\end{split}
\end{equation*}
Thus, with probability 1,
\begin{equation}\label{eq:x-y-difference}
\begin{split}
    &n^{-1}\sum_{i=1}^n|\mathbb{E}(  X_i-Y_i \mid \mathcal{F}_{i-1})|\\
    \leq & n^{-1}\sum_{i=1}^n \mathbb{E}(\left.|X_i| I(|X_i|>i)\right| \mathcal{F}_{i-1})\\
    \leq & n^{-1}\sum_{i=1}^n\sum_{a=1}^k \mathbb{E}[\{ \left.|X_i| I(|X_i|>i)\right| \mathcal{F}_{i-1},a_i=a\} \mathbb{P}(a_i=a|\mathcal{F}_{i-1})]\\  
    \leq &\sum_{a=1}^k\frac{1}{n} \sum_{i=1}^n \mathbb{E}( |X^a| I(|X^a|>i) ).
\end{split}
\end{equation}
Because $\mathbb{E}|X^a|<\infty$, we know that  $\lim_{n\to\infty}\mathbb{E}( |X^a| I(|X^a|>n) )= 0$. Because the arithmetic mean of a sequence converges to the same limit as the sequence itself, we obtain that for all $a\in\{1,\cdots,k\}$
\begin{equation*}
    \lim_{n\to \infty}\frac{1}{n} \sum_{i=1}^n \mathbb{E}( |X^a| I(|X^a|>i) ) = 0.
\end{equation*}
In conclusion, we obtain, with probability $1$, that
\begin{equation*}
    \Big|n^{-1} \sum_{i=1}^n\mathbb{E}( X_i-  Y_i \mid \mathcal{F}_{i-1} ) \Big|\leq \sum_{a=1}^k\frac{1}{n} \sum_{i=1}^n \mathbb{E}( |X^a| I(|X^a|>i) ),
\end{equation*}
and the expression on the right-hand side is a deterministic sequence converging to $0$, which implies that
\begin{equation}\label{equ:47}
n^{-1} \sum_{i=1}^n\mathbb{E}( X_i-  Y_i \mid \mathcal{F}_{i-1} ) \stackrel{\text { a.s. }}{\longrightarrow} 0.
\end{equation}
Combining \eqref{equ:as1} and \eqref{equ:47}, we obtain \eqref{equ:45}.
\end{proof}

{Anscombe's theorem \citep{anscombe1952large} is a classic limit theorem for randomly indexed processes. We prove a multivariate version of Anscombe's theorem as follows, which generalizes the univariate Anscombe's theorem with Gaussian limit (see \cite{mukhopadhyay2012tribute}). }
\begin{theorem}[Multivariate Anscombe's theorem]\label{thm:mult_random_clt}
    Let $\{T_n\}_{n\geq 1}$ be a sequence of column random vectors and $\{W_n\}_{n\geq 1}$ be a sequence of positive definite matrices satisfying multivariate Anscombe's condition, namely, for every $\varepsilon>0$, $0<\gamma<1$ there exists some $\delta>0$ such that
\begin{equation*}
\limsup_{n \to \infty} \mathbb{P}\left(\max_{\left|n^{\prime}-n\right| \leq \delta n}\norm{T_{n^{\prime}}-T_n} \geq \varepsilon \lambda_{min}(W_n)\right)<\gamma
\end{equation*}
hold. Moreover, we assume that
\begin{equation*}
    \sup_{n\geq 1}\frac{\lambda_{max}(W_n)}{\lambda_{min}(W_n)} <\infty,
\end{equation*}
and there exists positive sequence $\rho_n\to \infty$ such that
\begin{equation}\label{lim:rho_W}
     \rho_nW_n \inP W,
\end{equation}
where $W$ is a real positive definite matrix.

Assume that there exists a real column vector $\theta\in \mathbb{R}^p$ and as $n\to \infty$
\begin{equation*}
    W_n^{-1}(T_n-\theta) \inD N_p(\bm{0}_p,I_p).
\end{equation*}
Consider $\{N_n\}_{n\geq 1}$, a sequence of positive integer-valued stopping times defined on the same probability space where $\{T_n\}_{n\geq 1}$ is defined. Let $\{r_n\}_{n\geq 1}$ be an increasing sequence of positive integers such that $\lim_{n\to \infty }r_n= \infty$. If $N_n/r_n\to 1$ in probability as $n\to \infty$, then as $n\to \infty$
\begin{equation*}
    W_{r_n}^{-1}(T_{N_n}-\theta) \inD N_p(\bm{0}_p,I_p).
\end{equation*}
\end{theorem}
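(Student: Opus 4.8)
The plan is to split the randomly-indexed object into a deterministically-indexed central-limit piece plus a negligible fluctuation, and then apply Slutsky's theorem. Concretely, I would write
\begin{equation*}
W_{r_n}^{-1}(T_{N_n}-\theta) = W_{r_n}^{-1}(T_{r_n}-\theta) + W_{r_n}^{-1}(T_{N_n}-T_{r_n}).
\end{equation*}
The first summand is governed by the hypothesized central limit theorem: since $\{r_n\}$ is a deterministic sequence of integers with $r_n\to\infty$, the convergence $W_n^{-1}(T_n-\theta)\inD N_p(\bm{0}_p,I_p)$ is inherited along the subsequence indexed by $r_n$, giving $W_{r_n}^{-1}(T_{r_n}-\theta)\inD N_p(\bm{0}_p,I_p)$. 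It therefore suffices to show that the fluctuation term $W_{r_n}^{-1}(T_{N_n}-T_{r_n})$ converges to $\bm{0}_p$ in probability; Slutsky's theorem then delivers the conclusion.

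To handle the fluctuation term I would first pass to a scalar statement via an operator-norm bound. Since $W_{r_n}$ is positive definite, $\norm{W_{r_n}^{-1}}_{op}=\lambda_{min}(W_{r_n})^{-1}$, so
\begin{equation*}
\norm{W_{r_n}^{-1}(T_{N_n}-T_{r_n})} \leq \frac{\norm{T_{N_n}-T_{r_n}}}{\lambda_{min}(W_{r_n})}.
\end{equation*}
The right-hand side is precisely the quantity that appears in the multivariate Anscombe condition; the spectral hypotheses $\sup_n \lambda_{max}(W_n)/\lambda_{min}(W_n)<\infty$ and $\rho_nW_n\inP W$ keep each $W_n$ genuinely positive definite with a controlled spectrum, so this bound is non-degenerate. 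Thus it remains to prove $\norm{T_{N_n}-T_{r_n}}/\lambda_{min}(W_{r_n})\inP 0$.

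The essential step is transferring the Anscombe condition through the random index $N_n$. Fix $\varepsilon>0$ and $\gamma\in(0,1)$, and let $\delta>0$ be the window width supplied by the Anscombe condition. Splitting on the event $\{|N_n-r_n|\leq \delta r_n\}$ and its complement, and noting that on the good event $N_n$ is itself one of the indices entering the maximum, I obtain
\begin{equation*}
\mathbb{P}\Big(\tfrac{\norm{T_{N_n}-T_{r_n}}}{\lambda_{min}(W_{r_n})}\geq \varepsilon\Big) \leq \mathbb{P}\Big(\max_{|n'-r_n|\leq \delta r_n}\tfrac{\norm{T_{n'}-T_{r_n}}}{\lambda_{min}(W_{r_n})}\geq \varepsilon\Big) + \mathbb{P}\big(|N_n-r_n|> \delta r_n\big).
\end{equation*}
Taking $\limsup_{n\to\infty}$, the second probability vanishes because $N_n/r_n\to 1$ in probability, while the first, being the Anscombe-controlled quantity evaluated along the subsequence $m=r_n\to\infty$, has $\limsup$ strictly below $\gamma$. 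Hence $\limsup_n\mathbb{P}(\,\cdot\geq\varepsilon)\leq\gamma$, and letting $\gamma\downarrow 0$ gives the desired convergence in probability.

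I expect this last transfer to be the main obstacle: the Anscombe condition is stated for \emph{deterministic} indices with a maximum over a window whose width is a fixed \emph{fraction} $\delta$ of the index, and it must be made to apply to the \emph{random} $N_n$. The event split $\{|N_n-r_n|\leq\delta r_n\}$ is the device that accomplishes this, but it hinges on matching the relative window width $\delta$ of the Anscombe condition to the in-probability concentration of $N_n/r_n$ about $1$, and on respecting the order of quantifiers ($\delta$ chosen only after $\gamma$, then $\gamma\downarrow 0$). The remaining ingredients—the subsequence central limit theorem for the leading term and the final Slutsky combination—are routine.
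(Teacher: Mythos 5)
Your proof is correct, and it takes a genuinely different route from the paper's. You run the classical Anscombe argument directly in the multivariate, matrix-normed setting: decompose $W_{r_n}^{-1}(T_{N_n}-\theta)=W_{r_n}^{-1}(T_{r_n}-\theta)+W_{r_n}^{-1}(T_{N_n}-T_{r_n})$, pass the hypothesized CLT to the deterministic subsequence $r_n$ for the first term, and kill the second term via the pathwise bound $\norm{W_{r_n}^{-1}}_{op}=\lambda_{min}(W_{r_n})^{-1}$, the event split on $\{|N_n-r_n|\leq\delta r_n\}$, and the Anscombe condition evaluated along $m=r_n$, finishing with Slutsky; the quantifier handling ($\delta$ after $\varepsilon,\gamma$, then $\gamma\downarrow 0$) is done properly. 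The paper instead argues by Cram\'er--Wold: it first proves the scalar limit $b^T(T_n-\theta)/\norm{W_n b}\inD N(0,1)$ for each unit vector $b$, checks the scalar Anscombe condition for $b^TT_n$ with the random norming $\norm{W_nb}$, invokes the univariate Anscombe theorem with Gaussian limit (Theorem 3.1 of Mukhopadhyay and Chattopadhyay, 2012) to get $b^T(T_{N_n}-\theta)/\norm{W_{r_n}b}\inD N(0,1)$, and then converts this family of projection limits back into the matrix-normed vector statement using the random directions $b_n=W_{r_n}^{-1}h/\norm{W_{r_n}^{-1}h}$. That back-conversion is precisely where the paper uses the two extra hypotheses, $\sup_n \kappa(W_n)<\infty$ and $\rho_n W_n\inP W$: they yield $h_n\to h$, $b_n\to b$, $\rho_{r_n}(T_{N_n}-\theta)=O_p(1)$, and control of the remainder terms. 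Your route never touches those hypotheses (you only use positive definiteness of $W_{r_n}$ for the operator-norm identity), so it is shorter, self-contained, and in fact establishes the conclusion under strictly weaker assumptions; what the paper's route buys is the reuse of a known univariate theorem as a black box, at the cost of heavier bookkeeping and the additional spectral conditions.
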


\begin{proof}[Proof of Theorem \ref{thm:mult_random_clt}]
For any $b\in \mathbb{R}^p$ such that $\norm{b}=1$, we have
\begin{equation*}
    \frac{b^T(T_n-\theta)}{\norm{W_nb}}=\frac{(W_nb)^T}{\norm{W_nb}}W_n^{-1}(T_n-\theta).
\end{equation*}
Let $h_n=\frac{W_nb }{\norm{W_nb}} = \frac{\rho_n W_nb }{\norm{\rho_n W_nb}}$ and $h=\frac{Wb }{\norm{Wb}}$. By the continuous mapping theorem, we know that $h_n\to h$ in probability.

Recall that $W_n^{-1}(T_n-\theta) \inD N_p(\bm{0}_p,I_p).$ Hence, by Slutsky theorem, as $n\to \infty$
\begin{equation*}
\begin{split}
    &\frac{(W_{n}b)^T}{\norm{W_{n}b}}W_{n}^{-1}(T_{n}-\theta)
 = h^TW_{n}^{-1}(T_{n}-\theta)+\Big(h_{n}-h\Big)^T W_{n}^{-1}(T_{n}-\theta)
    \inD  N(0,1).
\end{split}
\end{equation*}
In conclusion, we know that
\begin{equation*}
    \frac{b^TT_n-b^T\theta}{\norm{W_nb}} \inD N(0,1).
\end{equation*}
Note that $N_n/r_n\to 1$ in probability and
\begin{equation*}
    \mathbb{P}\Big(\max_{|n'-n|\leq \delta n} |b^TT_n-b^T\theta|\geq \varepsilon \norm{W_n b} \Big) \leq \mathbb{P} \Big(\max_{|n'-n|\leq \delta n} \norm{T_n-\theta}\geq \varepsilon \cdot  \lambda_{min}(W_n) \Big),
\end{equation*}
and
\begin{equation*}
    \limsup_{n\to \infty}\mathbb{P} \Big(\max_{|n'-n|\leq \delta n} \norm{T_n-\theta}\geq \varepsilon \cdot  \lambda_{min}(W_n) \Big)\leq \gamma.
\end{equation*}
Applying Theorem 3.1 in \cite{mukhopadhyay2012tribute}, we obtain that for any $b\neq 0$ and as $n\to \infty$
\begin{equation}\label{lim:random_clt}
    \frac{b^TT_{N_n}-b^T\theta}{\norm{W_{r_n}b}} \inD N(0,1).
\end{equation}
Furthermore, by \eqref{lim:rho_W}, we know that
\begin{equation}\label{equ:S6.5}
     \rho_{r_n}\norm{W_{r_n}b} \to \norm{Wb}.
\end{equation}
Thus, we know that $\rho_{r_n} b^T(T_{N_n}- \theta)=O_p(1)$ for any $b\in \mathbb{R}^p$, which implies 
\begin{equation}\label{equ:S7}
    \rho_{r_n} (T_{N_n}- \theta)=O_p(1).
\end{equation}
Note that
\begin{equation*}
\begin{split}
&\norm{W^{-1}_{r_n}(T_{N_n}- \theta)}\\
\leq & \frac{\norm{ T_{N_n}-\theta }}{\lambda_{min}(W_{r_n}) }\\
\leq&\sum_{i=1}^p\frac{|\ble_i^T(T_{N_n}-\theta)|}{\norm{W_{r_n}\ble_i}}\sup_{n\geq 1} \kappa(W_n)\\
= & O_p(1),
\end{split}    
\end{equation*}
where
\begin{equation*}
\kappa(W_n)=\frac{\lambda_{max}(W_n)}{\lambda_{min}(W_n)}.
\end{equation*}
By Cramér–Wold theorem (see \cite{billingsley1999convergence} p383), it is sufficient to show that for all $h\in \mathbb{R}^p$ such that $\norm{h}=1$, we have
\begin{equation*}
    h^TW^{-1}_{r_n}(T_{N_n}-\theta)\inD N(0,1).
\end{equation*}
Set $b_n=\frac{W^{-1}_{r_n}h}{\norm{W^{-1}_{r_n}h}}$, and $b=\frac{W^{-1} h}{\norm{W^{-1} h}}$. We have $h=\frac{W_{r_n} b_n}{\norm{W_{r_n} b_n}}$. By the continuous mapping theorem, we know that 
\(b_n\to b\) in probability. Notice that
\begin{equation*}
    \left| \frac{\norm{W_{r_n}b_n }}{\norm{W_{r_n}b }} -1 \right|\leq \kappa(W_{r_n})\norm{b_n-b}\to 0,
\end{equation*}
which implies $\frac{\norm{W_{r_n}b_n }}{\norm{W_{r_n}b }} \to 1$ in probability. Combine this with \eqref{lim:random_clt}, we obtain that as $n\to \infty$
\begin{equation*}
\begin{split}
    &h^TW^{-1}_{r_{n}}(T_{N_{n}}- \theta)\\
    = &\frac{ b_n^T (T_{N_{n}}- \theta)}{\norm{W_{r_{n}}b  } }  \frac{\norm{W_{r_n}b }}{\norm{W_{r_n}b_n }} \\
    =&\frac{ b_n^T (T_{N_{n}}- \theta)}{\norm{W_{r_{n}}b  } } (1+o_p(1))\\
    = &\Big\{\frac{ b^T  (T_{N_{n}}- \theta)}{ \norm{W_{r_{n}}b  } }+\frac{(b_n- b)^T \rho_{r_n}(T_{N_{n}}- \theta)}{\rho_{r_n}\norm{W_{r_{n}}b  } }\Big\}(1 +o_p(1)).
\end{split}
\end{equation*}
Combining \eqref{equ:S6.5}, \eqref{equ:S7} and $b_n\to b$ in probability, we know that
\begin{equation*}
    \frac{(b_n- b)^T \rho_{r_n}(T_{N_{n}}- \theta)}{\rho_{r_n}\norm{W_{r_{n}}b  } }=o_{p}(1),
\end{equation*}
which implies that
\begin{equation*}
    h^TW^{-1}_{r_{n}}(T_{N_{n}}- \theta)=\Big\{\frac{ b^T (T_{N_{n}}- \theta)}{\norm{W_{r_{n}}b  } } +o_p(1)\Big\}(1+o_p(1))\inD N(0,1).
\end{equation*}
Thus, we know that for any $h\neq 0$, as $n\to \infty$
\begin{equation*}
    h^TW^{-1}_{r_n}(T_{N_n}-\theta)\inD N(0,\norm{h}^2).
\end{equation*}
By Cramér–Wold theorem (see \cite{billingsley1999convergence} p383), we complete the proof of Theorem \ref{thm:mult_random_clt}. 
\end{proof}

\subsection{Results regarding Functions of Matrices}
In this section, we provide results on derivatives of functions of matrices, and properties on functions of a convex combination of matrices.
\begin{lemma}\label{lem:Differentiation inverse Matrix}
Let $\mathcal{I}_a, a \in \mathcal{A}$ be a sequence of positive semidefinite matrix. Assume $\pi_0(a)\geq 0, a\in \mathcal{A}$ (not necessary that $\bgpi_0\in \ShatA$) such that $\sum_{a'\in \mathcal{A} }  \pi_0(a') \mathcal{I}_{a'}$ is a real positive definite matrix. 
Then, for all $a\in \mathcal{A}$  we have
\begin{equation}\label{equ:derivative_pi(a)}
    \left.\frac{\partial (\sum_{a'\in \mathcal{A}}  \pi(a') \mathcal{I}_{a'} )^{-1} }{\partial \pi(a)}\right|_{\bgpi=\bgpi_0} =-\Big(\sum_{a'\in  \mathcal{A}} \pi_0(a') \mathcal{I}_{a'} \Big)^{-1}\mathcal{I}_a \Big(\sum_{a'\in \mathcal{A}}  \pi_0(a') \mathcal{I}_{a'} \Big)^{-1}.
\end{equation}
\end{lemma}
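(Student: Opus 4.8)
The plan is to reduce the statement to the classical rule for differentiating a matrix inverse, $\partial M^{-1} = -M^{-1}(\partial M)M^{-1}$, specialized to the single scalar coordinate $\pi(a)$. Write $M(\bgpi)=\sum_{a'\in\mathcal{A}}\pi(a')\mathcal{I}_{a'}$, which is an affine-linear (hence $C^\infty$) map from the coordinates $\{\pi(a')\}_{a'\in\mathcal{A}}$ into symmetric $p\times p$ matrices. First I would establish that $M(\bgpi)^{-1}$ is well-defined and differentiable in a neighborhood of $\bgpi_0$: by hypothesis $M(\bgpi_0)$ is positive definite, so $\det M(\bgpi_0)\neq 0$; since $\bgpi\mapsto M(\bgpi)$ and the determinant are continuous, the set where $\det M(\bgpi)\neq 0$ is open and contains $\bgpi_0$. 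On this open set Cramer's rule expresses each entry of $M(\bgpi)^{-1}$ as a cofactor divided by $\det M(\bgpi)$, i.e.\ a ratio of polynomials in the entries of $M(\bgpi)$ with nonvanishing denominator, so $\bgpi\mapsto M(\bgpi)^{-1}$ is smooth there and in particular has a well-defined partial derivative with respect to $\pi(a)$.

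Next I would differentiate the defining identity $M(\bgpi)\,M(\bgpi)^{-1}=I_p$ with respect to the scalar coordinate $\pi(a)$. Applying the product rule for matrix-valued functions gives
\begin{equation*}
\frac{\partial M(\bgpi)}{\partial \pi(a)}\,M(\bgpi)^{-1} + M(\bgpi)\,\frac{\partial M(\bgpi)^{-1}}{\partial \pi(a)} = 0.
\end{equation*}
Because $M(\bgpi)=\sum_{a'\in\mathcal{A}}\pi(a')\mathcal{I}_{a'}$ is linear in each coordinate, the first factor is simply $\partial M(\bgpi)/\partial\pi(a)=\mathcal{I}_a$. Solving the displayed equation for the unknown derivative, by moving the first term to the right and multiplying on the left by $M(\bgpi)^{-1}$, yields
\begin{equation*}
\frac{\partial M(\bgpi)^{-1}}{\partial \pi(a)} = -\,M(\bgpi)^{-1}\,\mathcal{I}_a\,M(\bgpi)^{-1}.
\end{equation*}
Evaluating at $\bgpi=\bgpi_0$ and substituting $M(\bgpi_0)=\sum_{a'\in\mathcal{A}}\pi_0(a')\mathcal{I}_{a'}$ gives exactly \eqref{equ:derivative_pi(a)}.

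There is essentially no deep obstacle here; the result is a routine application of the implicit differentiation of $M M^{-1}=I_p$. The only point requiring genuine care is the justification that $\bgpi\mapsto M(\bgpi)^{-1}$ is actually differentiable at $\bgpi_0$ (so that differentiating the identity term by term is legitimate rather than merely formal), and this is precisely what the Cramer's-rule argument above supplies, using that $\bgpi_0$ lies in the open region where $\det M(\bgpi)\neq 0$. I would note that the hypothesis $\pi_0(a')\geq 0$ with $\sum_{a'}\pi_0(a')\mathcal{I}_{a'}$ positive definite is used only to guarantee invertibility at $\bgpi_0$; the identity itself holds at any point where $M(\bgpi)$ is invertible, and convexity or membership of $\bgpi_0$ in the simplex $\ShatA$ plays no role.
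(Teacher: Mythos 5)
Your proof is correct, but it takes a different route from the paper's. The paper works directly with the difference quotient: writing $A=\sum_{a'}\pi_0(a')\mathcal{I}_{a'}$, it uses the exact algebraic identity $(A+\varepsilon\mathcal{I}_a)^{-1}-A^{-1}=-\varepsilon\,(A+\varepsilon\mathcal{I}_a)^{-1}\mathcal{I}_a A^{-1}$, divides by $\varepsilon$, and lets $\varepsilon\to 0$ using only the continuity of matrix inversion at $A$; this establishes the existence of the partial derivative and its value in a single step, with no appeal to prior differentiability. You instead argue by implicit differentiation: first you justify that $\bgpi\mapsto M(\bgpi)^{-1}$ is smooth near $\bgpi_0$ (via openness of the invertibility locus and Cramer's rule), and only then differentiate the identity $M(\bgpi)M(\bgpi)^{-1}=I_p$ by the product rule and solve for the unknown derivative. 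Both arguments are sound and elementary. The paper's version is slightly more economical and self-contained, since it never needs the a priori smoothness step that your product-rule argument genuinely requires (differentiating $MM^{-1}=I_p$ term by term is not legitimate without it, a point you correctly flag and handle). Your version, in exchange, yields more: Cramer's rule gives $C^\infty$ dependence of $M(\bgpi)^{-1}$ on $\bgpi$ on the whole open set where $\det M(\bgpi)\neq 0$, and your closing remark that positivity of the weights and membership in the simplex play no role beyond guaranteeing invertibility at $\bgpi_0$ is accurate and matches how the lemma is actually used in the paper.
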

\begin{proof}[Proof of Lemma \ref{lem:Differentiation inverse Matrix}]
By definition, 
\begin{equation*}
    \left.\frac{\partial (\sum_{a'\in \mathcal{A}}  \pi(a') \mathcal{I}_{a'} )^{-1} }{\partial \pi(a)}\right|_{\bgpi=\bgpi_0} = \lim_{\varepsilon\to 0}  \frac{   (\sum_{a'\in \mathcal{A}}  \pi_0(a') \mathcal{I}_{a'} + \varepsilon \mathcal{I}_{a})^{-1} -  (\sum_{a'\in \mathcal{A}} \pi_0(a') \mathcal{I}_{a'} )^{-1}   }{ \varepsilon }. 
\end{equation*}
Because $\sum_{a'\in \mathcal{A}} \pi_0(a') \mathcal{I}_{a'}$ is a positive definite matrix, then for small enough $\varepsilon$, the inverse of $\sum_{a'\in \mathcal{A}} \pi_0(a') \mathcal{I}_{a'}+\varepsilon \mathcal{I}_a$ exists. Furthermore,
\begin{equation*}
    (\sum_{a'\in \mathcal{A}} \pi_0(a') \mathcal{I}_{a'} + \varepsilon \mathcal{I}_{a})^{-1} - (\sum_{a'\in \mathcal{A}} \pi_0(a') \mathcal{I}_{a'}  )^{-1} = -\varepsilon (\sum_{a'\in \mathcal{A}} \pi_0(a') \mathcal{I}_{a'} + \varepsilon \mathcal{I}_{a})^{-1} \mathcal{I}_a (\sum_{a'\in \mathcal{A}} \pi_0(a') \mathcal{I}_{a'} )^{-1},
\end{equation*}
and 
\begin{equation*}
    \lim_{\varepsilon\to 0}(\sum_{a'\in \mathcal{A}} \pi_0(a') \mathcal{I}_{a'} + \varepsilon \mathcal{I}_{a})^{-1}  =  (\sum_{a'\in \mathcal{A}} \pi_0(a') \mathcal{I}_{a'} )^{-1},
\end{equation*}
which implies \eqref{equ:derivative_pi(a)}. 
\end{proof}

Next, we will define and derive the Gateaux derivative of the criteria function $\Phi_q$. For a real positive definite matrix $\bgSigma$, recall that
\begin{equation*}
    \Phi_0( \bgSigma )= \log|\bgSigma|,\ \Phi_q(\bgSigma)=\operatorname{tr}\bgSigma^q, 0<q < 1,
\end{equation*}
and
\begin{equation*}
    \Phi_q( \bgSigma )= (\operatorname{tr} ( \bgSigma^{q}) )^{1/q},q\geq 1. 
\end{equation*}
The Gateaux derivative $\nabla_{\blH} \Phi_q(\bgSigma)$ of $\Phi_q$ at $\bgSigma$ in direction $\blH$, which is a symmetric matrix, is defined as 
\begin{equation}\label{equ:Gateaux derivative}
\nabla_{\blH}\Phi_q(\bgSigma)=\lim_{\varepsilon\to 0} \frac{ \Phi_q(\bgSigma+\varepsilon \blH)-\Phi_q(\bgSigma ) }{\varepsilon} = \left.\frac{d}{d\varepsilon }   \Phi_q(\bgSigma+\varepsilon \blH)  \right|_{\varepsilon=0}.
\end{equation}
If the limit specified in \eqref{equ:Gateaux derivative} exists for all symmetric matrices $\blH$, we says that $\Phi_q$ is Gateaux differentiable at $\bgSigma$.

{
The next lemma provides the Gateaux derivative of $\Phi_q$. This lemma allows non-integer values for $q$, and is thus more general than a similar result in \cite{yang2013optimal}.
}

\begin{lemma}\label{lem:phi_q}
$\Phi_q$ is Gateaux differentiable at any real positive definite matrix $\bgSigma$ for any $q\geq 0$. Moreover, we have
\begin{equation}\label{equ:Phi_q_Gateaux}
    \nabla_{\blH} \Phi_q(\bgSigma) = 
    \begin{cases}
    \operatorname{tr} (\blH \bgSigma^{-1} ), &\text{if } q=0,  \\ 
    q \cdot \tr \big( \bgSigma^{q-1} \blH \big), &\text{if } 0<q<1,\\
    (\operatorname{tr}\ \bgSigma^q  )^{1/q-1}\cdot \operatorname{tr}(\bgSigma^{q-1}\blH), &\text{if } q\geq 1,
    \end{cases}
\end{equation}
and
\begin{equation}\label{equ:Phi_q_Gateaux_pi}
    \frac{\partial\Phi_q( \mathcal{I}^{-{\bgpi}})   }{\partial \pi(a)}  = 
    \begin{cases}
    -\operatorname{tr} (  \mathcal{I}^{-{\bgpi}} \mathcal{I}_a), &\text{if } q=0,  \\
    -q\cdot \operatorname{tr}\Big( (\mathcal{I}^{-{\bgpi}})^{q+1}\mathcal{I}_a \Big), &\text{if } 0<q<1,\\
    -\Big[ \operatorname{tr}\Big(    \big( \mathcal{I}^{-{\bgpi}}\big)^{ q} \Big) \Big] ^{1/q-1}\cdot \operatorname{tr}\Big( (\mathcal{I}^{-{\bgpi}})^{q+1}\mathcal{I}_a \Big), &\text{if } q\geq 1,
    \end{cases}
\end{equation}
where $\mathcal{I}^{\bgpi}=\sum_{a\in \mathcal{A}} \pi(a)\mathcal{I}_a$ and $\mathcal{I}^{-{\bgpi}}=\big\{\sum_{a\in \mathcal{A}} \pi(a)\mathcal{I}_a\big\}^{-1}$.

\end{lemma}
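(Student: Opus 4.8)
The plan is to reduce all three regimes of $\Phi_q$ to a single core identity, namely that for every $q>0$ and every symmetric $\blH$,
\begin{equation*}
\nabla_{\blH}\operatorname{tr}(\bgSigma^q) = q\operatorname{tr}(\bgSigma^{q-1}\blH),
\end{equation*}
from which the $0<q<1$ case is immediate and the $q\ge 1$ case follows by a scalar chain rule. The case $q=0$ is handled separately: since $\Phi_0=\log\det$, I would invoke the factorization $\log\det(\bgSigma+\varepsilon\blH)=\log\det\bgSigma+\log\det(I+\varepsilon\bgSigma^{-1}\blH)$ together with $\log\det(I+\varepsilon M)=\varepsilon\operatorname{tr}(M)+o(\varepsilon)$ to obtain $\nabla_{\blH}\Phi_0(\bgSigma)=\operatorname{tr}(\bgSigma^{-1}\blH)=\operatorname{tr}(\blH\bgSigma^{-1})$. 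A preliminary remark I would make first is that positive definiteness is an open condition, so $\bgSigma+\varepsilon\blH$ stays positive definite (its spectrum remaining in a compact subset of $(0,\infty)$) for all small $\varepsilon$; this is what makes the non-integer power $(\bgSigma+\varepsilon\blH)^q$ well defined and $\varepsilon\mapsto\Phi_q(\bgSigma+\varepsilon\blH)$ a genuine smooth scalar function near $0$.

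For the core identity I would use the holomorphic functional calculus, which handles non-integer $q$ uniformly (this is exactly where the lemma goes beyond the integer-$q$ computation in \cite{yang2013optimal}). Writing $\bgSigma^q=\frac{1}{2\pi i}\oint_{\Gamma} z^q (zI-\bgSigma)^{-1}\,dz$ with the principal branch of $z^q$ and a contour $\Gamma$ in $\mathbb{C}\setminus(-\infty,0]$ enclosing the spectrum, I would differentiate under the integral using $\frac{d}{d\varepsilon}(zI-\bgSigma-\varepsilon\blH)^{-1}\big|_{0}=(zI-\bgSigma)^{-1}\blH(zI-\bgSigma)^{-1}$, take the trace, use cyclicity to pull $\blH$ out, and recognize $\frac{1}{2\pi i}\oint_{\Gamma} z^q (zI-\bgSigma)^{-2}\,dz=q\bgSigma^{q-1}$ via integration by parts in $z$ (since $(zI-\bgSigma)^{-2}=-\tfrac{d}{dz}(zI-\bgSigma)^{-1}$). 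This yields $\nabla_{\blH}\operatorname{tr}(\bgSigma^q)=\operatorname{tr}(\blH\cdot q\bgSigma^{q-1})=q\operatorname{tr}(\bgSigma^{q-1}\blH)$. An equivalent route, which I would mention as a fallback, is spectral perturbation: diagonalize $\bgSigma$, use that the eigenvalues of $\bgSigma+\varepsilon\blH$ admit analytic branches $\mu_i(\varepsilon)$ with $\mu_i'(0)=\tilde u_i^T\blH\tilde u_i$, and differentiate $\sum_i\mu_i(\varepsilon)^q$ termwise. This core step is the main obstacle, precisely because $\bgSigma^q$ is defined spectrally rather than algebraically for non-integer $q$; the contour argument is what makes the factor $q\bgSigma^{q-1}$ appear rigorously. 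With it in hand, the $q\ge 1$ case is a one-line chain rule: with $\phi(\bgSigma)=\operatorname{tr}(\bgSigma^q)$ and $g(t)=t^{1/q}$, one gets $\nabla_{\blH}\Phi_q(\bgSigma)=g'(\phi(\bgSigma))\nabla_{\blH}\phi(\bgSigma)=\tfrac1q(\operatorname{tr}\bgSigma^q)^{1/q-1}\cdot q\operatorname{tr}(\bgSigma^{q-1}\blH)=(\operatorname{tr}\bgSigma^q)^{1/q-1}\operatorname{tr}(\bgSigma^{q-1}\blH)$, which is \eqref{equ:Phi_q_Gateaux}.

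Finally, for the second display \eqref{equ:Phi_q_Gateaux_pi} I would compose the Gateaux derivative with Lemma~\ref{lem:Differentiation inverse Matrix}. Setting $\bgSigma=\mathcal{I}^{-\bgpi}$ and using $\partial\bgSigma/\partial\pi(a)=-\mathcal{I}^{-\bgpi}\mathcal{I}_a\mathcal{I}^{-\bgpi}$ from that lemma as the direction $\blH$, the chain rule gives, for $0<q<1$, $\partial\Phi_q(\mathcal{I}^{-\bgpi})/\partial\pi(a)=q\operatorname{tr}\big((\mathcal{I}^{-\bgpi})^{q-1}(-\mathcal{I}^{-\bgpi}\mathcal{I}_a\mathcal{I}^{-\bgpi})\big)$; collapsing $(\mathcal{I}^{-\bgpi})^{q-1}\mathcal{I}^{-\bgpi}=(\mathcal{I}^{-\bgpi})^{q}$ and applying cyclicity of the trace yields $-q\operatorname{tr}\big((\mathcal{I}^{-\bgpi})^{q+1}\mathcal{I}_a\big)$. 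The $q=0$ entry uses $\operatorname{tr}(\blH(\mathcal{I}^{-\bgpi})^{-1})=-\operatorname{tr}(\mathcal{I}^{-\bgpi}\mathcal{I}_a\mathcal{I}^{-\bgpi}\mathcal{I}^{\bgpi})=-\operatorname{tr}(\mathcal{I}^{-\bgpi}\mathcal{I}_a)$, and the $q\ge1$ entry is identical to the $0<q<1$ computation up to the scalar prefactor $(\operatorname{tr}((\mathcal{I}^{-\bgpi})^q))^{1/q-1}$. These last steps are routine once the core identity and Lemma~\ref{lem:Differentiation inverse Matrix} are available.
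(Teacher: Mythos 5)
Your proposal is correct, and for the one genuinely hard step—the derivative of $\operatorname{tr}(\bgSigma^q)$ for non-integer $q$—you take a route that differs from the paper's. The paper handles $q=0$ and integer $q$ exactly as you do (eigenvalue expansion of $\log\det(I+\varepsilon\blH\bgSigma^{-1})$, respectively direct expansion of $(\bgSigma+\varepsilon\blH)^q$), and it also finishes $q\geq 1$ by the same scalar chain rule and derives \eqref{equ:Phi_q_Gateaux_pi} by composing with Lemma~\ref{lem:Differentiation inverse Matrix}, as you do. But for non-integer $q$ the paper runs a real-variable matrix perturbation argument: it groups the (possibly repeated) eigenvalues of $\bgSigma$, tracks the perturbed eigenvalues and eigenprojections of $\bgSigma+\varepsilon\blH$ via Weyl's inequality and the Davis--Kahan $\sin\bgTheta$ theorem, and shows directly that $\operatorname{tr}((\bgSigma+\varepsilon\blH)^q)-\operatorname{tr}(\bgSigma^q)=\varepsilon\, q\operatorname{tr}(\bgSigma^{q-1}\blH)+o(\varepsilon)$. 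Your contour-integral argument (differentiating $\frac{1}{2\pi i}\oint_\Gamma z^q\operatorname{tr}\{(zI-\bgSigma-\varepsilon\blH)^{-1}\}\,dz$ under the integral, then integrating by parts to identify $q\bgSigma^{q-1}$) reaches the same identity more compactly and treats all $q>0$ uniformly, with the multiplicity issues absorbed into the functional calculus; the price is that you must justify differentiation under the contour integral and check that the spectrum of $\bgSigma+\varepsilon\blH$ stays inside $\Gamma$ and off the branch cut for small $\varepsilon$ (both standard, and your opening remark about openness of positive definiteness essentially covers the latter). Your Rellich-type fallback (analytic eigenvalue/eigenvector branches in the single parameter $\varepsilon$) is also valid and is closer in spirit to the paper's computation, but the paper deliberately avoids invoking analytic perturbation theory and instead pays with the explicit Davis--Kahan bookkeeping. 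Both routes are sound; yours is shorter and more general, the paper's is self-contained at the level of elementary linear-algebra inequalities it already uses elsewhere (Davis--Kahan also appears in its exploration analysis).
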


\begin{remark}
    Based on \eqref{equ:Phi_q_Gateaux}, and the Riesz representation theorem over the Hilbert space of symmetric matrix, for any positive definite $\bgSigma$, there exists unique symmetric matrix $\nabla \Phi_q(\bgSigma)=\{ \frac{\partial}{\partial \Sigma_{ij}} \Phi_q(\bgSigma) \}_{1\leq i,j \leq n}$, such that for any symmetric matrix $\blH$ of comparable size, 
\begin{equation*}
    \nabla_{\blH} \Phi_q(\bgSigma)=\innerpoduct{\nabla  \Phi_q(\bgSigma)}{\blH}. 
\end{equation*}   
\end{remark}

\begin{proof}[Proof of Lemma \ref{lem:phi_q}]
    Let $q=0$. By the definition of the Gateaux derivative, for any symmetric $\blH$ and positive definite matrix $\bgSigma$, 
\begin{equation*}
\begin{split}
&\nabla_{\blH}\Phi_0(\bgSigma)=\lim_{\varepsilon\to 0} \frac{ \Phi_0(\bgSigma+\varepsilon \blH)-\Phi_0(\bgSigma ) }{\varepsilon}\\
=&\lim_{\varepsilon\to 0}\frac{1}{\varepsilon}\log |I+\varepsilon \blH \bgSigma^{-1} |=\lim_{\varepsilon\to 0}\frac{ \log (1+\varepsilon \lambda_{1})+\log(1+\varepsilon \lambda_{2})+\cdots +\log (1+\varepsilon \lambda_{n}) }{\varepsilon} \\
=&(\lambda_1+\lambda_2+\cdots+\lambda_n)=\operatorname{tr}(\blH\bgSigma^{-1} ).
\end{split}
\end{equation*}
where $\lambda_{1},\lambda_{2},\cdots, \lambda_{n}$ denote all eigenvalues of $\blH\bgSigma^{-1}$ counting multiplicity.

When $q$ is a positive integer, by expanding $(\bgSigma+\varepsilon \blH)^q$, we have
\begin{equation*}%
    \operatorname{tr} ((\bgSigma+\varepsilon \blH)^q)-\operatorname{tr} (\bgSigma^q) = \varepsilon \cdot q \cdot \operatorname{tr}(\bgSigma^{q-1} \blH ) + o(\varepsilon).
\end{equation*}
Note that when $q$ is a positive integer
\begin{equation}\label{equ:phi_q}
\begin{split}
&\frac{ \Phi_q(\bgSigma+\varepsilon \blH)-\Phi_q(\bgSigma ) }{\varepsilon}\\
=&\frac{1}{ \varepsilon } (\operatorname{tr} \ \bgSigma^q)^{1/q} \left[ \Big(1+ \big( \operatorname{tr}(\bgSigma+\varepsilon \blH)^q-\operatorname{tr} (\bgSigma^q) \big)/{ \operatorname{tr} (\bgSigma^q)}  \Big)^{1/q} -1\right]\\
=&  \frac{1}{\varepsilon \cdot q} (\operatorname{tr} (\bgSigma^q) )^{1/q-1}\cdot \Big[ \operatorname{tr} ((\bgSigma+\varepsilon \blH)^q)-\operatorname{tr} (\bgSigma^q)\Big] +o(1)\\
=&   (\operatorname{tr} (\bgSigma^q) )^{1/q-1}\cdot \operatorname{tr}(\bgSigma^{q-1}\blH)+o(1).
\end{split}
\end{equation}
Thus, when $q$ is a positive integer, \eqref{equ:Phi_q_Gateaux} holds.

Now, consider the case when $q>0$ and $q$ is not an integer. Because we can not expand $(\bgSigma+\varepsilon \blH)^q$ and due to the lack of commutative between $\bgSigma$ and $\blH$, we need some more complicated techniques.
Assume $\bgSigma$ is of size $n\times n$. Let $\lambda_1(\varepsilon)\geq\lambda_2(\varepsilon) \geq \cdots \geq  \lambda_n (\varepsilon)$ be all the eigenvalues of $\bgSigma+\varepsilon \blH$. Denote the corresponding eigenvectors by $\blu_1(\varepsilon),\cdots,\blu_n(\varepsilon)$.

Let $\lambda_i=\lambda_i(0)$, and $\blu_i=\blu_i(0)$ for $1\leq i\leq n$. Set $\lambda_0=-\infty, \lambda_{n+1}=\infty$.

Let $\lambda$ be an eigenvalue of $\bgSigma+\varepsilon \blH$, there exists $0\leq r<s\leq n+1$ such that
\begin{equation*}
    \lambda_{r-1}>\lambda=\lambda_r=\cdots =\lambda_s > \lambda_{s+1}. 
\end{equation*}
Let $d=s-r+1$, $\blU_\lambda=[\blu_r,\blr_{r+1},\cdots,\blu_s]$ and $\blU_\lambda(\varepsilon)=[\blu_r(\varepsilon),\blu_{r+1}(\varepsilon),\cdots, \blu_s(\varepsilon)]$.

By Wely's inequality, $|\lambda_i(\varepsilon)-\lambda|\leq |\varepsilon| \norm{\blH}_{op}$.

Notice that when $|\varepsilon|$ is small enough, we have
\begin{equation}\label{equ:57}
\begin{split}
    &\operatorname{tr}( \blU^T_\lambda(\varepsilon) (\bgSigma+\varepsilon \blH)^q \blU_\lambda(\varepsilon))-\operatorname{tr}( \blU^T_\lambda  \bgSigma^q \blU_\lambda )\\
    =&(\lambda^q_r(\varepsilon)-\lambda^q )+\cdots+(\lambda^q_s(\varepsilon)-\lambda^q )\\
    =&\lambda^q\left[  \left(\Big(1+\frac{\lambda_r(\varepsilon)-\lambda }{\lambda} \Big)^q-1\right) + \cdots +\left(\Big(1+\frac{\lambda_s(\varepsilon)-\lambda }{\lambda} \Big)^q-1\right)   \right]\\
    =&q\lambda^{q-1} ( (\lambda_r(\varepsilon) -\lambda) +\cdots+(\lambda_s(\varepsilon) -\lambda) )+o(\varepsilon),
\end{split}
\end{equation}
and
\begin{equation}\label{equ:58}
\begin{split}
    &( (\lambda_r(\varepsilon) -\lambda) +\cdots+(\lambda_s(\varepsilon) -\lambda) )\\
    =&\operatorname{tr}( \blU^T_\lambda(\varepsilon) (\bgSigma+\varepsilon \blH) \blU_\lambda(\varepsilon))-\operatorname{tr}( \blU^T_\lambda  \bgSigma  \blU_\lambda )\\
    =&\varepsilon \cdot \operatorname{tr}( \blU^T_\lambda(\varepsilon)   \blH \blU_\lambda(\varepsilon)) + \operatorname{tr}( \blU^T_\lambda(\varepsilon)  \bgSigma  \blU_\lambda(\varepsilon) ) - \operatorname{tr}( \blU^T_\lambda   \bgSigma  \blU_\lambda  ).
\end{split}
\end{equation}
To proceed, we first prove the following equation,
\begin{equation}\label{claim:ind}
    \operatorname{tr}( (\bgSigma-\lambda I) (\blU_\lambda(\varepsilon)-\blU_\lambda)(\blU_\lambda(\varepsilon)-\blU_\lambda)^T )=\operatorname{tr}( \blU^T_\lambda(\varepsilon)  \bgSigma  \blU_\lambda(\varepsilon) ) - \operatorname{tr}( \blU^T_\lambda   \bgSigma  \blU_\lambda  ).
\end{equation}
\eqref{claim:ind} is justified by the following matrix calculation,
\begin{equation*}
\begin{split}
   &\operatorname{tr}\big( (\bgSigma-\lambda I) (\blU_\lambda(\varepsilon)-\blU_\lambda)(\blU_\lambda(\varepsilon)-\blU_\lambda)^T \big)-\big[\operatorname{tr}( \blU^T_\lambda(\varepsilon)  \bgSigma  \blU_\lambda(\varepsilon) ) - \operatorname{tr}( \blU^T_\lambda   \bgSigma  \blU_\lambda  )\big]\\
   =&\operatorname{tr}\big( (\blU_\lambda(\varepsilon)-\blU_\lambda)^T (\bgSigma-\lambda I) (\blU_\lambda(\varepsilon)-\blU_\lambda) \big)- \big[\operatorname{tr}( \blU^T_\lambda(\varepsilon)  (\bgSigma-\lambda I)  \blU_\lambda(\varepsilon) ) - \operatorname{tr}( \blU^T_\lambda   (\bgSigma-\lambda I) \blU_\lambda  )\big]\\
   =&-2\operatorname{tr}\big( \blU^T_\lambda(\varepsilon)  (\bgSigma-\lambda I) \blU_\lambda  \big)-2\operatorname{tr}\big( \blU^T_\lambda   (\bgSigma-\lambda I) \blU_\lambda  \big)=0, \text{ because $(\bgSigma-\lambda I) \blU_\lambda =0$.}
\end{split}
\end{equation*}
Note that $\bgSigma-\lambda I = (I-\blU_\lambda \blU^T_\lambda)(\bgSigma-\lambda I)(I-\blU_\lambda \blU^T_\lambda) $, $-\norm{\bgSigma-\lambda I}_{op} I\preceq  \bgSigma-\lambda I \preceq \norm{\bgSigma-\lambda I}_{op} I$, and  $(\blU_\lambda(\varepsilon)-\blU_\lambda)(\blU_\lambda(\varepsilon)-\blU_\lambda)^T$ is positive semidefinite, we obtain
\begin{equation}\label{equ:60}
\begin{split}
    &|\operatorname{tr}\big( (\bgSigma-\lambda I) (\blU_\lambda(\varepsilon)-\blU_\lambda)(\blU_\lambda(\varepsilon)-\blU_\lambda)^T \big)|\\
    \leq& \norm{\bgSigma-\lambda I}_{op} \cdot \operatorname{tr}\big( (I-\blU_\lambda \blU^T_\lambda) (\blU_\lambda(\varepsilon)-\blU_\lambda)(\blU_\lambda(\varepsilon)-\blU_\lambda)^T(I-\blU_\lambda \blU^T_\lambda) \big)\\
    =&\norm{\bgSigma-\lambda I}_{op}\cdot  \operatorname{tr}\big((I-\blU_\lambda \blU^T_\lambda) \blU_\lambda(\varepsilon) \blU^T_\lambda(\varepsilon) \big)\\
    =&\norm{\bgSigma-\lambda I}_{op}\cdot   (d- \norm{\blU^T_\lambda  \blU_\lambda(\varepsilon)}^2_F)\\
    =&\norm{\bgSigma-\lambda I}_{op}\cdot   (d- \norm{\cos \bgTheta ( \blU_\lambda,  \blU_\lambda(\varepsilon) )}_F^2 )\\
    =&\norm{\bgSigma-\lambda I}_{op}\cdot \norm{\sin \bgTheta ( \blU_\lambda,  \blU_\lambda(\varepsilon) )}_F^2,
\end{split}
\end{equation}
where $\norm{\cos \bgTheta ( \blU_\lambda,  \blU_\lambda(\varepsilon) )}_F= \norm{\blU^T_\lambda  \blU_\lambda(\varepsilon)}_F$ is due to the definition of principal angles between column spaces of $\blU_{\lambda}$ and $\blU_{\lambda}( \varepsilon)$ (see \cite{yu2015useful}).

By Davis-Kahan theorem (see \cite{yu2015useful}), we obtain
\begin{equation}\label{equ:61}
\|\sin \bgTheta(  \blU_\lambda, \blU_\lambda(\varepsilon))\|_{ {F}} \leq \frac{2    \| (\bgSigma+\varepsilon \blH)-\bgSigma \|_{  {F}} }{\min \left(\lambda_{r-1}-\lambda_r, \lambda_s-\lambda_{s+1}\right)}=O(\varepsilon).
\end{equation}
Combining \eqref{equ:57},\eqref{equ:58},\eqref{claim:ind},\eqref{equ:60} and \eqref{equ:61}, we obtain
\begin{equation*}
\begin{split}
    &( (\lambda_r(\varepsilon) -\lambda) +\cdots+(\lambda_s(\varepsilon) -\lambda) )=\varepsilon \cdot \operatorname{tr}( \blU^T_\lambda(\varepsilon)   \blH \blU_\lambda(\varepsilon))  +O(\varepsilon^2), \text{ and}\\
    &\operatorname{tr}( \blU^T_\lambda(\varepsilon) (\bgSigma+\varepsilon \blH)^q \blU_\lambda(\varepsilon))-\operatorname{tr}( \blU^T_\lambda  \bgSigma^q \blU_\lambda )
    =\varepsilon \cdot q \lambda^{q-1} \operatorname{tr}( \blU^T_\lambda(\varepsilon)   \blH \blU_\lambda(\varepsilon)) +o(\varepsilon).
\end{split}
\end{equation*}
Let $\blP_{\varepsilon}=\blU_\lambda(\varepsilon)\blU^T_\lambda(\varepsilon)$ and $\blP =\blU_\lambda \blU^T_\lambda $, we know that
\begin{equation}
\begin{split}
    &\norm{\blP_{\varepsilon}-\blP }_F^2=\operatorname{tr}(\blP_{\varepsilon}-2\blP_{\varepsilon}\blP+\blP)=2(d-\operatorname{tr}(\blP_{\varepsilon} \blP) )=2(d- \norm{ \blU^T_\lambda \blU_\lambda(\varepsilon)}_F^2 )\\
    =&2(d-\norm{\cos \bgTheta ( \blU_\lambda,  \blU_\lambda(\varepsilon) )}^2_F )=2\norm{\sin \bgTheta ( \blU_\lambda,  \blU_\lambda(\varepsilon) )}^2_F,
\end{split}
\end{equation}
and $\norm{\blP_{\varepsilon}-\blP }_F=\sqrt{2}\norm{\sin \bgTheta ( \blU_\lambda,  \blU_\lambda(\varepsilon) ) }_F=O(\varepsilon)$. Due to $\bgSigma \blU_{\lambda}=\lambda \blU_{\lambda}$, we know that $\bgSigma^{q-1} \blU_{\lambda}=\lambda^{q-1} \blU_{\lambda}$, which means that
\begin{equation*}
\begin{split}
    &( (\lambda_r(\varepsilon) -\lambda) +\cdots+(\lambda_s(\varepsilon) -\lambda) )=\varepsilon \cdot q \lambda^{q-1} \operatorname{tr}(\blH \blU_\lambda(\varepsilon) \blU^T_\lambda(\varepsilon) ) +o(\varepsilon)\\
    =&\varepsilon \cdot q \lambda^{q-1} \operatorname{tr}(\blH \blU_\lambda \blU^T_\lambda) +o(\varepsilon)=\varepsilon \cdot q \lambda^{q-1} \operatorname{tr}(\blU^T_\lambda\blH \blU_\lambda ) +o(\varepsilon)=\varepsilon \cdot  q \cdot  \operatorname{tr}(\bgSigma^{q-1}\blH \blU_\lambda \blU^T_\lambda)+o(\varepsilon).
\end{split}
\end{equation*}
This leads to
\begin{equation}\label{equ:trace_dir}
\begin{split}
    &\operatorname{tr} ((\bgSigma+\varepsilon \blH)^q)-\operatorname{tr} (\bgSigma^q)=\sum_{\lambda} \operatorname{tr}( \blU^T_\lambda(\varepsilon) (\bgSigma+\varepsilon \blH)^q \blU_\lambda(\varepsilon))-\operatorname{tr}( \blU^T_\lambda  \bgSigma^q \blU_\lambda )\\
    =&\sum_{\lambda}\varepsilon \cdot q \cdot  \operatorname{tr}(\bgSigma^{q-1}\blH \blU_\lambda \blU^T_\lambda)+o(\varepsilon)=\varepsilon \cdot q \cdot  \operatorname{tr}(\bgSigma^{q-1}\blH  )+o(\varepsilon),
\end{split}
\end{equation}
where the last equation holds due to $\sum_{\lambda} \blU_\lambda \blU_\lambda^T = I$.

Thus, when $0<q<1$, we know that
\[
\nabla_{\blH} \Phi_q(\bgSigma) = q \cdot \tr \big( \bgSigma^{q-1} \blH \big).
\]

Combining the first two equations in \eqref{equ:phi_q} with the equation \eqref{equ:trace_dir}, if $q\geq 1$, we know that 
\[
\nabla_{\blH} \Phi_q(\Sigma)= (\operatorname{tr} (\bgSigma^q) )^{1/q-1}\cdot \operatorname{tr}(\bgSigma^{q-1}\blH).
\]

Applying \eqref{equ:phi_q} again, we complete the proof of  \eqref{equ:Phi_q_Gateaux} in Lemma \ref{lem:phi_q}.

By applying the chain rule and combining  \eqref{equ:derivative_pi(a)} in Lemma~\ref{lem:Differentiation inverse Matrix} with \eqref{equ:Phi_q_Gateaux}, we obtain that
\[
\frac{\partial\Phi_q( \mathcal{I}^{-\pi})   }{\partial \pi(a)} =\innerpoduct{ \nabla \Phi_q( \mathcal{I}^{-\pi} ) }{ \frac{\partial \mathcal{I}^{-\pi} }{\partial \pi(a)} }=\innerpoduct{ \nabla \Phi_q( \mathcal{I}^{-\pi} ) }{ {-\mathcal{I}^{-\pi} \mathcal{I}_a \mathcal{I}^{-\pi}} }=\nabla_{-\mathcal{I}^{-\pi} \mathcal{I}_a \mathcal{I}^{-\pi}} \Phi_q( \mathcal{I}^{-\pi} ),
\]
which completes the proof of  \eqref{equ:Phi_q_Gateaux_pi} in Lemma \ref{lem:phi_q}.
\end{proof}

\begin{lemma}\label{lem:convex F}
    Assume $\mathcal{I}_a,a\in \ShatA$ are positive semidefinite matrices. Consider a convex matrix function, $g$, such that for any pair of positive semidefinite matrices, $\blA$ and $\blB$, if $\blA-\blB$ is a positive semidefinite matrix (denoted as $\blA\succeq \blB$), then $g(\blA)\geq g(\blB)$. For any ${\bgpi}\in \ShatA$, define
\begin{equation}\label{equ:S17_F}
    F({\bgpi})=g\left( \Big\{\sum_{a \in \mathcal{A}} \pi(a) \mathcal{I}_a\Big\}^{-1}\right).
\end{equation}
Then, $F({\bgpi})$ is a convex function over the set $\{{\bgpi}\in \ShatA; \sum_{a\in \mathcal{A} } \pi(a) \mathcal{I}_a\text{ is nonsingular}\}$.

In particular, under Assumption~\ref{ass:5}, the function $\mathbb{F}_{\bgtheta}(\bgpi)$ is a convex function over the set 
$
\{{\bgpi}\in \ShatA; \sum_{a\in \mathcal{A} } \pi(a) \mathcal{I}_a(\bgtheta)\text{ is nonsingular}\}.
$

\end{lemma}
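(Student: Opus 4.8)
The plan is to use that the map $\bgpi \mapsto \sum_{a\in\mathcal{A}}\pi(a)\mathcal{I}_a =: S(\bgpi)$ is affine in $\bgpi$, so that convexity of $F$ reduces to convexity of the matrix map $X\mapsto g(X^{-1})$ over the positive-definite cone, precomposed with the affine map $S$. First I would check that the stated domain is convex: a nonsingular positive semidefinite matrix is positive definite, and a convex combination of positive-definite matrices is positive definite (for any $v\neq 0$, $v^T(tA+(1-t)B)v = t\,v^T A v+(1-t)\,v^T B v>0$); hence if $S(\bgpi_1),S(\bgpi_2)$ are nonsingular then so is $S(t\bgpi_1+(1-t)\bgpi_2)=tS(\bgpi_1)+(1-t)S(\bgpi_2)$, and the whole segment stays inside the positive-definite cone.

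The core of the general statement is the operator convexity of the matrix inverse, which I would establish from the variational identity $v^T M^{-1} v = \sup_{w}\{2v^T w - w^T M w\}$, valid for positive-definite $M$ (the supremum is attained at $w=M^{-1}v$). For each fixed $v,w$ the right-hand side is affine in $M$, so $M\mapsto v^T M^{-1}v$ is a supremum of affine functions and therefore convex; this yields $(tA+(1-t)B)^{-1}\preceq tA^{-1}+(1-t)B^{-1}$ for positive-definite $A,B$. Writing $\bgpi = t\bgpi_1+(1-t)\bgpi_2$ and applying this with $A=S(\bgpi_1)$, $B=S(\bgpi_2)$ gives $S(\bgpi)^{-1}\preceq tS(\bgpi_1)^{-1}+(1-t)S(\bgpi_2)^{-1}$. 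Now the Löwner-monotonicity of $g$ converts this operator inequality into $g(S(\bgpi)^{-1})\leq g\big(tS(\bgpi_1)^{-1}+(1-t)S(\bgpi_2)^{-1}\big)$, and the (ordinary) convexity of $g$ bounds the right-hand side by $t\,g(S(\bgpi_1)^{-1})+(1-t)\,g(S(\bgpi_2)^{-1})$. Chaining the two inequalities is exactly $F(\bgpi)\leq tF(\bgpi_1)+(1-t)F(\bgpi_2)$, which proves the general claim.

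For the particular statement I would split according to Assumption~\ref{ass:5}. In case (2) the function $\mathbb{G}_{\bgtheta}$ is convex and Löwner-monotone by hypothesis, so $g=\mathbb{G}_{\bgtheta}$ satisfies the conditions of the general part and convexity of $\mathbb{F}_{\bgtheta}$ follows immediately. For $\mathbb{G}_{\bgtheta}=\Phi_q$ with $q\geq 1$, note that $\Phi_q(\bgSigma)=(\tr\bgSigma^{q})^{1/q}$ is the Schatten $q$-norm, hence convex as a norm, and monotone in the Löwner order because $\blA\succeq\blB\succ 0$ forces $\lambda_i(\blA)\geq\lambda_i(\blB)$ for all $i$ by Weyl's inequality; thus the general part again applies.

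The main obstacle is the range $0\leq q<1$, where $\Phi_q$ is concave rather than convex, so $g=\Phi_q$ is not admissible in the general lemma and a direct argument is needed. For $0<q<1$ I would observe that $\mathbb{F}_{\bgtheta}(\bgpi)=\tr\big(S(\bgpi)^{-q}\big)$ and invoke the fact that $X\mapsto\tr f(X)$ is convex on positive-definite matrices whenever $f$ is a convex scalar function; taking $f(t)=t^{-q}$ (convex on $(0,\infty)$ for $q>0$) and composing with the affine map $S$ gives convexity. For $q=0$ I would write $\mathbb{F}_{\bgtheta}(\bgpi)=-\log\det S(\bgpi)$ and use that $\log\det$ is concave on the positive-definite cone together with affinity of $S$. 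The only care required is confirming that the relevant segments remain inside the positive-definite cone, which is guaranteed by the domain convexity established at the outset.
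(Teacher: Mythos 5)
Your proposal is correct, and it follows the same overall architecture as the paper's proof (establish operator convexity of the inverse, chain Löwner-monotonicity with ordinary convexity of $g$, then handle $\Phi_q$ by cases $q\geq 1$, $0<q<1$, $q=0$), but the key sub-steps are carried out with genuinely different tools. For the operator convexity $(t\blA+(1-t)\blB)^{-1}\preceq t\blA^{-1}+(1-t)\blB^{-1}$, the paper argues via the Schur complement: it notes that $\begin{pmatrix} v^T\blA^{-1}v & v^T \\ v & \blA\end{pmatrix}\succeq 0$, takes convex combinations, and applies the Schur complement condition again; your variational identity $v^T M^{-1}v=\sup_{w}\{2v^Tw-w^TMw\}$ reaches the same conclusion as a supremum of affine functions of $M$, which is arguably more self-contained. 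For $0<q<1$, the paper invokes the L\"owner--Heinz theorem to get \emph{operator} convexity of $\blA\mapsto \blA^{-q}$ and then traces; you instead invoke only the weaker (and more elementary) fact that $X\mapsto\tr f(X)$ is convex whenever the \emph{scalar} function $f$ is convex, applied to $f(t)=t^{-q}$ — this suffices because only trace convexity is needed, so your route uses a strictly weaker ingredient. For $q=0$, the trade goes the other way: you cite the standard concavity of $\log\det$ on the positive-definite cone, whereas the paper gives a self-contained proof of it via the Gaussian integral representation $\int e^{-\frac12 \blx^T\blA\blx}\,d\blx \propto (\det\blA)^{-1/2}$ and H\"older's inequality. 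Both treatments of the monotonicity of $\Phi_q$ (your Weyl monotonicity, the paper's Courant--Fischer--Weyl minimax principle) are essentially the same eigenvalue comparison. One small point worth keeping from your write-up that the paper leaves implicit: the explicit verification that the domain $\{\bgpi\in\ShatA:\sum_{a}\pi(a)\mathcal{I}_a \text{ nonsingular}\}$ is convex, so that the segment stays inside the positive-definite cone.
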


\begin{proof}[Proof of Lemma \ref{lem:convex F}]
Let $\mathcal{C}_0=\{{\bgpi}\in \ShatA; \sum_{a\in \mathcal{A} } \pi(a) \mathcal{I}_a\text{ is nonsingular}\}$. Assume that ${\bgpi},{\bgpi}'\in \mathcal{C}_0$. Then, $\blA=\sum_{a \in \mathcal{A}}\pi(a) \mathcal{I}_a$ and $\blB=\sum_{a\in \mathcal{A}}\pi'(a) \mathcal{I}_a$ are positive definite. 

For any $0<t<1$, $t\blA+(1-t)\blB$ is positive definite. Note that for any vector $v$, applying Schur complement condition (see Theorem 1.12 (b) in \cite{zhang2006schur}), %
we obtain that
\begin{equation*}
    \left[\begin{array}{cc}
v^T \blA^{-1} v & v^T \\
v & \blA
\end{array}\right] \text{ and } \left[\begin{array}{cc}
v^T \blB^{-1} v & v^T \\
v & \blB
\end{array}\right]
\end{equation*}
are positive semi-definite matrices.
Notice that the following matrix is positive semi-definite
\begin{equation*}
t\left[\begin{array}{cc}
v^T \blA^{-1} v & v^T \\
v & \blA
\end{array}\right]+(1-t)\left[\begin{array}{cc}
v^T \blB^{-1} v & v^T \\
v & \blB
\end{array}\right]=\left[\begin{array}{cc}
t v^T \blA^{-1} v+(1-t) v^T \blB^{-1} v & v^T \\
v & t \blA+(1-t) \blB
\end{array}\right], 
\end{equation*}
by Theorem 1.12 (b) in \cite{zhang2006schur}, we obtain that
$$
tv^T\blA^{-1}v+ (1-t)v^T\blB^{-1}v\geq v^T(t\blA+(1-t)\blB)^{-1}v.
$$
Since $v$ is arbitrary, we have
\begin{equation}\label{ineq:convex inverse}
t\blA^{-1}+(1-t)\blB^{-1}\succeq (t\blA+(1-t)\blB)^{-1}.    
\end{equation}
Now, we have
\begin{align*}
    &tF({\bgpi})+(1-t)F({\bgpi}')=t g \left(   \blA^{-1}\right)+(1-t)g \left(  \blB^{-1}\right)\\
    \geq&g (t\blA^{-1}+(1-t)\blB^{-1})\geq g \big( \big\{t\blA+(1-t)\blB\big\}^{-1}\big)=F(t{\bgpi}+(1-t){\bgpi}').
\end{align*}
This shows that $F$ is convex.

We proceed to the proof of the `In particular' part of the lemma. 
{Note that under Assumption~\ref{ass:5}, there are two cases:
Case 1: $\mathbb{G}_{\bgtheta}(\bgSigma )=\Phi_q(\bgSigma)$ for $q\geq 0$, and Case 2: $\mathbb{G}_{\bgtheta}(\cdot)$ is a convex function satisfying $\mathbb{G}_{\bgtheta}(\blA)\geq \mathbb{G}_{\bgtheta}(\blB)$ whenever $\blA\succeq\blB$. For Case 2, we can apply our previous analysis directly for $g(\cdot)=\mathbb{G}_{\bgtheta}(\cdot)$, and obtain that $\mathbb{F}_{\bgtheta}(\bgpi)$ is convex. Thus, we focus our analysis on Case 1 in the rest of the proof.
}
By Courant-Fischer-Wely minimax principle (see Corollary III.1.2 in \cite{bhatia2013matrix}), the $i-$th largest eigenvalue satisfies $\lambda_{i}(\blA)\geq \lambda_{i}(\blB)$ for any $1\leq i\leq n$. Thus, $\Phi_q(\blA)\geq \Phi_q(\blB)$ for any $q\geq 0$.

If $\mathbb{G}_{\bgtheta}(\bgSigma )=\Phi_q(\bgSigma)= \big(\operatorname{tr} ( \bgSigma^q) \big)^{1/q}$ with $q\geq 1$, then $\Phi_q(\bgSigma)$ is the Schatten $q-$norm (see equation (IV.31) in \cite{bhatia2013matrix}), which implies that $\mathbb{G}_{\bgtheta}(\bgSigma )$ is convex. More generally, if $\mathbb{G}_{\bgtheta}(\bgSigma )$ is convex in $\bgSigma$, then by \eqref{equ:S17_F}, we obtain that 
\[
 \mathbb{F}_{\bgtheta}(\bgpi)= \mathbb{G}_{\bgtheta} (\{ \mathcal{I}^{\bgpi}(\bgtheta) \}^{-1})
\]
is convex in $\bgpi$ over
\[
\{{\bgpi}\in \ShatA; \sum_{a\in \mathcal{A} } \pi(a) \mathcal{I}_a(\bgtheta)\text{ is nonsingular}\}.
\]

If $\mathbb{G}_{\bgtheta}(\bgSigma )=\Phi_0(\bgSigma)=\log \det \bgSigma$, we know that 
\[
\Phi_0( \{\mathcal{I}^{\bgpi}(\bgtheta) \}^{-1} )=-\log \det (\sum_{a\in \mathcal{A}}\pi(a) \mathcal{I}_a(\bgtheta) ).
\]
We aim to show that $-\log \det (\blA)$ is convex over positive definite matrices.

Notice that for any $p\times p$ positive definite matrix $\blA$, 
\[
\int_{\mathbb{R}^p} e^{-1/2\innerpoduct{ \blA   \blx}{\blx}} d\blx= \frac{1 }{   (2 \pi )^{p/2} \det (\blA)^{1/2} }.
\]
By Hölder's inequality, for any positive definite $p\times p$ matrices $\blA$ and $\blB$, we have
\begin{equation}
    \int_{\mathbb{R}^p} e^{-1/2\innerpoduct{(t\blA+(1-t)\blB) \blx}{\blx}} d\blx \leq \Big( \int_{\mathbb{R}^p} e^{-1/2\innerpoduct{ \blA   \blx}{\blx}} d\blx \Big)^{t} \Big( \int_{\mathbb{R}^p} e^{-1/2\innerpoduct{ \blB   \blx}{\blx}} d\blx \Big)^{1-t},
\end{equation}
which implies that
\[
-\log \det (t\blA +(1-t)\blB) \leq -t\log \det (\blA) - (1-t)\log \det (\blB).
\]
This shows that $-\log \det (\blA)$ is convex over positive definite matrices. \\
Thus, $\mathbb{F}_{\bgtheta}(\bgpi)=-\log \det (\sum_{a\in \mathcal{A}}\pi(a) \mathcal{I}_a(\bgtheta) )$ is convex in $\bgpi$ over 
\[
\{{\bgpi}\in \ShatA; \sum_{a\in \mathcal{A} } \pi(a) \mathcal{I}_a(\bgtheta)\text{ is nonsingular}\}.
\]
If $\mathbb{G}_{\bgtheta}(\bgSigma )=\Phi_q(\bgSigma)= \operatorname{tr} \big( \bgSigma^q\big)$ with $0<q<1$, we know that 
\[
\Phi_q( \{\mathcal{I}^{\bgpi}(\bgtheta) \}^{-1} )=  \operatorname{tr} \Big(\sum_{a\in \mathcal{A}}\pi(a) \mathcal{I}_a(\bgtheta) \Big)^{-q}.
\]
By L\"owner-Heinz Theorem (see Theorem 2.6 in \cite{carlen2010trace}), we know that $\operatorname{tr} \big( \blA^{-q} \big)$ is operator convex, which means that for all positive definite matrices $\blA$ and $\blB$,
\[
\Big( t\blA+(1-t)\blB \Big)^{-q} \preceq  t  \blA^{-q} +(1-t)  \blB^{-q},
\]
which implies that $\operatorname{tr}\big(\blA^{-q}\big)$ is a convex function over positive definite matrices. In conclusion, we obtain that \(\mathbb{F}_{\bgtheta}(\bgpi)=\Phi_q( \{\mathcal{I}^{\bgpi}(\bgtheta) \}^{-1} )\) is convex in $\bgpi$ over 
\[
\{{\bgpi}\in \ShatA; \sum_{a\in \mathcal{A} } \pi(a) \mathcal{I}_a(\bgtheta)\text{ is nonsingular}\}.
\]

\end{proof}

\subsection{Decoupling Active Sequential Sampling}
The next lemma provides a decoupling result which makes it easier to analyze the likelihood for problems with adaptive experiment selection.
\begin{lemma}\label{lem:same dist}
Consider deterministic sequential selection functions $h_m(\cdot)$,
\begin{equation*}
h_m(a_1,Y_1,a_2,Y_2,\cdots,a_{m-1},Y_{m-1}) \in \mathcal{A}, \text{ for } m \geq 2,
\end{equation*}
and $h_1 \in \mathcal{A}$.
Consider two
random vectors generated from the following procedures.
\begin{enumerate}
    \item (Decoupled Sampling) Independently generate $\{X^a_m\}_{m\geq 1, a \in \mathcal{A}}$, where $X^a_m \sim f_{\boldsymbol{\theta}^*, a}(\cdot)$. Let 
    \[ 
    a_1 = h_1,  a_2 = h_2(a_1, X_1^{a_1}),  \cdots,  a_{m+1} = h_{m+1}(a_1, X_1^{a_1}, a_2, X_2^{a_2}, \cdots, a_{m}, X_{m}^{a_{m}}),\cdots.
    \]
    \item (Iterative Sampling) Let $a'_1 = h_1$. Generate $X_1 \sim f_{\boldsymbol{\theta}^*, a'_1}(\cdot)$. For $m \geq 1$, 
    obtain $a'_{m+1}$ and $X_{m+1}$ iteratively as
    \[ 
    a'_{m+1} = h_{m+1}(a'_1, X_1, \cdots, a'_m, X_m),
    \]
    then generate $X_{m+1} | \mathcal{F}_m \sim f_{\boldsymbol{\theta}, a'_{m+1}}$, where the $\sigma$-algebra $\mathcal{F}_m = \sigma(a'_1, X_1, a'_2, X_2, \cdots, a'_{m}, X_m)$.
\end{enumerate}
Then, the random vectors $(X_1^{a_1},  \cdots, X_m^{a_m}, a_1, \cdots, a_m)$ and $(X_1,  \cdots, X_m, a'_1, \cdots, a'_m)$ have the same distribution for all $m\geq 1$.
\end{lemma}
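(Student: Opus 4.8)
The plan is to prove that the two tuples are equal in distribution by induction on $m$, exploiting the fact that in the decoupled scheme the observation consumed at step $m$, namely $X_m^{a_m}$, carries the time index $m$ and is therefore independent of every variable $X_j^{a'}$ with $j<m$ that could have entered the selection of $a_1,\ldots,a_m$. Write $W_m=(a_1,X_1^{a_1},\ldots,a_m,X_m^{a_m})$ for the decoupled tuple and $V_m=(a_1',X_1,\ldots,a_m',X_m)$ for the iterative one (equality in distribution is unaffected by the fixed coordinate permutation relative to the statement's ordering). For the base case $m=1$, both schemes set the first experiment to the deterministic value $h_1$, and the first observation is drawn from $f_{\bgtheta^*,h_1}$ in each, so $W_1 \dd V_1$.

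For the inductive step, suppose $W_m \dd V_m$. Since $a_{m+1}=h_{m+1}(W_m)$ and $a_{m+1}'=h_{m+1}(V_m)$ are the same deterministic function of the respective pasts, it suffices to compare the new observation conditionally on the past. Let $\mathcal{G}_m=\sigma(\{X_j^{a}:\,1\le j\le m,\ a\in\mathcal{A}\})$. The tuple $W_m$ — and hence $a_{m+1}$ — is $\mathcal{G}_m$-measurable, while the family $\{X_{m+1}^{a}\}_{a\in\mathcal{A}}$ is independent of $\mathcal{G}_m$ by the mutual independence assumed in the decoupled construction. Consequently, for any bounded measurable $g$, decomposing over the (deterministic, $\mathcal{G}_m$-measurable) value of $a_{m+1}$ gives
\[
\mathbb{E}\big[g(X_{m+1}^{a_{m+1}})\,\big|\,\mathcal{G}_m\big]=\sum_{a\in\mathcal{A}} I(a_{m+1}=a)\,\mathbb{E}[g(X_{m+1}^{a})]=\int g(x)\,f_{\bgtheta^*,a_{m+1}}(x)\,d\mu(x),
\]
which is exactly the conditional law prescribed for $X_{m+1}$ given $\mathcal{F}_m$ in the iterative scheme.

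To convert this into distributional equality I would test against product functionals: for bounded measurable $F$ and $g$, set $G(w)=F(w)\int g(x)\,f_{\bgtheta^*,h_{m+1}(w)}(x)\,d\mu(x)$, a function of the past tuple alone. The display above yields $\mathbb{E}[F(W_m)\,g(X_{m+1}^{a_{m+1}})]=\mathbb{E}[G(W_m)]$, while the iterative construction gives $\mathbb{E}[F(V_m)\,g(X_{m+1})]=\mathbb{E}[G(V_m)]$; the inductive hypothesis $W_m\dd V_m$ forces $\mathbb{E}[G(W_m)]=\mathbb{E}[G(V_m)]$, and appending the deterministic coordinate $a_{m+1}=h_{m+1}(\cdot)$ then gives $W_{m+1}\dd V_{m+1}$.

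The main obstacle is precisely the step justifying the displayed conditional law: one must argue carefully that although $a_{m+1}$ is random, its being measurable with respect to a $\sigma$-field independent of the freshly-indexed variables $\{X_{m+1}^{a}\}$ permits replacing the randomly selected coordinate by the fixed density $f_{\bgtheta^*,a_{m+1}}$. This hinges on indexing the decoupled variables by both time and experiment with full mutual independence; were the observations reused across time steps, the required independence of $X_{m+1}^{a}$ from $\mathcal{G}_m$ would fail. A standard monotone-class (Dynkin $\pi$–$\lambda$) argument would finally upgrade equality from product functionals to the full joint distribution, completing the induction.
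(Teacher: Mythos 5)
Your proof is correct, and it shares the paper's inductive skeleton: both arguments reduce the step from $m$ to $m+1$ to the single fact that, in the decoupled scheme, the newly consumed observation $X_{m+1}^{a_{m+1}}$ has conditional law $f_{\bgtheta^*,a_{m+1}}$ given the observed past, which is exactly the law prescribed for $X_{m+1}$ given $\mathcal{F}_m$ in the iterative scheme. Where you differ is in how this fact is established and exploited. The paper works with explicit densities: it writes the joint density of the entire potential-outcome array $\{X_i^a\}_{i\le n,\, a\in\mathcal{A}}$ together with the indicator-valued conditional mass function of the actions, computes the conditional density of the time-$(n+1)$ block given the past, and observes that this conditional density is measurable with respect to $\sigma(a_1,X_1^{a_1},\cdots,a_n,X_n^{a_n})$. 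You instead condition on $\mathcal{G}_m$, pull out the $\mathcal{G}_m$-measurable indicators $I(a_{m+1}=a)$, and invoke independence of the fresh block $\{X_{m+1}^a\}_{a\in\mathcal{A}}$ from $\mathcal{G}_m$; you then transfer the identity to the iterative scheme through product test functionals $F\otimes g$ and close with a monotone-class argument. The two routes buy slightly different things: the paper's density bookkeeping is self-contained given its standing assumption that densities with respect to baseline measures exist, whereas your argument works directly at the level of laws (the densities enter only through the marginal law of each $X_{m+1}^a$), is the more standard measure-theoretic formulation, and makes explicit the final $\pi$--$\lambda$ step that the paper leaves implicit when it passes from equality of conditional laws plus the induction hypothesis to equality of the joint distributions.
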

\begin{proof}[Proof of Lemma \ref{lem:same dist}]
We prove the lemma by induction.
When $m=1$, we know that $a_1=h_1=a_1'$, and $X_1|a'_1$ and $X^{a_1}_1|a_1$ have the same distribution. 
Thus, $(X_1^{a_1},a_1)$ and $(X_1,a_1')$ have the same distribution. 

By induction, assume that when $m=n$, random vectors $(X_1^{a_1}, X_2^{a_2}, \cdots, X_n^{a_n}, a_1, \cdots, a_n)$ and $(X_1, X_2, \cdots, X_n, a'_1, \cdots, a'_n)$ have the same distribution.

Let $m=n+1$. Define $\blX_{n}^{\mathcal{A}}=\{X_i^a\}_{1\leq i\leq n,a\in \mathcal{A}}$, and $\va^n=(a^1,\cdots,a^n)\in \mathcal{A}^n$. The density of $\blX_{n}^{\mathcal{A}}$ is given by
\begin{equation*}
    f_{\bgtheta}(\blX_{n}^{\mathcal{A}})=\prod_{i=1}^n\prod_{a\in \mathcal{A}}f_{\bgtheta,a}(X_i^a).
\end{equation*}
{Let $\boldsymbol{a}_m = (a_1, \cdots, a_m)$, where $a_1,\cdots,a_m$ are obtained from the decoupled sampling.}
Given $\blX_{n}^{\mathcal{A}}$, the conditional probability mass function of {$\bla_n$ and $\bla_{n+1}$ are}
\begin{equation*}
    f_{\bgtheta}(\va^{n}|\blX_{n}^{\mathcal{A}})=I(a^1=a_1,\cdots,a^{n}=a_{n} )\text{ and }
    f_{\bgtheta}(\va^{n+1}|\blX_{n}^{\mathcal{A}})=I(a^1=a_1,\cdots,a^{n+1}=a_{n+1} ),
\end{equation*}
{where we used the fact that the $\{a_m\}_{1\leq m\leq n+1}$ is measurable with respect to $\sigma(\blX_{n}^{\mathcal{A}})$.} As a result, the joint density functions for { $(\blX_{n}^{\mathcal{A}}, \bla_n)$ and $(\blX_{n+1}^{\mathcal{A}}, \bla_{n+1})$} 
are
\begin{equation*}
     f_{\bgtheta}(\blX_{n}^{\mathcal{A}},\va^{n})=\prod_{i=1}^n\prod_{a\in \mathcal{A}}f_{\bgtheta,a}(X_i^a) I(a^1=a_1,\cdots,a^{n}=a_{n} )
\end{equation*}
and
\begin{equation*}
     f_{\bgtheta}(\blX_{n+1}^{\mathcal{A}},\va^{n+1})=f_{\bgtheta}(\blX_{n}^{\mathcal{A}},\va^{n})\prod_{a\in \mathcal{A}}f_{\bgtheta,a}(X_{n+1}^a) I( a^{n+1}=a_{n+1} ).
\end{equation*}
Thus, given $\blX^{\mathcal{A}}_n$ and $\va_{n}$, the condition density for $\{X^a_{n+1}\}_{a\in \mathcal{A}}, a_{n+1}$ is 
\begin{equation*}
    f_{\bgtheta}(\{X_{n+1}^{a}\}_{a\in \mathcal{A}},a^{n+1}|\blX_{n}^{\mathcal{A}},\va^{n})=\prod_{a\in \mathcal{A}}f_{\bgtheta,a}(X_{n+1}^a) I( a^{n+1}=a_{n+1} ).
\end{equation*}
Note that $a_{n+1} = h_{n+1}(a_1, X_1^{a_1}, a_2, X_2^{a_2}, \cdots, a_{n}, X_{n}^{a_{n}})$. So $f_{\bgtheta}(\{X_{n+1}^{a}\}_{a\in \mathcal{A}},a^{n+1}|\blX_{n}^{\mathcal{A}},\va^{n})$ depends on $\blX_{n}^{\mathcal{A}},\va^{n}$ only through  $a_1,X_1^{a_1},\cdots,a_n,X_n^{a_n}$. 

Define $\sigma$-algebra $\mathcal{F}'_{n}=\sigma(a_1,X_1^{a_1},\cdots,a_n,X_n^{a_n})$. Because $f_{\bgtheta}(\{X_{n+1}^{a}\}_{a\in \mathcal{A}},a^{n+1}|\blX_{n}^{\mathcal{A}},\va^{n})$ is measurable in $\mathcal{F}'_{n}$, we have
\begin{equation}
    f_{\bgtheta}(\{X_{n+1}^{a}\}_{a\in \mathcal{A}},a^{n+1}|\mathcal{F}'_{n})=f_{\bgtheta}(\{X_{n+1}^{a}\}_{a\in \mathcal{A}},a^{n+1}|\blX_{n}^{\mathcal{A}},\va_{n})=\prod_{a\in \mathcal{A}}f_{\bgtheta,a}(X_{n+1}^a) I( a^{n+1}=a_{n+1} )
\end{equation}
We have 
$X_{n+1}^{a_{n+1}}|\{a_1,X^{a_1}_{1},\cdots,a_n,X^{a_n}_{n}\}\sim f_{\bgtheta ,a_{n+1}}(\cdot)$, and $a_{n+1}=h_n(a_1,X_1^{a_1},\cdots,a_n,X_n^{a_n})$. Recall that  $X_{n+1} |\{a'_1,X_{1},\cdots,a'_n,X_{n}\}\sim f_{\bgtheta ,a'_{n+1}}(\cdot)$, $a'_{n+1}=h_n(a'_1,X_1 ,\cdots,a'_n,X_n )$, as well as the induction assumption that random vectors $(X_1^{a_1}, X_2^{a_2}, \cdots, X_n^{a_n}, a_1, \cdots, a_n)$ and $(X_1, X_2, \cdots, X_n, a'_1, \cdots, a'_n)$ have the same distribution. Consequently, random vectors $(X_1^{a_1}, X_2^{a_2}, \cdots, X_{n+1}^{a_{n+1}}, a_1, \cdots, a_{n+1})$ and $(X_1, X_2, \cdots, X_{n+1}, a'_1, \cdots, a'_{n+1})$ also have the same distribution. We complete the proof of Lemma \ref{lem:same dist} by induction.

\end{proof}

\subsection{Results on Linear Spaces Indexed by a Parameter}
Linear spaces spanned by the Fisher information play a crucial role in the proof of the theorems. Note that the Fisher information matrices are depending on the parameter $\bgtheta$. In this section, we present useful linear algebra results where the linear spaces are indexed by a parameter.

{
Recall $V_{Q}(\bgtheta)=\sum_{a\in Q}\mathcal{R}(\mathcal{I}_a(\bgtheta))$, and $\mathcal{I}_a(\bgtheta)$ is the Fisher information matrix at the parameter $\bgtheta$ with the experiment $a$. 
Throughout the section, we only used the property that $\mathcal{I}_a(\bgtheta)$ is a positive semidefinite matrix and is continuous in $\bgtheta$, for all $a\in\mathcal{A}$, which is guaranteed under the regularity assumptions in Section~\ref{sec:assumptions}. The results in this section still hold even when $\mathcal{I}_a(\bgtheta)$ is not the Fisher information matrix, as long as it is still positive semidefinite and continuous in $\bgtheta$, for all $a\in\mathcal{A}$. We do not require any further assumptions.
}
\begin{lemma}\label{lem:dim-rank}
    For all $Q\subset \mathcal{A}$, and $x_a>0,a\in Q$, we have 
\begin{equation*}
    \dim(V_{Q}(\bgtheta))=\operatorname{rank}\Big(\sum_{a\in Q}x_a\mathcal{I}_a(\bgtheta)\Big).
\end{equation*}    
\end{lemma}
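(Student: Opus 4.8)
The plan is to establish the stronger statement that the two linear spaces coincide, namely
\[
\mathcal{R}\Big(\sum_{a\in Q} x_a \mathcal{I}_a(\bgtheta)\Big) = \sum_{a\in Q}\mathcal{R}(\mathcal{I}_a(\bgtheta)) = V_Q(\bgtheta),
\]
from which the lemma follows immediately by taking dimensions, since the rank of a matrix equals the dimension of its column space. Throughout I write $S=\sum_{a\in Q} x_a \mathcal{I}_a(\bgtheta)$, which is symmetric positive semidefinite because each $\mathcal{I}_a(\bgtheta)$ is positive semidefinite and each weight $x_a$ is positive.

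The inclusion $\mathcal{R}(S)\subseteq V_Q(\bgtheta)$ is elementary and uses neither positivity of the weights nor positive semidefiniteness in an essential way: every column of $S$ is a linear combination of columns of the matrices $\mathcal{I}_a(\bgtheta)$, and $\mathcal{R}(x_a\mathcal{I}_a(\bgtheta))=\mathcal{R}(\mathcal{I}_a(\bgtheta))$ since $x_a>0$. The reverse inclusion is where positive semidefiniteness is indispensable, and I would obtain it by working with kernels rather than ranges. The key identity to prove is
\[
\ker(S)=\bigcap_{a\in Q}\ker(\mathcal{I}_a(\bgtheta)).
\]
The inclusion $\supseteq$ is trivial. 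For $\subseteq$, take $v\in\ker(S)$; then $0=v^T S v = \sum_{a\in Q} x_a\, v^T\mathcal{I}_a(\bgtheta) v$, a sum of nonnegative terms, so each term vanishes, i.e. $v^T\mathcal{I}_a(\bgtheta)v=0$. Factoring $\mathcal{I}_a(\bgtheta)=B_a^T B_a$, as any positive semidefinite matrix admits, gives $\norm{B_a v}^2=0$, hence $B_a v=0$ and $\mathcal{I}_a(\bgtheta)v=B_a^T B_a v=0$, so $v\in\ker(\mathcal{I}_a(\bgtheta))$ for every $a\in Q$.

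With the kernel identity in hand, I would pass to orthogonal complements. For a symmetric matrix $M$ one has $\mathcal{R}(M)=\ker(M)^\perp$, and the standard subspace identity $\big(\bigcap_i W_i\big)^\perp=\sum_i W_i^\perp$ yields
\[
\mathcal{R}(S)=\ker(S)^\perp=\Big(\bigcap_{a\in Q}\ker(\mathcal{I}_a(\bgtheta))\Big)^\perp=\sum_{a\in Q}\ker(\mathcal{I}_a(\bgtheta))^\perp=\sum_{a\in Q}\mathcal{R}(\mathcal{I}_a(\bgtheta))=V_Q(\bgtheta).
\]
Taking dimensions gives $\operatorname{rank}(S)=\dim\mathcal{R}(S)=\dim V_Q(\bgtheta)$, which is the claim. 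The only genuinely delicate point---the real crux---is the reverse range inclusion, driven by the observation that a nonnegatively weighted sum of quadratic forms can vanish only if each vanishes; everything else is routine bookkeeping with the kernel/range duality of symmetric matrices. I note that the argument uses nothing about $\mathcal{I}_a(\bgtheta)$ beyond positive semidefiniteness, in agreement with the remark preceding the lemma that continuity in $\bgtheta$ plays no role here.
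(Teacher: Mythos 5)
Your proof is correct and follows essentially the same route as the paper: both arguments reduce the claim to the kernel identity $\ker\bigl(\sum_{a\in Q}x_a\mathcal{I}_a(\bgtheta)\bigr)=\bigcap_{a\in Q}\ker(\mathcal{I}_a(\bgtheta))=V_Q(\bgtheta)^{\perp}$, proved via the observation that a nonnegatively weighted sum of positive semidefinite quadratic forms vanishes only if each term does, and then conclude by kernel/range duality for symmetric matrices. Your version states the range equality $\mathcal{R}(S)=V_Q(\bgtheta)$ explicitly, but this is the same argument reorganized, not a different one.
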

\begin{proof}[Proof of Lemma \ref{lem:dim-rank}]
    It suffices to show that $V_Q(\bgtheta)^{\perp}=\operatorname{ker}(\sum_{a\in Q }x_a\mathcal{I}_a(\bgtheta))$.
This equation holds because $\blu\in V_Q(\bgtheta)^{\perp}$ if and only if $\innerpoduct{\mathcal{I}_a(\bgtheta)\bly_a}{\blu}=0$ for all $a\in Q$ and $\bly_a\in \mathbb{R}^p$, if and only if $\mathcal{I}_a(\bgtheta)\blu=0$ for all $a\in Q$, if and only if $\blu^T\mathcal{I}_a(\bgtheta)\blu=0$ for all $a\in Q$, if and only if $\blu^T(\sum_{a\in Q}x_a\mathcal{I}_a(\bgtheta))\blu=0$, if and only if $\blu\in \operatorname{ker}(\sum_{a\in Q }x_a\mathcal{I}_a(\bgtheta))$.
\end{proof}

\begin{lemma}\label{lem:dim-rank-c}
    Assume $\bgTheta$ is a path connected and compact set. The following statements are equivalent:
    \begin{enumerate}
        \item for all $Q\subset \mathcal{A}$, $\dim(V_{Q}({\bgtheta}))$ does not depend on ${\bgtheta}$,
        \item for all $Q\subset \mathcal{A}$, $\operatorname{rank}(\sum_{a\in Q}\mathcal{I}_a({\bgtheta}))$ does not depend on ${\bgtheta}$,
        \item  there exists $0<\underline{c}<\overline{c}<\infty$, which does not depend on $Q$, such that
\begin{equation*}
 \underline{c} \cdot \blP_{V_Q({\bgtheta})}\preceq  \sum_{a\in Q} \mathcal{I}_a({\bgtheta}) \preceq \overline{c} \cdot \blP_{V_Q({\bgtheta})}, \forall Q\subset \mathcal{A},
\end{equation*}
where $\blP_V$ denotes the orthogonal projection matrix onto vector space $V$. 
    \end{enumerate}
   
\end{lemma}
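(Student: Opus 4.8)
The plan is to prove $(1)\Leftrightarrow(2)$ directly and then establish the two-way implication between $(2)$ and $(3)$. The equivalence of $(1)$ and $(2)$ is immediate from Lemma~\ref{lem:dim-rank}: taking $x_a=1$ for every $a\in Q$ gives $\dim(V_Q(\bgtheta))=\operatorname{rank}(\sum_{a\in Q}\mathcal{I}_a(\bgtheta))$ for each $\bgtheta$, so the two constancy statements coincide verbatim. All the real work is therefore in relating $(2)$ to the two-sided operator bound $(3)$.

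For $(2)\Rightarrow(3)$, write $M_Q(\bgtheta)=\sum_{a\in Q}\mathcal{I}_a(\bgtheta)$, a continuous positive semidefinite matrix-valued function whose range equals $V_Q(\bgtheta)$ (its kernel is $V_Q(\bgtheta)^\perp$, as shown in the proof of Lemma~\ref{lem:dim-rank}). The upper bound is the easy half: for any positive semidefinite matrix with range $V_Q(\bgtheta)$ one has $M_Q(\bgtheta)\preceq \norm{M_Q(\bgtheta)}_{op}\,\blP_{V_Q(\bgtheta)}$, and $\norm{M_Q(\bgtheta)}_{op}\leq \sum_{a\in\mathcal{A}}\norm{\mathcal{I}_a(\bgtheta)}_{op}$, so continuity of each $\norm{\mathcal{I}_a(\cdot)}_{op}$ together with compactness of $\bgTheta$ yields a finite $\overline{c}=\sup_{\bgtheta\in\bgTheta}\sum_{a\in\mathcal{A}}\norm{\mathcal{I}_a(\bgtheta)}_{op}$, which is independent of $Q$. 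For the lower bound I would fix $Q$ with $r:=\dim V_Q\geq 1$; by $(2)$ the rank of $M_Q(\bgtheta)$ is constantly $r$, so its ordered eigenvalues $\lambda_1(\bgtheta)\geq\cdots\geq\lambda_p(\bgtheta)$, continuous in $\bgtheta$ by Weyl's inequality, satisfy $\lambda_r(\bgtheta)>0=\lambda_{r+1}(\bgtheta)$ everywhere. A continuous strictly positive function on the compact set $\bgTheta$ attains a positive minimum $\underline{c}_Q>0$, giving $M_Q(\bgtheta)\succeq \underline{c}_Q\,\blP_{V_Q(\bgtheta)}$. Since $\mathcal{A}$ is finite there are finitely many admissible $Q$, so $\underline{c}:=\min_{Q:\,V_Q\neq\{0\}}\underline{c}_Q$ is positive and $Q$-independent; shrinking $\underline{c}$ if necessary ensures $\underline{c}<\overline{c}$.

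For the converse $(3)\Rightarrow(2)$, I would use the bound to show that the spectrum of $M_Q(\bgtheta)$ avoids the open interval $(0,\underline{c})$: the relation $\underline{c}\,\blP_{V_Q(\bgtheta)}\preceq M_Q(\bgtheta)\preceq \overline{c}\,\blP_{V_Q(\bgtheta)}$ forces every eigenvalue of $M_Q(\bgtheta)$ to be either $0$ on $V_Q(\bgtheta)^\perp$ or at least $\underline{c}$ on $V_Q(\bgtheta)$. Now fix any $\bgtheta_0,\bgtheta_1\in\bgTheta$ and join them by a continuous path, using path-connectedness. Each ordered eigenvalue varies continuously along the path, so if $\operatorname{rank}(M_Q(\bgtheta_0))\neq\operatorname{rank}(M_Q(\bgtheta_1))$, some eigenvalue would pass from a value at least $\underline{c}$ down to $0$ and, by the intermediate value theorem, take a value in $(0,\underline{c})$, contradicting the spectral gap. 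Hence the rank is constant in $\bgtheta$, which is $(2)$.

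The main obstacle I anticipate is the lower bound in $(2)\Rightarrow(3)$: the object of interest is the smallest nonzero eigenvalue of $M_Q(\bgtheta)$, which fails to be continuous in $\bgtheta$ in general precisely because the rank can jump. The resolution is that hypothesis $(2)$ pins the rank to the constant $r$, so the smallest nonzero eigenvalue coincides with the continuous ordered eigenvalue $\lambda_r(\bgtheta)$, after which compactness supplies the positive infimum and the finiteness of $\mathcal{A}$ supplies uniformity over $Q$. It is worth flagging that the two topological hypotheses play distinct roles: compactness drives the positive minima in $(2)\Rightarrow(3)$, whereas path-connectedness drives the intermediate-value argument in $(3)\Rightarrow(2)$.
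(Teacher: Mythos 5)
Your proof is correct and takes essentially the same route as the paper's: the equivalence of (1) and (2) via Lemma~\ref{lem:dim-rank}; the lower bound in $(2)\Rightarrow(3)$ obtained because constant rank makes the $r$-th ordered eigenvalue a continuous, strictly positive function on the compact set $\bgTheta$, with finiteness of $\mathcal{A}$ giving $Q$-uniformity; and $(3)\Rightarrow(2)$ via the spectral gap forced by the two-sided bound combined with path-connectedness, continuity of ordered eigenvalues, and the intermediate value theorem. The only cosmetic differences are your choice of $\overline{c}=\sup_{\bgtheta\in\bgTheta}\sum_{a\in\mathcal{A}}\norm{\mathcal{I}_a(\bgtheta)}_{op}$ in place of the paper's per-$Q$ maximum of the largest nonzero eigenvalue, and your phrasing of the contradiction as an eigenvalue being forced into the forbidden interval $(0,\underline{c})$ rather than the paper's explicit midpoint construction along the path.
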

\begin{proof}[Proof of Lemma \ref{lem:dim-rank-c}]
~~
\paragraph*{1 $\iff$ 2}
{ This equivalency holds because $\dim(V_Q({\bgtheta}))=\text{rank}(\sum_{a\in Q}\mathcal{I}_a({\bgtheta}))$, according to Lemma~\ref{lem:dim-rank}.}

\paragraph*{3 $\implies$ 2} For $Q\subset \mathcal{A}$, let $r({\bgtheta})=\operatorname{rank}( \sum_{a\in Q}\mathcal{I}_a({\bgtheta}) )$.
Also, let 
\begin{equation}\label{def:maxrank}
    r=\sup_{{\bgtheta}\in \bgTheta} \operatorname{rank}( \sum_{a\in Q}\mathcal{I}_a({\bgtheta}) ).
\end{equation}
By the definition of supremum, there exists ${\bgtheta}_0\in \bgTheta$ such that
\[
r-1/2\leq \operatorname{rank}( \sum_{a\in Q}\mathcal{I}_a({\bgtheta}_0) )\leq r.
\]
Because the rank of a  matrix can only take integer values, we know that $\operatorname{rank}( \sum_{a\in Q}\mathcal{I}_a({\bgtheta}_0) ) = r$.
 Let $\mu_1(\blA)\geq \mu_2(\blA) \geq \cdots \geq \mu_p(\blA)$ be the eigenvalues of a positive semidefinite matrix $\blA$. Applying Courant–Fischer–Weyl min-max principle (see Chapter I of \cite{hilbert1953methods} or Corollary III.1.2 in \cite{bhatia2013matrix}) to $\underline{c} \cdot \blP_{V_Q({\bgtheta})}\preceq  \sum_{a\in Q} \mathcal{I}_a({\bgtheta})$, and $r({\bgtheta})=\dim(V_Q({\bgtheta}))$ (see Lemma~\ref{lem:dim-rank}), we obtain 
\begin{equation}\label{equ:rank5.3}
    \mu_{r({\bgtheta})}( \sum_{a\in Q} \mathcal{I}_a({\bgtheta}) )\geq \mu_{r({\bgtheta})}(  \underline{c} \cdot \blP_{V_Q({\bgtheta})} )=\underline{c}>0, \forall {\bgtheta}\in \bgTheta.
\end{equation}
Applying Courant–Fischer–Weyl min-max principle to $  \sum_{a\in Q} \mathcal{I}_a({\bgtheta}) \preceq \overline{c} \cdot \blP_{V_Q({\bgtheta})}$, and $r({\bgtheta})=\dim(V_Q({\bgtheta}))$, we obtain
\begin{equation*}
    \mu_{r({\bgtheta})+1}( \sum_{a\in Q} \mathcal{I}_a({\bgtheta}) )\leq \mu_{r({\bgtheta})+1}(  \overline{c} \cdot \blP_{V_Q({\bgtheta})} )=0, \forall {\bgtheta}\in \bgTheta.
\end{equation*}
We will prove $r(\bgtheta)=r$ for all $\bgtheta\in\bgTheta$ by contradiction. Assume, in contrast,  that there exists $r_1=r({\bgtheta}_1)<r({\bgtheta}_0)=r$. Then, there exists a continuous path $h:[0,1]\to \bgTheta$ such that $h(0)={\bgtheta}_0$, and $h(1)={\bgtheta}_1$. Set 
$$
u(t)=\mu_{r }( \sum_{a\in Q} \mathcal{I}_a( h(t) ) ).
$$
Note that $u(0)=\mu_{r }( \sum_{a\in Q} \mathcal{I}_a({\bgtheta}_0) )\geq \underline{c}$ and $u( 1 )=\mu_{r }( \sum_{a\in Q} \mathcal{I}_a({\bgtheta}_1) )=0$. Because $u(t)$ is a continuous function in $t\in [0,1]$, by the intermediate value theorem, there exists $t'\in(0,1)$ such that $u(t')=\underline{c}/2$. 

Let ${\bgtheta}_2=h(t')$. Because
$\mu_{r }( \sum_{a\in Q} \mathcal{I}_a({\bgtheta}_2) ) = \underline{c}/2>0$, we know that $rank( \sum_{a\in Q} \mathcal{I}_a({\bgtheta}_2))\geq r$. By definition \eqref{def:maxrank}, we know that $rank( \sum_{a\in Q} \mathcal{I}_a({\bgtheta}_2))\leq r$. Thus, $r({\bgtheta}_2)=r$. 
However, $\mu_{r({\bgtheta}_2) }( \sum_{a\in Q} \mathcal{I}_a({\bgtheta}_2) )=\underline{c}/2$ contradicts inequality \eqref{equ:rank5.3}. This completes the proof that $r({\bgtheta})=r$ for all ${\bgtheta}\in \bgTheta$.

\paragraph*{2 $\implies$ 3}

For $Q\subset \mathcal{A}$, define
\begin{equation*}
    c_{min}(Q)=\min_{{\bgtheta}\in \bgTheta }\Lambda_{min}\Big(\sum_{a\in Q}\mathcal{I}_a({\bgtheta})\Big), \text{ and }c_{max}(Q)=\max_{{\bgtheta}\in \bgTheta }\Lambda_{max}\Big(\sum_{a\in Q}\mathcal{I}_a({\bgtheta})\Big),
\end{equation*}
where $\Lambda_{min}$ and $\Lambda_{max}$ represent the smallest and largest non-zero eigenvalue of a positive semidefinite matrix, respectively.

Because $\operatorname{rank}(\sum_{a\in Q}\mathcal{I}_a({\bgtheta}))$ does not depend on ${\bgtheta}$, let ${r}=\operatorname{rank}(\sum_{a\in Q}\mathcal{I}_a({\bgtheta}))$. Let $\lambda_{(s)}(\sum_{a\in Q}\mathcal{I}_a({\bgtheta}) )$ denote the $s-$th largest eigenvalue of $\sum_{a\in Q}\mathcal{I}_a({\bgtheta})$, $s=1,2,\cdots,p$.
Note that $\Lambda_{min}(\sum_{a\in Q}\mathcal{I}_a({\bgtheta}))=\lambda_{(r)}(\sum_{a\in Q}\mathcal{I}_a({\bgtheta}) )$ and $\Lambda_{max}(\sum_{a\in Q}\mathcal{I}_a({\bgtheta}))=\lambda_{(1)}(\sum_{a\in Q}\mathcal{I}_a({\bgtheta}) )$. Now, we know that $\Lambda_{min}$ and $\Lambda_{max}$ are continuous functions provided $\operatorname{rank}(\sum_{a\in Q}\mathcal{I}_a({\bgtheta}))$ does not depend on ${\bgtheta}$, and $\mathcal{I}_a({\bgtheta})$ is continuous over compact set $\bgTheta$. Thus,
$0<c_{min}(Q)\leq c_{max}(Q)<\infty$.

Recall that $V_Q({\bgtheta})^{\perp}=\operatorname{ker}(\sum_{a\in Q}\mathcal{I}_a({\bgtheta}))$ from Lemma~\ref{lem:dim-rank}. Because for the positive semidefinite matrix $\sum_{a\in Q}\mathcal{I}_a({\bgtheta})$, $\operatorname{ker}(\sum_{a\in Q}\mathcal{I}_a({\bgtheta}))^{\perp}=\mathcal{R}(\sum_{a\in Q}\mathcal{I}_a({\bgtheta}) )$, we obtain $V_Q({\bgtheta})=\mathcal{R}(\sum_{a\in Q}\mathcal{I}_a({\bgtheta}))$.

Applying eigendecomposition of $\sum_{a\in Q}\mathcal{I}_a({\bgtheta}) $, we have
\begin{equation*}
    c_{min}(Q)\cdot \blP_{V_Q({\bgtheta})} \preceq \sum_{a\in Q}\mathcal{I}_a({\bgtheta}) \preceq c_{max}(Q)\cdot \blP_{V_Q({\bgtheta})}.
\end{equation*}
Set $\underline{c}=\min_{Q\subset \mathcal{A}}c_{min}(Q)$ and $\overline{c}=\max_{Q\subset \mathcal{A}}c_{max}(Q)$. Since $\mathcal{A}$ is a finite set, we know that $\underline{c}>0$ and $\overline{c}<\infty$.
\end{proof}
\subsection{Other Supporting Lemmas}
\begin{lemma}\label{lemma:trace}
    Assume that positive semidefinite matrices $\blA,\blB$ and $\blC$ have the same size. If $\blA\succeq \blC$, then
    \begin{equation}
        \operatorname{tr}(\blA\blB)\geq \operatorname{tr}(\blC\blB).
    \end{equation}
\end{lemma}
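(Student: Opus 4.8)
The plan is to reduce the claim to the elementary fact that the trace of a product of two positive semidefinite matrices is nonnegative. First I would set $\blD = \blA - \blC$. By hypothesis $\blA \succeq \blC$, so $\blD$ is positive semidefinite. By the linearity of the trace, $\tr(\blA\blB) - \tr(\blC\blB) = \tr(\blD\blB)$, so it suffices to show $\tr(\blD\blB) \geq 0$.

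Next, since $\blB$ is positive semidefinite, it admits a unique positive semidefinite square root $\blB^{1/2}$ satisfying $\blB = \blB^{1/2}\blB^{1/2}$. Using the cyclic invariance of the trace, I would rewrite
\begin{equation*}
\tr(\blD\blB) = \tr(\blD\blB^{1/2}\blB^{1/2}) = \tr(\blB^{1/2}\blD\blB^{1/2}).
\end{equation*}
The matrix $\blB^{1/2}\blD\blB^{1/2}$ is obtained from $\blD$ by a congruence transformation with $\blB^{1/2}$: since $\blD \succeq 0$, for any vector $\blx$ we have $\blx^T \blB^{1/2}\blD\blB^{1/2}\blx = (\blB^{1/2}\blx)^T \blD (\blB^{1/2}\blx) \geq 0$, so $\blB^{1/2}\blD\blB^{1/2}$ is positive semidefinite.

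Finally, the trace of a positive semidefinite matrix equals the sum of its nonnegative eigenvalues, hence is nonnegative. Therefore $\tr(\blD\blB) = \tr(\blB^{1/2}\blD\blB^{1/2}) \geq 0$, which yields $\tr(\blA\blB) \geq \tr(\blC\blB)$ and completes the argument.

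There is no serious obstacle here; the only point requiring a little care is that the manipulation relies on the existence of the PSD square root together with the cyclic property of the trace, rather than on any assumption that $\blD$ and $\blB$ commute (they need not). An alternative route that avoids the square root entirely is to diagonalize $\blB = \blU \bgLambda \blU^T$ and observe $\tr(\blD\blB) = \sum_i \lambda_i (\blU^T \blD \blU)_{ii}$, where each $\lambda_i \geq 0$ and each diagonal entry satisfies $(\blU^T\blD\blU)_{ii} = \ble_i^T \blU^T \blD \blU \ble_i \geq 0$ because $\blD \succeq 0$; either route is routine.
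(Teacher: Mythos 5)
Your proof is correct and follows essentially the same route as the paper's: both reduce to the fact that $\blB^{1/2}(\blA-\blC)\blB^{1/2}$ is positive semidefinite and hence has nonnegative trace. The paper states this in one line; your write-up merely fills in the justification (cyclicity of the trace and preservation of positive semidefiniteness under congruence), so there is no substantive difference.
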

\begin{proof}[Proof of Lemma \ref{lemma:trace}]
    Because $\blB^{1/2}(\blA-\blC) \blB^{1/2}$ is positive semi-definite, 
    \begin{equation}
        \operatorname{tr}(\blA\blB)-\operatorname{tr}(\blC\blB)=\operatorname{tr}(\blB^{1/2}(\blA-\blC) \blB^{1/2})\geq 0.
    \end{equation}
    This completes the proof.
\end{proof}

\begin{lemma}[Multivariate Cauchy-Schwartz Inequality]
\label{lemma:MCS}
    For any random variable $z$ and random vector $\bly$, if $\operatorname{cov}(\bly)=\bgSigma_{\bly}$ is positive definite matrix, then
\begin{equation}\label{ineq:MCR}
    \operatorname{var}(z)\geq \operatorname{cov}(z,\bly) \operatorname{cov}(\bly )^{-1}\operatorname{cov}(\bly,z).
\end{equation}
\end{lemma}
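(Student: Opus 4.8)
The plan is to prove the inequality by producing a nonnegative quantity whose value equals the gap between the two sides of \eqref{ineq:MCR}. First I would reduce to the centered case: subtracting means, i.e. replacing $z$ by $z-\mathbb{E} z$ and $\bly$ by $\bly-\mathbb{E}\bly$, alters none of the variances or covariances appearing in \eqref{ineq:MCR}, so without loss of generality I may assume $\mathbb{E} z=0$ and $\mathbb{E}\bly=\bm{0}_p$. Under this normalization $\operatorname{var}(z)=\mathbb{E}[z^2]$, $\operatorname{cov}(\bly)=\mathbb{E}[\bly\bly^T]$, $\operatorname{cov}(\bly,z)=\mathbb{E}[\bly z]$, and $\operatorname{cov}(z,\bly)=\operatorname{cov}(\bly,z)^T$, which keeps all subsequent algebra transparent.

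The key step is to consider the residual of the best linear predictor of $z$ from $\bly$. I would set $\blb=\operatorname{cov}(\bly)^{-1}\operatorname{cov}(\bly,z)\in\mathbb{R}^p$, which is well defined since $\operatorname{cov}(\bly)$ is positive definite, and define the scalar random variable $w=z-\blb^T\bly$. Expanding $\operatorname{var}(w)=\operatorname{var}(z)-2\operatorname{cov}(z,\blb^T\bly)+\operatorname{var}(\blb^T\bly)$ and using $\operatorname{cov}(z,\blb^T\bly)=\blb^T\operatorname{cov}(\bly,z)$ together with $\operatorname{var}(\blb^T\bly)=\blb^T\operatorname{cov}(\bly)\blb$, I would substitute the definition of $\blb$; both the cross term and the quadratic term collapse to the same expression $\operatorname{cov}(z,\bly)\operatorname{cov}(\bly)^{-1}\operatorname{cov}(\bly,z)$, leaving
\begin{equation*}
\operatorname{var}(w)=\operatorname{var}(z)-\operatorname{cov}(z,\bly)\operatorname{cov}(\bly)^{-1}\operatorname{cov}(\bly,z).
\end{equation*}
Since $\operatorname{var}(w)\geq 0$ for any random variable, the inequality \eqref{ineq:MCR} follows at once, and equality holds precisely when $z$ is an affine function of $\bly$.

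An equivalent and shorter route, which also reuses machinery already present in the paper, is to observe that the joint covariance matrix
\begin{equation*}
\blM=\begin{pmatrix}\operatorname{var}(z) & \operatorname{cov}(z,\bly)\\ \operatorname{cov}(\bly,z) & \operatorname{cov}(\bly)\end{pmatrix}
\end{equation*}
is positive semidefinite, being the covariance matrix of the stacked vector $(z,\bly^T)^T$. Because the lower-right block $\operatorname{cov}(\bly)$ is positive definite, the Schur complement condition (Theorem~1.12(b) in \cite{zhang2006schur}, exactly as invoked in the proof of Lemma~\ref{lem:convex F}) guarantees that the Schur complement $\operatorname{var}(z)-\operatorname{cov}(z,\bly)\operatorname{cov}(\bly)^{-1}\operatorname{cov}(\bly,z)$ is nonnegative, which is precisely \eqref{ineq:MCR}. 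I expect no substantive obstacle in either argument; the only points demanding care are confirming that $\blM$ is genuinely a covariance matrix (hence positive semidefinite) and that the dimensions in the bilinear form $\operatorname{cov}(z,\bly)\operatorname{cov}(\bly)^{-1}\operatorname{cov}(\bly,z)$ combine to a scalar, both of which are routine bookkeeping rather than a genuine difficulty.
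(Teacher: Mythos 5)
Your first argument is essentially identical to the paper's proof: with $\blb=\operatorname{cov}(\bly)^{-1}\operatorname{cov}(\bly,z)$, your residual $w=z-\blb^T\bly$ is exactly the quantity $z-\operatorname{cov}(z,\bly)\bgSigma_{\bly}^{-1}\bly$ whose variance the paper expands, and the expansion showing $\operatorname{var}(w)=\operatorname{var}(z)-\operatorname{cov}(z,\bly)\operatorname{cov}(\bly)^{-1}\operatorname{cov}(\bly,z)\geq 0$ is the same computation. Your alternative Schur-complement route is also correct and is a genuinely different argument: positive semidefiniteness of the joint covariance matrix of $(z,\bly^T)^T$, combined with positive definiteness of the lower-right block and Theorem~1.12 of \cite{zhang2006schur} (which the paper already invokes in the proof of Lemma~\ref{lem:convex F}), immediately yields \eqref{ineq:MCR}; this buys a one-line proof that generalizes directly to vector-valued $z$, at the cost of importing the block-matrix characterization, whereas the paper's residual expansion is elementary and self-contained.
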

\begin{proof}[Proof of Lemma~\ref{lemma:MCS}]

Because 
\begin{equation*}
\begin{split}
    0\leq&\operatorname{var} \big(z- \operatorname{cov}(z,\bly) \bgSigma_{\bly}^{-1}\bly \big)\\
    =&\operatorname{var} (z)-2 \operatorname{cov}\Big(\operatorname{cov}(z,\bly) \bgSigma_{\bly}^{-1}\bly ,z \Big)+\operatorname{var}\Big(\operatorname{cov}(z,\bly) \bgSigma_{\bly}^{-1}  {\bly} \Big)\\
    =&\operatorname{var} (z)-2 \operatorname{cov}(z,\bly) \bgSigma_{\bly}^{-1}\operatorname{cov}(\bly ,z)+  \operatorname{cov}(z,\bly) \bgSigma_{\bly}^{-1}\bgSigma_{\bly}    \bgSigma_{\bly}^{-1}\operatorname{cov}(\bly,z) \\
    =&\operatorname{var} (z)-\operatorname{cov}(z,\bly)  \{\operatorname{cov}(\bly)\}^{-1} \operatorname{cov}(\bly ,z),
\end{split}
\end{equation*}
we complete the proof of Lemma~\ref{lemma:MCS}.

\end{proof}

\begin{lemma}\label{lemma:assumption-AB}
Assumptions~\ref{ass:6A} and \ref{ass:7A} imply Assumptions~\ref{ass:6B} and \ref{ass:7B}.
\end{lemma}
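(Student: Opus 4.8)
The plan is to exploit the reparametrization of Assumption~\ref{ass:6A} to express every Fisher information matrix through its compressed counterpart, and then to obtain \ref{ass:6B} from the purely linear-algebraic equivalence already proved in Lemma~\ref{lem:dim-rank-c}, while deriving \ref{ass:7B} by a direct comparison in the positive semidefinite order. The first step, shared by both parts, is the chain-rule identity: since $f_{\bgtheta,a}=h_{\blZ_a\bgtheta,a}$ and $\bgxi_a=\blZ_a\bgtheta$, we have $\nabla_{\bgtheta}\log f_{\bgtheta,a}(X)=\blZ_a^{T}\nabla_{\bgxi_a}\log h_{\bgxi_a,a}(X)$, hence
\begin{equation*}
\mathcal{I}_a(\bgtheta)=\blZ_a^{T}\,\mathcal{I}_{\bgxi_a,a}(\bgxi_a)\,\blZ_a .
\end{equation*}
Because $\mathcal{I}_{\bgxi_a,a}(\bgxi_a)$ is symmetric positive definite (nonsingular by \ref{ass:6A}) and $\blZ_a$ has full row rank $p_a$, factoring $\mathcal{I}_{\bgxi_a,a}=B^{T}B$ with $B$ invertible gives $\mathcal{I}_a(\bgtheta)=(B\blZ_a)^{T}(B\blZ_a)$, so $\operatorname{rank}\mathcal{I}_a(\bgtheta)=p_a$ and $\mathcal{R}(\mathcal{I}_a(\bgtheta))=\mathcal{R}(\blZ_a^{T})$. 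The point to record is that this column space is independent of $\bgtheta$.

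For \ref{ass:6B}, the preceding observation yields $V_Q(\bgtheta)=\sum_{a\in Q}\mathcal{R}(\blZ_a^{T})$, a subspace that does not depend on $\bgtheta$; in particular $\dim V_Q(\bgtheta)$ is constant in $\bgtheta$. This is precisely condition~1 of Lemma~\ref{lem:dim-rank-c}, whose hypotheses (path-connected compact $\bgTheta$, continuous positive semidefinite $\mathcal{I}_a$) are furnished by Assumptions~\ref{ass:1} and \ref{ass:3}. Invoking the equivalence in that lemma, condition~3 holds, i.e. there exist $0<\underline c\le\overline c<\infty$ independent of $Q$ and $\bgtheta$ with $\underline c\,\blP_{V_Q(\bgtheta)}\preceq\sum_{a\in Q}\mathcal{I}_a(\bgtheta)\preceq\overline c\,\blP_{V_Q(\bgtheta)}$, which is exactly \ref{ass:6B}. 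No separate spectral estimate is needed for this half beyond the constancy of the dimension.

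For \ref{ass:7B}, I would first note $D_{\mathrm{KL}}(f_{\bgtheta^*,a}\|f_{\bgtheta,a})=D_{\mathrm{KL}}(h_{\bgxi^*_a,a}\|h_{\bgxi_a,a})$, so Assumption~\ref{ass:7A} (applied for each $a$, taking the minimal constant over the finite set $\mathcal{A}$) gives $D_{\mathrm{KL}}(f_{\bgtheta^*,a}\|f_{\bgtheta,a})\ge C\,\|\blZ_a(\bgtheta-\bgtheta^*)\|^{2}=C\,(\bgtheta-\bgtheta^*)^{T}\blZ_a^{T}\blZ_a(\bgtheta-\bgtheta^*)$. It then remains to dominate $\mathcal{I}_a(\bgtheta^*)$ by $\blZ_a^{T}\blZ_a$. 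Setting $\overline\lambda=\sup_{a,\bgtheta}\lambda_{max}(\mathcal{I}_{\bgxi_a,a}(\bgxi_a))$, the identity above gives $\mathcal{I}_a(\bgtheta^*)=\blZ_a^{T}\mathcal{I}_{\bgxi_a,a}(\bgxi^*_a)\blZ_a\preceq\overline\lambda\,\blZ_a^{T}\blZ_a$, hence $\blZ_a^{T}\blZ_a\succeq\overline\lambda^{-1}\mathcal{I}_a(\bgtheta^*)$. Multiplying the KL bound by $\pi(a)\ge 0$ and summing over $a\in\mathcal{A}$ produces $\sum_{a}\pi(a)D_{\mathrm{KL}}(f_{\bgtheta^*,a}\|f_{\bgtheta,a})\ge (C/\overline\lambda)\sum_{a}\pi(a)(\bgtheta-\bgtheta^*)^{T}\mathcal{I}_a(\bgtheta^*)(\bgtheta-\bgtheta^*)$, which is \ref{ass:7B} with constant $C/\overline\lambda$.

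The one place needing genuine care — and the main obstacle — is the finiteness of $\overline\lambda$ (and, dually, the positivity implicit in \ref{ass:6A}), which requires uniformity over $\bgtheta$ and $a$. I would secure it by showing $\mathcal{I}_{\bgxi_a,a}(\bgxi_a)$ is continuous in $\bgtheta$: since $\blZ_a\blZ_a^{T}$ is invertible, inverting the identity yields $\mathcal{I}_{\bgxi_a,a}(\bgxi_a)=(\blZ_a\blZ_a^{T})^{-1}\blZ_a\,\mathcal{I}_a(\bgtheta)\,\blZ_a^{T}(\blZ_a\blZ_a^{T})^{-1}$, a continuous function of the continuous map $\bgtheta\mapsto\mathcal{I}_a(\bgtheta)$. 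Continuity on the compact $\bgTheta$ together with nonsingularity bounds $\lambda_{max}$ from above and $\lambda_{min}$ away from $0$ for each $a$, and taking extrema over the finite $\mathcal{A}$ makes these bounds uniform; thus $\overline\lambda<\infty$ and the constant in \ref{ass:7B} is strictly positive. This same continuity-plus-compactness input is what activates Lemma~\ref{lem:dim-rank-c} in the \ref{ass:6B} part, so both halves ultimately rest on the same elementary uniformity argument.
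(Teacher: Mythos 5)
Your proposal is correct and follows essentially the same route as the paper: the identity $\mathcal{I}_a(\bgtheta)=\blZ_a^{T}\mathcal{I}_{\bgxi_a,a}(\bgxi_a)\blZ_a$, the $\bgtheta$-independence of $\mathcal{R}(\mathcal{I}_a(\bgtheta))=\mathcal{R}(\blZ_a^T)$ feeding into Lemma~\ref{lem:dim-rank-c} for Assumption~\ref{ass:6B}, and the chain $D_{\mathrm{KL}}\geq C\|\blZ_a(\bgtheta-\bgtheta^*)\|^2\geq (C/\overline\lambda)(\bgtheta-\bgtheta^*)^T\mathcal{I}_a(\bgtheta^*)(\bgtheta-\bgtheta^*)$ for Assumption~\ref{ass:7B}. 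Even your continuity device $\mathcal{I}_{\bgxi_a,a}(\bgxi_a)=(\blZ_a\blZ_a^{T})^{-1}\blZ_a\,\mathcal{I}_a(\bgtheta)\,\blZ_a^{T}(\blZ_a\blZ_a^{T})^{-1}$ is exactly the paper's Moore--Penrose inversion $\{\blZ_a^\dagger\}^T\mathcal{I}_a(\bgtheta)\blZ_a^\dagger$ written out explicitly.
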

\begin{proof}[Proof of Lemma~\ref{lemma:assumption-AB}]
Under Assumption~\ref{ass:6A}, we have
\begin{equation*}
    \mathcal{I}_a(\bgtheta)=\bm{Z}_a^T \mathcal{I}_{\bgxi_a,a}(\bm{Z}_a \bgtheta ) \bm{Z}_a.
\end{equation*}
{Let $\blZ_a^\dagger$ be the Moore-Penrose inverse of $\blZ_a$.}
Because $\blZ_a$ has full row rank, we know that $\blZ_a \blZ_a^\dagger=I_{p_a}$, and
\[
\mathcal{I}_{\bgxi_a,a}( \blZ_a \bgtheta )=\{\blZ_a^\dagger\}^T \mathcal{I}_a(\bgtheta) \blZ_a^\dagger,
\]
which implies that $\mathcal{I}_{\bgxi_a,a}( \blZ_a \bgtheta )$ is continuous in $\bgtheta$. Thus, there exists $0<c_1<c_2<\infty$ such that
\begin{equation*}
    c_1 I_{p_a} \preceq \mathcal{I}_{\bgxi_a,a}(\bm{Z}_a \bgtheta ) \preceq c_2 I_{p_a},
\end{equation*}
and for any $Q\subset \mathcal{A}$, 
\begin{equation*}
    V_{Q}(\bgtheta)=\sum_{a\in Q}\mathcal{R}(\mathcal{I}_a(\bgtheta))=\sum_{a\in Q}\mathcal{R}(\bm{Z}^T_a).
\end{equation*}
By Lemma~\ref{lem:dim-rank-c}, we know that
\begin{equation*}
    \dim(V_{Q}(\bgtheta))=\dim\Big(\sum_{a\in Q}\mathcal{R}(\bm{Z}^T_a)\Big)
\end{equation*}
does not depend on $\bgtheta$.

By Lemma \ref{lem:dim-rank-c}, we obtain inequality \eqref{ineq:double c bound}. This proves that Assumption~\ref{ass:6A} implies Assumption~\ref{ass:6B}. 

Next, we consider Assumption \ref{ass:7B}. Under Assumptions~\ref{ass:6A} and \ref{ass:7A}, we obtain
\begin{equation*}
\begin{split}
    &D_{\mathrm{KL}}( f_{{\bgtheta}^*,a} \| f_{{\bgtheta},a} )\\
    = &D_{\mathrm{KL}}(h_{\bgxi^*_{a},a}\| h_{\bgxi_{a},a} )\\
    \geq  &C \norm{ \bgxi^*_{a} - \bgxi_{a} }^2\\
    = &C(\bgtheta-\bgtheta^*)^T\bm{Z}^T_a\bm{Z}_a (\bgtheta-\bgtheta^*)\\
    \geq &\frac{C}{c_2}(\bgtheta-\bgtheta^*)^T\bm{Z}^T_a \mathcal{I}_{\bgxi_a,a}(\bm{Z}_a \bgtheta^* ) \bm{Z}_a (\bgtheta-\bgtheta^*)\\
    = &\frac{C}{c_2}(\bgtheta-\bgtheta^*)^T  \mathcal{I}_{a}(\bgtheta^*)  (\bgtheta-\bgtheta^*).
\end{split}
\end{equation*}
Thus, for any ${\bgpi} \in \ShatA$ and $\bgtheta\in \bgTheta$, we have
\begin{equation*}
    \sum_{a \in \mathcal{A}}\pi(a)D_{\mathrm{KL}}(f_{\bgtheta^*,a}\|f_{\bgtheta,a} )\geq  \frac{C}{c_2}   \sum_{a\in \mathcal{A}}\pi(a)(\bgtheta-\bgtheta^*)^T  \mathcal{I}_a(\bgtheta^*)(\bgtheta-\bgtheta^*). 
\end{equation*}
Replacing the constant $\frac{C}{c_2}$ by $C$, we obtain inequality \eqref{ineq:piD_KL} in Assumption~\ref{ass:7B}. Thus, we obtain Assumptions~\ref{ass:6B}-\ref{ass:7B}. 

\end{proof}

\section{Proof of Theoretical Results}
\subsection{Proof of Lemma \ref{lem:complexity}}\label{app:computational_complexity}
\begin{proof}[Proof of Lemma \ref{lem:complexity}]
The standard computational complexity for both matrix multiplication and matrix inversion of a matrix of size $p\times p$ is $O(p^3)$. Consequently, the computational complexity of evaluating 
$  \mathbb{G}_{\widehat{\bgtheta}_{n}^{\text{ML}} } \left[ \big\{ \mathcal{I}(\widehat{\bgtheta}_{n}^{\text{ML}};\va_n,a) \big\}^{-1} \right]$ for each $ a\in \mathcal{A}$ is $O(p^3)$. Therefore, the computational complexity for the \textrm{GI0} selection is $O(kp^3)$.

The \textrm{GI1} selection rule \eqref{eq:GI1} can be reformulated as
\begin{equation*}
\begin{aligned}
    &a_{n+1}\\
    =& \arg\max_{a\in \mathcal{A}}\tr \left[  \nabla\mathbb{G}_{\widehat{\bgtheta}_{n}^{\text{ML}}}\big(\{   \mathcal{I}(\widehat{\bgtheta}_{n}^{\text{ML}}; \va_n ) \}^{-1}   \big) \{\mathcal{I} (\widehat{\bgtheta}_{n}^{\text{ML}}; \va_n)\}^{-1} \mathcal{I}_a(\widehat{\bgtheta}_{n}^{\text{ML}})\{\mathcal{I} (\widehat{\bgtheta}_{n}^{\text{ML}}; \va_n)\}^{-1} \right] \\
    =& \arg\max_{a\in \mathcal{A}}\tr\left[ L^T_a(\widehat{\bgtheta}_{n}^{\text{ML}})
    \{\mathcal{I} (\widehat{\bgtheta}_{n}^{\text{ML}}; \va_n)\}^{-1} \nabla\mathbb{G}_{\widehat{\bgtheta}_{n}^{\text{ML}}}\big(\{   \mathcal{I}(\widehat{\bgtheta}_{n}^{\text{ML}}; \va_n ) \}^{-1}   \big) 
    \{\mathcal{I} (\widehat{\bgtheta}_{n}^{\text{ML}}; \va_n)\}^{-1} L_a(\widehat{\bgtheta}_{n}^{\text{ML}})\right] .
\end{aligned}
\end{equation*}
Thus, Algorithm~\ref{alg:gi1-Accelerated} produce the same outcome as Algorithm~\ref{alg:gi1}.

Note that the matrix 
\[ 
\blM = \{\mathcal{I} (\widehat{\bgtheta}_{n}^{\text{ML}}; \va_n)\}^{-1} \nabla\mathbb{G}_{\widehat{\bgtheta}_{n}^{\text{ML}}}\big(\{   \mathcal{I}(\widehat{\bgtheta}_{n}^{\text{ML}}; \va_n ) \}^{-1}   \big)  \{\mathcal{I} (\widehat{\bgtheta}_{n}^{\text{ML}}; \va_n)\}^{-1}  \]
only needs to be computed once. Matrix multiplication involving matrices of sizes $p\times p$ and $p\times s_a$ is of order $O(p^2s)$, given $s_a \leq s$. Under the assumption that $L_a(\bgtheta)$ has size $p\times s_a$, the computational complexity of the \textrm{GI1} is bounded by $O(p^3+ksp^2)$.

Furthermore, if the matrices $\{L_a(\bgtheta)\}$ are primarily supported on an $s\times s$ submatrix, then the computational cost of the multiplication $\bgSigma  L_a(\widehat{\bgtheta}_{n}^{\text{ML}})$ is at most equivalent to multiplying matrices of sizes $p\times s$ and $s\times s$ {for any matrix $\bgSigma$}, which has a complexity of $O(s^2p)$. Therefore, the overall computational complexity of the \textrm{GI1} selection is $O(p^3+ks^2p)$.
\end{proof}

\subsection{Proof of Proposition~\ref{prop:exploration}}
We prove the following 
Theorem~\ref{thm:selection bound} instead, which is a generalized version of Proposition~\ref{prop:exploration} allowing for an arbitrary sequence of $\bgtheta_n$ that is not necessarily the MLE.
\begin{theorem}\label{thm:selection bound}Under the regularity conditions described in Section~\ref{sec:assumptions}, and also
assume that the initial experiments $a_1,\cdots, a_{n_0}\in \mathcal{A}$ are such that $\mathcal{I}( \bgtheta;\va_{n_0})$ is nonsingular.
For any sequence of (random or non-random) vectors $\bgtheta_1,\bgtheta_2,\cdots$ in $\bgTheta$, if we consider the following generalized \textrm{GI0} or \textrm{GI1} selection rules: for any $n\geq n_0$
\begin{equation}\label{eq:GI0-exploration}
 \textrm{GI0}: \quad   a_{n+1} = \arg\min_{a\in\mathcal{A}}\mathbb{G}_{ {\bgtheta}_{n} }\Big[\big\{   \mathcal{I}( {\bgtheta}_{n} ;\va_n,a)\big\}^{-1} \Big], \text{ and}
\end{equation}
\begin{equation}\label{eq:GI1-exploration}
\begin{split}
      \textrm{GI1}:  a_{n+1} = \arg\max_{a\in\mathcal{A}}  \operatorname{tr} \Big[  \nabla\mathbb{G}_{ \bgtheta_{n} }\big( {\bgSigma}_n  \big) 
  {\bgSigma}_n \mathcal{I}_a( {\bgtheta}_{n} ) {\bgSigma}_n  \Big],\text{ where we define }{\bgSigma}_n=\{   \mathcal{I}( {\bgtheta}_{n} ; \va_n ) \}^{-1},
  \end{split}
\end{equation}
then there exists \( C > 0 \) such that
\[
\inf_{n\geq n_0}\frac{n_I}{n} \geq C,
\]
and the lower bound is independent of the choice of \( \bgtheta_n \). {Moreover, under the same settings, 
\begin{equation}\label{ineq:Ipi_bound}
    \mathcal{I}^{\overline{{\bgpi}}_n}(\bgtheta)\succeq \underline{c} \cdot C  \cdot I_p.
\end{equation}
for all $\bgtheta\in\bgTheta$. %
}
\end{theorem}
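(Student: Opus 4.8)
The plan is to turn the ``regularizer'' heuristic of Section~\ref{sec:proof-sketch} into a deterministic lower bound on $n_I/n$ that is uniform over the sequence $\{\bgtheta_n\}$, and then read off \eqref{ineq:Ipi_bound} as a consequence. I will treat the \textrm{GI1} rule \eqref{eq:GI1-exploration} first and reduce \textrm{GI0} to it via the first-order expansion \eqref{eq:first-approximation}. Fix $n$ and a threshold $U>0$, and decompose $\widehat\bgSigma_n^{-1}=\mathcal{I}(\bgtheta_n;\va_n)=\blA+\blE$, where $\blA=\sum_{a:\,n_a/n\geq U}\tfrac{n_a}{n}\mathcal{I}_a(\bgtheta_n)$ collects the frequently selected experiments and $\blE$ the rest. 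The first observation is purely combinatorial: if $n_I/n<U$, then the frequent set $\{a:n_a/n\geq U\}$ contains no relevant subset (otherwise the max--min defining $n_I$ would force $n_I\geq Un$), so $\blA$ is singular, while $\sum_{a\in\mathcal{A}}\mathcal{I}_a(\bgtheta_n)$ is nonsingular by Assumption~\ref{ass:3}. Consequently there is an experiment $a'$ with $n_{a'}\leq n_I$ whose range is not contained in $\cR(\blA)$, while every most-frequent experiment $a^m\in\cA_{\max}$ satisfies $\mathcal{R}(\mathcal{I}_{a^m}(\bgtheta_n))\subseteq\cR(\blA)$.

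The core of the argument is a quantitative perturbation comparison. Using Assumption~\ref{ass:6B} to control, uniformly in $\bgtheta_n$, both the smallest nonzero eigenvalue of $\blA$ (bounded below by $\underline{c}\,U$) and $\lambda_{\max}(\blE)$ (bounded above by $\overline{c}\,U\,|\mathcal{A}|$), I will choose $U$ and the critical ratio small enough that the eigen-gap of $\blA$ dominates $\norm{\blE}_{op}$. The Davis--Kahan $\sin\Theta$ theorem then bounds the rotation of the invariant subspaces of $\blA+\blE$ away from those of $\blA$, which shows that $(\blA+\blE)^{-1}$ blows up precisely along $\cR(\blA)^{\perp}$ at rate $\norm{\blE}_{op}^{-1}$. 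Since $\mathcal{I}_{a'}(\bgtheta_n)$ has a component transverse to $\cR(\blA)$ bounded below uniformly (again by finiteness of $\mathcal{A}$ and Assumption~\ref{ass:6B}), the \textrm{GI1} criterion value $\tr[\nabla\mathbb{G}_{\bgtheta_n}(\widehat\bgSigma_n)\widehat\bgSigma_n^{-1}\mathcal{I}_{a'}(\bgtheta_n)\widehat\bgSigma_n^{-1}]$ diverges, whereas the value for any $a^m\in\cA_{\max}$ stays bounded because $\mathcal{I}_{a^m}(\bgtheta_n)\in\cR(\blA)$; the condition-number bound $\sup_\blA\kappa(\nabla\mathbb{G}_{\bgtheta}(\blA))<\infty$ in Assumption~\ref{ass:5} guarantees $\nabla\mathbb{G}$ cannot reverse this separation. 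Hence, once $n_I/n$ is below a universal threshold, \textrm{GI1} strictly prefers $a'$ to every $a^m\in\cA_{\max}$.

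I will then convert this ``the most frequent experiment is never selected when $n_I/n$ is too small'' statement into $\inf_{n\geq n_0}n_I/n\geq C$ by induction on $n$. The base case uses nonsingularity of $\mathcal{I}(\bgtheta;\va_{n_0})$, which guarantees a relevant set is already present so that $n_I(n_0)\geq1$; shrinking $C$ if necessary makes $Cn_0\leq1$. For the inductive step, whenever $n_I/n$ falls to the critical threshold, the step above forces the selected experiment to come from an under-sampled relevant direction rather than from $\cA_{\max}$, so $n_I$ increases and keeps pace with $n$; tracking the increments yields a constant $C$ depending only on $|\mathcal{A}|$, $p$, $\underline{c}$, $\overline{c}$ and $U$, and in particular independent of $\{\bgtheta_n\}$. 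The \textrm{GI0} case follows because \eqref{eq:first-approximation} identifies the one-step change of $\mathbb{G}_{\bgtheta_n}[\{\mathcal{I}(\bgtheta_n;\va_n,a)\}^{-1}]$ with the \textrm{GI1} directional derivative up to an $O(1/n^2)$ term, which is negligible against the diverging separation above; alternatively, the exact one-step objectives can be compared directly with the same perturbation bounds.

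Finally, \eqref{ineq:Ipi_bound} is immediate: given $n_I\geq Cn$, choose a relevant set $S$ attaining the max--min, so $\overline{\pi}_n(a)\geq C$ for all $a\in S$; then $\mathcal{I}^{\overline{\pi}_n}(\bgtheta)\succeq C\sum_{a\in S}\mathcal{I}_a(\bgtheta)\succeq C\,\underline{c}\,\blP_{V_S(\bgtheta)}=C\,\underline{c}\,I_p$ by Assumption~\ref{ass:6B} and $V_S(\bgtheta)=\mathbb{R}^p$, uniformly in $\bgtheta\in\bgTheta$. I expect the main obstacle to be the second and third paragraphs taken together: making the Davis--Kahan comparison genuinely quantitative and uniform in $\bgtheta_n$---so that the threshold ratio and all constants are free of $\bgtheta_n$---and then packaging the single-step correction into a clean induction, since the blow-up along $\cR(\blA)^{\perp}$ is only generic and must be secured by the uniform transversality of $\mathcal{I}_{a'}$ coming from the finiteness of $\mathcal{A}$ and the spectral bounds of Assumption~\ref{ass:6B}.
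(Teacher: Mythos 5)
Your plan follows the heuristic sketch of Section~\ref{sec:proof-sketch}, but the central quantitative step fails as stated. With the fixed-threshold decomposition $\mathcal{I}(\bgtheta_n;\va_n)=\blA+\blE$, where $\blA=\sum_{a:\,n_a/n\geq U}\frac{n_a}{n}\mathcal{I}_a(\bgtheta_n)$, both spectral quantities you need to separate scale \emph{linearly} in $U$: Assumption~\ref{ass:6B} gives that the smallest nonzero eigenvalue of $\blA$ is at least $\underline{c}\,U$, but it can also be of order $\overline{c}\,U$ (e.g.\ when the least-sampled ``frequent'' experiment has count close to $Un$ and contributes the extreme direction), while $\norm{\blE}_{op}$ can be as large as $k\,\overline{c}\,U$ (e.g.\ when an ``infrequent'' experiment has count just below $Un$). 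Hence the ratio of $\norm{\blE}_{op}$ to the spectral gap of $\blA$ is bounded away from zero by constants depending only on $k,\overline{c},\underline{c}$, \emph{independently of $U$}; shrinking $U$ shrinks both sides proportionally and creates no separation. Davis--Kahan then yields only an $O(1)$ bound on the subspace rotation, so the claimed dichotomy (criterion diverging at rate $\norm{\blE}_{op}^{-1}$ along $\cR(\blA)^{\perp}$ versus staying bounded on $\cR(\blA)$) is not available uniformly in $n$ and $\bgtheta_n$. This is exactly why the paper's proof does not threshold by counts: it locates the decomposition \emph{adaptively} in the order statistics $m^n_{a^{(1)}}\geq\cdots\geq m^n_{a^{(k)}}$, at a level $s$ where the consecutive ratio $m^n_{a^{(s)}}/m^n_{a^{(s+1)}}$ is genuinely large (conditions \eqref{ineq:Negative_feedback} and \eqref{ineq:Negative_feedback_q}), and takes the ``signal'' matrix to be built from all experiments whose information lies in $V_{Q_s}$ (not the high-count ones), so that the perturbation-to-gap ratio is controlled by that consecutive ratio rather than by a constant.

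There are two further gaps. First, even granting your dichotomy, the inductive step ``the selected experiment comes from an under-sampled relevant direction, so $n_I$ increases and keeps pace with $n$'' does not follow: selecting one experiment transverse to $\cR(\blA)$ increments a single count, whereas $n_I$ in \eqref{eq:exploration-condition} grows only when the \emph{minimum} count over an entire relevant set grows, and your single-scale argument does not control which of the several under-sampled experiments (possibly spread over many scales) receives the selection. The paper instead bounds the ratio $n_{\max}/n_I$ by a two-case induction (Lemma~\ref{lem:m^n_a norm bound}): if the most frequent experiment is selected, the negative-feedback conditions must fail at every level $s\leq t_n-1$, and chaining these failures through Lemma~\ref{lemma:iterative-sequence} bounds $n_{\max}/n_I$ by an explicit constant; if it is not selected, the ratio cannot increase. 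Second, reducing \textrm{GI0} to \textrm{GI1} via the $O(1/n^2)$ remainder in \eqref{eq:first-approximation} is an asymptotic statement, while the comparison must hold at every fixed $n\geq n_0$; the paper handles \textrm{GI0} non-asymptotically through the interpolation argument of Lemma~\ref{lemma:unify}, which requires the trace inequality to hold along the whole segment $A_2(t_1,t_2)$, $t_1,t_2\in[0,1]$. Your parenthetical ``compare the exact one-step objectives directly'' points in the right direction but is the part that actually needs to be carried out.
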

\begin{proof}[Proof of Proposition~\ref{prop:exploration}]
Applying Theorem~\ref{thm:selection bound} with { $\bgtheta_n=\widehat{\bgtheta}_n^{\text{ML}}$}, we complete the proof of Proposition~\ref{prop:exploration}.
\end{proof}
 
The proof of Theorem~\ref{thm:selection bound} is involved. We break it down to the following series of lemmas and steps.

\paragraph*{Step 1: Define the order statistics of experiments counts, permutations, and find their connections with $n_{\max}$ and $n_I$}
Let $m^n_a=|\{ i; a_i=a, 1\leq i\leq n \}|$ be the number of times that the experiment $a$ has been selected up to time $n$. Without loss of generality, let $\mathcal{A}=[k]=\{1,2,\cdots,k\}$. Let $\mathscr{P}_k$ be the set of all permutations over $[k]$. 

For each $m^{n}=(m^n_a)_{a\in\mathcal{A}}$, define $\mathscr{P}^{m^n}_k\subset \mathscr{P}_k$, which is described by the following statements: permutation $\tau\in\mathscr{P}^{m^n}_k$ if and only if $\tau\in \mathscr{P}_k$ and
\begin{equation*}
    m^n_{\tau(1)}\geq m^n_{\tau(2)}\geq \cdots \geq m^n_{\tau(k)}.
\end{equation*}
The set $\mathscr{P}^{m^n}_k$ is not empty, because order statistic exists.

For any permutation $\tau\in\mathscr{P}^{m^n}_k$, define the set $Q_s(\tau)=\{\tau(1),\tau(2),\cdots,\tau(s)\}$ for $s\in [k]$. {$Q_s(\tau)$ collects the indices of the top-$s$ most frequently selected experiments. Here, $\tau$ is introduced to handle the case where there may be ties among $m^n_a$ for $a\in\mathcal{A}$.}

{Define
\begin{equation}\label{def:t_n}
    t_n=t_n(m^n)=\min_{\tau\in \mathscr{P}^{m^n}_k} \{ s\in [k]; \dim(V_{Q_s(\tau)}(\bgtheta))=p \}.
\end{equation}
{
Note that $\dim(V_{Q_k(\tau)}(\bgtheta))= \text{rank}(\sum_{a\in\mathcal{A}}\mathcal{I}_a(\bgtheta))=p$ for all $\tau\in\mathscr{P}^{m^n}_k$, according to Lemma~\ref{lem:dim-rank} and Assumption~\ref{ass:3}. Also note that according to Lemma~\ref{lem:dim-rank-c} and Assumption~\ref{ass:6B}, $\dim(V_{Q_s(\tau)}(\bgtheta))$ does not depend on $\bgtheta$. Thus, the above $t_n$ is well defined and does not depend on $\bgtheta$. For the same reason, we will drop `$\bgtheta$' and write $\dim(V_A)$ for $\dim(V_{A}(\bgtheta))$ for $A\subset\mathcal{A}$ in the rest of the proof when the context is clear.

The next lemma specifies the permutations that we would like to focus on when there may be ties among $m^n_a$.
\begin{lemma}\label{lemma:tau_order}
There exists $\tau_n\in \mathscr{P}_k$ such that 
\begin{equation}\label{ineq:tau_order}
    m^n_{\tau_n(1)}\geq m^n_{\tau_n(2)}\geq \cdots \geq m^n_{\tau_n(k)},
\end{equation}
$\dim (V_{Q_{t_n} (\tau_n) } )=p$, and for all $\tau' \in \mathscr{P}^{m^n}_k$ and all $s\leq t_n-1$, $\dim (V_{Q_{s} (\tau') } ) <p$.
\end{lemma}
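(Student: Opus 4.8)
The plan is to recognize that the lemma is essentially an unwinding of the definition of $t_n$ as a minimum over the finite set $\mathscr{P}^{m^n}_k$, combined with the monotonicity of $s\mapsto\dim(V_{Q_s(\tau)})$. First I would introduce, for each sorting permutation $\tau\in\mathscr{P}^{m^n}_k$, the threshold index
\begin{equation*}
    s^*(\tau)=\min\{ s\in[k];\ \dim(V_{Q_s(\tau)})=p \},
\end{equation*}
so that by definition $t_n=\min_{\tau\in\mathscr{P}^{m^n}_k} s^*(\tau)$.

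Before using $s^*(\tau)$ I would check it is well defined. Since $Q_s(\tau)\subseteq Q_{s+1}(\tau)$, we have the nested inclusion $V_{Q_s(\tau)}\subseteq V_{Q_{s+1}(\tau)}$, hence $\dim(V_{Q_s(\tau)})$ is nondecreasing in $s$ and, once it reaches $p$, stays at $p$. Moreover $\dim(V_{Q_k(\tau)})=\operatorname{rank}(\sum_{a\in\mathcal{A}}\mathcal{I}_a(\bgtheta))=p$ by Lemma~\ref{lem:dim-rank} together with Assumption~\ref{ass:3} (the full information matrix is positive definite), so the set in the definition of $s^*(\tau)$ is nonempty and $s^*(\tau)\in[k]$ exists. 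As already noted after \eqref{def:t_n}, $\dim(V_{Q_s(\tau)}(\bgtheta))$ does not depend on $\bgtheta$ by Lemma~\ref{lem:dim-rank-c} and Assumption~\ref{ass:6B}, so $s^*(\tau)$ and $t_n$ are unambiguous.

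Next I would produce $\tau_n$ and verify the three claims. The set $\mathscr{P}^{m^n}_k$ is finite and nonempty (order statistics of $m^n$ exist), so the minimum $t_n=\min_\tau s^*(\tau)$ is attained at some $\tau_n\in\mathscr{P}^{m^n}_k$; this gives the ordering in \eqref{ineq:tau_order} (claim 1) and $s^*(\tau_n)=t_n$, i.e.\ $\dim(V_{Q_{t_n}(\tau_n)})=p$ (claim 2). For claim 3, fix any $\tau'\in\mathscr{P}^{m^n}_k$. By minimality of $t_n$ we have $s^*(\tau')\geq t_n$, so every $s\leq t_n-1$ satisfies $s<s^*(\tau')$; by the definition of $s^*$ and the monotonicity established above, $\dim(V_{Q_s(\tau')})<p$ for all such $s$.

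The argument is essentially bookkeeping, so there is no serious obstacle; the only points that require care are the monotonicity and full-rank facts guaranteeing that $s^*(\tau)$ is well defined, and the observation that the dimension is independent of $\bgtheta$ so that $t_n$ is a genuine (deterministic) function of the count vector $m^n$ alone.
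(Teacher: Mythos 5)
Your proposal is correct and follows essentially the same route as the paper: pick $\tau_n$ attaining the minimum defining $t_n$ over the finite set $\mathscr{P}^{m^n}_k$, which immediately gives the ordering and $\dim(V_{Q_{t_n}(\tau_n)})=p$, and then derive the third claim from minimality of $t_n$ (you state it as a contrapositive, the paper as a contradiction — logically the same step). The extra bookkeeping you include (monotonicity of $s\mapsto\dim(V_{Q_s(\tau)})$, nonemptiness via $\dim(V_{Q_k(\tau)})=p$, and $\bgtheta$-independence of the dimension) is exactly what the paper records in the remarks immediately preceding the lemma, so nothing is missing.
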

\begin{proof}[Proof of Lemma~\ref{lemma:tau_order}]
First, according to the definition of $t_n$ in \eqref{def:t_n} and $\mathscr{P}^{m^n}_k\neq\emptyset$, we know that
\[
S'=\arg\min_{\tau\in \mathscr{P}^{m^n}_k} \{ s\in [k]; \dim(V_{Q_s(\tau)}(\bgtheta))=p \}
\]
is not empty. Let \(\tau_n\in S'\). We know that $\tau_n$ satisfies \eqref{ineq:tau_order}, and $\dim (V_{Q_{t_n} (\tau_n) } )=p$.

Assume there exist $\tau' \in \mathscr{P}^{m^n}_k$ and $s\leq t_n-1$, such that $\dim (V_{Q_{s} (\tau') } ) = p$. This leads to the following contradiction
\[
t_n=\min_{\tau\in \mathscr{P}^{m^n}_k} \{ s\in [k]; \dim(V_{Q_s(\tau)}(\bgtheta))=p \} \leq s \leq t_n-1.
\]

This completes the proof of Lemma~\ref{lemma:tau_order}.    
\end{proof}
}

{Recall that $n_{\max} = \max_{a\in\mathcal{A}}n_a=\max_{a\in\mathcal{A}} m_a^{n}$ is defined in Section~\ref{sec:proof-sketch}.} We obtain that $n_{\max}=m^n_{\tau_n(1)}$. The following Lemma shows that $n_I=m^n_{\tau_n(t_n)}$.
\begin{lemma}\label{lem:n_I equation}
{Let $\tau_n$ be a permutation satisfying the properties described in Lemma~\ref{lemma:tau_order}.} Then,
    $n_I=m^n_{\tau_n(t_n)}$, where $n_I$ is defined in \eqref{eq:exploration-condition}.
\end{lemma}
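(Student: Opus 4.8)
The plan is to prove the two inequalities $n_I \geq m^n_{\tau_n(t_n)}$ and $n_I \leq m^n_{\tau_n(t_n)}$ separately. Throughout I would use that $n_a = m^n_a$, that a set $S\subset\mathcal{A}$ is relevant precisely when $\dim(V_S)=p$ (via Lemma~\ref{lem:dim-rank}, recalling $\dim(V_S)$ is $\bgtheta$-independent by Lemma~\ref{lem:dim-rank-c} and Assumption~\ref{ass:6B}), and that by definition $n_I = \max_{S:\,\dim(V_S)=p}\min_{a\in S} n_a$.

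For the lower bound I would simply exhibit the relevant set $S = Q_{t_n}(\tau_n)$. Property (ii) of Lemma~\ref{lemma:tau_order} gives $\dim(V_{Q_{t_n}(\tau_n)})=p$, so $S$ is relevant and admissible in the maximum defining $n_I$. Since the counts are non-increasing along $\tau_n$ (property \eqref{ineq:tau_order}), $\min_{a\in Q_{t_n}(\tau_n)} n_a = \min_{1\le j\le t_n} m^n_{\tau_n(j)} = m^n_{\tau_n(t_n)}$, whence $n_I \geq m^n_{\tau_n(t_n)}$.

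For the upper bound I would argue by contradiction. Write $c^\ast = m^n_{\tau_n(t_n)}$ and suppose some relevant $S$ satisfies $\min_{a\in S} n_a > c^\ast$. Set $r = |\{a\in\mathcal{A}: n_a > c^\ast\}|$. The chain of observations is: every $a\in S$ has $n_a > c^\ast$, so $S\subseteq\{a: n_a > c^\ast\}$, and since $S$ is relevant it is nonempty, forcing $r\ge 1$; because $\tau_n$ sorts the counts non-increasingly with $m^n_{\tau_n(t_n)}=c^\ast$, every experiment with count strictly above $c^\ast$ must precede every experiment with count at most $c^\ast$, so these $r$ experiments occupy exactly the first $r$ positions, i.e.\ $\{a: n_a > c^\ast\}=Q_r(\tau_n)$, and since position $t_n$ already carries count $c^\ast$ we get $r\le t_n-1$; consequently $S\subseteq Q_r(\tau_n)$ and $\dim(V_{Q_r(\tau_n)})\ge \dim(V_S)=p$. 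But $\tau_n\in\mathscr{P}^{m^n}_k$ together with $r\le t_n-1$ and $\dim(V_{Q_r(\tau_n)})=p$ directly contradicts property (iii) of Lemma~\ref{lemma:tau_order}. Hence no such $S$ exists and $n_I \leq c^\ast = m^n_{\tau_n(t_n)}$.

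Combining the two inequalities yields $n_I = m^n_{\tau_n(t_n)}$. The main obstacle is the upper bound, and within it the careful bookkeeping of ties: one must verify that the experiments with count strictly exceeding $c^\ast$ are exactly the top $r$ entries of the sorted ordering $\tau_n$ and that $r\le t_n-1$, so that an arbitrary relevant $S$ is trapped inside $Q_r(\tau_n)$ and property (iii) can be invoked. The remaining steps are essentially immediate consequences of the definition of $n_I$ and the monotonicity of the sorted counts.
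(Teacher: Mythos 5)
Your proof is correct and follows essentially the same route as the paper's: the lower bound by exhibiting the relevant set $Q_{t_n}(\tau_n)$, and the upper bound by contradiction, trapping any relevant set with minimum count exceeding $m^n_{\tau_n(t_n)}$ inside an initial segment $Q_r(\tau_n)$ with $r\le t_n-1$ and invoking $\dim(V_{Q_s(\tau_n)})<p$ for $s\le t_n-1$. The only cosmetic difference is that the paper passes through the upward-closure $Q(S')=\{a: m^n_a\ge \min_{a\in S'}m^n_a\}$ of the maximizing relevant set and bounds $\dim(V_{Q(S')})\le \dim(V_{Q_{t_n-1}(\tau_n)})$, whereas you identify $\{a:n_a>c^\ast\}$ with $Q_r(\tau_n)$ directly and use monotonicity of dimension in the opposite direction.
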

\begin{proof}[Proof of Lemma~\ref{lem:n_I equation}]
    
Because  $\dim (V_{Q_{t_n} (\tau_n) } (\bgtheta))=p$, we know that $Q=Q_{t_n}(\tau_n)$ %
is relevant, which means that $\sum_{a\in Q} \mathcal{I}_a(\bgtheta)$ is non-singular for all $\bgtheta\in \bgTheta$. By the definition of $n_I$ in \eqref{eq:exploration-condition},
\[
n_I\geq \min_{a\in Q} m^n_a =m^n_{\tau_n(t_n)}.
\]

It suffices to prove $n_I\leq m^n_{\tau_n(t_n)}$. Assume, on the contrary, that $n_I>m^n_{\tau_n(t_n)}$. In the rest of the proof, we aim to find a contradiction.

For any $S\subset \mathcal{A}$ such that $S$ is relevant, define $Q(S)=\{a\in \mathcal{A}; m_a^n\geq \min_{a\in S}m_a^n\}$. Since $S\subset Q(S)$, $Q(S)$ is also relevant, and 
\[
\min_{a\in S}m^n_a=\min_{a\in Q(S)}m^n_a.
\]
By the definition of $n_I$ in \eqref{eq:exploration-condition}, there exists a relevant $S'\subset \mathcal{A}$ such that
\(\min_{ a\in S' }m^n_{a}=n_I.\)
Thus, $n_I=\min_{ a\in S' }m^n_{a}=\min_{ a\in Q(S') }m^n_{a}$, $Q(S')$ is relevant and $ \min_{ a\in Q(S') }m^n_{a} > m^n_{\tau_n(t_n)}$.This implies that
\[
\min_{ a\in Q(S') }m^n_{a} \geq m^n_{\tau_n(t_n-1)}.%
\]
Thus, \(Q(S')\subset Q_{t_n-1}(\tau_n),\) which implies that $\dim(V_{Q(S')}) \leq \dim(Q_{t_n-1}(\tau_n))<p$. By Assumption~\ref{ass:6B} and Lemma~\ref{lem:dim-rank-c}, we know that ${Q(S')}$ is not relevant, which contradicts the previous assumption that $Q(S')$ is relevant.
\end{proof}

{The next lemma compares 
the ratio between $n_{\max}=m^n_{\tau_n(1)}$ and $n_I=m^n_{\tau_n(t_n)}$ with the ratio between the maximum and minimum counts of experiments for a  set of relevant experiments.
}
\begin{lemma}\label{lem:kappa bound}
    For any $Q\subset \mathcal{A}$ such that $\dim(V_Q)=p$, 
\begin{equation*}
    \frac{m^n_{\tau_n(1)}}{m^n_{\tau_n(t_n)}}\leq \frac{\max_{a\in \mathcal{A}} m_a^n}{ \min_{a\in Q } m_a^n }.
\end{equation*}
\end{lemma}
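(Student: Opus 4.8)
The plan is to reduce the claimed bound to the defining property of $n_I$, using the two preceding lemmas to identify the numerator and denominator on the left-hand side. First I would observe that, because $\tau_n$ arranges the counts in nonincreasing order (property \eqref{ineq:tau_order} in Lemma~\ref{lemma:tau_order}), the top count equals the global maximum, i.e. $m^n_{\tau_n(1)} = \max_{a\in\mathcal{A}} m_a^n$. This already shows that the numerator on the left-hand side coincides with the numerator on the right-hand side, so the desired inequality is equivalent, after cross-multiplying, to $\min_{a\in Q} m_a^n \leq m^n_{\tau_n(t_n)}$ (all quantities involved being positive, as discussed below).

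Next I would invoke Lemma~\ref{lem:n_I equation} to rewrite the denominator as $m^n_{\tau_n(t_n)} = n_I$. The remaining task is then to show that any $Q$ with $\dim(V_Q) = p$ satisfies $\min_{a\in Q} m_a^n \leq n_I$. By Lemma~\ref{lem:dim-rank} and Lemma~\ref{lem:dim-rank-c} (together with Assumption~\ref{ass:6B}), the condition $\dim(V_Q) = p$ is equivalent to $\sum_{a\in Q}\mathcal{I}_a(\bgtheta)$ being nonsingular for every $\bgtheta\in\bgTheta$; that is, $Q$ is relevant in the sense of \eqref{eq:exploration-condition}. Since $n_I$ is defined as the maximum of $\min_{a\in S} m_a^n$ over all relevant subsets $S$, taking $S = Q$ gives $n_I \geq \min_{a\in Q} m_a^n$ immediately, which completes the argument.

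To make the cross-multiplication rigorous I would address positivity as follows. The initialization guarantees that $\{a_1,\dots,a_{n_0}\}$ is relevant, so for every $n \geq n_0$ we have $n_I = m^n_{\tau_n(t_n)} \geq 1 > 0$, and the left-hand side is well defined. If $\min_{a\in Q} m_a^n = 0$, the right-hand side is read as $+\infty$ and the inequality holds trivially; hence the only substantive case is $\min_{a\in Q} m_a^n \geq 1$, in which the ratios are finite and cross-multiplication is valid.

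I do not anticipate a genuine obstacle here: the statement is essentially a repackaging of the definition of $n_I$, and all the structural work—the existence and ordering properties of $\tau_n$, the identity $n_I = m^n_{\tau_n(t_n)}$, and the equivalence between full column-space dimension and relevance—has already been carried out in Lemmas~\ref{lemma:tau_order}, \ref{lem:n_I equation}, \ref{lem:dim-rank}, and \ref{lem:dim-rank-c}. The only point requiring a moment's care is confirming positivity of the counts so that the ratios are meaningful, which the initialization hypothesis supplies.
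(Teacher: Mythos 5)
Your proof is correct, and it takes a genuinely different route from the paper's. The paper proves the lemma directly, without ever invoking $n_I$ or Lemma~\ref{lem:n_I equation}: it locates the index $s$ at which $\min_{a\in Q}m^n_a$ sits in the sorted counts (so that $Q\subset Q_s(\tau_n)$ and $m^n_{\tau_n(s)}=\min_{a\in Q}m^n_a$), observes that $V_Q\subset V_{Q_s(\tau_n)}$ forces $\dim(V_{Q_s(\tau_n)})=p$, and then uses the minimality in the definition of $t_n$ to get $t_n\leq s$, whence $m^n_{\tau_n(t_n)}\geq m^n_{\tau_n(s)}$ by the nonincreasing ordering. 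You instead derive the lemma as a corollary of Lemma~\ref{lem:n_I equation}: relevance of $Q$ (from $\dim(V_Q)=p$ via Lemma~\ref{lem:dim-rank} and Assumption~\ref{ass:6B}) gives $\min_{a\in Q}m^n_a\leq n_I$ straight from the definition of $n_I$, and then $n_I=m^n_{\tau_n(t_n)}$ finishes the argument. This is legitimate — Lemma~\ref{lem:n_I equation} precedes this lemma and its proof does not rely on it, so there is no circularity — and it makes the statement transparently a repackaging of the definition of $n_I$, as you note. The trade-off is that your route leans on the hard direction of Lemma~\ref{lem:n_I equation} (the inequality $n_I\leq m^n_{\tau_n(t_n)}$, proved there by contradiction), whereas the paper's direct argument needs only the elementary ordering and minimality facts and would stand even if Lemma~\ref{lem:n_I equation} were unavailable. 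Your explicit handling of positivity and the degenerate case $\min_{a\in Q}m^n_a=0$ is a point of care the paper leaves implicit, and it is handled correctly.
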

\begin{proof}[Proof of Lemma \ref{lem:kappa bound}]
By the definition of $\tau_n$, we know that
\begin{equation*}
    m^n_{\tau_n(1)}\geq\cdots \geq   m^n_{\tau_n(k)}. %
\end{equation*}
Define $m^n_{\tau_n(0)}=\infty$ and $m^n_{\tau_n(k+1)}=-\infty$. %

{Because $(m^n_{\tau_n(1)},\cdots, m^n_{\tau_n(k)})$ forms the order statistic of $(m^n_a)_{a\in[k]}$ (with possibly ties), } there exists $s\in [k]$ such that $Q\subset\{\tau_n(1),\ \cdots, \tau_n(s)\}=Q_s(\tau_n)$, and
\begin{equation*}
    m^n_{\tau_n(s)} = \min_{a\in Q} m_a^n \text{ and }  m^n_{\tau_n(s+1)}<\min_{a\in Q} m_a^n.
\end{equation*}
Because $V_Q(\bgtheta)\subset V_{Q_s(\tau_n)}(\bgtheta)$, we have $p=\dim(V_Q)\leq \dim(V_{Q_s(\tau_n)})$. By the definition of $t_n$ in \eqref{def:t_n}, we obtain that $t_n\leq s$. {This implies $m^n_{\tau_n(t_n)} \geq m^n_{\tau_n(s)} =\min_{a\in Q }m_a^n$. We complete the proof by noting that $ m^n_{\tau_n(1)} = \max_{a\in \mathcal{A}} m^n_a$.} 

\end{proof}

{ 
\paragraph*{Step 2: Unify the proof for generalized \textrm{GI0} and \textrm{GI1}}
To simplify the analysis, we use the next lemma to extract a key property shared by generalized \textrm{GI0} and \textrm{GI1}.  
\begin{lemma}\label{lemma:unify}
    Assume that $\sum_{i=1}^{n_0}\mathcal{I}_{a_i}({\bgtheta})$ is non-singular.

For a fixed (or random) sequence $\bgtheta_n\in \bgTheta$ and for any $n\geq n_0$, we consider the following generalized \textrm{GI0} selection rule
\begin{equation*}
a_{n+1}=\arg\min_{a\in\mathcal{A}} {  \mathbb{F}_{  {\bgtheta}_{n }  }(\frac{n}{n+1}\overline{{\bgpi}}_{n } + \frac{1}{n+1}\delta_a )}  =\arg\min_{a\in \mathcal{A}} \mathbb{G}_{\bgtheta_n}\left( \left\{ \frac{1}{n+1}\blA+ \frac{1}{n+1}\mathcal{I}_a(\bgtheta_n)\right\}  ^{-1}\right),
\end{equation*}
and \textrm{GI1} selection rule
\begin{equation}\label{equ:GI1_genera_select_S5.5}
a_{n+1}=\arg\min_{a\in\mathcal{A}}  \frac{\partial {  \mathbb{F}_{{\bgtheta}_{n }}(\overline{{\bgpi}}_{n })} }{\partial \pi(a)}  =\arg\max_{a\in \mathcal{A}}\innerpoduct{\nabla \mathbb{G}_{\bgtheta_n}( \{\blA/n\}^{-1})}{  {\blA}^{-1}\mathcal{I}_a(\bgtheta_n ){\blA}^{-1}},
\end{equation}
where $\blA=\sum_{a\in \mathcal{A}} m^n_{a}\mathcal{I}_a(\bgtheta_n)$,$\delta_a = (\delta_a(a'))_{a'\in\mathcal{A}}$, and $\delta_a(a')=I(a=a')$.

\sloppy Let $A_2(t_1,t_2)=\sum_{a\in \mathcal{A}} \mathcal{I}_a(\bgtheta_n)+ t_1\mathcal{I}_{a'}(\bgtheta_n) +t_2 \mathcal{I}_{a''}(\bgtheta_n)$, and ${\blS}_n = {\blS}_n(t_1,t_2)= \nabla\mathbb{G}_{\bgtheta_n}(\{A_2(t_1,t_2)/(n+1)\}^{-1})$.
Then,  both generalized \textrm{GI0} and \textrm{GI1} satisfy the following property for all $n\geq n_0$:
\begin{itemize}
    \item []If $a',a''\in\mathcal{A}$ are such that
\begin{equation}\label{eq:unify-selection-property}
     \innerpoduct{ {\blS}_n  }{ \{A_2(t_1,t_2) \}^{-1}  \mathcal{I}_{a'}(\bgtheta_n)   \{A_2(t_1,t_2) \}^{-1}  } 
     >\innerpoduct{{\blS}_n  }{ \{A_2(t_1,t_2) \}^{-1}  \mathcal{I}_{a''}(\bgtheta_n)   \{A_2(t_1,t_2) \}^{-1}  }, 
\end{equation}
for all $t_1,t_2\in[0,1]$ 
, then
$
a_{n+1}\neq a''.
$
\end{itemize}

\end{lemma}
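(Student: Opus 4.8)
The plan is to reduce the claim to a domination statement: under \eqref{eq:unify-selection-property} I would show that the alternative $a'$ strictly outperforms $a''$ in whichever of the two criteria is in force, so that $a''$ can never attain the optimum that defines $a_{n+1}$. A preliminary observation is that everything is well posed on the positive-definite cone: the initialization makes $\mathcal{I}(\bgtheta_n;\va_{n_0})$ nonsingular, hence $\blA=\sum_a m^n_a\mathcal{I}_a(\bgtheta_n)\succ0$, and therefore $A_2(t_1,t_2)\succeq\blA\succ0$ for all $t_1,t_2\in[0,1]$, so every matrix inverse and every evaluation of $\nabla\mathbb{G}_{\bgtheta_n}$ below exists and is continuous.

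For \textrm{GI1} the desired domination is the hypothesis read at $(t_1,t_2)=(0,0)$, where $A_2$ reduces to $\blA$. There \eqref{eq:unify-selection-property} becomes $\innerpoduct{\nabla\mathbb{G}_{\bgtheta_n}(\{\blA/(n+1)\}^{-1})}{\blA^{-1}\mathcal{I}_{a'}(\bgtheta_n)\blA^{-1}}>\innerpoduct{\nabla\mathbb{G}_{\bgtheta_n}(\{\blA/(n+1)\}^{-1})}{\blA^{-1}\mathcal{I}_{a''}(\bgtheta_n)\blA^{-1}}$, which differs from the rule \eqref{equ:GI1_genera_select_S5.5} only in that the latter evaluates $\nabla\mathbb{G}_{\bgtheta_n}$ at $\{\blA/n\}^{-1}$ rather than $\{\blA/(n+1)\}^{-1}$. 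Since both arguments are positive scalar multiples of $\blA^{-1}$, for the $\Phi_q$ family the gradient formula of Lemma~\ref{lem:phi_q} expresses one as a strictly positive multiple of the other, and a positive prefactor preserves the strict inequality; hence the \textrm{GI1} objective is strictly larger at $a'$ than at $a''$, giving $a_{n+1}\neq a''$.

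For \textrm{GI0} I would pass from this pointwise comparison to the finite difference of criterion values by a path integral along the anti-diagonal of the box $[0,1]^2$. Consider $s\mapsto A_2(1-s,s)$, $s\in[0,1]$, whose endpoints $A_2(1,0)$ and $A_2(0,1)$ are exactly the two one-step-ahead information matrices $\blA+\mathcal{I}_{a'}(\bgtheta_n)$ and $\blA+\mathcal{I}_{a''}(\bgtheta_n)$ compared by \textrm{GI0}. Writing $\phi(s)=\mathbb{G}_{\bgtheta_n}(\{A_2(1-s,s)/(n+1)\}^{-1})$ and differentiating via Lemma~\ref{lem:Differentiation inverse Matrix} and the chain rule gives $\phi'(s)=-(n+1)\big[\innerpoduct{\blS_n}{A_2^{-1}\mathcal{I}_{a''}(\bgtheta_n)A_2^{-1}}-\innerpoduct{\blS_n}{A_2^{-1}\mathcal{I}_{a'}(\bgtheta_n)A_2^{-1}}\big]$, all evaluated at $(t_1,t_2)=(1-s,s)$. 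Hypothesis \eqref{eq:unify-selection-property} makes the bracket strictly negative for every $s$, so $\phi'>0$ and $\phi(1)>\phi(0)$; that is, the \textrm{GI0} criterion is strictly larger at $a''$ than at $a'$, and since \textrm{GI0} minimizes it, $a''$ is dominated and $a_{n+1}\neq a''$.

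The hard part will be making the argument genuinely uniform across the two criterion classes of Assumption~\ref{ass:5}. For $\mathbb{G}_{\bgtheta}=\Phi_q$ the positive homogeneity of $\nabla\Phi_q$ is precisely what reconciles the $1/n$ normalization of the \textrm{GI1} rule with the $1/(n+1)$ normalization built into $\blS_n$; when $\mathbb{G}_{\bgtheta}$ is only convex and monotone this homogeneity is unavailable, so I would instead carry the $\blS_n$ normalization unchanged throughout, confirm that the interpolation path remains in the region where $\nabla\mathbb{G}_{\bgtheta_n}$ and $\nabla^2\mathbb{G}_{\bgtheta_n}$ are continuous and $\kappa(\nabla\mathbb{G}_{\bgtheta_n})$ is bounded, and use the monotonicity $\blA\succeq\blB\Rightarrow\mathbb{G}_{\bgtheta}(\blA)\geq\mathbb{G}_{\bgtheta}(\blB)$ to control the integrand's sign. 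The only remaining check is that $\phi$ is continuously differentiable on $[0,1]$, which follows from the differentiability of $\mathbb{G}_{\bgtheta_n}$ on the positive-definite cone supplied by Lemma~\ref{lem:phi_q} and Assumption~\ref{ass:5}.
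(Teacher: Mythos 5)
Your proposal is correct and follows essentially the same route as the paper's proof: for \textrm{GI0} you interpolate along $s\mapsto A_2(1-s,s)$ and sign the derivative via Lemma~\ref{lem:Differentiation inverse Matrix} (the paper does exactly this with its function $h(t)$, phrased as a proof by contradiction rather than a direct domination argument, which is a cosmetic difference), and for \textrm{GI1} you read the hypothesis \eqref{eq:unify-selection-property} at $(t_1,t_2)=(0,0)$, exactly as the paper does.

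The one point where you diverge is worth recording. You correctly notice that the rule \eqref{equ:GI1_genera_select_S5.5} evaluates $\nabla\mathbb{G}_{\bgtheta_n}$ at $\{\blA/n\}^{-1}$ while ${\blS}_n(0,0)$ is evaluated at $\{\blA/(n+1)\}^{-1}$, and you repair this for $\mathbb{G}_{\bgtheta}=\Phi_q$ via the positive homogeneity of $\nabla\Phi_q$ supplied by Lemma~\ref{lem:phi_q}; the paper's proof simply writes ``$\blA=A_2(0,0)$'' and identifies the two evaluation points without comment. However, for the second class of criteria in Assumption~\ref{ass:5} your proposed fix does not close the issue: monotonicity of $\mathbb{G}_{\bgtheta}$ and ``carrying the ${\blS}_n$ normalization unchanged'' give no relation between the orderings of $a'$ and $a''$ induced by $\nabla\mathbb{G}_{\bgtheta_n}\bigl(\{\blA/n\}^{-1}\bigr)$ and by $\nabla\mathbb{G}_{\bgtheta_n}\bigl(\{\blA/(n+1)\}^{-1}\bigr)$ when the gradient is not homogeneous, and the \textrm{GI1} rule is pinned to the former. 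To be fair, the paper's own proof leaves exactly the same point unaddressed, and it is immaterial where the lemma is applied: the downstream argument in Lemma~\ref{lem:Li-Bound} only invokes the bound $\sup_{\blA}\kappa\bigl(\nabla\mathbb{G}_{\bgtheta}(\blA)\bigr)\leq K$, which holds uniformly over the evaluation point, so the hypothesis it verifies is insensitive to the $1/n$ versus $1/(n+1)$ normalization. If you want your version airtight in the general case, the clean repair is to state (and verify downstream) the hypothesis with ${\blS}_n$ also defined at the $1/n$ normalization for the \textrm{GI1} half, rather than appeal to monotonicity.
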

\begin{remark}
The generalized \textrm{GI0} and \textrm{GI1}  defined in \eqref{eq:GI0-exploration} and \eqref{eq:GI1-exploration} are the same as \textrm{GI0} and \textrm{GI1} selections described in Lemma~\ref{lemma:unify}, respectively.
\end{remark}

\begin{proof}[Proof of Lemma~\ref{lemma:unify}]
Let $a',a''$ satisfy \eqref{eq:unify-selection-property}. Assume, in the contrast, that $a_{n+1}=a''$. We will find contradictions for both \textrm{GI0} and \textrm{GI1} in the rest of the proof.

We start with \textrm{GI0}, which selects 
$a_{n+1}=\arg\min_{a\in\mathcal{A}} {  \mathbb{F}_{  {\bgtheta}_{n }  }(\frac{n}{n+1}\overline{{\bgpi}}_{n } + \frac{1}{n+1}\delta_a )}$. Thus, $a''=a_{n+1}$ satisfies
\begin{equation}\label{eq:gi-unify-first}
     \mathbb{F}_{  {\bgtheta}_{n }  }(\frac{n}{n+1}\overline{{\bgpi}}_{n } + \frac{1}{n+1}\delta_{a''} )\leq \mathbb{F}_{  {\bgtheta}_{n }  }(\frac{n}{n+1}\overline{{\bgpi}}_{n } + \frac{1}{n+1}\delta_{a'} ).
\end{equation}
Define $h(t)=\mathbb{F}_{  {\bgtheta}_{n }  }(\frac{n}{n+1}\overline{{\bgpi}}_{n } + \frac{1}{n+1}\{(1-t)\delta_{a'} +t \delta_{a''} \} )$. Then, \eqref{eq:gi-unify-first} is equivalent to that $h(1)-h(0)\leq 0$.

Let $A_1(t)=\sum_{a\in \mathcal{A}} \mathcal{I}_a(\bgtheta_n)  +(1-t)\mathcal{I}_{a'}(\bgtheta_n) +t \mathcal{I}_{a''}(\bgtheta_n)$. By Lemma \ref{lem:Differentiation inverse Matrix}, we know that
\begin{equation*}
    h'(t)=\innerpoduct{ \nabla \mathbb{G}_{\bgtheta_n}(\{A_1(t)/(n+1)\}^{-1})  }{ -\{A_1(t)/(n+1)\}^{-1} \{\mathcal{I}_{a''}(\bgtheta_n)- \mathcal{I}_{a'}(\bgtheta_n) \}  \{A_1(t)/(n+1)\}^{-1}  }.
\end{equation*}
Note that $A_1(t)=A_2(1-t,t)$. Thus, \eqref{eq:unify-selection-property} holds for all $t_1,t_2\in[0,1]$ implies that it holds for $(t_1,t_2)=(1-t,t)$, which further implies $h'(t)>0$ for any $t\in(0,1)$. This contradicts with $h(1)-h(0)\leq 0$. Thus, $a_{n+1}\neq a''$.

We proceed to the analysis of \textrm{GI1}. By the definition of the generalized \textrm{GI1}, $a''=a_{n+1}$ satisfies
$$
\innerpoduct{\nabla \mathbb{G}_{\bgtheta_n}( \{\blA/n\}^{-1})}{  {\blA}^{-1}\mathcal{I}_{a'}(\bgtheta_n ){\blA}^{-1}}\leq \innerpoduct{\nabla \mathbb{G}_{\bgtheta_n}( \{\blA/n\}^{-1})}{  {\blA}^{-1}\mathcal{I}_{a''}(\bgtheta_n ){\blA}^{-1}},
$$
Note that $\blA = A_2(0,0)$. Thus, the above inequality contradicts with \eqref{eq:unify-selection-property} with $(t_1,t_2)=(0,0)$.
\end{proof}

}

{ 
\paragraph*{Step 3: Regularization effect of \textrm{GI0} and \textrm{GI1}}
In this step, we show that both \textrm{GI0} and \textrm{GI1} regularize the experiment selection process through the property established in Lemma~\ref{lemma:unify}. This is proved through the following Lemma~\ref{lem:Li-Bound}, Lemma~\ref{lem:Li-Bound2}, and Lemma~\ref{lem:m^n_a norm bound}. 
}
\begin{lemma}\label{lem:Li-Bound}
    Assume that $\sum_{i=1}^{n_0}\mathcal{I}_{a_i}({\bgtheta})$ is non-singular for some $n_0$. 
    
    Assume the condition number $\max_{{\bgtheta}\in \bgTheta,\bgSigma\succeq 0}\kappa \Big(\nabla \mathbb{G}_{\bgtheta}  \Big(  \bgSigma \Big)\Big)\leq K$ for some $0<K<\infty$. Let $(a^{(1)},a^{(2)},\cdots, a^{(k)})$ be a permutation of $\mathcal{A}$ such that $m^n_{a^{(1)} } \geq \cdots \geq m^n_{a^{(k)} }$ and $\dim(Q_{t_n-1})<\dim(Q_{t_n})=p$, where $Q_s=\{a^{(1)},a^{(2)},\cdots, a^{(s)} \}$, and $t_n=t_n(m^n)$.

If for some {$n\geq n_0$ and } $1\leq s\leq t_n-1$
\begin{equation}\label{ineq:Negative_feedback}
    \left( \frac{m^n_{a^{(s)}}}{m^n_{a^{(s+1)}}} \right)^2 >\frac{8\overline{c}^3  p K}{\underline{c}^3  } \Big(1+16p\frac{\overline{c}^2}{\underline{c}^2} \Big( \frac{   m^n_{a^{(s+1)} }     }{   m^n_{a^{(t_n)}}    }\Big)^2      \Big),
\end{equation}
then 
{ 
\begin{equation*}
     a_{n+1}\in \mathcal{G}(Q_s)=\{a\in \mathcal{A}: \dim(V_{Q_s\cup \{ a\}})>\dim(V_{Q_s })  \}
\end{equation*}
}
for \textrm{GI0} and \textrm{GI1}, where
\begin{equation*}
    V_{Q}=V_{Q}(\bgtheta_n) = \sum_{a\in Q }\mathcal{R}(\mathcal{I}_a(\bgtheta_n)).
\end{equation*}
\end{lemma}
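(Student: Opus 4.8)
The plan is to apply Lemma~\ref{lemma:unify}, which reduces everything to a directional trace comparison. It suffices to exhibit, for each candidate $a''\notin\mathcal{G}(Q_s)$, one fixed expanding experiment $a'\in\mathcal{G}(Q_s)$ for which \eqref{eq:unify-selection-property} holds for all $t_1,t_2\in[0,1]$; this forces $a_{n+1}\neq a''$, and ranging over all non-expanding $a''$ (including those inside $Q_s$, for which $\cR(\mathcal{I}_{a''})\subseteq V_{Q_s}$ as well) yields $a_{n+1}\in\mathcal{G}(Q_s)$. Such an $a'$ exists because $s\leq t_n-1$ forces $\dim(V_{Q_s})<p=\dim(V_{Q_{t_n}})$, so $\mathcal{G}(Q_s)\neq\emptyset$; I fix one. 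Writing $\blB=A_2^{-1}\blS_n A_2^{-1}$ and using Assumption~\ref{ass:5} (which makes $\blS_n=\nabla\mathbb{G}_{\bgtheta_n}(\cdot)$ positive definite with $\kappa(\blS_n)\leq K$, hence $\sigma_{\min}A_2^{-2}\preceq\blB\preceq K\sigma_{\min}A_2^{-2}$), inequality \eqref{eq:unify-selection-property} reduces to the scalar comparison $\tr(A_2^{-2}\mathcal{I}_{a'}(\bgtheta_n))>K\,\tr(A_2^{-2}\mathcal{I}_{a''}(\bgtheta_n))$, uniformly over $t_1,t_2$.

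Next I would run a matrix-perturbation analysis of $A_2=\blA+t_1\mathcal{I}_{a'}+t_2\mathcal{I}_{a''}$, where $\blA=\sum_{a}m^n_a\mathcal{I}_a(\bgtheta_n)$. Decompose $A_2=\blA_1+\blR$, with $\blA_1=\sum_{a\in Q_s}m^n_a\mathcal{I}_a(\bgtheta_n)$ having range exactly $V=V_{Q_s}$, and $\blR=A_2-\blA_1$ collecting the light experiments $\blE=\sum_{a\notin Q_s}m^n_a\mathcal{I}_a$ together with the $t$-terms. Assumption~\ref{ass:6B} (via Lemma~\ref{lem:dim-rank-c}) gives $\underline{c}\,m^n_{a^{(s)}}\blP_{V}\preceq\blA_1\preceq\overline{c}\,m^n_{a^{(1)}}\blP_{V}$ and $\norm{\blR}_{op}\leq 2\overline{c}\,m^n_{a^{(s+1)}}$, while $\lambda_{\min}(A_2)\geq\lambda_{\min}(\blA)\geq\underline{c}\,m^n_{a^{(t_n)}}$ since $\blA\succeq\sum_{a\in Q_{t_n}}m^n_a\mathcal{I}_a\succeq\underline{c}\,m^n_{a^{(t_n)}}I_p$. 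By Weyl's inequality the top $r=\dim V$ eigenvalues of $A_2$ exceed $\underline{c}\,m^n_{a^{(s)}}-\norm{\blR}_{op}\geq\tfrac12\underline{c}\,m^n_{a^{(s)}}$ (using the gap hypothesis \eqref{ineq:Negative_feedback}), and the bottom $p-r$ lie in $[\underline{c}\,m^n_{a^{(t_n)}},\,2\overline{c}\,m^n_{a^{(s+1)}}]$. The Davis--Kahan $\sin\Theta$ theorem \citep{yu2015useful}, with separation $\underline{c}\,m^n_{a^{(s)}}$, then bounds the deviation $\norm{\widehat{\blP}-\blP}_{op}$ of the top-$r$ eigenprojection $\widehat{\blP}$ of $A_2$ from $\blP=\blP_{V}$ by $O\!\big(\overline{c}\,m^n_{a^{(s+1)}}/(\underline{c}\,m^n_{a^{(s)}})\big)$.

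With this in hand I estimate the two traces. For non-expanding $a''$ we have $\cR(\mathcal{I}_{a''})\subseteq V$, so $\blP^{\perp}\mathcal{I}_{a''}=0$; splitting $A_2^{-2}$ into its top and bottom eigenblocks gives a main term at most $4\overline{c}p/(\underline{c}^2(m^n_{a^{(s)}})^2)$ and, since $\widehat{\blP}^{\perp}\mathcal{I}_{a''}\widehat{\blP}^{\perp}=(\widehat{\blP}^{\perp}-\blP^{\perp})\mathcal{I}_{a''}(\widehat{\blP}^{\perp}-\blP^{\perp})$, a leakage term at most $2\overline{c}p\,\norm{\widehat{\blP}-\blP}_{op}^{2}/(\underline{c}^2(m^n_{a^{(t_n)}})^2)$. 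For the fixed $a'$, the crucial point is that Assumption~\ref{ass:6B} upgrades ``$a'$ expands $V$'' into a quantitative lower bound: $\blP^{\perp}\mathcal{I}_{a'}\blP^{\perp}=\blP^{\perp}\big(\sum_{a\in Q_s\cup\{a'\}}\mathcal{I}_a\big)\blP^{\perp}\succeq\underline{c}\,\blP_{U}$, where $U=V_{Q_s\cup\{a'\}}\ominus V_{Q_s}$ has dimension $\geq 1$, so $\tr(\blP^{\perp}\mathcal{I}_{a'}\blP^{\perp})\geq\underline{c}$; propagating through Davis--Kahan yields $\tr(\widehat{\blP}^{\perp}\mathcal{I}_{a'}\widehat{\blP}^{\perp})\geq\underline{c}/2$ and hence $\tr(A_2^{-2}\mathcal{I}_{a'})\geq\underline{c}/(8\overline{c}^2(m^n_{a^{(s+1)}})^2)$. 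Inserting the three estimates into the scalar comparison and clearing denominators reproduces, up to the explicit constants, exactly the hypothesis \eqref{ineq:Negative_feedback}.

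I expect the matrix-perturbation bookkeeping to be the main obstacle. The bottom eigenvalues of $A_2$ scale like $m^n_{a^{(t_n)}}=n_I$ rather than $m^n_{a^{(s+1)}}$, so the Davis--Kahan leakage term for $a''$ carries a dangerous factor $(m^n_{a^{(s+1)}}/m^n_{a^{(t_n)}})^2$; it is precisely this factor that the second bracket in \eqref{ineq:Negative_feedback} is calibrated to dominate. Keeping the powers of $\underline{c},\overline{c},p,K$ aligned across the two trace bounds, and checking that the Davis--Kahan applicability condition (perturbation below the spectral gap) is itself implied by \eqref{ineq:Negative_feedback}, is the delicate accounting that must be carried out carefully.
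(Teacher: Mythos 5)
Your overall architecture is the same as the paper's: reduce to the pairwise comparison \eqref{eq:unify-selection-property} via Lemma~\ref{lemma:unify}, absorb $\blS_n=\nabla\mathbb{G}_{\bgtheta_n}(\cdot)$ through its condition number $K$ (your reduction to $\tr(A_2^{-2}\mathcal{I}_{a'})>K\,\tr(A_2^{-2}\mathcal{I}_{a''})$ is valid), decompose $A_2$ into a dominant part with range $V_{Q_s}$ plus a light perturbation, and invoke Davis--Kahan to control the leakage of the non-expanding term. Your treatment of $a''$ and of the spectrum of $A_2$ closely parallels the paper's Steps A and C. The gap is in your lower bound for the expanding experiment $a'$.

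You propose to obtain $\tr(\widehat{\blP}^{\perp}\mathcal{I}_{a'}\widehat{\blP}^{\perp})\geq\underline{c}/2$ from $\tr(\blP^{\perp}\mathcal{I}_{a'}\blP^{\perp})\geq\underline{c}$ by ``propagating through Davis--Kahan''. Since $\tr(\widehat{\blP}^{\perp}\mathcal{I}_{a'}\widehat{\blP}^{\perp})-\tr(\blP^{\perp}\mathcal{I}_{a'}\blP^{\perp})=-\tr\bigl(\mathcal{I}_{a'}(\widehat{\blP}-\blP)\bigr)$, this propagation costs a term that can only be bounded by $\tr(\mathcal{I}_{a'})\,\norm{\widehat{\blP}-\blP}_{op}\leq p\,\overline{c}\,\norm{\widehat{\blP}-\blP}_{op}$, and Davis--Kahan gives $\norm{\widehat{\blP}-\blP}_{op}=O\bigl(\overline{c}\,m^n_{a^{(s+1)}}/(\underline{c}\,m^n_{a^{(s)}})\bigr)$. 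Hence your step needs, as an applicability precondition, a bound on the \emph{first} power of the count ratio, namely $m^n_{a^{(s)}}/m^n_{a^{(s+1)}}\gtrsim p\,\overline{c}^2/\underline{c}^2$. But hypothesis \eqref{ineq:Negative_feedback} controls the \emph{square} of that ratio by $8pK\overline{c}^3/\underline{c}^3$ times a bracket at least $1$, so it only guarantees $m^n_{a^{(s)}}/m^n_{a^{(s+1)}}\gtrsim\sqrt{pK}\,(\overline{c}/\underline{c})^{3/2}$. When $K$ is of order one --- e.g.\ $\mathbb{G}_{\bgtheta}=\Phi_1=\tr$, for which $\nabla\mathbb{G}_{\bgtheta}=I_p$ and $K=1$ --- and $p$ (or $\overline{c}/\underline{c}$) is large, this does not imply your precondition, so your argument does not go through under the stated hypothesis. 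The paper avoids this issue entirely: its Step B lower-bounds the expanding trace \emph{unconditionally}, by choosing a unit vector $\blu$ with $\norm{(I-\blP_{Q_s})\blP_{Q_s\cup\{a'\}}\blu}=1$, writing $1\leq\norm{(I-\blP_{Q_s})\blA}_{op}\,\norm{\blA^{-1}\blP_{Q_s\cup\{a'\}}\blu}$, and noting that $(I-\blP_{Q_s})\blA$ annihilates every non-expanding experiment, so $\norm{(I-\blP_{Q_s})\blA}_{op}\leq2\overline{c}\,m^n_{a^{(s+1)}}$; this yields $\tr(\blP_{Q_s\cup\{a'\}}\blA^{-2})\geq1/(4\overline{c}^2(m^n_{a^{(s+1)}})^2)$ with no perturbation condition, and Davis--Kahan is then used only in the upper bound for the $Q_s$-term, where the perturbation enters the final inequality quadratically (the second bracket of \eqref{ineq:Negative_feedback}) rather than as a precondition. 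A secondary point: even where your route does apply, its constants (e.g.\ a factor $32$ in place of $8$) differ from those in \eqref{ineq:Negative_feedback}, so as written it would only prove the lemma with a modified threshold.
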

\begin{proof}[Proof of Lemma \ref{lem:Li-Bound}]
Let $t=t_n$.{  \eqref{ineq:Negative_feedback} implies that $t\geq 2$}. By Lemma~\ref{lemma:tau_order}, for any $s\leq t-1$, we have $\dim(V_{Q_t})=p$. Because $\dim(V_{Q_s})<p$, we know that $|\mathcal{G}(Q_s)| >0$.

{
For any $Q\subset \mathcal{A}$, let $\blP_{V_Q }$ be the orthogonal projection matrix on $V_Q({\bgtheta_n})$. We will simplify the notation and write it as $\blP_Q$ for the ease of exposition when the context is clear. Then $\blP_{Q_s}$ denote the orthogonal projection matrix on $V_{Q_s}$.%

According to Lemma~\ref{lemma:unify}, it is sufficient to show that, if \eqref{ineq:Negative_feedback} holds, then for all $a''\not\in \mathcal{G}(Q_{ s})$, $a'\in \mathcal{G}(Q_{ s})$ and all $t_1,t_2\in[0,1]$.
\begin{equation}
     \innerpoduct{ {\blS}_n  }{ \{A_2(t_1,t_2) \}^{-1}  \mathcal{I}_{a'}(\bgtheta_n)   \{A_2(t_1,t_2) \}^{-1}  } 
     >\innerpoduct{{\blS}_n  }{ \{A_2(t_1,t_2) \}^{-1}  \mathcal{I}_{a''}(\bgtheta_n)   \{A_2(t_1,t_2) \}^{-1}  },
\end{equation}
where we recall that ${\blS}_n = \nabla\mathbb{G}_{\bgtheta_n}(\{A_2(t_1,t_2)/(n+1)\}^{-1})$. 
In the rest of the proof, we abuse the notation a little and write ${\blA}=A_2(t_1,t_2)$ for the ease of exposition. Then, it is sufficient to show that for all $a''\not\in \mathcal{G}(Q_{ s})$, $a'\in \mathcal{G}(Q_{ s})$ and all $t_1,t_2\in[0,1]$
\begin{equation}\label{eq:suff-A}
         \innerpoduct{ {\blS}_n  }{ \blA^{-1}  \mathcal{I}_{a'}(\bgtheta_n)   \blA^{-1}  } >\innerpoduct{{\blS}_n  }{ \blA^{-1}  \mathcal{I}_{a''}(\bgtheta_n)   \blA^{-1}  }.
\end{equation}

The rest proof of the consists of the following three steps:
\paragraph*{Step A: Connect \eqref{eq:suff-A} with $\operatorname{tr}(\blP_{Q_s\cup \{ a' \}} {\blA}^{-2} )$ and $\operatorname{tr}(  \blP_{Q_{s}} {\blA}^{-2})$} 

Let
\begin{equation}
\overline{m}^n_{a}=
    \begin{cases}
    {m}^n_{a'}+t_1 &\text{ if } a= a'\\
    {m}^n_{a'}+t_2 &\text{ if } a = a''\\
        {m}^n_{a} &\text{ otherwise}
    \end{cases}.
\end{equation}
Then, for $0\leq t_1,t_2\leq 1 $, $\overline{m}^{n}_{a}\leq m_a^n+1$ for all $a\in \mathcal{A}$, and ${\blA}=\sum_{a\in \mathcal{A}} \overline{m}^n_{a}\mathcal{I}_a(\bgtheta_n)$.

}

{ 
Note that $\lambda_{\max} ({\blS}_n)I_p\succeq\blS_n\succeq \lambda_{\min} ({\blS}_n)I_p$.  
}
we have
\begin{equation}\label{eq:trace-projection-connection-a'}
\begin{split}
 &\innerpoduct{{\blS}_n}{  {\blA}^{-1} \mathcal{I}_{a'}(\bgtheta_n){\blA}^{-1}}\\
   = &\operatorname{tr}({\blS}_n{\blA}^{-1} 
    \mathcal{I}_{a'}(\bgtheta_n){\blA}^{-1} )\\
    =&\operatorname{tr}({\blS}_n{\blA}^{-1} \sum_{a\in Q_s\cup \{ a' \} }
    \mathcal{I}_{a}(\bgtheta_n){\blA}^{-1})-\operatorname{tr}({\blS}_n{\blA}^{-1} 
    \sum_{a  \in  Q_{s} }
    \mathcal{I}_{a}(\bgtheta_n){\blA}^{-1})\\
    \geq & \underline{c} \cdot \lambda_{min} ({\blS}_n)\operatorname{tr}(\blP_{Q_s\cup \{ a' \}}{\blA}^{-2} )-\overline{c}\cdot \lambda_{max}({\blS}_n) \operatorname{tr}( {\blA}^{-1} \blP_{Q_{s}} {\blA}^{-1}),
\end{split}
\end{equation}
{where the last inequality is due to Assumption~\ref{ass:6B} and
Lemma~\ref{lemma:trace}}.

Notice that $a''\not\in \mathcal{G}(Q_s)$  implies
$$
\operatorname{dim}(V_{Q_s\cup \{a''\} }) = \operatorname{dim}(V_{Q_s}) .
$$
Combined with $V_{Q_s}\subset V_{Q_s\cup \{a''\} }$, we know that $V_{Q_s}=V_{Q_s\cup \{a''\} }$. This implies 
\begin{equation}\label{subset:space_32}
    \mathcal{R}(\mathcal{I}_{a''}(\bgtheta_n)) \subset V_{Q_s}.
\end{equation}
By Assumption~\ref{ass:6B}, we obtain 
\[
\mathcal{I}_{a''}(\bgtheta_n) \preceq \overline{c}\cdot \blP_{\mathcal{R}(\mathcal{I}_{a''}(\bgtheta_n)) }  \preceq \overline{c}\cdot \blP_{Q_s}.
\]
Hence, 
\begin{equation}\label{eq:trace-projection-connection-a''}
   \innerpoduct{{\blS}_n}{  {\blA}^{-1} \mathcal{I}_{a''}(\bgtheta_n){\blA}^{-1}} = \operatorname{tr}({\blS}_n{\blA}^{-1} \mathcal{I}_{a''}(\bgtheta_n){\blA}^{-1})
    \leq \overline{c}\cdot \lambda_{\max}({\blS}_n )  \operatorname{tr}( {\blA}^{-1} 
     \blP_{Q_{s}} {\blA}^{-1}).
\end{equation}
Thus, to show \eqref{eq:suff-A}, it is sufficient to show that \eqref{ineq:Negative_feedback} implies 
\begin{equation}\label{ineq:4.20}
    \underline{c} \cdot \lambda_{min} ({\blS}_n)\operatorname{tr}({\blA}^{-1} \blP_{Q_s\cup \{ a' \}} {\blA}^{-1} )> 2\overline{c}\cdot \lambda_{max}({\blS}_n) \operatorname{tr}( {\blA}^{-1} \blP_{Q_{s}} {\blA}^{-1}),
\end{equation}
which is equivalent to\begin{equation}\label{ineq:tr_ratio}
    \frac{\operatorname{tr}(\blP_{Q_s\cup \{ a' \}} {\blA}^{-2} )}{\operatorname{tr}(  \blP_{Q_{s}} {\blA}^{-2})}>\frac{2\overline{c}\cdot \lambda_{max}({\blS}_n) }{\underline{c} \cdot \lambda_{min} ({\blS}_n)}=\frac{2\overline{c} }{\underline{c}  }\kappa({\blS}_n).
\end{equation}
We focus on proving the above inequality in the rest of the proof.

\paragraph*{Step B: Establish a lower bound for $\operatorname{tr}(\blP_{Q_s\cup \{ a' \}} {\blA}^{-2} )$}
Because $V_{Q_s}\subset V_{Q_s\cup \{ a' \}}$ and $\dim(V_{Q_s})< \dim(V_{Q_s\cup \{ a' \}})$, we know that $(I-\blP_{Q_s})\blP_{Q_s\cup \{ a' \}}=\blP_{Q_s\cup \{ a' \}}-\blP_{Q_s}\neq{\bf 0}$. Thus, there exists a unit vector $\blu\in \mathbb{R}^p $ such that $\norm{(I-\blP_{Q})\blP_{Q_s\cup \{ a' \}} \blu}=1$.

Applying the Rayleigh–Ritz quotient for the largest eigenvalue, we know that
\begin{equation}\label{eq:v-bound-2}
\begin{split}
   &\operatorname{tr}( \blP_{Q_s\cup \{ a' \}} {\blA}^{-2})=\operatorname{tr}( \blP_{Q_s\cup \{ a' \}} {\blA}^{-2}\blP_{Q_s\cup \{ a' \}} )\\
   \geq &\lambda_{max}(\blP_{Q_s\cup \{ a' \}} {\blA}^{-2}\blP_{Q_s\cup \{ a' \}} )\geq \blu^T\blP_{Q_s\cup \{ a' \}} {\blA}^{-2}\blP_{Q_s\cup \{ a' \}}  \blu =\norm{{\blA}^{-1}\blP_{Q_s\cup \{ a' \}}  \blu}^2.
\end{split}
\end{equation}
Set $\blv={\blA}^{-1}\blP_{Q_s\cup \{ a' \}}  \blu$. Then,
\begin{equation}\label{eq:v-bound-1}
     \norm{(I-\blP_{Q_{s}})\blA}_{op} \norm{\blv}\geq \norm{(I-\blP_{Q_{s}})\blA \blv}=\norm{(I-\blP_{Q_{s}})\blP_{Q_s\cup \{ a' \}} \blu}=1.
\end{equation}
Notice that $m^n_{a^{(s+1)}}\geq m^n_{a^{(t_n)}} \geq 1$ for any $s\leq t-1$. 
Thus, $m^n_{a^{(s+1)}}+1\leq 2 m^n_{a^{(s+1)}}$.
{ 
Also note that by the definition of $\mathcal{G}(Q_{s})$, $a_{s'}\notin \mathcal{G}(Q_{s})$ for all $s'\in[s]$. Thus, for all $a\in \mathcal{G}(Q_{s})$, $m_a\leq m_a^{(s+1)}$.}
The above analysis, together with Assumption~\ref{ass:6B}, implies
\begin{equation}\label{eq:g-qs-A-bound}
    \sum_{a\in \mathcal{G}(Q_{s})} \overline{m}_a^n  \mathcal{I}_a(\bgtheta_n) \preceq \overline{c}\cdot  ({m}_a^{(s+1)} +1)\cdot \blP_{\mathcal{G}(Q_{s}) } \mathcal{I}_a(\bgtheta_n) \preceq 2\overline{c}\cdot  {m}_a^{(s+1)}  \cdot \blP_{\mathcal{G}(Q_{s}) }.
\end{equation}
{
Note that 
$\blA= \sum_{a  \in \mathcal{G}(Q_{s})} \overline{m}^n_{a}\mathcal{I}_a(\bgtheta_n) + \sum_{a  \notin \mathcal{G}(Q_{s})} \overline{m}^n_{a}\mathcal{I}_a(\bgtheta_n)$. Also note that if  $a  \notin \mathcal{G}(Q_{s})$, then $\mathcal{I}_a(\bgtheta_n)\in V_{Q_{s}}$, which implies $(I-\blP_{Q_{s}})\mathcal{I}_a(\bgtheta_n) =\mathbf{0}$. Thus, \eqref{eq:g-qs-A-bound}} further implies
\begin{equation*}
\begin{split}
    &\norm{(I-\blP_{Q_{s}})\blA}_{op}\\
    =&\norm{\sum_{a\in \mathcal{G}(Q_{s})} \overline{m}_a^n (I-\blP_{Q_{s}})\mathcal{I}_a({\bgtheta}_n)}_{op}\\
    \leq &\norm{(I-\blP_{Q_{s}})}_{op}\norm{\sum_{a\in \mathcal{G}(Q_{s})} \overline{m}_a^n \mathcal{I}_a({\bgtheta}_n)}_{op}\\
    \leq& 2\overline{c}\cdot {m}_a^{(s+1)}  . 
\end{split}
\end{equation*}
The above inequality and \eqref{eq:v-bound-1} implies
$\norm{\blv}\geq \frac{1 }{2\overline{c}\cdot  m^n_{a^{(s+1)}}   }$. This, along with \eqref{eq:v-bound-2}, implies
\begin{equation}\label{eq:qs-lower}
    \operatorname{tr}( \blP_{Q_s\cup \{ a' \}} {\blA}^{-2})\geq \norm{\blv}^2\geq  \frac{1}{4\overline{c}^2\cdot (m^n_{a^{(s+1)}})^2 }.
\end{equation}
 
\paragraph*{Step C: Establish an upper bound for $\operatorname{tr}(  \blP_{Q_{s}} {\blA}^{-2})$}
\begin{equation}\label{eq:qs-upper-1}
    \operatorname{tr}( {\blA}^{-1} \blP_{Q_{s}} {\blA}^{-1}) \leq p \cdot \lambda_{max}( {\blA}^{-1} \blP_{Q_{s}} {\blA}^{-1}).
\end{equation}
Set $\blA_{s} = \sum_{a\not\in \mathcal{G}(Q_s)}\overline{m_a^n} \mathcal{I}_a({\bgtheta}_n)$. Let $r=\dim(V_{Q_s})$.
We first show that $\text{rank}(\blA_s)=r$ and $\mathcal{R}(\blA_s)=V_{Q_s}$.
By Lemma~\ref{lem:dim-rank},  
\begin{equation}\label{equ:rank_dim_equ}
    \operatorname{rank}(\blA_s)=\operatorname{dim}(V_{\mathcal{A}\backslash \mathcal{G}(Q_s) 
\cap \{a\in \mathcal{A}; m^n_a\geq 1 \} } ).
\end{equation}
Because $Q_s\subset\mathcal{A}\backslash \mathcal{G}(Q_s) 
\cap \{a\in \mathcal{A}; m^n_a\geq 1 \}$, we know that
\begin{equation}\label{equ:dim_ineq40}
     \operatorname{dim}(V_{Q_s})\leq \operatorname{dim}(V_{\mathcal{A}\backslash \mathcal{G}(Q_s) 
\cap \{a\in \mathcal{A}; m^n_a\geq 1 \} } ) \leq \operatorname{dim}(V_{\mathcal{A}\backslash \mathcal{G}(Q_s)   } ) .
\end{equation}
By \eqref{subset:space_32} and $V_{Q_s}\subset V_{\mathcal{A}\backslash \mathcal{G}(Q_s)} $, we know that 
\begin{equation}\label{equ:def_sin_Theta}
    V_{\mathcal{A}\backslash \mathcal{G}(Q_s)   } =\sum_{a\not\in  \mathcal{G}(Q_s) }\mathcal{R}(\mathcal{I}_a(\bgtheta_n)) \subset V_{Q_s}\subset V_{\mathcal{A}\backslash \mathcal{G}(Q_s).   } 
\end{equation}
Hence, $V_{\mathcal{A}\backslash \mathcal{G}(Q_s)   }=V_{Q_s}$, and $\operatorname{dim}(V_{\mathcal{A}\backslash \mathcal{G}(Q_s)   })=\operatorname{dim}(V_{Q_s})$. Combined with \eqref{equ:rank_dim_equ} and \eqref{equ:dim_ineq40}, we know that
\[
\operatorname{rank}(\blA_s)=\operatorname{dim}(V_{Q_s})= \operatorname{dim}(V_{\mathcal{A}\backslash \mathcal{G}(Q_s) 
\cap \{a\in \mathcal{A}; m^n_a\geq 1 \} } ) = \operatorname{dim}(V_{\mathcal{A}\backslash \mathcal{G}(Q_s)   } ) =r.
\]
The above analysis and 
\[
\mathcal{R}(\blA_s )\subset \sum_{a\not\in \mathcal{G}(Q_s)} \mathcal{R}(\mathcal{I}_a({\bgtheta}_n)) = V_{\mathcal{A} \backslash \mathcal{G}(Q_s)}=V_{Q_s}
\]
together imply that $\mathcal{R}(\blA_s )=V_{Q_s}$.

Assume the eigendecomposition $\blA_s\widehat{\blu}_i=\widehat\lambda_i \widehat{\blu}_i$, and $\blA {\blu}_i=\lambda_i {\blu}_i$, $1\leq i\leq p$, where $\widehat\lambda_1 \geq \cdots \geq \widehat\lambda_r> \widehat\lambda_{r+1}=\cdots=\widehat\lambda_{p}=0$, $ \lambda_1\geq \cdots \geq \lambda_p$ with $\widehat{\blU}_s=[\widehat{\blu}_1,\widehat{\blu}_2,\cdots,\widehat{\blu}_r]$, $\widehat{\blU}_{-s}=[\widehat{\blu}_{r+1},\widehat{\blu}_{r+2} ,\cdots,\widehat{\blu}_p]$, $\widehat{\blU}=[\widehat{\blU}_s, \widehat{\blU}_{-s}]$, ${\blU}_s=[\blu_1,\blu_2,\cdots,\blu_r]$, ${\blU}_{-s}=[\blu_{r+1},\blu_{r+2} ,\cdots, \blu_p]$, and ${\blU}=[ {\blU}_s,  {\blU}_{-s}]$. Based on the previous notation, we know that $\blP_{Q_s}$ is the orthogonal projection on $\mathcal{R}(\blA_s)$, and thus, it equals $\widehat{\blU}_s \widehat{\blU}_s^T$. 

Let $\bgTheta(\widehat{\blU}_s, {\blU}_s)$ denote the $r \times r$ diagonal matrix whose $j-$th diagonal entry is the $j-$th principal angle $\cos^{-1}(\sigma_j)$, where $\sigma_1\geq \sigma_2\geq \cdots \geq \sigma_r$ are the singular values of $\widehat{\blU}_s^T {\blU}_s$.

Applying a variant of the Davis–Kahan theorem (Theorem 2 in \cite{yu2015useful}), we have 
\begin{equation}\label{eq:DK-theorem}
    \norm{\sin \bgTheta(\widehat{\blU}_s,  {\blU}_s)}_F\leq \frac{2
    \norm{\blA-\blA_s}_F }{\widehat\lambda_r-\widehat\lambda_{r+1}}= \frac{2
    \norm{\blA-\blA_s}_F }{\widehat\lambda_r }.
\end{equation}
Note that 
\begin{equation}\label{equ:A-A_s}
    \blA-\blA_s=\sum_{a\in \mathcal{G}(Q_s)}\overline{m}_a^n\mathcal{I}_a({\bgtheta}_n)\preceq 2\overline{c} m^n_{a^{(s+1)}}  \blP_{\mathcal{G}(Q_s)},
\end{equation}
and
\begin{equation}\label{equ:4p_upper_bound}
    \norm{\blA-\blA_s}^2_F\leq 4p\cdot \overline{c}^2(m^n_{a^{(s+1)}}) ^2.
\end{equation}

Note that $Q_s\subset \mathcal{A}\backslash \mathcal{G}(Q_s)$.
Because $\blA\succeq \blA_s\succeq \sum_{a\in  Q_s}\overline{m}_a^n \mathcal{I}_a(\bgtheta_n)\succeq \underline{c} {m}^n_{a^{(s)}} \cdot \blP_{Q_s}$ %
and $\blA \succeq \sum_{a\in  Q_t}m_a^n\mathcal{I}_a(\bgtheta_n)\succeq \underline{c} m^n_{a^{(t)}}  I_p$, by Courant–Fischer–Weyl min-max principle (see Chapter I of \cite{hilbert1953methods} or Corollary III.1.2 in \cite{bhatia2013matrix}), we have
\begin{equation}\label{eq:lambda-bound}
    \lambda_r\geq \widehat\lambda_r  \geq  \underline{c} m^n_{a^{(s )}} \text{ and } \ \lambda_p\geq  \underline{c} m^n_{a^{(t)}}.
\end{equation}
{
Combining \eqref{eq:DK-theorem}, \eqref{equ:4p_upper_bound}, and \eqref{eq:lambda-bound}, we obtain
\begin{equation}\label{eq:sin-perturb-f}
       \norm{\sin \bgTheta(\widehat{\blU}_s,  {\blU}_s)}_F^2\leq  \frac{16p\cdot \overline{c}^2(m^n_{a^{(s+1)}}) ^2 }{\underline{c}^2 (m^n_{a^{(s )}})^2} .
\end{equation}
}
By definition, we obtain
\begin{equation*}
    \norm{\sin \bgTheta(\widehat{\blU}_s,  {\blU}_s) }_F^2=r-(\cos^2(\sigma_1)+\cos^2(\sigma_2)+\cdots +\cos^2(\sigma_r))=r-\norm{\widehat{\blU}_s^T {\blU}_s}_F^2,
\end{equation*}
and 
\begin{equation*}
    r=\norm{\widehat{\blU}_s^T [ {\blU}_s,  {\blU}_{-s}]  }_F^2=\norm{\widehat{\blU}_s^T {\blU}_s}_F^2+\norm{\widehat{\blU}_s^T {\blU}_{-s}}_F^2.
\end{equation*}
Thus,
\begin{equation}\label{eq:sine-matrix}
    \norm{\sin \bgTheta(\widehat{\blU}_s,  {\blU}_s) }_F^2=\norm{\widehat{\blU}_s^T {\blU}_{-s}}_F^2.
\end{equation}
{Combining \eqref{eq:sin-perturb-f}  and \eqref{eq:sine-matrix}, we have}
\begin{equation}\label{eq:long-lam-max-bound}
\begin{split}
    &\lambda_{max}( {\blA}^{-1} \blP_{Q_{s}} {\blA}^{-1})\\
    =&\lambda_{max}(\widehat{\blU}^T_s {\blA}^{-2} \widehat{\blU}_s)\\
    =&\lambda_{max}(\widehat{\blU}^T_s  [ {\blU}_s,  {\blU}_{-s}]diag(\lambda^{-2}_1,\lambda^{-2}_2,\cdots, \lambda^{-2}_p)[ {\blU}_s,  {\blU}_{-s}]^T \widehat{\blU}_s)\\
    =&\lambda_{max}(\widehat{\blU}^T_s   {\blU}_s diag(\lambda^{-2}_1, \cdots, \lambda^{-2}_r)  {\blU}^T_s  \widehat{\blU}_s + \widehat{\blU}^T_s   {\blU}_{-s} diag(\lambda^{-2}_{r+1}, \cdots, \lambda^{-2}_p)  {\blU}^T_{-s}  \widehat{\blU}_s)\\
    \leq &\lambda_{max}(\widehat{\blU}^T_s   {\blU}_s diag(\lambda^{-2}_1, \cdots, \lambda^{-2}_r)  {\blU}^T_s  \widehat{\blU}_s )+\lambda_{max}( \widehat{\blU}^T_s   {\blU}_{-s} diag(\lambda^{-2}_{r+1}, \cdots, \lambda^{-2}_p)  {\blU}^T_{-s}  \widehat{\blU}_s)\\
    \leq & \lambda_r^{-2}\norm{\widehat{\blU}^T_s   {\blU}_s}_{op}^2 +\lambda_p^{-2} \norm{{\blU}^T_{-s}  \widehat{\blU}_s}_{op}^2 \\
    \leq & \lambda_r^{-2} +\lambda_p^{-2} \norm{\sin \bgTheta(\widehat{\blU}_s,  {\blU}_s) }_F^2\\
    \leq & \frac{1}{(\underline{c} m^n_{a^{(s)}})^2}\left(1+\frac{16p (\overline{c}m^n_{a^{(s+1)}})^2}{(\underline{c} m^n_{a^{(t)}})^2} \right).
\end{split}
\end{equation}

{ 
The above display and \eqref{eq:qs-upper-1} implies
\begin{equation}\label{eq:qs-upper-2} \operatorname{tr}( {\blA}^{-1} \blP_{Q_{s}} {\blA}^{-1}) \leq p \frac{1}{(\underline{c} m^n_{a^{(s)}})^2}\left(1+\frac{16p (\overline{c}m^n_{a^{(s+1)}})^2}{(\underline{c} m^n_{a^{(t)}})^2} \right).
\end{equation}
Combining \eqref{eq:qs-lower}, \eqref{eq:qs-upper-1} and \eqref{eq:qs-upper-2}, we have
\begin{equation} 
\frac{\operatorname{tr}( \blP_{Q_s\cup \{ a' \}} {\blA}^{-2})}{\operatorname{tr}( {\blA}^{-1} \blP_{Q_{s}} {\blA}^{-1}) }
\geq \Big\{\frac{4 \overline{c}^2  p }{\underline{c}^2}\left(1+\frac{16p (\overline{c}m^n_{a^{(s+1)}})^2}{(\underline{c} m^n_{a^{(t)}})^2} \right)\Big\}^{-1}  \left( \frac{m^n_{a^{(s)}}}{m^n_{a^{(s+1)}}} \right)^2
\end{equation}
Thus, \eqref{ineq:Negative_feedback}
implies \eqref{ineq:tr_ratio}. 
}

\end{proof}

\begin{lemma}\label{lem:Li-Bound2}
    Assume that $\sum_{i=1}^{n_0}\mathcal{I}_{a_i}({\bgtheta})$ is nonsingular for some $n_0$. 
    
    Consider the pre-specified criteria function $\mathbb{G}_{\bgtheta}(\bgSigma)=\Phi_q(\bgSigma)$. Let $(a^{(1)},a^{(2)},\cdots, a^{(k)})$ be a permutation of $\mathcal{A}$ such that $m^n_{a^{(1)} } \geq \cdots \geq m^n_{a^{(k)} }$ and $\dim(Q_{t_n-1})<\dim(Q_{t_n})=p$, where $Q_s=\{a^{(1)},a^{(2)},\cdots, a^{(s)} \}$, and $t_n=t_n(m^n)$.

For a fixed (or random) sequence $\bgtheta_n\in \bgTheta$ and for any $n\geq n_0$, we consider the generalized \textrm{GI0} selection rule
\begin{equation*}
a_{n+1}= \arg\min_{a\in \mathcal{A}} \Phi_q\left( \left\{ \frac{1}{n+1}\blA+ \frac{1}{n+1}\mathcal{I}_a(\bgtheta_n)\right\}  ^{-1}\right),
\end{equation*}
and \textrm{GI1} selection rule 
\begin{equation}\label{equ:GI1_Phi_q}
a_{n+1}=\arg\max_{a\in\mathcal{A}}\operatorname{tr} \Big( {\blA}^{-(q+1)} \mathcal{I}_a(\bgtheta_n)  \Big),
\end{equation}
where $\blA=\sum_{a\in \mathcal{A}} m^n_{a}\mathcal{I}_a(\bgtheta_n)$.%
The generalized \textrm{GI1} selection based on \eqref{equ:GI1_Phi_q} coincides with the selection \eqref{equ:GI1_genera_select_S5.5}.

If for some $1\leq s\leq t_n-1$
\begin{equation}\label{ineq:Negative_feedback_q}
    \left( \frac{m^n_{a^{(s)}}}{m^n_{a^{(s+1)}}} \right)^{q+1} \Big(1- \frac{16p \overline{c}^2 }{\underline{c}^2   } \big(\frac{m^n_{a^{(s+1)}} }{ m^n_{a^{(s)}} }\big)^2  \Big) >\Big(\frac{2\overline{c} }{\underline{c}}\Big)^{q+2}  p \left(1+  \frac{16p\overline{c}^{2} }{\underline{c}^{2} } \Big( \frac{   m^n_{a^{(s+1)} }     }{   m^n_{a^{(t_n)}}    }\Big)^{q+1}  \Big(\frac{ m^n_{a^{(s+1)}} }{ m^n_{a^{(s)}} } \Big)^{1-q} \right), 
\end{equation}
then 
\begin{equation*}
     a_{n+1}\in \mathcal{G}(Q_s)=\{a\in \mathcal{A}: \dim(V_{Q_s\cup \{ a\}})>\dim(V_{Q_s })  \}
\end{equation*}
for generalized \text{GI0} and \textrm{GI1}, where
\begin{equation*}
    V_{Q} = \sum_{a\in Q }\mathcal{R}(\mathcal{I}_a(\bgtheta_n)).
\end{equation*}
\end{lemma}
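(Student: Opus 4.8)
The plan is to follow the architecture of the proof of Lemma~\ref{lem:Li-Bound}, but to replace the crude bound on $\kappa(\nabla\mathbb{G}_{\bgtheta})$ by an exact description of the gradient, which is the whole point of isolating the $\Phi_q$ family. The generic argument of Lemma~\ref{lem:Li-Bound} is in fact unavailable here, since $\kappa(\nabla\Phi_q(\bgSigma))=\kappa(\bgSigma^{q-1})$ is unbounded over positive definite $\bgSigma$ whenever $q\neq1$, so no finite $K$ exists. The first step is therefore to apply Lemma~\ref{lem:phi_q}: for $\bgSigma=\{A_2(t_1,t_2)/(n+1)\}^{-1}$ one has $\nabla\Phi_q(\bgSigma)=c_n\,A_2(t_1,t_2)^{1-q}$ for a strictly positive scalar $c_n=c_n(t_1,t_2)$ (equal to $\tfrac{1}{n+1}$, $q(n+1)^{q-1}$, and $(\tr\bgSigma^{q})^{1/q-1}(n+1)^{q-1}$ in the three regimes). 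Writing $\blS_n=\nabla\Phi_q(\bgSigma)$ and $\blA=A_2(t_1,t_2)$, the matrices $\blS_n$ and $\blA$ commute, so the cyclic trace identity gives $\innerpoduct{\blS_n}{\blA^{-1}\mathcal{I}_a(\bgtheta_n)\blA^{-1}}=c_n\tr(\blA^{-q-1}\mathcal{I}_a(\bgtheta_n))$. This simultaneously confirms that the generalized GI1 rule \eqref{equ:GI1_Phi_q} agrees with \eqref{equ:GI1_genera_select_S5.5}, and reduces the selection property of Lemma~\ref{lemma:unify} to a $c_n$-free trace comparison.

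By Lemma~\ref{lemma:unify} it suffices to prove that, for every $a''\notin\mathcal{G}(Q_s)$, every $a'\in\mathcal{G}(Q_s)$, and all $t_1,t_2\in[0,1]$,
\begin{equation*}
\tr\big(\blA^{-q-1}\mathcal{I}_{a'}(\bgtheta_n)\big)>\tr\big(\blA^{-q-1}\mathcal{I}_{a''}(\bgtheta_n)\big).
\end{equation*}
Exactly as in Step~A of Lemma~\ref{lem:Li-Bound}, I would split $\mathcal{I}_{a'}=\sum_{a\in Q_s\cup\{a'\}}\mathcal{I}_a-\sum_{a\in Q_s}\mathcal{I}_a$, use Assumption~\ref{ass:6B} together with Lemma~\ref{lemma:trace} to sandwich each sum between $\underline{c}$ and $\overline{c}$ times the relevant projection, and use $\mathcal{R}(\mathcal{I}_{a''})\subset V_{Q_s}$ (because $a''\notin\mathcal{G}(Q_s)$) to obtain $\mathcal{I}_{a''}\preceq\overline{c}\,\blP_{Q_s}$. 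This reduces the desired inequality to
\begin{equation*}
\underline{c}\,\tr\big(\blA^{-q-1}\blP_{Q_s\cup\{a'\}}\big)>2\overline{c}\,\tr\big(\blA^{-q-1}\blP_{Q_s}\big),
\end{equation*}
i.e. to bounding the trace ratio below by $2\overline{c}/\underline{c}$.

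The substance is then to re-run Steps~B and~C of Lemma~\ref{lem:Li-Bound} with the inverse square $\blA^{-2}$ replaced throughout by the fractional inverse power $\blA^{-(q+1)}$. For the numerator I would pick a unit vector $\blu\in V_{Q_s\cup\{a'\}}\cap V_{Q_s}^{\perp}$ (nonempty since $a'\in\mathcal{G}(Q_s)$), set $\blv=\blA^{-(q+1)/2}\blu$, and deduce $\tr(\blA^{-q-1}\blP_{Q_s\cup\{a'\}})\ge\norm{\blv}^2\ge\norm{(I-\blP_{Q_s})\blA^{(q+1)/2}}_{op}^{-2}$, bounding the latter below by a multiple of $(\overline{c}\,m^n_{a^{(s+1)}})^{-(q+1)}$. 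For the denominator I would diagonalize $\blA$, set $\blA_s=\sum_{a\notin\mathcal{G}(Q_s)}\overline{m}_a^n\mathcal{I}_a$ (whose range is $V_{Q_s}$), and split $\tr(\blA^{-q-1}\blP_{Q_s})$ into the top-$r$ eigenspace contribution (controlled by $\lambda_r^{-(q+1)}\le(\underline{c}\,m^n_{a^{(s)}})^{-(q+1)}$) and the remaining contribution (controlled by $\lambda_p^{-(q+1)}\le(\underline{c}\,m^n_{a^{(t_n)}})^{-(q+1)}$ and weighted by $\norm{\sin\bgTheta(\widehat{\blU}_s,\blU_s)}_F^2$, itself bounded by the Davis--Kahan estimate \eqref{eq:DK-theorem}). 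Propagating the exponent $q+1$ through the eigenvalue bounds \eqref{eq:lambda-bound} and the $\sin\Theta$ bound \eqref{eq:sin-perturb-f} produces precisely the $(q+1)$-powers, the $1-q$ exponent, and the $(2\overline{c}/\underline{c})^{q+2}$ prefactor appearing in \eqref{ineq:Negative_feedback_q}, so that \eqref{ineq:Negative_feedback_q} yields the trace-ratio bound and hence $a_{n+1}\in\mathcal{G}(Q_s)$.

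The main obstacle is the passage to a non-integer power: when $q\neq1$ the matrices $\blA^{(q+1)/2}$ and $\blA^{-(q+1)}$ no longer commute with the projections $\blP_{Q_s},\blP_{Q_s\cup\{a'\}}$, so the clean block manipulations available for $\blA^{-2}$ in Lemma~\ref{lem:Li-Bound} must be replaced by angle-based (Davis--Kahan) estimates, and operator monotonicity of $t\mapsto t^{(q+1)/2}$ (L\"owner--Heinz) is available only for $q\le1$, forcing a separate treatment of the regime $q>1$ when bounding $\norm{(I-\blP_{Q_s})\blA^{(q+1)/2}}_{op}$. Keeping the various powers of $m^n_{a^{(s)}}/m^n_{a^{(s+1)}}$, $m^n_{a^{(s+1)}}/m^n_{a^{(t_n)}}$ and $\overline{c}/\underline{c}$ mutually consistent, so that they collapse exactly into the threshold \eqref{ineq:Negative_feedback_q}, is the most delicate bookkeeping.
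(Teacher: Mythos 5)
Your overall architecture matches the paper's: the commutation observation $\nabla\Phi_q(\bgSigma)\propto\blA^{1-q}$ (Lemma~\ref{lem:phi_q}) reducing both rules to the comparison of $\operatorname{tr}\big(\blA^{-(q+1)}\mathcal{I}_a(\bgtheta_n)\big)$, the reduction through Lemma~\ref{lemma:unify}, the Step~A sandwich leading to the target ratio $\operatorname{tr}(\blA^{-(q+1)}\blP_{Q_s\cup\{a'\}})/\operatorname{tr}(\blA^{-(q+1)}\blP_{Q_s})>2\overline{c}/\underline{c}$, and your Step~C denominator bound are all exactly what the paper does. The genuine gap is your Step~B. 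You lower-bound the numerator by $\norm{(I-\blP_{Q_s})\blA^{(q+1)/2}}_{op}^{-2}$ and then claim this operator norm squared is bounded by a constant multiple of $(\overline{c}\,m^n_{a^{(s+1)}})^{q+1}$; that claim is false for $q>1$, and you only flag this regime as needing ``separate treatment'' without supplying one. Concretely, write $\blA=\blA_s+\blE$ with $\blE=\sum_{a\in\mathcal{G}(Q_s)}\overline{m}^n_a\mathcal{I}_a(\bgtheta_n)$, so $\norm{\blE}_{op}\leq2\overline{c}\,m^n_{a^{(s+1)}}$. The top eigenvector $\blu_1$ of $\blA$ (eigenvalue $\lambda_1\geq\underline{c}\,m^n_{a^{(s)}}$) can be tilted out of $V_{Q_s}$ by an angle of order $\norm{\blE}_{op}/\lambda_1$, since $\mathcal{R}(\blE)\not\subset V_{Q_s}$ by the definition of $\mathcal{G}(Q_s)$; then
\begin{equation*}
\norm{(I-\blP_{Q_s})\blA^{(q+1)/2}}_{op}^2\;\geq\;\lambda_1^{q+1}\,\norm{(I-\blP_{Q_s})\blu_1}^2\;\asymp\;\lambda_1^{q-1}\norm{\blE}_{op}^2,
\end{equation*}
which exceeds $(\overline{c}\,m^n_{a^{(s+1)}})^{q+1}$ by a factor of order $(\lambda_1/\norm{\blE}_{op})^{q-1}\to\infty$ when $q>1$; a $2\times2$ example with $\blA_s=\operatorname{diag}(m_1,0)$ and a rank-one $\blE$ not aligned with $\ble_1$ realizes this. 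The loss is not cosmetic: feeding the correspondingly weakened numerator bound into your Step~C estimate gives, for instance with $q=3$, $m^n_{a^{(s+1)}}=m^n_{a^{(t_n)}}$, and $m^n_{a^{(s)}}/m^n_{a^{(s+1)}}$ of order $p$ (values permitted by the threshold \eqref{ineq:Negative_feedback_q}), a trace-ratio lower bound of order $p^{-3}$ rather than $2\overline{c}/\underline{c}$, so \eqref{ineq:Negative_feedback_q} cannot be certified this way. No L\"owner--Heinz variant can rescue the device, because the offending quantity $\norm{(I-\blP_{Q_s})\blA^{(q+1)/2}}_{op}$ is itself genuinely that large: the fractional power of $\blA$ lets the large eigenvalues leak into $V_{Q_s}^{\perp}$ through the eigenvector rotation.

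The missing idea is the paper's: never form $\blA^{(q+1)/2}$. Expand $\blu^T\blA^{-(q+1)}\blu=\sum_i\lambda_i^{-(q+1)}b_i^2$ in the eigenbasis of $\blA$, discard the nonnegative terms $i\leq r$ (which is exactly where the large eigenvalues live), and retain only $\lambda_{r+1}^{-(q+1)}\norm{\blU_{-s}^T\blu}^2$. Weyl's inequality gives $\lambda_{r+1}\leq\norm{\blA-\blA_s}_{op}\leq2\overline{c}\,m^n_{a^{(s+1)}}$, and the Davis--Kahan bound on the complementary subspaces gives $\norm{\blU_{-s}^T\blu}^2\geq1-\norm{\sin\bgTheta(\widehat{\blU}_{-s},\blU_{-s})}_F^2\geq1-16p\overline{c}^2(m^n_{a^{(s+1)}})^2/\{\underline{c}^2(m^n_{a^{(s)}})^2\}$. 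Because the inverse power is evaluated only at $\lambda_{r+1}$, the large eigenvalues never contaminate the bound, the estimate holds uniformly in $q\geq0$, and the stated threshold \eqref{ineq:Negative_feedback_q} follows. Replacing your Step~B with this spectral-truncation argument (keeping your Steps~A and~C unchanged) repairs the proof.
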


\begin{proof}[Proof of Lemma \ref{lem:Li-Bound2} ]
{
To prove the lemma, we follow similar steps as those in the proof of Lemma~\ref{lem:Li-Bound}. We will omit the repetitive details and only state the main differences. 

By assumption $\mathbb{G}_{\bgtheta}(\bgSigma)=\Phi_q(\bgSigma)$, Lemma~\ref{lem:Differentiation inverse Matrix} and \ref{lem:phi_q}, we know that for any $a,a'\in \mathcal{A}$ and positive definte matrix $\blA\in \mathbb{R}^{p\times p}$,
\begin{equation}\label{equ:equiv_GI1_select}
\begin{split}
    &\innerpoduct{\nabla \Phi_q( \{\blA/n\}^{-1} )}{\blA^{-1} \mathcal{I}_{a}(\bgtheta_n) \blA^{-1}} >\innerpoduct{\nabla \Phi_q( \{\blA/n\}^{-1} )}{\blA^{-1} \mathcal{I}_{a'}(\bgtheta_n) \blA^{-1}}\\
    &\text{if and only if } \operatorname{tr}( {\blA}^{-(q+1)} \mathcal{I}_a(\bgtheta_n) )> \operatorname{tr}( {\blA}^{-(q+1)} \mathcal{I}_{a'}(\bgtheta_n) ).  
\end{split}
\end{equation}
Thus, the generalized \textrm{GI1} selection based on \eqref{equ:GI1_Phi_q} coincides with the selection \eqref{equ:GI1_genera_select_S5.5}.

Similar to the arguments for \eqref{eq:suff-A}, to prove the lemma, it is sufficient to show that \eqref{ineq:Negative_feedback_q} implies that for all $a''\not\in \mathcal{G}(Q_{ s})$, $a'\in \mathcal{G}(Q_{ s})$ and all $t_1,t_2\in[0,1]$,
\begin{equation}\label{equ:S55}
         \innerpoduct{ {\blS}_n  }{ \blA^{-1}  \mathcal{I}_{a'}(\bgtheta_n)   \blA^{-1}  } >\innerpoduct{{\blS}_n  }{ \blA^{-1}  \mathcal{I}_{a''}(\bgtheta_n)   \blA^{-1}  },
\end{equation}
where ${\blA}$ is redefined as $A_2(t_1,t_2)$ and ${\blS}_n = \nabla\mathbb{G}_{\bgtheta_n}(\{A_2(t_1,t_2)/(n+1)\}^{-1}) = \nabla\Phi_q(\{\blA/(n+1)\}^{-1})$. %
Applying \eqref{equ:equiv_GI1_select}, we know that \eqref{equ:S55} is equivalent to
\begin{equation}\label{eq:suff-A2}
 \operatorname{tr}\left({\blA}^{-(q+1)}\mathcal{I}_{a'}(\bgtheta_n) \right) > \operatorname{tr}\left({\blA}^{-(q+1)}\mathcal{I}_{a''}(\bgtheta_n) \right).
\end{equation}
It is sufficient to show that \eqref{ineq:Negative_feedback_q} implies \eqref{eq:suff-A2} for all $a''\not\in \mathcal{G}(Q_{ s})$, $a'\in \mathcal{G}(Q_{ s})$ and all $t_1,t_2\in[0,1]$. Similar to the proof of Lemma~\ref{lem:Li-Bound}, this is proved using the following 3 Steps.
}

\paragraph*{Step A: Connect \eqref{eq:suff-A2} with   $\operatorname{tr}(  {\blA}^{-(q+1) } \blP_{Q_s\cup \{a'\}} )$ and ${\operatorname{tr}( {\blA}^{-(q+1)} \blP_{Q_s} )}$ }

{Similar to the derivation leading to \eqref{eq:trace-projection-connection-a'}, 
we have
\begin{equation*}
\begin{split}
    \operatorname{tr}( {\blA}^{-(q+1)} 
    \mathcal{I}_{a'}(\bgtheta_n) )
    \geq  \underline{c} \cdot  \operatorname{tr}( {\blA}^{-(q+1)}\blP_{Q_s\cup \{a'\}})-\overline{c}\cdot   \operatorname{tr}( {\blA}^{-(q+1)} \blP_{Q_{s}}  ).
\end{split}
\end{equation*}
Similar to \eqref{eq:trace-projection-connection-a''}, we have
\begin{equation}
   \text{tr}(  {\blA}^{-(q+1)} \mathcal{I}_{a''}(\bgtheta_n) )  
    \leq \overline{c} \cdot \operatorname{tr}( {\blA}^{-(q+1)} 
     \blP_{Q_{s}}).
\end{equation}
Thus, to prove \eqref{eq:suff-A2}, it is sufficient to show 
\eqref{ineq:Negative_feedback_q} implies that
\begin{equation}\label{ineq:4.21}
    \underline{c} \cdot \operatorname{tr}( {\blA}^{-(q+1)}\blP_{Q_s\cup \{a'\}} )> 2\overline{c}\cdot   \operatorname{tr}( {\blA}^{-(q+1) } \blP_{Q_{s}}  ),
\end{equation}
which is equivalent to
\begin{equation}\label{ineq:tr_ratio2}
    \frac{\operatorname{tr}(  {\blA}^{-(q+1) } \blP_{Q_s\cup \{a'\}} )}{\operatorname{tr}( {\blA}^{-(q+1)} \blP_{Q_s} )}>\frac{2\overline{c}  }{\underline{c}  }.
\end{equation}
}

\paragraph*{Step B: Establish a lower bound for $\operatorname{tr}(  {\blA}^{-(q+1) } \blP_{Q_s\cup \{a'\}} )$}
Similar to the proof of Lemma~\ref{lem:Li-Bound}, we define $\blA_s=\sum_{a\in \mathcal{G}(Q_s) } \overline{m_a^n} \mathcal{I}_a(\bgtheta_n)$, then $r=\dim(V_{Q_s})=\operatorname{rank}(\blA_s)$ and $\mathcal{R}(\blA_s)=V_{Q_s}$. Assume the eigendecomposition $\blA_s\widehat{\blu}_i=\widehat\lambda_i \widehat{\blu}_i$, and $\blA \blu_i=\lambda_i \blu_i$, $1\leq i\leq p$, where $\widehat\lambda_1 \geq \cdots \geq \widehat\lambda_r> \widehat\lambda_{r+1}=\cdots=\widehat\lambda_{p}=0$, $ \lambda_1\geq \cdots \geq \lambda_p$ with $\widehat{\blU}_s=[\widehat{\blu}_1,\widehat{\blu}_2,\cdots,\widehat{\blu}_r]$, $\widehat{\blU}_{-s}=[\widehat{\blu}_{r+1},\widehat{\blu}_{r+2} ,\cdots,\widehat{\blu}_p]$, $\widehat{\blU}=[\widehat{\blU}_s, \widehat{\blU}_{-s}]$, ${\blU}_s=[\blu_1,\blu_2,\cdots,\blu_r]$, ${\blU}_{-s}=[\blu_{r+1},\blu_{r+2} ,\cdots,\blu_p]$, and ${\blU}=[ {\blU}_s,  {\blU}_{-s}]$.

{Because $a'\notin V_{Q_s}$, there exists a unit vector $\blu\in \mathbb{R}^p$ such that $\blP_{Q_s}\blu=\bm{0}$, and $\blP_{Q_s\cup \{a'\} }\blu=\blu$. Then,
}
\begin{equation*}
    \lambda_{max}( {\blA}^{-(q+1)} \blP_{Q_{s} \cup \{a'\} })=\lambda_{max}( \blP_{Q_{s} \cup \{a'\} } {\blA}^{-(q+1)} \blP_{Q_{s} \cup \{a'\} })\geq  \blu^T {\blA}^{-(q+1)} \blu.
\end{equation*}
Assume $\blu=\sum_{i=1}^p b_i \blu_i=\sum_{i=1}^p \widehat{b}_i \widehat{\blu}_i$. Because $\blP_{Q_s}=\sum_{i=1}^r \widehat{\blu}^T_i \widehat{\blu}_i $ and
\[
\bm{0}=\blP_{Q_s}\blu=\sum_{i=1}^p \widehat{b}_i \blP_{Q_s}\widehat{\blu}_i=\sum_{i=1}^{r} \widehat{b}_i  \widehat{\blu}_i,
\]
we obtain that $\widehat{b}_1=\widehat{b}_2=\cdots=\widehat{b}_r=0$. Thus, we can rewrite $\blu$ as $\widehat\blU_{-s}\widehat{\bgbeta}_1$ with $\norm{\widehat{\bgbeta}_1}=1$.

Note that
\begin{equation*}
    \blu^T {\blA}^{-(q+1)} \blu=\sum_{i=1}^p \lambda^{-(q+1)}_{i} b_i^2\geq \lambda_{r+1}^{-(q+1) } \sum_{i=r+1}^pb_i^2 = \lambda_{r+1}^{-(q+1) } \norm{\blU^T_{-s} \blu}^2.
\end{equation*}
Because $\widehat\bgbeta_1\in \mathbb{R}^{p-r}$ and $\norm{\widehat\bgbeta_1}=1$, we know that there exist unit vectors $\widehat\bgbeta_2,\widehat\bgbeta_3,\cdots,\widehat\bgbeta_{p-r}$ such that $\widehat{\bgbeta}=[\widehat\bgbeta_1,\widehat\bgbeta_2,\cdots,\widehat\bgbeta_{p-r}]$ is an orthogonal matrix. Thus, we know that 
\begin{equation*}
\begin{split}
    &\norm{\blU^T_{-s} \blu}^2=\norm{\blU^T_{-s} \widehat{\blU}_{-s}\widehat\bgbeta_1}^2=\norm{\blU^T_{-s} \widehat{\blU}_{-s}\widehat\bgbeta}^2-\sum_{i=2}^{p-r} \norm{\blU^T_{-s} \widehat{\blU}_{-s}\widehat\bgbeta_i}^2\\
    \geq&  \norm{    \blU^T_{-s} \widehat{\blU}_{-s} }_F^2-(p-r-1)=1-\norm{\operatorname{sin} \bgTheta (\widehat{\blU}_{-s},\blU_{-s}) }_F^2,
\end{split}
\end{equation*}
where the last equation holds because of the definition of $\operatorname{sin} \bgTheta (\widehat{\blU}_{-s},\blU_{-s})$.

{
Combining the above inequalities, we obtain that
\begin{equation}
    \lambda_{max}( {\blA}^{-(q+1)} \blP_{Q_{s} \cup \{a'\} })
    \geq \lambda_{r+1}^{-(q+1) }\cdot \Big(1- \norm{\operatorname{sin} \bgTheta (\widehat{\blU}_{-s},\blU_{-s}) }_F^2\Big).
\end{equation}
By Weyl's inequality and \eqref{equ:A-A_s}, we know that
\begin{equation*}
    \lambda_{r+1}=|\lambda_{r+1}-\widehat{\lambda}_{r+1}|\leq \norm{\blA-\blA_s}_{op} \leq  2\overline{c}m^n_{a^{(s+1)}}.
\end{equation*}
Similar to how we show \eqref{eq:sin-perturb-f}, we also have that
\begin{equation}\label{eq:sin-perturb-f-s}
       \norm{\sin \bgTheta(\widehat{\blU}_{-s},  {\blU}_{-s})}_F^2\leq  \frac{16p\cdot \overline{c}^2(m^n_{a^{(s+1)}}) ^2 }{\underline{c}^2 (m^n_{a^{(s )}})^2} .
\end{equation}
Thus, we obtain
\begin{equation}\label{eq:a-q-lower}
     \operatorname{tr}(  {\blA}^{-(q+1) } \blP_{Q_s\cup \{a'\}} )
     \geq \lambda_{max}( {\blA}^{-(q+1)} \blP_{Q_{s} \cup \{a'\} }) \geq (2\overline{c} m^n_{a^{(s +1)}})^{-q-1}\Big\{1-\frac{16p\cdot \overline{c}^2(m^n_{a^{(s+1)}}) ^2 }{\underline{c}^2 (m^n_{a^{(s )}})^2} 
\Big\}.
\end{equation}
}

\paragraph*{Step C: Establish an upper bound for $\operatorname{tr}( {\blA}^{-(q+1)} \blP_{Q_s} )$}
{Similar to \eqref{eq:long-lam-max-bound} and according to \eqref{eq:sin-perturb-f}, we have}
\begin{equation*}
\begin{split}
    &\lambda_{max}( {\blA}^{-(q+1)} \blP_{Q_{s}}  )\\
    =&\lambda_{max}(\widehat{\blU}^T_s {\blA}^{-(q+1) } \widehat{\blU}_s)\\
    =&\lambda_{max}(\widehat{\blU}^T_s  [ {\blU}_s,  {\blU}_{-s}]diag(\lambda^{-(q+1)}_1,\lambda^{-(q+1) }_2,\cdots, \lambda^{-(q+1) }_p)[ {\blU}_s,  {\blU}_{-s}]^T \widehat{\blU}_s)\\
    =&\lambda_{max}(\widehat{\blU}^T_s   {\blU}_s diag(\lambda^{-(q+1)}_1, \cdots, \lambda^{-(q+1)}_r)  {\blU}^T_s  \widehat{\blU}_s + \widehat{\blU}^T_s   {\blU}_{-s} diag(\lambda^{-(q+1)}_{r+1}, \cdots, \lambda^{-(q+1)}_p)  {\blU}^T_{-s}  \widehat{\blU}_s)\\
    \leq &\lambda_{max}(\widehat{\blU}^T_s   {\blU}_s diag(\lambda^{-(q+1)}_1, \cdots, \lambda^{-(q+1)}_r)  {\blU}^T_s  \widehat{\blU}_s )+\lambda_{max}( \widehat{\blU}^T_s   {\blU}_{-s} diag(\lambda^{-(q+1)}_{r+1}, \cdots, \lambda^{-(q+1)}_p)  {\blU}^T_{-s}  \widehat{\blU}_s)\\
    \leq & \lambda_r^{-(q+1)}\norm{\widehat{\blU}^T_s   {\blU}_s}_{op}^2 +\lambda_p^{-(q+1)} \norm{{\blU}^T_{-s}  \widehat{\blU}_s}_{op}^2\\ 
    \leq & \lambda_r^{-(q+1)} +\lambda_p^{-(q+1)} \norm{\sin \bgTheta(\widehat{\blU}_s,  {\blU}_s) }_F^2\\
    \leq & \frac{1}{(\underline{c} m^n_{a^{(s)}})^{q+1}  }\left(1+\frac{16p (\overline{c})^2 (m^n_{a^{(s+1)}})^{2}}{(\underline{c})^2 ( m^n_{a^{(t_n)}})^{q+1} ( m^n_{a^{(s)}})^{1-q}  } \right),
\end{split}
\end{equation*}
{where we used $\lambda_p\geq  \underline{c} m^n_{a^{(t)}}$ in the last inequality.} Thus,
\begin{equation}\label{eq:a-q-upper}
    \operatorname{tr}( {\blA}^{-(q+1)} \blP_{Q_s} )\leq p \lambda_{max}( {\blA}^{-(q+1)} \blP_{Q_{s}}  ) \leq   \frac{p}{(\underline{c} m^n_{a^{(s)}})^{q+1}  }\left(1+\frac{16p (\overline{c})^2 (m^n_{a^{(s+1)}})^{2}}{(\underline{c})^2 ( m^n_{a^{(t_n)}})^{q+1} ( m^n_{a^{(s)}})^{1-q}  } \right).
\end{equation}

{Combining \eqref{eq:a-q-lower} and \eqref{eq:a-q-upper}, we obtain
\begin{equation}
    \frac{ \operatorname{tr}(  {\blA}^{-(q+1) } \blP_{Q_s\cup \{a'\}} )}{\operatorname{tr}( {\blA}^{-(q+1)} \blP_{Q_s} )}
     \geq \Big(\frac{\underline{c}}{2\overline{c} }\Big)^{q+1} \frac{ (m^n_{a^{(s)}} )^{q+1} }{p(m^n_{a^{(s+1)}} )^{q+1} } 
     \frac{\Big(1-\frac{16p\cdot \overline{c}^2(m^n_{a^{(s+1)}}) ^2 }{\underline{c}^2 (m^n_{a^{(s )}})^2} 
\Big)}{\left(1+\frac{16p (\overline{c})^2 (m^n_{a^{(s+1)}})^{2}}{(\underline{c})^2 ( m^n_{a^{(t_n)}})^{q+1} ( m^n_{a^{(s)}})^{1-q}  } \right)}.
\end{equation}
}
Hence, if  
\begin{equation*}
    \left( \frac{m^n_{a^{(s)}}}{m^n_{a^{(s+1)}}} \right)^{q+1} \Big(1- \frac{16p \overline{c}^2 }{\underline{c}^2   } \big(\frac{m^n_{a^{(s+1)}} }{ m^n_{a^{(s)}} }\big)^2  \Big) >\Big(\frac{2\overline{c} }{\underline{c}}\Big)^{q+2}  p \left(1+  \frac{16p\overline{c}^{2} }{\underline{c}^{2} } \Big( \frac{   m^n_{a^{(s+1)} }     }{   m^n_{a^{(t_n)}}    }\Big)^{q+1}  \Big(\frac{ m^n_{a^{(s+1)}} }{ m^n_{a^{(s)}} } \Big)^{1-q} \right), 
\end{equation*}
then \eqref{ineq:tr_ratio2} holds.
\end{proof}
{
The next lemma is useful for controlling a sequence based on some iterative inequality.
\begin{lemma}\label{lemma:iterative-sequence}
Given $M_0>0, A'>0$, $B',C'\geq 0$, and $q\geq 0$, there exists $M_1,M_2,\cdots$ such that 
\begin{equation*}
    x^{q+1}\leq A' x^{q-1}+B'(1+C' M_j^{q+1} x^{q-1}),
\end{equation*}
then $x\leq M_{j+1}/M_j$. Here, each $M_j$ depends only on $M_0,\cdots,M_{j-1}$, $q$, and $A',B',C'$.
\end{lemma}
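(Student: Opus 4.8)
The plan is to construct the sequence $\{M_j\}$ recursively so that the displayed inequality can only hold for $x$ bounded by a quantity expressible through $M_j$ and the fixed constants $A',B',C',q$. Since in the intended application $x$ is a ratio of order statistics and hence $x\geq 1$, I would first dispose of the regime $x<1$ for free: provided the recursion is chosen so that $M_{j+1}\geq M_j$, any $x<1$ automatically satisfies $x<1\leq M_{j+1}/M_j$. It therefore suffices to bound $x$ under the additional standing assumption $x\geq 1$ (so in particular $x>0$ and we may divide by $x^{q-1}$). Dividing the hypothesis
\[
x^{q+1}\leq A' x^{q-1}+B'\bigl(1+C' M_j^{q+1} x^{q-1}\bigr)
\]
by $x^{q-1}$ yields the cleaner inequality
\[
x^2\leq A'+B' x^{1-q}+B' C' M_j^{q+1}.
\]

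The key step is to absorb the term $B' x^{1-q}$, whose behaviour depends on the sign of $1-q$. When $q\geq 1$ this is immediate: since $x\geq 1$ and $1-q\leq 0$ we have $x^{1-q}\leq 1$, so $x^2\leq A'+B'+B'C'M_j^{q+1}$. When $0\leq q<1$ the exponent $1-q$ lies in $(0,1)\subset(0,2)$, so $B' x^{1-q}$ grows strictly slower than $x^2$; by Young's inequality (equivalently, by maximizing $B't^{1-q}-\tfrac12 t^2$ over $t\geq 0$) there is a constant $C_0=C_0(B',q)$ with $B' x^{1-q}\leq \tfrac12 x^2+C_0$, whence $\tfrac12 x^2\leq A'+C_0+B'C'M_j^{q+1}$. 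In either case I obtain a bound of the uniform form
\[
x\leq \sqrt{D_1+D_2\, M_j^{q+1}},
\]
where $D_1,D_2$ depend only on $A',B',C',q$ (take $D_1=A'+B'$, $D_2=B'C'$ in the first case and $D_1=2(A'+C_0)$, $D_2=2B'C'$ in the second).

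With this bound in hand I would close the recursion by setting
\[
M_{j+1}=M_j\cdot\max\!\Bigl(1,\ \sqrt{D_1+D_2\, M_j^{q+1}}\Bigr),
\]
which depends only on $M_j$ (hence, through the already-constructed $M_j$, only on $M_0,\dots,M_{j-1}$) together with $q$ and $A',B',C'$, as required. The factor $\max(1,\cdot)$ guarantees $M_{j+1}\geq M_j$, validating the reduction of the $x<1$ case, while for $x\geq 1$ the estimate above gives exactly $x\leq \sqrt{D_1+D_2 M_j^{q+1}}\leq M_{j+1}/M_j$. The only genuinely delicate point is the uniform absorption of $B' x^{1-q}$ in the sub-case $0\leq q<1$; everything else is elementary algebra, and I expect the whole argument to be short once the Young-type estimate is recorded.
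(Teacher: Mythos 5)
Your proof is correct, but it takes a more explicit route than the paper's. The paper's argument is essentially one line of soft analysis: it \emph{defines} $M_{j+1} := M_j \cdot \sup\{x \,;\, x^{q+1}\leq A'x^{q-1}+B'(1+C'M_j^{q+1}x^{q-1})\}$ and observes that this supremum is finite because the right-hand side is $o(x^{q+1})$ as $x\to\infty$; the conclusion $x\leq M_{j+1}/M_j$ is then automatic, with no case analysis, no restriction to $x\geq 1$, and no explicit constants. Your proof implements the same underlying observation (the right side grows strictly slower than the left) constructively: dividing by $x^{q-1}$, absorbing $B'x^{1-q}$ (trivially when $q\geq 1$, by a Young-type estimate when $0\leq q<1$), and setting $M_{j+1}=M_j\cdot\max\bigl(1,\sqrt{D_1+D_2M_j^{q+1}}\bigr)$. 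What your version buys is quantitative information the paper's sup-based definition hides — explicit constants and the growth rate $M_{j+1}\lesssim M_j^{(q+3)/2}$ — at the cost of the case split on $q$ and the reduction of the $x<1$ regime (which is harmless, since you enforce $M_{j+1}\geq M_j$, and is vacuous in the application where $x$ is a ratio of order statistics). One cosmetic slip: for $q=0$ the exponent $1-q$ equals $1$ rather than lying in $(0,1)$, but Young's inequality applies whenever $1-q<2$, so nothing breaks.
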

\begin{proof}[Proof of Lemma~\ref{lemma:iterative-sequence}]
Let 
\begin{equation}\label{equ:M_j_seq}
    M_{j+1}:=M_{j} \cdot \sup  \{x;x^{q+1}\leq A' x^{q-1}+B'(1+C' M_j^{q+1} x^{q-1}) \}.
\end{equation}
By induction, assume $M_j<\infty$. Due to
\[
\lim_{x\to \infty} \frac{1}{x^{q+1}}\Big( A' x^{q-1}+B'(1+C' M_j^{q+1} x^{q-1}) \Big) =0,
\]
we know that the set $\{x;x^{q+1}\leq A' x^{q-1}+B'(1+C' M_j^{q+1} x^{q-1}) \}$ is bounded from above.

Thus, $M_{j+1}$ defined by \eqref{equ:M_j_seq} is bounded from above. By induction, we complete the proof.
\end{proof}
}

{
\begin{lemma}\label{lem:m^n_a norm bound}
Assume that $\sum_{i=1}^{n_0}\mathcal{I}_{a_i}(\bgtheta)$ is nonsingular. 

Let $(a_n^{(1)},a_n^{(2)},\cdots, a_n^{(k)})$ be a permutation of $\mathcal{A}$ such that $m^n_{a_n^{(1)} } \geq \cdots \geq m^n_{a_n^{(k)} }$ and $\dim(Q_{t_n-1})<\dim(Q_{t_n})=p$, where $Q_{s,n}=\{a_n^{(1)},a_n^{(2)},\cdots, a_n^{(s)} \}$, and $t_n=t_n(m^n)$. To simplify the notation, let $(a^{(1)},a^{(2)},\cdots, a^{(k)})=(a_n^{(1)},a_n^{(2)},\cdots, a_n^{(k)})$ and $Q_s=Q_{s,n}$.

Also assume that  the experiment selection rule satisfy the following property:
\begin{itemize}
    \item []There exists constants $A'\geq 0,B'>0,C'>0$ such that for all $n\geq n_0$, if for some $1\leq s \leq t_n-1$, 
    \begin{equation*}
        \left( \frac{m^n_{a^{(s)}}}{m^n_{a^{(s+1)}}} \right)^{q+1} \Big( 1-A'  \big(\frac{m^n_{a^{(s+1)}}}{m^n_{a^{(s)} } }\big)^2   \Big) > B' \left(1+C' \Big( \frac{   m^n_{a^{(s+1)} }     }{   m^n_{a^{(t_n)}}    }\Big)^{q+1}  \Big(\frac{ m^n_{a^{(s+1)}} }{ m^n_{a^{(s)}} } \Big)^{1-q} \right),
    \end{equation*}
then  $a_{n+1}\in \mathcal{G}(Q_s)$. 
\end{itemize}
Then, this experiment selection rule also satisfies that
$\sup_{n\geq n_0}   \frac{m^n_{a^{(1)}}}{m^n_{a^{(t_n)}}}  <\infty$ and $\inf_{n\geq n_0} \frac{n_I}{n_{max}} \geq C>0$, where $C$ depending only on $A',B',C',k$ and $\frac{m^{n_0}_{a^{(1)}} }{ m^{n_0}_{a^{(t_{n_0})}} }$.
\end{lemma}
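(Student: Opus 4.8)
The plan is to turn the selection property assumed in the lemma into the recursive inequality of Lemma~\ref{lemma:iterative-sequence}, and then run an induction on $n$ that keeps all consecutive ratios among the top experiments under control. First I would take the contrapositive of the assumed property: if the chosen experiment satisfies $a_{n+1}\notin\mathcal{G}(Q_s)$ for some $1\le s\le t_n-1$ — which in particular occurs whenever $a_{n+1}$ is one of the top-$s$ experiments $a^{(1)},\dots,a^{(s)}$, since then $\mathcal{I}_{a_{n+1}}(\bgtheta_n)\in V_{Q_s}$ — then the displayed hypothesis inequality must fail. Writing $x=m^n_{a^{(s)}}/m^n_{a^{(s+1)}}$ and using the identity $(m^n_{a^{(s+1)}}/m^n_{a^{(s)}})^{1-q}=x^{q-1}$, this failure is exactly
$$x^{q+1}\le A'x^{q-1}+B'\Big(1+C'\big(m^n_{a^{(s+1)}}/m^n_{a^{(t_n)}}\big)^{q+1}x^{q-1}\Big),$$
which is precisely the hypothesis of Lemma~\ref{lemma:iterative-sequence} with $M_j$ replaced by the current lower ratio $m^n_{a^{(s+1)}}/m^n_{a^{(t_n)}}$.

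Next I would fix the constants. Let $M_0=\max\{1,\,m^{n_0}_{a^{(1)}}/m^{n_0}_{a^{(t_{n_0})}}\}$, let $M_1,\dots,M_{k-1}$ be produced by Lemma~\ref{lemma:iterative-sequence} from $M_0$ and $A',B',C',q$, and set $\bar M=\max_{0\le j\le k-1}M_j<\infty$ (under the explicit constants of Lemmas~\ref{lem:Li-Bound} and \ref{lem:Li-Bound2} one checks $M_{j+1}\ge M_j$, so in fact $\bar M=M_{k-1}$). The target invariant, to be proved by induction on $n$, is
$$m^n_{a^{(s)}}\le M_{t_n-s}\,m^n_{a^{(t_n)}}\qquad\text{for all }1\le s\le t_n,\ n\ge n_0,$$
with base case $n=n_0$ coming from the choice of $M_0$. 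Granting the invariant, its $s=1$ instance reads $m^n_{a^{(1)}}\le M_{t_n-1}\,m^n_{a^{(t_n)}}\le\bar M\,m^n_{a^{(t_n)}}$ (using $t_n\le k$); since $n_{\max}=m^n_{a^{(1)}}$ and $n_I=m^n_{a^{(t_n)}}$ by Lemma~\ref{lem:n_I equation}, this delivers $\sup_{n\ge n_0}n_{\max}/n_I\le\bar M<\infty$ and $\inf_{n\ge n_0}n_I/n_{\max}\ge C:=\bar M^{-1}$, with $C$ depending only on $A',B',C',k$ and the initial ratio through $M_0$, exactly as claimed.

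For the inductive step from $H(n)$ to $H(n+1)$ I would combine the order-statistic perturbation bounds that follow from incrementing the single count of $a_{n+1}$ (namely $m^n_{a^{(s)}}\le m^{n+1}_{a^{(s)}}\le m^n_{a^{(s-1)}}$, and the count sitting at the spanning threshold never decreases) with the contrapositive inequality above. The point is that the only position whose ratio can be freshly created at time $n+1$ is the landing position of $a_{n+1}$; at that position the contrapositive inequality, fed with the bound on $m^n_{a^{(s+1)}}/m^n_{a^{(t_n)}}$ inherited from $H(n)$, triggers Lemma~\ref{lemma:iterative-sequence} and forces $m^n_{a^{(s)}}/m^n_{a^{(s+1)}}\le M_{t_n-s}/M_{t_n-s-1}$, preserving the invariant there, while every other position inherits its bound from $H(n)$. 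The forced-exploration conclusion of Lemmas~\ref{lem:Li-Bound} and \ref{lem:Li-Bound2} is precisely what rules out the single failure mode: repeatedly incrementing a top experiment while the spread already sits at the boundary $M_{\cdot}$.

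The main obstacle I anticipate is the joint evolution of the sorted order and of the threshold $t_n$ across a step. Although $t_n$ is $\bgtheta$-independent by Lemma~\ref{lem:dim-rank-c} together with Assumption~\ref{ass:6B}, it still depends on the ranking, and a single increment can raise it; in that case the denominator $m^{n+1}_{a^{(t_{n+1})}}$ may correspond to a strictly smaller count than $m^n_{a^{(t_n)}}$ and naively inflate the ratio. I would treat this delicate case by passing to the last time the offending top experiment was selected and invoking the contrapositive inequality at that earlier time, so that its count was already pinned to within a factor $\bar M$ of $n_I$ before it could run away; the same last-selection-time device is what controls how reorderings propagate the invariant, and making this bookkeeping rigorous is where the real work lies.
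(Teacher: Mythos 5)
Your overall architecture---taking the contrapositive of the assumed selection property, feeding the resulting inequality into Lemma~\ref{lemma:iterative-sequence}, and running an induction on $n$ over the count ratios---is the same as the paper's. But there is a genuine gap: the target invariant you propose, $m^n_{a^{(s)}}\le M_{t_n-s}\,m^n_{a^{(t_n)}}$ for all $s$, with \emph{no slack}, is not inductively maintainable. The assumed property only forces exploration when the displayed inequality holds \emph{strictly}; its contrapositive therefore gives a non-strict bound, and the suprema defining $M_{j+1}/M_j$ in Lemma~\ref{lemma:iterative-sequence} are attainable. So the rule is free to select the top experiment again while the ratio $m^n_{a^{(1)}}/m^n_{a^{(t_n)}}$ sits exactly at $M_{t_n-1}$; after that increment, $m^{n+1}_{a^{(1)}}=m^n_{a^{(1)}}+1>M_{t_n-1}\,m^{n+1}_{a^{(t_n)}}$ and your invariant fails at $s=1$. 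This is precisely the ``failure mode'' you flag at the end, but Lemmas~\ref{lem:Li-Bound} and \ref{lem:Li-Bound2} do \emph{not} rule it out---they only yield the (non-strict) upper bound at selection times, nothing stronger. The paper's proof resolves this with slack: the invariant actually carried through the induction is $Ind(n)=m^n_{a^{(1)}}/m^n_{a^{(t_n)}}\le 2\max_{0\le i\le k-1}M_i$, while the tight bound $Ind(n)\le\max_i M_i$ is derived only at those times $n$ at which the top experiment is about to be re-selected (there the contrapositive holds simultaneously for every $s\le t_n-1$, enabling the downward iteration $s=t_n-1,\dots,1$), and the subsequent unit increment is absorbed by $m+1\le 2m$, valid since $m^n_{a^{(t_n)}}\ge 1$. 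Note also that your within-step argument establishes bounds on ratios \emph{at time $n$}, whereas the invariant must be verified \emph{at time $n+1$}, after the increment---exactly the point where the no-slack version breaks.

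A second, smaller issue: you leave the joint evolution of the sorted order and of $t_n$ as ``where the real work lies,'' to be handled by a last-selection-time bookkeeping device. The paper makes this step essentially free via Lemma~\ref{lem:kappa bound}: for any $Q$ with $\dim(V_Q)=p$, one has $Ind(n+1)\le \max_{a}m^{n+1}_a/\min_{a\in Q}m^{n+1}_a$, applied with $Q=Q_{t_n}(\tau_n)$ fixed from time $n$. When a non-top experiment is selected, the numerator is unchanged and the denominator cannot decrease, so $Ind(n+1)\le Ind(n)$; when the top is selected, it gives $Ind(n+1)\le 2\max_i M_i$ directly. No tracking of re-orderings or of how $t_n$ jumps is needed (and $n_I=m^n_{a^{(t_n)}}$ by Lemma~\ref{lem:n_I equation} then closes the argument). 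With the slack invariant and this lemma, your sketch becomes the paper's proof.
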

}

\begin{proof}[Proof of Lemma \ref{lem:m^n_a norm bound}]
Recall the definition of $t_n=t_n(m^n)$, there exists a permutation $\tau_n\in \mathscr{P}$ over $\mathcal{A}$ such that
\begin{equation*}
    m^n_{\tau_n(1)}\geq m^n_{\tau_n(2)}\geq \cdots\geq m^n_{\tau_n(k)},
\end{equation*}
\begin{equation*}
    a^{(1)}=\tau_n(1),\cdots, a^{(k)}=\tau_n(k),
\end{equation*}
and $\dim(V_{Q_{t_n-1}})<\dim(V_{Q_{t_n}})=p$.
Define
\begin{equation*}
    Ind(n)= \frac{ m^{n}_{a^{(1)}} }{ m^{n}_{a^{(t_n)}} }.
\end{equation*}
To show $\sup_{n\geq n_0}   \frac{m^n_{a^{(1)}}}{m^n_{a^{(t_n)}}}  <\infty$, it is sufficient to show
 that if $n\geq n_0$,
\begin{equation*}
    \sup_{n\geq n_0 }  Ind(n)<\infty.
\end{equation*}
{Let $M_0=\frac{m^{n_0}_{a^{(1)}} }{m^{n_0}_{a^{(t_{n_0})}}}$. According to the definition of $a^{(1)}$ and $a^{(t_{n_0})}$, we know that $M_0\geq 1$.
Next, we use induction to prove that for all $n\geq n_0$
\begin{equation}
    Ind(n)\leq 2\max_{0\leq i\leq k-1} M_{i},
\end{equation}
where the sequence $\{M_{i}\}_{i=1}^{k-1}$ is defined in Lemma~\ref{lemma:iterative-sequence}.}

For the base case, when $n=n_0$, we know that $\sum_{i=1}^{n_0}\mathcal{I}_{a_i}(\bgtheta)$ is nonsingular, and thus $m^{n_0}_{a^{(t_{n_0})}}\geq 1$. This implies,
\begin{equation*}
    Ind(n_0)\leq M_0\leq  m^{n_0}_{a^{(1)}}<\infty.
\end{equation*}

For the induction step $n > n_0$, assume that $Ind(n)\leq 2\max_{0\leq i\leq k-1} M_{i}$. We discuss two cases
\begin{itemize}
    \item [Case 1:] if $a_{n+1}\in \arg\max_{a} m_a^n$, then we will show that $  Ind(n) \leq  \max_{0\leq i\leq k-1} M_{i}$ and $  Ind(n+1) \leq 2 \max_{0\leq i\leq k-1} M_{i}$,
    \item[Case 2:] if $a_{n+1}\not \in \arg\max_{a} m_a^n$, then we will show that $Ind(n+1)\leq Ind(n)$.
\end{itemize}
Below are the detailed analysis for these two cases.
\paragraph*{Case 1: $a_{n+1}\in \arg\max_{a} m_a^n$}
Without loss of generality, we assume that $\tau_n(1)=a^{(1)}=a_{n+1}$. If $t_n=1$, then $Ind(n)=1\leq \max_{0\leq i\leq k-1} M_{i}$. Now, we focus on the case where $t_n\geq 2$. 

Note that $\tau_n(1)$ has been selected as $a_{n+1}$,
and $\tau_n(1)\notin \mathcal{G}(Q_1)$. According to the lemma's assumption, we know that for all $1\leq s \leq t_n-1$,    \begin{equation}\label{ineq:5.43}
    \left( \frac{m^n_{a^{(s)}}}{m^n_{a^{(s+1)}}} \right)^{q+1} \Big( 1-A'  \big(\frac{m^n_{a^{(s+1)}}}{m^n_{a^{(s)} } }\big)^2   \Big) \leq B' \left(1+C' \Big( \frac{   m^n_{a^{(s+1)} }     }{   m^n_{a^{(t_n)}}    }\Big)^{q+1}  \Big(\frac{ m^n_{a^{(s+1)}} }{ m^n_{a^{(s)}} } \Big)^{1-q} \right).
\end{equation}
Next, we use this inequality iteratively for $s=t_n-1,t_n-2,\cdots, 1$ to show that $Ind(n)\leq  \max_{0\leq i\leq k-1} M_{i}$.
We start with setting $s=t_n-1$ in \eqref{ineq:5.43}, we obtain that
\begin{equation*}
    x^{q+1}\leq A' x^{q-1}+B'(1+C' M_0^{q+1} x^{q-1}),
\end{equation*}
where $x=\frac{ m^n_{a^{(t_n-1)}} }{ m^n_{a^{(t_n)}} }$. 
According to Lemma~\ref{lemma:iterative-sequence}, this implies $x= \frac{m^n_{a^{(t_n-1)}}}{m^n_{a^{(t_n)}}}\leq M_1.$ 

Set $s=t_n-2$ in \eqref{ineq:5.43}, and combine it with $\frac{m^n_{a^{(t_n-1)}}}{m^n_{a^{(t_n)}}}\leq M_1$, we have
\begin{equation*}
    x^{q+1}\leq A' x^{q-1}+B'(1+C' M_1^{q+1} x^{q-1}),
\end{equation*}
where $x=\frac{ m^n_{a^{(t_n-2)}} }{ m^n_{a^{(t_n-1)}} }$. Apply Lemma~\ref{lemma:iterative-sequence} again, we obtain that $\frac{m^n_{a^{(t_n-2)}}}{m^n_{a^{(t_n-1)}}}   \leq  M_2/M_1,$ which further implies $\frac{m^n_{a^{(t_n-2)}}}{m^n_{a^{(t_n)}}} \leq M_2$. 
By similar arguments, set $s=t_n-3,t_n-4,\cdots,1$, we obtain that
\begin{equation*}
      \frac{m^n_{a^{(1)}}}{m^n_{a^{(t_n)}}}   \leq M_{t_n-1}\leq \max_{0\leq i\leq k-1} M_{i}.
\end{equation*}
That is, $Ind(n)\leq  \max_{0\leq i\leq k-1} M_{i} $.

Note that in this case $m^{n+1}_{\tau_n(1)}=m^{n}_{\tau_n(1) }+1$, and $m^{n+1}_{\tau_n(s)}=m^{n}_{\tau_n(s)}$, for any $s\geq 2$. Set $Q=\{ \tau_n(1), \tau_n(2),\cdots, \tau_n(t_n) \}$. We know that $\dim(V_Q)=p$. Hence, 
\begin{equation*}
    \frac{ \max_{a}m_a^{n+1}  }{ \min_{a\in Q} m^{n+1}_a }=\frac{ m^{n+1}_{\tau_n(1) }}{m^{n+1}_{\tau_n(t_{n}) } } = \frac{m^{n}_{\tau_n(1) }+1 }{m^{n}_{\tau_n( t_n )} }\leq 2\frac{ m^{n}_{\tau_n(1) } }{ m^{n}_{\tau_n( t_n )}  }\leq 2  \max_{0\leq i\leq k-1} M_{i}.
\end{equation*}
By Lemma \ref{lem:kappa bound}, we know that 
\begin{equation*}
    Ind(n+1)=  \frac{  m^{n+1}_{\tau_{n+1}(1)}   }{ m^{n+1}_{\tau_{n+1}(t_{n+1})} } \leq  \frac{ \max_{a}m_a^{n+1}  }{ \min_{a\in Q} m^{n+1}_a } \leq 2  \max_{0\leq i\leq k-1} M_{i}.
\end{equation*}
\paragraph*{Case 2: $a_{n+1}\not \in \arg\max_{a} m_a^n$} In this case, $\max_{a}m_a^{n+1}=m^n_{\tau_n(1)}$ and $m^{n+1}_{\tau_n(t_n)}\geq m^{n}_{\tau_n(t_n)}=\min_{a\in Q} m^{n}_a$, where we let $Q=\{ \tau_n(1), \tau_n(2),\cdots, \tau_n(t_n) \}$.

Applying Lemma \ref{lem:kappa bound}, we have
\begin{equation*}
    Ind(n+1)\leq \frac{ \max_{a}m_a^{n+1}  }{ \min_{a\in Q} m^{n+1}_a }\leq \frac{ m^n_{\tau_n(1)} }{ m^{n}_{\tau_n(t_n)} }=Ind(n)\leq 2\max_{0\leq i\leq k-1} M_{i},
\end{equation*}
where the last inequality in the above display is due to the induction assumption.

Combine the results from both cases. By induction, we have
\begin{equation*}
     \frac{m^n_{a^{(1)}}}{m^n_{a^{(t_n )}}}    \leq  2\max_{0\leq i\leq k-1} M_{i},
\end{equation*}
for all $n\geq n_0$.
Combined with Lemma~\ref{lem:n_I equation}, we know that
\[
\frac{n_I}{n}=\frac{m^n_{a^{(t_n )}}}{n}\geq \frac{m^n_{a^{(t_n )}}}{k\cdot m^n_{a^{(1)}}} \geq \frac{1}{2 k \cdot\max_{0\leq i\leq k-1} M_{i}}>0,
\]
where $k \cdot\max_{0\leq i\leq k-1} M_{k-1}$ only depend on $A',B',C',k$ and $\frac{m^{n_0}_{a^{(1)}} }{ m^{n_0}_{a^{(t_{n_0})} } }$.
\end{proof}

\begin{proof}[Proof of Theorem~\ref{thm:selection bound}]
Combining Lemmas~\ref{lem:Li-Bound}, \ref{lem:Li-Bound2} and \ref{lem:m^n_a norm bound}, we compete the proof of 
\[
\inf_{n\geq n_0} \frac{n_I}{n}\geq C>0.
\]
{By Assumption~\ref{ass:6B}, we know that
\begin{equation}
    \mathcal{I}^{\overline{{\bgpi}}_n}(\bgtheta)\succeq\sum_{a;m^n_a\geq n_I } \frac{m^n_a}{ n } \mathcal{I}_a(\bgtheta) \succeq \frac{n_I}{ n }\sum_{a;m^n_a\geq n_I }  \mathcal{I}_a(\bgtheta) \succeq \underline{c} \cdot C  \cdot I_p.
\end{equation}
for all $\bgtheta\in\bgTheta$. %
}

\end{proof}

\subsection{Proof of Theorem \ref{thm:consistency_final}}
To show Theorem~\ref{thm:consistency_final}, we  prove the following more general Theorem~\ref{thm:GI0-GI1consistent} instead, which applies to general experiment selection rules that are not necessarily  \textrm{GI0} and \textrm{GI1}.\begin{theorem}\label{thm:GI0-GI1consistent}
    Let $U\in (0,1)$. Assume the experiment selection rule satisfies that $ \overline{\bgpi}_n (\va_n)\in K_U$ for large enough $n$, where
\begin{equation}\label{def:K_U}
    K_U= \left\{ {\bgpi}\in \ShatA: \max_{S\subset\mathcal{A}: S \text{ is relevant} }\min_{a\in S} \pi(a) \geq U \right\}.
\end{equation}
Here, we say that a set of experiments $S$ is relevant if $\sum_{a\in \mathcal{A}} \mathcal{I}_a(\bgtheta)$ is nonsingular for any $\bgtheta\in \bgTheta$. Given Assumptions~\ref{ass:1}-\ref{ass:4} along with either Assumptions~\ref{ass:6A}-\ref{ass:7A} or \ref{ass:6B}-\ref{ass:7B}, the $\widehat{\bgtheta}_{n}^{\text{ML}}$ converges to $\bgtheta^*$ almost surely.
\end{theorem}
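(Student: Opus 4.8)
The plan is to reduce the statement to a uniform (in both $n$ and $\bgtheta$) identifiability inequality and then close with the classical ``argmax'' argument driven by the uniform law of large numbers; the only genuinely new ingredient is controlling the fact that the limiting objective drifts with the random design. First I would invoke Lemma~\ref{lemma:assumption-AB} to assume without loss of generality that Assumptions~\ref{ass:6B} and \ref{ass:7B} hold, since \ref{ass:6A}--\ref{ass:7A} imply them. Recalling $M(\bgtheta;\bgpi)=\sum_{a\in\mathcal{A}}\pi(a)\mathbb{E}_{X\sim f_{\bgtheta^*,a}}\log f_{\bgtheta,a}(X)$, the information inequality gives, for every $\bgpi\in\ShatA$,
\[
M(\bgtheta^*;\bgpi)-M(\bgtheta;\bgpi)=\sum_{a\in\mathcal{A}}\pi(a)D_{\mathrm{KL}}(f_{\bgtheta^*,a}\|f_{\bgtheta,a})\ge 0 .
\]

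The key quantitative step produces an $n$-free strong-convexity-type separation. For $n$ large enough that $\overline{\bgpi}_n\in K_U$ (see \eqref{def:K_U}), I would choose a relevant set $S_n$ attaining the maximum in the definition of $K_U$, so that $\min_{a\in S_n}\overline{\bgpi}_n(a)\ge U$. Applying Assumption~\ref{ass:7B} with $\bgpi=\overline{\bgpi}_n$, discarding the nonnegative terms with $a\notin S_n$ on the right, and using $\overline{\bgpi}_n(a)\ge U$ on $S_n$, I get
\[
M(\bgtheta^*;\overline{\bgpi}_n)-M(\bgtheta;\overline{\bgpi}_n)\ \ge\ C\,U\,(\bgtheta-\bgtheta^*)^T\Big(\textstyle\sum_{a\in S_n}\mathcal{I}_a(\bgtheta^*)\Big)(\bgtheta-\bgtheta^*).
\]
Since $S_n$ is relevant, $V_{S_n}(\bgtheta^*)=\mathbb{R}^p$, so Assumption~\ref{ass:6B} yields $\sum_{a\in S_n}\mathcal{I}_a(\bgtheta^*)\succeq\underline{c}\,I_p$ with $\underline{c}$ independent of $S_n$. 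Hence, writing $c_0:=C\,U\,\underline{c}>0$,
\[
M(\bgtheta^*;\overline{\bgpi}_n)-M(\bgtheta;\overline{\bgpi}_n)\ \ge\ c_0\,\norm{\bgtheta-\bgtheta^*}^2\qquad\text{for all }\bgtheta\in\bgTheta,
\]
and finiteness of $\mathcal{A}$ (only finitely many candidate relevant sets) is exactly what makes $c_0$ uniform in $n$.

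To finish, I would set $\varepsilon_n:=\sup_{\bgtheta\in\bgTheta}|l_n(\bgtheta;\va_n)-M(\bgtheta;\overline{\bgpi}_n)|$, which tends to $0$ almost surely by Assumption~\ref{ass:4}. Because $\widehat{\bgtheta}_n^{\text{ML}}$ maximizes $l_n(\cdot;\va_n)$ and $\bgtheta^*\in\bgTheta$,
\[
M(\bgtheta^*;\overline{\bgpi}_n)-M(\widehat{\bgtheta}_n^{\text{ML}};\overline{\bgpi}_n)\le \big(l_n(\bgtheta^*)+\varepsilon_n\big)-\big(l_n(\widehat{\bgtheta}_n^{\text{ML}})-\varepsilon_n\big)\le 2\varepsilon_n,
\]
and combining with the separation bound gives $\norm{\widehat{\bgtheta}_n^{\text{ML}}-\bgtheta^*}^2\le 2\varepsilon_n/c_0\to 0$ a.s.

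I expect the main obstacle to be the uniformity of the separation constant: the limiting objective $M(\cdot;\overline{\bgpi}_n)$ moves with the random, adaptively chosen frequencies, and a fixed $\bgtheta^*$ need not be a well-separated maximizer for an arbitrary $\bgpi$ (indeed if some relevant experiment receives vanishing weight, identifiability degrades). The hypothesis $\overline{\bgpi}_n\in K_U$ is precisely what keeps one relevant set uniformly sampled, and feeding this into the quadratic lower bound of Assumption~\ref{ass:7B} together with the uniform spectral bound of Assumption~\ref{ass:6B} is what delivers the $n$-independent constant $c_0$. A secondary point I would check carefully is that Assumption~\ref{ass:4} genuinely applies along the adaptively generated sequence $\va_n$ (it is stated for every $\mathcal{F}$-adapted selection rule), so that $\varepsilon_n\to 0$ remains valid despite the dependence between the design and the data.
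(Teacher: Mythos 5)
Your proof is correct and follows essentially the same route as the paper's: the MLE argmax inequality combined with the uniform law of large numbers (Assumption~\ref{ass:4}) applied along the adaptively generated design, plus an identifiability separation on $K_U$ derived from Assumption~\ref{ass:7B} (after reducing \ref{ass:6A}--\ref{ass:7A} to \ref{ass:6B}--\ref{ass:7B} via Lemma~\ref{lemma:assumption-AB}). If anything, your argument is more complete: the explicit quadratic bound $M(\bgtheta^*;\overline{\bgpi}_n)-M(\bgtheta;\overline{\bgpi}_n)\ge c_0\norm{\bgtheta-\bgtheta^*}^2$ with $c_0=CU\underline{c}$ (obtained by restricting to the relevant set $S_n$ attaining the maximum in $K_U$ and using Assumption~\ref{ass:6B}) is exactly the uniform well-separation step that the paper's proof only asserts with ``we can show.''
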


\begin{proof}[Proof of Theorem~\ref{thm:consistency_final}]
{ According to Proposition~\ref{prop:exploration}, there exists $U>0$ such that \eqref{def:K_U} holds for $n$ large enough, following \textrm{GI0} or \textrm{GI1}. Theorem~\ref{thm:consistency_final} then follows by applying Theorem~\ref{thm:GI0-GI1consistent}.

}

\end{proof}

\begin{proof}[Proof of Theorem \ref{thm:GI0-GI1consistent}]

Let $\widehat{\bgtheta}_n=\widehat{\bgtheta}_{n}^{\text{ML}}$ for the ease of exposition. According to \eqref{def:MLE}, we know that $l_n(\widehat{\bgtheta}_n; \va_n) \geq l_n(\bgtheta^*; \va_n)$. According to \eqref{eq:uslln} in Assumption~\ref{ass:4}, with probability 1, for any $\eta>0$, there exists $N$ such that for $n>N$, $\overline{{\bgpi}}_n\in K_U$ 
\begin{equation*}
|l_n(\bgtheta^*;\va_n) - M(\bgtheta^*;\overline{{\bgpi}}_n)| \leq  \frac{\eta}{3} \text{ and } |l_n(\widehat{\bgtheta}_n; \va_n) -  M(\widehat{\bgtheta}_n;\overline{{\bgpi}}_n)|\leq \frac{\eta}{3}.
\end{equation*}
It follows that 
\begin{equation*}
l_n(\widehat{\bgtheta}_n; \va_n) \geq M(\bgtheta^*;\overline{{\bgpi}}_n) -\frac{\eta}{3}.
\end{equation*}
Also, we have 
\begin{equation*}
M(\bgtheta^*;\overline{{\bgpi}}_n)- M(\widehat{\bgtheta}_n;\overline{{\bgpi}}_n) \leq l_n(\widehat{\bgtheta}_n; \va_n) -  M(\widehat{\bgtheta}_n;\overline{{\bgpi}}_n)+ \frac{\eta}{3}\leq \frac{2\eta}{3}.
\end{equation*}
That is, for $\eta>0$, 
\begin{equation*}
\mathbb{P} \left\{ \bigcup_{m=1}^{\infty} \bigcap_{n=m}^{\infty} \{ M(\widehat{\bgtheta}_n;\overline{{\bgpi}}_n) -M(\bgtheta^*;\overline{{\bgpi}}_n) \geq - \frac{2}{3}\eta \} \right\} = 1. 
\end{equation*}
It follows that 
\begin{equation*}
\mathbb{P} \left\{ \bigcap_{m=1}^{\infty} \bigcup_{n=m}^{\infty} \{ M(\widehat{\bgtheta}_n;\overline{{\bgpi}}_n) -M(\bgtheta^*;\overline{{\bgpi}}_n) \leq - \eta \} \right\} =0. 
\end{equation*}
Notice that
\[
M(\bgtheta^*;\bgpi)-M(\bgtheta;\bgpi)=\sum_{a\in \mathcal{A}} \pi(a) D_{KL}( f_{\bgtheta^*,a}\|f_{\bgtheta,a})
\]
By Assumption~\ref{ass:7B}, we can show that for any ${\bgpi}\in K_U$, and any $\varepsilon>0$, there exists a finite positive number $\eta=\eta(U,\varepsilon)$, such that
\begin{equation*} %
        \sup_{\bgtheta:   \norm{\bgtheta - \bgtheta^* } \geq \varepsilon} M(\bgtheta;{\bgpi})\leq M\left(\bgtheta^*,{\bgpi}\right)-\eta.
\end{equation*} 
This means that for large enough $n$,
\begin{equation*}
    \left\{ ||\widehat{\bgtheta}_n-\bgtheta^* || \geq \varepsilon \right\} \subset  
    \left\{ M(\widehat{\bgtheta}_n;\overline{{\bgpi}}_n) \leq M(\bgtheta^*;\overline{{\bgpi}}_n)- \eta \right\}.  
\end{equation*}
It follows that for any $\varepsilon>0$, 
\begin{equation*}
    \mathbb{P} \left( \bigcap_{m=1}^{\infty} \bigcup_{n=m}^{\infty} \left\{\norm{\widehat{\bgtheta}_n-\bgtheta^* } \geq \varepsilon \right\} \right) = 0. 
\end{equation*}
Thus,
\begin{equation*}
\begin{split}
    &\mathbb{P}\left( \lim_{n\to \infty} \widehat{\bgtheta}_n=\bgtheta^*\right)=\mathbb{P}\left( \bigcap_{l=1}^\infty \bigcup_{m=1}^\infty \bigcap_{n=m}^\infty \left\{\norm{\widehat{\bgtheta}_n-\bgtheta^*}< \frac{1}{l}\right\}  \right)\\
    =&\lim_{l\to \infty } \mathbb{P}\left(   \bigcup_{m=1}^\infty \bigcap_{n=m}^\infty \left\{\norm{\widehat{\bgtheta}_n-\bgtheta^*}< \frac{1}{l}\right\}  \right)=1.
\end{split}    
\end{equation*}
\end{proof}
\subsection{Proof of Theorem \ref{thm:GI0-GI1AN}}
{The proof of Theorem~\ref{thm:GI0-GI1AN} follows the similar strategy as the proof of the classic asymptotic normality result for MLE with i.i.d. observations, which involves the asymptotic analysis of the Taylor expansion of the score equation. However, the proof for Theorem~\ref{thm:GI0-GI1AN} requires the analysis of dependent stochastic processes and is more delicate.

In the following series of lemmas, we first justify the use of the score equation in Lemma~\ref{lemma:MLE score}. Then, we provide (almost surely) asymptotic bounds for the Hessian of the log-likelihood and the score statistic in Lemma~\ref{lem:a.s.bound}. Lemma~\ref{lem:taylor ln} provides a Taylor expansion for the score function around the true parameter and the MLE, and gives an upper bound for the remaining terms. Finally, these lemmas are combined together to obtain the proof of Theorem~\ref{thm:GI0-GI1AN}.}

\begin{lemma}\label{lemma:MLE score}
Under the setting of Theorem \ref{thm:GI0-GI1consistent}, if $\bgtheta^*\in \operatorname{int}(\bgTheta)$, we have
\begin{equation*}
    \mathbb{P}\left( \bigcup_{m=1}^\infty \bigcap_{n=m}^\infty\{ \nabla_{\bgtheta} l_n(\widehat\bgtheta_n;\va_n)=\bm{0} \} \right)=1.
\end{equation*}
\end{lemma}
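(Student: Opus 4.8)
The plan is to combine the strong consistency of the MLE with the interiority of $\bgtheta^*$ and the first-order optimality condition for a differentiable objective. The statement asserts that, almost surely, the score equation $\nabla_{\bgtheta} l_n(\widehat\bgtheta_n;\va_n)=\bm{0}$ holds for all sufficiently large $n$; equivalently, I want to show that the consistency event is, up to a null set, contained in the target event $\bigcup_{m}\bigcap_{n\geq m}\{\nabla_{\bgtheta} l_n(\widehat\bgtheta_n;\va_n)=\bm{0}\}$, and hence that the latter has probability one.

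First I would invoke Theorem~\ref{thm:GI0-GI1consistent}, whose hypotheses coincide with the setting assumed here, to obtain $\widehat\bgtheta_n\to\bgtheta^*$ almost surely. Fixing a sample path in the probability-one event on which this convergence holds, I use that $\bgtheta^*\in\operatorname{int}(\bgTheta)$ to select an open ball $B(\bgtheta^*,\delta)\subset\bgTheta$. Convergence then supplies a path-dependent (hence random) index $N$ with $\widehat\bgtheta_n\in B(\bgtheta^*,\delta)\subset\operatorname{int}(\bgTheta)$ for every $n\geq N$.

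Second, for each such $n$ the map $\bgtheta\mapsto l_n(\bgtheta;\va_n)=\frac1n\sum_{i=1}^n\log f_{\bgtheta,a_i}(X_i)$ is differentiable on $\operatorname{int}(\bgTheta)$, since Assumption~\ref{ass:2} guarantees that each $\log f_{\bgtheta,a}$ is differentiable in $\bgtheta$. By definition $\widehat\bgtheta_n$ maximizes $l_n(\cdot;\va_n)$ over $\bgTheta$, so for $n\geq N$ it is an interior local maximizer, and Fermat's first-order condition forces $\nabla_{\bgtheta}l_n(\widehat\bgtheta_n;\va_n)=\bm{0}$. Thus, on the chosen path, the index $m=N$ witnesses membership in $\bigcap_{n\geq m}\{\nabla_{\bgtheta}l_n(\widehat\bgtheta_n;\va_n)=\bm{0}\}$, so the whole target event occurs; since the consistency event has probability one, so does the target event.

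The argument is almost entirely routine, and I do not expect a substantive obstacle: the only point requiring care is the bookkeeping that translates the almost-sure ``eventually interior'' conclusion into the nested union–intersection formulation of the event, namely verifying that the random threshold $N$ at which $\widehat\bgtheta_n$ first enters $\operatorname{int}(\bgTheta)$ and stays there can serve as the outer index $m$. No new estimate is needed, because differentiability of the log-likelihood and interiority of $\bgtheta^*$ are already supplied by Assumptions~\ref{ass:1} and~\ref{ass:2}, while the genuine input, strong consistency, is borrowed wholesale from Theorem~\ref{thm:GI0-GI1consistent}.
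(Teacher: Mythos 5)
Your proposal is correct and follows essentially the same route as the paper: both arguments combine the strong consistency from Theorem~\ref{thm:GI0-GI1consistent} with the interiority of $\bgtheta^*$ and Fermat's first-order condition at an interior maximizer of the differentiable map $\bgtheta\mapsto l_n(\bgtheta;\va_n)$. The only cosmetic difference is that the paper expresses the step as the event inclusion $\{\norm{\widehat\bgtheta_n-\bgtheta^*}<\delta\}\subset\{\nabla_{\bgtheta} l_n(\widehat\bgtheta_n;\va_n)=\bm{0}\}$ and then bounds probabilities, whereas you argue pathwise with a random threshold $N$; both forms of bookkeeping are valid since the target event is a union over deterministic indices $m$.
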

\begin{proof}[Proof of Lemma~\ref{lemma:MLE score}]
Let $B(\bgtheta^*,\delta)$ denote the open ball with the center $\bgtheta^*$ and radius $\delta>0$ such that $B(\bgtheta^*,\delta)\subset \operatorname{int}(\bgTheta)$.

    Because $l_n(\bgtheta;\va_n)$ is differentiable in $\bgtheta$, we know that 
    \[
    \Big\{  \norm{\widehat\bgtheta_n- \bgtheta^*}<\delta \Big\}\subset\Big\{ \nabla_{\bgtheta} l_n(\widehat\bgtheta_n;\va_n)=\bm{0}\Big\}.
    \]
Thus,
\begin{equation*}
    1\geq\mathbb{P}\left( \bigcup_{m=1}^\infty \bigcap_{n=m}^\infty\{ \nabla_{\bgtheta} l_n(\widehat\bgtheta_n;\va_n)=\bm{0} \} \right)\geq \mathbb{P}\left( \bigcup_{m=1}^\infty \bigcap_{n=m}^\infty\Big\{  \norm{\widehat\bgtheta_n- \bgtheta^*}<\delta \Big\} \right)=1,
\end{equation*}
{where the last equation is due to the almost sure convergence of $\widehat\bgtheta_n$ obtained from Theorem~\ref{thm:GI0-GI1consistent}.}
\end{proof}
\begin{lemma}\label{lem:a.s.bound}
    Under Assumptions~\ref{ass:1}-\ref{ass:4}, if $\overline{{\bgpi}}_n\in K_U$ for large enough $n$, i.e.,
\begin{equation*}
    \mathbb{P} \left( \bigcup_{m=1}^\infty\bigcap_{n=m}^\infty \{\overline{{\bgpi}}_n\in K_U\} \right) = 1,
\end{equation*}
Also assume that the estimator  $\widehat{\bgtheta}_n\to \bgtheta^*$ a.s. $\mathbb{P}_*$.
Then, 
 with probability 1,
\begin{equation*}
    \limsup_{n\to \infty} \frac{1}{n-1}\sum_{j=1}^{n-1}\Psi_2^{a_j}(X_j)\leq \sum_{a=1}^k \mathbb{E}_{X\sim f_{\bgtheta^*,a} } \Psi_2^a(X)=:\mu_Y<\infty,
\end{equation*}
\begin{equation*}%
\begin{split}
    &\lim_{n\to \infty}\norm{-\nabla_{\bgtheta}^2l_{n-1}(\bgtheta^*;\va_{n-1})-\mathcal{I}^{\overline{{\bgpi}}_{n-1}}(\bgtheta^*)  }_{op}= 0,\\
    &\norm{-\nabla_{\bgtheta}^2 l_{n-1}(\bgtheta;\va_{n-1})+\nabla_{\bgtheta}^2 l_{n-1}(\bgtheta^*;\va_{n-1})}_{op}\leq \frac{1}{n-1}\sum_{i=1}^{n-1}\Psi_2^{a_i}(X_i)\norm{\bgtheta-\bgtheta^*},\\
    &\limsup_{n\to \infty}\norm{(-\nabla_{\bgtheta}^2 l_{n-1}(\widehat{\bgtheta}_{n-2} ;\va_{n-1}))^{-1}}_{op} \leq \frac{1}{\min_{{\bgpi}\in K_U} \lambda_{min}(\mathcal{I}^{\bgpi}(\bgtheta^*))}<\infty,\\
    &\limsup_{n\to \infty}\norm{(-\nabla_{\bgtheta}^2 l_{n-2}(\widehat{\bgtheta}_{n-2};\va_{n-2} ))^{-1}}_{op} \leq \frac{1}{\min_{{\bgpi}\in K_U} \lambda_{min}(\mathcal{I}^{\bgpi}(\bgtheta^*))}<\infty, \\  
    &\limsup_{n\to \infty}\norm{(-\nabla_{\bgtheta}^2 l_{n}( \bgtheta^*;\va_{n}))^{-1}}_{op} \leq \frac{1}{\min_{{\bgpi}\in K_U} \lambda_{min}(\mathcal{I}^{\bgpi}(\bgtheta^*))}<\infty.   
\end{split}
\end{equation*}
\end{lemma}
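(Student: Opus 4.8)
The six displayed bounds share a common engine: the modified strong law of large numbers for adaptively selected experiments (Lemma~\ref{lem:SLLN}), which absorbs the dependence created by the fact that each $a_i$ is only $\mathcal{F}_{i-1}$-measurable. The plan is to establish the three ``averaging'' statements (the bound on $\frac{1}{n-1}\sum_j\Psi_2^{a_j}(X_j)$, the convergence of $-\nabla^2_{\bgtheta}l_{n-1}(\bgtheta^*)$ to $\mathcal{I}^{\overline{{\bgpi}}_{n-1}}(\bgtheta^*)$, and the Hessian-Lipschitz bound) first, and then deduce the three inverse-Hessian bounds by matrix perturbation. For the first bound I would apply Lemma~\ref{lem:SLLN} to $Y_i := \Psi_2^{a_i}(X_i)$. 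Since $X_i \mid \mathcal{F}_{i-1}, a_i = a \sim f_{\bgtheta^*,a}$, the conditional law of $Y_i$ given $\{\mathcal{F}_{i-1}, a_i = a\}$ coincides with that of $\Psi_2^a(X^a)$ for $X^a\sim f_{\bgtheta^*,a}$, whose mean is finite by Assumption~\ref{ass:2}; hence $\frac{1}{n}\sum_{i=1}^n\{Y_i - \mathbb{E}[Y_i\mid\mathcal{F}_{i-1}]\}\to 0$ a.s. Because $a_i$ is $\mathcal{F}_{i-1}$-measurable, $\mathbb{E}[Y_i\mid\mathcal{F}_{i-1}] = \mathbb{E}_{X\sim f_{\bgtheta^*,a_i}}\Psi_2^{a_i}(X)$, so the running average of conditional means equals $\sum_{a}\overline{\pi}_n(a)\,\mathbb{E}_{X\sim f_{\bgtheta^*,a}}\Psi_2^a(X)$, which is at most $\mu_Y$ since each summand is nonnegative and $\overline{\pi}_n(a)\le 1$. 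This yields the stated $\limsup$.

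The Hessian-Lipschitz bound is purely deterministic given the sample: expand $\nabla^2_{\bgtheta}l_{n-1}(\bgtheta)$ as the average $\frac{1}{n-1}\sum_{i}\nabla^2_{\bgtheta}\log f_{\bgtheta,a_i}(X_i)$, apply the triangle inequality for $\norm{\cdot}_{op}$, and bound each summand by the Hessian-Lipschitz condition \eqref{equ:hessian_lips} in Assumption~\ref{ass:2}. For the convergence $-\nabla^2_{\bgtheta}l_{n-1}(\bgtheta^*)\to\mathcal{I}^{\overline{{\bgpi}}_{n-1}}(\bgtheta^*)$ I would invoke Lemma~\ref{lem:SLLN} entrywise on the matrices $\nabla^2_{\bgtheta}\log f_{\bgtheta^*,a_i}(X_i)$ (each entry integrable by the bounded-second-moment part of Assumption~\ref{ass:2}); the conditional mean of each term is $-\mathcal{I}_{a_i}(\bgtheta^*)$ by the information identity in Assumption~\ref{ass:3}, so the running average of conditional means is exactly $-\mathcal{I}^{\overline{{\bgpi}}_{n-1}}(\bgtheta^*)$. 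Entrywise almost-sure convergence upgrades to operator-norm convergence because all norms on $p\times p$ matrices are equivalent.

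The inverse-Hessian bounds then follow by combining the above with Weyl's inequality. First I would record that $\min_{{\bgpi}\in K_U}\lambda_{min}(\mathcal{I}^{{\bgpi}}(\bgtheta^*)) > 0$: for any ${\bgpi}\in K_U$ there is a relevant $S$ with $\min_{a\in S}\pi(a)\ge U$, whence $\mathcal{I}^{{\bgpi}}(\bgtheta^*)\succeq U\sum_{a\in S}\mathcal{I}_a(\bgtheta^*)\succ 0$, and compactness of $K_U$ (from \eqref{def:K_U}) with continuity of ${\bgpi}\mapsto\lambda_{min}(\mathcal{I}^{{\bgpi}}(\bgtheta^*))$ gives a strictly positive minimum. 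For the bound at $\bgtheta^*$, the convergence just proved shows $-\nabla^2_{\bgtheta}l_n(\bgtheta^*;\va_n)-\mathcal{I}^{\overline{{\bgpi}}_n}(\bgtheta^*)\to 0$ in $\norm{\cdot}_{op}$, so Weyl's inequality gives $\liminf_n\lambda_{min}(-\nabla^2_{\bgtheta}l_n(\bgtheta^*))\ge \min_{{\bgpi}\in K_U}\lambda_{min}(\mathcal{I}^{{\bgpi}}(\bgtheta^*))$ (using $\overline{{\bgpi}}_n\in K_U$ eventually), and inverting yields the stated $\limsup$. For the bounds at $\widehat{\bgtheta}_{n-2}$, I would split $-\nabla^2_{\bgtheta}l_m(\widehat{\bgtheta}_{n-2})=[-\nabla^2_{\bgtheta}l_m(\widehat{\bgtheta}_{n-2})+\nabla^2_{\bgtheta}l_m(\bgtheta^*)] + [-\nabla^2_{\bgtheta}l_m(\bgtheta^*)]$ (with $m=n-1$ or $m=n-2$); the first bracket is bounded by $\frac{1}{m}\sum_i\Psi_2^{a_i}(X_i)\cdot\norm{\widehat{\bgtheta}_{n-2}-\bgtheta^*}\to 0$ a.s. by the first and third bounds together with the assumed consistency $\widehat{\bgtheta}_n\to\bgtheta^*$, while the second bracket converges to $\mathcal{I}^{\overline{{\bgpi}}_m}(\bgtheta^*)$; the same Weyl argument then closes the bound.

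The main obstacle is bookkeeping rather than depth: each application of Lemma~\ref{lem:SLLN} must have its hypotheses (finite conditional mean, matching conditional law) verified, and the finitely many almost-sure null sets—one per matrix entry and one per statement—must be combined. The one genuinely structural point is the strict positivity of $\min_{{\bgpi}\in K_U}\lambda_{min}(\mathcal{I}^{{\bgpi}}(\bgtheta^*))$, which is precisely where the sufficient-exploration hypothesis $\overline{{\bgpi}}_n\in K_U$ (guaranteed for \textrm{GI0}/\textrm{GI1} by Proposition~\ref{prop:exploration}) does its work.
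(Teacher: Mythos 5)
Your proposal is correct, and its overall architecture (Lemma~\ref{lem:SLLN} for the averaging statements, the Lipschitz condition for the deterministic bound, then matrix perturbation for the inverse-Hessian bounds) matches the paper's proof. Two of your technical routes differ, though, and both are legitimate. First, for the bound on $\frac{1}{n-1}\sum_{j}\Psi_2^{a_j}(X_j)$, the paper does not use Lemma~\ref{lem:SLLN} at all: it invokes the decoupling result (Lemma~\ref{lem:same dist}) to replace $(X_1,\dots,X_n)$ by independent copies $(X_1^{a_1},\dots,X_n^{a_n})$, bounds $\Psi_2^{a_j}(X_j^{a_j})\leq\sum_{a\in\mathcal{A}}\Psi_2^{a}(X_j^{a})$, and applies the classical i.i.d.\ strong law to the envelope sums. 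Your route — applying Lemma~\ref{lem:SLLN} directly to $Y_i=\Psi_2^{a_i}(X_i)$ and bounding the average of conditional means by $\sum_a\overline{\pi}_n(a)\,\mathbb{E}_{X\sim f_{\bgtheta^*,a}}\Psi_2^a(X)\leq\mu_Y$ — avoids the decoupling machinery entirely, reuses a lemma already needed for the Hessian convergence, and even yields the slightly sharper intermediate statement with the empirical weights. Second, for the inverse bounds the paper uses the algebraic identity $(A_n+\Delta A_n)^{-1}-A_n^{-1}=-(A_n+\Delta A_n)^{-1}\Delta A_n A_n^{-1}$, leading to $\norm{(A_n+\Delta A_n)^{-1}}_{op}\leq\norm{A_n^{-1}}_{op}/(1-\norm{A_n^{-1}}_{op}\norm{\Delta A_n}_{op})$, whereas you control $\lambda_{\min}$ via Weyl's inequality and invert; for symmetric matrices these are interchangeable, and your version is the more direct of the two. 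One thing you supply that the paper leaves implicit is the verification that $\min_{\bgpi\in K_U}\lambda_{\min}(\mathcal{I}^{\bgpi}(\bgtheta^*))>0$ (relevance of some $S$ with $\min_{a\in S}\pi(a)\geq U$, plus compactness of $K_U$ and continuity of $\lambda_{\min}$); making that explicit is a genuine improvement in completeness, since every one of the three inverse bounds rests on it.
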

\begin{proof}[Proof of Lemma \ref{lem:a.s.bound}]
Let the information filtration be
\[
\mathcal{F}_n =\sigma(\{a_1,X_1,\cdots, a_n, X_n\}).
\]
{In the rest of the proof, we restrict the analysis to the event $\bigcup_{m=1}^\infty\bigcap_{n=m}^\infty \{\overline{{\bgpi}}_n\in K_U\} $, which has probability 1 by the assumption.
}
Applying Lemma \ref{lem:SLLN} on each entry of $-\nabla_{\bgtheta}^2 \log f_{\bgtheta^*,a_i}(X_i)$, { 
and note that
$
\mathbb{E}(\nabla_{\bgtheta}^2 \log f_{\bgtheta^*,a_i}(X_i)|\mathcal{F}_{i-1}) =  -\mathcal{I}_{a_i}(\bgtheta^*)
$, 
} we obtain
\begin{equation}\label{eq:information-true-LLN}
    -\nabla_{\bgtheta}^2l_{n-1}(\bgtheta^*;\va_{n-1})-\mathcal{I}^{\overline{{\bgpi}}_{n-1}}(\bgtheta^*)=\frac{1}{n-1}\sum_{i=1}^{n-1}\left( -\nabla_{\bgtheta}^2 \log f_{\bgtheta^*,a_i}(X_i)-I_{a_i}(\bgtheta^*) \right)\stackrel{\text { a.s. }}{\longrightarrow} 0.
\end{equation} 
By Assumption~\ref{ass:2} and the relaxed condition \eqref{cond:relaxed}, we know that 
\begin{equation*}
\norm{-\nabla_{\bgtheta}^2 l_{n-1}(\bgtheta; \va_{n-1})+\nabla_{\bgtheta}^2 l_{n-1}(\bgtheta^*; \va_{n-1})}_{op}\leq \frac{1}{n-1}\sum_{i=1}^{n-1}\Psi_2^{a_i}(X_i)\psi(\norm{\bgtheta-\bgtheta^*}).
\end{equation*}
Let $\{X_j^a\}_{1\leq j\leq n,a\in \mathcal{A}}$ be a sequence of independent random variables such that $X_j^a\sim f_{\bgtheta^*,a}$ for all $j\geq 1$ and $a\in \mathcal{A}$. By Lemma~\ref{lem:same dist}, we can replace $(X_1, X_2, \cdots, X_n)$ with $(X_1^{a_1}, X_2^{a_2}, \cdots, X_n^{a_n})$ without changing the joint distribution for all $n$.

Let $Y_i= \sum_{a\in \mathcal{A}}\Psi_2^{a}(X^a_i)$. %
We know that $\{Y_i\}_{i=1}^\infty$ are i.i.d. and by Assumption~\ref{ass:2}, $\mu_Y:=\mathbb{E}_{\bgtheta^*}Y_1<\infty$.

The strong law of large numbers (see Theorem 2.1 in \cite{ross2014introduction}) %
implies that with probability 1,
\begin{equation*} 
\frac{1}{n}\sum_{j=1}^nY_j\to \mu_Y. 
\end{equation*}
Thus,
{
with probability $1$
\begin{equation*}
    \limsup_{n\to \infty} \frac{1}{n-1}\sum_{j=1}^{n-1}\Psi_2^{a_j}(X^{a_j}_j)
    \leq  \limsup_{n\to \infty} \frac{1}{n-1}\sum_{j=1}^{n-1} \sum_{a\in\mathcal{A}}\Psi_2^{a}(X^{a}_j) 
    = \sum_{a=1}^k \mathbb{E}_{X\sim f_{\bgtheta^*,a}} \Psi_2^a(X)=\mu_Y<\infty.
\end{equation*}
}
Set $A_{n}=\mathcal{I}^{\overline{\bgpi}_{n-1}}(\bgtheta^*)$, and $\Delta A_{n}=-\nabla^2 l_{n-1}(\widehat{\bgtheta}_{n-2};\va_{n-1} ) -A_{n}$ for all $n\geq 1$. Notice that
\begin{equation*}
    A_n^{-1}-(A_n+\Delta A_n)^{-1}\Delta A_nA_n^{-1}= (A_n+\Delta A_n)^{-1}(A_n+\Delta A_n-\Delta A_n)A_n^{-1}=(A_n+\Delta A_n)^{-1},
\end{equation*}
\begin{equation*}
    \norm{(A_n+\Delta A_n)^{-1}-A_n^{-1}}_{op}= \norm{-(A_n+\Delta A_n)^{-1}\Delta A_nA_n^{-1}}_{op} \leq \norm{(A_n+\Delta A_n)^{-1} }_{op} \norm{\Delta A_n}_{op}\norm{A_n^{-1}}_{op},
\end{equation*}
as well as 
\begin{equation*}
    \norm{A_n^{-1}}_{op}\leq \frac{1}{ \min_{{\bgpi}\in K_U} \lambda_{min}(\mathcal{I}^{\bgpi}(\bgtheta^*)) }<\infty,
\end{equation*}
{for any $n$.}
Furthermore,
\begin{equation*}
    \norm{(A_n+\Delta A_n)^{-1}}_{op}\leq \norm{A_n^{-1}}_{op}+\norm{(A_n+\Delta A_n)^{-1}}_{op}\norm{A_n^{-1}}_{op} \norm{\Delta A_n}_{op},
\end{equation*}
{which implies
$$
 \norm{(A_n+\Delta A_n)^{-1}}_{op}\leq \frac{\norm{A_n^{-1}}_{op}}{1-\norm{A_n^{-1}}_{op} \norm{\Delta A_n}_{op}}
$$
given that $\norm{A_n^{-1}}_{op} \norm{\Delta A_n}_{op}<1$.
}
Note that
\begin{equation*}
    \norm{\Delta A_{n}}_{op}\leq \frac{1}{n-1}\sum_{j=1}^{n-1}\Psi_2^{a_j}(X_j)\norm{\widehat{\bgtheta}_{n-2}-\bgtheta^*} +\norm{-\nabla^2l_{n-1}(\bgtheta^*)-\mathcal{I}^{\overline{{\bgpi}}_{n-1}}(\bgtheta^*)  }_{op}.
\end{equation*}
{The first term on the right-hand side of the above inequality converges to $0$ a.s., because of the almost sure convergence assumption on $\hat{\bgtheta}_n$, and the second term converges to $0$ a.s. because of \eqref{eq:information-true-LLN}. Consequently, $ \norm{\Delta A_{n}}_{op}\stackrel{\text { a.s. }}{\longrightarrow} 0$.
This further implies that, for $n$ large enough, $\norm{\Delta A_n}_{op}\leq \frac{1}{2}\min_{{\bgpi}\in K_U} \lambda_{min}(\mathcal{I}^{\bgpi}(\bgtheta^*))$. For such $n$, we have
}
\begin{equation*}
    \norm{(A_n+\Delta A_n)^{-1}-A_n^{-1}}_{op}\leq  \frac{\norm{\Delta A_n}_{op} \norm{A_n^{-1}}^2_{op}}{1-\norm{\Delta A_n}_{op} \norm{A_n^{-1}}_{op}}\stackrel{\text { a.s. }}{\longrightarrow}0.
\end{equation*}
{Note that $A_n+\Delta A_n = -\nabla^2 l_{n-1}(\widehat{\bgtheta}_{n-2};\va_{n-1} )$.
}
Thus, with probability 1,
\begin{equation*}
    \limsup_{n\to \infty}\norm{(-\nabla_{\bgtheta}^2 l_{n-1}(\widehat{\bgtheta}_{n-2}; \va_{n-1} ))^{-1}}_{op} \leq \limsup_{n\to \infty} \norm{A_n^{-1}}_{op} \leq \frac{1}{\min_{{\bgpi}\in K_U} \lambda_{min}(\mathcal{I}^{\bgpi}(\bgtheta^*))}<\infty.
\end{equation*}
Similarly, we can also show that with probability 1,
\begin{equation*}
    \limsup_{n\to \infty}\norm{(-\nabla_{\bgtheta}^2 l_{n-2}(\widehat{\bgtheta}_{n-2} ;\va_{n-2}))^{-1}}_{op} \leq \frac{1}{\min_{{\bgpi}\in K_U} \lambda_{min}(\mathcal{I}^{\bgpi}(\bgtheta^*))}<\infty,
\end{equation*}
as well as 
\begin{equation*}
    \limsup_{n\to \infty}\norm{(-\nabla_{\bgtheta}^2 l_{n}( \bgtheta^*;\va_{n}))^{-1}}_{op} \leq \frac{1}{\min_{{\bgpi}\in K_U} \lambda_{min}(\mathcal{I}^{\bgpi}(\bgtheta^*))}<\infty.
\end{equation*}
\end{proof}
\begin{lemma}\label{lem:taylor ln}
    Under Assumptions~\ref{ass:1}-\ref{ass:4}, the Taylor expansion for the score function $\nabla_{\bgtheta}l_n(\bgtheta;\va_n)$ is given by
\begin{equation}
\begin{split}
    &\nabla_{\bgtheta} l_n(\bgtheta^*+\bm{W}_n/\sqrt{n};\va_n)=\nabla_{\bgtheta} l_n(\bgtheta^* ;\va_n )+\nabla^2_{\bgtheta} l_n(\bgtheta^*;\va_n)\bm{W}_n/{\sqrt{n}}+R(\bgtheta^*,\bm{W}_n),\\
    &\nabla_{\bgtheta} l_{n}(\widehat{\bgtheta}_{n'};\va_{n})=\nabla_{\bgtheta} l_{n}(\widehat{\bgtheta}_{n};\va_{n} )+\nabla^2_{\bgtheta} l_{n}(\widehat{\bgtheta}_{n} ;\va_{n})(\widehat{\bgtheta}_{n'}-\widehat{\bgtheta}_{n}) +R'(\widehat{\bgtheta}_{n'},\widehat{\bgtheta}_{n}),
\end{split}
\end{equation}
where {$\widehat{\bgtheta}_n$ 
and $\widehat{\bgtheta}_{n'}$
     are  the MLE based on $l_n(\bgtheta;\va_n)$ and $l_{n'}(\bgtheta;\va_{n'}), respectively$,}  $\bm{W}_n=\sqrt{n}(\widehat{\bgtheta}_n-\bgtheta^*)$, 
\begin{equation*}
    \norm{R(\bgtheta^*,\bm{W}_n)}\leq \frac{1}{n}\sum_{j=1}^n\Psi^{a_j}_2(X_j) \norm{\widehat{\bgtheta}_n-\bgtheta^* } \psi\Big(\norm{\widehat{\bgtheta}_n-\bgtheta^* }\Big),\text{ and}
\end{equation*}
\begin{equation*}
\norm{R'(\widehat{\bgtheta}_{n'},\widehat{\bgtheta}_{n})}\leq \frac{1}{n}\sum_{j=1}^{n}\Psi^{a_j}_2(X_j) \norm{\widehat{\bgtheta}_{n'}-\widehat{\bgtheta}_{n} }\psi\Big(\norm{\widehat{\bgtheta}_{n'}-\widehat{\bgtheta}_{n} } \Big).
\end{equation*}
\end{lemma}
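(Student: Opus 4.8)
The plan is to obtain both expansions from the integral (first-order) form of Taylor's theorem applied to the vector-valued score function $g_n(\cdot):=\nabla_{\bgtheta} l_n(\cdot;\va_n)$, whose Jacobian is the Hessian $\nabla^2_{\bgtheta} l_n(\cdot;\va_n)$, and then to control the remainder by the averaged stochastic-Lipschitz bound on the Hessian coming from Assumption~\ref{ass:2}. For the first expansion I would first note that $\bgtheta^*+\bm{W}_n/\sqrt{n}=\widehat{\bgtheta}_n$ since $\bm{W}_n=\sqrt{n}(\widehat{\bgtheta}_n-\bgtheta^*)$, and that by the convexity of $\bgTheta$ (Assumption~\ref{ass:1}) the whole segment $\{\bgtheta^*+t(\widehat{\bgtheta}_n-\bgtheta^*):t\in[0,1]\}$ lies in $\bgTheta$, so the Hessian is defined along it. Applying the fundamental theorem of calculus to $t\mapsto g_n(\bgtheta^*+t(\widehat{\bgtheta}_n-\bgtheta^*))$ gives
\[
g_n(\widehat{\bgtheta}_n)-g_n(\bgtheta^*)=\int_0^1 \nabla^2_{\bgtheta} l_n(\bgtheta^*+t(\widehat{\bgtheta}_n-\bgtheta^*);\va_n)\,(\widehat{\bgtheta}_n-\bgtheta^*)\,dt.
\]
Adding and subtracting the leading term $\nabla^2_{\bgtheta} l_n(\bgtheta^*;\va_n)(\widehat{\bgtheta}_n-\bgtheta^*)$ then identifies the remainder as
\[
R(\bgtheta^*,\bm{W}_n)=\int_0^1 \big[\nabla^2_{\bgtheta} l_n(\bgtheta^*+t(\widehat{\bgtheta}_n-\bgtheta^*);\va_n)-\nabla^2_{\bgtheta} l_n(\bgtheta^*;\va_n)\big](\widehat{\bgtheta}_n-\bgtheta^*)\,dt.
\]

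The next step is to bound this remainder. Averaging the relaxed Lipschitz inequality \eqref{cond:relaxed} over $i=1,\dots,n$ yields, for any $\bgtheta_1,\bgtheta_2\in\bgTheta$,
\[
\norm{\nabla^2_{\bgtheta} l_n(\bgtheta_1;\va_n)-\nabla^2_{\bgtheta} l_n(\bgtheta_2;\va_n)}_{op}\leq \frac{1}{n}\sum_{j=1}^n \Psi_2^{a_j}(X_j)\,\psi(\norm{\bgtheta_1-\bgtheta_2}).
\]
Taking $\bgtheta_1=\bgtheta^*+t(\widehat{\bgtheta}_n-\bgtheta^*)$ and $\bgtheta_2=\bgtheta^*$, so that $\norm{\bgtheta_1-\bgtheta_2}=t\norm{\widehat{\bgtheta}_n-\bgtheta^*}$, and using that $\psi$ is increasing together with $t\leq 1$ to replace $\psi(t\norm{\widehat{\bgtheta}_n-\bgtheta^*})$ by $\psi(\norm{\widehat{\bgtheta}_n-\bgtheta^*})$, I would then pass the integral through the operator-norm bound $\norm{Mv}\leq\norm{M}_{op}\norm{v}$ to obtain exactly
\[
\norm{R(\bgtheta^*,\bm{W}_n)}\leq \frac{1}{n}\sum_{j=1}^n \Psi_2^{a_j}(X_j)\,\norm{\widehat{\bgtheta}_n-\bgtheta^*}\,\psi\big(\norm{\widehat{\bgtheta}_n-\bgtheta^*}\big),
\]
which is the claimed bound.

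Finally, the second expansion follows by the identical argument with $\bgtheta^*$ replaced by $\widehat{\bgtheta}_n$ and $\widehat{\bgtheta}_n$ replaced by $\widehat{\bgtheta}_{n'}$, again invoking convexity of $\bgTheta$ so that the connecting segment stays in $\bgTheta$ and monotonicity of $\psi$ in the remainder estimate; this produces $R'(\widehat{\bgtheta}_{n'},\widehat{\bgtheta}_{n})$ with the stated bound. I do not expect a genuine obstacle here, since the argument is a routine mean-value/integral-remainder computation; the only point requiring care is that the Lipschitz control must be applied along the entire segment, which is legitimate precisely because $\bgTheta$ is convex and because we use the \emph{averaged} form of \eqref{cond:relaxed} with the common envelope $\tfrac{1}{n}\sum_j \Psi_2^{a_j}(X_j)$, uniform in the integration variable $t$, rather than a pointwise-in-$t$ bound.
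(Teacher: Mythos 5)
Your proposal is correct and takes essentially the same route as the paper: a first-order Taylor expansion of the score whose remainder is controlled by the averaged stochastic-Lipschitz bound on the Hessian from Assumption~\ref{ass:2} together with the monotonicity of $\psi$, applied along the segment joining the two points (legitimate by convexity of $\bgTheta$). The only difference is cosmetic: you use the integral (fundamental-theorem-of-calculus) form of the remainder, whereas the paper applies the Lagrange mean value theorem to the scalar projections $\innerpoduct{\blb}{\nabla_{\bgtheta} l_n(\cdot;\va_n)}$ and then takes a supremum over unit vectors $\blb$ (with a max over the intermediate point, since that point depends on $\blb$); both devices produce the identical bound.
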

\begin{proof}[Proof of Lemma \ref{lem:taylor ln}]
    Set $g(t)=\innerpoduct{\blb}{\nabla_{\bgtheta} l_n(\bgtheta^*+t(\widehat{\bgtheta}_n-\bgtheta^*);\va_n)}$. By the Lagrange mean value theorem, there exists $0<t^*<1$ such that
\begin{equation*}
    g(1)-g(0)=g'(t^*),
\end{equation*}
i.e.,
\begin{equation*}
    \innerpoduct{\blb}{\nabla_{\bgtheta} l_n(\widehat{\bgtheta}_n ;\va_n)-\nabla_{\bgtheta} l_n(\bgtheta^* ;\va_n)  }=\innerpoduct{\blb}{\nabla^2_{\bgtheta} l_n(\bgtheta^*+t^*(\widehat{\bgtheta}_n-\bgtheta^*);\va_n ) (\widehat{\bgtheta}_n-\bgtheta^*))}.
\end{equation*}
Then,
\begin{equation}
    \innerpoduct{\blb}{R(\bgtheta^*,\bm{W}_n) }=\innerpoduct{\blb}{\left\{\nabla^2_{\bgtheta} l_n(\bgtheta^*+t^*(\widehat{\bgtheta}_n-\bgtheta^*);\va_n)-\nabla^2_{\bgtheta} l_n(\bgtheta^* ;\va_n) \right\} (\widehat{\bgtheta}_n-\bgtheta^*)}.
\end{equation}
Under Assumption~\ref{ass:2}, we have  
\begin{equation*}
\begin{split}
\norm{R(\bgtheta^*,\bm{W}_n)}&=\sup_{\norm{\blb}\leq 1}\innerpoduct{\blb}{R(\bgtheta^*,\bm{W}_n)}\\
&\leq \max_{0\leq t^*\leq 1}\norm{  \nabla^2_{\bgtheta} l_n(\bgtheta^*+t^*(\widehat{\bgtheta}_n-\bgtheta^*);\va_n)-\nabla^2_{\bgtheta} l_n(\bgtheta^* ;\va_n)  }_{op}\norm{\widehat{\bgtheta}_n-\bgtheta^*}\\
&\leq \frac{1}{n}\sum_{j=1}^n\Psi^{a_j}_2(X_j)\norm{\widehat{\bgtheta}_n-\bgtheta^*}\psi\Big(\norm{\widehat{\bgtheta}_n-\bgtheta^* }\Big).
\end{split} 
\end{equation*}
Similarly, we can show that
\begin{equation*}
    \innerpoduct{\blb}{R'(\widehat{\bgtheta}_{n'},\widehat{\bgtheta}_{n}) }=\innerpoduct{\blb}{\left\{\nabla^2_{\bgtheta} l_{n}(\widehat{\bgtheta}_{n}+t^*(\widehat{\bgtheta}_{n'}-\widehat{\bgtheta}_{n});\va_{n})-\nabla^2_{\bgtheta} l_{n}(\widehat{\bgtheta}_{n} ;\va_{n}) \right\} (\widehat{\bgtheta}_{n'}-\widehat{\bgtheta}_{n})}.
\end{equation*}
Under Assumption~\ref{ass:2}, we have  
\begin{equation*}
\begin{split}
&\norm{R'(\widehat{\bgtheta}_{n'},\widehat{\bgtheta}_{n}) }\\
=&\sup_{\norm{\blb}\leq 1}\innerpoduct{\blb}{R'(\widehat{\bgtheta}_{n'},\widehat{\bgtheta}_{n}) }\\
\leq &\max_{0\leq t^* \leq 1}\norm{  \nabla^2_{\bgtheta} l_{n}(\widehat{\bgtheta}_{n}+t^*(\widehat{\bgtheta}_{n'}-\widehat{\bgtheta}_{n});\va_{n})-\nabla^2_{\bgtheta} l_{n}(\widehat{\bgtheta}_{n} ;\va_{n} )  }_{op}\norm{\widehat{\bgtheta}_{n'}-\widehat{\bgtheta}_{n}}\\
\leq &\frac{1}{n}\sum_{j=1}^{n}\Psi^{a_j}_2(X_j)\norm{\widehat{\bgtheta}_{n'}-\widehat{\bgtheta}_{n}} \psi\Big(\norm{\widehat{\bgtheta}_{n'}-\widehat{\bgtheta}_{n}}\Big).
\end{split} 
\end{equation*}
\end{proof}

\begin{proof}[Proof of Theorem \ref{thm:GI0-GI1AN}]
Write $\widehat\bgtheta_n=\widehat{\bgtheta}_{n}^{\text{ML}}$ for the ease of exposition. By Theorem \ref{thm:consistency_final}, we know that $\widehat\bgtheta_n$ converges to $\bgtheta^*$ almost surely.
By Lemma \ref{lemma:MLE score}, with probability 1, there exists random integer $N<\infty$ such that for any $n\geq N$, $\nabla_{\bgtheta} l_n(\widehat{\bgtheta}_n; \va_n)={\mathbf 0}$. Let $\blW_n = \sqrt{n}(\widehat{\bgtheta}_n - {\bgtheta}^*).$ 

Recall the remainder function defined in Lemma~\ref{lem:taylor ln},
\begin{equation*}
R({\bgtheta}^*, \blW_n) := \nabla_{\bgtheta} l_n({\bgtheta}^* + \blW_n/\sqrt{n};\va_n) - \nabla_{\bgtheta} l_n({\bgtheta}^*;\va_n) - \nabla_{\bgtheta}^2 l_n({\bgtheta}^*;\va_n) \blW_n/\sqrt{n}. 
\end{equation*}
With $\nabla_{\bgtheta} l_n({\bgtheta}^* + \blW_n/\sqrt{n};\va_n)={\mathbf 0}$ provided $n\geq N$ in mind, we can write $\blW_n$
\begin{equation} \label{eq:Wn-decomp}
    \blW_n = - \left\{\nabla_{\bgtheta}^2 l_n({\bgtheta}^*;\va_n) \right\}^{-1} \left\{  \sqrt{n}\nabla_{\bgtheta} l_n({\bgtheta}^*;\va_n) + \sqrt{n}R({\bgtheta}^*, \blW_n) \right\}. 
\end{equation}
The rest of the proof consists of three parts: in Part I, we show that $\sqrt{n}\nabla_{\bgtheta} l_n({\bgtheta}^*;\va_n) \inD N(0, \sum_{a\in \mathcal{A}}  \pi(a) \mathcal{I}_{a}({\bgtheta}^*))$; in Part II, we show that $\nabla_{\bgtheta}^2 l_n({\bgtheta}^*;\va_n) \inP -\sum_{a \in \mathcal{A} } \pi(a) \mathcal{I}_{a}({\bgtheta}^*)$; and in Part III, we show that $\sqrt{n}R({\bgtheta}^*, \blW_n) = o_p(1)$.

\paragraph*{Part I: Show that $\sqrt{n}\nabla_{\bgtheta} l_n({\bgtheta}^*;\va_n) \inD N(0, \sum_{a\in \mathcal{A}}  \pi(a) \mathcal{I}_{a}({\bgtheta}^*))$ as $n\rightarrow\infty$}
Let $\blb$ be any constant vector in $\mathbb{R}^p$ with $\norm{\blb} =1$. For $i = 1,\ldots, n$, let
\begin{equation*}
    \xi_{n,i} := \frac{1}{\sqrt{n}} \blb^T \nabla_{\bgtheta} \log f_{\bgtheta^*,a_i} (X_i).
\end{equation*}
Set $\mathcal{F}_{i}=\sigma\{a_1,X_1,a_2,X_2,\cdots,a_i,X_i\}$ for any $i\geq 1$ and $\mathcal{F}_0$ denote the trivial $\sigma-$algebra. Applying the Dominated Convergence Theorem, coupled with the classical proof of differentiation under the integral sign, we arrive at the conclusion that $\mathbb{E}(\xi_{n,i}|\mathcal{F}_{i-1}) = 0$,
which implies that $\mathbb{E}(\xi_{n,i}) = 0$. Denote  $\sigma_{n,i}^2 := \mathbb{E}(\xi_{n,i}^2 |\mathcal{F}_{i-1}) = \frac{1}{n} \blb^T \mathcal{I}_{a_i}({\bgtheta}^*) \blb$. 

Let $S_{n}:= \sum_{i=1}^{n} \xi_{n,i}$. Note that $\mathbb{E}(S_{n}) = 0$ and $\mathbb{E}(S_n^2) < \infty$, since $\mathbb{E}(\xi_{n,i}^2) < \infty$ for all $i$. Then, $\{S_n, \mathcal{F}_n\}_{n\geq1}$ is a martingale array with mean 0 and finite variance. We will apply the martingale central limit theorem to $S_n$. We check the conditions first.

We first check the conditional variance condition. We write  
\begin{equation*}
    \sum_{i=1}^n \sigma_{n,i}^2 = \blb^T \left\{\frac{1}{n} \sum_{i=1}^n  \mathcal{I}_{a_i}({\bgtheta}^*) \right\} \blb = \blb^T \left\{ \sum_{a\in \mathcal{A}}  \overline{\pi}_n(a) \mathcal{I}_{a}({\bgtheta}^*) \right\} \blb.  
\end{equation*}
Due to the convergence assumption of $\overline{{\bgpi}}_n$, we have 
\begin{equation*}
    \blb^T \left\{ \sum_{a\in \mathcal{A}}  \overline{\pi}_n(a) \mathcal{I}_{a}({\bgtheta}^*) \right\} \blb \inP \blb^T \left\{ \sum_{a\in \mathcal{A}}  \pi(a) \mathcal{I}_{a}({\bgtheta}^*) \right\} \blb. 
\end{equation*}
Then the conditional variance condition holds. 

We then check the conditional Lindeberg's condition. Assume random variables $\{X^a\}_{a\in \mathcal{A}} $ have densities $\{f_{\bgtheta^*,a} \}_{a\in \mathcal{A}}$, respectively. For any $\varepsilon >0$, with probability 1,
\begin{equation*} 
    \sum_{i=1}^n \mathbb{E}\left\{ \xi_{n,i}^2 I (|\xi_{n,i}|>\varepsilon) | \mathcal{F}_{i-1} \right\} 
    \leq  \sum_{a=1}^k \mathbb{E} \left\{\norm{\nabla_{\bgtheta} \log f_{\bgtheta^*,a}(X^a)}^{2 }I (\norm{\nabla_{\bgtheta} \log f_{\bgtheta^*,a}(X^a)}>\sqrt{n}\varepsilon)  \right\} .
\end{equation*}
By Assumption~\ref{ass:2},
\begin{equation*}
    \lim_{n\to \infty}\mathbb{E}\left\{\norm{\nabla_{\bgtheta} \log f_{\bgtheta^*,a}(X^a)}^{2 }I (\norm{\nabla_{\bgtheta} \log f_{\bgtheta^*,a}(X^a)}>\sqrt{n}\varepsilon)  \right\} = 0.
\end{equation*}
Thus, the conditional Lindeberg condition holds. 

By the Martingale Central Limit Theorem (Corollary 3.1 in \cite{hall2014martingale}), we have
\begin{equation*}
    \sum_{i=1}^n \xi_{n,i} \inD N \left(0, \blb^T \left\{ \sum_{a \in \mathcal{A} } \pi(a) \mathcal{I}_{a}({\bgtheta}^*) \right\} \blb\right). 
\end{equation*}
It follows by Cramér–Wold theorem (see \cite{billingsley1999convergence} p383) that
\begin{equation}\label{equ:lim_grad_ln}
    \sqrt{n}\nabla_{\bgtheta} l_n({\bgtheta}^*;\va_n) \inD N \left(0, \sum_{a \in \mathcal{A} } \pi(a) \mathcal{I}_{a}({\bgtheta}^*)\right). 
\end{equation}

\paragraph*{ Part II: Show that $\nabla_{\bgtheta}^2 l_n({\bgtheta}^*;\va_n) \inP -\sum_{a \in \mathcal{A} } \pi(a) \mathcal{I}_{a}({\bgtheta}^*)$}

For each $i=1,\ldots, n$, by Assumption A3, we have
\begin{equation*}
    \mathbb{E} \left\{ \nabla_{\bgtheta}^2 \log f_{\bgtheta^*,a_i}(X_i) | \mathcal{F}_{i-1} \right\} = -\mathcal{I}_{a_i}({\bgtheta}^*).
\end{equation*}
Also, the conditional expectation has
\begin{equation*} 
    \frac{1}{n} \sum_{i=1}^n  \mathcal{I}_{a_i}({\bgtheta}^*) = \mathcal{I}^{\overline{\bgpi}_n}(\bgtheta^*) \inP \sum_{a \in \mathcal{A} } \pi(a) \mathcal{I}_{a}({\bgtheta}^*),
\end{equation*}
due to the convergence assumption of $\overline{{\bgpi}}_n$. 

For each $i,l = 1,\ldots, p$, define 
\begin{equation*}
    G_{i,l} = \sum_{a\in \mathcal{A}, X^a\sim f_{\bgtheta^*,a}(\cdot)} \left| \left( \nabla_{\bgtheta}^2 \log f_{\bgtheta^*,a}(X^a) \right)_{i,l} \right|. 
\end{equation*}
Then, for all $x\geq0$ and $i,l\geq1$, 
\begin{equation*}
    \mathbb{P} \left\{ \left| \left( \nabla_{\bgtheta}^2 \log f_{\bgtheta^*,a}(X^a) \right)_{i,l} \right|
    >x \right\} \leq  \mathbb{P} \{ G_{i,l} >x\}
\end{equation*}
{This implies that $\sum_{a\in\mathcal{A}}\mathbb{E}(\left| \left( \nabla_{\bgtheta}^2 \log f_{\bgtheta^*,a}(X^a) \right)_{i,l} \right|)\leq \sum_{i,l}\mathbb{E}(G_{i,l})<\infty$ under Assumption~\ref{ass:2}.}

By Lemma~\ref{lem:SLLN} {and the Slutsky's theorem}, we arrive at
\begin{equation} \label{eq:lln}
    \frac{1}{n} \sum_{i=1}^{n}\nabla_{\bgtheta}^2 \log f_{\bgtheta^*,a_i}(X_i) = \nabla_{\bgtheta}^2 l_n({\bgtheta}^*;\va_n) \inP -\sum_{a \in \mathcal{A} } \pi(a) \mathcal{I}_{a}({\bgtheta}^*).
\end{equation}

\paragraph*{Part III: Show that $\sqrt{n}R({\bgtheta}^*, \blW_n) = o_p(1)$ }

According to Lemma~\ref{lem:a.s.bound} and Lemma~\ref{lem:taylor ln}, we know that
\begin{equation} \label{eq:R-ineq}
    \norm{ \sqrt{n} R({\bgtheta}^*, \blW_n)} \leq \frac{1}{ n }\sum_{j=1}^n\Psi_2^{a_j}(X_j)\norm{\blW_n}  \psi\Big(\norm{\widehat{\bgtheta}_n-{\bgtheta}^*}\Big). 
\end{equation}
By \eqref{eq:Wn-decomp}, we have 
\begin{align} \label{eq:Wn-ineq}
     \norm{\blW_n} &\leq \norm{\left\{\nabla_{\bgtheta}^2 l_n({\bgtheta}^*;\va_n) \right\}^{-1}}_{op} \left\{  \norm{\sqrt{n}\nabla_{\bgtheta} l_n({\bgtheta}^*;\va_n)} + \norm{\sqrt{n}R({\bgtheta}^*, \blW_n)} \right\}.
\end{align}    
{ 
The above two inequalities together implies
\begin{equation} \label{eq:Wn-ineq2}
\begin{split}
     \norm{ \sqrt{n} R({\bgtheta}^*, \blW_n)}  \leq &\frac{\frac{1}{ n }\sum_{j=1}^n\Psi_2^{a_j}(X_j)\psi\Big(\norm{\widehat{\bgtheta}_n-{\bgtheta}^*}\Big)\norm{\left\{\nabla_{\bgtheta}^2 l_n({\bgtheta}^*) \right\}^{-1}}_{op}   \norm{\sqrt{n}\nabla_{\bgtheta} l_n({\bgtheta}^*)}}{1-\frac{1}{n}\sum_{j=1}^n\Psi_2^{a_j}(X_j)\norm{\left\{\nabla_{\bgtheta}^2 l_n({\bgtheta}^*) \right\}^{-1}}_{op}  \psi\left(\norm{\widehat{\bgtheta}_n-{\bgtheta}^*}\right) }\\
\end{split}
\end{equation}
given that $\frac{1}{n}\sum_{j=1}^n\Psi_2^{a_j}(X_j)\norm{\left\{\nabla_{\bgtheta}^2 l_n({\bgtheta}^*) \right\}^{-1}}_{op}  \psi\left(\norm{\widehat{\bgtheta}_n-{\bgtheta}^*}\right)<1$.
}
We have shown in \eqref{eq:lln} in Part II that 
\begin{equation*}
    \nabla_{\bgtheta}^2 l_n({\bgtheta}^*;\va_n) = -\sum_{a \in \mathcal{A} } \pi(a) \mathcal{I}_{a}({\bgtheta}^*) + o_p(1) , 
\end{equation*}
and thus
\begin{equation} \label{eq:hessian-op1}
     \norm{\left\{\nabla_{\bgtheta}^2 l_n({\bgtheta}^*;\va_n) \right\}^{-1}}_{op} = \lambda_{\min}^{-1} \left( \sum_{a \in \mathcal{A} } \pi(a) \mathcal{I}_{a}({\bgtheta}^*) \right) +o_p(1).
\end{equation}
Furthermore, under Assumption~\ref{ass:5}, by the consistency result in Theorem~\ref{thm:consistency_final} and Lemma \ref{lem:a.s.bound}, we have
\begin{equation*}
    \norm{\widehat{\bgtheta}_n-{\bgtheta}^*} = o_p(1),
\end{equation*}
which implies that 
\begin{equation} \label{eq:denom-op1}
    \frac{1}{n}\sum_{j=1}^n\Psi_2^{a_j}(X_j)\norm{\left\{\nabla_{\bgtheta}^2 l_n({\bgtheta}^*) \right\}^{-1}}_{op}   \psi\left(\norm{\widehat{\bgtheta}_n-{\bgtheta}^*}\right) = o_p(1).
\end{equation}
Let the event $D_n := \big\{ 1-\frac{1}{n}\sum_{j=1}^n\Psi_2^{a_j}(X_j)\norm{\left\{\nabla_{\bgtheta}^2 l_n({\bgtheta}^*) \right\}^{-1}}_{op}  \psi\left(\norm{\widehat{\bgtheta}_n-{\bgtheta}^*}\right) > \frac{1}{2}\big\}$. We have 
\begin{equation*}
    \mathbb{P} (D_n) \to 1, \text{ as } n \to \infty. 
\end{equation*}
On the event $D_n$, according to \eqref{equ:lim_grad_ln} %
, we have
\begin{equation*}%
    \norm{\sqrt{n}\nabla_{\bgtheta} l_n({\bgtheta}^*)} = O_p(1),
\end{equation*}
which together with \eqref{eq:Wn-ineq2}, \eqref{eq:denom-op1} and Lemma~\ref{lem:a.s.bound} yields $\norm{\blW_n} = O_p(1)$.
It follows from \eqref{eq:R-ineq} that
\begin{equation*}
    \norm{ \sqrt{n} R({\bgtheta}^*, \blW_n)} =o_p(1). 
\end{equation*}
Therefore, applying Slutsky's Theorem and the continuous mapping Theorem to \eqref{eq:Wn-decomp}, we have 
\begin{equation*}
    \blW_n \inD N \left(0, \left(\sum_{a \in \mathcal{A} } \pi(a) \mathcal{I}_{a}({\bgtheta}^*)\right)^{-1} \right), 
\end{equation*}
which concludes the proof. 
\end{proof}
\subsection{Proof of Theorem \ref{thm:empirical pi as converge}}
{We first present an extension of the classic convergence theorem by \cite{robbins1971convergence}, which is frequently employed to prove convergence of stochastic processes within the fields of stochastic approximation and reinforcement learning. It provides conditions on a stochastic process $\{Z_n\}$ for it to converge almost surely. The following modified version of the Robbins-Siegmund Theorem allows us to obtain a better estimate of the convergence rate of $\{Z_n\}$. Later in this section, we will apply this result to  $Z_n=\mathbb{F}_{ \bgtheta^*}( \overline{{\bgpi}}_{n } )-\mathbb{F}_{ \bgtheta^*}({\bgpi}^*)$ for proving Theorem~\ref{thm:empirical pi as converge}.
}
\begin{lemma}[Modified Robbins-Siegmund Theorem]\label{lem:RS lem}
Let $a_n, c_n$ be integrable random variables and $Z_n$ be a non-negative integrable random variable adaptive to filtration $\mathcal{F}_n$ for all $n\geq 1$, and $\mathcal{F}_1\subset \mathcal{F}_2\cdots$. Assume that
\begin{equation}\label{ineq:RStm}
    \mathbb{E}\left[Z_{n+1} \mid \mathcal{F}_n\right] \leq\left(1-a_n \right) Z_n+c_n, \text{ for all }n\geq 1.
\end{equation} 
Set $a_n^-=\max\{ 0, -a_n \}$ and $a_n^+=\max\{ 0, a_n \}$. Assume
    \begin{equation*}
        \sum_{n=1}^\infty a^-_n<\infty.
    \end{equation*}
Then, the following statements hold.    
\begin{enumerate}
    \item If
    \begin{equation}\label{equ:sum_c_finite}
        \sum_{n=1}^\infty c_n \text{ exists with probability $1$,}
    \end{equation}
    then there exists non-negative random variable $Z_\infty$ such that $\lim_{n\to \infty} Z_n=Z_\infty$ with probability 1.
    \item If we assume \eqref{equ:sum_c_finite} holds and further require
\begin{equation}\label{equ:aZ_intergrable}
    \{a_n^+Z_n\}_{n=1}^\infty \text{ are all intergrable, and }\sum_{n=1}^\infty a_n=+\infty \text{  with probability $1$},
\end{equation}
then $\lim_{n\to \infty} Z_n=0$ with probability 1.
    \item Assume \eqref{equ:aZ_intergrable} holds. If there exists $0<\beta<c$ such that $a_n\geq\frac{c}{n}$ and the limit $\sum_{n=1}^\infty n^\beta c_n$ exists with probability 1, then $\lim_{n\to \infty}n^\beta Z_n=0$ with probability 1.
\end{enumerate}
\end{lemma}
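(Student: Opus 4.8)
The plan is to recognize that the hypothesis
$\mathbb{E}[Z_{n+1}\mid\mathcal{F}_n]\le(1-a_n)Z_n+c_n$
is, after the split $a_n=a_n^+-a_n^-$, exactly the Robbins--Siegmund inequality
$\mathbb{E}[Z_{n+1}\mid\mathcal{F}_n]\le(1+a_n^-)Z_n-a_n^+Z_n+c_n$,
with a summable multiplicative perturbation $a_n^-$, a ``good'' downward term $a_n^+Z_n$, and a \emph{signed, only conditionally summable} forcing term $c_n$. This last feature is precisely what blocks a direct appeal to the classical theorem (which needs $\sum|c_n|<\infty$ or $c_n\ge0$), so the core of the argument will be a self-contained supermartingale construction that tolerates a signed $c_n$.

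For Part 1 I would set $P_n=\prod_{k=1}^n(1+a_k^-)^{-1}$, which is $\mathcal{F}_n$-measurable, decreasing, and---because $\sum_n a_n^-<\infty$---converges to a strictly positive limit $P_\infty$. Dropping the nonpositive term $-a_n^+Z_n$ gives $\mathbb{E}[Z_{n+1}\mid\mathcal{F}_n]\le(1+a_n^-)Z_n+c_n$; multiplying by $P_n$ and telescoping shows that $Y_n:=P_{n-1}Z_n-\Gamma_n$, where $\Gamma_n:=\sum_{k=1}^{n-1}P_kc_k$, is an integrable supermartingale. Since $P_n$ is monotone and bounded and $\sum_k c_k$ converges a.s., Abel's summation test yields $\Gamma_n\to\Gamma_\infty$ a.s.\ with $\Gamma_\infty$ finite. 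As $Y_n\ge-\Gamma_n$ is only bounded below by an a.s.-finite (not $L^1$-bounded) quantity, I would localize via the stopping times $\rho_K=\inf\{n:|\Gamma_{n+1}|>K\}$: the stopped process $Y_{n\wedge\rho_K}+K\ge0$ is a nonnegative supermartingale, hence converges a.s.\ by Doob's theorem, and since $\mathbb{P}(\rho_K=\infty)\uparrow1$ as $K\to\infty$, $Y_n$ itself converges a.s. Consequently $P_{n-1}Z_n=Y_n+\Gamma_n$ converges and, as $P_{n-1}\to P_\infty>0$, $Z_n\to Z_\infty\ge0$ a.s.

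For Part 2 I reinstate the dropped term: with $\{a_n^+Z_n\}$ now integrable, $Y_n':=P_{n-1}Z_n+\sum_{k=1}^{n-1}P_ka_k^+Z_k-\Gamma_n$ is again an integrable supermartingale, and the same localization gives its a.s.\ convergence; its monotone part $\sum_kP_ka_k^+Z_k$ must then converge, and since $P_k\ge P_\infty>0$ I obtain $\sum_k a_k^+Z_k<\infty$ a.s. Combining $\sum_n a_n^-<\infty$ with $\sum_n a_n=+\infty$ gives $\sum_n a_n^+=+\infty$; if $Z_\infty>0$ on a set of positive probability, then $a_n^+Z_n\ge(Z_\infty/2)a_n^+$ there for large $n$, contradicting $\sum_n a_n^+Z_n<\infty$, so $Z_\infty=0$ a.s. For Part 3 I pass to $\tilde Z_n:=n^\beta Z_n$: multiplying the recursion by $(n+1)^\beta$ and writing $(n+1)^\beta(1-a_n)=(1-\tilde a_n)n^\beta$ yields $\mathbb{E}[\tilde Z_{n+1}\mid\mathcal{F}_n]\le(1-\tilde a_n)\tilde Z_n+\tilde c_n$ with $\tilde c_n=(n+1)^\beta c_n$ and $\tilde a_n=(1+1/n)^\beta a_n-\{(1+1/n)^\beta-1\}$. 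The Taylor estimate $(1+1/n)^\beta-1=\beta/n+O(n^{-2})$ with $a_n\ge c/n$ and $c>\beta$ gives $\tilde a_n\ge\frac{c-\beta}{2n}>0$ eventually, so $\sum_n\tilde a_n^-<\infty$ and $\sum_n\tilde a_n^+=+\infty$; and Abel's test applied to the convergent series $\sum_n n^\beta c_n$ shows $\sum_n\tilde c_n$ converges a.s. Since the weights $n^\beta$ are deterministic, integrability of $\tilde Z_n$ and of $\tilde a_n^+\tilde Z_n$ is preserved, so $\tilde Z_n$ meets all hypotheses of Part 2, whence $n^\beta Z_n\to0$ a.s.

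I expect the delicate point to be the a.s.\ convergence of the supermartingale in Part 1 under only \emph{conditional} summability of $c_n$: the classical route requires $\sup_n\mathbb{E}[Y_n^-]<\infty$, which fails here, so the Abel-summation reduction to $\sum_n P_nc_n$ together with the stopping-time localization $\rho_K$ is the crux. Everything else---the contradiction argument in Part 2 and the change of variables $\tilde Z_n=n^\beta Z_n$ in Part 3---should be comparatively routine once Part 1 is established.
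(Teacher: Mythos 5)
Your proposal is correct and takes essentially the same route as the paper's own proof: your $Y_n = P_{n-1}Z_n - \sum_{k=1}^{n-1}P_k c_k$ is exactly the paper's normalized process $Z_n' - \sum_{k<n} c_k'$, your stopping-time localization $\rho_K$ mirrors the paper's $\tau_T$ (Doob on the shifted nonnegative stopped supermartingale), your Part~2 supermartingale $Y_n'$ with the monotone term $\sum_k P_k a_k^+ Z_k$ coincides with the paper's, and your Part~3 rescaling $\tilde Z_n = n^\beta Z_n$ with the estimate $(1+1/n)^\beta(1-c/n) \le 1 - \tfrac{c-\beta}{2n}$ reproduces the paper's reduction to Part~2. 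The only differences are cosmetic (two-sided versus one-sided stopping rule, and keeping $\tilde a_n$ random rather than replacing it by the deterministic minorant $A_n$), and your integrability checks for applying Part~2 are valid.
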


\begin{proof}[Proof of Lemma \ref{lem:RS lem}]
~~
\paragraph*{Part 1} 

First of all, \eqref{ineq:RStm} implies
\begin{equation*}
    \mathbb{E}\left[Z_{n+1} \mid \mathcal{F}_n\right] \leq\left(1+a^-_n \right) Z_n+c_n.
\end{equation*}
Set $Z'_n=\frac{Z_n}{\prod_{i=1}^{n-1}(1+a_i^-) }$, and $c_n'=\frac{c_n}{\prod_{i=1}^{n}(1+a_i^-)}$. Notice that $\sum_{n=1}^\infty a_n^-<\infty$ implies $\prod_{n=1}^\infty (1+a_n^-)<\infty$. By Abel's test for series (see Exercise 9.15 in \cite{ghorpade2006course}), we know that 
\begin{equation}\label{equ:c_n_sum_prob1}
    \mathbb{P}\Big(\sum_{n=1}^\infty c_n' \text{ exsits }\Big)=1,
\end{equation}
Because $|c_n'|\leq |c_n|$, $0\leq Z_n' \leq  Z_n$ as well as $c_n$ and $Z_n$ are integrable, we know that $c_n'$ and $Z_n'$ are also integrable. Note that
\begin{equation}\label{ineq:sup_martingale}
    \mathbb{E}\left[Z_{n+1}' \mid \mathcal{F}_n\right] \leq Z'_n+c'_n.
\end{equation}
Let $Y_1=Z_1'$ and $Y_n=Z_n'-(c_1'+\cdots+c_{n-1}')$, which are integrable for all $n\geq 2$. We know that $Y_n$ is integrable for all $n\geq 1$. By \eqref{ineq:sup_martingale}, we obtain
\begin{equation}\label{ineq:sup_martingale_Y}
    \mathbb{E}\left[Y_{n+1}  \mid \mathcal{F}_n\right] \leq Y_n.
\end{equation}
Let $\tau_T=\inf\{n; \sum_{k=1}^nc_k'>T\}$, for any $T\geq 0$. Note that $\{ \tau_T>n \}=\{ \sum_{i=1}^n c'_n\leq T \}\in \mathcal{F}_n$. Because
\begin{equation*}
    Y_{n\wedge \tau_T}=\sum_{l=1}^nY_l\cdot  I(\tau_T=l)+Y_n\cdot I(\tau_T>n), \text{ and }|Y_{n\wedge \tau_T}|\leq \sum_{i=1}^n|Y_i|,
\end{equation*}
we obtain that $Y_{n\wedge \tau_T}$ is integrable for all $n\geq 1$. 

By the definition of $\tau_T$, we know that $i\leq \tau_T-1\implies\sum_{k=1}^{i} c_k^{\prime} \leq T$, which implies that for any $n\geq 1$
\begin{equation*}
Y_{n \wedge \tau_T} \geq-\sum_{k=1}^{n \wedge \tau_T-1} c_k^{\prime} \geq-T.
\end{equation*}
By \eqref{ineq:sup_martingale_Y}, we know that for any $n\geq 1$,
\begin{equation*}
\begin{split}
    \mathbb{E}[Y_{(n+1) \wedge \tau_T}|\mathcal{F}_n]&=Y_{\tau_T} I(\tau_T\leq n)+\mathbb{E}[Y_{n+1}|\mathcal{F}_n] I(\tau_T > n)\\
    &\leq \sum_{l=1}^nY_{\tau_T} I(\tau_T = l)+ Y_{n }  I(\tau_T > n)= Y_{n\wedge \tau_T},\text{ and}
\end{split}
\end{equation*}
\begin{equation*}
    0\leq \mathbb{E}[Y_{n \wedge \tau_T} +T ]\leq \cdots\leq  \mathbb{E}[Y_{1\wedge \tau_T} +T ]=\mathbb{E}Z'_1+T<\infty.
\end{equation*}
This concludes that $Y_{n \wedge \tau_T}+T$ is a non-negative supermartingale (see Section 1.1 in \cite{hall2014martingale}). Applying Doob's convergence theorem (see Theorem 2.5 in \cite{hall2014martingale}) to $L^1$ uniformly bounded submartingale $-(Y_{n \wedge \tau_T}+T)$
, we know that $\lim_{n\to \infty }Y_{n \wedge \tau_T}$ exists and is finite for any $T\geq 0$. 

In conclusion, $\lim_{n\to \infty }Y_{n }$ exists and is finite almost surely on event 
\[
\{ \tau_T=\infty \} = \big\{ \sum_{i=1}^n c'_i\leq T \text{ for any }n\geq 1 \big\} \text{ for any }T\geq 0.
\]
Combining this with \eqref{equ:c_n_sum_prob1}, we know that $\lim_{n\to \infty} Y_n$ exists and is finite almost surely. Hence, with probability 1, we have
\[
Z_\infty=\lim_{n\to \infty} Z_n=\prod_{n=1}^\infty (1+a_n^-)\Big(\lim_{n\to \infty} Y_n-\sum_{k=1}^{\infty}c'_{k}\Big).
\]
\paragraph*{Part 2} 

Because $a_n^+=\max\{0,a_n\}$, we have $a_n'=\frac{a_n^+}{1+a_n^-}\geq 0$. Similar to the arguments in Part 1, we have
\begin{equation*}
    \mathbb{E}\left[Z'_{n+1} \mid \mathcal{F}_n\right] \leq\left(1-a'_n \right) Z'_n+c'_n.
\end{equation*}
Because we assume that $\sum_{n=1}^\infty a_n=+\infty$ with probability 1, we have
$$
\sum_{n=1}^N a_n'\geq \frac{1}{1+\sup_{n}a_n^-}\sum_{n=1}^N(a_n-a_n^- )\to +\infty,
$$
as $N\to \infty$ with probability 1. Let $Y_1'=Z_1'$ and for any $n\geq 2$
$$
Y'_n=Z_n'+\sum_{k=1}^{n-1}a_k'Z_k'-\sum_{k=1}^{n-1}c_k'.
$$
Since $|a_n'|\leq |a_n|$, $|a'_nZ_n'|\leq a_n^+Z_n$, $|a_n|$ and $a_n^+Z_n$ are intergrable, we know that $Y_n'$ is intergrable for any $n\geq 1$. Similar to the arguments in Part 1, we obtain that $Y'_{n\wedge \tau_T}$ is intergrable for any $T\geq 0$,
\begin{equation*}
    \mathbb{E}[Y'_{ n+1}|\mathcal{F}_n] \leq Y'_n \text{ and}
\end{equation*}
\begin{equation*}
    Y'_{ n \wedge \tau_T} \geq-\sum_{k=0}^{n \wedge \tau_T-1} c_k^{\prime} \geq-T, \text{ and}
\end{equation*}
\begin{equation*}
\begin{split}
    \mathbb{E}[Y'_{(n+1) \wedge \tau_T}|\mathcal{F}_n]&=Y'_{\tau_T} I(\tau_T\leq n)+\mathbb{E}[Y'_{n+1}|\mathcal{F}_n] I(\tau_T > n)\\
    &\leq \sum_{l=1}^nY'_{\tau_T} I(\tau_T = l)+ Y'_{n }  I(\tau_T > n)= Y'_{n\wedge \tau_T}.
\end{split}
\end{equation*}
In conclusion, we obtain that $Y'_{ n \wedge \tau_T}$ is a super-martingale, such that 
\[
0\leq   \mathbb{E}[Y'_{ n \wedge \tau_T}+T]\leq \cdots \leq  \mathbb{E}[Y'_{ 1 \wedge \tau_T}+T]=\mathbb{E}Z'_1+T<\infty.
\]
This concludes that $\{Y'_{n \wedge \tau_T}+T\}_{n=1}^\infty$ is a $L^1$ uniformly bounded supermartingale (see Section 1.1 in \cite{hall2014martingale}). Applying Doob's convergence theorem (see Theorem 2.5 in \cite{hall2014martingale}) to $L^1$ uniformly bounded submartingale $-(Y'_{n \wedge \tau_T}+T)$, we know that $\lim_{n\to \infty }Y'_{n \wedge \tau_T}$ exists and is finite for any $T\geq0$. Similar to the arguments in Part 1, we know that with probability 1, $\lim_{n\to \infty}Y'_n$ exists and is finite.

Notice that with probability 1,
\begin{equation*}
    0\leq \sum_{k=1}^{n-1}a_k'Z_k'= Y'_n-Z_n' +\sum_{k=1}^{n-1}c_k'\leq Y'_n+\sum_{k=1}^{n-1}c_k'<\infty.
\end{equation*}
Combined with \eqref{equ:c_n_sum_prob1}, we obtain that $\sum_{k=1}^{\infty}a_k'Z_k'$ exists with probability 1.

Because with probability 1,
\begin{equation*}
\sum_{n=1}^\infty a'_n=+\infty,\quad\sum_{k=1}^{\infty}a_k'Z_k'<\infty,\text{ and }\lim_{n\to \infty}Z_n' \text{ exists},
\end{equation*}
we obtain that with probability 1
$$
\lim_{n\to \infty}Z'_n=0.
$$
\paragraph*{Part 3} We define $g(t)=(1-ct)(1+t)^\beta, t\geq 0$. Notice that
$\lim_{t\to 0+}\frac{g(t)-g(0)}{t}=g'(0)= -(c-\beta)<0$. Thus, there exists $N>0$ such that
$g(\frac{1}{n})\leq 1-\frac{c-\beta}{2n}$, for all $n\geq N$. Define $C_n=(n+1)^\beta c_n$ and $A_n=1-A_n'$, where
\begin{equation*}
    A'_n=\begin{cases}
        &g(\frac{1}{n}) ,n< N\\
        &1-\frac{c-\beta}{2n}, n\geq N.
        \end{cases}
\end{equation*}
Note that 
\[
\frac{(n+1)^\beta}{n^\beta}(1-a_n)\leq (1+\frac{1}{n})^\beta (1-\frac{c}{n}) =g\big(\frac{1}{n}\big) \leq A'_n=1-A_n, n\geq 1.
\]
This implies that 
\[
\mathbb{E}\left[(n+1)^\beta Z_{n+1} \mid \mathcal{F}_n\right]  \leq\left( 1-A_n \right)n^\beta Z_n+C_n.
\]

Because the limit $\sum_{n=1}^\infty n^\beta c_n$ exists and $\{(\frac{n+1}{n})^\beta\}_{n=1}^\infty$ is a monotone and bounded sequence with probability 1,  by Abel's test for series (see Exercise 9.15 in \cite{ghorpade2006course}), the limit $\sum_{n=1}^\infty   C_n$ exists with probability 1.

It is straightforward to check that
$$
\sum_{n=1}^\infty A_n=\infty, \text{ and }\sum_{n=1}^\infty A_n^{-}<\infty.
$$
Applying the second conclusion in Lemma~\ref{lem:RS lem}, we obtain that with probability 1
$$
\lim_{n\to \infty}n^{\beta}Z_n=0.
$$
\end{proof}

{
In the rest of the section, let
$Z_{n}=\mathbb{F}_{\bgtheta^*}(\overline{{\bgpi}}_n)-\mathbb{F}_{\bgtheta^*}({\bgpi}^*)$.
} Applying Assumption~\ref{ass:5} and Lemma~\ref{lem:convex F}, we know that $\mathbb{F}_{\bgtheta}( {\bgpi} )$ is convex in $\bgpi$. Notice
$$
\mathbb{E}\left[Z_{n} \mid \mathcal{F}_{n-1}\right]=  \mathbb{F}_{\bgtheta^*}\left(\frac{n-1}{n}\overline{{\bgpi}}_{n-1}+\frac{1}{n}\delta_{a_n}\right)-\mathbb{F}_{\bgtheta^*}\left(  {\bgpi}^* \right),
$$
and
\begin{equation*}%
Z_{n-1}= \mathbb{F}_{\bgtheta^*} \left( \overline{{\bgpi}}_{n-1} \right)-\mathbb{F}_{\bgtheta^*}\left(  {\bgpi}^* \right). 
\end{equation*}

\begin{lemma}\label{lem:K_U}
$K_U$ defined in \eqref{def:K_U} satisfies that
\begin{itemize}
    \item $\bgpi,\bgpi'\in K_U, t\in(0,1)\implies t\bgpi+(1-t)\bgpi'\in K_{U/2 }$, and
    \item $\bgpi\in K_U\implies \lambda_{max}\big(\{\mathcal{I}^{\bgpi}(\bgtheta)\}^{-1} \big)\leq \frac{1}{\underline{c}\cdot U}$.
\end{itemize}
Moreover, there exists $U_0>0$ such that \begin{equation}\label{subset:argmin K_U}
\bigcup_{\widehat{\bgtheta} \in \bgTheta} \arg\min_{\bgpi\in \ShatA}\mathbb{F}_{\widehat{\bgtheta} }(\bgpi) \subset K_{U_0}.
\end{equation}
and for both {generalized \textrm{GI0} and \textrm{GI1} defined in \eqref{eq:GI0-exploration} and \eqref{eq:GI1-exploration}}, and for all $n\geq n_0$,
we have
\[
\overline{\bgpi}_n \in K_{U_0}, \forall n\geq n_0,
\]
{where $n_0$ satisfies that $\sum_{i=1}^{n_0} \mathcal{I}_{a_i} (\widehat\bgtheta_0)$ is non-singular for some $\widehat\bgtheta_0\in \bgTheta$.}

\end{lemma}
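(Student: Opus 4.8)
The plan is to dispatch the two structural properties of $K_U$ directly from the definition \eqref{def:K_U}, and then to build the existence of $U_0$ on top of them together with Theorem~\ref{thm:selection bound}. For the near-convexity property, I would take $\bgpi,\bgpi'\in K_U$ with witnessing relevant sets $S,S'$, so that $\min_{a\in S}\pi(a)\geq U$ and $\min_{a\in S'}\pi'(a)\geq U$. On $S$ the convex combination satisfies $t\pi(a)+(1-t)\pi'(a)\geq t\pi(a)\geq tU$, while on $S'$ it is at least $(1-t)U$; since $\max\{t,1-t\}\geq 1/2$, whichever of $S,S'$ carries the larger weight witnesses membership in $K_{U/2}$. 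For the eigenvalue bound, I would note that a witnessing relevant $S$ has $V_S(\bgtheta)=\mathbb{R}^p$ by Lemma~\ref{lem:dim-rank}, so $\blP_{V_S(\bgtheta)}=I_p$ and Assumption~\ref{ass:6B} yields $\sum_{a\in S}\mathcal{I}_a(\bgtheta)\succeq\underline{c}\,I_p$; hence $\mathcal{I}^{\bgpi}(\bgtheta)\succeq U\sum_{a\in S}\mathcal{I}_a(\bgtheta)\succeq\underline{c}\,U\,I_p$ and $\lambda_{\max}(\{\mathcal{I}^{\bgpi}(\bgtheta)\}^{-1})\leq 1/(\underline{c}\,U)$.

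The core of the lemma is \eqref{subset:argmin K_U}, for which I would first establish a quantitative converse to the eigenvalue bound: if $\bgpi\notin K_U$ then $\lambda_{\min}(\mathcal{I}^{\bgpi}(\bgtheta))\leq k\overline{c}\,U$ for all $\bgtheta$. Indeed, set $T_U=\{a:\pi(a)\geq U\}$; were $T_U$ relevant we would get $\bgpi\in K_U$, so $T_U$ is not relevant, and by Lemma~\ref{lem:dim-rank-c} and Assumption~\ref{ass:6B} the matrix $\sum_{a\in T_U}\mathcal{I}_a(\bgtheta)$ is singular for every $\bgtheta$, giving a unit vector $v$ with $v^T\mathcal{I}_a(\bgtheta)v=0$ for all $a\in T_U$. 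Using $\mathcal{I}_a(\bgtheta)\preceq\overline{c}\,I_p$ for $a\notin T_U$, I obtain $v^T\mathcal{I}^{\bgpi}(\bgtheta)v=\sum_{a\notin T_U}\pi(a)\,v^T\mathcal{I}_a(\bgtheta)v\leq \overline{c}\sum_{a\notin T_U}\pi(a)\leq k\overline{c}\,U$, which bounds $\lambda_{\min}$ as claimed.

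With the converse in hand, I would control minimizers by a coercivity argument. The uniform proportion $\bgpi_0=(1/k,\dots,1/k)$ lies in $K_{1/k}$ because $\mathcal{A}$ is relevant (Assumption~\ref{ass:3}), so the first two bullets confine the spectrum of $\{\mathcal{I}^{\bgpi_0}(\widehat\bgtheta)\}^{-1}$ to the fixed interval $[1/\overline{c},\,k/\underline{c}]$, giving $M_0:=\sup_{\widehat\bgtheta\in\bgTheta}\mathbb{F}_{\widehat\bgtheta}(\bgpi_0)<\infty$ by continuity of $\mathbb{G}$; every minimizer then obeys $\mathbb{F}_{\widehat\bgtheta}(\bgpi^*)\leq M_0$. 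Invoking the coercivity in Assumption~\ref{ass:5} — which I would verify separately for the $\Phi_q$ family, using $\Phi_q(\blA)\geq\lambda_{\max}(\blA)^{\min\{q,1\}}$ for $q>0$ and, for $q=0$, the fact that $\lambda_{\min}(\{\mathcal{I}^{\bgpi}(\widehat\bgtheta)\}^{-1})\geq 1/\overline{c}$ on the relevant domain prevents cancellation among the logarithms — produces a threshold $L$ uniform in $\widehat\bgtheta$ with $\lambda_{\max}(\{\mathcal{I}^{\bgpi^*}(\widehat\bgtheta)\}^{-1})\leq L$, i.e. $\lambda_{\min}(\mathcal{I}^{\bgpi^*}(\widehat\bgtheta))\geq 1/L$. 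Combining this with the converse estimate yields $\bgpi^*\in K_{U_0^{(1)}}$ for $U_0^{(1)}=1/(Lk\overline{c})$.

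Finally, for the last assertion I would apply Theorem~\ref{thm:selection bound}, which gives $\inf_{n\geq n_0} n_I/n\geq C>0$ for both generalized \textrm{GI0} and \textrm{GI1}; since $n_I/n=\max_{S\text{ relevant}}\min_{a\in S}\overline{\pi}_n(a)$ by \eqref{eq:exploration-condition}, this says exactly $\overline{\bgpi}_n\in K_C$ for all $n\geq n_0$. Taking $U_0=\min\{U_0^{(1)},C\}$ and using that $K_U$ is decreasing in $U$ delivers both \eqref{subset:argmin K_U} and $\overline{\bgpi}_n\in K_{U_0}$. The main obstacle I anticipate is the coercivity step: turning the qualitative statement that $\mathbb{F}_{\widehat\bgtheta}$ blows up near the singular boundary into a threshold $L$ that is \emph{uniform} over the compact space $\bgTheta$ and valid across all admissible criterion functions, with the $\Phi_0=\log\det$ case hinging on the a priori upper bound $\lambda_{\max}(\mathcal{I}^{\bgpi})\leq\overline{c}$ to rule out spurious descent.
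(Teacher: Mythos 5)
Your proposal is correct and takes essentially the same approach as the paper: the two structural bullets are proved identically from Assumption~\ref{ass:6B}, the inclusion \eqref{subset:argmin K_U} rests on the same two pillars the paper uses (failure of $K_U$-membership forces $\lambda_{\min}(\mathcal{I}^{\bgpi}(\bgtheta))$ to be small via a direction annihilated by the high-weight experiments, while coercivity of $\mathbb{G}_{\bgtheta}$ together with a uniform-in-$\bgtheta$ bound on the minimal criterion value keeps minimizers away from the singular boundary), and the final claim about $\overline{\bgpi}_n$ is read off from Theorem~\ref{thm:selection bound} exactly as in the paper. The only difference is organizational: the paper runs the middle step as a sequential contradiction (minimizers $\bgpi^n$ with $\bgpi^n_I\leq 1/n$ would have criterion values simultaneously bounded and divergent), whereas you extract explicit constants $M_0$, $L$, $U_0^{(1)}$ — which works, noting that your contradiction at $U_0^{(1)}=1/(Lk\overline{c})$ closes only because $\pi(a)<U$ holds strictly off $T_U$, so the spectral estimate $\lambda_{\min}(\mathcal{I}^{\bgpi^*}(\widehat\bgtheta))<k\overline{c}\,U_0^{(1)}=1/L$ is strict.
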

\begin{proof}[Proof of Lemma \ref{lem:K_U}]
For any $\bgpi,\bgpi'\in K_{U}$, and $t\in (0,1/2]$, 
\begin{equation*}
     \max_{S\subset\mathcal{A}: S \text{ is relevant} }\min_{a\in S} t\pi(a) +(1-t)\pi'(a)\geq \frac{1}{2}\max_{S\subset\mathcal{A}: S \text{ is relevant} }\min_{a\in S}\pi'(a) \geq \frac{U}{2}.
\end{equation*}
When $t\in[1/2,1)$, we can obtain the same lower bound, which means that $t\bgpi+(1-t)\bgpi'\in K_{U/2}$ for any $t\in(0,1)$. For any $\bgpi\in \ShatA$, define 
\begin{equation}\label{def:pi_I}
    \bgpi_I:=\max_{S\subset\mathcal{A}: S \text{ is relevant} }\min_{a\in S} \pi(a).
\end{equation}
By Assumption~\ref{ass:6B}, 
\begin{equation}\label{ineq:inv 2 side bound}
    \underline{c}\bgpi_I\cdot I_p\preceq \sum_{a\in \mathcal{A}} \pi(a)\mathcal{I}_a(\bgtheta)   \preceq\sum_{a: \pi(a)>0} \mathcal{I}_a(\bgtheta) \preceq \overline{c}\cdot \blP_{V_{\{a;\pi(a)>0\}} (\bgtheta)}.
\end{equation}
Notice that for any $\bgtheta$, 
\begin{equation*}
    \bgpi_{I}>0 \implies \mathcal{I}^{\bgpi}(\bgtheta)\succ 0\implies  \dim \Big( V_{\{a;\pi(a)>0\}} (\bgtheta) \Big)=p \implies \bgpi_I\geq \min_{a;\pi(a)>0} \pi(a)>0.
\end{equation*}
Thus, we know that $\bgpi_I>0$, if and only if $\mathcal{I}^{\bgpi}(\bgtheta^*)$ is nonsingular, if and only if $\mathcal{I}^{\bgpi}(\bgtheta)$ is nonsingular for any $\bgtheta\in \bgTheta$.

Let $Q=\{a\in \mathcal{A}; \pi(a)>\bgpi_I \}$.
We will show by contradiction that if \( Q \) is not empty, then \(\dim(V_Q) < p\). 
If $\dim(V_Q)=p$, then by Assumption~\ref{ass:6B}, we know that $\blP_{V_{Q}(\bgtheta) }=I_p$. By \eqref{ineq:double c bound}, we know that $\mathcal{I}^{\bgpi}(\bgtheta^*)$ is nonsingluar, which means that $Q\subset \mathcal{A}$ is relevant. However, $\min_{a\in Q} \pi(a)>\bgpi_I$, which contradicts the definition of $\bgpi_I$ in \eqref{def:pi_I}.

Thus, \(\dim(V_Q) < p\) and $\blP_{V_{Q}(\bgtheta) }\neq I_p$. By Assumption~\ref{ass:6B}, we obtain
\begin{equation}\label{ineq:bgpi_I}
     \underline{c}\bgpi_I\cdot I_p\preceq \sum_{a\in \mathcal{A}} \pi(a)\mathcal{I}_a(\bgtheta)   \preceq\sum_{a\in Q} \pi(a)\mathcal{I}_a(\bgtheta) +\sum_{a\not\in Q} \bgpi_I\mathcal{I}_a(\bgtheta) \preceq \overline{c}\cdot  \blP_{V_{Q}(\bgtheta) }+\overline{c} \bgpi_I \cdot I_p.
\end{equation}
Applying Courant–Fischer–Weyl min-max principle (see Chapter I of \cite{hilbert1953methods} or Corollary III.1.2 in \cite{bhatia2013matrix}) for Rayleigh quotient on \eqref{ineq:bgpi_I}, we obtain that
\begin{equation}\label{equ:lambda_min_range}
    \lambda_{\min}(\mathcal{I}^{\bgpi}(\bgtheta))\in [\underline{c} \bgpi_I, \overline{c} \bgpi_I].
\end{equation}
{Applying Theorem~\ref{thm:selection bound}, $\bgpi\in K_U$ implies 
$
\lambda_{\min}(\mathcal{I}^{\bgpi}(\bgtheta))\geq \underline{c} U
$, which further implies $\lambda_{max}\big(\{\mathcal{I}^{\bgpi}(\bgtheta)\}^{-1} \big)\leq \frac{1}{\underline{c}\cdot U}$.}

Also, we have
\begin{equation}\label{equ:equv_bgpi_I}
\lambda_{\max}\big(\{ \mathcal{I}^{\bgpi} ( \bgtheta^*) \}^{-1} \big)\to \infty
    \iff \lambda_{\max}\big(\{ \mathcal{I}^{\bgpi}(\bgtheta) \}^{-1} \big)\to \infty, \forall \bgtheta\in \bgTheta
    \iff \bgpi_I \to 0.
\end{equation}
We will show \eqref{subset:argmin K_U} by contradiction. Set $U_n=\frac{1}{n}$. Assume, in contrast to  \eqref{subset:argmin K_U}, that there exists $\widehat{\bgtheta}^n\in \bgTheta$ and $\bgpi^n\in\ShatA$,  such that
\[
\mathbb{F}_{\widehat{\bgtheta}^n}(\bgpi^n)=\min_{\bgpi\in \ShatA}\mathbb{F}_{\widehat{\bgtheta}^n}(\bgpi), \text{ and }\bgpi^n_I\leq U_n=\frac{1}{n}.
\]
Then, 
\begin{equation}\label{lim:limsup_upper bound}
    \limsup_{n\to \infty}\mathbb{F}_{\widehat{\bgtheta}^n}(\bgpi^n)=\limsup_{n\to \infty}\min_{\bgpi\in \ShatA}\mathbb{F}_{\widehat{\bgtheta}^n}(\bgpi)\leq \max_{\bgtheta\in \bgTheta} \min_{\bgpi\in\ShatA}\mathbb{F}_{\bgtheta}(\bgpi)<\infty
\end{equation}
Set $\blA_n= \mathcal{I}^{\bgpi^n}(\bgtheta)$. We know that $\norm{\blA_n}_{op}\leq \overline{c}$ and by \eqref{equ:lambda_min_range} $\lambda_{min}(\blA_n)\leq \overline{c} \bgpi^n_I\leq \overline{c}/n$. Let $\mathbb{G}_{\bgtheta}(\blA_n^{-1})=\Phi_q(\blA_n^{-1})$. When $q=0$, we know that
\begin{equation}\label{equ:S106}
    \lim_{n\to \infty}\min_{\bgtheta\in \bgTheta }\Phi_0(\blA_n^{-1})\geq \lim_{n\to \infty}\log  ( {n}/{\overline{c}}  ) + (p-1)\log (1/\overline{c})=\infty.
\end{equation}
When $q>0$, we know that
\begin{equation}\label{equ:S107}
    \lim_{n\to \infty}\min_{\bgtheta\in \bgTheta }\Phi_q(\blA_n^{-1}) \geq \lim_{n\to \infty}\min_{\bgtheta\in \bgTheta } \lambda_{max}(\blA_n^{-1}) \geq \lim_{n\to \infty}n/\overline{c}=\infty.
\end{equation}
Combining \eqref{equ:S106} and \eqref{equ:S107} with Assumption~\ref{ass:5} {and \eqref{equ:equv_bgpi_I}}, we know that
\begin{equation}\label{lim:Fbgpi_I}
    \lim_{\bgpi^n_I\to 0} \min_{\bgtheta\in\bgTheta} \mathbb{F}_{\bgtheta}(\bgpi^n)=\lim_{\bgpi^n_I\to 0} \min_{\bgtheta\in\bgTheta} \mathbb{G}_{\bgtheta}( \{\mathcal{I}^{\bgpi^n}(\bgtheta)\}^{-1} )=\infty.
\end{equation}
By equivalence result \eqref{equ:equv_bgpi_I} and limit result \eqref{lim:Fbgpi_I}, since $\bgpi^n_I\to 0$ as $n\to\infty$, we obtain
\[
\liminf_{n\to \infty} \mathbb{F}_{\widehat{\bgtheta}^n}(\bgpi^n)\geq \liminf_{\bgpi^n_I\to 0} \min_{\bgtheta\in \bgTheta}\mathbb{F}_{ {\bgtheta}}(\bgpi^n)\to \infty,
\]
which contradicts \eqref{lim:limsup_upper bound}. Thus, \eqref{subset:argmin K_U} holds.

By Theorem~\ref{thm:selection bound}, we know that there exists $U_0>0$ such that
\[
 \overline{\bgpi}_n\in K_{U_0}, n\geq n_0.
\]
Combined with \eqref{equ:lambda_min_range}, we completes the proof.
\end{proof}
\begin{lemma}\label{lem:grad_F}
Under Assumptions~\ref{ass:1}-\ref{ass:5}, there exists $L_U<\infty$ such that
\[
\norm{\nabla \nabla_{\bgtheta} \mathbb{F}_{\bgtheta}(\bgpi) }_{op}\leq L_U, \text{ and }\norm{\nabla^2  \mathbb{F}_{\bgtheta}(\bgpi) }_{op}\leq L_U, 
\]
for any $\bgtheta\in \bgTheta$ and $\bgpi\in K_U$.
\end{lemma}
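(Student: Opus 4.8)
The plan is to write $\mathbb{F}_{\bgtheta}(\bgpi)=\mathbb{G}_{\bgtheta}\big(\bgSigma(\bgtheta,\bgpi)\big)$ with $\bgSigma(\bgtheta,\bgpi)=\{\mathcal{I}^{\bgpi}(\bgtheta)\}^{-1}$ as a composition of smooth maps, differentiate twice by the chain rule, and bound every factor uniformly on a single compact domain. First I would confine the relevant matrices to such a domain. For $\bgpi\in K_U$, the second bullet of Lemma~\ref{lem:K_U} gives $\lambda_{max}(\bgSigma(\bgtheta,\bgpi))\leq 1/(\underline{c}\,U)$. On the other hand, Assumption~\ref{ass:6B} applied with $Q=\mathcal{A}$ yields $\mathcal{I}^{\bgpi}(\bgtheta)=\sum_{a}\pi(a)\mathcal{I}_a(\bgtheta)\preceq\sum_{a\in\mathcal{A}}\mathcal{I}_a(\bgtheta)\preceq \overline{c}\,\blP_{V_{\mathcal{A}}(\bgtheta)}\preceq \overline{c}\,I_p$, hence $\lambda_{min}(\bgSigma(\bgtheta,\bgpi))\geq 1/\overline{c}$. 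Thus $\bgSigma(\bgtheta,\bgpi)$ lies in the fixed compact set $\mathcal{K}=\{\bgSigma=\bgSigma^T: (1/\overline{c})I_p\preceq \bgSigma\preceq (1/(\underline{c}\,U))I_p\}$ of positive definite matrices whose spectrum is bounded away from $0$ and $\infty$ uniformly in $(\bgtheta,\bgpi)$.

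Next I would compute the derivatives. Since $\blS:=\mathcal{I}^{\bgpi}(\bgtheta)$ is linear in $\bgpi$, Lemma~\ref{lem:Differentiation inverse Matrix} gives $\partial_{\pi(a)}\bgSigma=-\bgSigma\,\mathcal{I}_a(\bgtheta)\,\bgSigma$ and, differentiating once more, $\partial^2_{\pi(a)\pi(b)}\bgSigma=\bgSigma\mathcal{I}_a\bgSigma\mathcal{I}_b\bgSigma+\bgSigma\mathcal{I}_b\bgSigma\mathcal{I}_a\bgSigma$. The chain rule then produces
\[
\partial^2_{\pi(a)\pi(b)}\mathbb{F}_{\bgtheta}(\bgpi)=\innerpoduct{\nabla^2\mathbb{G}_{\bgtheta}(\bgSigma)[\partial_{\pi(b)}\bgSigma]}{\partial_{\pi(a)}\bgSigma}+\innerpoduct{\nabla\mathbb{G}_{\bgtheta}(\bgSigma)}{\partial^2_{\pi(a)\pi(b)}\bgSigma},
\]
while differentiating the first-order $\bgpi$-derivative $\partial_{\pi(a)}\mathbb{F}_{\bgtheta}=-\innerpoduct{\nabla\mathbb{G}_{\bgtheta}(\bgSigma)}{\bgSigma\mathcal{I}_a(\bgtheta)\bgSigma}$ in $\theta_i$ gives the mixed derivative, which, using $\partial_{\theta_i}\bgSigma=-\bgSigma\big(\sum_a\pi(a)\partial_{\theta_i}\mathcal{I}_a\big)\bgSigma$, the direct $\bgtheta$-dependence of $\mathbb{G}_{\bgtheta}$, and $\partial_{\theta_i}\mathcal{I}_a$, expands into a finite sum of products of the same building blocks. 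I would then invoke the uniform bounds: $\norm{\mathcal{I}_a(\bgtheta)}_{op}\leq\overline{c}$ and $\norm{\partial_{\theta_i}\mathcal{I}_a(\bgtheta)}_{op}$ bounded by continuity (Assumption~\ref{ass:3}) over the compact $\bgTheta$; $\norm{\bgSigma}_{op}\leq 1/(\underline{c}\,U)$ on $\mathcal{K}$; and $\nabla\mathbb{G}_{\bgtheta}(\bgSigma)$, $\nabla^2\mathbb{G}_{\bgtheta}(\bgSigma)$, $\nabla_{\bgtheta}\nabla\mathbb{G}_{\bgtheta}(\bgSigma)$ bounded by continuity on the compact $\bgTheta\times\mathcal{K}$ (Assumption~\ref{ass:5}). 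Since $\mathbb{F}_{\bgtheta}$ is only $C^2$ in the finitely many variables $(\pi(a))_{a\in\mathcal{A}}$ and the $p$ coordinates of $\bgtheta$, the operator norm of the Hessian and of the mixed derivative matrix is controlled by $|\mathcal{A}|$ (resp.\ $\sqrt{p\,|\mathcal{A}|}$) times the maximal entrywise bound, giving the common constant $L_U$.

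The hard part will be the regularity of $\mathbb{G}_{\bgtheta}=\Phi_q$ in case~1 of Assumption~\ref{ass:5}: Lemma~\ref{lem:phi_q} supplies only the first (Gateaux) derivative, so for non-integer $q$ one must separately justify that $\bgSigma\mapsto\tr(\bgSigma^q)$ and $\bgSigma\mapsto(\tr\,\bgSigma^q)^{1/q}$ are twice continuously differentiable with bounded derivatives on $\mathcal{K}$. I would handle this by noting that the spectrum of every $\bgSigma\in\mathcal{K}$ stays in the compact interval $[1/\overline{c},\,1/(\underline{c}\,U)]\subset(0,\infty)$, on which $t\mapsto t^q$ extends to a function holomorphic in a neighborhood; hence $\bgSigma\mapsto\bgSigma^q$ is $C^\infty$ there by the spectral (holomorphic) functional calculus, and its first two Fréchet derivatives are continuous, hence bounded, on $\mathcal{K}$. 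The derivative formulas then match those obtained in the proof of Lemma~\ref{lem:phi_q}, and the outer smooth scalar map $x\mapsto x^{1/q}$ (for $q\geq1$) applied to the bounded quantity $\tr\,\bgSigma^q$, which stays in a compact interval of $(0,\infty)$, preserves the bounds. In case~2 the needed continuity of $\nabla\mathbb{G}_{\bgtheta}$, $\nabla^2\mathbb{G}_{\bgtheta}$, and $\nabla_{\bgtheta}\nabla\mathbb{G}_{\bgtheta}$ is assumed directly, so no extra work is required beyond the compactness argument above.
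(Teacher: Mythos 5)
Your proposal is correct and follows the same basic strategy as the paper's proof: write $\mathbb{F}_{\bgtheta}(\bgpi)=\mathbb{G}_{\bgtheta}\big(\{\mathcal{I}^{\bgpi}(\bgtheta)\}^{-1}\big)$, differentiate by the chain rule, and conclude by continuity on a compact set. The differences are worth recording. The paper keeps the compactness in the domain: it defines $u_{\bgtheta}(\blA)=\mathbb{G}_{\bgtheta}(\blA^{-1})$, expresses the mixed and second derivatives of $\mathbb{F}_{\bgtheta}$ as compositions of continuous maps, and invokes compactness of $\bgTheta\times K_U$ (using that $K_U$ is closed in $\ShatA$) to deduce boundedness, so no explicit constants appear. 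You instead push the compactness onto the image: the two-sided spectral bounds $\mathcal{I}^{\bgpi}(\bgtheta)\preceq\overline{c}\,I_p$ (Assumption~\ref{ass:6B} with $Q=\mathcal{A}$) and $\lambda_{\max}\big(\{\mathcal{I}^{\bgpi}(\bgtheta)\}^{-1}\big)\leq 1/(\underline{c}\,U)$ (Lemma~\ref{lem:K_U}) confine $\bgSigma$ to a fixed compact set $\mathcal{K}$ of positive definite matrices, and you bound each factor in the explicit chain-rule formulas, which produces $L_U$ more concretely. More substantively, you identified and repaired a step the paper elides: the paper justifies continuity of $\nabla^2\mathbb{G}_{\bgtheta}(\bgSigma)$ in case 1 of Assumption~\ref{ass:5} by citing Lemma~\ref{lem:phi_q}, but that lemma only supplies the first (Gateaux) derivative of $\Phi_q$, so second-order differentiability of $\bgSigma\mapsto\operatorname{tr}(\bgSigma^{q})$ for non-integer $q$ is asserted there rather than proved. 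Your holomorphic-functional-calculus argument (the spectrum of every $\bgSigma\in\mathcal{K}$ lies in a compact interval of $(0,\infty)$, hence $\bgSigma\mapsto\bgSigma^{q}$ is $C^\infty$ with continuous Fr\'echet derivatives on $\mathcal{K}$) fills this gap cleanly; for completeness you should also note that the $q=0$ case, $\log\det$, is handled by the same confinement since $\det\bgSigma$ stays in a compact interval of $(0,\infty)$.
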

\begin{proof}[Proof of Lemma \ref{lem:grad_F}]
Define $u_{\bgtheta}(\blA)=\mathbb{G}_{\bgtheta}(\blA^{-1})$.

For any positive definite matrix \(\blA\), each element of \( \blA^{-1} \) is a well-defined composition of elementary functions of \( \blA \). Therefore, each element of \( \blA^{-1} \) is infinitely differentiable.

By Assumptions~\ref{ass:5} and Lemma~\ref{lem:phi_q}, $ \nabla_{\bgtheta}\mathbb{G}_{\bgtheta}(\bgSigma)$ and $\nabla^2\mathbb{G}_{\bgtheta}(\bgSigma)$ are continuous in $(\bgtheta,\bgSigma)$ for any $\bgtheta\in \bgTheta$ and positive definite matrix $\bgSigma$. Set $\blA=\mathcal{I}^{\bgpi}({\bgtheta})$. We have $\mathbb{F}_{\bgtheta}(\bgpi)=u_{\bgtheta}(\mathcal{I}^{\bgpi}({\bgtheta}))$.

By the chain rule, we know that
\begin{equation}\label{equ:S95_lem}
    \frac{\partial }{\partial{\theta}_i}\mathbb{F}_{\bgtheta}(\bgpi)=\innerpoduct{\left.\frac{\partial}{\partial \blA}u_{\bgtheta}(\blA)\right|_{\blA=\mathcal{I}^{\bgpi}({\bgtheta})} }{\frac{\partial \mathcal{I}^{\bgpi}({\bgtheta})}{\partial{\theta}_i}}+\left.\frac{\partial}{\partial{\theta}_i}\mathbb{G}_{\bgtheta}(\blA^{-1} ) \right|_{\blA=\mathcal{I}^{\bgpi}({\bgtheta})}.
\end{equation}
Notice that each element of $\left.\frac{\partial}{\partial \blA}u_{\bgtheta}(\blA)\right|_{\blA=\mathcal{I}^{\bgpi}({\bgtheta})}$ is continuously differentiable in $\blA$. Thus
\begin{equation}\label{equ:S96_lem}
        \frac{\partial}{\partial \pi(a)}\frac{\partial}{\partial \blA}u_{\bgtheta}(\mathcal{I}^{\bgpi}({\bgtheta}))
\end{equation}
exists and is continuous in $(\bgpi,\bgtheta)\in  K_U\times \bgTheta$. %

Furthermore, we know that
\begin{equation}\label{equ:S97_lem}
    \frac{\partial}{\partial \pi(a) }\frac{\partial \mathcal{I}^{\bgpi}({\bgtheta})}{\partial{\theta}_i}=\frac{\partial \mathcal{I}_a({\bgtheta})}{\partial{\theta}_i},\text{ and}
\end{equation}
\begin{equation}\label{equ:S98_lem}
    \frac{\partial}{\partial \pi(a)} \Big(\left.\frac{\partial}{\partial \theta_i} \mathbb{G}_{\bgtheta}(\blA^{-1}) \right|_{\blA=\mathcal{I}^{\bgpi}({\bgtheta})} \Big) = \innerpoduct{   \left.\frac{\partial}{\partial \theta_i} \frac{\partial}{\partial \blA}\mathbb{G}_{\bgtheta}(\blA^{-1}) \right|_{\blA=\mathcal{I}^{\bgpi}({\bgtheta})}   }{ \mathcal{I}_a(\bgtheta)}.
\end{equation}
Combining \eqref{equ:S95_lem}, \eqref{equ:S96_lem}, \eqref{equ:S97_lem} and \eqref{equ:S98_lem}, we obtain that $\norm{\nabla \nabla_{\bgtheta} \mathbb{F}_{\bgtheta}(\bgpi) }$ is continuous over $\bgTheta\times K_U$ for any $U>0$. By Lemma~\ref{lem:K_U} and the definition of $K_U$ in \eqref{def:K_U}, we know that $K_U$ is a close subset of $\ShatA$. Thus, $\bgTheta\times K_U$ is compact.

By chain rule, we know that
\begin{equation}\label{equ:S99_lem}
    \frac{\partial }{\partial\pi(a)}\mathbb{F}_{\bgtheta}(\bgpi)=\innerpoduct{\left.\frac{\partial}{\partial \blA}u_{\bgtheta}(\blA)\right|_{\blA=\mathcal{I}^{\bgpi}({\bgtheta})} }{\frac{\partial \mathcal{I}^{\bgpi}({\bgtheta})}{\partial\pi(a)}}=\innerpoduct{ \frac{\partial}{\partial \blA}u_{\bgtheta}( \mathcal{I}^{\bgpi}({\bgtheta}))  }{ \mathcal{I}_a({\bgtheta}) }.
\end{equation}
Because $u_{\bgtheta}( \mathcal{I}^{\bgpi}({\bgtheta}))$ is twice continously differentiable in $\blA$, we know that $\nabla^2 \mathbb{F}_{\bgtheta}(\bgpi)$ is continuous over compact set $\bgTheta\times K_U$ for any $U$. 

In conclusion, there exists $L_U<\infty$ such that
\[
\norm{\nabla \nabla_{\bgtheta} \mathbb{F}_{\bgtheta}(\bgpi) }_{op}\leq L_U, \text{ and }\norm{\nabla^2  \mathbb{F}_{\bgtheta}(\bgpi) }_{op}\leq L_U, 
\]
for any $\bgtheta\in \bgTheta$ and $\bgpi\in K_U$.
\end{proof}

\begin{lemma}\label{lem:replacement principle before}
Under Assumptions~\ref{ass:1}-\ref{ass:5} as well as \ref{ass:6A}-\ref{ass:7A} (or \ref{ass:6B}-\ref{ass:7B}), the {generalized } \(\textrm{GI0}\) and \eqref{eq:GI0-exploration} and \(\textrm{GI1}\), defined in  \eqref{eq:GI1-exploration}, satisfy that there exists a constant \( L > 0 \) such that,
\begin{equation}\label{equ:replacement principle 1}
   \mathbb{F}_{{\bgtheta}_{n-1}} (\overline{{\bgpi}}_n)-\mathbb{F}_{{\bgtheta}_{n-1}} ({\bgpi}^*_n)\leq (1-\frac{1}{n})(\mathbb{F}_{{\bgtheta}_{n-1}}(\overline{{\bgpi}}_{n-1})-\mathbb{F}_{{\bgtheta}_{n-1}} ({\bgpi}_n^*))+\frac{L}{n^2}, n\geq n_0.
\end{equation}
\end{lemma}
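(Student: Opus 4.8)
The plan is to freeze the parameter at $\bgtheta_{n-1}$ and reproduce, with fully uniform error control, the noiseless contraction argument sketched in Section~\ref{sec:proof-sketch}. Write $\bgpi^*_n=\arg\min_{\bgpi\in\ShatA}\mathbb{F}_{\bgtheta_{n-1}}(\bgpi)$ and $\overline{{\bgpi}}^{a}_{n}=\tfrac{n-1}{n}\overline{{\bgpi}}_{n-1}+\tfrac1n\delta_a$, so that $\overline{{\bgpi}}_n=\overline{{\bgpi}}^{a_n}_{n}$ with $a_n$ chosen by generalized \textrm{GI0} or \textrm{GI1}. The first thing I would record is that all relevant points live in a fixed sub-simplex on which $\mathbb{F}_{\bgtheta_{n-1}}$ is well behaved: by Theorem~\ref{thm:selection bound} and Lemma~\ref{lem:K_U} we have $\overline{{\bgpi}}_{n-1},\overline{{\bgpi}}_n\in K_{U_0}$ for all $n\geq n_0$, while $\bgpi^*_n\in K_{U_0}$ by \eqref{subset:argmin K_U}. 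Since $\mathcal{I}^{\bgpi}(\bgtheta)$ is linear in $\bgpi$ and the positive-definite cone is convex, the region $\{\bgpi:\mathcal{I}^{\bgpi}(\bgtheta_{n-1})\succ0\}$ is convex, $\mathbb{F}_{\bgtheta_{n-1}}$ is convex on it by Lemma~\ref{lem:convex F}, and every convex combination of these three points lies in $K_{U_0/2}$ by the first bullet of Lemma~\ref{lem:K_U}.

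Next I would identify each selection rule with a one-step optimization of $\mathbb{F}_{\bgtheta_{n-1}}$. Because $\mathcal{I}(\bgtheta_{n-1};\va_{n-1},a)=\mathcal{I}^{\overline{{\bgpi}}^a_n}(\bgtheta_{n-1})$, the generalized \textrm{GI0} rule \eqref{eq:GI0-exploration} selects exactly $a^0_n=\arg\min_{a\in\cA}\mathbb{F}_{\bgtheta_{n-1}}(\overline{{\bgpi}}^a_n)$. For \textrm{GI1}, Lemma~\ref{lem:Differentiation inverse Matrix} gives $\partial\mathbb{F}_{\bgtheta_{n-1}}(\bgpi)/\partial\pi(a)=-\operatorname{tr}[\nabla\mathbb{G}_{\bgtheta_{n-1}}(\bgSigma_{n-1})\bgSigma_{n-1}\mathcal{I}_a(\bgtheta_{n-1})\bgSigma_{n-1}]$ with $\bgSigma_{n-1}=\{\mathcal{I}^{\overline{{\bgpi}}_{n-1}}(\bgtheta_{n-1})\}^{-1}$, so maximizing the trace in \eqref{eq:GI1-exploration} is the same as minimizing $\langle\nabla\mathbb{F}_{\bgtheta_{n-1}}(\overline{{\bgpi}}_{n-1}),\tfrac1n(\delta_a-\overline{{\bgpi}}_{n-1})\rangle$ over $a$; that is, $a^1_n$ minimizes the first-order term appearing in \eqref{eq:first-approximation}. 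Since \textrm{GI0} minimizes $\mathbb{F}_{\bgtheta_{n-1}}(\overline{{\bgpi}}^a_n)$ exactly, we automatically obtain $\mathbb{F}_{\bgtheta_{n-1}}(\overline{{\bgpi}}^{a^0_n}_n)\leq\mathbb{F}_{\bgtheta_{n-1}}(\overline{{\bgpi}}^{a^1_n}_n)$, so it suffices to prove \eqref{equ:replacement principle 1} for the \textrm{GI1} choice $a_n=a^1_n$.

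For the \textrm{GI1} value I would chain a second-order Taylor expansion against the convexity bound. A Taylor expansion of $\mathbb{F}_{\bgtheta_{n-1}}$ at $\overline{{\bgpi}}_{n-1}$, valid because the segment stays in $K_{U_0/2}$ and $\norm{\nabla^2\mathbb{F}_{\bgtheta_{n-1}}}_{op}\leq L_{U_0/2}$ by Lemma~\ref{lem:grad_F}, gives $\mathbb{F}_{\bgtheta_{n-1}}(\overline{{\bgpi}}^{a^1_n}_n)=\mathbb{F}_{\bgtheta_{n-1}}(\overline{{\bgpi}}_{n-1})+\langle\nabla\mathbb{F}_{\bgtheta_{n-1}}(\overline{{\bgpi}}_{n-1}),\tfrac1n(\delta_{a^1_n}-\overline{{\bgpi}}_{n-1})\rangle+r_n$ with $|r_n|\leq L_{U_0/2}/n^2$ (using $\norm{\delta_a-\bgpi}\leq\sqrt2$). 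Because the linear functional $\bgpi\mapsto\langle\nabla\mathbb{F}_{\bgtheta_{n-1}}(\overline{{\bgpi}}_{n-1}),\bgpi-\overline{{\bgpi}}_{n-1}\rangle$ attains its minimum over $\ShatA$ at a vertex, the minimality of $a^1_n$ yields $\langle\nabla\mathbb{F}_{\bgtheta_{n-1}}(\overline{{\bgpi}}_{n-1}),\tfrac1n(\delta_{a^1_n}-\overline{{\bgpi}}_{n-1})\rangle\leq\langle\nabla\mathbb{F}_{\bgtheta_{n-1}}(\overline{{\bgpi}}_{n-1}),\tfrac1n(\bgpi^*_n-\overline{{\bgpi}}_{n-1})\rangle$; expanding the right-hand side back by Taylor (again with an $O(1/n^2)$ remainder) converts it into $\mathbb{F}_{\bgtheta_{n-1}}(\tfrac{n-1}{n}\overline{{\bgpi}}_{n-1}+\tfrac1n\bgpi^*_n)-\mathbb{F}_{\bgtheta_{n-1}}(\overline{{\bgpi}}_{n-1})$. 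Finally Jensen's inequality applied to the convex $\mathbb{F}_{\bgtheta_{n-1}}$ gives $\mathbb{F}_{\bgtheta_{n-1}}(\tfrac{n-1}{n}\overline{{\bgpi}}_{n-1}+\tfrac1n\bgpi^*_n)-\mathbb{F}_{\bgtheta_{n-1}}(\bgpi^*_n)\leq(1-\tfrac1n)(\mathbb{F}_{\bgtheta_{n-1}}(\overline{{\bgpi}}_{n-1})-\mathbb{F}_{\bgtheta_{n-1}}(\bgpi^*_n))$, and collecting the two remainders produces \eqref{equ:replacement principle 1} with $L=2L_{U_0/2}$.

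The main obstacle I anticipate is ensuring that the $O(1/n^2)$ remainders are genuinely uniform --- in $n$, in the (possibly random) sequence $\bgtheta_{n-1}$, and over all sample trajectories --- rather than merely pointwise. This is precisely where the preparatory lemmas are essential: the containment $\overline{{\bgpi}}_{n-1},\overline{{\bgpi}}_n,\bgpi^*_n\in K_{U_0}$ (hence all convex combinations in $K_{U_0/2}$) keeps every evaluation point bounded away from the boundary where $\mathcal{I}^{\bgpi}(\bgtheta)$ degenerates, and the uniform Hessian bound of Lemma~\ref{lem:grad_F} over the compact set $\bgTheta\times K_{U_0/2}$ then furnishes a single constant $L_{U_0/2}$ independent of $\bgtheta_{n-1}$. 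A secondary point requiring care is the exact algebraic equivalence between the trace form of \textrm{GI1} in \eqref{eq:GI1-exploration} and minimization of the first-order term; I would verify it cleanly through Lemma~\ref{lem:Differentiation inverse Matrix} (and Lemma~\ref{lem:phi_q} in the $\Phi_q$ case), taking care that \textrm{GI1} uses the gradient evaluated at $\overline{{\bgpi}}_{n-1}$, which matches the base point of the Taylor expansion.
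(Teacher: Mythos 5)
Your proposal is correct and follows essentially the same route as the paper's proof: containment of $\overline{{\bgpi}}_{n-1},\overline{{\bgpi}}_n,\bgpi^*_n$ in $K_{U_0}$ via Theorem~\ref{thm:selection bound} and Lemma~\ref{lem:K_U}, convexity of $\mathbb{F}_{\bgtheta_{n-1}}$ (Lemma~\ref{lem:convex F}) plus Jensen, a second-order Taylor expansion with the uniform Hessian bound of Lemma~\ref{lem:grad_F} over $K_{U_0/2}$, identification of \textrm{GI1} with minimization of the linear term through Lemma~\ref{lem:Differentiation inverse Matrix}, and the observation that \textrm{GI0}'s exact one-step minimality makes its value at most the \textrm{GI1} value. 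The paper assembles these same ingredients in a slightly different order (Jensen first, then Taylor, then the selection-rule identifications), but the argument and the constants are the same up to generic relabeling.
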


\begin{proof}[Proof of Lemma \ref{lem:replacement principle before}]
By Theorem~\ref{thm:selection bound} and Lemma~\ref{lem:K_U}, there exists $0<U<\infty$ such that $\overline{{\bgpi}}_n,{\bgpi}_n^*\in K_U$, where we define 
\[
\bgpi_n^*\in \arg\min_{\bgpi\in \ShatA}\mathbb{F}_{{\bgtheta}_{n-1}}(\bgpi).
\]
By Lemma~\ref{lem:convex F}, $\mathbb{F}_{{\bgtheta}_{n-1}}({\bgpi})$ is convex in $\bgpi$. According to Jensen's inequality, 
\begin{equation}\label{ineq:convex-line1}
    \mathbb{F}_{{\bgtheta}_{n-1}}(\frac{n-1}{n}\overline{{\bgpi}}_{n-1}+\frac{1}{n}{\bgpi}_n^* ) \leq \frac{n-1}{n} \mathbb{F}_{{\bgtheta}_{n-1}}(\overline{{\bgpi}}_{n-1}) +\frac{1}{n}\mathbb{F}_{{\bgtheta}_{n-1}}({\bgpi}^*_n).
\end{equation}
Thus,
\begin{equation}\label{ineq:convex-line2}
    \mathbb{F}_{{\bgtheta}_{n-1}}(\frac{n-1}{n}\overline{{\bgpi}}_{n-1}+\frac{1}{n}{\bgpi}_n^*)-\mathbb{F}_{{\bgtheta}_{n-1}}({\bgpi}^*_n)\leq (1-\frac{1}{n})(\mathbb{F}_{{\bgtheta}_{n-1}}(\overline{{\bgpi}}_{n-1})-\mathbb{F}_{{\bgtheta}_{n-1}} ({\bgpi}^*_n)).
\end{equation} 
Notice that
\begin{equation}\label{equ:taylor 7.11}
   \mathbb{F}_{{\bgtheta}_{n-1}}(\frac{n-1}{n}\overline{{\bgpi}}_{n-1}+\frac{1}{n}{\bgpi}_n^*)-\mathbb{F}_{{\bgtheta}_{n-1}}(\overline{{\bgpi}}_{n-1})=\innerpoduct{\nabla\mathbb{F}_{{\bgtheta}_{n-1}}(\overline{{\bgpi}}_{n-1}) }{\frac{1}{n}{\bgpi}_n^*-\frac{1}{n}\overline{{\bgpi}}_{n-1}}+\overline{R}(\overline{{\bgpi}}_{n-1}, {\bgpi}_n^*),
\end{equation}
where 
$$
\overline{R}(\overline{{\bgpi}}_{n-1}, {\bgpi}_n^*)=\innerpoduct{\nabla\mathbb{F}_{{\bgtheta}_{n-1}}( {\bgpi}' )-\nabla\mathbb{F}_{{\bgtheta}_{n-1}}( \overline{{\bgpi}}_{n-1} )}{ \frac{1}{n}{\bgpi}_n^*-\frac{1}{n}\overline{{\bgpi}}_{n-1} },
$$
for some ${\bgpi}'$ between $\overline{{\bgpi}}_{n-1}$ and $\frac{n-1}{n}\overline{{\bgpi}}_{n-1}+\frac{1}{n}{\bgpi}_n^*$. By Lemma~\ref{lem:K_U}, we know that ${\bgpi}'\in K_{U/2}$.

By Assumptions~\ref{ass:1}-\ref{ass:5} and Lemma~\ref{lem:grad_F}, there exists a constant $C'<\infty$ such that
{\begin{equation}\label{eq:taylor-L}
\begin{split}
    & |\overline{R}(\overline{{\bgpi}}_{n-1}, {\bgpi}_n^*)|\\
\leq &
\|\frac{1}{n}{\bgpi}_n^*-\frac{1}{n}\overline{{\bgpi}}_{n-1}\|^2 \sup_{{\bgpi}\in K_{U/2},\bgtheta \in \bgTheta }\norm{\nabla^2 \mathbb{F}_{\bgtheta } ({\bgpi})}_{op}\\
\leq & \frac{C'}{n^2} \sup_{{\bgpi}\in K_{U/2},\bgtheta \in \bgTheta }\norm{\nabla^2 \mathbb{F}_{\bgtheta } ({\bgpi})}_{op}\\
= &\frac{L}{n^2},
\end{split}
\end{equation}
}
where
\begin{equation*}
    L=C'\sup_{{\bgpi}\in K_{U/2},\bgtheta \in \bgTheta }\norm{\nabla^2 \mathbb{F}_{\bgtheta } ({\bgpi})}_{op}<\infty.
\end{equation*}
By Lemma~\ref{lem:Differentiation inverse Matrix}, we know that
\[
\innerpoduct{ \nabla \mathbb{F}_{\bgtheta}(\bgpi) }{ \delta_a } = \frac{\partial }{\partial \pi(a) }\mathbb{F}_{\bgtheta}(\bgpi)=-\innerpoduct{ \nabla \mathbb{G}_{\bgtheta}(\{\mathcal{I}^{\bgpi}(\bgtheta)\}^{-1} ) }{\{\mathcal{I}^{\bgpi}(\bgtheta)\}^{-1} \mathcal{I}_a(\bgtheta) \{\mathcal{I}^{\bgpi}(\bgtheta)\}^{-1} }.
\]
{Let $a_{n}^{(1)}$ be the experiment selected following the generalized \textrm{GI1}.}
Then, according to the definition of \textrm{GI1}, it minimizes the following function over $\ShatA$ with respect to $a$:
\begin{equation*}
\innerpoduct{\nabla\mathbb{F}_{{\bgtheta}_{n-1}}(\overline{{\bgpi}}_{n-1}) }{\frac{1}{n}{\bgpi}-\frac{1}{n}\overline{{\bgpi}}_{n-1}}=\sum_{a=1}^k{\bgpi}(a)\innerpoduct{\nabla\mathbb{F}_{{\bgtheta}_{n-1}}(\overline{{\bgpi}}_{n-1}) }{\frac{1}{n}\delta_a-\frac{1}{n}\overline{{\bgpi}}_{n-1}}.
\end{equation*}
By similar Taylor expansion arguments {as those for \eqref{eq:taylor-L}}, we have for all $n\geq n_0$,
\begin{equation*}
    \left|\mathbb{F}_{{\bgtheta}_{n-1}}(\frac{n-1}{n}\overline{{\bgpi}}_{n-1}+\frac{1}{n} \delta_{a^{(1)}_n} )-\mathbb{F}_{{\bgtheta}_{n-1}}(\overline{{\bgpi}}_{n-1})-\innerpoduct{\nabla\mathbb{F}_{{\bgtheta}_{n-1}}(\overline{{\bgpi}}_{n-1}) }{\frac{1}{n}\delta_{a^{(1)}_n}-\frac{1}{n}\overline{{\bgpi}}_{n-1}}\right| \leq \frac{L}{n^2}.
\end{equation*}

The above inequality implies that for all $n\geq n_0$, \textrm{GI1} satisfies
\begin{equation}\label{ineq:det rule}
    \mathbb{F}_{\widehat{\bgtheta}_{n-1}}( \overline{{\bgpi}}_{n})=\mathbb{F}_{\widehat{\bgtheta}_{n-1}}(\frac{n-1}{n}\overline{{\bgpi}}_{n-1}+\frac{1}{n} \delta_{a^{(1)}_n} )\leq\mathbb{F}_{\widehat{\bgtheta}_{n-1}}(\frac{n-1}{n}\overline{{\bgpi}}_{n-1}+\frac{1}{n} {\bgpi}_n^* )+\frac{L}{n^2}.
\end{equation}
For \textrm{GI0}, {let $a_{n}^{(0)}$ be the experiment selected at time $n$. Then, according to its definition} we have
\begin{equation}\label{ineq:GI0<GI1}\mathbb{F}_{\widehat{\bgtheta}_{n-1}}(\frac{n-1}{n}\overline{{\bgpi}}_{n-1}+\frac{1}{n} \delta_{a^{(0)}_n} )\leq 
\mathbb{F}_{\widehat{\bgtheta}_{n-1}}(\frac{n-1}{n}\overline{{\bgpi}}_{n-1}+\frac{1}{n} \delta_{a^{(1)}_n} ).
\end{equation}
Therefore, the proof of Lemma \ref{lem:replacement principle before} is concluded by combining  inequalities {\eqref{ineq:convex-line2} -- \eqref{ineq:GI0<GI1}}.

\end{proof}

\begin{lemma}\label{lem:replacement principle after}
Under Assumptions~\ref{ass:1}-\ref{ass:5} as well as \ref{ass:6A}-\ref{ass:7A} (or \ref{ass:6B}-\ref{ass:7B}),  the generalized \(\textrm{GI0}\) selection \eqref{eq:GI0-exploration} and \(\textrm{GI1}\) selection \eqref{eq:GI1-exploration} satisfy that there exists $0<C<\infty$ such that 
\begin{equation}\label{equ:replacement principle 2}
    \mathbb{F}_{ {\bgtheta}^* }(\overline{{\bgpi}}_n)-\mathbb{F}_{ {\bgtheta}^* }({\bgpi}^* )\leq (1-\frac{1}{n})(\mathbb{F}_{ {\bgtheta}^* }(\overline{{\bgpi}}_{n-1})-\mathbb{F}_{ {\bgtheta}^* }({\bgpi}^* ))+c_{n-1}, n\geq n_0,
\end{equation}
where $c_{n-1}  = \frac{C}{n^2} +\frac{C}{n}\norm{{\bgtheta}_{n-1}-\bgtheta^*}$.
\end{lemma}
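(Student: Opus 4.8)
The plan is to transfer the estimate from Lemma~\ref{lem:replacement principle before}, which is stated at the ``frozen'' parameter ${\bgtheta}_{n-1}$ with its own minimizer ${\bgpi}_n^*$, over to the estimate in \eqref{equ:replacement principle 2}, which is stated entirely at the true parameter $\bgtheta^*$ with the fixed minimizer ${\bgpi}^*$. The key difficulty is that both the criterion function $\mathbb{F}_{\bgtheta}(\cdot)$ and its minimizer depend on $\bgtheta$, so replacing ${\bgtheta}_{n-1}$ by $\bgtheta^*$ throughout introduces error terms that must be controlled by $\norm{{\bgtheta}_{n-1}-\bgtheta^*}$. First I would invoke Theorem~\ref{thm:selection bound} and Lemma~\ref{lem:K_U} to fix a constant $U>0$ so that $\overline{{\bgpi}}_n$, $\overline{{\bgpi}}_{n-1}$, ${\bgpi}^*$, and ${\bgpi}_n^*$ all lie in $K_U$ for all $n\geq n_0$; this compactness is what makes every subsequent Lipschitz/Taylor bound uniform.

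Next I would establish the two ``parameter-swap'' inequalities that bridge $\mathbb{F}_{{\bgtheta}_{n-1}}$ and $\mathbb{F}_{\bgtheta^*}$. By Lemma~\ref{lem:grad_F}, $\norm{\nabla\nabla_{\bgtheta}\mathbb{F}_{\bgtheta}(\bgpi)}_{op}\leq L_U$ uniformly over $\bgtheta\in\bgTheta$ and $\bgpi\in K_U$, so for any fixed $\bgpi\in K_U$ the map $\bgtheta\mapsto\mathbb{F}_{\bgtheta}(\bgpi)$ is Lipschitz in $\bgtheta$ with a constant controlled by $L_U$ (after also using continuity of $\nabla_\bgtheta\mathbb{F}_\bgtheta$ on the compact set $\bgTheta\times K_U$). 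Applying this to each of the four terms appearing in \eqref{equ:replacement principle 1} yields, for each relevant $\bgpi$,
\begin{equation*}
    \big|\mathbb{F}_{{\bgtheta}_{n-1}}(\bgpi)-\mathbb{F}_{\bgtheta^*}(\bgpi)\big|\leq \overline{C}\norm{{\bgtheta}_{n-1}-\bgtheta^*},
\end{equation*}
with $\overline{C}$ depending only on $U$ and $L_U$. A second ingredient I would need is that the frozen minimizer can be replaced by the true one: since ${\bgpi}_n^*=\arg\min_{\bgpi}\mathbb{F}_{{\bgtheta}_{n-1}}(\bgpi)$ and ${\bgpi}^*=\arg\min_{\bgpi}\mathbb{F}_{\bgtheta^*}(\bgpi)$, the optimality of each minimizer combined with the swap bound gives $0\leq \mathbb{F}_{\bgtheta^*}({\bgpi}_n^*)-\mathbb{F}_{\bgtheta^*}({\bgpi}^*)\leq 2\overline{C}\norm{{\bgtheta}_{n-1}-\bgtheta^*}$ by a standard two-sided argument (evaluate $\mathbb{F}_{{\bgtheta}_{n-1}}$ at both ${\bgpi}^*$ and ${\bgpi}_n^*$, swap, and use $\mathbb{F}_{{\bgtheta}_{n-1}}({\bgpi}_n^*)\leq\mathbb{F}_{{\bgtheta}_{n-1}}({\bgpi}^*)$).

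Finally I would chain these together. Starting from the left side of \eqref{equ:replacement principle 2}, I rewrite $\mathbb{F}_{\bgtheta^*}(\overline{{\bgpi}}_n)-\mathbb{F}_{\bgtheta^*}({\bgpi}^*)$ as $\mathbb{F}_{{\bgtheta}_{n-1}}(\overline{{\bgpi}}_n)-\mathbb{F}_{{\bgtheta}_{n-1}}({\bgpi}_n^*)$ plus four swap-error terms, each bounded by $\overline{C}\norm{{\bgtheta}_{n-1}-\bgtheta^*}$; apply Lemma~\ref{lem:replacement principle before} to the frozen middle quantity to pull out the contraction factor $(1-1/n)$ and the $L/n^2$ term; then convert $\mathbb{F}_{{\bgtheta}_{n-1}}(\overline{{\bgpi}}_{n-1})-\mathbb{F}_{{\bgtheta}_{n-1}}({\bgpi}_n^*)$ back to $\mathbb{F}_{\bgtheta^*}(\overline{{\bgpi}}_{n-1})-\mathbb{F}_{\bgtheta^*}({\bgpi}^*)$ via the same swap bounds. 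Collecting the $O(1/n^2)$ term and all the $O(\norm{{\bgtheta}_{n-1}-\bgtheta^*})$ terms into $c_{n-1}=\frac{C}{n^2}+\frac{C}{n}\norm{{\bgtheta}_{n-1}-\bgtheta^*}$ finishes the proof; here the factor $1/n$ on the perturbation term arises because the contraction multiplies the residual error, so the swap error attached to $\overline{{\bgpi}}_{n-1}$ is effectively weighted by $(1-1/n)$ while the remaining swap errors are absorbed at order $1/n$. The main obstacle I anticipate is bookkeeping the weights carefully enough to obtain the stated $\frac{C}{n}\norm{{\bgtheta}_{n-1}-\bgtheta^*}$ rate rather than an $O(\norm{{\bgtheta}_{n-1}-\bgtheta^*})$ term: one must verify that the swap errors on $\overline{{\bgpi}}_n$ and on the two minimizers either come with an intrinsic $1/n$ factor (from the one-step update $\overline{{\bgpi}}_n-\overline{{\bgpi}}_{n-1}=O(1/n)$) or can be combined across the contraction step, so that no unweighted $\norm{{\bgtheta}_{n-1}-\bgtheta^*}$ survives.
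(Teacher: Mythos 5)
Your high-level strategy is the paper's own: restrict all proportions to a fixed $K_U$ via Theorem~\ref{thm:selection bound} and Lemma~\ref{lem:K_U}, transfer the frozen-parameter inequality of Lemma~\ref{lem:replacement principle before} to $\bgtheta^*$, and absorb the transfer errors into $c_{n-1}$. But there is a genuine gap, and it is exactly the one you flag at the end without resolving: your only quantitative swap tool is the pointwise bound $|\mathbb{F}_{\bgtheta_{n-1}}(\bgpi)-\mathbb{F}_{\bgtheta^*}(\bgpi)|\leq \overline{C}\norm{\bgtheta_{n-1}-\bgtheta^*}$, and that bound is provably too weak to produce the factor $\tfrac{1}{n}$ in front of $\norm{\bgtheta_{n-1}-\bgtheta^*}$. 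Concretely, writing the target in increment form $\mathbb{F}_{\bgtheta^*}(\overline{{\bgpi}}_n)-\mathbb{F}_{\bgtheta^*}(\overline{{\bgpi}}_{n-1})+\frac{1}{n}\{\mathbb{F}_{\bgtheta^*}(\overline{{\bgpi}}_{n-1})-\mathbb{F}_{\bgtheta^*}({\bgpi}^*)\}$, the swap error on the increment term is, under your pointwise bound, only controlled by $|\mathbb{F}_{\bgtheta^*}(\overline{{\bgpi}}_n)-\mathbb{F}_{\bgtheta_{n-1}}(\overline{{\bgpi}}_n)|+|\mathbb{F}_{\bgtheta^*}(\overline{{\bgpi}}_{n-1})-\mathbb{F}_{\bgtheta_{n-1}}(\overline{{\bgpi}}_{n-1})|\leq 2\overline{C}\norm{\bgtheta_{n-1}-\bgtheta^*}$, with no $1/n$: the near-cancellation of these two terms, which are evaluated at proportions only $O(1/n)$ apart, is invisible to a pointwise bound. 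The fact that $\overline{{\bgpi}}_n-\overline{{\bgpi}}_{n-1}=O(1/n)$ helps only if you have an estimate that couples the $\bgpi$-increment with the $\bgtheta$-perturbation, and you never state one.

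What the paper proves, and what your argument needs, is a difference-of-differences estimate: for all $\bgpi_0,\bgpi_1\in K_U$ and $\bgtheta_1,\bgtheta_2\in\bgTheta$,
\begin{equation*}
\big|\{\mathbb{F}_{\bgtheta_1}(\bgpi_1)-\mathbb{F}_{\bgtheta_1}(\bgpi_0)\}-\{\mathbb{F}_{\bgtheta_2}(\bgpi_1)-\mathbb{F}_{\bgtheta_2}(\bgpi_0)\}\big|\leq C_1\norm{\bgpi_1-\bgpi_0}\,\norm{\bgtheta_1-\bgtheta_2},
\end{equation*}
obtained by the mean value theorem applied to $t\mapsto \mathbb{F}_{\bgtheta_1}(\bgpi(t))-\mathbb{F}_{\bgtheta_2}(\bgpi(t))$ together with the mixed-derivative bound $\norm{\nabla_{\bgtheta}\nabla\mathbb{F}_{\bgtheta}(\bgpi)}_{op}\leq L_U$ of Lemma~\ref{lem:grad_F}; note that this mixed bound, not a plain Lipschitz constant for $\bgtheta\mapsto\mathbb{F}_{\bgtheta}(\bgpi)$, is the actual content of that lemma. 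Applied to $(\bgpi_0,\bgpi_1)=(\overline{{\bgpi}}_{n-1},\overline{{\bgpi}}_n)$ with $\norm{\overline{{\bgpi}}_n-\overline{{\bgpi}}_{n-1}}\leq 2/n$ it yields the increment swap error $\frac{2C_1}{n}\norm{\bgtheta_{n-1}-\bgtheta^*}$; applied to $(\overline{{\bgpi}}_{n-1},{\bgpi}^*)$, combined with the one-sided optimality $\mathbb{F}_{\bgtheta_{n-1}}({\bgpi}^*)\geq\mathbb{F}_{\bgtheta_{n-1}}({\bgpi}^*_n)$, it yields an $O(\norm{\bgtheta_{n-1}-\bgtheta^*})$ error that is harmless because it enters only through the term carrying the explicit prefactor $1/n$, giving in total $c_{n-1}=\frac{L}{n^2}+\frac{4C_1}{n}\norm{\bgtheta_{n-1}-\bgtheta^*}$. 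Without this ingredient your bookkeeping can only deliver $c_{n-1}=O(1/n^2)+O(\norm{\bgtheta_{n-1}-\bgtheta^*})$, which is not the lemma's conclusion and would break the downstream argument: Lemma~\ref{lem:converge} needs $\sum_n n^{\beta}c_n<\infty$ under $\sum_n n^{\beta-1}\norm{\widehat{\bgtheta}_{n-1}-\bgtheta^*}<\infty$, and an unweighted error of size $\norm{\bgtheta_{n-1}-\bgtheta^*}\asymp n^{-1/2}$ makes that series diverge for every $\beta\geq 0$.
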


\begin{proof}[Proof of Lemma \ref{lem:replacement principle after}]
We can rewrite \eqref{equ:replacement principle 1} as
\begin{equation}\label{equ:140}
    \mathbb{F}_{{\bgtheta}_{n-1}}(\overline{{\bgpi}}_n)-\mathbb{F}_{{\bgtheta}_{n-1}}(\overline{{\bgpi}}_{n-1}) +  \frac{1}{n} (\mathbb{F}_{{\bgtheta}_{n-1}}(\overline{{\bgpi}}_{n-1})-\mathbb{F}_{{\bgtheta}_{n-1}}({\bgpi}^*_n))\leq \frac{L}{n^2}.
\end{equation}
{We first show that for all ${\bgpi}_0,{\bgpi}_1\in K_U$, and $\bgtheta_1,\bgtheta_2\in\bgTheta$,
\begin{equation}\label{eq:f-bound-pdif}
   | \mathbb{F}_{\bgtheta_1}({\bgpi}_1)-\mathbb{F}_{\bgtheta_1}({\bgpi}_0)- \{\mathbb{F}_{\bgtheta_2}({\bgpi}_1) -\mathbb{F}_{\bgtheta_2}({\bgpi}_0) \}|\leq  C_1 \norm{{\bgpi}_0-{\bgpi}_1}\norm{ {\bgtheta}_{1}-\bgtheta_{2} },
\end{equation}
where $C_1=\sup_{{\bgpi}\in K_{U/2},\bgtheta\in \bgTheta}\norm{\nabla_{\bgtheta} \nabla  \mathbb{F}_{\bgtheta}({\bgpi}) }_{op}$ is a positive constant.
To show this,
}
set $g(t)=\mathbb{F}_{\bgtheta_1}({\bgpi}(t))-\mathbb{F}_{\bgtheta_2}({\bgpi}(t))$, where ${\bgpi}(t)=t{\bgpi}_1+(1-t){\bgpi}_0, t\in [0,1]$ and ${\bgpi}_0,{\bgpi}_1\in K_U$, {where $K_U$ is chosen according to the proof of Lemma \ref{lem:replacement principle before}}.
By Lagrange mean value theorem, there exists $0<t<1$ such that
\begin{equation*}
    g(1)-g(0)=g'(t)=\innerpoduct{\nabla \mathbb{F}_{\bgtheta_1}({\bgpi}(t))-\nabla \mathbb{F}_{\bgtheta_2}({\bgpi}(t))}{ {\bgpi}_1-{\bgpi}_0 }.
\end{equation*}
By Assumptions~\ref{ass:1}-\ref{ass:5}, Lemma~\ref{lem:K_U} and Lemma~\ref{lem:grad_F}, we know that
\begin{equation*}
    \frac{\norm{\nabla \mathbb{F}_{\bgtheta_1}({\bgpi}(t))-\nabla \mathbb{F}_{\bgtheta_2}({\bgpi}(t))}}{\norm{\bgtheta_1-\bgtheta_2}}\leq  \sup_{{\bgpi}\in K_{U/2},\bgtheta\in \bgTheta}\norm{\nabla_{\bgtheta} \nabla  \mathbb{F}_{\bgtheta}({\bgpi}) }_{op}
     <\infty.
\end{equation*}
Set
\begin{equation*}
C_1=\sup_{{\bgpi}\in K_{U/2},\bgtheta\in \bgTheta}\norm{\nabla_{\bgtheta} \nabla  \mathbb{F}_{\bgtheta}({\bgpi}) }_{op}.
\end{equation*}
Then, the above inequality implies \eqref{eq:f-bound-pdif}. Note that
\begin{equation*}
    \norm{\overline{{\bgpi}}_n-\overline{{\bgpi}}_{n-1}}\leq \frac{2}{n}.
\end{equation*}
The above inequality together with \eqref{eq:f-bound-pdif}  implies
\begin{equation}\label{equ:142}
\begin{split}
     &|(\mathbb{F}_{ \bgtheta^*}(\overline{{\bgpi}}_n)-\mathbb{F}_{ \bgtheta^*}(\overline{{\bgpi}}_{n-1}))- (\mathbb{F}_{{\bgtheta}_{n-1}}(\overline{{\bgpi}}_n)-\mathbb{F}_{{\bgtheta}_{n-1}}(\overline{{\bgpi}}_{n-1}))| \leq  \frac{2C_1}{n}\norm{{\bgtheta}_{n-1}-\bgtheta^*},\text{ and}\\ 
     &|(\mathbb{F}_{ \bgtheta^*}(\overline{{\bgpi}}_{n-1})-\mathbb{F}_{ \bgtheta^*}({{\bgpi}}^*))- (\mathbb{F}_{{\bgtheta}_{n-1}}(\overline{{\bgpi}}_{n-1})-\mathbb{F}_{{\bgtheta}_{n-1}}( {{\bgpi}}^{*}))|\leq  {2C_1} \norm{{\bgtheta}_{n-1}-\bgtheta^*}.
\end{split}
\end{equation}
Because $\mathbb{F}_{{\bgtheta}_{n-1}}({\bgpi}^*)\geq \mathbb{F}_{{\bgtheta}_{n-1}}({\bgpi}^*_n) $, we have
\begin{equation}\label{equ:143}
\begin{split}
    &(\mathbb{F}_{ \bgtheta^*}(\overline{{\bgpi}}_{n-1})-\mathbb{F}_{ \bgtheta^*}({\bgpi}^*))-(\mathbb{F}_{{\bgtheta}_{n-1}}(\overline{{\bgpi}}_{n-1})-\mathbb{F}_{{\bgtheta}_{n-1}}({\bgpi}^*_n))\\
    \leq&(\mathbb{F}_{ \bgtheta^*}(\overline{{\bgpi}}_{n-1})-\mathbb{F}_{ \bgtheta^*}({\bgpi}^*))-(\mathbb{F}_{{\bgtheta}_{n-1}}(\overline{{\bgpi}}_{n-1})-\mathbb{F}_{{\bgtheta}_{n-1}}({\bgpi}^*))\\
    \leq & 2C_1  \norm{{\bgtheta}_{n-1}-\bgtheta^*}.
\end{split}
\end{equation}
By triangular inequality, inequalities \eqref{equ:140}, \eqref{equ:142} and \eqref{equ:143}, we obtain
\begin{equation}
    \begin{split}
          &\mathbb{F}_{ \bgtheta^*}(\overline{{\bgpi}}_n)-\mathbb{F}_{ \bgtheta^*}(\overline{{\bgpi}}_{n-1}) +  \frac{1}{n} (\mathbb{F}_{ \bgtheta^*}(\overline{{\bgpi}}_{n-1})-\mathbb{F}_{ \bgtheta^*}({\bgpi}^*))\\
          \leq & |(\mathbb{F}_{ \bgtheta^*}(\overline{{\bgpi}}_n)-\mathbb{F}_{ \bgtheta^*}(\overline{{\bgpi}}_{n-1}))- (\mathbb{F}_{{\bgtheta}_{n-1}}(\overline{{\bgpi}}_n)-\mathbb{F}_{{\bgtheta}_{n-1}}(\overline{{\bgpi}}_{n-1}))| \\ 
         & +  (\mathbb{F}_{{\bgtheta}_{n-1}}(\overline{{\bgpi}}_n)-\mathbb{F}_{{\bgtheta}_{n-1}}(\overline{{\bgpi}}_{n-1}))+\frac{1}{n}\Big(\mathbb{F}_{{\bgtheta}_{n-1}}(\overline{{\bgpi}}_{n-1})-\mathbb{F}_{{\bgtheta}_{n-1}}({\bgpi}^*_n)\Big)+
         \frac{2C_1  \norm{{\bgtheta}_{n-1}-\bgtheta^*}}{n}\\
         \leq &  \frac{4C_1}{n}\norm{\bgtheta_{n-1}-\bgtheta^*}+\frac{L}{n^2}.
    \end{split}
\end{equation}
In conclusion, we know that
\begin{equation*}
    \mathbb{F}_{ \bgtheta^*}(\overline{{\bgpi}}_n)-\mathbb{F}_{ \bgtheta^*}(\overline{{\bgpi}}_{n-1}) +  \frac{1}{n} (\mathbb{F}_{ \bgtheta^*}(\overline{{\bgpi}}_{n-1})-\mathbb{F}_{ \bgtheta^*}({\bgpi}^*))\leq c_{n-1},
\end{equation*}
where $c_{n-1}  = \frac{C}{n^2} +\frac{C}{n}\norm{{\bgtheta}_{n-1}-\bgtheta^*},$ and $C=  4C_1+L $.
\end{proof}

As a corollary of Theorem \ref{thm:GI0-GI1AN}, we establish the following:
\begin{corollary}\label{cor:bound}
Under Assumptions~\ref{ass:1}-\ref{ass:5} as well as Assumptions~\ref{ass:6A}-\ref{ass:7A} (or Assumptions~\ref{ass:6B}-\ref{ass:7B}), if there exists $U>0$ such that $n\geq n_0\implies \overline{{\bgpi}}_n\in K_U$, 
then with probability 1,
\begin{equation*}%
\sum_{n=1}^\infty n^{-s}\norm{\sqrt{n}(\widehat{\bgtheta}^{\text{ML}}_{n} - \bgtheta^*)}^t<\infty. 
\end{equation*}
provided $s>1,0<t\leq 2$.

\end{corollary}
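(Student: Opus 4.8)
The plan is to convert the desired almost-sure summability into a moment estimate. Write $\blW_n=\sqrt{n}(\widehat{\bgtheta}_n^{\text{ML}}-\bgtheta^*)$. Since every summand $n^{-s}\norm{\blW_n}^t$ is nonnegative, Tonelli's theorem reduces the claim to $\sum_n n^{-s}\mathbb{E}\norm{\blW_n}^t<\infty$, because finiteness of the expected sum forces finiteness almost surely. As $0<t\le 2$, Lyapunov's inequality gives $\mathbb{E}\norm{\blW_n}^t\le(\mathbb{E}\norm{\blW_n}^2)^{t/2}$, so it suffices to control the single quantity $\mathbb{E}\norm{\blW_n}^2=n\,\mathbb{E}\norm{\widehat{\bgtheta}_n^{\text{ML}}-\bgtheta^*}^2$. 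Because $s>1$ is arbitrary, a poly-logarithmic bound is enough: if $\mathbb{E}\norm{\blW_n}^2=O((\log n)^{\kappa})$ then $\mathbb{E}\norm{\blW_n}^t=O((\log n)^{\kappa t/2})$ and $\sum_n n^{-s}(\log n)^{\kappa t/2}<\infty$ for every $s>1$. Thus the whole task is a uniform-up-to-logs bound on $n\,\mathbb{E}\norm{\widehat{\bgtheta}_n^{\text{ML}}-\bgtheta^*}^2$.

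To obtain it I would split on the location of the MLE. Fix a small $\rho>0$ with $B(\bgtheta^*,\rho)\subset\operatorname{int}(\bgTheta)$ and set $D=\operatorname{diam}(\bgTheta)$. On the far event $\{\norm{\widehat{\bgtheta}_n^{\text{ML}}-\bgtheta^*}\ge\rho\}$ only the crude deterministic bound $\norm{\blW_n}^2\le nD^2$ is available, so I need $\mathbb{P}(\norm{\widehat{\bgtheta}_n^{\text{ML}}-\bgtheta^*}\ge\rho)=O(1/n)$ to make its contribution $O(1)$. The hypothesis $\overline{\bgpi}_n\in K_U$ gives $\mathcal{I}^{\overline{\bgpi}_n}(\bgtheta^*)\succeq\lambda_U I_p$ with $\lambda_U=\min_{\bgpi\in K_U}\lambda_{min}(\mathcal{I}^{\bgpi}(\bgtheta^*))>0$; together with Assumption~\ref{ass:7B} this yields a constant gap $M(\bgtheta^*;\overline{\bgpi}_n)-M(\bgtheta;\overline{\bgpi}_n)\ge C\lambda_U\rho^2=:\eta$ uniformly for $\norm{\bgtheta-\bgtheta^*}\ge\rho$. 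Since $l_n(\widehat{\bgtheta}_n^{\text{ML}};\va_n)\ge l_n(\bgtheta^*;\va_n)$, the far event is contained in $\{\sup_{\bgtheta}|\nu_n(\bgtheta)|\ge\eta/2\}$, where $\nu_n(\bgtheta)=[l_n(\bgtheta;\va_n)-M(\bgtheta;\overline{\bgpi}_n)]-[l_n(\bgtheta^*;\va_n)-M(\bgtheta^*;\overline{\bgpi}_n)]$ is a centered martingale-difference average with $\nu_n(\bgtheta^*)=0$. The decisive point is that, the gap $\eta$ being a fixed constant, I may discretize $\bgTheta$ at a fixed scale depending only on $\eta$ and the Lipschitz constant of the summands, giving an $O(1)$ number of grid points with no dimension penalty; a pointwise estimate $\mathbb{E}\nu_n(\bgtheta_k)^2=O(1/n)$ (from the $L^2$ score bound in Assumption~\ref{ass:2} and the decoupling of Lemma~\ref{lem:same dist}) plus a union bound gives $O(1/n)$, while the oscillation between grid points is controlled by the $\Psi_1$-Lipschitz property, whose empirical average concentrates at rate $O(1/n)$ by the same $L^2$ control.

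On the local event $\{\norm{\widehat{\bgtheta}_n^{\text{ML}}-\bgtheta^*}<\rho\}$ I would use dyadic peeling. With shells $S_j=\{2^{j-1}\delta_n\le\norm{\bgtheta-\bgtheta^*}\le 2^j\delta_n\}$ and base $\delta_n\asymp 1/\sqrt n$, the event $\widehat{\bgtheta}_n^{\text{ML}}\in S_j$ forces $C\lambda_U(2^{j-1}\delta_n)^2\le\sup_{\bgtheta\in S_j}\nu_n(\bgtheta)$ through $0\le l_n(\widehat{\bgtheta}_n^{\text{ML}};\va_n)-l_n(\bgtheta^*;\va_n)$ and Assumption~\ref{ass:7B}. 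A mean-value reduction $\sup_{\norm{\bgtheta-\bgtheta^*}\le r}|\nu_n(\bgtheta)|\le r\sup_{\norm{\bgtheta-\bgtheta^*}\le r}\norm{\nabla_{\bgtheta}\nu_n(\bgtheta)}$, the identity $\nabla_{\bgtheta}\nu_n(\bgtheta^*)=\nabla_{\bgtheta}l_n(\bgtheta^*;\va_n)$ (a score-type martingale average with $\mathbb{E}\norm{\nabla_{\bgtheta}\nu_n(\bgtheta^*)}^2\le\bar I/n$, $\bar I=\max_a\tr\,\mathcal{I}_a(\bgtheta^*)$), and the $\Psi_1$-Lipschitzness of the gradient increments (giving $L^2$-oscillation of order $r/\sqrt n$) should yield a per-shell probability of order $2^{-2j}$; the leftover quadratic fluctuation, governed by the empirical Hessian, is absorbed into the threshold for large $n$ using its almost-sure vanishing from Lemma~\ref{lem:a.s.bound}. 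Summing $\mathbb{E}[\norm{\blW_n}^2 I(\widehat{\bgtheta}_n^{\text{ML}}\in S_j)]\le C\,n(2^j\delta_n)^2 2^{-2j}\asymp C$ over the $O(\log n)$ shells needed to reach $\rho$ gives $\mathbb{E}[\norm{\blW_n}^2 I(\text{local})]=O(\log n)$; for the innermost shells I would instead invoke the score-equation representation $\blW_n=-\{\nabla_{\bgtheta}^2 l_n(\bgtheta^*;\va_n)\}^{-1}\{\sqrt n\,\nabla_{\bgtheta} l_n(\bgtheta^*;\va_n)+\sqrt n\,R\}$ and the bounds already assembled in the proof of Theorem~\ref{thm:GI0-GI1AN}.

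The hard part will be the uniform-in-$\bgtheta$ control of the centered process $\nu_n$ and of the empirical Hessian under only the low-order moments available here: an $L^2$ bound on the score and a mere $L^1$ bound on the Hessian operator norm. A naive covering of a shell produces a dimension-dependent grid and, with only second moments, a union bound that degrades like $n^{p/2-1}$; the way around this is to never discretize at the shell scale but to reduce everything to the gradient via the mean value theorem and exploit the centered-martingale structure, so that the $\Psi_1$-Lipschitz condition supplies the crucial $r/\sqrt n$ scaling of fluctuations, while the already-proved almost-sure law of large numbers for the Hessian (Lemma~\ref{lem:a.s.bound}) lets the non-quadratic remainder be swallowed by the identifiability gap. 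Verifying that these ingredients combine to deliver the clean $2^{-2j}$ decay, rather than a slower rate that would only cover $s>3/2$, is the technical crux.
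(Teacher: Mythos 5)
Your strategy is genuinely different from the paper's: you aim for \emph{unconditional} moment bounds ($\mathbb{E}\norm{\blW_n}^2=O(\mathrm{polylog}\,n)$, then Tonelli and Lyapunov), whereas the paper never controls unconditional moments at all. The gap is in your local (peeling) step, and it is structural rather than technical. Peeling requires the quadratic remainder of $\nu_n$ around $\bgtheta^*$ to be dominated by the identifiability gap $C\lambda_U r^2$ \emph{with a quantified probability of failure}, and neither of your two mechanisms delivers this. (i) If the remainder is bounded through the $\Psi_1$-Lipschitz property of the gradient, then the supremum over a shell of radius $r$ of $\norm{\nabla\nu_n(\bgtheta)-\nabla\nu_n(\bgtheta^*)}$ is of order $r\cdot\big(\tfrac{1}{n}\sum_i\Psi_1^{a_i}(X_i)+\max_a\mathbb{E}\Psi_1^a\big)$; the empirical average converges to a \emph{nonvanishing} constant, so the remainder is of order $r^2$ with a coefficient that has no reason to be smaller than $C\lambda_U$ — the shell probabilities then need not decay at all. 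Your claimed ``$L^2$-oscillation of order $r/\sqrt n$'' is a fixed-$\bgtheta$ bound; promoting it to a supremum over the shell is exactly the chaining problem you set aside, and with only second moments it reinstates the dimension-dependent covering penalty. (ii) If instead the remainder is controlled by the empirical Hessian and Lemma~\ref{lem:a.s.bound}, note that Assumption~\ref{ass:2} grants only \emph{first} moments for $\Psi_2^a$ and for $\norm{\nabla^2_{\bgtheta}\log f_{\bgtheta,a}}_{op}$, so the Hessian law of large numbers comes with no rate whatsoever. But your Tonelli reduction forces you to integrate $\norm{\blW_n}^2$ over the bad event $G_n^c$ as well, where the only available bound is $nD^2$; without $\mathbb{P}(G_n^c)=O(1/n)$ (up to logs) — which first moments cannot supply — the term $nD^2\,\mathbb{P}(G_n^c)$ is uncontrolled and the moment bound collapses. (Your far-event step, by contrast, is fine: there the relevant Lipschitz constants involve only scores and $\Psi_1$, which do have second moments, so Chebyshev yields the $O(1/n)$ you need.)

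This is precisely the difficulty the paper's proof is engineered to avoid. The paper bounds $\norm{\blW_n}$ only on good events $D_n$, via the score-equation/Taylor representation, obtaining $\norm{\blW_n}I_{D_n}\leq C'\norm{\sqrt n\,\nabla_{\bgtheta}l_n(\bgtheta^*;\va_n)}$ whose conditional second moment is uniformly bounded; it observes that $\mathbb{P}(\liminf_n D_n)=1$ follows from strong consistency and the a.s.\ Hessian LLN \emph{with no rate needed}; and it then applies the modified Kolmogorov three-series theorem (Lemma~\ref{lem:K two-series theorem}), which is designed exactly for this situation: conditional moment bounds are required only on events $E_n$ whose liminf has probability one, and a.s.\ convergence of the full series follows because the bad events occur only finitely often. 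The case $0<t<2$ is then handled by H\"older's inequality applied to the a.s.\ finite sums. To salvage your route you would need to strengthen Assumption~\ref{ass:2} (e.g., second moments for $\Psi_2^a$ and the Hessian, giving Chebyshev rates for the Hessian LLN, plus a genuine chaining argument for the gradient oscillation); as stated, the corollary cannot be proved along the path you propose.
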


\begin{proof}[Proof of Corollary \ref{cor:bound}]
Let $\widehat{\bgtheta}_n=\widehat{\bgtheta}^{\text{ML}}_{n}$. We first assume $t=2$. Applying Lemma \ref{lem:a.s.bound}, 
$$
 \limsup_{n\to \infty}\norm{\{-\nabla_{\bgtheta}^2 l_n({\bgtheta^*} )\}^{-1}}_{op}\leq \frac{1}{ \min_{\pi\in K_U}\lambda_{min}(I^{\pi}({\bgtheta^*}))}.
$$
Set $D_n := \big\{ 1-\frac{1}{n}\sum_{j=1}^n\Psi_2^{a_j}(X_j)\norm{\left\{\nabla_{\bgtheta}^2 l_n({\bgtheta^*}) \right\}^{-1}}_{op}  \psi\left(\norm{{{\widehat\bgtheta}_n}-{\bgtheta^*}} \right)  > \frac{1}{2}, \norm{\{-\nabla_{\bgtheta}^2 l_n({\bgtheta^*})\}^{-1}}_{op}\leq \frac{2}{ \min_{\pi\in K}\lambda_{min}(I^{\pi}({\bgtheta^*})) } \big\}$. By Lemma~\ref{thm:consistency_final}, we know that 
\begin{equation}\label{equ:D_m_lim_prob}
    \mathbb{P} \left( \bigcup_{n=1}^\infty\bigcap_{m=n}^\infty D_m \right) = 1.
\end{equation}
Note that \eqref{eq:R-ineq} and \eqref{eq:Wn-ineq} yield
\begin{equation*}  
    \norm{\blW_n}I_{D_n} \leq I_{D_n}\frac{\norm{\left\{\nabla_{\bgtheta}^2 l_n({\bgtheta^*}) \right\}^{-1}}_{op}   \norm{\sqrt{n}\nabla_{\bgtheta} l_n({\bgtheta^*})}}{1-\frac{1}{n}\sum_{j=1}^n\Psi_2^{a_j}(X_j)\norm{\left\{\nabla_{\bgtheta}^2 l_n({\bgtheta^*}) \right\}^{-1}}_{op}  \psi\left(\norm{{{\widehat\bgtheta}_n}-{\bgtheta^*}} \right) }\leq C'\norm{\sqrt{n}\nabla_{\bgtheta} l_n({\bgtheta^*})},
\end{equation*}
where $C'<\infty$ {and $\blW_n = \sqrt{n}(\widehat{\bgtheta}_n^{\text{ML}}-\bgtheta^*)$}. Set $S_n=\sum_{i=1}^n\nabla_{\bgtheta}\log f_{{\bgtheta^*},a_i}(X_i)$. By Assumption \ref{ass:2}, we know that 
\begin{equation*}
    \sigma^2:=\max_{a\in \mathcal{A} }\mathbb{E}_{X\sim f_{{\bgtheta^*},a}}\left\{ \norm{\nabla_{\bgtheta}\log  f_{{\bgtheta^*},a}(X)}^2 \right\}<\infty.
\end{equation*}
By induction, we obtain that
\begin{equation*}
\begin{split}
    &\mathbb{E}\norm{\sqrt{n}\nabla_{\bgtheta} l_n({\bgtheta^*})}^2\\
    = &\frac{1}{n} \mathbb{E}\norm{S_n}^2 \\
    =& \frac{1}{n}\mathbb{E}\left[\mathbb{E}\left\{\left.\norm{\log f_{{\bgtheta^*},a_n}(X_n)+S_{n-1}}^2\right| \mathcal{F}_{n-1}\right\}\right]\\
    =& \frac{1}{n}\mathbb{E}\left[\norm{S_{n-1}}^2+2\innerpoduct{S_{n-1}}{\mathbb{E}\left\{\left. {\log f_{{\bgtheta^*},a_n}(X_n)}\right| \mathcal{F}_{n-1}\right\}}+\mathbb{E}\left\{\left.\norm{\log f_{{\bgtheta^*},a_n}(X_n)}^2\right| \mathcal{F}_{n-1}\right\}\right]\\   
    =&\frac{1}{n}\mathbb{E}\left[\norm{S_{n-1}}^2 +\mathbb{E}\left\{\left.\norm{\log f_{{\bgtheta^*},a_n}(X_n)}^2\right| \mathcal{F}_{n-1}\right\}\right]\\ 
    \leq &\frac{1}{n}\left(\mathbb{E}\norm{S_{n-1}}^2 +\sigma^2 \right) \leq   \cdots\leq  \sigma^2.
\end{split}
\end{equation*}
{Apply Lemma \ref{lem:K two-series theorem} with $X_n=\frac{1}{n^{s}}\norm{\sqrt{n}(\widehat{{\bgtheta}}_n - {\bgtheta^*})}^{2} $, $E_n=D_n $, $\gamma=1$, and $\varepsilon_{n-1} =\frac{1}{n^{s}}(C')^2\mathbb{E}[\norm{\sqrt{n}\nabla_{\bgtheta}  l_n({\bgtheta^*})   }^2|\mathcal{F}_{n-1}] $,}
because 
\[
\sum_{n=0}^\infty \mathbb{E}\varepsilon_{n} \leq \sum_{n=1}^\infty\frac{\sigma^2(C')^2}{n^{s}}<\infty,
\]
we obtain that with probability 1,
\begin{equation*}
    \sum_{n=1}^\infty n^{-s} \cdot\mathbb{E}\left\{ \norm{\sqrt{n}(\widehat{{\bgtheta}}_n - {\bgtheta^*})}^{2}I_{D_n}\right\} <\infty.
\end{equation*}
Combined with \eqref{equ:D_m_lim_prob}, we obtain that
\begin{equation*}
\begin{split}
    &\mathbb{P}\Big( \sum_{n=1}^\infty n^{-s}\norm{\sqrt{n}(\widehat{{\bgtheta}}_n - {\bgtheta^*})}^2= \infty \Big)\\
    \leq & \mathbb{P}\Big( \sum_{n=1}^\infty n^{-s}\norm{\sqrt{n}(\widehat{{\bgtheta}}_n - {\bgtheta^*})}^2I_{D_n} = \infty \Big)+\mathbb{P}\Big( \sum_{n=1}^\infty n^{-s}\norm{\sqrt{n}(\widehat{{\bgtheta}}_n - {\bgtheta^*})}^2I_{D^c_n} = \infty \Big)\\
    \leq & 0+\mathbb{P}\Big( \sum_{n=1}^\infty   I_{D_n} = \infty \Big)=\mathbb{P} \left( \bigcap_{n=1}^\infty\bigcup_{m=n}^\infty D^c_m \right)=0,
\end{split}
\end{equation*}
that is, $\sum_{n=1}^\infty n^{-s}\norm{\sqrt{n}(\widehat{{\bgtheta}}_n - {\bgtheta^*})}^2<\infty$ with probability 1.

If $0<t<2$, set $s_1=1-\frac{t}{2}$, $s_2=\frac{t}{2}$, $s_0=\frac{s-1}{2}$, $p=\frac{1}{s_1}>1$, and $q=\frac{1}{s_2}>1$. We have $1/p+1/q=1$, and $s_1+s_2+2s_0=s$. Notice that 
$$
\left(\sum_{n=1}^\infty n^{-(s_1+s_0)p}\right)^{1/p} =\left(\sum_{n=1}^\infty n^{-(1+s_0p)}\right)^{1/p} <\infty,
$$
and with probability 1,
$$
\sum_{n=1}^\infty n^{-(s_2+s_0)q} \norm{\sqrt{n}(\widehat{{\bgtheta}}_n - {\bgtheta^*})}^{tq}=\sum_{n=1}^\infty n^{-1-s_0 q} \norm{\sqrt{n}(\widehat{{\bgtheta}}_n - {\bgtheta^*})}^{2}<\infty.
$$
By Hölder's inequality, with probability 1
$$
\sum_{n=1}^\infty n^{-s}\norm{\sqrt{n}(\widehat{{\bgtheta}}_n - {\bgtheta^*})}^t\leq \left(\sum_{n=1}^\infty n^{-(s_1+s_0)p}\right)^{1/p}  \left(\sum_{n=1}^\infty n^{-(s_2+s_0)q} \norm{\sqrt{n}(\widehat{{\bgtheta}}_n - {\bgtheta^*})}^{tq} \right)^{1/q}<\infty. 
$$

\end{proof}

\begin{lemma}\label{lem:converge}
    Under Assumptions~\ref{ass:1}-\ref{ass:5} as well as  Assumptions~\ref{ass:6A}-\ref{ass:7A} (or Assumptions~\ref{ass:6B}-\ref{ass:7B}), if the sequence of estimators $\widehat{\bgtheta}_n$ satisfies that for $0\leq \beta< \frac{1}{2}$,{
\begin{equation}\label{eq:converge-condition}
\sum_{n\geq n_0} n^{\beta-1}\norm{ \widehat{\bgtheta}_{n-1}-\bgtheta^* }<\infty \text{ a.s.}
\end{equation}
}
then
    the {generalized} \textrm{GI0} and \textrm{GI1} {(with $\bgtheta_n$ replaced by $\widehat{\bgtheta}_{n}$)} satisfy
\begin{equation*}
    n^\beta Z_n\stackrel{\text { a.s. }}{\longrightarrow} 0,
\end{equation*}
where $Z_n=\mathbb{F}_{ \bgtheta^*}( \overline{{\bgpi}}_{n } )-\mathbb{F}_{ \bgtheta^*}({\bgpi}^*)$. 
\end{lemma}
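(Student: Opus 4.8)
The plan is to recast the deterministic recursion supplied by Lemma~\ref{lem:replacement principle after} as an instance of the modified Robbins--Siegmund inequality \eqref{ineq:RStm} and then apply Part~3 of Lemma~\ref{lem:RS lem} with the prescribed exponent $\beta$. The first observation is a measurability one: when the selection rule is the generalized \textrm{GI0} or \textrm{GI1} with $\bgtheta_n$ replaced by $\widehat{\bgtheta}_n$, the experiment $a_{n+1}$ is $\mathcal{F}_n$-measurable, so $\overline{{\bgpi}}_n$ and hence $Z_n=\mathbb{F}_{\bgtheta^*}(\overline{{\bgpi}}_n)-\mathbb{F}_{\bgtheta^*}(\bgpi^*)$ are $\mathcal{F}_{n-1}$-measurable. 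Consequently $\mathbb{E}[Z_n\mid\mathcal{F}_{n-1}]=Z_n$, and the pathwise inequality of Lemma~\ref{lem:replacement principle after}, namely $Z_n\le(1-\tfrac{1}{n})Z_{n-1}+c_{n-1}$ with $c_{n-1}=\tfrac{C}{n^2}+\tfrac{C}{n}\norm{\widehat{\bgtheta}_{n-1}-\bgtheta^*}$, becomes, after the index shift $m=n-1$, exactly $\mathbb{E}[Z_{m+1}\mid\mathcal{F}_m]\le(1-a_m)Z_m+c_m$ with $a_m=\tfrac{1}{m+1}$.

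Next I would verify the hypotheses of Lemma~\ref{lem:RS lem}. Since $\bgpi^*$ minimizes $\mathbb{F}_{\bgtheta^*}$, we have $Z_n\ge 0$; and because $\overline{{\bgpi}}_n\in K_{U_0}$ for $n\ge n_0$ by Lemma~\ref{lem:K_U} while $\mathbb{F}_{\bgtheta^*}$ is continuous on the compact set $K_{U_0}$, $Z_n$ is bounded by a deterministic constant, hence integrable; the same bound makes $a_m^+Z_m=Z_m/(m+1)$ integrable. Because $a_m=\tfrac{1}{m+1}>0$, we have $a_m^-=0$ so $\sum_m a_m^-<\infty$, while $\sum_m a_m=+\infty$; together these give condition \eqref{equ:aZ_intergrable}. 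Finally, since $\beta<\tfrac{1}{2}$ I can fix a constant $c\in(\beta,\tfrac{1}{2}]$, and then $a_m=\tfrac{1}{m+1}\ge\tfrac{c}{m}$ for every $m\ge 1$ because $\tfrac{m}{m+1}\ge\tfrac12\ge c$.

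The crux of the argument is the almost-sure convergence of $\sum_m m^\beta c_m$. Splitting $c_m$, the term $\sum_m \tfrac{Cm^\beta}{(m+1)^2}$ converges deterministically since $\beta-2<-1$, while $\sum_m \tfrac{Cm^\beta}{m+1}\norm{\widehat{\bgtheta}_m-\bgtheta^*}$ is comparable to $\sum_m m^{\beta-1}\norm{\widehat{\bgtheta}_m-\bgtheta^*}$, which converges almost surely precisely by the standing hypothesis \eqref{eq:converge-condition}. With all the conditions of Part~3 of Lemma~\ref{lem:RS lem} in place, I conclude $\lim_{m\to\infty}m^\beta Z_m=0$ almost surely, which is the claim.

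I expect the genuinely delicate points to lie not in this final assembly---which is essentially a bookkeeping application of the Robbins--Siegmund machinery---but in correctly importing the prior structural results: that Lemma~\ref{lem:replacement principle after} indeed yields the recursion with remainder $c_{n-1}$ once $\bgtheta_n=\widehat{\bgtheta}_n$, that $K_{U_0}$-membership from Lemma~\ref{lem:K_U} guarantees boundedness of $Z_n$, and the care needed to turn the realized pathwise inequality into the conditional-expectation form required by \eqref{ineq:RStm}. One should also keep in mind that the hypothesis \eqref{eq:converge-condition} on $\sum_n n^{\beta-1}\norm{\widehat{\bgtheta}_{n-1}-\bgtheta^*}$ is exactly what will be supplied for the MLE by Corollary~\ref{cor:bound} when this lemma is later specialized, so matching the two exponents and index ranges is essential.
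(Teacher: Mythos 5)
Your proof is correct and follows essentially the same route as the paper: the recursion from Lemma~\ref{lem:replacement principle after}, $Z_n \le (1-\tfrac{1}{n})Z_{n-1}+c_{n-1}$, is fed into Part~3 of the modified Robbins--Siegmund theorem (Lemma~\ref{lem:RS lem}) with $a_m = 1/(m+1)$, and the required almost-sure summability of $\sum_m m^\beta c_m$ comes from $\beta-2<-1$ together with hypothesis \eqref{eq:converge-condition}, exactly as in the paper. One small point in your favor: your choice $c\in(\beta,\tfrac12]$ is actually more careful than the paper's stated ``$c=1$'', which would violate the requirement $a_n\ge c/n$ (since $1/(n+1)<1/n$); your fix is the right reading and the conclusion is unaffected.
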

\begin{proof}[Proof of Lemma \ref{lem:converge}]
Based on Lemma \ref{lem:replacement principle after}, the \textrm{GI0} and \textrm{GI1} selection rules satisfy that there exists $C<\infty$ such that for any $n\geq n_0$,
\begin{equation*}
    \mathbb{E}\left[Z_{n} \mid \mathcal{F}_{n-1}\right] \leq\left(1-\frac{1}{n} \right) Z_{n-1}+c_{n-1},
\end{equation*}   
where $c_{n-1}  = C\Big(\frac{1}{n^2}+\frac{ \norm{\widehat{\bgtheta}_{n-1}-\bgtheta^*}}{n}\Big)$. Notice that with probability $1$, we have
\begin{equation*}
\begin{split}
    &\sum_{n=n_0+1}^\infty ( n-1 )^\beta c_{n-1}\leq \sum_{n=n_0}^\infty C\cdot n^\beta \Big(\frac{1}{n^2}+\frac{ \norm{\widehat{\bgtheta}_{n-1}-\bgtheta^*}}{n}\Big)\\
    =& \sum_{n=n_0+1}^\infty C\cdot    \frac{1}{n^{2-\beta}} + \sum_{n=n_0+1}^\infty \frac{C}{n^{1-\beta}}   { \norm{  \widehat{ \bgtheta}_{n-1}-\bgtheta^*    }} <\infty.
\end{split}
\end{equation*}
Applying the third part of Lemma~\ref{lem:RS lem} to $Z_n$ {with $a_n=1/(n+1)$, $c=1$, }we obtain
\begin{equation*}
    n^\beta Z_n\stackrel{\text { a.s. }}{\longrightarrow} 0.
\end{equation*}
\end{proof}
\begin{proof}[Proof of Theorem \ref{thm:empirical pi as converge}]
By Corollary \ref{cor:bound}, we know that with probability 1, if $0\leq \beta<\frac{1}{2}$, then with probability $1$,
\begin{equation*}
\sum_{n=1}^\infty n^{\beta-1}\norm{ \widehat{\bgtheta}^{\text{ML}}_{n} - \bgtheta^*} = \sum_{n=1}^\infty n^{\beta-3/2}\norm{\sqrt{n}(\widehat{\bgtheta}^{\text{ML}}_{n} - \bgtheta^*)}<\infty.
\end{equation*}
By Lemma~\ref{lem:converge}, we obtain that $n^{\beta}Z_n\to 0$ a.s. $\mathbb{P}_*$. {That is, 
$\lim_{n\to\infty}n^{\beta}\{\mathbb{F}_{ \bgtheta^*}( \overline{{\bgpi}}_{n } ) - \mathbb{F}_{ \bgtheta^*}({\bgpi}^*)\}$, a.s.

Next, we prove by contradiction that, when $\mathbb{F}_{\bgtheta^*}(\cdot)$ has a unique minimizer, we also have $\lim_{n\to\infty}\overline{{\bgpi}}_{n }={\bgpi}^*$ a.s. Assume, on the contrary, that  there exists a sub-sequence such that $\overline{{\bgpi}}_{n_l }   \to {\bgpi}_1\neq {\bgpi}^*$, as $l\to \infty$. Then, by the continuity of $\mathbb{F}_{\bgtheta^*}(\cdot)$, we have $ \mathbb{F}_{ \bgtheta^*}( \overline{{\bgpi}}_{n_l } )  \to \mathbb{F}_{ \bgtheta^*}({\bgpi}_1)$.}

Set $\beta=0$. We obtain that
\begin{equation*}
    \mathbb{F}_{ \bgtheta^*}( \overline{{\bgpi}}_{n } ) \to \mathbb{F}_{ \bgtheta^*}({\bgpi}^*)\text{ a.s. }\mathbb{P}_* .
\end{equation*}
Given that $\mathbb{F}_{ \bgtheta^*}({\bgpi})$ has a unique global minimizer, it must be the case that $\mathbb{F}_{ \bgtheta^*}({\bgpi}_1)\neq \mathbb{F}_{ \bgtheta^*}({\bgpi}^*)$. This contradicts with the above display.
\end{proof}

\subsection{Proof of Theorem~\ref{thm:Asy_Normal_final}}
\begin{proof}[Proof of Theorem~\ref{thm:Asy_Normal_final}]
By applying Theorem~\ref{thm:GI0-GI1AN} and Theorem~\ref{thm:empirical pi as converge}, we conclude the proof of Theorem~\ref{thm:Asy_Normal_final}.
\end{proof}

\subsection{Proof of Theorem~\ref{thm:Asy_cov_mle}}
\begin{proof}[Proof of Theorem~\ref{thm:Asy_cov_mle}]
Under the assumptions of Theorem~\ref{thm:Asy_Normal_final}, the conclusions from Theorem~\ref{thm:consistency_final} and Theorem~\ref{thm:empirical pi as converge} still apply. Hence, we have
    \begin{equation*}
        \lim_{n\to \infty}\mathcal{I}^{ \overline{{\bgpi}}_{ n} }(\widehat{\bgtheta}_{ n}^{\text{ML}})
        = \lim_{n\to \infty}\sum_{a\in \mathcal{A}}  \overline{{\bgpi}}_n (a) \mathcal{I}_a(\widehat{\bgtheta}_{ n}^{\text{ML}})
        = \mathcal{I}^{{\bgpi}^*}(\bgtheta^*)  \text{ a.s. },
    \end{equation*}
and 
    \begin{equation*}
        \lim_{n\to \infty} \norm{ \{ \mathcal{I}^{\overline{{\bgpi}}_{n} }(\widehat{\bgtheta}_{ n}^{\text{ML}}) \}^{-1/2}  \nabla g(\widehat{\bgtheta}_{ n}^{\text{ML}}) }
        = \norm{ \{ \mathcal{I}^{{\bgpi}^*}(\bgtheta^*) \}^{-1/2}  \nabla g(\bgtheta^*) }\text{ a.s. }
    \end{equation*}
    By Slutsky’s theorem and Theorem~\ref{thm:Asy_Normal_final}, we derive the limit result as in \eqref{lim:AN}.

    Moreover, through the Delta method, we find
    \begin{equation*}
        \frac{\sqrt{n} (g(\widehat{\bgtheta}_{ n}^{\text{ML}})-g(\bgtheta^*)) }{\norm{ \{ \mathcal{I}^{{\bgpi}^*}(\bgtheta^*) \}^{-1/2}  \nabla g(\bgtheta^*) }}
        \overset{d}{\longrightarrow} N(0,1).
    \end{equation*}
    Once again, by Slutsky’s theorem, we establish the limit result in \eqref{lim:lim_g}.
\end{proof}

\subsection{Proof of Theorem~\ref{thm:ultimate}}
We first provide an extension of the Cram\'er-Rao lower bound for unbiased estimators based on sequential observations following an active experiment selection rule.
\begin{lemma}[Cra\'mer-Rao lower bound for sequential data]\label{thm:C-R ineq}
Assume that for some initial values $a_1^0, \cdots, a_{n_0}^0\in \mathcal{A}$, we consider initial selections \(a_i = a_i^0\) for \(i = 1, \ldots, n_0,\) such that the sum
\(\sum_{i=1}^{n_0} \mathcal{I}_{a_i^0}(\bgtheta) \)
is nonsingular for all \( \bgtheta \in \bgTheta \). Given any deterministic selection function \( h_n \), we consider the selections 
\(  a_n = h_n(a_1, X_1, \cdots, a_{n-1}, X_{n-1}) \in \mathcal{A}, \forall n>n_0.\)
Let $\blT_n=T(X_1,X_2,\cdots,X_n,\va_n)$ be an unbiased estimator of vector $\blh(\bgtheta)$ with a finite second moment, for all $\bgtheta \in \bgTheta$, that is $\blh(\bgtheta)=\mathbb{E}_{\bgtheta}[\blT_n]$ and $\sup_{\bgtheta\in \bgTheta}\mathbb{E}_{\bgtheta}\norm{\blT_n}^2<\infty$. Then, under Assumptions~\ref{ass:1}-\ref{ass:4}, 
we have
\begin{equation*}
    \operatorname{cov}_{\bgtheta}(\blT_n ) \succeq \frac{1}{n}\Big\{ \nabla_{\bgtheta} \blh(\bgtheta)\Big\}^T \{\mathcal{I}^{\mathbb{E}_{\bgtheta}\overline{{\bgpi}}_n} (\bgtheta)\}^{-1} \nabla_{\bgtheta} \blh(\bgtheta).
\end{equation*}
Specifically, if $\blh(\bgtheta)=\bgtheta$, then
\begin{equation*}
    \mathbb{G}_{\bgtheta}(n\operatorname{cov}_{\bgtheta}(\blT_n ) )\geq \inf_{\pi\in \ShatA} \mathbb{G}_{\bgtheta}(    \{\mathcal{I}^{ \pi } (\bgtheta)\}^{-1}).
\end{equation*}
\end{lemma}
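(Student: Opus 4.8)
The plan is to prove a Cram\'er--Rao-type bound for the sequential/active setting by reducing it, via the decoupling Lemma~\ref{lem:same dist}, to a computation involving the score statistic summed over the actually-selected experiments. The key quantities are the estimator $\blT_n$ and the total score $\blS_n = n\nabla_{\bgtheta} l_n(\bgtheta;\va_n) = \sum_{i=1}^n \nabla_{\bgtheta}\log f_{\bgtheta,a_i}(X_i)$. First I would establish the two building blocks: (i) $\mathbb{E}_{\bgtheta}[\blS_n]=\bm{0}_p$, and (ii) the conditional covariance identity $\mathbb{E}_{\bgtheta}[\blS_n\blS_n^T] = \sum_{i=1}^n \mathbb{E}_{\bgtheta}[\mathcal{I}_{a_i}(\bgtheta)] = n\,\mathcal{I}^{\mathbb{E}_{\bgtheta}\overline{{\bgpi}}_n}(\bgtheta)$. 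Both follow by conditioning on $\mathcal{F}_{i-1}$: since $a_i$ is $\mathcal{F}_{i-1}$-measurable, $\mathbb{E}_{\bgtheta}[\nabla_{\bgtheta}\log f_{\bgtheta,a_i}(X_i)\mid\mathcal{F}_{i-1}]=\bm{0}$ and $\mathbb{E}_{\bgtheta}[\nabla_{\bgtheta}\log f_{\bgtheta,a_i}(X_i)\{\nabla_{\bgtheta}\log f_{\bgtheta,a_i}(X_i)\}^T\mid\mathcal{F}_{i-1}]=\mathcal{I}_{a_i}(\bgtheta)$, where the cross terms at different times vanish by the martingale-difference property. Differentiation under the integral sign is justified by Assumptions~\ref{ass:2}--\ref{ass:3} (bounded second moments, support not depending on $\bgtheta$), exactly as in the proof of Theorem~\ref{thm:GI0-GI1AN}.

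Next I would compute the cross-covariance $\operatorname{cov}_{\bgtheta}(\blT_n,\blS_n)$ and show it equals $\{\nabla_{\bgtheta}\blh(\bgtheta)\}^T$. Starting from the unbiasedness identity $\blh(\bgtheta)=\int T\,\prod_i f_{\bgtheta,a_i}(x_i)\,d\mu$, I would differentiate both sides in $\bgtheta$ and move the derivative inside the integral; using $\nabla_{\bgtheta}\prod_i f = (\prod_i f)\sum_i\nabla_{\bgtheta}\log f_{\bgtheta,a_i}(x_i)$ this yields $\nabla_{\bgtheta}\blh(\bgtheta)=\mathbb{E}_{\bgtheta}[\blS_n\blT_n^T]=\operatorname{cov}_{\bgtheta}(\blS_n,\blT_n)$, where I use $\mathbb{E}_{\bgtheta}[\blS_n]=\bm{0}$ to pass from raw moment to covariance. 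The subtlety here is that the selection functions $h_n$ make the integration domain's weighting data-dependent; the decoupling Lemma~\ref{lem:same dist} lets me replace the iteratively-sampled $(X_1,\dots,X_n,\va_n)$ by the decoupled array $\{X_m^a\}$ with the $a_i$ being deterministic functions, so that differentiation under the integral sign applies to a genuine product density and the indicator terms encoding the selection rule are piecewise constant and contribute nothing to the $\bgtheta$-derivative.

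With these three facts in hand, I would invoke the multivariate Cauchy--Schwarz inequality (Lemma~\ref{lemma:MCS}) with $z$ replaced by an arbitrary linear functional $\bla^T\blT_n$ and $\bly=\blS_n$, giving
\begin{equation*}
\operatorname{var}_{\bgtheta}(\bla^T\blT_n)\geq \operatorname{cov}_{\bgtheta}(\bla^T\blT_n,\blS_n)\{\operatorname{cov}_{\bgtheta}(\blS_n)\}^{-1}\operatorname{cov}_{\bgtheta}(\blS_n,\bla^T\blT_n),
\end{equation*}
which rewrites as $\bla^T\operatorname{cov}_{\bgtheta}(\blT_n)\bla \geq \tfrac{1}{n}\bla^T\{\nabla_{\bgtheta}\blh(\bgtheta)\}^T\{\mathcal{I}^{\mathbb{E}_{\bgtheta}\overline{{\bgpi}}_n}(\bgtheta)\}^{-1}\nabla_{\bgtheta}\blh(\bgtheta)\bla$. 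Since $\bla$ is arbitrary, this is exactly the claimed matrix inequality. Here I use that $\mathcal{I}^{\mathbb{E}_{\bgtheta}\overline{{\bgpi}}_n}(\bgtheta)=\tfrac1n\operatorname{cov}_{\bgtheta}(\blS_n)$ is nonsingular, which follows because the initial selections $a_1^0,\dots,a_{n_0}^0$ make $\sum_{i=1}^{n_0}\mathcal{I}_{a_i^0}(\bgtheta)$ nonsingular and the remaining terms are positive semidefinite.

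For the specialization to $\blh(\bgtheta)=\bgtheta$, where $\nabla_{\bgtheta}\blh(\bgtheta)=I_p$, the bound reads $n\operatorname{cov}_{\bgtheta}(\blT_n)\succeq\{\mathcal{I}^{\mathbb{E}_{\bgtheta}\overline{{\bgpi}}_n}(\bgtheta)\}^{-1}$. Applying the monotonicity property of $\mathbb{G}_{\bgtheta}$ from Assumption~\ref{ass:5} (namely $\blA\succeq\blB\Rightarrow\mathbb{G}_{\bgtheta}(\blA)\geq\mathbb{G}_{\bgtheta}(\blB)$) gives $\mathbb{G}_{\bgtheta}(n\operatorname{cov}_{\bgtheta}(\blT_n))\geq\mathbb{G}_{\bgtheta}(\{\mathcal{I}^{\mathbb{E}_{\bgtheta}\overline{{\bgpi}}_n}(\bgtheta)\}^{-1})$, and since $\mathbb{E}_{\bgtheta}\overline{{\bgpi}}_n\in\ShatA$, the right-hand side is at least $\inf_{\pi\in\ShatA}\mathbb{G}_{\bgtheta}(\{\mathcal{I}^{\pi}(\bgtheta)\}^{-1})$, as desired. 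The main obstacle I anticipate is the rigorous justification of interchanging differentiation and integration in the data-dependent sequential design — making the decoupling step airtight so that the selection indicators genuinely drop out of the $\bgtheta$-derivative, and controlling the resulting dominated-convergence argument uniformly using the moment bounds of Assumption~\ref{ass:2}; the linear-algebra and Cauchy--Schwarz portions are then routine.
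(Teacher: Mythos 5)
Your proposal is correct in its overall skeleton, and that skeleton coincides with the paper's: compute the score $\blS_n$ over the \emph{selected} experiments, show $\mathbb{E}_{\bgtheta}[\blS_n]=\bm{0}$ and $\operatorname{cov}_{\bgtheta}(\blS_n)=n\,\mathcal{I}^{\mathbb{E}_{\bgtheta}\overline{\bgpi}_n}(\bgtheta)$ by martingale conditioning, identify $\nabla_{\bgtheta}\blh(\bgtheta)$ with the cross-covariance $\operatorname{cov}_{\bgtheta}(\blS_n,\blT_n)$, finish with the multivariate Cauchy--Schwarz inequality (Lemma~\ref{lemma:MCS}), and in the case $\blh(\bgtheta)=\bgtheta$ invoke the monotonicity of $\mathbb{G}_{\bgtheta}$ together with $\mathbb{E}_{\bgtheta}\overline{\bgpi}_n\in\ShatA$. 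Where you genuinely differ is in how the cross-covariance identity is obtained. The paper passes to the decoupled array $\{X_i^a\}_{i\le n,\,a\in\mathcal{A}}$ via Lemma~\ref{lem:same dist}; the joint density there is the product over \emph{all} pairs $(i,a)$, so its score contains the terms $\nabla_{\bgtheta}\log f_{\bgtheta,a}(X_i^a)$ for unselected $a\neq a_i$, and a substantial portion of the paper's proof is devoted to showing that these extra terms are uncorrelated with $\blT_n$ (a delicate argument conditioning on $\mathcal{G}_{i-1}=\sigma(\mathcal{F}_{i-1},\{X^a_j\}_{j\ge i+1,a\in\mathcal{A}})$). You instead differentiate the observed-data likelihood $\prod_{i}f_{\bgtheta,a_i}(x_i)$ directly, in which the selection indicators are $\bgtheta$-free and only selected experiments appear, so no vanishing step is needed; this is a legitimately more direct route, and the domination needed to differentiate under the integral is available from the envelope bounds of Assumption~\ref{ass:2} exactly as you say.

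One point of confusion should be fixed, however: your appeal to the decoupling lemma is backwards. Decoupling is what \emph{creates} the unselected-score terms, not what removes them --- the ``genuine product density'' of the decoupled array is $\prod_i\prod_{a\in\mathcal{A}}f_{\bgtheta,a}(x_i^a)$, not $\prod_i f_{\bgtheta,a_i}(x_i)$. If you truly work on the decoupled space, you must supply the paper's argument that $\mathbb{E}_{\bgtheta}\bigl[\blb^T\blT_n\sum_{i}\sum_{a\neq a_i}\nabla_{\bgtheta}\log f_{\bgtheta,a}(X_i^a)\bigr]=\bm{0}$, which your proposal never addresses. If instead you work with the sequential observed-data likelihood (as your displayed unbiasedness identity indicates), then Lemma~\ref{lem:same dist} is not needed for this step at all; what you need is only the routine observation that, because the selection functions $h_i$ are deterministic maps of past data, the joint density of $(a_1,X_1,\ldots,a_n,X_n)$ with respect to the product of counting measure and the baseline measures factors as $\prod_i I\bigl(a^i=h_i(\cdots)\bigr)f_{\bgtheta,a^i}(x_i)$, with the indicators independent of $\bgtheta$. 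Either repair yields a complete proof; as written, the justification straddles the two and is not airtight.
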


\begin{proof}[Proof of Lemma~\ref{thm:C-R ineq}]

Assume $\blh(\bgtheta)\in \mathbb{R}^l$. For any $\blb\in \mathbb{R}^l$, define $h_{\blb}(\bgtheta) = \blb^T\mathbb{E}_{\bgtheta}\Big[\blT_n \Big]$. 

Let $\{X_i^a\}_{a\in \mathcal{A},i\geq 1}$ be a sequence of independent random elements, such that $X_i^a \sim f_{\bgtheta,a}(\cdot)$. According to Lemma \ref{lem:same dist},  we can assume that the observations and experiments are $a_1, X_1^{a_1},\cdots, a_n, X_n^{a_n}$ in the rest of the proof, where $a_{n+1}=h_{n+1}(a_1,X_1^{a_1},\cdots,a_n,X_n^{a_n})$, for all $n\geq n_0$.

The joint density for $\blX^{\mathcal{A}}_n=\{X^a_i\}_{1\leq i\leq n, a\in \mathcal{A}}$ and $\bla_n = (a_1,\cdots,a_n)$
is given by
\begin{equation*}
    f_{\bgtheta} (\blX^{\mathcal{A}}_n,\va^n)=\prod_{i=1}^n\prod_{a\in \mathcal{A}}f_{\bgtheta,  {a}}(X_i^{a}) I(a_1=a^1, \cdots,a_{n_0}=a^{n_0},a_{n_0+1} =a^{n_0+1},\cdots,a_n=a^n).
\end{equation*}
Notice that
\begin{equation*}
    \nabla_{\bgtheta} f_{\bgtheta} (\blX^{\mathcal{A}}_n,\va^n)= f_{\bgtheta} (\blX^{\mathcal{A}}_n,\va^n)\sum_{i=1}^n\sum_{a\in \mathcal{A}}  \nabla_{\bgtheta}\log f_{\bgtheta,  {a}}(X_i^{a}).  
\end{equation*}
Assume that probability density $f_{\bgtheta,a}(\cdot)$ is with respect to baseline measure $\mu_a(\cdot)$. By Assumption \ref{ass:2}, denote the support of probability density $f_{\bgtheta,a}(\cdot)$ by $\Omega_a=\operatorname{supp}(f_{\bgtheta,a})$, which does not depend on $\bgtheta$. Let product measure $d\bgmu^n(\blX^{\mathcal{A}}_{n})= \prod_{1 \leq i \leq n, a\in \mathcal{A}}d \mu_a(X^a_i)$, and product space $\bgOmega^{1}=\times_{a\in \mathcal{A}} \Omega_a$, $\bgOmega^n= \times_{a\in \mathcal{A}} \Omega_a \times \bgOmega^{n-1}$.

Set $\blT_n=T_n(a_1,X_1^{a_1},\cdots,a_n,X_n^{a_n})$. Because
\begin{equation*}
    \mathbb{E}_{\bgtheta}[\blb^T\blT_n] = \sum_{\va^n\in \mathcal{A}^n}\int_{\bgOmega^n}\blb^TT_n(a_1,X_1^{a_1},\cdots,a_n,X_n^{a_n}) f_{\bgtheta} (\blX^{\mathcal{A}}_n,\va^n) d\bgmu^n(\blX_n^{\mathcal{A}}),
\end{equation*}
we know that
\begin{equation*}
   \nabla_{\bgtheta}h_{\blb}(\bgtheta)=\nabla_{\bgtheta} \mathbb{E}_{\bgtheta}[\blb^T\blT_n] = \sum_{\va^n\in \mathcal{A}^n}\nabla_{\bgtheta} \int_{\bgOmega^n}\blb^TT_n(a_1,X_1^{a_1},\cdots,a_n,X_n^{a_n}) f_{\bgtheta} (\blX^{\mathcal{A}}_n,\va^n) d\bgmu^n(\blX_n^{\mathcal{A}}).
\end{equation*}
By Assumption \ref{ass:2}, we know that for any $a\in \mathcal{A}$
\begin{equation*}
    \norm{ \nabla_{\bgtheta} 
    \log f_{\bgtheta,a}(X^a) }\leq \norm{ \nabla_{\bgtheta}\log f_{\bgtheta^*,a}(X^a) }+\Psi_1^a(X^a) \max_{\bgtheta,\bgtheta'\in \bgTheta} \norm{\bgtheta-\bgtheta'}=:F_a(X^a), \forall  X^a\in \Omega_a,
\end{equation*}
where the dominate function $F_a$ satisfies that
\[
\sup_{\bgtheta\in \bgTheta}\mathbb{E}_{X^a\sim f_{\bgtheta,a}} \{F_a(X^a)\}^2<\infty.
\]
Notice that
\begin{equation*}
\begin{split}
&\int_{\bgOmega^n}\blb^TT_n(a_1,X_1^{a_1},\cdots,a_n,X_n^{a_n}) \nabla_{\bgtheta} f_{\bgtheta} (\blX^{\mathcal{A}}_n,\va^n) d\bgmu^n(\blX_n^{\mathcal{A}})\\
=&\int_{\bgOmega^n}\blb^TT_n(a_1,X_1^{a_1},\cdots,a_n,X_n^{a_n}) \sum_{i=1}^n\sum_{a\in \mathcal{A}}  \nabla_{\bgtheta}\log f_{\bgtheta,  {a}}(X_i^{a}) f_{\bgtheta} (\blX^{\mathcal{A}}_n,\va^n) d\bgmu^n(\blX_n^{\mathcal{A}}),
\end{split}
\end{equation*}
\begin{equation*}
\norm{\blb^T\blT_n \sum_{i=1}^n\sum_{a\in \mathcal{A}}  \nabla_{\bgtheta}\log f_{\bgtheta,  {a}}(X_i^{a})  } 
    \leq |\blb^T\blT_n| \cdot  \sum_{i=1}^n\sum_{a\in \mathcal{A}} F_a(X^a_i),
\end{equation*}
and by Hölder's inequality,
\[
\mathbb{E}_{\bgtheta}[ {|\blb^T\blT_n| \cdot  \sum_{i=1}^n\sum_{a\in \mathcal{A}} F_a(X^a_i)}\leq\sum_{i=1}^n\sum_{a\in \mathcal{A}}\Big(\mathbb{E}_{\bgtheta}[ (\blb^T\blT_n)^2]\cdot \mathbb{E}_{\bgtheta}[\{F_a(X^a_i) \}^2 ]\Big)^{1/2}<\infty
\]

Taking into account that \(\bgOmega^n\) is independent of \(\bgtheta\), and by applying the Dominated Convergence Theorem together with the classical proof of differentiation under the integral sign, we arrive at
\begin{equation*}
\begin{split}
&\nabla_{\bgtheta}\int_{\bgOmega^n}\blb^TT_n(a_1,X_1^{a_1},\cdots,a_n,X_n^{a_n}) f_{\bgtheta} (\blX^{\mathcal{A}}_n,\va^n) d\bgmu^n(\blX_n^{\mathcal{A}}) \\
=& \int_{\bgOmega^n}\blb^TT_n(a_1,X_1^{a_1},\cdots,a_n,X_n^{a_n}) \nabla_{\bgtheta}f_{\bgtheta} (\blX^{\mathcal{A}}_n,\va^n) d\bgmu^n(\blX_n^{\mathcal{A}}).
\end{split}
\end{equation*}
In conclusion, we know that
\[
\nabla_{\bgtheta}h_{\blb}(\bgtheta)=\mathbb{E}_{\bgtheta}\left[\blb^T\blT_n\sum_{i=1}^n\sum_{a\in \mathcal{A}} \nabla_{\bgtheta}\log f_{\bgtheta,  {a}}(X_i^{a}   )\right]. 
\]
Next, we show that
\begin{equation}\label{equ:212}
\mathbb{E}_{\bgtheta}\left[\blb^T\blT_n\sum_{i=1}^n\sum_{a\in \mathcal{A},a\neq a_i}\nabla_{\bgtheta}\log f_{\bgtheta,  {a}}(X_i^{a}   ) \right]=0.
\end{equation}
First of all, let $\mathcal{F}_{i+1}=\sigma\{a_1, X^{a_1}_1, a_2, X^{a_2}_2, \cdots, a_{i},X^{a_i}_i \}$. Note that $a_{i+1}$ is measurable with respect to  $\mathcal{F}_{i}$ for all $i$. Notice that $\{X^{a}_n\}_{a\in \mathcal{A}}$ are independent of $\mathcal{F}_{n-1}$, as well as $\{X^{a}_n\}_{a\in \mathcal{A},a\neq a_n}$ and $X^{a_n}_n$ are independent, given $\mathcal{F}_{n-1}$. Also recall that $a_n$ is measurable in $\mathcal{F}_{n-1}$. Thus,
\begin{equation*}
\begin{split}
    &\mathbb{E}_{\bgtheta}\Big[\blb^T\blT_n \sum_{a\in \mathcal{A},a\neq a_n}\nabla_{\bgtheta}\log f_{\bgtheta,  {a}}(X_n^{a}    ) \Big|\mathcal{F}_{n-1}, X^{a_n}_n\Big]\\
    =&\blb^T\blT_n\sum_{a\in \mathcal{A},a\neq a_n}\mathbb{E}_{\bgtheta}\Big[\nabla_{\bgtheta} \log f_{\bgtheta,  {a}}(X_n^{a}    ) \Big|\mathcal{F}_{n-1}, X^{a_n}_n\Big]\\
    =&\blb^T\blT_n\sum_{a\in \mathcal{A},a\neq a_n}\mathbb{E}_{\bgtheta}\Big[\nabla_{\bgtheta} \log f_{\bgtheta,  {a}}(X_n^{a}    )  \Big]=0.    
\end{split}
\end{equation*}
Note that for fixed $1\leq i<n$, $\{X^{a}_j\}_{j\geq i,a\in \mathcal{A}}$ and $\mathcal{F}_{i-1}$ are independent. Define another $\sigma$-algebra, $\mathcal{G}_{i-1}=\sigma(\mathcal{F}_{i-1},\{X^a_j\}_{i+1\leq j\leq n ,a\in \mathcal{A} })$. Note that $a_i,a_{i+1},\cdots, a_n$ are measurable in $\sigma(\mathcal{G}_{i-1},X_i^{a_i})$. Furthermore, $\{X^{a}_i\}_{a\in \mathcal{A},a\neq a_i}$ and $X^{a_i}_i$ are independent, given $\mathcal{G}_{i-1}$. Thus, for any $1\leq i<n$
\begin{equation*}
\begin{split}
    &\mathbb{E}_{\bgtheta}\Big[\blb^T\blT_n \sum_{a\in \mathcal{A},a\neq a_i}\nabla_{\bgtheta}\log f_{\bgtheta,  {a}}(X_i^{a}    ) \Big|\mathcal{G}_{i-1}, X^{a_i}_i\Big]\\
    =&\blb^T\blT_n\sum_{a\in \mathcal{A},a\neq a_i}\mathbb{E}_{\bgtheta}\Big[\nabla_{\bgtheta} \log f_{\bgtheta,  {a}}(X_i^{a}    ) \Big|\mathcal{G}_{i-1}, X^{a_i}_i\Big]\\
    =&\blb^T\blT_n\sum_{a\in \mathcal{A},a\neq a_i}\mathbb{E}_{\bgtheta}\Big[\nabla_{\bgtheta}\log f_{\bgtheta,  {a}}(X_i^{a}    )  \Big]=0.    
\end{split}
\end{equation*}
By the law of iterated expectation, we have proved \eqref{equ:212}. Hence, we know that
\begin{equation*}
    \nabla_{\bgtheta}h_{\blb}(\bgtheta)=\mathbb{E}_{\bgtheta}\left[\blb^T\blT_n\nabla_{\bgtheta}\sum_{i=1}^n \log f_{\bgtheta,  {a_i}}(X_i^{a_i}   )\right]=0.
\end{equation*}
Set $\blY_n=\nabla_{\bgtheta} \sum_{i=1}^n\log f_{\bgtheta , a_i}(X^{a_i}_i)$, and we have 
\begin{equation*}
    \nabla_{\bgtheta} h_{\blb}(\bgtheta )= \mathbb{E}_{\bgtheta}  [  \blY_n \blT_n^T \blb   ] = \operatorname{cov}_{\bgtheta} (  \blY_n, \blb^T\blT_n  ).
\end{equation*}
By multivariate Cauchy-Schwartz inequality \eqref{ineq:MCR}, for any $\blb\in \mathbb{R}^l$,
\begin{equation*}
\begin{split}
&\blb^T\operatorname{cov}_{\bgtheta}(\blT_n )\blb\\
=&\operatorname{var}_{\bgtheta}(\blb^T\blT_n )\\
\geq&\operatorname{cov}_{\bgtheta}\Big(\blb^T\blT_n, \blY_n  \Big) \big\{\operatorname{cov}_{\bgtheta } ( \blY_n  )\big\}^{-1}\operatorname{cov}_{\bgtheta}\Big(\blY_n , \blb^T\blT_n \Big)\\
=&\blb^T \big\{\nabla_{\bgtheta} h(\bgtheta) \big\}^T  \big\{\operatorname{cov}_{\bgtheta } ( \blY_n  )\big\}^{-1} \nabla_{\bgtheta} h(\bgtheta)\blb.
\end{split}
\end{equation*}
Note that
\[
\mathbb{E}_{\bgtheta}[\blY_{i-1}  \{\nabla_{\bgtheta} \log f_{\bgtheta,a_i}(X^{a_i}_i)\}^T ]=\mathbb{E}_{\bgtheta}\Big\{\blY_{i-1}\cdot \mathbb{E}_{\bgtheta}\big[   \{\nabla_{\bgtheta} \log f_{\bgtheta,a_i}(X^{a_i}_i)\}^T|\mathcal{F}_{i-1}\big]\Big\}=0.
\]
Thus
\begin{equation*}
\begin{split}
    &\operatorname{cov}_{\bgtheta } ( \blY_n  ) = \mathbb{E}_{\bgtheta}[\blY_n \blY_n^T]=\mathbb{E}_{\bgtheta}[\blY_{n-1} \blY_{n-1}^T]+ \mathbb{E}_{\bgtheta}[ \mathcal{I}_{a_i}(\bgtheta) ]=n\cdot \mathcal{I}^{\mathbb{E}_{\bgtheta}\overline{{\bgpi}}_n}(\bgtheta). 
\end{split}
\end{equation*}
In conclusion, for any $\blb\in \mathbb{R}^l$, we obtain that
\begin{equation*}
    \blb^T \operatorname{cov}_{\bgtheta }(\blT_n ) \blb=\operatorname{var}_{\bgtheta }(\blb^T \blT_n )\geq \blb^T\left[\frac{1}{n}\Big\{\nabla_{\bgtheta} h(\bgtheta )\Big\}^T \Big\{\mathcal{I}^{\mathbb{E}_{\bgtheta}\overline{{\bgpi}}_n} (\bgtheta )\Big\}^{-1} \nabla_{\bgtheta} h(\bgtheta )\right]\blb.
\end{equation*}
This implies that
\begin{equation*}
    \operatorname{cov}_{\bgtheta }(\blT_n ) \succeq \frac{1}{n}\{\nabla_{\bgtheta} h(\bgtheta )\}^T \{\mathcal{I}^{\mathbb{E}_{\bgtheta}\overline{{\bgpi}}_n} (\bgtheta )\}^{-1} \nabla_{\bgtheta} h(\bgtheta ).
\end{equation*}   
If $h(\bgtheta)=\bgtheta$, we know that
\begin{equation*}
    n\operatorname{cov}_{\bgtheta}(\blT_n )  \succeq \{\mathcal{I}^{\mathbb{E}_{\bgtheta}\overline{{\bgpi}}_n} (\bgtheta)\}^{-1}.
\end{equation*}
By assumption~\ref{ass:5}, we obtain
\begin{equation*}
    \mathbb{G}_{\bgtheta}(n\operatorname{cov}_{\bgtheta}(\blT_n )  ) \geq \mathbb{G}_{\bgtheta}(\{ \mathcal{I}^{\mathbb{E}_{\bgtheta} [\overline{{\bgpi}}_n] }(\bgtheta ) \}^{- 1} ) \geq \inf_{{\bgpi}\in \ShatA }\mathbb{G}_{\bgtheta }(\{\mathcal{I}^{ {\bgpi} } (\bgtheta )\}^{-1}  ). 
\end{equation*}
\end{proof}

\begin{proof}[Proof of Theorem~\ref{thm:ultimate}]
~~

\paragraph*{Part 1}

Notice that $L(\bgtheta^*,\widehat{\bgtheta})$ is a loss function, which means that $L(\bgtheta^*,\widehat{\bgtheta}) \geq L(\bgtheta^*,\bgtheta^*)=0$. Due to $L(\bgtheta^*,\widehat{\bgtheta})$ is differentiable in $\widehat{\bgtheta}$, we know that $ \nabla_{\widehat{\bgtheta}} L(\bgtheta^*,\bgtheta^*)=\bm{0}$.

Applying first order Taylor expansion to $L(\bgtheta^*,\widehat{\bgtheta})$ with respect to $\widehat{\bgtheta}$, we obtain that
\begin{equation}\label{ineq:L33}
    L(\bgtheta^*,\blT_n)= \frac{1}{2}\innerpoduct{ \nabla^2_{\widehat \bgtheta} L(\bgtheta^*,\widehat\bgtheta)\Big\vert_{\widehat\bgtheta=\widetilde{\bgtheta}_n }  (\bgtheta^*-\blT_n)}{\bgtheta^*-\blT_n}\geq \eta \norm{\bgtheta^* -\blT_n   }^2,
\end{equation}
where $\widetilde{\bgtheta}_n=t_n\bgtheta^*+(1-t_n)\blT_n$ for some $t_n\in (0,1)$. Thus,
\begin{equation*}
    \mathbb{E}_{\bgtheta^*}n\cdot L(\bgtheta^*,\blT_n)\geq \eta \mathbb{E}_{\bgtheta^*}\norm{\sqrt{n} (\blT_n-\bgtheta^*)}^2.
\end{equation*}
To show \eqref{lim:efficiency}, without loss of generality, we assume that 
\[
\limsup_{n\to \infty}\mathbb{E}_{\bgtheta^*}\norm{\sqrt{n} (\blT_n-\bgtheta^*)}^2<\infty.
\]
This implies $\blT_n\inP \bgtheta^*$. {It also implies that $\blT_n$ has finite second moment, and,  thus, conditions of Lemma~\ref{thm:C-R ineq} are satisfied.}
By Lemma~\ref{thm:C-R ineq}, we have
\(\operatorname{cov}_{\bgtheta}(\blT_n ) \succeq \frac{1}{n}  \{\mathcal{I}^{\mathbb{E}\overline{{\bgpi}}_n} (\bgtheta)\}^{-1}.\) 

If $L(\bgtheta^*,\widehat{\bgtheta})  \equiv \innerpoduct{H_{\bgtheta^*} (\bgtheta^*-\widehat{\bgtheta})}{\bgtheta^*-\widehat{\bgtheta}}$, we obtain
\[
    \mathbb{E}_{\bgtheta^*}[nL(\bgtheta^*,\blT_n)] = n\langle H_{\bgtheta^*}  , \operatorname{cov}_{\bgtheta^*}(\blT_n) \rangle \geq \tr(H_{\bgtheta^*}\{\mathcal{I}^{\mathbb{E}\overline{{\bgpi}}_n} (\bgtheta^*)\}^{-1}).
\] 
If $L(\bgtheta^*,\widehat{\bgtheta})  \not\equiv \innerpoduct{H_{\bgtheta^*} (\bgtheta^*-\widehat{\bgtheta})}{\bgtheta^*-\widehat{\bgtheta}}$, under the theorem's assumption
\[   \limsup_{n\to \infty}\mathbb{E}_{\bgtheta^*}n\norm{\blT_n-\bgtheta^*}^2 I(\norm{\blT_n-\bgtheta^*} >\varepsilon)= 0.\]
Define $\blV_n=\sqrt{n} \big(\bgtheta^*-\blT_n \big)$, and its truncation $\blV^M_n=\blV_n I(\norm{\blV_n} \leq M )$. Define 
\[
H(\bgtheta^*,\blT_n)=\frac{1}{2}\nabla^2_{\widehat \bgtheta} L(\bgtheta^*,\widehat\bgtheta)\Big\vert_{\widehat\bgtheta=\widetilde{\bgtheta}_n } , 
\]
where $\widetilde{\bgtheta}_n =t_n \bgtheta^*+(1-t_n)\blT_n$. {According to \eqref{ineq:L33},}$L(\bgtheta^*,\blT_n)=\innerpoduct{H(\bgtheta^*,\blT_n) (\bgtheta^*-\blT_n)}{\bgtheta^*-\blT_n}$. Furthermore, for any $\varepsilon >0$,
\begin{equation*}
\begin{split}
    &\mathbb{E}_{\bgtheta^*} \Big| n\cdot L(\bgtheta^*,\blT_n) -\innerpoduct{H_{\bgtheta^*}  \blV_n}{\blV_n}\Big|I(\norm{\blT_n -\bgtheta^* } \leq \varepsilon    )\\
    \leq&\max_{ \norm{\widehat{\bgtheta}-\bgtheta^*} \leq \varepsilon } \norm{H(\bgtheta^*,\widehat{\bgtheta})-H_{\bgtheta^*} }_{op} \mathbb{E}_{\bgtheta^*} \norm{\blV_n}^2 \\
    = & o(1). 
\end{split}
\end{equation*}
Now, we obtain that
\begin{equation*}
\begin{split}
    \mathbb{E}_{\bgtheta^*}\Big[ n\cdot L(\bgtheta^*,\blT_n) \Big]&\geq \mathbb{E}_{\bgtheta^*}\Big[ n\cdot L(\bgtheta^*,\blT_n)\cdot  I(\norm{\blV_n }\leq \varepsilon \sqrt{n}) \Big]\\
    &\geq \mathbb{E}_{\bgtheta^*}    \innerpoduct{H_{\bgtheta^*}  \blV^{\varepsilon \sqrt{n}}_n}{\blV^{\varepsilon \sqrt{n}}_n} -\max_{ \norm{\widehat{\bgtheta}-\bgtheta^*} \leq \varepsilon } \norm{H(\bgtheta^*,\widehat{\bgtheta})-H_{\bgtheta^*} }_{op} \mathbb{E}_{\bgtheta^*} \norm{\blV_n}^2\\
    &= \mathbb{E}_{\bgtheta^*}    \innerpoduct{H_{\bgtheta^*}  \blV_n}{\blV_n}-\mathbb{E}_{\bgtheta^*}n\norm{\blT_n-\bgtheta^*}^2 I(\norm{\blT_n-\bgtheta^*} >\varepsilon) -o(1)\\
    &\geq  \min_{{\bgpi} \in \ShatA} \operatorname{tr} ( H_{\bgtheta^*}\{\mathcal{I}^{ {\bgpi} } (\bgtheta^*)\}^{-1} )-\mathbb{E}_{\bgtheta^*}n\norm{\blT_n-\bgtheta^*}^2 I(\norm{\blT_n-\bgtheta^*} >\varepsilon) -o(1),
\end{split}
\end{equation*}
{where the last inequality is due to Lemma~\ref{thm:C-R ineq}.
}
Taking the inferior limit as $n\to \infty$ and then taking the inferior limit as $\varepsilon\to 0^+$, we obtain
\[
\liminf_{n\to \infty}\mathbb{E}_{\bgtheta^*}\Big[ n\cdot L(\bgtheta^*,\blT_n) \Big]\geq    \min_{{\bgpi} \in \ShatA}\operatorname{tr} ( H_{\bgtheta^*}\{\mathcal{I}^{ {\bgpi} } (\bgtheta^*)\}^{-1} ).
\]
{The `in particular' part is proved by noting that $H_{\bgtheta^*} = I_p$ in this case.}
\paragraph*{Part 2} 
Recall the log-likelihood defined in \ref{equ:log-like}. Because \(f_{\bgtheta,a}(\cdot) = h_{\bgxi_a,a}(\cdot)\), we obtain that
\begin{equation}\label{ineq:31}
    -\nabla^2_{\bgtheta}l_n(\bgtheta;\va_n)=-\frac{1}{n}\sum_{i=1}^n \blZ_{a_i}^T\nabla^2_{\bgxi_{a_i}}\log h_{\bgxi_{a_i},a_i}(X_i)  \blZ_{a_i} \succeq \alpha  \sum_{a\in \mathcal{A}}\overline{{\bgpi}}_n(a)  \blZ^T_{a}\blZ_{a },
\end{equation}
and \(\mathcal{I}_{\bgxi_a, a}(\bgxi_a)=-\mathbb{E}_{X^a\sim h_{\bgxi_a,a}} \nabla^2_{\bgxi_a} \log h_{\bgxi_a,a}(X^a)\succeq \alpha I, \text{ where } \bgxi_a=\blZ_a \bgtheta.\)

Under Assumption~\ref{ass:6A}, we obtain \(\mathcal{I}_a(\bgtheta) = \blZ^T_a \mathcal{I}_{\bgxi_a,a}(\bgxi_a)\blZ_a\) and
\[
\nabla^2_{\bgtheta} \log f_{\bgtheta, a}(X^a) = \blZ^T_a\nabla^2_{\bgxi_a}\log h_{\bgxi_a,a}(X^a)  \blZ_a,
\]
and
\[
\mathcal{I}^{{\bgpi}}(\bgtheta^*)=\sum_{a\in \mathcal{A}}{\bgpi}(a) \blZ^T_a \mathcal{I}_{\bgxi_a,a}(\bgxi^*_a)\blZ_a \succeq \alpha \sum_{a\in \mathcal{A}}{\bgpi}(a) \blZ^T_a  \blZ_a
\]
is a positive definite matrix.

Applying the first order Taylor expansion of $L(\bgtheta^*,\widehat{\bgtheta})$ over $\widehat{\bgtheta}$, we obtain that
\begin{equation}\label{ineq:L33_2}
    L(\bgtheta^*,\widehat{\bgtheta}_n)=\frac{1}{2} \innerpoduct{ \nabla^2_{\widehat \bgtheta} L(\bgtheta^*,\widehat\bgtheta)\Big\vert_{\widehat\bgtheta=\widetilde{\bgtheta}_n }  (\bgtheta^*-\widehat\bgtheta_n)}{\bgtheta^*-\widehat\bgtheta_n}\leq \eta' \norm{\bgtheta^* -\widehat\bgtheta_n   }^2,
\end{equation}
where $\widetilde{\bgtheta}_n=t_n\bgtheta^*+(1-t_n)\widehat \bgtheta_n$ for some $t_n\in (0,1)$. Recall that $\mathbb{G}_{\bgtheta}(\bgSigma)=\tr (H_{\bgtheta} \bgSigma)$. Note that  $\nabla \mathbb{G}_{\bgtheta}(\bgSigma)=H_{\bgtheta}$ and $\kappa(H_{\bgtheta})\leq \frac{\eta'}{\eta}<\infty$, which implies that $\mathbb{G}_{\bgtheta}(\bgSigma)$ satisfies Assumption \ref{ass:5}.

By Theorem~\ref{thm:empirical pi as converge}, $\overline{{\bgpi}}_n \inP {\bgpi^*}= \arg\min_{\bgpi\in \ShatA}\mathbb{F}_{\bgtheta^*}(\bgpi) $ and $\mathcal{I}^{{\bgpi^*}}(\bgtheta)$ is nonsingular for any $\bgtheta\in\bgTheta$, applying Theorem~\ref{thm:GI0-GI1AN}, we obtain
\begin{equation}\label{lim:31}
    \sqrt{n} (\widehat{\bgtheta}_n^{\text{ML}} - \bgtheta^*) \inD N_p\Big(0,\big\{\mathcal{I}^{{\bgpi^*}}(\bgtheta^*)\big\}^{-1} \Big) \text{ as } n\to\infty.
\end{equation}
Notice that for any $n\geq n_0$, by Lemma~\ref{thm:selection bound} and Assumption~\ref{ass:6B}, there exists $\underline{C}>0$ such that
\[
-\nabla^2_{\bgtheta}l_n(\bgtheta)\succeq \alpha \sum_{a\in \mathcal{A}} \overline{{\bgpi}}_n(a) \blZ_a^T \blZ_a \succeq \alpha \inf_{n\geq n_0}   \lambda_{min}\Big( \overline{{\bgpi}}_n(a) \blZ_a^T \blZ_a \Big)I_p\succeq 2\underline{C}I_p.
\]
By Taylor expansion, we obtain
\begin{equation*}
     0\leq l_n(\widehat{\bgtheta}_n^{\text{ML}};\va_n) -l_n(\bgtheta^*;\va_n)\leq  \innerpoduct{\nabla_{\bgtheta} l_n(\bgtheta^*;\va_n)}{\widehat{\bgtheta}_n^{\text{ML}} - \bgtheta^*} - \underline{C} \norm{\widehat{\bgtheta}_n^{\text{ML}} - \bgtheta^*}^2.
\end{equation*}
Thus,
\[
\norm{\widehat{\bgtheta}_n^{\text{ML}} - \bgtheta^*} \leq \frac{1}{\underline{C}}    \norm{\nabla_{\bgtheta} l_n(\bgtheta^*;\va_n)}.
\]
By Theorem 6.2 in \cite{dasgupta2008asymptotic}, to show that
\[
\mathbb{E}_{\bgtheta^*}\Big[n \norm{\widehat{\bgtheta}_n^{\text{ML}}- \bgtheta^* }^2\Big]\to \tr(\big\{\mathcal{I}^{{\bgpi}}(\bgtheta^*)\big\}^{-1}),
\]
combined with \eqref{lim:31}, it suffices to show that 
{
\[
\limsup_{n\to\infty}\mathbb{E}_{\bgtheta^*}\Big(\sqrt{n} \norm{\widehat{\bgtheta}_n^{\text{ML}}- \bgtheta^* }\Big)^{2+\delta}<\infty.\]
}
Note that
{ 
\begin{equation*}
\mathbb{E}_{\bgtheta^*}\Big(\sqrt{n} \norm{\widehat{\bgtheta}_n^{\text{ML}}- \bgtheta^* }\Big)^{2+\delta}\leq\frac{ 1 }{( \underline{C} )^{1+\delta} } \mathbb{E}_{\bgtheta^*} \norm{ \sqrt{n}  \nabla_{\bgtheta} l_n(\bgtheta^*;\va_n) }^{2+\delta}.
\end{equation*}
}
By classical $c_r$-inequality (see Chapter 9 of \cite{lin2010probability}), we have
\[
\mathbb{E}_{\bgtheta^*} \norm{ \sqrt{n} \nabla_{\bgtheta} l_n(\bgtheta^*;\va_n) }^{2+\delta}\leq   {p^{\delta/2}} 
\sum_{j=1}^p \mathbb{E}_{\bgtheta^*}\Big|  \sum_{i=1}^n  \frac{1}{\sqrt{n}} \ble_j^T \nabla_{\bgtheta} \log f_{\bgtheta^*,a_i}(X_i) \Big|^{2+\delta }.
\]
Since $\sum_{i=1}^n \ble_j^T \nabla_{\bgtheta} \log f_{\bgtheta^*,a_i}(X_i)$ is a martingale, 
applying inequality $(45)$ in \cite{lin2010probability}, 
we obtain
\begin{equation*}
\begin{split}
    \mathbb{E}_{\bgtheta^*}\Big|  \sum_{i=1}^n  \frac{1}{\sqrt{n}} \ble_j^T \nabla_{\bgtheta} \log f_{\bgtheta^*,a_i}(X_i) \Big|^{2+\delta }&\leq C_{2+\delta}\cdot n^{\delta/2} \sum_{i=1}^n \mathbb{E}_{\bgtheta^*}\Big| \frac{1}{\sqrt{n}}  \ble_j^T \nabla_{\bgtheta} \log f_{\bgtheta^*,a_i}(X_i)\Big|^{2+\delta}\\
    &\leq C_{2+\delta}   \sum_{a\in \mathcal{A}}\mathbb{E}_{X^a\sim f_{\bgtheta^*,a} } \norm{\nabla_{\bgtheta} \log f_{\bgtheta^*,a}(X^a)}^{2+\delta}\\
    &\leq C_{2+\delta}   \sum_{a\in \mathcal{A}}\mathbb{E}_{X^a\sim f_{\bgtheta^*,a} } \norm{\nabla_{\bgtheta} \log h_{\bgxi_a^*,a}(X^a)}^{2+\delta} \norm{\blZ_a}^{2+\delta}_{op}.
\end{split}
\end{equation*}
In conclusion, we obtain
\begin{equation}\label{ineq:L2_33}
    \sup_{n\geq n_0}\mathbb{E}_{\bgtheta^*}\Big(\sqrt{n} \norm{\widehat{\bgtheta}_n^{\text{ML}}- \bgtheta^* }\Big)^{2+\delta}<\infty.
\end{equation}
Notice that as $\widehat\bgtheta_n\inP \bgtheta^*$, we know that
\[
\frac{1}{2}\nabla^2_{\widehat \bgtheta} L(\bgtheta^*,\widehat\bgtheta)\Big\vert_{\widehat\bgtheta=\widetilde{\bgtheta}_n }\inP H_{\bgtheta^*}.
\]
Thus, we obtain that
\begin{equation}\label{equ:lim35}
nL(\bgtheta^*,\widehat{\bgtheta}_n^{\text{ML}})=n\innerpoduct{H_{\bgtheta^*} (\bgtheta^*-\widehat{\bgtheta}_n^{\text{ML}})}{\bgtheta^*-\widehat{\bgtheta}_n^{\text{ML}}}+o_p(1).
\end{equation}
By \eqref{ineq:L2_33} and \eqref{ineq:L33_2}, we obtain that
\[
\sup_{n\geq n_0}\mathbb{E}_{\bgtheta^*}\Big[nL(\bgtheta^*,\widehat{\bgtheta}_n^{\text{ML}}) \Big]^{1+\delta/2} \leq (\eta')^{1+\delta/2}\sup_{n\geq n_0}\mathbb{E}_{\bgtheta^*}\Big(\sqrt{n} \norm{\widehat{\bgtheta}_n^{\text{ML}}- \bgtheta^* }\Big)^{2+\delta}<\infty.
\]
Applying Theorem 6.2 in \cite{dasgupta2008asymptotic}, we obtain that as $n\to \infty$,
\[
\mathbb{E}_{\bgtheta^*} \Big[nL(\bgtheta^*,\widehat{\bgtheta}_n^{\text{ML}}) \Big]\to \mathbb{E}\innerpoduct{H_{\bgtheta^*} \blV }{\blV}=\tr (H_{\bgtheta^*} \{\mathcal{I}^{{\bgpi}}(\bgtheta^*) \}^{-1}), \blV \sim N_p(\bm{0}_p, \{\mathcal{I}^{{\bgpi}}(\bgtheta^*) \}^{-1}).
\]
Applying Theorem~\ref{thm:opt_selection}, the proof of the second part of Theorem~\ref{thm:ultimate} is completed.
\end{proof}

\subsection{Proof of Theorem~\ref{thm:convolution_theorem}}
\begin{proof}[Proof of Theorem~\ref{thm:convolution_theorem}]
The proof of Theorem~\ref{thm:convolution_theorem} is similar to that of Theorem~8.8 and Theorem~8.11 in~\cite{van2000asymptotic}. Thus, we will only state the main differences and omit the repetitive details.

{For proving the first part of the theorem, }
we follow the proof of Theorem~8.8 in~\cite{van2000asymptotic}. We need to verify Theorem~8.3, Theorem~7.10, as well as Proposition~8.4,  as presented in~\cite{van2000asymptotic}, {under our sequential setting}. %

{For proving the second part of the theorem, we follow the proof of }Theorem~8.11 in~\cite{van2000asymptotic}. It is sufficient to modify and prove Theorem~7.2 %
and Proposition 8.6, as presented in~\cite{van2000asymptotic}, {under our sequential setting}.

{Below we verify the above mentioned results in our context.}

\paragraph*{Differentiable in quadratic mean}
We need to show that densities $\{f_{\bgtheta,a}(\cdot)\}_{a\in \mathcal{A}}$ are differentiable in quadratic mean at $\bgtheta$, which means that 
\begin{equation}\label{equ:differentiable_in_quadratic_mean}
    \int \Big[  \sqrt{f_{\bgtheta+\blh,a}(x) }-\sqrt{f_{\bgtheta,a}(x)}-\frac{1}{2} \blh^T \nabla_{\bgtheta}\log f_{\bgtheta,a}(x) \sqrt{f_{\bgtheta,a}(x) }  \Big]^2d\mu(x)=o(\norm{\blh}^2).
\end{equation}
By applying the regularity conditions and using Lemma~7.6 from \cite{van2000asymptotic}, we have completed the proof of \eqref{equ:differentiable_in_quadratic_mean} for any $a\in \mathcal{A}$ and $\bgtheta$, where $\bgtheta$ is an interior point of $\bgTheta$.

\paragraph*{Modified Theorem~7.2 in \cite{van2000asymptotic}}
{We modified Theorem~7.2 in \cite{van2000asymptotic} in our context as follows.
Let $P_{n,\bgtheta}$ denote the joint distribution of
$(a_1,X_1,\cdots, a_n,X_n)$ following some experiment selection rule with the empirical selection proportion $\overline{\bgpi}_n$.
Then, {given that $\blh_n=\blh+ o(1)$,}
\begin{equation}
  \begin{split}     &\log\frac{P_{n,\bgtheta+\blh_n/\sqrt{n}}}{P_{n,\bgtheta}}(a_1, X_1,\cdots,a_n,X_n)\\
  \sim  &   \log \prod_{j=1}^n \frac{f_{\bgtheta+ {\blh_n}/{\sqrt{n}},a_j}(X_j^{a_j}) }{f_{\bgtheta ,a_j}(X_j^{a_j})}\\
  = &\frac{1}{\sqrt{n}} \sum_{j=1}^n \blh^T \nabla_{\bgtheta}  \log f_{\bgtheta,a_j}(X_j^{a_j})-\frac{1}{2} \blh^T \mathcal{I}^{\bgpi}(\bgtheta) \blh +o_{p}(1) 
  \end{split}
\end{equation}
where $\{ X^a_{j} \}_{a\in \mathcal{A},j\geq 1}$, where $X^a_j\sim f_{\bgtheta,a}(\cdot)$ are independent random variables, `$\sim$' means that random variables on both sides share the same distribution, the second line is due to Lemma~\ref{lem:same dist}, and the last line is obtained following a similar proof as that of Theorem 7.2 in \cite{van2000asymptotic}, which is detailed below.

By Assumptions~\ref{ass:1}-\ref{ass:4}, the Dominated Convergence Theorem, and the  proof of the classical differentiation under the integral sign, we arrive at
\[
\mathbb{E}[\nabla_{\bgtheta}  \log f_{\bgtheta,a_j}(X_j^{a_j}) |\mathcal{F}_{j-1}] =\bm{0} \text{ and }\mathbb{E}\Big[ \sum_{j=1}^n  \nabla_{\bgtheta}  \log f_{\bgtheta,a_j}(X_j^{a_j})\Big]=\bm{0}.
\]
The proof of the first Equation (7.3) in \cite{van2000asymptotic} needs to be modified as follows. Let $W_{nj}=2\Big(\sqrt{ \frac{f_{\bgtheta +\blh_n/\sqrt{n},a_j } (X_j^{a_j})}{f_{\bgtheta ,a_j } (X_j^{a_j})}  } - 1 \Big)$ and 
\(V_n=\sum_{j=1}^n W_{nj} -\frac{1}{\sqrt{n} }\blh^T\sum_{j=1}^n \nabla_{\bgtheta}\log f_{\bgtheta,a_j}(X_j^{a_j}) \). 
We know that
\begin{equation*}
\begin{split}
    &\operatorname{var}\Big(V_n \Big)\\
    =&\operatorname{var}\Big(V_{n-1}\Big)+ \operatorname{var}\Big( W_{nn} -\frac{1}{\sqrt{n} } \blh^T\nabla_{\bgtheta}\log f_{\bgtheta,a_n}(X_n^{a_n}) \Big) +2\operatorname{cov}\Big(V_{n-1},W_{nn} -\frac{1}{\sqrt{n} }\blh^T \nabla_{\bgtheta}\log f_{\bgtheta,a_n}(X_n^{a_n})\Big)
\end{split}
\end{equation*}
and
\begin{equation*}
\begin{split}
    &\operatorname{cov}\Big(V_{n-1},W_{nn} -\frac{1}{\sqrt{n} } \blh^T\nabla_{\bgtheta}\log f_{\bgtheta,a_n}(X_n^{a_n})\Big)\\
    =&\mathbb{E}\Big[\big(V_{n-1}-\mathbb{E}V_{n-1}\big)\big( W_{nn} -\frac{1}{\sqrt{n} } \blh^T\nabla_{\bgtheta}\log f_{\bgtheta,a_n}(X_n^{a_n})-\mathbb{E}\big(W_{nn} -\frac{1}{\sqrt{n} }\blh^T\nabla_{\bgtheta} \log f_{\bgtheta,a_n}(X_n^{a_n})\big) \big) \Big]\\
    =&\mathbb{E}\Big\{\big(V_{n-1}-\mathbb{E}V_{n-1}\big)\mathbb{E}\Big[\big( W_{nn} -\frac{1}{\sqrt{n} } \blh^T\nabla_{\bgtheta}\log f_{\bgtheta,a_n}(X_n^{a_n})-\mathbb{E}\big(W_{nn} -\frac{1}{\sqrt{n} }\blh^T \nabla_{\bgtheta}\log f_{\bgtheta,a_n}(X_n^{a_n})\big) \big)\Big|\mathcal{F}_{n-1} \Big]\Big\}\\
    =&0.
\end{split}
\end{equation*}
By induction and \eqref{equ:differentiable_in_quadratic_mean}, we obtain that as $n\to \infty$,
\begin{equation*}
\begin{split}
    &\operatorname{var}\Big(V_n \Big)= \sum_{j=1}^n \operatorname{var}\Big( W_{nj} -\frac{1}{\sqrt{n} } \blh^T\nabla_{\bgtheta}\log f_{\bgtheta,a_j}(X_j^{a_j}) \Big) \\
    \leq&\sum_{j=1}^n \mathbb{E}\Big( W_{nj} -\frac{1}{\sqrt{n} }\blh^T\nabla_{\bgtheta} \log f_{\bgtheta,a_j}(X_j^{a_j}) \Big)^2\\
     \leq & \sum_{j=1}^n \sum_{a\in\mathcal{A}} \mathbb{E}\Big( 2\Big(\sqrt{ \frac{f_{\bgtheta +\blh_n/\sqrt{n},a  } (X_j^{a })}{f_{\bgtheta ,a  } (X_j^{a })}  } - 1 \Big) -\frac{1}{\sqrt{n} }\blh^T\nabla_{\bgtheta} \log f_{\bgtheta,a}(X_j^{a}) \Big)^2\\
    \leq&8 n\sum_{a\in \mathcal{A}}\int \Big[  \sqrt{f_{\bgtheta+\blh_n/\sqrt{n},a}(x) }-\sqrt{f_{\bgtheta,a}(x)}-\frac{1}{2\sqrt{n}} \blh_n^T \nabla_{\bgtheta}\log f_{\bgtheta,a}(x) \sqrt{f_{\bgtheta,a}(x) }  \Big]^2d\mu(x)\\
    &+2 (\blh-\blh_n)^T \sum_{a\in \mathcal{A}} \mathcal{I}_a(\bgtheta)(\blh-\blh_n)\\
    =&8 o(\norm{\blh}^2 ) + 2 (\blh-\blh_n)^T \sum_{a\in \mathcal{A}} \mathcal{I}_a(\bgtheta)(\blh-\blh_n)\to 0.
\end{split}
\end{equation*}

Because of \eqref{equ:differentiable_in_quadratic_mean}, we obtain that
\begin{equation*}
\begin{split}
    &\left|\norm{\sqrt{  f_{\bgtheta +\blh_n/\sqrt{n},a } (x)} - \sqrt{    {f_{\bgtheta ,a } (x)}} }_{L^2(\mu)} - \norm{\frac{1}{2\sqrt{n}} \blh_n^T \nabla_{\bgtheta}\log f_{\bgtheta,a}(x) \sqrt{f_{\bgtheta,a}(x) }}_{L^2(\mu)} \right|\\
    \leq& \Big( \int \Big[  \sqrt{f_{\bgtheta+\blh_n/\sqrt{n},a}(x) }-\sqrt{f_{\bgtheta,a}(x)}-\frac{1}{2\sqrt{n}} \blh_n^T \nabla_{\bgtheta}\log f_{\bgtheta,a}(x) \sqrt{f_{\bgtheta,a}(x) }  \Big]^2d\mu(x)\Big)^{1/2}=o\Big(\frac{\norm{\blh}}{\sqrt{n}}\Big). 
\end{split}
\end{equation*}
Note that
\[
\norm{\frac{1}{2\sqrt{n}} \blh_n^T \nabla_{\bgtheta}\log f_{\bgtheta,a}(x) \sqrt{f_{\bgtheta,a}(x) }}_{L^2(\mu)}^2=\frac{1}{4n} \blh_n^T \mathcal{I}_a(\bgtheta)\blh_n=O\big(\frac{\norm{\blh}^2 }{n}\big).
\]
By inequality $|x^2-y^2|\leq|x-y|\big||x|+|y|\big|\leq |x-y|\big|2|x|+|x-y|\big|$, we obtain
\begin{equation*}
\begin{split}
    &\left|\norm{\sqrt{  f_{\bgtheta +\blh_n/\sqrt{n},a } (x)} - \sqrt{    {f_{\bgtheta ,a } (x)}} }_{L^2(\mu)}^2-\frac{1}{4n} \blh_n^T \mathcal{I}_a(\bgtheta) \blh_n\right|\\
    \leq& o\Big(\frac{\norm{\blh}}{\sqrt{n}}\Big) \left|  2\cdot O(\frac{\norm{\blh}}{\sqrt{n}})+ o\Big(\frac{\norm{\blh}}{\sqrt{n}}\Big) \right|=o\Big(\frac{\norm{\blh}^2}{ {n}}\Big).
\end{split}
\end{equation*}
Thus, the second Equation (7.3) in \cite{van2000asymptotic} is modified by
\begin{equation}
\begin{split}
&\mathbb{E} [W_{nj}|\mathcal{F}_{j-1}]=2 \Big(\int\sqrt{  {f_{\bgtheta +\blh_n/\sqrt{n},a_j } (x)}     {f_{\bgtheta ,a_j } (x)}} d\mu(x)-1 \Big)\\
=&-  \int\Big[\sqrt{  f_{\bgtheta +\blh_n/\sqrt{n},a_j } (x)} - \sqrt{    {f_{\bgtheta ,a_j } (x)}} \Big]^2d\mu(x)  =-\frac{1}{4n}\blh_n^T\mathcal{I}_{a_j}(\bgtheta)\blh_n+o(\frac{1}{n})\\
=&-\frac{1}{4n}\blh^T\mathcal{I}_{a_j}(\bgtheta)\blh+\frac{1}{n}o(1),
\end{split}
\end{equation}
where the $o(1)$ converges uniformly to $0$ as $n\to \infty$. Now, we obtain that
\[
\mathbb{E}\overline{\bgpi}_n=\bgpi+o(1),\text{ and }\mathbb{E} \sum_{j=1}^n W_{nj} = -\frac{1}{4}\blh^T\mathcal{I}^{\mathbb{E}\overline{\bgpi}_n}(\bgtheta)\blh+o(1)=-\frac{1}{4}\blh^T\mathcal{I}^{{\bgpi} }(\bgtheta)\blh+o(1).
\]
We define 
\[
A_{ni}=nW_{ni}^2-\Big(\blh^T \nabla_{\bgtheta}\log f_{\bgtheta,a_i}(X_i^{a_i})\Big)^2
\]
and
\[
A'_{ni}=\sum_{a\in \mathcal{A}}\Big|4n\Big(\sqrt{ \frac{f_{\bgtheta +\blh_n/\sqrt{n},a  } (X_i^{a })}{f_{\bgtheta ,a  } (X_i^{a })}  } - 1 \Big)^2-\Big(\blh^T \nabla_{\bgtheta}\log f_{\bgtheta,a}(X_i^{a})\Big)^2\Big|.
\]
Notice that
\begin{equation*}
\begin{split}
    A'_{ni}=& \sum_{a\in \mathcal{A}}\Big|4n\Big(\sqrt{ \frac{f_{\bgtheta +\blh_n/\sqrt{n},a  } (X_i^{a })}{f_{\bgtheta ,a  } (X_i^{a })}  } - 1 \Big)^2-\Big(\blh^T \nabla_{\bgtheta}\log f_{\bgtheta,a}(X_i^{a})\Big)^2\Big|\\
    \leq  &\sum_{a\in \mathcal{A}}\Big|2\sqrt{n}\Big(\sqrt{ \frac{f_{\bgtheta +\blh_n/\sqrt{n},a  } (X_i^{a })}{f_{\bgtheta ,a  } (X_i^{a })}  } - 1 \Big)-\Big(\blh^T \nabla_{\bgtheta}\log f_{\bgtheta,a}(X_i^{a})\Big) \Big|\cdot \\
    &\Big|2\sqrt{n}\Big(\sqrt{ \frac{f_{\bgtheta +\blh_n/\sqrt{n},a  } (X_i^{a })}{f_{\bgtheta ,a  } (X_i^{a })}  } - 1 \Big)+ \Big(\blh^T \nabla_{\bgtheta}\log f_{\bgtheta,a}(X_i^{a})\Big) \Big|
\end{split}
\end{equation*}
By Hölder's inequality and the definition of differentiable in quadratic mean at $\bgtheta$, we obtain that
\begin{equation*}
\begin{split}
    \mathbb{E}|A'_{ni}|
    \leq  &\sum_{a\in \mathcal{A}}\Big(\mathbb{E}\Big|2\sqrt{n}\Big(\sqrt{ \frac{f_{\bgtheta +\blh_n/\sqrt{n},a  } (X_i^{a })}{f_{\bgtheta ,a  } (X_i^{a })}  } - 1 \Big)-\Big(\blh^T \nabla_{\bgtheta}\log f_{\bgtheta,a}(X_i^{a})\Big) \Big|^2\Big)^{1/2}\cdot \\
    &\Big(\mathbb{E}\Big|2\sqrt{n}\Big(\sqrt{ \frac{f_{\bgtheta +\blh_n/\sqrt{n},a  } (X_i^{a })}{f_{\bgtheta ,a  } (X_i^{a })}  } - 1 \Big)+ \Big(\blh^T \nabla_{\bgtheta}\log f_{\bgtheta,a_i}(X_i^{a_i})\Big) \Big|^2\Big)^{1/2}\\
    \leq&\sum_{a\in \mathcal{A}}\Big(\mathbb{E}\Big|2\sqrt{n}\Big(\sqrt{ \frac{f_{\bgtheta +\blh_n/\sqrt{n},a  } (X_i^{a })}{f_{\bgtheta ,a  } (X_i^{a })}  } - 1 \Big)-\Big(\blh^T \nabla_{\bgtheta}\log f_{\bgtheta,a}(X_i^{a})\Big) \Big|^2\Big)^{1/2}\cdot \\
    &\Big[\Big(\mathbb{E}\Big|2\sqrt{n}\Big(\sqrt{ \frac{f_{\bgtheta +\blh_n/\sqrt{n},a  } (X_i^{a })}{f_{\bgtheta ,a  } (X_i^{a })}  } - 1 \Big) -\blh^T \nabla_{\bgtheta}\log f_{\bgtheta,a}(X_i^{a})  \Big|^2\Big)^{1/2}\\
    &+2\Big(\mathbb{E}\Big|  \Big(\blh^T \nabla_{\bgtheta}\log f_{\bgtheta,a}(X_i^{a})\Big) \Big|^2\Big)^{1/2}\Big].
\end{split}
\end{equation*}
Due to
\begin{equation*}
\begin{split}
&\Big(\mathbb{E}\Big|2\sqrt{n}\Big(\sqrt{ \frac{f_{\bgtheta +\blh_n/\sqrt{n},a  } (X_i^{a })}{f_{\bgtheta ,a  } (X_i^{a })}  } - 1 \Big)-\Big(\blh^T \nabla_{\bgtheta}\log f_{\bgtheta,a}(X_i^{a})\Big) \Big|^2\Big)^{1/2}\\
\leq& \Big(\mathbb{E}\Big|2\sqrt{n}\Big(\sqrt{ \frac{f_{\bgtheta +\blh_n/\sqrt{n},a  } (X_i^{a })}{f_{\bgtheta ,a  } (X_i^{a })}  } - 1 \Big)-\Big(\blh_n^T \nabla_{\bgtheta}\log f_{\bgtheta,a}(X_i^{a})\Big) \Big|^2\Big)^{1/2} \\
&+\Big(\mathbb{E}\Big| \Big(\blh_n-\blh\Big)^T \nabla_{\bgtheta}\log f_{\bgtheta,a}(X_i^{a})  \Big|^2\Big)^{1/2}\\
=&o(\norm{\blh_n} )+ \Big( (\blh-\blh_n)^T \mathcal{I}_a(\bgtheta) (\blh-\blh_n) \Big)^{1/2}=o(1),
\end{split}
\end{equation*}
we obtain that
\begin{equation*}
    \mathbb{E}|A'_{ni}|=\sum_{a\in \mathcal{A}}o(1)\Big( o(1)+2\big( \blh^T \mathcal{I}_a(\bgtheta) \blh  \big)^{1/2} \Big)=o(1).
\end{equation*}
Because $|A_{ni}|\leq A'_{ni}$, we know that $\mathbb{E}|A_{ni}| \to 0$ and $\mathbb{E} \frac{1}{n}\sum_{i=1}^n|A_{ni}|\to 0$ as $n\to \infty$. By Lemma~\ref{lem:SLLN} and  \eqref{equ:lim_T_n}, we know that
\[
\sum_{i=1}^nW_{ni}^2=\frac{1}{n}\sum_{i=1}^n \Big(\blh^T \nabla_{\bgtheta}\log f_{\bgtheta,a_i}(X_i^{a_i})\Big)^2 + \frac{1}{n}\sum_{i=1}^n A_{ni}  \stackrel{\mathbb{P}_{\bgtheta}}{\rightarrow}  \blh^T \mathcal{I}^{\bgpi}(\bgtheta) \blh.
\]
By triangle inequality and Markov's inequality, as $n\to \infty$,
\begin{equation*}
\begin{split}
    &\mathbb{P}(\max_{1\leq i\leq n} |W_{ni}|>\varepsilon \sqrt{2} )\leq n\mathbb{P}(|W_{ni}|>\varepsilon \sqrt{2}) \\
    \leq& n \mathbb{P}\Big( \big(\blh^T \nabla_{\bgtheta}\log f_{\bgtheta,a_i}(X_i^{a_i})\big)^2 >n\varepsilon^2\Big)+n \mathbb{P}\Big( |A_{ni}| >n\varepsilon^2  \Big)\\
    \leq& n \mathbb{P}\Big(\sum_{a\in \mathcal{A}} \big(\blh^T \nabla_{\bgtheta}\log f_{\bgtheta,a}(X_i^{a})\big)^2 >n\varepsilon^2\Big)+n \mathbb{P}\Big( |A'_{ni}| >n\varepsilon^2  \Big)\\
    \leq&\frac{1}{\varepsilon^2} \mathbb{E}\sum_{a\in \mathcal{A}} \big(\blh^T \nabla_{\bgtheta}\log f_{\bgtheta,a}(X_i^{a})\big)^2 I\Big(\sum_{a\in \mathcal{A}} \big(\blh^T \nabla_{\bgtheta}\log f_{\bgtheta,a}(X_i^{a})\big)^2>n\varepsilon^2\Big)+\frac{\mathbb{E}A'_{ni}}{\varepsilon^2}\\
    \to& 0.
\end{split}
\end{equation*}
Based on the rest of the proof of Theorem 7.2 in \cite{van2000asymptotic}, we complete the proof of modified Theorem 7.2.

}

\paragraph*{Modified Theorem~7.10 in~\cite{van2000asymptotic}}. The modified theorem is as follows: if statistics $\blT_n=\blT_n(a_1,X_1^{a_1},\cdots,a_n,X_n^{a_n})$ satisfies the limit results in \eqref{equ:lim_T_n} under every $\blh$, then there exists a randomized statistic $\blT$ in the experiment $\{N_p(\blh,\{\mathcal{I}^{\bgpi}(\bgtheta)\}^{-1}  ):\blh \in \mathbb{R}^p \}$ such that $\blT_n \stackrel{\blh}{\rightsquigarrow} \blT$ for every $\blh$.
 
The proof mostly follows that of Theorem~7.10 in~\cite{van2000asymptotic} with the following modifications. Without loss of generality, let
\[
P_{n,\blh}={P_{n,\bgtheta+\blh/\sqrt{n}}}(a_1, X^{a_1}_1,\cdots,a_n,X^{a_n}_n), \blJ=\mathcal{I}^{\bgpi}(\bgtheta), \Delta_n= \frac{1}{\sqrt{n}} \sum_{j=1}^n \nabla_{\bgtheta} \log f_{\bgtheta,a_j}(X_j^{a_j}).
\]
There exists random vector $(\blS,\Delta)$ such that 
\begin{equation*}
\left(\blT_n, \Delta_n\right) \stackrel{ {\mathbf 0}}{\rightsquigarrow}(\blS, \Delta).
\end{equation*}

Applying the modified Theorem 7.2 and follow similar arguments as those in the proof of Theorem 7.10 in \cite{van2000asymptotic}, we obtain 
\begin{equation*}
\left( \blT_n, \log \frac{d P_{n, \blh}}{d P_{n, {\mathbf 0}}}\right) \stackrel{ {\mathbf 0}}{\rightsquigarrow}\left( \blS, \blh^T \Delta-\frac{1}{2} \blh^T \blJ \blh\right).
\end{equation*}
The rest of the proof remains unchanged. %

\paragraph*{Modified Theorem 8.3 in~\cite{van2000asymptotic}}
With a similar proof,  the conclusion in Theorem 8.3 in~\cite{van2000asymptotic} is modified as follows. If the limit results in \eqref{equ:lim_T_n} hold, then {there exists a randomized statistic $\blT$ in 
$\{N_p(\blh, \{\mathcal{I}^{\bgpi}(\bgtheta)\}^{-1}):\blh\in \mathbb{R}^p \}$ such that $\blT-\blh$ has the distribution $L^{\bgpi}_{\bgtheta}$ for every $\blh$.
}

\paragraph*{Proposition 8.4 in~\cite{van2000asymptotic}} 
{This proposition directly apply to our setting and } does not required to be changed.

With the above modifications, we follow a similar proof as that for Theorem~8.8 in \cite{van2000asymptotic}, we obtain \eqref{equ:Convolution_Theorem} as well as the first part of the theorem.

\paragraph*{Proposition~8.6 in \cite{van2000asymptotic}}
{This proposition directly applies to our problem and does not need to be modified.}

Following the proof of Theorem 8.11~\cite{van2000asymptotic} with the above modifications, we complete the proof of \eqref{conj:local_minimax} and the second part of the theorem.
\end{proof}
\subsection{Proof of Theorem~\ref{thm:a.s. stopping time}}
\begin{proof}[Proof of Theorem~\ref{thm:a.s. stopping time}]
    By Theorem~\ref{thm:consistency_final}, we know that 
\[
\lim_{n\to \infty}\widehat{\bgtheta}_{n}^{\text{ML}} = \bgtheta^*, a.s.
\]
Because $\lim_{n\to \infty} \tau_n = \infty$ a.s., we obtain that
\[
\lim_{n\to \infty}\widehat{\bgtheta}_{\tau_n}^{\text{ML}} = \bgtheta^*, a.s.
\]   
\end{proof}

\subsection{Proof of Theorem~\ref{thm:Asy_Normal_final_stopping_time_sp}}
{
We prove the theorem for a class more general stopping rules instead. 
}
We first define a deterministic stopping rule
\begin{equation*}
    \tau(\Gamma_{\bgtheta^*} ,c, \bgtheta, {\bgpi})=\min \Big\{m\geq n_0; \frac{1}{m} \Gamma_{\bgtheta^*} ( \{\mathcal{I}^{{\bgpi}}(\bgtheta)\}^{-1}  )\leq c \Big\},
\end{equation*}
where $\Gamma_{\bgtheta}$ is a continuous function that maps a positive definite matrix to a positive number, $c$ is a positive number, $\bgtheta\in \bgTheta$, and ${\bgpi} \in \ShatA$. Note that $\tau(\Gamma,c, \bgtheta, {\bgpi})=\max\{\lceil \frac{\Gamma( \{\mathcal{I}^{{\bgpi}}(\bgtheta)\}^{-1}  )}{c}  \rceil , n_0\}$, where $\lceil \cdot  \rceil$ is the ceiling function.
 
Consider a class of functions $\{ \Gamma_{\bgtheta}\}_{\bgtheta\in \bgTheta}$, such that for any $0<u_1<u_2<\infty$,
\begin{equation}\label{lim:Gamma_theta}
    \lim_{\bgtheta \to \bgtheta^*} \max_{{u_1 I\preceq \bgSigma \preceq u_2 I  }}  \left|\Gamma_{\bgtheta}(\bgSigma)-\Gamma_{\bgtheta^*}(\bgSigma)\right| =0, \text{ and } \min_{\bgtheta \in \bgTheta}\min_{{u_1 I\preceq \bgSigma \preceq u_2 I  }} \Gamma_{\bgtheta}(\bgSigma)>0.
\end{equation}
Define a random stopping time 
\begin{equation}\label{equ:tau_c}
\tau_c=\min \Big\{m\geq n_0; \frac{1}{m} \Gamma_{\widehat{\bgtheta}_m} ( \{\mathcal{I}^{\overline{{\bgpi}}_m }(\widehat{\bgtheta}_m )\}^{-1}  )\leq c \Big\},
\end{equation}
where $\widehat{\bgtheta}_m $ is an estimator of $\bgtheta$ based on $m$ observations. Later, we will show that the stopping rules considered in Theorem~\ref{thm:Asy_Normal_final_stopping_time_sp} are special cases of the general stopping rule defined above.

The following theorem generalizes Theorem~\ref{thm:Asy_Normal_final_stopping_time_sp}.
\begin{theorem}[General result for Asymptotic normality with stopping time] \label{thm:Asy_Normal_final_stopping_time}
Let $\widehat{\bgtheta}_{n}^{\text{ML}}$ be the MLE following the experiment selection rule \textrm{GI0} or \textrm{GI1}, as described in Algorithm~\ref{alg:gi0} and Algorithm~\ref{alg:gi1}. Assume that $\mathbb{F}_{\bgtheta^*} ({\bgpi})$ has a unique minimizer ${\bgpi}^*$. Let $\{c_n\}_{n\geq 0}$ be a positive decreasing sequence such that $c_n\to 0$ as $n\to \infty$. Consider the stopping time $\tau_{c_n}$ given by \eqref{equ:tau_c}, where $\Gamma_{\bgtheta}$ satisfies \eqref{lim:Gamma_theta}. Then,
\begin{equation}\label{lim:stopping_time_lim}
    \sqrt{\tau_n} \Big\{  \mathcal{I}^{ \overline{{\bgpi}}_{\tau_{c_n }} }(\widehat{\bgtheta}_{\tau_{c_n }}^{\text{ML}})\Big\}^{ 1/2}(\widehat{\bgtheta}_{\tau_{c_n } }^{\text{ML}} -\bgtheta^*) \inD N_p\Big(\bm{0}_p,I_p\Big).
\end{equation}
Furthermore, for any continuously differentiable function $g: \bgTheta\to \mathbb{R}$ such that $\nabla g(\bgtheta^*)\neq 0$,
\begin{equation}\label{lim:stopping_time_lim_g}
     \frac{\sqrt{\tau_{c_n }}(g(\widehat{\bgtheta}_{\tau_{c_n }}^{\text{ML}}) -g(\bgtheta^*))}{\norm{\Big\{  \mathcal{I}^{ \overline{{\bgpi}}_{\tau_{c_n }} }(\widehat{\bgtheta}_{\tau_{c_n }}^{\text{ML}})\Big\}^{ -1/2}\nabla g( \widehat{\bgtheta}_{\tau_{c_n }}^{\text{ML}} )}}\inD N\Big(0,1\Big).
\end{equation}
\end{theorem}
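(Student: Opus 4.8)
The plan is to deduce the result from the multivariate Anscombe theorem (Theorem~\ref{thm:mult_random_clt}), applied with $T_n=\widehat{\bgtheta}_n^{\text{ML}}$, limit point $\bgtheta^*$, and scaling matrices $W_n=\tfrac{1}{\sqrt n}\{\mathcal{I}^{\bgpi^*}(\bgtheta^*)\}^{-1/2}$, so that $W_n^{-1}(T_n-\bgtheta^*)=\sqrt n\,\{\mathcal{I}^{\bgpi^*}(\bgtheta^*)\}^{1/2}(\widehat{\bgtheta}_n^{\text{ML}}-\bgtheta^*)$. Taking $\rho_n=\sqrt n$ gives $\rho_nW_n\equiv\{\mathcal{I}^{\bgpi^*}(\bgtheta^*)\}^{-1/2}=:W$, a constant positive definite matrix, so $\rho_nW_n\inP W$ and $\sup_n\kappa(W_n)<\infty$ hold trivially, while Theorem~\ref{thm:Asy_Normal_final} supplies $W_n^{-1}(T_n-\bgtheta^*)\inD N_p(\bm{0}_p,I_p)$. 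The random index is $N_n=\tau_{c_n}$, and I would pair it with the deterministic sequence $r_n=\lceil\gamma/c_n\rceil$, where $\gamma=\Gamma_{\bgtheta^*}(\{\mathcal{I}^{\bgpi^*}(\bgtheta^*)\}^{-1})$; since $c_n\downarrow 0$ this is nondecreasing and diverges.

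First I would establish $N_n/r_n\to 1$ in probability. By Theorem~\ref{thm:consistency_final} and Theorem~\ref{thm:empirical pi as converge}, $\widehat{\bgtheta}_m^{\text{ML}}\to\bgtheta^*$ and $\overline{\bgpi}_m\to\bgpi^*$ a.s., while Theorem~\ref{thm:selection bound} confines $\{\mathcal{I}^{\overline{\bgpi}_m}(\widehat{\bgtheta}_m^{\text{ML}})\}^{-1}$ to a fixed compact set of positive definite matrices; the uniform continuity hypothesis \eqref{lim:Gamma_theta} then yields $\Gamma_{\widehat{\bgtheta}_m^{\text{ML}}}(\{\mathcal{I}^{\overline{\bgpi}_m}(\widehat{\bgtheta}_m^{\text{ML}})\}^{-1})\to\gamma$ a.s. A positive lower bound on this quantity over any initial block $n_0\le m\le M$ forces $\tau_{c_n}>M$ once $c_n$ is small, so $\tau_{c_n}\to\infty$ a.s.; sandwiching the stopping inequality at $\tau_{c_n}$ and $\tau_{c_n}-1$ then gives $c_n\tau_{c_n}\to\gamma$ a.s., and since $c_nr_n\to\gamma$ as well, $\tau_{c_n}/r_n\to 1$ a.s., hence in probability.

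The main obstacle is the Anscombe condition: for each $\varepsilon,\gamma'$ I must find $\delta$ with $\limsup_n\mathbb{P}(\max_{|n'-n|\le\delta n}\|\widehat{\bgtheta}_{n'}^{\text{ML}}-\widehat{\bgtheta}_n^{\text{ML}}\|\ge\varepsilon\lambda_{\min}(W_n))<\gamma'$; as $\lambda_{\min}(W_n)=c_0/\sqrt n$ with $c_0=\lambda_{\min}(\{\mathcal{I}^{\bgpi^*}(\bgtheta^*)\}^{-1/2})>0$, this amounts to controlling $\sqrt n\,\max_{|n'-n|\le\delta n}\|\widehat{\bgtheta}_{n'}^{\text{ML}}-\widehat{\bgtheta}_n^{\text{ML}}\|$. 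I would obtain it from the stochastic linearization already built into the proof of Theorem~\ref{thm:GI0-GI1AN}: writing $S_m=\sum_{i=1}^m\nabla_{\bgtheta}\log f_{\bgtheta^*,a_i}(X_i)$, a martingale with $\mathbb{E}\|S_m\|^2\le m\sigma^2$ by Assumption~\ref{ass:2}, and $M=\{\mathcal{I}^{\bgpi^*}(\bgtheta^*)\}^{-1}$, Lemmas~\ref{lem:a.s.bound} and~\ref{lem:taylor ln} give $\blW_m=\sqrt m(\widehat{\bgtheta}_m^{\text{ML}}-\bgtheta^*)=M\,S_m/\sqrt m+\bgzeta_m$ with $\|\bgzeta_m\|\le\eta_m\|S_m/\sqrt m\|$ for a sequence $\eta_m\to 0$ a.s. Using the identity $\sqrt n(\widehat{\bgtheta}_{n'}^{\text{ML}}-\widehat{\bgtheta}_n^{\text{ML}})=\sqrt{n/n'}\,\blW_{n'}-\blW_n$, the increment splits into a martingale part controlled by Doob's $L^2$ maximal inequality ($\mathbb{E}\max_{|k|\le\delta n}\|S_{n+k}-S_n\|^2\le 4\delta n\sigma^2$, giving an $O(\delta)$ probability bound via Chebyshev), a part of order $(k/n')\,(S_n/\sqrt n)=O_p(\delta)$, and an error part $\max_{|k|\le\delta n}\|\bgzeta_{n+k}\|\le(\sup_{m\ge(1-\delta)n}\eta_m)\cdot O_p(1)=o_p(1)$, where the $O_p(1)$ is a single application of Doob's inequality up to $(1+\delta)n$. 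Choosing $\delta$ small pushes the $O(\delta)$ term below $\gamma'$ while the $o_p(1)$ terms vanish in the $\limsup$. The care required here lies precisely in bounding the remainder $\bgzeta_m$ \emph{uniformly} over the whole window $|n'-n|\le\delta n$, not merely at a single index.

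With all hypotheses verified, Theorem~\ref{thm:mult_random_clt} yields $\sqrt{r_n}\,\{\mathcal{I}^{\bgpi^*}(\bgtheta^*)\}^{1/2}(\widehat{\bgtheta}_{\tau_{c_n}}^{\text{ML}}-\bgtheta^*)\inD N_p(\bm{0}_p,I_p)$. To reach \eqref{lim:stopping_time_lim} I would multiply by $\sqrt{\tau_{c_n}/r_n}\,\{\mathcal{I}^{\overline{\bgpi}_{\tau_{c_n}}}(\widehat{\bgtheta}_{\tau_{c_n}}^{\text{ML}})\}^{1/2}\{\mathcal{I}^{\bgpi^*}(\bgtheta^*)\}^{-1/2}$, which tends to $I_p$ a.s. by $\tau_{c_n}/r_n\to 1$, Theorem~\ref{thm:a.s. stopping time}, $\overline{\bgpi}_{\tau_{c_n}}\to\bgpi^*$ a.s., and continuity of $\mathcal{I}_a$, and then invoke Slutsky's theorem. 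For \eqref{lim:stopping_time_lim_g} I would expand $g(\widehat{\bgtheta}_{\tau_{c_n}}^{\text{ML}})-g(\bgtheta^*)=\nabla g(\tilde{\bgtheta})^T(\widehat{\bgtheta}_{\tau_{c_n}}^{\text{ML}}-\bgtheta^*)$ by the mean value theorem with $\tilde{\bgtheta}\to\bgtheta^*$ a.s., divide by the a.s.-convergent and (since $\nabla g(\bgtheta^*)\ne\bm{0}_p$) nonzero normalizer $\|\{\mathcal{I}^{\overline{\bgpi}_{\tau_{c_n}}}(\widehat{\bgtheta}_{\tau_{c_n}}^{\text{ML}})\}^{-1/2}\nabla g(\widehat{\bgtheta}_{\tau_{c_n}}^{\text{ML}})\|$, and conclude again by Slutsky. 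Finally, Theorem~\ref{thm:Asy_Normal_final_stopping_time_sp} follows by specialization: $\Gamma_{\bgtheta}(\bgSigma)=\{\nabla h(\bgtheta)\}^T\bgSigma\,\nabla h(\bgtheta)$ with threshold $c_n^2$ reproduces $\tau_{c_n}^{(1)}$, and $\Gamma_{\bgtheta}(\bgSigma)=\operatorname{tr}(\bgSigma)$ reproduces $\tau_{c_n}^{(2)}$, both satisfying \eqref{lim:Gamma_theta} because $\nabla h$ is continuous and nonvanishing on the compact $\bgTheta$.
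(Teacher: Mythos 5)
Your proposal is correct and follows essentially the same route as the paper: the multivariate Anscombe theorem (Theorem~\ref{thm:mult_random_clt}) with the identical choices $T_n=\widehat{\bgtheta}_n^{\text{ML}}$, $W_n=n^{-1/2}\{\mathcal{I}^{\bgpi^*}(\bgtheta^*)\}^{-1/2}$, a deterministic pairing sequence matching the paper's $v_n=\tau(\Gamma_{\bgtheta^*},c_n,\bgtheta^*,\bgpi^*)$ (the paper's Lemma~\ref{lem:tau_n_prop} is your $c_n\tau_{c_n}\to\gamma$ argument), verification of the Anscombe condition via the score-equation linearization and Doob's maximal inequality on the score martingale, and Slutsky for both conclusions. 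Your variations — the sandwich at $\tau_{c_n}$ and $\tau_{c_n}-1$, and bounding increments through the representation $\blW_m=MS_m/\sqrt{m}+\bgzeta_m$ rather than through $\nabla l_n(\widehat{\bgtheta}_{n'}^{\text{ML}};\va_n)$ on the event $D_n$ — are only cosmetic differences in execution.
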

{Given the above generalized theorem, the proof of \ref{thm:Asy_Normal_final_stopping_time_sp} is provided below. The proof of Theorem~\ref{thm:Asy_Normal_final_stopping_time}} is provided later in this section.}
\begin{proof}[Proof of Theorem~\ref{thm:Asy_Normal_final_stopping_time_sp}]
{
Note that $\tau^{(1)}_c$ and $\tau^{(2)}_c$ can be rewritten as
\begin{equation*}
\begin{split}
    &\tau^{(1)}_c=\min\{m\geq n_0;\ \frac{1}{m}\Gamma^{(1)}_{\bgtheta}\big( \{ \mathcal{I}( \widehat{\bgtheta}_{\tau_n}^{\text{ML}};\va_m ) \}^{-1} \big) \leq c^2 \}\\
    &\tau^{(2)}_c=\min\{m\geq n_0;\ \frac{1}{m}\Gamma^{(1)}_{\bgtheta}\big( \{ \mathcal{I}( \widehat{\bgtheta}_{\tau_n}^{\text{ML}};\va_m ) \}^{-1} \big) \leq c \},  
\end{split}
\end{equation*}
where \begin{equation*}
    \Gamma^{(1)}_{\bgtheta}(\bgSigma)=\tr \Big( \{\nabla h(\bgtheta)\}^T  \bgSigma  h(\bgtheta) \Big),\Gamma^{(2)}_{\bgtheta}(\bgSigma)=\tr \big( \bgSigma   \big),
\end{equation*}
Both $\Gamma^{(l)}_{\bgtheta}(\bgSigma)$ ($l=1,2$) are continuously differentiable in $\bgtheta$ and $\bgSigma$ so the first part of \eqref{lim:Gamma_theta} is satisfied. The second part of \eqref{lim:Gamma_theta} is satisfied for $\Gamma^{(2)}$ is straightforward. For $\Gamma^{(1)}_{\bgtheta}(\bgSigma)$, the second part of \eqref{lim:Gamma_theta} is satisfied due to the assumption that $\nabla h(\bgtheta)\neq \mathbf{0}$ for all $\bgtheta$. Thus, conditions of Theorem~\ref{thm:Asy_Normal_final_stopping_time} are satisfied, and the proof is completed by applying this theorem.
}    
\end{proof}
{In the rest of the section, we present the proof of Theorem~\ref{thm:Asy_Normal_final_stopping_time}. Roughly, Theorem~\ref{thm:Asy_Normal_final_stopping_time} is proved by combining the following lemma, compares the random and deterministic stopping times,  with the multivariate Anscombe’s theorem (Lemma~\ref{thm:mult_random_clt}).}

\begin{lemma}\label{lem:tau_n_prop}
Assume a family of function $\Gamma_{\bgtheta}$ satisfies \eqref{lim:Gamma_theta}. 
Assume there exists  constants $0<u_1<u_2<\infty$ such that for any $n\geq n_0$ and $\bgtheta \in \bgTheta$,
\begin{equation}\label{ineq:u1u2}
      u_1 I\preceq \mathcal{I}^{\overline{{\bgpi}}_n }( \bgtheta) \preceq u_2 I.
\end{equation}
If as $n\to \infty$, 
\begin{equation}\label{lim:3lim}
\begin{split}
    &c_n\to 0, c_n\geq c_{n+1}>0, \\
    &\widehat{\bgtheta}_n\to \bgtheta^*\ a.s. \ \mathbb{P}_*,\ \text{and}\\
    &\overline{{\bgpi}}_n \to {\bgpi}\ a.s. \ \mathbb{P}_*,
\end{split}
\end{equation}
where $\mathcal{I}^{{\bgpi}}(\bgtheta^*)$ is nonsingular. Then, $\tau_{c_n}<\infty$ a.s. $\mathbb{P}_*$ and as $n\to \infty$,
\begin{equation}\label{lim:tau_n}
     \tau(\Gamma_{\bgtheta^*},c_n, \bgtheta^*, {\bgpi})\to \infty,\ \tau_{c_n}\to \infty,\ \text{ and }\frac{\tau_{c_n}}{\tau(\Gamma_{\bgtheta^*},c_n, \bgtheta^*, {\bgpi})}\to 1, \ a.s. \ \mathbb{P}_*.
\end{equation}
\end{lemma}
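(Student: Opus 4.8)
The plan is to reduce everything to the elementary observation that $\tau(\Gamma_{\bgtheta^*},c_n,\bgtheta^*,\bgpi)=\max\{\lceil g^*/c_n\rceil,n_0\}$, where $g^*:=\Gamma_{\bgtheta^*}(\{\mathcal{I}^{\bgpi}(\bgtheta^*)\}^{-1})$ is a fixed positive constant, and to show that the data-driven quantity being thresholded in the random rule \eqref{equ:tau_c} converges almost surely to this same $g^*$. First I would establish uniform bounds on $\Gamma$. By \eqref{ineq:u1u2}, every matrix $\{\mathcal{I}^{\overline{{\bgpi}}_m}(\bgtheta)\}^{-1}$, and in the limit $\{\mathcal{I}^{\bgpi}(\bgtheta^*)\}^{-1}$, has all eigenvalues in $[1/u_2,1/u_1]$. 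Applying \eqref{lim:Gamma_theta} with $1/u_2,1/u_1$ in the roles of $u_1,u_2$, the map $(\bgtheta,\bgSigma)\mapsto\Gamma_{\bgtheta}(\bgSigma)$ is continuous and hence bounded above by some $\overline{\Gamma}<\infty$ and below by some $\underline{\Gamma}>0$ on the compact set $\{(\bgtheta,\bgSigma):\bgtheta\in\bgTheta,\ \tfrac1{u_2}I\preceq\bgSigma\preceq\tfrac1{u_1}I\}$; in particular $g^*\in[\underline{\Gamma},\overline{\Gamma}]$, so $g^*>0$.

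Next I would settle finiteness and divergence. Since $\tfrac1m\Gamma_{\widehat{\bgtheta}_m}(\{\mathcal{I}^{\overline{{\bgpi}}_m}(\widehat{\bgtheta}_m)\}^{-1})$ is sandwiched between $\underline{\Gamma}/m$ and $\overline{\Gamma}/m$, the stopping condition in \eqref{equ:tau_c} is met no later than $m=\lceil\overline{\Gamma}/c_n\rceil$, giving $\tau_{c_n}<\infty$ surely; evaluating the condition at $m=\tau_{c_n}$ yields $\underline{\Gamma}\le c_n\tau_{c_n}$, whence $\tau_{c_n}\ge\underline{\Gamma}/c_n\to\infty$. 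The divergence $\tau(\Gamma_{\bgtheta^*},c_n,\bgtheta^*,\bgpi)\to\infty$ is immediate since $g^*$ is fixed and $c_n\to0$.

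The crux is the almost sure convergence $G_m:=\Gamma_{\widehat{\bgtheta}_m}(\{\mathcal{I}^{\overline{{\bgpi}}_m}(\widehat{\bgtheta}_m)\}^{-1})\to g^*$. Writing $\bgSigma_m:=\{\mathcal{I}^{\overline{{\bgpi}}_m}(\widehat{\bgtheta}_m)\}^{-1}$, I would note that $\mathcal{I}^{\bgpi}(\bgtheta)=\sum_{a}\pi(a)\mathcal{I}_a(\bgtheta)$ is jointly continuous in $(\bgpi,\bgtheta)$ (each $\mathcal{I}_a$ is continuous by Assumption~\ref{ass:3}) and matrix inversion is continuous on the nonsingular matrices, so the assumed a.s.\ limits $\widehat{\bgtheta}_m\to\bgtheta^*$ and $\overline{{\bgpi}}_m\to\bgpi$ from \eqref{lim:3lim} give $\bgSigma_m\to\bgSigma^*:=\{\mathcal{I}^{\bgpi}(\bgtheta^*)\}^{-1}$ a.s., with all $\bgSigma_m$ in the eigenvalue box. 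Then I split
\[
|G_m-g^*|\le \max_{\frac1{u_2}I\preceq\bgSigma\preceq\frac1{u_1}I}\big|\Gamma_{\widehat{\bgtheta}_m}(\bgSigma)-\Gamma_{\bgtheta^*}(\bgSigma)\big|+\big|\Gamma_{\bgtheta^*}(\bgSigma_m)-\Gamma_{\bgtheta^*}(\bgSigma^*)\big|.
\]
The first term vanishes by the \emph{uniform} convergence in the first part of \eqref{lim:Gamma_theta}, and the second by continuity of $\Gamma_{\bgtheta^*}$ together with $\bgSigma_m\to\bgSigma^*$, so $G_m\to g^*$ a.s.

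Finally I would sandwich the random stopping time. Fix $\varepsilon\in(0,g^*)$; a.s.\ there is a random $M$ with $g^*-\varepsilon<G_m<g^*+\varepsilon$ for all $m\ge M$. Since $\tau_{c_n}\to\infty$, for all large $n$ we have $\tau_{c_n}-1\ge M$; evaluating the rule at $m=\tau_{c_n}$ (satisfied) and at $m=\tau_{c_n}-1$ (not satisfied) gives
\[
\frac{g^*-\varepsilon}{c_n}<\tau_{c_n}<\frac{g^*+\varepsilon}{c_n}+1.
\]
Dividing by $\tau(\Gamma_{\bgtheta^*},c_n,\bgtheta^*,\bgpi)=\lceil g^*/c_n\rceil\in[g^*/c_n,\,g^*/c_n+1]$ and letting $n\to\infty$ (so $g^*/c_n\to\infty$) traps the ratio in $[\,1-\varepsilon/g^*+o(1),\,1+\varepsilon/g^*+o(1)\,]$; letting $\varepsilon\downarrow0$ yields $\tau_{c_n}/\tau(\Gamma_{\bgtheta^*},c_n,\bgtheta^*,\bgpi)\to1$ a.s., completing \eqref{lim:tau_n}. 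The main obstacle is the convergence of $G_m$, whose only subtlety is the simultaneous drift of the parameter $\widehat{\bgtheta}_m$ and of the matrix argument $\bgSigma_m$; this is precisely what the two-sided uniform control in hypothesis \eqref{lim:Gamma_theta} is designed to handle.
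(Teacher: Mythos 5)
Your proposal is correct and takes essentially the same route as the paper: both prove a.s.\ convergence of the plug-in criterion $G_m=\Gamma_{\widehat{\bgtheta}_m}(\{\mathcal{I}^{\overline{\bgpi}_m}(\widehat{\bgtheta}_m)\}^{-1})$ to $g^*=\Gamma_{\bgtheta^*}(\{\mathcal{I}^{\bgpi}(\bgtheta^*)\}^{-1})$ using the uniform convergence in \eqref{lim:Gamma_theta} together with continuity of $\Gamma_{\bgtheta^*}$ on the eigenvalue box, and then trap $\tau_{c_n}$ between $(g^*\pm\varepsilon)/c_n$ (the paper via set inclusions and ceiling functions, you by evaluating the stopping rule at $\tau_{c_n}$ and $\tau_{c_n}-1$) before letting $\varepsilon\downarrow 0$. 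Your one imprecision — inferring joint continuity and a uniform upper bound $\overline{\Gamma}$ over all of $\bgTheta$ from \eqref{lim:Gamma_theta}, which only controls $\Gamma_{\bgtheta}$ as $\bgtheta\to\bgtheta^*$ and gives a lower bound — is harmless, since an a.s.-finite bound suffices for $\tau_{c_n}<\infty$ and follows from your convergence $G_m\to g^*$, and the paper's own constant $v_2$ is asserted with the same brevity.
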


\begin{proof}[Proof of Lemma \ref{lem:tau_n_prop}]
By Theorem~\eqref{thm:selection bound} and equation \eqref{ineq:Ipi_bound}, we know that there exists $0<\underline{c}<\overline{c}<\infty$ such that 
\[
\underline{c} I_p \preceq \mathcal{I}^{\overline{{\bgpi}}_m}(\widehat\bgtheta_n)\preceq \overline{c} I_p,
\]
{for all $m\geq n_0$.}
By assumption \eqref{lim:Gamma_theta}, there exists $0<v_1<v_2<\infty$ such that for any $n\geq n_0$,
\begin{equation*}
    v_1\leq \Gamma_{\widehat{\bgtheta}_m} ( \{\mathcal{I}^{\overline{{\bgpi}}_m }(\widehat{\bgtheta}_m )\}^{-1}  )\leq v_2.
\end{equation*}
Note that $\tau(\Gamma_{\bgtheta^*},c_n, \bgtheta^*, {\bgpi})=\max\{\lceil \frac{\Gamma_{\bgtheta^*}( \{\mathcal{I}^{{\bgpi}}(\bgtheta^*)\}^{-1}  )}{c_n}  \rceil ,n_0\}\to \infty$, as $n\to \infty$.
Also note that
\[
\Big\{m\geq n_0; \frac{v_2}{m} \leq c_n \Big\}\subset \Big\{m\geq n_0; \frac{1}{m} \Gamma_{\widehat{\bgtheta}_m} ( \{\mathcal{I}^{\overline{{\bgpi}}_m }(\widehat{\bgtheta}_m )\}^{-1}  )\leq c_n \Big\} \subset \Big\{m\geq n_0; \frac{v_1}{m} \leq c_n \Big\} . 
\]
Thus, for any fixed $n$,
\[
\tau_{c_n}\leq \min \Big\{m\geq n_0; \frac{v_2}{m} \leq c_n \Big\} \leq  \Big\lceil  \frac{v_2}{c_n}  \Big\rceil +n_0 <\infty, a.s.\ \mathbb{P}_*,
\]
and as $n\to \infty$,
\begin{equation*}
    \tau_{c_n}\geq \min \Big\{m\geq n_0; \frac{v_1}{m} \leq c_n \Big\} \geq  \Big\lceil  \frac{v_1}{c_n}  \Big\rceil \to \infty.
\end{equation*}
By assumption \eqref{lim:3lim}, we know that
\begin{equation*}
    \lim_{n\to \infty}\{\mathcal{I}^{\overline{{\bgpi}}_n }(\widehat{\bgtheta}_n )\}^{-1}  = \{\mathcal{I}^{{\bgpi}}(\bgtheta^*)\}^{-1}\ a.s. \ \mathbb{P}_*.
\end{equation*}
Combining the compact convergence assumption \eqref{lim:Gamma_theta} and \eqref{ineq:u1u2}, we obtain that
\begin{equation*}
    \lim_{n\to \infty} \Gamma_{\widehat{\bgtheta}_n}\big( \{\mathcal{I}^{\overline{{\bgpi}}_n }(\widehat{\bgtheta}_n )\}^{-1} \big) =\Gamma_{ {\bgtheta}^* } \big( \{\mathcal{I}^{{\bgpi}}(\bgtheta^*)\}^{-1}\big) \ a.s. \ \mathbb{P}_*.
\end{equation*}
That is, with probability 1, for any $\eta >0$, there exits $N_\eta\geq n_0$ such that for any $n\geq N_\eta$, 
\[
\big|\Gamma_{\widehat{\bgtheta}_n}\big( \{\mathcal{I}^{\overline{{\bgpi}}_n }(\widehat{\bgtheta}_n )\}^{-1} \big)-\Gamma_{ {\bgtheta}^* } \big( \{\mathcal{I}^{{\bgpi}}(\bgtheta^*)\}^{-1}\big) \big|<\eta.
\]
Set $N_2=\min\Big\{n\geq N_\eta ; \lceil  \frac{v_1}{c_n}   \rceil \geq N_\eta\Big\}<\infty$.
For any $n\geq N_2$, we obtain that,
\begin{equation*}
\begin{split}
    &\Big\{m\geq n_0; \frac{1}{m} \left\{\Gamma_{ {\bgtheta}^* } \big( \{\mathcal{I}^{{\bgpi}}(\bgtheta^*)\}^{-1}\big) + \eta \right\}\leq c_n \Big\} \\
    \subset &\Big\{m\geq n_0; \frac{1}{m} \Gamma_{\widehat{\bgtheta}_m} ( \{\mathcal{I}^{\overline{{\bgpi}}_m }(\widehat{\bgtheta}_m )\}^{-1}  )\leq c_n \Big\} \\
    \subset &\Big\{m\geq n_0; \frac{1}{m} \left\{\Gamma_{ {\bgtheta}^* } \big( \{\mathcal{I}^{{\bgpi}}(\bgtheta^*)\}^{-1}\big) - \eta \right\}
 \leq c_n \Big\},
\end{split}
\end{equation*}
which implies that
\begin{equation*}
    \Big\lceil \frac{\Gamma_{\bgtheta^*}( \{\mathcal{I}^{{\bgpi}}(\bgtheta^*)\}^{-1}  ) -\eta}{c_n}  \Big\rceil \leq \tau_{c_n} \leq \Big\lceil \frac{\Gamma_{\bgtheta^*}( \{\mathcal{I}^{{\bgpi}}(\bgtheta^*)\}^{-1}  ) +\eta}{c_n}  \Big\rceil.
\end{equation*}
Set $\eta=\xi \cdot \Gamma_{\bgtheta^*}( \{\mathcal{I}^{{\bgpi}}(\bgtheta^*)\}^{-1}  )$ and $ d_n=\Gamma_{\bgtheta^*}( \{\mathcal{I}^{{\bgpi}}(\bgtheta^*)\}^{-1}  )/c_n\to \infty$. Note that
\begin{equation}\label{ineq:tau_xi}
    \frac{ \lceil (1-\xi)d_n \rceil }{\lceil  d_n \rceil}\leq \frac{\tau_{c_n}}{\tau(\Gamma_{\bgtheta^*},c_n, \bgtheta^*, {\bgpi})} \leq \frac{ \lceil (1+\xi)d_n \rceil }{\lceil  d_n \rceil}.
\end{equation}
Taking the infimum limit and supremum limit over both sides of inequalities \eqref{ineq:tau_xi}, we obtain that for any $\xi>0$,
\begin{equation*}
    (1-\xi)\leq \liminf_{n\to \infty}\frac{\tau_{c_n}}{\tau(\Gamma_{\bgtheta^*},c_n, \bgtheta^*, {\bgpi})} \leq \limsup_{n\to \infty}\frac{\tau_{c_n}}{\tau(\Gamma_{\bgtheta^*},c_n, \bgtheta^*, {\bgpi})}  \leq (1+\xi).
\end{equation*}
In conclusion, 
\[
\lim_{n\to \infty}\frac{\tau_{c_n}}{\tau(\Gamma_{\bgtheta^*},c_n, \bgtheta^*, {\bgpi})} = 1, \ a.s. \ \mathbb{P}_*.
\]
\end{proof}

\begin{proof}[Proof of Theorem \ref{thm:Asy_Normal_final_stopping_time}]

Let $v_n=\tau(\Gamma_{\bgtheta^*},c_n,  {\bgtheta}^* ,  {\bgpi}^*  )$. Accordin to Theorem \ref{thm:consistency_final} and Theorem \ref{thm:empirical pi as converge}, the conditions in \eqref{lim:3lim} are satisfied. By Lemma~\ref{lem:tau_n_prop}, we obtain \eqref{lim:tau_n}, which implies that
\begin{equation}\label{lim:I_pa.s.}
    \sqrt{\tau_n} \{ \mathcal{I}^{\overline{{\bgpi}}_n}(\widehat{\bgtheta}_n^{\text{ML}}) \}^{1/2} \frac{1}{\sqrt{v_n}} \{ \mathcal{I}^{ {\bgpi}^* }( \bgtheta^* ) \}^{-1/2} \to I_p,\ a.s. \ \mathbb{P}_*.
\end{equation}
{For the ease of exposition, we write $\tau_n=\tau_{c_n}$.
}
To show the limit result \eqref{lim:stopping_time_lim}, by \eqref{lim:I_pa.s.} and Slutsky's theorem, it suffices to show that
\begin{equation}\label{lim:211_need}
    \sqrt{v_n} \Big\{  \mathcal{I}^{ {\bgpi}^* }(\bgtheta^*)\Big\}^{ 1/2}(\widehat{\bgtheta}_{\tau_n}^{\text{ML}} -\bgtheta^*) \inD N_p\Big(\bm{0}_p,I_p\Big).
\end{equation}
{According to Theorem~\ref{thm:mult_random_clt} with $T_n=\widehat{\bgtheta}_{n}^{\text{ML}}$, $\theta=\bgtheta^*$, $N_n=\tau_n$, $r_n=v_n$, and $W_n = n^{-1/2}\big\{  \mathcal{I}^{ {\bgpi}^* }(\bgtheta^*)\big\}^{ -1/2}$,  \eqref{lim:211_need} we only need to verify the following conditions for  Theorem~\ref{thm:mult_random_clt}: for all $\gamma >0$, $\varepsilon>0$, there exists $0<\delta<1$ such that
\begin{equation*}
    \limsup_{n\to \infty}\mathbb{P}\Big( \max_{|n'-n|\leq \delta n} \norm{\widehat{\bgtheta}_{n'}^{\text{ML}} -\widehat{\bgtheta}_n^{\text{ML}} } \geq  \frac{\varepsilon}{\sqrt{n}} \lambda_{min}(\{ \mathcal{I}^{{\bgpi}^*}(\bgtheta^*) \}^{-1/2})   \Big)\leq \gamma.
\end{equation*}
}

To show the above inequality, it suffices to show that for any $\gamma >0$, $\varepsilon>0$, there exists $0<\delta<1$ such that
\begin{equation*}
    \limsup_{n\to \infty }\mathbb{P}\Big( \max_{n',n'',n''' \in [n, (1+\delta) n] } \sqrt{n'''}\norm{\widehat{\bgtheta}_{n'}^{\text{ML}} -\widehat{\bgtheta}_{n''}^{\text{ML}} } \geq   {\varepsilon}  \lambda_{min}(\{ \mathcal{I}^{{\bgpi}^*}(\bgtheta^*) \}^{-1/2})   \Big)\leq \gamma.
\end{equation*}
Note that
\begin{equation*}
\begin{split}
    &\mathbb{P}\Big( \max_{n',n'',n''' \in [n, (1+\delta) n] } \sqrt{n'''}\norm{\widehat{\bgtheta}_{n'}^{\text{ML}} -\widehat{\bgtheta}_{n''}^{\text{ML}} } \geq   {\varepsilon}  \lambda_{min}(\{ \mathcal{I}^{{\bgpi}^*}(\bgtheta^*) \}^{-1/2})   \Big)\\
    \leq &\mathbb{P}\Big( \max_{n',n''  \in [n, (1+\delta) n] } \sqrt{n}\Big(\norm{\widehat{\bgtheta}_{n'}^{\text{ML}} -\widehat{\bgtheta}_{n}^{\text{ML}} } +\norm{\widehat{\bgtheta}_{n}^{\text{ML}} -\widehat{\bgtheta}_{n''}^{\text{ML}} }  \Big) \geq   \frac{\varepsilon}{\sqrt{1+\delta}}  \lambda_{min}(\{ \mathcal{I}^{{\bgpi}^*}(\bgtheta^*) \}^{-1/2})   \Big)\\
    \leq &2\cdot \mathbb{P}\Big( \max_{n'  \in [n, (1+\delta) n] } \sqrt{n}\norm{\widehat{\bgtheta}_{n'}^{\text{ML}} -\widehat{\bgtheta}_{n}^{\text{ML}} } \geq   \frac{\varepsilon}{2\sqrt{2}}   \lambda_{min}(\{ \mathcal{I}^{{\bgpi}^*}(\bgtheta^*) \}^{-1/2})   \Big).
\end{split}
\end{equation*}
Thus, we only need to show that for all  $\varepsilon>0$,
\begin{equation}\label{lim:215_need}
\lim_{\delta\to 0}\limsup_{n\to \infty}\mathbb{P}\Big( \max_{n'  \in [n, (1+\delta ) n] } \sqrt{n}\norm{\widehat{\bgtheta}_{n'}^{\text{ML}} -\widehat{\bgtheta}_{n}^{\text{ML}} } \geq {\varepsilon} \lambda_{min}(\{ \mathcal{I}^{{\bgpi}^*}(\bgtheta^*) \}^{-1/2})   \Big)=0.
\end{equation}
Let
\begin{equation*}
\begin{split}
    {D}_n=&\Big\{ \nabla_{\bgtheta}l_{n}(\widehat{\bgtheta}_{n}^{\text{ML}}; \va_{n} )=0  \Big\}\\
    &\bigcap\Big\{\frac{1}{n}\sum_{j=1}^n\Psi_2^{a_j}(X_j)
    \norm{\{\nabla^2_{\bgtheta}l_n(\bgtheta^*;\va_n)\}^{-1}}_{op}
    \sup_{n'\geq  n}\psi\Big( \norm{ \widehat{\bgtheta}_n^{\text{ML}}-\widehat{\bgtheta}_{n'}^{\text{ML}} } \Big)\leq \frac{1}{2}  \Big\}\\
    &\bigcap\Big\{\frac{1}{n}\sum_{j=1}^{n}\Psi_2^{a_j}(X_j) \norm{\{\nabla^2_{\bgtheta}l_n(\bgtheta^*;\va_n)\}^{-1}}_{op}\psi\Big( \norm{ \widehat{\bgtheta}_{n}^{\text{ML}}-  \bgtheta^* } \Big)\leq \frac{1}{2}  \Big\}\\
    &\bigcap\Big\{ \norm{ \{\nabla^2_{\bgtheta} l_n(\widehat{\bgtheta}_n^{\text{ML}};\va_n) \}^{-1} }_{op}  \leq \frac{2}{ \lambda_{min}(\mathcal{I}^{{\bgpi}^*}(\bgtheta^*) ) } \Big\}\\
    &\bigcap\Big\{ \norm{ \{\nabla^2_{\bgtheta} l_n(\bgtheta^*;\va_n) \}^{-1} }_{op}  \leq \frac{2}{ \lambda_{min}(\mathcal{I}^{{\bgpi}^*}(\bgtheta^*) ) } \Big\}.
\end{split}
\end{equation*}
By Theorem \ref{thm:consistency_final}, Theorem \ref{thm:selection bound}, Corollary \ref{lemma:MLE score}, Lemma \ref{lem:a.s.bound}, and with probability 1,
\begin{equation*}
    \limsup_{n\to \infty } \norm{ \{\nabla^2_{\bgtheta} l_n(\widehat{\bgtheta}_n^{\text{ML}};\va_n) \}^{-1} }_{op}  \leq \frac{1}{ \lambda_{min}(\mathcal{I}^{{\bgpi}^*}(\bgtheta^*) ) },
\end{equation*}
we know that
\begin{equation*}
    \mathbb{P} \left( \bigcup_{m=1}^\infty\bigcap_{n=m}^\infty    {D}_n \right) = 1.
\end{equation*}
By Lemma \ref{lem:taylor ln}, we obtain that 
\begin{equation*}
    \bigcap_{m\geq n} {D}_m \subset \Big\{ \sup_{n':n\leq n'\leq  (1+\delta)n}\norm{\widehat{\bgtheta}_{n'}^{\text{ML}}-\widehat{\bgtheta}_n^{\text{ML}} } \leq  \frac{4}{ \lambda_{min}(\mathcal{I}^{{\bgpi}^*}(\bgtheta^*) ) }   \sup_{n':n\leq n'\leq  (1+\delta)n} \norm{\nabla_{\bgtheta} l_n( \widehat{\bgtheta}_{n'}^{\text{ML}}; \va_n)} \Big\},
\end{equation*}
and
\begin{equation*}
    {D}_n  \subset \Big\{  \sqrt{n} \norm{\widehat{\bgtheta}_{n}^{\text{ML}}-\bgtheta^* } \leq  \frac{4}{ \lambda_{min}(\mathcal{I}^{{\bgpi}^*}(\bgtheta^*) ) }   \sqrt{n}\norm{\nabla_{\bgtheta} l_n( \bgtheta^*; \va_n)} \Big\}.
\end{equation*}
Thus
\begin{equation}\label{ineq:220_need}
\begin{split}
    &\mathbb{P}\Big( \sup_{n':n\leq n'\leq  (1+\delta)n} \norm{\widehat{\bgtheta}_{n'}^{\text{ML}}-\widehat{\bgtheta}_n^{\text{ML}} } \geq \frac{\varepsilon}{\sqrt{n}} \lambda_{min}(\{ \mathcal{I}^{{\bgpi}^*}(\bgtheta^*) \}^{-1/2})   \Big)\\
    \leq& \mathbb{P} \Big( \Big\{\bigcap_{m\geq  n} {D}_m \Big\}^c\Big)\\
    &+ \mathbb{P}\Big( \Big\{ \sup_{n':n\leq n'\leq  (1+\delta)n} \sqrt{n}\norm{\nabla_{\bgtheta} l_n( \widehat{\bgtheta}_{n'}^{\text{ML}}; \va_n)} \geq  C(\varepsilon) \Big\} \bigcap \bigcap_{m\geq  n} {D}_m   \Big),
\end{split}
\end{equation}
where $C(\varepsilon)=\frac{\varepsilon \lambda_{min}(\{\mathcal{I}^{{\bgpi}^*}(\bgtheta^*) \}^{-1/2})}{4\{\lambda_{min}(\mathcal{I}^{{\bgpi}^*}(\bgtheta^*) )\}^{-1}}$. Let $n'\in [n+1,(1+\delta)n]$. With $\nabla_{\bgtheta} l_{n'}( \widehat{\bgtheta}_{n'}^{\text{ML}}; \va_{n'})=0$, we know that that over $\cap_{m\geq  n} {D}_m $
\begin{equation}\label{ineq:221_need}
\begin{split}
    &\sqrt{n}\norm{\nabla_{\bgtheta} l_n( \widehat{\bgtheta}_{n'}^{\text{ML}}; \va_n)} = \frac{1}{ \sqrt{n} } \norm{\nabla_{\bgtheta} \sum_{j=n+1}^{ n' } \log f_{\widehat{\bgtheta}_{n'}^{\text{ML}} ,a_j}(X_j)}\\
    \leq&\frac{1}{ \sqrt{n} } \norm{ \sum_{j=n+1}^{n' } \nabla_{\bgtheta}\log f_{ \bgtheta^* ,a_j}(X_j)}+\delta \frac{ \sum_{j=n+1}^{n' } \Psi_1^{a_j}(X_j) }{\delta n} \sqrt{n}\norm{  \widehat{\bgtheta}_{n'}^{\text{ML}}-\bgtheta^* }\\
    \leq& \frac{1}{ \sqrt{n} } \norm{ \sum_{j=n+1}^{n' }\nabla_{\bgtheta}\log f_{ \bgtheta^* ,a_j}(X_j)}\\
    &+\frac{4\delta }{ \lambda_{min}(\mathcal{I}^{{\bgpi}^*}(\bgtheta^*) ) } \frac{ \sum_{j=n+1}^{(1+\delta)n } \Psi_1^{a_j}(X_j) }{\delta n}  \frac{1}{\sqrt{n}}\norm{ \sum_{j=1}^{n' } \nabla_{\bgtheta}\log f_{ \bgtheta^* ,a_j}(X_j) }.
\end{split}
\end{equation}
By Markov inequality, we obtain that for any $M>0$
\begin{equation*}
 \mathbb{P}\Big(\frac{  \sum_{n+1\leq j\leq (1+\delta)n )  }\Psi_1^{a_j}(X_j) }{\delta n} \geq M \Big)   \leq\frac{\mu_Y}{M},
\end{equation*}
where 
$\mu_Y=\sum_{a\in \mathcal{A}} \mathbb{E}_{X^a\sim f_{\bgtheta^*,a}}\Psi_1^{a}(X^a)<\infty$. Thus,
\begin{equation*}
    \frac{ \sup_{n': n+1\leq n' \leq (1+\delta) n}\sum_{j =n+1}^{n'}\Psi_1^{a_j}(X_j) }{\delta n}=\frac{  \sum_{j =n+1}^{(1+\delta)n}\Psi_1^{a_j}(X_j) }{\delta n}= O_p(1).
\end{equation*}
Note that $S^n_m=\sum_{j=n+1}^{n+m }\nabla_{\bgtheta}\log f_{ \bgtheta^* ,a_j}(X_j)$ is martingale sequence with respect to filtration $\mathcal{F}^n_m=\mathcal{F}_{n+m}$.

By Assumption~\ref{ass:2}, 
$C_1:=\max_{a\in \mathcal{A} }\max_{\bgtheta\in \bgTheta} \mathbb{E}_{X\sim f_{\bgtheta^*, a}} \{ \norm{\nabla_{\bgtheta} \log f_{\bgtheta, a} (X)}^2 \}<\infty$. Since $\norm{\cdot}$ is convex, by Jensen's inequality, $\norm{S^n_m}$ is a submartingale. Applying Doob's inequality (see Theorem 6.5.d. in \cite{lin2010probability}), we obtain that
\begin{equation*}
    \mathbb{P}\Big( \max_{1\leq m\leq l }\norm{S^n_m}\geq M \Big)\leq  \frac{\mathbb{E}\norm{S^n_l}^2}{M^2}\leq \frac{l\cdot C_1}{M^2}.
\end{equation*}
Hence, we obtain
\begin{equation}\label{ineq:225_need}
    \mathbb{P}\Big(\max_{n+1\leq n'\leq (1+\delta)n} \frac{1}{\sqrt{n \delta }} \norm{ \sum_{j=n+1}^{n' }\nabla_{\bgtheta}\log f_{ \bgtheta^* ,a_j}(X_j)} \geq M \Big)\leq \frac{ C_1}{M^2},
\end{equation}
and 
\begin{equation}\label{ineq:226_need}
     \mathbb{P}\Big(\max_{n+ 1\leq  n'\leq (1+\delta)n} \frac{1}{\sqrt{n}}\norm{ \sum_{j=1}^{n' } \nabla_{\bgtheta}\log f_{ \bgtheta^* ,a_j}(X_j) }  \geq M \Big)\leq \frac{(1+\delta)n\cdot C_1 }{n\cdot M^2}\leq \frac{2C_1}{M^2}.
\end{equation}
Combining \eqref{ineq:221_need}, \eqref{ineq:225_need} and \eqref{ineq:226_need}, we obtain
\begin{equation*}
    \max_{n':n+1\leq n'\leq (1+\delta)n }\sqrt{n}\norm{\nabla_{\bgtheta} l_n( \widehat{\bgtheta}_{n'}^{\text{ML}}; \va_n)}=\sqrt{\delta}  O_p(1)+\delta O_{P}(1).
\end{equation*}
Thus, 
\begin{equation*}
\begin{split}
    &\limsup_{\delta\to 0}\limsup_{n\to \infty}\mathbb{P}\Big( \Big\{ \sup_{n':n\leq n'\leq  (1+\delta)n} \sqrt{n}\norm{\nabla_{\bgtheta} l_n( \widehat{\bgtheta}_{n'}^{\text{ML}}; \va_n)} \geq C(\varepsilon) \Big\} \bigcap \bigcap_{m\geq  n} {D}_m   \Big)\\
    \leq&\limsup_{\delta\to 0}  \mathbb{P}\Big( \sqrt{\delta} O_p(1)\geq C(\varepsilon) \Big) =0.
\end{split}
\end{equation*}
This completes the proof of \eqref{lim:stopping_time_lim}.

We proceed to the proof of the `Furthermore' part of the theorem. Note that
\begin{equation*}
g(\widehat{\bgtheta}_{\tau_n}^{\text{ML}})-g(\bgtheta^*)=\{\nabla g( \widetilde{\bgtheta}_n )\}^{T}(\widehat{\bgtheta}_{\tau_n}^{\text{ML}}- \bgtheta^*),
\end{equation*}
where $\widetilde{\bgtheta}_n \to \bgtheta^*$ and $\nabla g(\widetilde{\bgtheta}_n ) \to \nabla g(\bgtheta^*)$ a.s. $\mathbb{P}_*$ as $n\to \infty$.
Then,
\begin{equation*}
\begin{split}
    &\frac{\sqrt{\tau_n}(g(\widehat{\bgtheta}_{\tau_n}^{\text{ML}}) -g(\bgtheta^*))}{\norm{\Big\{  \mathcal{I}^{ \overline{{\bgpi}}_{\tau_n} }(\widehat{\bgtheta}_{\tau_n}^{\text{ML}})\Big\}^{ -1/2}\nabla g( \widehat{\bgtheta}_{\tau_n}^{\text{ML}} )}} =
    \frac{\sqrt{\tau_n}\{\nabla g( \widetilde{\bgtheta}_n )\}^{T}(\widehat{\bgtheta}_{\tau_n}^{\text{ML}}-\bgtheta^*)}{\norm{\Big\{  \mathcal{I}^{ \overline{{\bgpi}}_{\tau_n} }(\widehat{\bgtheta}_{\tau_n}^{\text{ML}})\Big\}^{ -1/2}\nabla g( \widehat{\bgtheta}_{\tau_n}^{\text{ML}} )}}\\
    =&\frac{
    \Big[\Big\{  \mathcal{I}^{ \overline{{\bgpi}}_{\tau_n} }(\widehat{\bgtheta}_{\tau_n}^{\text{ML}})\Big\}^{ -1/2}  \nabla g( \widetilde{\bgtheta}_n )\Big]^{T}
    }{\norm{\Big\{  \mathcal{I}^{ \overline{{\bgpi}}_{\tau_n} }(\widehat{\bgtheta}_{\tau_n}^{\text{ML}})\Big\}^{ -1/2}\nabla g( \widehat{\bgtheta}_{\tau_n}^{\text{ML}} )}} 
    \sqrt{\tau_n} \Big\{  \mathcal{I}^{ \overline{{\bgpi}}_{\tau_n} }(\widehat{\bgtheta}_{\tau_n}^{\text{ML}})\Big\}^{ 1/2}(\widehat{\bgtheta}_{\tau_n}^{\text{ML}} -\bgtheta^*).
\end{split}
\end{equation*}
By Theorem \ref{thm:selection bound} and Assumption~\ref{ass:6B}, we know that there exists $U>0$ such that for any $n\geq n_0$
\begin{equation*}
    \underline{c}U I_p\preceq \mathcal{I}^{\overline{\bgpi}_n} (\bgtheta) \preceq \overline{c} I_p,\ \forall \bgtheta \in \bgTheta.
\end{equation*}
Thus, the condition number of $\mathcal{I}^{\overline{\bgpi}_n} (\bgtheta)$, $ \kappa\Big(\mathcal{I}^{\overline{\bgpi}_n} (\bgtheta) \Big)\leq \frac{\overline{c}}{\underline{c}U}<\infty$, for any $\bgtheta\in \bgTheta$ and $n\geq n_0$. Moreover, we know that
\begin{equation*}
\begin{split}
    &\norm{\frac{\Big\{  \mathcal{I}^{ \overline{{\bgpi}}_{\tau_n} }(\widehat{\bgtheta}_{\tau_n}^{\text{ML}})\Big\}^{ -1/2}  (\nabla g( \widetilde{\bgtheta}_n )- \nabla g( \widehat{\bgtheta}_{\tau_n}^{\text{ML}} )) }{\norm{\Big\{  \mathcal{I}^{ \overline{{\bgpi}}_{\tau_n} }(\widehat{\bgtheta}_{\tau_n}^{\text{ML}})\Big\}^{ -1/2}\nabla g( \widehat{\bgtheta}_{\tau_n}^{\text{ML}} )}} }\\
    \leq &\kappa\Big(\Big\{\mathcal{I}^{ \overline{{\bgpi}}_{\tau_n} }(\widehat{\bgtheta}_{\tau_n}^{\text{ML}})\Big\}^{-1/2}\Big) \frac{\norm{\nabla g( \widetilde{\bgtheta}_n )- \nabla g( \widehat{\bgtheta}_{\tau_n}^{\text{ML}} )} }{\norm{ \nabla g( \widehat{\bgtheta}_{\tau_n}^{\text{ML}} ) }} =o_p(1).
\end{split}
\end{equation*}
Let $h_n=\frac{\Big\{  \mathcal{I}^{ \overline{{\bgpi}}_{\tau_n} }(\widehat{\bgtheta}_{\tau_n}^{\text{ML}})\Big\}^{ -1/2}\nabla g( \widehat{\bgtheta}_{\tau_n}^{\text{ML}} ) }{\norm{\Big\{  \mathcal{I}^{ \overline{{\bgpi}}_{\tau_n} }(\widehat{\bgtheta}_{\tau_n}^{\text{ML}})\Big\}^{ -1/2}\nabla g( \widehat{\bgtheta}_{\tau_n}^{\text{ML}} )}}$, then $\norm{h_n}=1$. By continuous mapping theorem, 
\begin{equation*}
    h_n\to h: = \frac{\Big\{  \mathcal{I}^{ {\bgpi}^* }( \bgtheta^* )\Big\}^{ -1/2}\nabla g(\bgtheta^* ) }{\norm{\Big\{  \mathcal{I}^{ {\bgpi}^* }( \bgtheta^* )\Big\}^{ -1/2}\nabla g( \bgtheta^* )}}\ a.s. \ \mathbb{P}_*.
\end{equation*}
As $n\to \infty$,
\begin{equation*}
    \frac{\sqrt{\tau_n}(g(\widehat{\bgtheta}_{\tau_n}^{\text{ML}}) -g(\bgtheta^*))}{\norm{\Big\{  \mathcal{I}^{ \overline{{\bgpi}}_{\tau_n} }(\widehat{\bgtheta}_{\tau_n}^{\text{ML}})\Big\}^{ -1/2}\nabla g( \widehat{\bgtheta}_{\tau_n}^{\text{ML}} )}}=h^T \sqrt{\tau_n} \Big\{  \mathcal{I}^{ \overline{{\bgpi}}_{\tau_n} }(\widehat{\bgtheta}_{\tau_n}^{\text{ML}})\Big\}^{ 1/2}(\widehat{\bgtheta}_{\tau_n}^{\text{ML}} -\bgtheta^*)+o_p(1)\inD N(0,1).
\end{equation*} 
This completes the proof of \eqref{lim:stopping_time_lim_g}.

\end{proof}

\subsection{Proof of Corollaries~\ref{cor:app-glm}, \ref{cor:app-cat}, and \ref{cor:app-RA}}\label{appendix:example Verification}
In this section, we will verify the regularity conditions for the applications presented in Sections~\ref{sec:app-glm}, \ref{sec:app-cat}, and \ref{sec:app-RA}, thereby proving Corollaries~\ref{cor:app-glm}, \ref{cor:app-cat}, and \ref{cor:app-RA}. 
First, according to Lemma~\ref{lemma:assumption-AB},  Assumptions~\ref{ass:6A} and \ref{ass:7A} imply Assumptions~\ref{ass:6B} and \ref{ass:7B}. The next lemma is useful for verifying Assumption~\ref{ass:4}.
\begin{lemma}\label{lem:ULLN}
   Let $\mathcal{F}^a=\left\{\log f_{\bgtheta,a}(\cdot): \bgtheta \in \bgTheta\right\},a\in \mathcal{A}$ be collections of measurable functions with a $\mathbb{P}_{*}$ integrable envelope functions. {That is,} $F_a$ satisfies that for all $\bgtheta\in\bgTheta$,
\begin{equation*}
    |\log f_{\bgtheta, a}(x^a)| \leq F_a(x^a) , a.s.\ \mathbb{P}_{*}\text{ and } \mathbb{E}_{X^a\sim f_{\bgtheta^*,a}}F_a(X^a)<\infty.
\end{equation*}
If $\bgTheta$ is compact and mapping $\bgtheta \mapsto \log f_{\bgtheta,a}(x^a)$ is continuous for every $x^a$ and $a\in\mathcal{A}$, then
\begin{equation*}
\mathbb{P}_{*} \left\{
\lim_{n\to \infty} \sup_{\bgtheta \in \bgTheta} 
|l_n(\bgtheta;\va_n)-M(\bgtheta;\overline{\bgpi}_n)|  = 0
\right\} = 1.  
\end{equation*}   
\end{lemma}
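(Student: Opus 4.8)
The plan is to prove this uniform law of large numbers by a bracketing (finite-net) argument built on top of the adaptive strong law of large numbers already established in Lemma~\ref{lem:SLLN}. The starting observation is that, because $a_i$ is $\mathcal{F}_{i-1}$-measurable and $X_i\mid\mathcal{F}_{i-1}\sim f_{\bgtheta^*,a_i}$, the centering term is a running conditional mean:
\[
l_n(\bgtheta;\va_n)-M(\bgtheta;\overline{\bgpi}_n)=\frac{1}{n}\sum_{i=1}^n\Big\{\log f_{\bgtheta,a_i}(X_i)-\mathbb{E}\big[\log f_{\bgtheta,a_i}(X_i)\mid\mathcal{F}_{i-1}\big]\Big\},
\]
since $M(\bgtheta;\overline{\bgpi}_n)=\frac{1}{n}\sum_{i=1}^n\mathbb{E}_{X\sim f_{\bgtheta^*,a_i}}[\log f_{\bgtheta,a_i}(X)]$. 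Thus for each fixed $\bgtheta$ this is a normalized sum of martingale differences whose conditional distributions depend only on the selected $a_i$, which is exactly the regime covered by Lemma~\ref{lem:SLLN}; the difficulty is purely in making the convergence uniform over the (non-finite) index set $\bgTheta$ while the centering $M(\,\cdot\,;\overline{\bgpi}_n)$ is itself random and $n$-dependent.

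To control the oscillation, I would, for each $\bgtheta_0\in\bgTheta$, $\rho>0$ and $a\in\mathcal{A}$, introduce the bracketing functions $\overline{g}_{\bgtheta_0,\rho,a}(x)=\sup_{\bgtheta\in B(\bgtheta_0,\rho)}\log f_{\bgtheta,a}(x)$ and $\underline{g}_{\bgtheta_0,\rho,a}(x)=\inf_{\bgtheta\in B(\bgtheta_0,\rho)}\log f_{\bgtheta,a}(x)$. These are measurable (the sup/inf over the ball equals that over a countable dense subset, by continuity and separability of the compact $\bgTheta$) and are dominated by the envelope $F_a$. By continuity of $\bgtheta\mapsto\log f_{\bgtheta,a}(x)$ they decrease/increase pointwise to $\log f_{\bgtheta_0,a}(x)$ as $\rho\downarrow0$, so the dominated convergence theorem gives $\overline{m}(a):=\mathbb{E}_{X\sim f_{\bgtheta^*,a}}[\overline{g}_{\bgtheta_0,\rho,a}(X)]\to m_{\bgtheta_0}(a)$ and $\underline{m}(a):=\mathbb{E}_{X\sim f_{\bgtheta^*,a}}[\underline{g}_{\bgtheta_0,\rho,a}(X)]\to m_{\bgtheta_0}(a)$, where $m_{\bgtheta}(a):=\mathbb{E}_{X\sim f_{\bgtheta^*,a}}[\log f_{\bgtheta,a}(X)]$. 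Since $\mathcal{A}$ is finite, for any $\varepsilon>0$ one can pick $\rho(\bgtheta_0)>0$ so that the bracket widths satisfy $\overline{m}(a)-\underline{m}(a)<\varepsilon$ simultaneously for all $a\in\mathcal{A}$.

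The balls $\{B(\bgtheta_0,\rho(\bgtheta_0))\}_{\bgtheta_0\in\bgTheta}$ cover $\bgTheta$; by compactness I extract a finite subcover $B(\bgtheta_j,\rho_j)$, $j=1,\dots,N$. On each ball, for $\bgtheta\in B(\bgtheta_j,\rho_j)$ the sandwich $\underline{g}_{j,a_i}(X_i)\le\log f_{\bgtheta,a_i}(X_i)\le\overline{g}_{j,a_i}(X_i)$ together with $\underline{m}_j(a_i)\le m_{\bgtheta}(a_i)\le\overline{m}_j(a_i)$ bounds $|l_n(\bgtheta;\va_n)-M(\bgtheta;\overline{\bgpi}_n)|$ by the two averaged quantities $|\frac{1}{n}\sum_i\{\overline{g}_{j,a_i}(X_i)-\overline{m}_j(a_i)\}|$ and $|\frac{1}{n}\sum_i\{\underline{g}_{j,a_i}(X_i)-\underline{m}_j(a_i)\}|$ plus the bracket width $\max_a[\overline{m}_j(a)-\underline{m}_j(a)]<\varepsilon$. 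Applying Lemma~\ref{lem:SLLN} to the finitely many functions $\overline{g}_{j,\cdot}$ and $\underline{g}_{j,\cdot}$ (whose conditional mean given $\mathcal{F}_{i-1}$ is exactly $\overline{m}_j(a_i)$, resp.\ $\underline{m}_j(a_i)$, and whose integrable envelope is $F_a$), both averaged terms converge to $0$ almost surely. Taking the maximum over the $N$ balls and then $\limsup_n$ gives $\limsup_n\sup_{\bgtheta}|l_n(\bgtheta;\va_n)-M(\bgtheta;\overline{\bgpi}_n)|\le\varepsilon$ almost surely; intersecting the resulting events over $\varepsilon=1/k$, $k\in\mathbb{N}$, completes the proof.

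The main obstacle is conceptual rather than computational: the centering $M(\bgtheta;\overline{\bgpi}_n)$ is random and varies with $n$ through the adaptively chosen empirical frequencies, so a classical fixed-center uniform law does not apply directly. The resolution is to perform the bracketing at the level of the per-experiment log-densities, so that the conditional means of the bracketing functions bracket $m_{\bgtheta}(a)$ exactly and the entire effect of adaptive selection is absorbed into Lemma~\ref{lem:SLLN}; the one remaining technical point requiring care is the measurability of the suprema/infima, which follows from separability of the compact parameter space.
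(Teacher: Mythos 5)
Your proposal is correct and follows essentially the same route as the paper's own proof: bracketing functions $\sup/\inf_{\bgtheta\in B(\bgtheta_0,\rho)}\log f_{\bgtheta,a}$, bracket widths controlled via dominated convergence using the integrable envelope, a finite subcover by compactness, the sandwich bound reducing the uniform deviation to finitely many averaged martingale-difference terms plus $\varepsilon$, and Lemma~\ref{lem:SLLN} to kill those terms almost surely. The only cosmetic difference is that the paper phrases the finite-cover step in the language of bracketing numbers (citing Theorem 2.4.1 of \cite{wellner2013weak}), while you also spell out the measurability of the suprema via separability, which the paper leaves implicit.
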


\begin{proof}[Proof of Lemma \ref{lem:ULLN}]
   The proof is similar to the proof of Theorem 2.4.1 in \cite{wellner2013weak}.%
   First, we show that the bracketing numbers $N_{[\ ]}\left(\varepsilon, \mathcal{F}^a, L_1(\mathbb{P}_{*})\right)<\infty$, for every $a\in \mathcal{A}$ and $\varepsilon>0$, {where the definition of bracketing number $N_{[\ ]}\left(\varepsilon, \mathcal{F},  \norm{\cdot } \right)$ is given by Definition 2.1.6 in  \cite{wellner2013weak}}.

Let $B(\bgtheta, \delta)=\{ \bgtheta'\in \bgTheta ; \norm{\bgtheta'-\bgtheta}< \delta  \}$. Define 
\begin{equation*}
    u^a_{B(\bgtheta',\delta)}(x^a)=\sup_{\norm{\bgtheta-\bgtheta'} < \delta} \log f_{\bgtheta,a}(x^a), \text{ and, }l^a_{B(\bgtheta',\delta)}(x^a)=\inf_{\norm{\bgtheta-\bgtheta'} < \delta} \log f_{\bgtheta,a}(x^a).
\end{equation*}
Because the envelope function $F_a$ is integrable with respect to $\mathbb{P}_{*}$ {and $\log f_{\bgtheta,a}(\cdot)$ is continuous in $\bgtheta$}, the Dominated Convergence Theorem ensures that for any $\bgtheta'$ and $\varepsilon>0$, there exists $\delta > 0$ such that
\begin{equation*}
    \mathbb{E}_{\bgtheta^*} \left( u^a_{B(\bgtheta',\delta)}(X^a) - l^a_{B(\bgtheta',\delta)}(X^a) \right) < \varepsilon.
\end{equation*} 
By compactness of $\bgTheta$, there exists $(\bgtheta_1,\delta_1),(\bgtheta_2,\delta_2),\cdots,(\bgtheta_{m},\delta_m)$, such that for any $\bgtheta\in \bgTheta$, there exists $1\leq j\leq m$, 
\begin{equation*}
    l_j(x^a)\leq \log f_{\bgtheta,a}(x^a) \leq u_j(x^a),
\end{equation*}
where $u_j= u^a_{B(\bgtheta_j,\delta_j)}$ and $l_j= l^a_{B(\bgtheta_j,\delta_j)}$. Thus, the bracketing numbers $N_{[\ ]}\left(\varepsilon, \mathcal{F}^a, L_1(\mathbb{P}_{*})\right)<\infty$, for all $\varepsilon>0$ and $a\in \mathcal{A}$. That is, we can choose finitely many $\varepsilon-$brackets $[l^a_j,u^a_j]$, whose union contains $\mathcal{F}^a$ and such that $\mathbb{E}_{\bgtheta^*} (u^a_j(X^a)-l^a_j(X^a))<\varepsilon$, %
for every $j$. Hence, for every $\bgtheta\in \bgTheta$, $a\in\mathcal{A}$, there exists $j_a$ such that
\begin{equation*}
    l_{j_a}(x^a)\leq \log f_{\bgtheta,a}(x^a) \leq u_{j_a} (x^a).
\end{equation*}
{The above inequality also implies that
\begin{equation}
  \mathbb{E}_{X\sim f_{\bgtheta,a}} l_{j_a} (X)  \leq \mathbb{E}_{X\sim f_{\bgtheta,a}} \log f_{\bgtheta,a}(x^a)\leq \mathbb{E}_{X\sim f_{\bgtheta,a}} u_{j_a} (X)
\end{equation}}
Note that, {if the functions $f_{\bgtheta,a}(\cdot)$ are inside the brackets $[l^a,u^a]$ for all $a$, then}
{
\begin{equation*}
\begin{split}
        &l_n(\bgtheta;\va_n)-M(\bgtheta;\overline{\bgpi}_n)\\
        \leq & \frac{1}{n}\sum_{i=1}^n(u^{a_i}(X_i) - \mathbb{E}[l^{a_i}(X_i) | \mathcal{F}_{i-1}] )\\
        \leq &\frac{1}{n}\sum_{i=1}^n(u^{a_i}(X_i) - \mathbb{E}[u^{a_i}(X_i) | \mathcal{F}_{i-1}] )+\varepsilon.
\end{split}
\end{equation*}
}
Thus, 
\begin{equation*}
    \sup_{\bgtheta\in \bgTheta}  (l_n(\bgtheta;\va_n)-M(\bgtheta;\overline{\bgpi}_n))\leq \max_{\substack{ a\in \mathcal{A} \\ u^a\in \{ u_j^a; 1\leq j\leq m \}}  } \frac{1}{n}\sum_{i=1}^n(u^{a_i}(X_i) - \mathbb{E}[u^{a_i}(X_i) | \mathcal{F}_{i-1}] )+\varepsilon.
\end{equation*}
By Lemma \ref{lem:SLLN}, 
\begin{equation*}
\frac{1}{n}\sum_{i=1}^n(u^{a_i}(X_i) - \mathbb{E}[u^{a_i}(X_i) | \mathcal{F}_{i-1}] )    \stackrel{\text { a.s. }}{\longrightarrow} 0.
\end{equation*}
Consequently, 
\begin{equation*}
    \limsup_{n\to \infty}\sup_{\bgtheta\in \bgTheta}  (l_n(\bgtheta;\va_n)-M(\bgtheta;\overline{\bgpi}_n)) \leq \varepsilon, a.s. \ \mathbb{P}_{*}. 
\end{equation*}
A similar argument yields that
\begin{equation*}
    \liminf_{n\to \infty}\inf_{\bgtheta\in \bgTheta}  (l_n(\bgtheta;\va_n)-M(\bgtheta;\overline{\bgpi}_n)) \geq -\varepsilon, a.s. \  \mathbb{P}_{*}. 
\end{equation*}
Taking $\varepsilon\to 0$, we obtain that 
\begin{equation*}
\mathbb{P} \left\{
\lim_{n\to \infty} \sup_{\bgtheta \in \bgTheta} 
|l_n(\bgtheta;\va_n)-M(\bgtheta;\overline{\bgpi}_n)|  = 0
\right\} = 1.  
\end{equation*} 
\end{proof}
\begin{remark}\label{rem:ULLN}
    Under Assumptions~\ref{ass:1} and \ref{ass:2}, if we assume $\nabla^2_{\bgtheta}f_{\bgtheta,a}(x^a)$ is continuous in $(\bgtheta,x^a)$ and 
    \[
    L:=\max_{a\in \mathcal{A}}\sup_{\bgtheta\in \bgTheta,x^a\in \operatorname{supp}(f_{\bgtheta,a})}\norm{\nabla^2_{\bgtheta} \log f_{\bgtheta,a}(x^a)  }_{op}<\infty,
    \]
then by the first order Taylor expansion with Lagrange remainder, we can choose the envelop function $F_a$ as 
\[
F_a(x^a)=|\log f_{\bgtheta^*,a}(x^a)|+\norm{\nabla_{\bgtheta} \log f_{\bgtheta^*,a}(x^a)}\cdot \operatorname{diameter}(\bgTheta)+ \frac{L }{2}   \cdot \operatorname{diameter}(\bgTheta)^2.
\]
\end{remark}

\begin{proof}[Proof of Corollary~\ref{cor:app-glm}]

Let $\bgxi_a=\blz^T_a \bgtheta$ and $h_{\bgxi_a,a}(x^a)=\zeta_a(x^a)\exp\{x^a \bgxi_a-B_a(\bgxi_a)\}$. Let $X^a\sim f_{\bgtheta^*,a}$. Note that
\[
\nabla_{\bgxi_a} \log h_{\bgxi_a,a}(X^a)= X^a-B_a'(\bgxi_a) \text{ and } -\nabla^2_{\bgxi_a} \log h_{\bgxi_a,a}(X^a)= B_a''(\bgxi_a)\geq \min_{\bgxi_a=\blz_a^T\bgtheta, \bgtheta\in \bgTheta}B_a''(\bgxi_a)>0.
\]
Assumption~\ref{ass:1}, \( \boldsymbol{\theta}^* \) is in the interior of \( \boldsymbol{\Theta} \), \( \boldsymbol{\xi}_a^* = \boldsymbol{z}_a^T \boldsymbol{\theta}^* \) is in the interior of \( \{ \boldsymbol{z}_a^T \boldsymbol{\theta}; \boldsymbol{\theta} \in \boldsymbol{\Theta} \} \). Applying Theorem 5.8 in \cite{lehmann2006theory}, we know that all moments for \( \nabla_{\boldsymbol{\xi}_a} \log h_{\boldsymbol{\xi}^*_a,a}(X^a) = X^a - B'_a(\boldsymbol{\xi}^*_a) \) exist.  Also note that
 $B''_{a_i}(\blz_{a_i}^T\bgtheta)>0$ and $\mathcal{I}_{\bgxi_a,a}(\bgxi_a)=B''(\bgxi_a)$
is nonsingular. Thus, Assumptions~\ref{ass:2} and \ref{ass:6A} hold. {Note that from the above derivations, $ \norm{\nabla^2_{\bgxi_a} \log f_{\bgtheta,a}(x^a) }_{op}$ does not depend on $x^a$. This, together with  Lemma~\ref{lem:ULLN} and the accompanying Remark~\ref{rem:ULLN}, implies that  Assumption~\ref{ass:4} is  satisfied.
}

Note that
\begin{equation*}
    D_{KL}( h_{\bgxi_a^*,a} \|h_{\bgxi_a,a} )=B_a(\bgxi_a)-B_a(\bgxi^*_a ) +(\bgxi_a^*-\bgxi_a^*)B_a'(\bgxi_a^*)\geq\frac{1}{2}\norm{ \bgxi_a^*-\bgxi_a^*}^2\min_{\widetilde\bgxi_a=\blz_a^T \bgtheta,\bgtheta\in \bgTheta}B_a''(\widetilde\bgxi_a).
\end{equation*}
Thus, Assumption~\ref{ass:7A} is satisfied with $C=\min_{\widetilde\bgxi_a=\blz_a^T \bgtheta,\bgtheta\in \bgTheta}B_a''(\widetilde\bgxi_a)/2$.

Thus, to prove the corollary, it is sufficient to verify Assumption~\ref{ass:3}, which will be the focus of the rest of the proof.

Because the Fisher information is $\mathcal{I}_{\bgxi_a,a}(\bgxi_a)=B''(\bgxi_a)$, the first part of conditions of Assumption~\ref{ass:3} on the smoothness of the Fisher information in $\bgtheta$ holds. We proceed to verify that $\sum_a \mathcal{I}_a(\bgtheta) $ is positive definite.

Note that
\[ \mathcal{I}_{a}(\bgtheta)=B''(\blz_a^T \bgtheta) \blz_a \blz_a^T.
\]
Thus,
\begin{equation*}
   \underline{c} \sum_{ a \in \mathcal{A}}\blz_a \blz_a^T \leq   \sum_{a\in \mathcal{A}} \mathcal{I}_a(\bgtheta)  \leq \overline{c}\sum_{a\in \mathcal{A}} \blz_a \blz_a^T,
\end{equation*}
{where $\underline{c}=\inf_{\bgtheta\in\bgTheta, a\in\mathcal{A}}B''_{a_i}(\blz_a^T \bgtheta)> 0$ and $\overline{c} = \sup_{\bgtheta\in\bgTheta, a\in\mathcal{A}}B''_{a_i}(\blz_a^T \bgtheta)<\infty$.
}

So, it is sufficient to show that $\sum_{ a \in \mathcal{A}}\blz_a \blz_a^T$ is non-singular. In the rest of the proof, we show that $\sum_{ a \in \mathcal{A}}\blz_a \blz_a^T$ is non-singular by proving the following result in linear algebra:
\begin{equation}\label{eq:linear-al}
    \{\blz_a;a\in \mathcal{A}\}^\perp=\operatorname{ker}\Big( \sum_{a\in \mathcal{A}} \blz_a\blz_a^T \Big).
\end{equation}

\paragraph*{Proof of \eqref{eq:linear-al}}
Because $\sum_{a\in \mathcal{A}} \blz_a\blz_a^T$ is positive semidefinite, 
\begin{equation*}
\begin{split}
    &\blz\in \operatorname{ker}\Big( \sum_{a\in \mathcal{A}} \blz_a\blz_a^T \Big)\iff\blz^T \Big( \sum_{a\in \mathcal{A}} \blz_a\blz_a^T \Big)\blz=0\iff \sum_{a\in \mathcal{A}} |\blz^T_a   \blz|^2=0\\
     &\iff \innerpoduct{\blz}{\blz_a}=0, \forall a\in \mathcal{A}\iff \blz\in \{\blz_a;a\in \mathcal{A}\}^\perp.
\end{split}
\end{equation*}
Since $\sum_{a\in \mathcal{A}} \blz_a\blz_a^T$ is symmetric, 
\begin{equation*}
    \mathcal{R}\Big( \sum_{a\in \mathcal{A}} \blz_a\blz_a^T \Big)=\operatorname{ker}\Big( \sum_{a\in \mathcal{A}} \blz_a\blz_a^T \Big)^\perp=\Big(\{\blz_a;a\in \mathcal{A}\}^\perp\Big)^\perp=\operatorname{span}\{\blz_a;a\in \mathcal{A}\}=\mathbb{R}^p.
\end{equation*}
This completes the proof of Corollary~\ref{cor:app-glm}.

\end{proof}
\begin{proof}[Proof of Corollary~\ref{cor:app-cat}]
{Because
\begin{equation*}
    f_{\bgtheta,a}(x_a)=\exp(b_ax_a)\exp\big\{ \blz_a^T \bgtheta-\log(1+\exp(\blz_a^T \bgtheta +b_a)) \big\},x_a\in \{0,1\},
\end{equation*}
the M2PL model described in \eqref{eq:m2pl} is a special case of the GLM described in \eqref{eq:glm}, with $B_a(\bgxi_a) = \log\{1+\exp(\bgxi_a+b_a)\}$, and $\zeta^a(x_a) = \exp(b_ax_a)$. Because the support of $B_a(\cdot)$ is $\mathbb{R}$, conditions of Corollary~\ref{cor:app-glm} are satisfied. As a result, Corollary~\ref{cor:app-cat} follows by directly applying Corollary~\ref{cor:app-cat}.}

\end{proof}

\begin{proof}[Proof of Corollary~\ref{cor:app-RA}]
{Note that the BTL model described in \eqref{eq:btl} is a special case of the M2PL model described in \eqref{eq:m2pl} with the following $\blz_a$ and $b_a$ for $a=(i,j)$, and $0\leq i<j\leq p$,
\begin{equation}
(\blz_{a},b_a)=
\begin{cases}
    (\ble_j-\ble_i, 0) &\text{ if } i\neq 0\\
    (\ble_j, 0) & \text{ if } i=0
\end{cases}.
\end{equation}
Thus, Corollary~\ref{cor:app-RA} is implied by Corollary~\ref{cor:app-cat} as long as we can verify that a connected graph $G$ ensures that $\dim (\operatorname{span}\{ \blz_{a}; a\in \mathcal{A} \})=p$. In the rest of the proof, we prove a slightly stronger result: 
}
for all $\mathcal{G}=\{a^1,\cdots,a^s\}\subset \mathcal{A}$, if the graph $(V, \mathcal{G})$ is connected, where $V=\{0,1,2,\cdots,p\}$, then $\dim (\operatorname{span}\{ \blz_{a}; a\in \mathcal{A} \})=p$.

First of all, if the graph \( G = (V, E) \) is connected, then it implies that \( s \geq p \).
Let $\blZ_{\mathcal{G}}=[\blz_{a^1},\cdots,\blz_{a^s}]$. It suffices to demonstrate that $\operatorname{rank}(\blZ_{\mathcal{G}})=p$. 
{To proceed, we construct a matrix that possesses the same rank as $\mathbf{Z}_{\mathcal{G}}$, as described below. For $a=(i,j)$, $0\leq i<j\leq p$, let
\begin{equation}
     \blz^+_{a}=
    \begin{cases}
    (-1,\blz_a^T)^T &\text{ if } i=0\\
    (0,\blz_a^T)^T &\text{ if } i>0.
    \end{cases}
\end{equation}
Then, define $\blZ^+_{\mathcal{G}}=[\blz^+_{a^1},\cdots,\blz^+_{a^s}]\in\mathbb{R}^{(p+1)\times s}$. Note that $\blz^+_{a} = ( -\blz_a^T\mathbf{1}_p,\blz_a^T)^T$ for all $a$. Consequently, %
$\operatorname{rank}(\blZ_{\mathcal{G}})=\operatorname{rank}(\blZ^+_{\mathcal{G}})$.}

Let $\ble^+_0=\ble_1,\cdots,\ble^+_p=\ble_{p+1}$, where \(\ble_1, \ldots, \ble_{p+1}\) is the standard basis for \(\mathbb{R}^{p+1}\). It is easy to check that $\blz^+_{a}=\ble^+_{a_1}-\ble^+_{a_2}$, where $a=(a_1,a_2)$. Let $v_1=a^1_1$, $v_2=a^1_2$, $S_2=\{v_1,v_2\}$ and $\mathcal{G}_{-1}=\{a\in \mathcal{G}; a\neq a^1 \} $, where $a^1=(a^1_1,a^1_2)$. Set $\widehat{a}_1=a^1$. Now we know that $\operatorname{rank}(\blz^+_{\widehat{a}_1} )=1$

Since $(V,\mathcal{G})$ is a connected graph, there exists $a'\in \mathcal{G}_{-1}$ such that $a'=(a'_1,a'_2)$ $a'_1\in S_2$ and $a'_2\not \in S_2$ (or $a'_2\in S_2$ and $a'_1\not \in S_2$). Set $v_3=a'_2$ (or $v_3=a'_1$), $S_3=S_2\cup \{v_3\}$, $\widehat{a}_2=a'$, and $\mathcal{G}_{-2}=\{a\in \mathcal{G}_{-1};a\not = \widehat{a}_2 \}$. By our construction, we know that $\blz^+_{\widehat{a}_2}=\ble^+_{a'_1}-\ble^+_{a'_2}\not \in \operatorname{span} \{\blz^+_{\widehat{a}_1} \}.$ Thus, $\operatorname{rank}([\blz^+_{\widehat{a}_1} \blz^+_{\widehat{a}_2}] ) > \operatorname{rank}  (\blz^+_{\widehat{a}_1} )$.

Because $(V,\mathcal{G})$ is a connected graph, we can always repeat the above process, until $S_{p+1}=V$. By this process, we obtain a sequence $\widehat{a}_1, \cdots,\widehat{a}_{p}$, such that
\begin{equation*}
     1=\operatorname{rank}( \blz^+_{\widehat{a}_1}  )<\cdots< \operatorname{rank}([\blz^+_{\widehat{a}_1},\cdots, \blz^+_{\widehat{a}_{p-1}}] )< \operatorname{rank}([\blz^+_{\widehat{a}_1},\cdots, \blz^+_{\widehat{a}_{p }}] )=p.
\end{equation*}
Notice that \( p=\operatorname{rank}([\blz^+_{\widehat{a}_1},\cdots, \blz^+_{\widehat{a}_{p }}] )\leq \operatorname{rank}( \blZ^+_{\mathcal{G}} )=\operatorname{rank}( \blZ_{\mathcal{G}} )\leq p\). This implies that $\operatorname{rank}( \blZ_{\mathcal{G}} )= p$.

\end{proof}

\bibliographystyle{apalike}
\bibliography{./bibliography}

\end{document}